\newcommand{\R}{{\mathbb R}}       
\newcommand{\N}{{\mathbb N}}
\newcommand{\Z}{{\mathbb Z}}       
\newcommand{\DD}{{\mathcal D}}
\newcommand{\HH}{{\mathcal H}}
\newcommand{\LL}{{\mathcal L}}
\newcommand{\PP}{{\mathcal P}}
\newcommand{\QQ}{{\mathcal Q}}
\newcommand{\cS}{{\mathcal S}}
\newcommand{\TT}{{\mathcal T}}
\newcommand{\RR}{{\mathcal R}}
\newcommand{\cM}{{\mathcal M}}
\newcommand{\cA}{{\mathcal A}}
\newcommand{\cF}{{\mathcal F}}
\newcommand{\Ch}{{\mathcal Ch}}
\newcommand{\EE}{{\mathcal E}}
\newcommand{\UU}{{\mathcal U}}
\newcommand{\cI}{{\mathcal I}}
\newcommand{\cJ}{{\mathcal J}}
\newcommand{\diam}{{\rm diam}}
\newcommand{\dist}{{\rm dist}}
\newcommand{\rf}[1]{{(\ref{#1})}}
\newcommand{\supp}{\operatorname{supp}}
\newcommand{\vphi}{{\varphi}}
\newcommand{\ve}{{\varepsilon}}
\newcommand{\vv}{{\vspace{2mm}}}
\newcommand{\vvv}{{\vspace{3mm}}}
\newcommand{\wt}[1]{{\widetilde{#1}}}
\newcommand{\wh}[1]{{\widehat{#1}}}
\newcommand{\wk}[1]{{\widecheck{#1}}}
\newcommand{\noi}{\noindent}
\newcommand{\rest}{{\lfloor}}
\newcommand{\sss}{{\mathsf {Stop}}}
\newcommand{\ttt}{{\mathsf {Top}}}
\newcommand{\treg}{{\mathsf{Treg}}}
\newcommand{\DB}{{\mathsf {DB}}}
\newcommand{\Gen}{{\mathsf {Gen}}}
\newcommand{\tree}{{\rm Tree}}
\newcommand{\pv}{\operatorname{pv}}
\newcommand{\GG}{{\mathsf G}}
\newcommand{\bad}{{\mathsf{Bad}}}
\newcommand{\HD}{{\mathsf{HD}}}
\newcommand{\hd}{{\mathsf{hd}}}
\newcommand{\GH}{{\mathsf{GH}}}
\newcommand{\Ot}{{\mathsf{Ot}}}
\newcommand{\LD}{{\mathsf{LD}}}
\newcommand{\BR}{{\mathsf {BR}}}
\newcommand{\MDW}{{\mathsf{MDW}}}
\newcommand{\Stop}{{\mathsf{Stop}}}
\newcommand{\Trc}{{\mathsf{Trc}}}
\newcommand{\sL}{{\mathsf{L}}}
\newcommand{\sD}{{\mathsf{D}}}
\newcommand{\sF}{{\mathsf{F}}}
\newcommand{\sG}{{\mathsf{G}}}
\newcommand{\sH}{{\mathsf{H}}}
\newcommand{\sM}{{\mathsf{M}}}
\newcommand{\Neg}{{\mathsf{Neg}}}
\newcommand{\OP}{{\mathsf{OP}}}
\newcommand{\End}{{\mathsf{End}}}
\newcommand{\Reg}{{\mathsf{Reg}}}
\newcommand{\NDB}{{\mathsf{NDB}}}
\newcommand{\Ty}{{\mathsf{Ty}}}
\newcommand{\GDF}{{\mathsf{GDF}}}
\def\Xint#1{\mathchoice
{\XXint\displaystyle\textstyle{#1}}%
{\XXint\textstyle\scriptstyle{#1}}%
{\XXint\scriptstyle\scriptscriptstyle{#1}}%
{\XXint\scriptscriptstyle\scriptscriptstyle{#1}}%
\!\int}
\def\XXint#1#2#3{{\setbox0=\hbox{$#1{#2#3}{\int}$ }
\vcenter{\hbox{$#2#3$ }}\kern-.58\wd0}}
\def\avint{\;\Xint-}
\definecolor{ffffff}{rgb}{1.0,1.0,1.0}
\definecolor{qqqqff}{rgb}{0.0,0.0,1.0}
\definecolor{ffqqqq}{rgb}{1.0,0.0,0.0}
\definecolor{zzzzqq}{rgb}{0.6,0.6,0.0}
\definecolor{marronet}{rgb}{0.6,0.2,0}
\definecolor{negre}{rgb}{0,0,0}
\definecolor{vermell}{rgb}{0.8,0.05,0.05}
\definecolor{blau}{rgb}{0.3,0.2,1.}
\definecolor{blauclar}{rgb}{0.,0.,1.}
\definecolor{grisfosc}{rgb}{0.25098039215686274,0.25098039215686274,0.25098039215686274}
\definecolor{verd}{rgb}{0.1,0.6,0.1}
\definecolor{taronja}{rgb}{0.9,0.6,0.05}
\definecolor{vermellclar}{rgb}{1.,0.,0.}
\definecolor{verdet}{rgb}{0,0.8,0.1}
\definecolor{blauverd}{rgb}{0,0.4,0.2}
\definecolor{grisclar}{rgb}{0.6274509803921569,0.6274509803921569,0.6274509803921569}
\newtheorem{theorem}{Theorem}[section]
\newtheorem{lemma}[theorem]{Lemma}
\newtheorem{mlemma}[theorem]{Main Lemma}
\newtheorem{coro}[theorem]{Corollary}
\newtheorem{mpropo}[theorem]{Main Proposition}
\newtheorem{claim}[theorem]{Claim}
\newtheorem*{claim*}{Claim}
\newtheorem*{theorem*}{Theorem}
\theoremstyle{definition}
\theoremstyle{remark}
\newtheorem{rem}[theorem]{\bf Remark}
\numberwithin{equation}{section}
\newcommand{\brem}{\begin{rem}}
\newcommand{\erem}{\end{rem}}
\begin{document}

\title[The Riesz transforms and the Painlev\'e problem]{The measures with $L^2$-bounded Riesz transform and the Painlev\'e problem}

\author[D. D\k{a}browski]{Damian D\k{a}browski}
\address{Damian D\k{a}browski\\ Departament de Matem\`atiques, Universitat Aut\`onoma de Barcelona, and Barcelona Graduate School of Mathematics (BGSMath)\newline\indent Edifici C Facultat de Ci\`encies, 08193 Bellaterra, Barcelona, Catalonia, Spain\newline\indent
Current address: P.O. Box 35 (MaD), 40014 University of Jyväskylä, Finland}

\email{damian.m.dabrowski ``at'' jyu.fi}

\author[X. Tolsa]{Xavier Tolsa}
\address{Xavier Tolsa \\
ICREA, Passeig Llu\'{\i}s Companys 23 08010 Barcelona, Catalonia;  
Universitat Aut\`onoma de Barcelona and Centre de Recerca Matem\`atica,
08193 Bellaterra, Catalonia.
}

\email{xtolsa ``at'' mat.uab.cat}

\thanks{Both authors were supported by 2017-SGR-0395 (Catalonia) and MTM-2016-77635-P (MINECO, Spain). D.D. was supported by the Academy of Finland via the project \textit{Incidences on Fractals}, grant No. 321896. X.T. was supported by
 the European Research Council (ERC) under the European Union's Horizon 2020 research and innovation programme (grant agreement 101018680).}

\subjclass{42B20, 28A75, 49Q15} 

\begin{abstract}
In this work we provide a geometric characterization of the measures $\mu$ in $\R^{n+1}$ with polynomial upper growth of degree $n$ such that the $n$-dimensional Riesz transform $\RR\mu (x) = \int \frac{x-y}{|x-y|^{n+1}}\,d\mu(y)$ belongs to $L^2(\mu)$. More precisely, it is shown that
$$\|\RR\mu\|_{L^2(\mu)}^2 + \|\mu\|\approx \int\!\!\int_0^\infty \beta_{2,\mu}(x,r)^2\,\frac{\mu(B(x,r))}{r^n}\,\frac{dr}r\,d\mu(x) +
\|\mu\|,$$
where $\beta_{\mu,2}(x,r)^2 = \inf_L \frac1{r^n}\int_{B(x,r)} \left(\frac{\dist(y,L)}r\right)^2\,d\mu(y),$
with the infimum taken over all affine $n$-planes $L\subset\R^{n+1}$. 
As a corollary, we obtain a characterization of the removable sets for Lipschitz
harmonic functions in terms of a metric-geometric potential and we deduce that the class of removable sets for Lipschitz harmonic functions is invariant by bilipschitz mappings.

\end{abstract}

\maketitle

\tableofcontents

\maketitle

\tableofcontents

\section{Introduction}

Given a Radon measure $\mu$ in $\R^{n+1}$, its 
($n$-dimensional) Riesz transform at $x\in\R^{n+1}$ is defined by
$$\RR\mu (x) = \int \frac{x-y}{|x-y|^{n+1}}\,d\mu(y),$$
whenever the integral makes sense. For $f\in L^1_{loc}(\mu)$,
one writes $\RR_\mu f(x) = \RR(f\mu)(x)$.
Given $\ve>0$, the $\ve$-truncated Riesz transform of $\mu$ equals
$$\RR_\ve\mu (x) = \int_{|x-y|>\ve} \frac{x-y}{|x-y|^{n+1}}\,d\mu(y),$$
and the operator $\RR_{\mu,\ve}$ is defined by
 $\RR_{\mu,\ve}f(x) = \RR_\ve(f\mu)(x)$. 
 
We say that $\RR_\mu$ is bounded in $L^2(\mu)$ if
the operators $\RR_{\mu,\ve}$ are bounded in $L^2(\mu)$ uniformly on $\ve$, and then we denote
$$\|\RR_\mu\|_{L^2(\mu)\to L^2(\mu)} = \sup_{\ve>0} \|\RR_{\mu,\ve}\|_{L^2(\mu)\to L^2(\mu)}.$$
We also write
$$\RR_*\mu(x) = \sup_{\ve>0} |\RR_{\ve}\mu(x)|, \qquad  \pv\RR\mu(x) = \lim_{\ve>0} \RR_{\ve}\mu(x),$$
in case that the latter limit exists. Sometimes we will abuse notation by writing $\RR\mu$ instead of $\pv\RR\mu$.

This paper provides a full geometric description of the measures $\mu$ with no point masses such that $\RR_\mu$ is bounded in
$L^2(\mu)$. 
In the case $n=1$, such description has already been obtained (see \cite{MV}, \cite{Leger}, \cite{Tolsa-duke}), relying on the connection between Menger curvature and the Cauchy kernel
found by Melnikov \cite{Melnikov}. In higher dimensions, a similar connection is missing,
and thus obtaining analogous results presents major difficulties. 
In the case when the measure $\mu$ is AD-regular (i.e., Ahlfors-David regular) that geometric description is equivalent to the 
codimension $1$ David-Semmes problem, solved by Nazarov, Tolsa, and Volberg
in \cite{NToV1}. Recall that a measure $\mu$ is AD-regular (or $n$-AD-regular) if there exists a constant $C>0$ such that
$$C^{-1}\,r^n\leq \mu(B(x,r))\leq C\,r^n\quad \mbox{ for all $x\in\supp\mu$ and $0<r\leq \diam(\supp\mu)$.}$$
One of the main motivations for the description of the measures $\mu$ such that $\RR_\mu$ is bounded in $L^2(\mu)$
is the characterization of the removable singularities for Lipschitz harmonic functions. Also, one may expect other
applications regarding the study of harmonic and elliptic measures. Indeed, in some of the recent advances on this topic, the connection between harmonic measure, the Riesz transform, and rectifiability has played an essential role (see \cite{AHM3TV}, \cite{AMT}, and \cite{AMTV}, for example).

Next we need to introduce additional notation. For a ball $B\subset \R^{n+1}$, we consider its $n$-dimensional density (with respect to $\mu$):
$$\theta_\mu(B)= \frac{\mu(B)}{r(B)^n},$$
and its $\beta_{2,\mu}$ coefficient:
$$\beta_{2,\mu}(B) = \inf_L \left(\frac1{r(B)^n}\int_B \left(\frac{\dist(x,L)}{r(B)}\right)^2\,d\mu(x)\right)^{1/2},$$
where the infimum is taken over all $n$-planes $L\subset\R^{n+1}$ and $r(B)$ stands for the radius of $B$. For
$B=B(x,r)$ we may also write $\theta_\mu(x,r)$ and $\beta_{2,\mu}(x,r)$ instead of $\theta_\mu(B)$ and 
$\beta_{2,\mu}(B)$. The coefficients $\beta_{2,\mu}$ were introduced by David and Semmes in their fundamental works \cite{DS1}, \cite{DS2} on uniform rectifiability. They can be considered as $L^2$ variants of 
some analogous coefficients considered previously by Peter Jones in his celebrated travelling salesman theorem \cite{Jones}.

The first main result of this paper is the following.

\begin{theorem}\label{teomain1}
Let $\mu$ be a Radon measure in $\R^{n+1}$ 
satisfying the polynomial growth condition
\begin{equation}\label{eqgrow00}
\mu(B(x,r))\leq \theta_0\,r^n\quad \mbox{ for all $x\in\supp\mu$ and all $r>0$}
\end{equation}
and such that $\|\RR_*\mu\|_{L^1(\mu)}<\infty$.
Then
\begin{equation}\label{eqbetawolff}
\int\!\!\int_0^\infty \beta_{2,\mu}(x,r)^2\,\theta_\mu(x,r)\,\frac{dr}r\,d\mu(x)\leq C\,(\big\|\pv\RR\mu\|_{L^2(\mu)}^2
+\theta_0^2\,\|\mu\|\big),
\end{equation}
where $C$ is an absolute constant.
\end{theorem}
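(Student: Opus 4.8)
The plan is to estimate the left-hand side of \eqref{eqbetawolff} by comparing the $\beta_{2,\mu}$ coefficients with suitable differences of the Riesz transform acting on $\mu$ and its approximations at various scales. The starting point is the by-now classical observation (essentially due to Tolsa, and used heavily in the AD-regular theory of \cite{NToV1}) that smallness of $\beta_{2,\mu}(x,r)$ together with a lower density bound forces the measure to look flat on $B(x,r)$, and that flatness of $\mu$ on a ball can be detected by the Riesz transform: if $\mu$ were exactly supported on an $n$-plane with bounded density, then $\RR\mu$ would be an explicit bounded vector field, whereas deviations from flatness produce a genuine contribution to $\RR_*\mu$ or to oscillations of $\RR\mu$. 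Concretely, I would fix a large truncation parameter and, for each ball $B=B(x,r)$, introduce a smoothed/truncated version $\RR_\ve\mu$ and compare $\RR\mu$ at scale $r$ with its average; the quantity $\beta_{2,\mu}(x,r)^2\,\theta_\mu(x,r)$ should be controlled by a local $L^2(\mu)$ average of $|\RR\mu - c_{B}|^2$ over an enlarged ball, where $c_B$ is an appropriate constant vector, up to error terms involving the density $\theta_\mu$ which are absorbed by the $\theta_0^2\|\mu\|$ term.

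The main steps, in order, are as follows. First, reduce to the region where the density $\theta_\mu(x,r)$ is bounded below by a small fixed constant, since where the density is tiny one uses the polynomial growth bound \eqref{eqgrow00} directly to sum the $\beta$'s against $\frac{dr}{r}$ and pick up only an $O(\theta_0^2\|\mu\|)$ contribution (a standard ``low-density'' estimate). Second, on the remaining set, establish a \emph{pointwise} or \emph{localized} inequality bounding $\beta_{2,\mu}(x,r)^2$ by a normalized $L^2$ oscillation of the Riesz transform at comparable scales; this is where one exploits that $\frac{x-y}{|x-y|^{n+1}}$ has a nonzero component transverse to any candidate $n$-plane $L$, so that $\dist(y,L)/r$ being large on a set of positive $\mu$-measure forces a lower bound on the relevant difference of truncated Riesz transforms. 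Third, integrate this localized inequality in $x$ and $r$ and invoke a Carleson-type / almost-orthogonality argument (a quasiorthogonality estimate for the family of ``difference operators'' $\RR_{2r}\mu - \RR_r\mu$, or a square-function bound via Cotlar-type or $T1$-type techniques already available under the hypothesis $\|\RR_*\mu\|_{L^1(\mu)}<\infty$) to convert the double integral of oscillations into $\|\pv\RR\mu\|_{L^2(\mu)}^2$ plus lower-order terms. Along the way one needs the finiteness of $\|\RR_*\mu\|_{L^1(\mu)}$ to guarantee that $\pv\RR\mu$ exists $\mu$-a.e.\ and to justify the various truncation limits.

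The hard part will be the second step: obtaining the localized comparison between $\beta_{2,\mu}$ and Riesz-transform oscillations \emph{without} AD-regularity and, in particular, without a lower density bound holding at all scales simultaneously. In the David--Semmes setting one has uniform upper and lower regularity to run the geometric arguments, but here only a one-sided polynomial growth bound is assumed, so the lower-density reduction must be done carefully and scale-by-scale, and the ``flatness implies controlled Riesz transform'' implication has to be made quantitative and stable under the failure of regularity on smaller subballs. I expect this to require a stopping-time decomposition (cf.\ the $\sss$, $\ttt$, $\Stop$, $\Reg$ notation set up in the preamble), separating balls by whether the density has stayed comparable down to the current scale, with the contribution of the ``stopping'' balls estimated by a packing/Carleson argument and the contribution of the ``good'' balls handled by the flatness-versus-Riesz estimate. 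The final quasiorthogonality in step three is technically delicate but should follow the template of the corresponding estimates in \cite{NToV1} and \cite{Tolsa-duke}, adapted to the non-AD-regular, $L^2$-density-weighted setting.
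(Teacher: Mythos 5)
Your plan hinges on Step~2: a \emph{localized} inequality asserting that $\beta_{2,\mu}(x,r)^2$ is controlled by an $L^2(\mu)$-oscillation of the Riesz transform at scale $r$. No such local comparison is known (or true) in codimension one, and it is essentially this missing ingredient --- the higher-dimensional replacement for the Menger-curvature/Cauchy-kernel identity --- that makes the problem hard. Flatness of $\mu$ on a ball, even with two-sided density bounds on that ball, does not force the truncated or smoothed Riesz transform to be close to a constant vector, because contributions from other scales and from the non-AD-regular structure of $\mu$ elsewhere can dominate; conversely, a large $\beta_{2,\mu}(x,r)$ does not by itself produce a quantifiable oscillation of $\RR\mu$ at scale $r$. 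Your transversality heuristic (``$\frac{x-y}{|x-y|^{n+1}}$ has a nonzero component transverse to $L$'') gives no cancellation-free lower bound in the style of Menger curvature. So Step~2, as stated, is not a lemma one can prove; it is a restatement of the problem, and the rest of the argument cannot be set in motion.

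The paper's route is different in a crucial way: it never tries to compare $\beta_{2,\mu}$ with Riesz oscillations scale by scale. Instead it first performs a corona-type decomposition of the David--Mattila lattice into trees $\tree(R)$, $R\in\ttt$, in which the discrete density $\Theta$ is comparable to $\Theta(R)$ throughout the tree. Inside each such tree, $\mu$ is approximated by an AD-regular measure $\eta$ on which the Riesz transform is bounded in $L^2(\eta)$; then the solved codimension-one David--Semmes problem \cite{NToV1} yields uniform rectifiability of $\supp\eta$, and the $\beta_{2,\mu}$ sum over the tree is transferred from the uniform-rectifiability square-function estimate for $\eta$ (Lemmas~\ref{lembetas99}, \ref{lemtreebeta}). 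This reduces the whole theorem to a packing estimate for the tops: $\sum_{R\in\ttt}\Theta(R)^2\mu(R)\lesssim \|\RR\mu\|^2_{L^2(\mu)}+\theta_0^2\|\mu\|+\sum_{Q\in\DB}\EE_\infty(9Q)$ (Main Lemma~\ref{mainlemma}). Proving that packing estimate is where all the genuinely new work lives: it goes through the $\MDW$ condition (moderate decrement of Wolff energy), the extraction of tractable trees, a variational argument producing a lower bound on $\|\RR\eta\|_{L^p(\eta)}$ for an auxiliary measure $\eta$ on each tractable tree (Lemma~\ref{lemvar}), the transference of that lower bound to the Haar coefficients $\Delta_{\wt\TT}\RR\mu$ (Section~\ref{sectrans}), and a second main proposition controlling the residual $\DB$ energy. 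The point to internalize is that the implication the paper actually proves quantitatively is ``density oscillation along the tree $\Rightarrow$ large Haar coefficients of $\RR\mu$,'' not ``large $\beta_{2,\mu}(x,r)$ $\Rightarrow$ large Riesz oscillation at scale $r$.'' Your Step~1 reduction by low density also does not work as stated, because small density at a single scale need not give summability of $\beta^2\theta\,\frac{dr}{r}$ over the remaining scales; in the paper the low-density cutoff is embedded in the stopping condition $\LD(R)$ and is coupled with the $\PP$-doubling and Wolff-energy bookkeeping rather than being a preliminary dichotomy.
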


Let us remark that the growth condition \rf{eqgrow00} and the assumption that $\|\RR_*\mu\|_{L^1(\mu)}<\infty$ ensures that $\RR_*\mu(x)<\infty$ $\mu$-a.e., and in turn this implies the existence of principal values $\pv\RR\mu(x)$ $\mu$-a.e. This can be easily deduced from \cite{NToV2} and \cite{Mattila-Verdera}, using arguments analogous to the ones for the Cauchy transform in \cite[Theorem 8.1 and Corollary 8.17]{Tolsa-llibre}, say.
So $\big\|\pv\RR\mu\|_{L^2(\mu)}$ is well defined. Nevertheless, we remark that the issue of the existence of principal values for the Riesz transform is not essential in Theorem \ref{teomain1}, and this
could be avoided by replacing $\pv\RR\mu$ in \rf{eqgrow00} by a suitable representative of $\RR\mu$ defined $\mu$-a.e.\ (see for example 
\cite{JN1} for some related techniques).

 A converse to the estimate \rf{eqbetawolff} also holds: if $\mu$ satisfies the growth condition
\rf{eqgrow00}, then
\begin{equation}\label{eqbetawolff'}
\|\pv\RR\mu\|_{L^2(\mu)}^2\leq C\,\int\!\!\int_0^\infty \beta_{2,\mu}(x,r)^2\,\theta_\mu(x,r)\,\frac{dr}r\,d\mu(x) 
+C\,\theta_0^2\,\|\mu\|,
\end{equation}
where $C$ is an absolute constant. This was shown in \cite{Azzam-Tolsa} in the case $n=1$, and in \cite{Girela} in full generality.

From \rf{eqbetawolff'}, Theorem \ref{teomain1}, and a direct application of the $T1$ theorem and Cotlar's inequality for non-doubling measures (\cite{NTrV1}, \cite{NTrV2}) one deduces the following.

\begin{theorem}\label{teomain2}
Let $\mu$ be a Radon measure in $\R^{n+1}$ with no point masses. Then $\RR_\mu$ is bounded in $L^2(\mu)$ if and
only if it satisfies the polynomial growth condition
\begin{equation}\label{eqgrow01}
\mu(B(x,r))\leq C\,r^n\quad \mbox{ for all $x\in\supp\mu$ and all $r>0$}
\end{equation}
and
\begin{equation}\label{eqbetawolff2}
\int_B\int_0^{r(B)} \beta_{2,\mu}(x,r)^2\,\theta_\mu(x,r)\,\frac{dr}r\,d\mu(x)\leq C^2\,\mu(B)\quad\mbox{ for any ball
$B\subset\R^{n+1}$.}
\end{equation}
Further, the optimal constant $C$ is comparable to $\|\RR_\mu\|_{L^2(\mu)\to L^2(\mu)}$.
\end{theorem}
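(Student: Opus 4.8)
The plan is to deduce Theorem~\ref{teomain2} from the two-sided estimate provided by Theorem~\ref{teomain1} (giving \rf{eqbetawolff}), its converse \rf{eqbetawolff'}, and the non-homogeneous $Tb$/$T1$ machinery of Nazarov--Treil--Volberg. The key observation is that the $L^2(\mu)$-boundedness of $\RR_\mu$ is, by the $T1$ theorem for Calder\'on--Zygmund operators with respect to non-doubling measures of polynomial growth (\cite{NTrV1}, \cite{NTrV2}), equivalent to the conjunction of (i) the growth condition \rf{eqgrow01}, and (ii) a uniform $L^2$ (or weak-$(1,1)$) testing bound on balls, which via Cotlar's inequality for non-doubling measures can be phrased in terms of $\pv\RR\mu$ and the maximal operator $\RR_*\mu$. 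So the task reduces to showing that, under polynomial growth, the local Wolff-type energy $\int_B\int_0^{r(B)}\beta_{2,\mu}(x,r)^2\,\theta_\mu(x,r)\,\frac{dr}{r}\,d\mu(x)$ is comparable, uniformly over balls $B$, to a local version of $\|\pv\RR\mu\|_{L^2(\mu)}^2$.

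First I would make the localization precise. Fix a ball $B$ and consider the measure $\mu_B := \mu\rest 2B$ (or a suitable smooth truncation). This measure still satisfies the polynomial growth condition \rf{eqgrow00} with the same $\theta_0$, so Theorem~\ref{teomain1} applies to $\mu_B$ provided $\|\RR_*\mu_B\|_{L^1(\mu_B)}<\infty$; the latter follows from the assumed $L^2(\mu)$-boundedness together with Cotlar's inequality, which controls $\RR_*\mu$ pointwise by the maximal function of $\pv\RR\mu$ plus $\theta_0$ times the maximal function of $\mu$. Applying \rf{eqbetawolff} to $\mu_B$ and discarding the far-away part of the $\beta$-integral (using that $\beta_{2,\mu}(x,r)$ and $\beta_{2,\mu_B}(x,r)$ agree for $x\in B$, $r\le r(B)$) yields
\begin{equation*}
\int_B\int_0^{r(B)} \beta_{2,\mu}(x,r)^2\,\theta_\mu(x,r)\,\frac{dr}{r}\,d\mu(x) \lesssim \|\pv\RR\mu_B\|_{L^2(\mu_B)}^2 + \theta_0^2\,\mu(2B).
\end{equation*}
Then $\|\pv\RR\mu_B\|_{L^2(\mu_B)}$ is split into the contribution of $\RR(\mu\rest 2B)$ restricted to $2B$, which is bounded by $\|\RR_\mu\|_{L^2(\mu)\to L^2(\mu)}^2\,\mu(2B)$, and since $\mu(2B)\lesssim \theta_0\,r(B)^n$ one gets the right-hand side of \rf{eqbetawolff2} with constant comparable to $\|\RR_\mu\|_{L^2(\mu)\to L^2(\mu)}^2+\theta_0^2$. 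This proves the forward implication and the corresponding half of the constant comparison.

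For the converse, suppose \rf{eqgrow01} and \rf{eqbetawolff2} hold. By the $T1$ theorem for non-doubling measures, to prove $L^2(\mu)$-boundedness of $\RR_\mu$ it suffices to verify the polynomial growth (assumed) and a local testing estimate of the form $\int_{B}|\RR_\mu(\chi_{B}\,\mu)|^2\,d\mu\lesssim \mu(B)$ for all balls $B$, up to the usual weak boundedness / suppressed-kernel reformulations. To get this, apply the converse estimate \rf{eqbetawolff'} to $\mu_B=\mu\rest 2B$: it gives $\|\pv\RR\mu_B\|_{L^2(\mu_B)}^2 \lesssim \int\!\!\int_0^\infty \beta_{2,\mu_B}(x,r)^2\,\theta_{\mu_B}(x,r)\,\frac{dr}{r}\,d\mu_B(x) + \theta_0^2\,\mu(2B)$, and the $\beta$-integral on the right is controlled, after standard manipulations splitting $r\le r(B)$ from $r> r(B)$ (for large $r$, $\beta_{2,\mu_B}(x,r)\lesssim \theta_{\mu_B}(x,2r)$ since the measure lives in $2B$), by a bounded number of terms of the shape appearing in \rf{eqbetawolff2} for $B$ and its neighbors, plus $\theta_0^2\,\mu(2B)$. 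Hence $\int_B |\RR(\chi_{2B}\mu)|^2\,d\mu\lesssim \mu(2B)\lesssim \theta_0\,r(B)^n$; combined with the obvious bound on $\RR(\chi_{\R^{n+1}\setminus 2B}\mu)$ on $B$ coming from growth, this yields the required local testing inequality, and the $T1$ theorem gives $L^2(\mu)$-boundedness with $\|\RR_\mu\|_{L^2(\mu)\to L^2(\mu)}^2 \lesssim C^2 + \theta_0^2$. Tracking constants through both directions gives the asserted comparability of the optimal $C$ with $\|\RR_\mu\|_{L^2(\mu)\to L^2(\mu)}$.

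The main obstacle I anticipate is the careful bookkeeping in the localization: one must ensure that the hypothesis ``$\|\RR_*\mu_B\|_{L^1(\mu_B)}<\infty$'' needed to invoke Theorem~\ref{teomain1} is genuinely available (this is where Cotlar's inequality for non-doubling measures, \cite{NTrV1}, \cite{NTrV2}, is essential, since without doubling one cannot freely pass between $\RR_*$ and $\pv\RR$), and that the passage between the global $T1$-type testing conditions and the local energy bounds \rf{eqbetawolff2} does not lose control of the constants. The estimates for large scales $r>r(B)$ in the $\beta$-integral, where $\beta_{2,\mu_B}$ is comparable to a density, are elementary but need to be done carefully to absorb them into the $\theta_0^2\,\|\mu\|$ error term. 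Modulo this, the argument is a direct combination of the quoted results with the standard non-homogeneous Calder\'on--Zygmund theory.
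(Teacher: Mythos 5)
Your proposal matches the paper's (very brief) argument: the paper derives Theorem~\ref{teomain2} by combining Theorem~\ref{teomain1}, its converse \rf{eqbetawolff'}, Cotlar's inequality, and the non-homogeneous $T1$ theorem, and your localization scheme (applying the global estimates to $\mu\rest 2B$ and controlling the large-scale part of the $\beta$-integral using polynomial growth) is the standard way to fill in that sketch. The only point worth flagging is the passage from a Carleson bound with $\mu(2B)$ on the right to one with $\mu(B)$ as written in \rf{eqbetawolff2}; this is a standard reduction in the non-doubling framework (e.g.\ via the small-boundary properties of David--Mattila cubes or the usual doubling-ball selection argument), which you implicitly invoke along with the paper.
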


In the case $n=1$ the preceding results are already known. They were proven in \cite{Azzam-Tolsa}, relying on the
corona decomposition involving the curvature of $\mu$ from \cite{Tolsa-bilip}. 
Further, for arbitrary $n>1$, it is also known that if one assumes that there exists a sufficiently large class of $L^2(\mu)$ bounded singular integral operators with an odd Calder\'on-Zygmund kernel, then the condition \rf{eqbetawolff2}
 holds, in any codimension (i.e., assuming that, instead of $\R^{n+1}$, the ambient space is $\R^d$, with $d\geq n$). This was proved in \cite{JNT}.

Observe that when $\mu$
is $n$-AD-regular, $\theta_\mu(x,r)\approx1$ for all $x\in\supp\mu$ and $0<r\leq \diam(\supp\mu)$, and so
the condition \rf{eqbetawolff2} is equivalent to the uniform $n$-rectifiability of $\mu$, by \cite{DS1}.
 So one deduces that the $L^2(\mu)$ boundedness
of $\RR_\mu$ implies the uniform $n$-rectifiability of $\mu$ and then one recovers the solution of the David-Semmes
problem from \cite{NToV1}. Recall that a measure $\mu$ in $\R^d$ is called {\em uniformly $n$-rectifiable} (UR) if it is $n$-AD-regular and
there exist constants $\kappa, M >0$ such that for all $x \in \supp\mu$ and all $0<r\leq \diam(\supp\mu)$ 
there is a Lipschitz mapping $g$ from the ball $B_n(0,r)\subset\R^{n}$ to $\R^d$ with $\text{Lip}(g) \leq M$ such that
$$
\mu(B(x,r)\cap g(B_{n}(0,r)))\geq \kappa\, r^{n}.$$

It is worth comparing Theorem \ref{teomain2} with a related result obtained in \cite{JNRT} in connection
with the fractional Riesz transform $\RR^s$ associated with the kernel
$x/|x|^{s+1}$ for $s\in (n,n+1)$. The precise result, which involves the $s$-dimensional density $\theta_\mu^s(x,r) = \mu(B(x,r))r^{-s}$, is the following.

\begin{theorem*}[\cite{JNRT}]
Let $\mu$ be a Radon measure in $\R^{n+1}$ with no point masses and let $s\in(n,n+1)$. Then $\RR_\mu^s$ is bounded in $L^2(\mu)$ if and
only if \begin{equation}\label{eqbetawolff3}
\int_B\int_0^{r(B)} \theta^s_\mu(x,r)^2\,\frac{dr}r\,d\mu(x)\leq C^2\,\mu(B)\quad\mbox{ for any ball
$B\subset\R^{n+1}$.}
\end{equation}
Further, the optimal constant $C$ is comparable to $\|\RR_\mu^s\|_{L^2(\mu)\to L^2(\mu)}$.
\end{theorem*}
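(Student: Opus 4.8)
The plan is to prove the two implications separately, keeping track of constants so that the optimal $C$ in \rf{eqbetawolff3} comes out comparable to $\|\RR^s_\mu\|_{L^2(\mu)\to L^2(\mu)}$. The structural fact that drives the argument, and which is special to the range $s\in(n,n+1)$, is that $\frac{x}{|x|^{s+1}}$ is, up to a dimensional constant, the gradient of the Riesz kernel $|x|^{-(s-1)}$, and $\Delta|x|^{-(s-1)}=(s-1)(s-n)\,|x|^{-(s+1)}$ has a definite sign when $s>n$; hence the potential $U^{s-1}\mu=|x|^{-(s-1)}*\mu$ is sub-/super-harmonic off $\supp\mu$ and obeys a maximum principle. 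This potential-theoretic mechanism will play the role that the Melnikov curvature identity plays in the Cauchy-transform ($s=n=1$) theory. (Observe also that a point mass makes the left side of \rf{eqbetawolff3} infinite and $\RR^s_\mu$ unbounded, so the hypothesis on $\mu$ is the natural one.)

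\emph{$(\Leftarrow)$ From \rf{eqbetawolff3} to $L^2$-boundedness.} One first checks that \rf{eqbetawolff3} forces the growth bound $\mu(B(x,r))\le C'r^s$ with $C'\lesssim C$: if $\theta^s_\mu(x_0,r_0)$ were large, then $\theta^s_\mu(x,r)\gtrsim\theta^s_\mu(x_0,r_0)$ for every $x\in\supp\mu\cap B(x_0,r_0)$ and $r\in[2r_0,3r_0]$, and plugging this into \rf{eqbetawolff3} on $B(x_0,3r_0)$ yields $\theta^s_\mu(x_0,3r_0)\gtrsim\theta^s_\mu(x_0,r_0)^3/C^2$; iterating this self-improvement contradicts local finiteness of $\mu$. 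Granted the $s$-growth, one invokes the $T1$ theorem and Cotlar's inequality for non-doubling measures (\cite{NTrV1}, \cite{NTrV2}): it suffices to prove $\int_B|\RR^s_\ve\chi_B|^2\,d\mu\lesssim(C^2+\theta_0^2)\,\mu(B)$ uniformly in $\ve>0$ and in the ball $B$. This is the $s>n$ analog of the converse estimate \rf{eqbetawolff'} (cf. \cite{Girela} for $s=n$); a crude annular decomposition of $\RR^s_\ve\chi_B$ only gives it with a logarithmic loss in the number of scales, so one needs a Whitney/stopping-time treatment of the cancellation of $\RR^s_\ve\chi_B$ across scales, now exploiting that under \rf{eqbetawolff3} and the $s$-growth the measure cannot look $n$-dimensional at small scales, which tames the first-moment (non-symmetry) terms governing the cancellation.

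\emph{$(\Rightarrow)$ From $L^2$-boundedness to \rf{eqbetawolff3}.} This is the substantial direction. Boundedness of $\RR^s_\mu$ first yields the $s$-growth bound with $\theta_0\lesssim\|\RR^s_\mu\|$ (a large $\mu(B(x,r))$ would force $\RR^s_\ve\chi_{B(x,r)}$ to be large on a large portion of $B(x,r)$). Fix a ball $B_0$ and normalize $\theta_0\approx1$; we must show $\int_{B_0}\int_0^{r(B_0)}\theta^s_\mu(x,r)^2\,\frac{dr}{r}\,d\mu(x)\lesssim(\|\RR^s_\mu\|^2+1)\,\mu(B_0)$. The steps: (1) a stopping-time decomposition of $B_0$ into cells $Q$ on which $\theta^s_\mu$ fluctuates by at most a bounded factor, arranged so that, by a Carleson packing argument, the portion of the energy carried by the stopping events (the scales where the density doubles or halves) is already bounded by the desired quantity plus the analogous "interior" energies of the cells; (2) on a fixed cell $Q$, apply the suppression technique of Nazarov--Treil--Volberg: choose a Lipschitz function $\Phi\ge0$ comparable to the stopping distance to $\supp\mu$ so that the smoothed operator $\RR^s_{\mu,\Phi}$, with kernel $\frac{x-y}{(|x-y|^2+\Phi(x)\Phi(y))^{(s+1)/2}}$, is bounded on $L^2(\mu)$ with norm $\lesssim\|\RR^s_\mu\|+1$ and has uniformly pointwise-bounded truncations; (3) using the maximum principle from the first paragraph, applied to the (modified) potential attached to $\RR^s_{\mu,\Phi}$, derive an a priori $L^\infty(Q)$ bound for that potential of size $\lesssim\|\RR^s_\mu\|+1$, and then, by testing $\RR^s_{\mu,\Phi}$ and its adjoint against $\chi_Q$ and running an extremal/variational argument, convert it into the lower bound
\[
\int_Q\int_0^{r(Q)}\theta^s_\mu(x,r)^2\,\frac{dr}{r}\,d\mu(x)\ \lesssim\ (\|\RR^s_\mu\|^2+1)\,\mu(Q)+\text{(errors)},
\]
the error terms being of a form absorbed by the packing of step (1); (4) summing over the cells and then over $B_0$ and taking the supremum over $B_0$ gives \rf{eqbetawolff3} with $C\lesssim\|\RR^s_\mu\|+1$, which together with the first implication gives the asserted comparability.

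\emph{Main obstacle.} The crux is step (3) of the second implication: converting $L^2$-boundedness of the (suppressed) Riesz transform into a \emph{lower} bound for it in terms of the truncated Wolff energy on a cell. For $s=n=1$ this is Melnikov's curvature identity; for $s\in(n,n+1)$ there is no such identity, and one must replace it by the potential-theoretic maximum principle together with a delicate extremal argument, all while keeping the suppression compatible with the stopping-time cells so that the error terms genuinely pack. Arranging the cancellation argument in the first implication so as to avoid the logarithmic loss is the other technically delicate point.
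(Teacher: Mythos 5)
The statement under review is quoted from \cite{JNRT} and is not proved in the present paper; the only guidance the paper gives is a description of \cite{JNRT}'s strategy. Measured against that description, your proposal correctly assembles several genuine ingredients: that \rf{eqbetawolff3} already forces $s$-growth by the self-improvement you sketch, the $T1$/Cotlar reduction for the easy direction, the Nazarov--Treil--Volberg suppression technique, a variational/extremal argument, and the crucial observation that $\frac{x}{|x|^{s+1}}$ is proportional to $\nabla |x|^{-(s-1)}$ with $\Delta |x|^{-(s-1)}$ of a definite sign when $s>n$, so that the $(s-1)$-Riesz potential enjoys a maximum principle. These are all real pieces of the theory.

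However, there is a genuine missing mechanism in the hard ($\Rightarrow$) direction, and it sits exactly at the step you flag as the crux. The paper says explicitly that \cite{JNRT} uses the variational argument \emph{together with} an ``extensive use of blowup techniques'', resting on the rigidity result of \cite{JN1}, \cite{JN2}: any measure with $\mu(B(x,r))\leq r^s$ and $\RR^s\mu = 0$ in a suitable $\mathrm{BMO}(\mu)$ sense must be the zero measure. The role of the positivity/maximum-principle structure you describe is precisely to prove that rigidity theorem for the blowup limit; it is then the compactness/rescaling that carries the contradiction back to a quantitative estimate at a fixed cell. Your step (3) replaces that two-stage mechanism by a direct ``$L^\infty$ bound on the suppressed potential on $Q$, then an extremal argument'' on the fixed cell, without either the blowup or the rigidity theorem, and as written this is not actually an argument. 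A uniform $L^\infty$ bound on the potential does not by itself yield a lower bound for the truncated Wolff energy; the known way to run the extremal argument (cf.\ Section~\ref{sec6} here, and \cite{ENVo}, \cite{Reguera-Tolsa}) produces a lower bound for $\|\RR\eta\|_{L^p(\eta)}$ for an \emph{approximating} measure $\eta$ under a density-oscillation hypothesis, and turning that into a contradiction with $L^2(\mu)$-boundedness is precisely what the compactness/blowup machinery in \cite{JNRT} (or, in the codimension-one case of this paper, the tractable-tree apparatus) is for. So the gap is not a matter of unwritten details in step (3); a whole ingredient --- extraction of a weak limit plus the reflectionless-measure rigidity --- has been omitted and not replaced. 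As a minor aside, your worry about a logarithmic loss in the converse direction is probably misplaced for $s>n$, where \rf{eqbetawolff3} is already a strong Carleson condition on $\theta^s_\mu$; but that is a detail, and the real issue is the missing blowup/rigidity mechanism.
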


In the preceding theorem one does not need to ask any growth condition analogous to \rf{eqgrow01}
(with $n$ interchanged with $s$) because this condition is already implied by \rf{eqbetawolff3}. Observe that in
\rf{eqbetawolff3} the density $\theta^s_\mu(x,r)$ replaces $\beta_{2,\mu}(x,r)^2$ in \rf{eqbetawolff2}, which scales similarly to
$\theta^s(x,r)$ when $n=s$. On the other hand, the proof of the last theorem in \cite{JNRT} makes an
extensive use of blowup techniques, which essentially rely on the fact that any measure $\mu$ satisfying the growth condition $\mu(B(x,r))\leq r^s$ for all $x\in\R^{n+1}$, $r>0$, and such that 
$\RR^s\mu=0$ in a suitable $BMO(\mu)$ sense, must be the zero measure (see \cite{JN1}, \cite{JN2}). In 
the codimension $1$ case, one might expect that if $\mu$ satisfies \rf{eqgrow01}
 and $\RR\mu=0$ in the $BMO(\mu)$ sense, then $\mu=c\HH^n|_L$ for some $n$-plane $L$. However, this is 
 still an open problem. If this were known to be true, probably in the present paper we
 could use blowup arguments analogous to the ones in \cite{JNRT}.

As shown in \cite{Azzam-Tolsa}, the finiteness of the double integral on the left hand side of \rf{eqbetawolff}
is equivalent to the existence of a suitable corona decomposition for $\mu$ satisfying an appropriate packing
condition. This condition is stable by bilipschitz maps (see also \cite{Girela} for more details). So we get the
following corollary.

\begin{coro}\label{coro1}
Let $\mu$ be a Radon measure in $\R^{n+1}$ with no point masses. Let $\vphi:\R^{n+1}\to\R^{n+1}$ be a bilipschitz
map. Let $\sigma=\vphi\#\mu$ be the image measure of $\mu$ by $\vphi$.
If $\RR_\mu$ is bounded in $L^2(\mu)$, then $\RR_\sigma$ is bounded in $L^2(\sigma)$. Further,
$$\|\RR_\sigma\|_{L^2(\sigma)\to L^2(\sigma)}\leq C\,\|\RR_\mu\|_{L^2(\mu)\to L^2(\mu)},$$
where $C$ depends only on the bilipschitz constant of $\vphi$.
\end{coro}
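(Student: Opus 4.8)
The plan is to deduce the corollary from the characterization in Theorem \ref{teomain2} together with the bilipschitz stability of the corona decomposition. Let $L\geq 1$ denote a bilipschitz constant for $\vphi$, so that $L^{-1}|x-x'|\leq|\vphi(x)-\vphi(x')|\leq L|x-x'|$ for all $x,x'\in\R^{n+1}$. Since $\vphi$ is a homeomorphism, $\sigma=\vphi\#\mu$ has no point masses if and only if $\mu$ does, and $\supp\sigma=\vphi(\supp\mu)$. By Theorem \ref{teomain2}, the hypothesis that $\RR_\mu$ is bounded in $L^2(\mu)$ is equivalent to $\mu$ satisfying the growth condition \rf{eqgrow01} and the packing condition \rf{eqbetawolff2} with a common constant $C_0$ comparable to $\|\RR_\mu\|_{L^2(\mu)\to L^2(\mu)}$. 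So the whole task reduces to showing that $\sigma$ satisfies \rf{eqgrow01} and \rf{eqbetawolff2} with a constant bounded by $C(L)\,C_0$; a second application of Theorem \ref{teomain2}, now to $\sigma$, then yields both the $L^2(\sigma)$-boundedness of $\RR_\sigma$ and the quantitative bound $\|\RR_\sigma\|_{L^2(\sigma)\to L^2(\sigma)}\leq C(L)\,\|\RR_\mu\|_{L^2(\mu)\to L^2(\mu)}$.

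The growth condition for $\sigma$ is immediate and would be checked first: for $y\in\supp\sigma$ and $r>0$ one has $\vphi^{-1}(B(y,r))\subset B(\vphi^{-1}(y),Lr)$, whence $\sigma(B(y,r))=\mu(\vphi^{-1}(B(y,r)))\leq\mu(B(\vphi^{-1}(y),Lr))\leq C_0\,L^n\,r^n$, so $\sigma$ satisfies \rf{eqgrow01} with constant $L^n\,C_0$.

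The packing condition \rf{eqbetawolff2} for $\sigma$ is the crux, and here one cannot transfer the estimate by working directly with the $\beta_{2,\mu}$ coefficients: a bilipschitz image of an $n$-plane is only a Lipschitz graph at small scales, not a plane, so $\beta_{2,\sigma}(\vphi(x),r)$ is not controlled by $\beta_{2,\mu}(x,r)$. The way around this is to route the argument through a bilipschitz-stable intermediate object. By \cite{Azzam-Tolsa} for $n=1$ and \cite{Girela} in general, for a measure of polynomial growth the packing condition \rf{eqbetawolff2} is equivalent, with comparable constants, to the existence of a corona decomposition of the measure satisfying an appropriate Carleson packing condition on its family of top cubes. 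The existence of such a corona decomposition, and the size of its packing constant, are preserved by bilipschitz maps, the constant degrading by at most a factor depending only on $L$; this is the content of \cite{Tolsa-bilip} in the case $n=1$ and of the analogous arguments in \cite{Azzam-Tolsa}, \cite{Girela} in general. Thus I would argue: from the $L^2(\mu)$-boundedness of $\RR_\mu$ obtain (via Theorem \ref{teomain2} and the first equivalence) a corona decomposition of $\mu$ with packing constant $\leq C\,C_0$; transport it through $\vphi$ --- replacing the approximating Lipschitz graphs by their $\vphi$-images and the David--Mattila lattice of $\mu$ by a compatible one for $\sigma$ --- to get a corona decomposition of $\sigma$ with packing constant $\leq C(L)\,C_0$; and convert back to obtain \rf{eqbetawolff2} for $\sigma$ with constant $\leq C(L)\,C_0$. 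Combined with the growth estimate, Theorem \ref{teomain2} then finishes the proof.

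The main obstacle is exactly this last step: the bilipschitz stability of the corona decomposition, and the bookkeeping that guarantees every constant --- and therefore the final operator norm --- deteriorates by no more than a factor depending on $L$. Some care is also needed to build a dyadic (David--Mattila type) lattice for $\sigma$ that is compatible under $\vphi$ with a chosen lattice for $\mu$, so that top cubes and Lipschitz-graph approximations can be matched between the two measures; but once the framework is set up this is routine, and the remainder of the argument is bookkeeping.
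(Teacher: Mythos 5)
Your proposal is correct and follows essentially the same route as the paper: pass from $L^2(\mu)$-boundedness of $\RR_\mu$ to conditions \rf{eqgrow01} and \rf{eqbetawolff2} via Theorem~\ref{teomain2}, replace the $\beta$-packing condition (which is not directly bilipschitz-invariant) by the equivalent bilipschitz-stable corona decomposition of \cite{Azzam-Tolsa}, \cite{Girela}, push the latter through $\vphi$, and then apply Theorem~\ref{teomain2} again to $\sigma$. The observation that one cannot transfer the $\beta_{2,\mu}$ coefficients directly because bilipschitz images of planes are not planes is precisely the point, and the remainder of your argument is the paper's.
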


Remark that, up to now, the preceding result was not  known even for the case of invertible affine maps such as the one defined by
$\vphi(x_1,\ldots,x_{n+1}) = (2x_1,x_2,\ldots,x_{n+1}).$

As shown in \cite{Girela}, the conditions \rf{eqgrow01} and \rf{eqbetawolff2} imply the $L^2(\mu)$ boundedness of any singular integral operator of the form
$$T_\mu f(x) =\int K(x-y)\,f(y)\,d\mu(y),$$
where $K$ is an odd kernel such that
\begin{equation}
\label{eqKernel}
|\nabla^j K(x)|\lesssim \frac1{|x|^{n+j}}\quad \mbox{ for all $x\neq0$ and $0\leq j\leq 2$}
\end{equation}
(remark that $T_\mu$ is said to be bounded in $L^2(\mu)$ if the truncated operators $T_{\mu,\ve}$, defined analogously to $\RR_{\mu,\ve}$, are bounded in $L^2(\mu)$ uniformly on $\ve>0$).
Then we deduce the following.

\begin{coro}\label{coro1.5}
Let $\mu$ be a Radon measure in $\R^{n+1}$ with no point masses. Let $T_\mu$ be a singular integral 
operator associated with an odd kernel $K$ satisfying \rf{eqKernel}.
If $\RR_\mu$ is bounded in $L^2(\mu)$, then $T_\mu$ is also bounded in $L^2(\mu)$. Further,
$$\|T_\mu\|_{L^2(\mu)\to L^2(\mu)}\leq C\,\|\RR_\mu\|_{L^2(\mu)\to L^2(\mu)},$$
where $C$ depends just on $n$ and the implicit constants in \rf{eqKernel}.
\end{coro}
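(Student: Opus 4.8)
The plan is to deduce Corollary~\ref{coro1.5} directly from Corollary~\ref{coro1}, Theorem~\ref{teomain2}, and the quoted result of \cite{Girela}, without reproving any of the hard harmonic-analytic machinery. Concretely, suppose $\RR_\mu$ is bounded in $L^2(\mu)$. By Theorem~\ref{teomain2}, $\mu$ has no point masses and satisfies the polynomial growth condition \rf{eqgrow01} together with the Wolff-type packing condition \rf{eqbetawolff2}, with the relevant constants controlled by $\|\RR_\mu\|_{L^2(\mu)\to L^2(\mu)}$. Now invoke the result of \cite{Girela}: for a measure satisfying \rf{eqgrow01} and \rf{eqbetawolff2}, every operator $T_\mu$ associated with an odd kernel $K$ obeying the standard Calder\'on--Zygmund estimates \rf{eqKernel} is bounded in $L^2(\mu)$, with
\[
\|T_\mu\|_{L^2(\mu)\to L^2(\mu)}\leq C\Big(\sup_{B}\frac1{\mu(B)}\int_B\int_0^{r(B)}\beta_{2,\mu}(x,r)^2\,\theta_\mu(x,r)\,\frac{dr}r\,d\mu(x)+\theta_0^2\Big)^{1/2},
\]
where $C$ depends only on $n$ and the implicit constants in \rf{eqKernel}. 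Chaining these bounds gives $\|T_\mu\|_{L^2(\mu)\to L^2(\mu)}\leq C\,\|\RR_\mu\|_{L^2(\mu)\to L^2(\mu)}$ with $C=C(n,K)$, which is exactly the assertion.

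A small amount of care is needed at two points. First, one must check that the quantitative dependence in Theorem~\ref{teomain2} is genuinely two-sided and effective: the statement says the optimal $C$ in \rf{eqgrow01}--\rf{eqbetawolff2} is comparable to $\|\RR_\mu\|_{L^2(\mu)\to L^2(\mu)}$, so in particular $\theta_0$ and the packing constant in \rf{eqbetawolff2} are each $\lesssim\|\RR_\mu\|_{L^2(\mu)\to L^2(\mu)}$ (and conversely), which is what we use. Second, one should confirm that the hypothesis ``$\mu$ has no point masses'' is available for the application of \cite{Girela}; this is part of the standing assumption of Corollary~\ref{coro1.5} and is preserved trivially, so there is no issue. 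It may also be worth noting explicitly that \rf{eqKernel} for $j=0,1$ already suffices for the $T1$/Cotlar machinery underlying \cite{Girela}, but since the hypothesis grants $j\le 2$ anyway we need not optimize this.

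The only genuine subtlety, and hence the place to be slightly more careful in the write-up, is that \cite{Girela} is stated for a fixed kernel and one should make sure the constant it produces depends on $K$ only through the implicit constants in \rf{eqKernel} (and on $n$), not on $K$ itself or on $\mu$; this is what allows the final inequality to have the uniform form claimed. Granting that — which is how the result of \cite{Girela} is formulated — the proof is essentially a one-line concatenation. I do not expect any serious obstacle here: all the depth is in Theorem~\ref{teomain1} (equivalently Theorem~\ref{teomain2}) and in \cite{Girela}, and Corollary~\ref{coro1.5} is purely a matter of combining them, exactly as Corollary~\ref{coro1} combines Theorem~\ref{teomain2} with the bilipschitz stability of the corona/packing condition from \cite{Azzam-Tolsa} and \cite{Girela}.
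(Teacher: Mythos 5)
Your proposal is correct and coincides with the paper's own derivation: the paper also obtains Corollary~\ref{coro1.5} by combining Theorem~\ref{teomain2} (which gives \rf{eqgrow01} and \rf{eqbetawolff2} with constant comparable to $\|\RR_\mu\|_{L^2(\mu)\to L^2(\mu)}$) with the result of \cite{Girela} that these two conditions imply $L^2(\mu)$-boundedness of every odd Calder\'on--Zygmund operator satisfying \rf{eqKernel}. The quantitative remarks you make about tracking the constants and the dependence of the \cite{Girela} bound only on $n$ and the kernel constants are exactly the points one needs, and they pose no difficulty.
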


We turn now to the applications of the results above to Lipschitz harmonic functions and Lipschitz harmonic
capacity. Given a compact set $E\subset \R^{n+1}$, one says that $E$ is removable for Lipschitz harmonic functions 
if for any open set $\Omega\supset E$, any function $f:\Omega\to\R$ which is Lipschitz in $\Omega$ and harmonic in
$\Omega\setminus E$
can be extended in a harmonic way to the whole $\Omega$. To study this problem and some related questions on approximation by Lipschitz harmonic functions it is useful to introduce the Lipschitz harmonic capacity $\kappa$
(see \cite{Paramonov} and \cite{Mattila-Paramonov}). This is defined by
$$\kappa(E) = \sup |\langle \Delta f,1\rangle|,$$
where the supremum is taken over all Lipschitz functions $f:\R^{n+1}\to\R$ which are harmonic in $\R^{n+1}\setminus E$ and satisfy $\|\nabla f\|_\infty\leq1$, with $\Delta f$ understood in the sense of distributions. It turns
out that $E$ is removable for Lipschitz harmonic functions if and only if $\kappa(E)=0$.

Extending previous results for analytic capacity from \cite{Tolsa-sem},  Volberg showed in \cite{Volberg} that
$$\kappa(E) \approx \sup \mu(E),$$
where the supremum is taken over all measures $\mu$ satisfying the polynomial growth condition 
\rf{eqgrow01} with constant $C=1$ and such that $\|\RR_\mu\|_{L^2(\mu)\to L^2(\mu)}\leq 1$.
Combining this result with Theorem \ref{teomain2}, we obtain:

\begin{coro}\label{coro2}
Let $E\subset\R^{n+1}$ be compact. Then
$$\kappa(E)\approx \mu(E),$$
where the supremum is taken over all Radon measures $\mu$ such that
$$
\mu(B(x,r))\leq r^n\quad \mbox{ for all $x\in\supp\mu$ and all $r>0$}
$$
and
$$
\int\!\!\int_0^\infty \beta_{2,\mu}(x,r)^2\,\theta_\mu(x,r)\,\frac{dr}r\,d\mu(x)\leq \mu(E).
$$
\end{coro}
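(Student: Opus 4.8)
\textbf{Proof proposal for Corollary \ref{coro2}.}
The plan is to combine Volberg's comparison $\kappa(E)\approx\sup\mu(E)$ with the characterization of $L^2(\mu)$-boundedness of $\RR_\mu$ provided by Theorem \ref{teomain2}. Write $\kappa'(E)$ for the quantity on the right-hand side of the claimed equivalence, i.e.\ the supremum of $\mu(E)$ over all Radon measures $\mu$ supported on $E$ satisfying $\mu(B(x,r))\le r^n$ for all $x\in\supp\mu$, $r>0$, and the global Dini-type packing bound $\iint_0^\infty \beta_{2,\mu}(x,r)^2\,\theta_\mu(x,r)\,\frac{dr}r\,d\mu(x)\le\mu(E)$. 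We must show $\kappa(E)\approx\kappa'(E)$, with absolute implicit constants.

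First I would prove $\kappa'(E)\lesssim\kappa(E)$. Take any admissible $\mu$ for $\kappa'(E)$. The growth bound $\mu(B(x,r))\le r^n$ is precisely \rf{eqgrow01} with $C=1$, and the global packing bound trivially implies the localized condition \rf{eqbetawolff2}: for any ball $B$, $\int_B\int_0^{r(B)}\beta_{2,\mu}(x,r)^2\,\theta_\mu(x,r)\,\frac{dr}r\,d\mu(x)\le\iint_0^\infty\beta_{2,\mu}(x,r)^2\,\theta_\mu(x,r)\,\frac{dr}r\,d\mu(x)\le\mu(E)$, and since any measure with $\mu(B(x,r))\le r^n$ satisfies $\mu(E)\le C\,\mu(B)$ whenever $B$ is a ball of radius $r(B)\ge\diam(E)$ containing $E$... actually more carefully one should just note $\mu(E)=\|\mu\|$ and $\mu(B)\le\|\mu\|$ is the wrong direction, so instead observe that \rf{eqbetawolff2} only needs to be checked for balls $B$ centered on $\supp\mu$ with $r(B)\le\diam(\supp\mu)$, for which $\mu(B)\approx$ is not automatic either; the clean fix is that the \emph{localized} left side over $B$ is at most $\mu(E)=\mu(B\cap E)+\mu(E\setminus B)$, and one applies \rf{eqbetawolff'} directly rather than Theorem \ref{teomain2}: by \rf{eqbetawolff'}, $\|\pv\RR\mu\|_{L^2(\mu)}^2\le C\,\iint_0^\infty\beta_{2,\mu}(x,r)^2\,\theta_\mu(x,r)\,\frac{dr}r\,d\mu(x)+C\|\mu\|\le C\,\mu(E)$, and then a standard $T1$/Cotlar argument for non-doubling measures (\cite{NTrV1}, \cite{NTrV2}) upgrades the finiteness of $\|\pv\RR\mu\|_{L^2(\mu)}$ together with the growth bound to $\|\RR_\mu\|_{L^2(\mu)\to L^2(\mu)}\le C$. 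After rescaling $\mu$ by a fixed absolute constant so that both the growth constant and the operator norm are $\le1$, Volberg's theorem gives $\kappa(E)\gtrsim\mu(E)$; taking the supremum over admissible $\mu$ yields $\kappa(E)\gtrsim\kappa'(E)$.

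Next, for the reverse inequality $\kappa(E)\lesssim\kappa'(E)$: by Volberg's theorem there is a measure $\mu$ supported on $E$ with $\mu(B(x,r))\le r^n$ for all $x\in\supp\mu$, $r>0$, with $\|\RR_\mu\|_{L^2(\mu)\to L^2(\mu)}\le1$, and with $\mu(E)\gtrsim\kappa(E)$. Since $\RR_\mu$ is bounded in $L^2(\mu)$, in particular $\|\pv\RR\mu\|_{L^2(\mu)}\le\|\RR_\mu\|_{L^2(\mu)\to L^2(\mu)}\le1$ and $\|\RR_*\mu\|_{L^2(\mu)}<\infty$, which together with $\mu(E)<\infty$ gives $\|\RR_*\mu\|_{L^1(\mu)}<\infty$ by Cauchy--Schwarz. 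Thus $\mu$ satisfies the hypotheses of Theorem \ref{teomain1} with $\theta_0=1$, and we obtain $\iint_0^\infty\beta_{2,\mu}(x,r)^2\,\theta_\mu(x,r)\,\frac{dr}r\,d\mu(x)\le C\big(\|\pv\RR\mu\|_{L^2(\mu)}^2+\|\mu\|\big)\le C\,\|\mu\|=C\,\mu(E)$. This is \emph{almost} admissible for $\kappa'(E)$, except for the constant $C$ in the packing bound: replacing $\mu$ by $C^{-1}\mu$ multiplies the left side of the packing inequality by $C^{-2}$ and the right side by $C^{-1}$, so $C^{-1}\mu$ satisfies $\iint_0^\infty\beta_{2,C^{-1}\mu}(x,r)^2\,\theta_{C^{-1}\mu}(x,r)\,\frac{dr}r\,d(C^{-1}\mu)(x)=C^{-2}\iint_0^\infty\beta_{2,\mu}(x,r)^2\,\theta_\mu(x,r)\,\frac{dr}r\,d\mu(x)\le C^{-2}\cdot C\mu(E)=C^{-1}\mu(E)=(C^{-1}\mu)(E)$, and it still satisfies the growth bound with constant $C^{-1}\le1$. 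Hence $C^{-1}\mu$ is admissible for $\kappa'(E)$, so $\kappa'(E)\ge C^{-1}\mu(E)\gtrsim C^{-1}\kappa(E)$, which is the desired bound.

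The only genuinely nontrivial inputs are Theorem \ref{teomain1} and its converse \rf{eqbetawolff'} (the latter from \cite{Azzam-Tolsa}, \cite{Girela}), Volberg's capacity comparison \cite{Volberg}, and the $T1$/Cotlar machinery for non-doubling measures \cite{NTrV1}, \cite{NTrV2}; all of these are cited as available. The \textbf{main point to be careful about} is purely bookkeeping: tracking how the various absolute constants interact under the rescalings $\mu\mapsto c\mu$, so that at each step one lands inside the class over which the relevant supremum is taken. Since $\beta_{2,\mu}$ is scale-invariant in $\mu$ while $\theta_\mu$ and $\mu(E)$ are linear in $\mu$, the packing integrand scales like $\mu^2$ against a linear right-hand side, which is exactly what makes a single rescaling suffice to absorb the constant $C$ from Theorem \ref{teomain1}; one must simply check the growth constant does not blow up in the process, which it does not since we only ever shrink $\mu$.
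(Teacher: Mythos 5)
Your reverse direction (from Volberg's comparison and Theorem~\ref{teomain1} to the admissibility of a rescaled $\mu$) is essentially the paper's argument and is sound, modulo a small bookkeeping slip: $\beta_{2,\mu}(x,r)^2$ is \emph{not} scale-invariant in $\mu$ — from the definition $\beta_{2,\mu}(x,r)^2=\inf_L\frac1{r^n}\int_{B(x,r)}(\dist(y,L)/r)^2\,d\mu(y)$ one sees $\beta_{2,c\mu}^2=c\,\beta_{2,\mu}^2$. Together with $\theta_{c\mu}=c\,\theta_\mu$ and $d(c\mu)=c\,d\mu$ the packing integral scales like $c^3$, while the right-hand side $\mu(E)$ scales like $c$. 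So the correct rescaling factor is $c=C^{-1/2}$, not $C^{-1}$; this does not affect the conclusion, but the sentence claiming scale-invariance of $\beta_{2,\mu}$ is wrong as stated.

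The forward direction ($\kappa'(E)\lesssim\kappa(E)$) contains a genuine gap. You derive, via \rf{eqbetawolff'}, the single global inequality $\|\pv\RR\mu\|_{L^2(\mu)}^2\lesssim\|\mu\|$, and then assert that ``a standard $T1$/Cotlar argument'' upgrades this to $\|\RR_\mu\|_{L^2(\mu)\to L^2(\mu)}\lesssim1$. That implication does not hold: the $T1$ theorem for non-doubling measures requires the \emph{localized} testing condition $\|\RR_\mu\chi_Q\|_{L^2(\mu\rest_Q)}^2\lesssim\mu(Q)$ uniformly over all cubes $Q$, and the bound $\int|\RR\mu|^2\,d\mu\lesssim\|\mu\|$ only gives the testing condition for the single ``cube'' $Q=\R^{n+1}$. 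One cannot localize the estimate $\int_Q|\RR\mu|^2\,d\mu\lesssim\|\mu\|$ to obtain $\lesssim\mu(Q)$ without further work, and a bound like $\|\RR_*\mu\|_{L^2(\mu)}^2\lesssim\|\mu\|$ together with growth only yields boundedness on a \emph{big piece}, via a suppressed-kernel $Tb$ argument, not by a standard $T1$.

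What is missing is exactly the idea the paper uses and which you do not: a Chebyshev argument applied directly to the global packing condition. Set $g(x)=\int_0^\infty\beta_{2,\mu}(x,r)^2\theta_\mu(x,r)\frac{dr}{r}$, so the hypothesis reads $\int g\,d\mu\le\|\mu\|$. Then $F=\{x\in E: g(x)\le 2\}$ satisfies $\mu(F)\ge\frac12\|\mu\|$, and for $\wt\mu=\mu\rest_F$ one has $\beta_{2,\wt\mu}\le\beta_{2,\mu}$ and $\theta_{\wt\mu}\le\theta_\mu$, so for every ball $B$
$$\int_B\int_0^{r(B)}\beta_{2,\wt\mu}(x,r)^2\theta_{\wt\mu}(x,r)\frac{dr}{r}\,d\wt\mu(x)\le\int_{B\cap F}g\,d\mu\le 2\,\wt\mu(B),$$
which is precisely the localized condition \rf{eqbetawolff2} for $\wt\mu$. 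Now Theorem~\ref{teomain2} applies to $\wt\mu$ and gives $\|\RR_{\wt\mu}\|_{L^2(\wt\mu)\to L^2(\wt\mu)}\lesssim1$, whence Volberg's theorem yields $\kappa(E)\gtrsim\wt\mu(E)=\mu(F)\gtrsim\mu(E)$. This avoids the detour through \rf{eqbetawolff'} and the unjustified $T1$ step entirely; you should replace your forward argument with this one.
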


To derive this corollary, remark that if $\mu$ satisfies the conditions above, by Chebyshev one deduces that there is a big piece
$F\subset E\cap\supp\mu$, with $\mu(F)\approx\mu(E)$, such that the measure $\wt\mu = \mu|_F$ satisfies \rf{eqbetawolff2}, and so
$\RR_{\wt \mu}$ is bounded in $L^2(\wt \mu)$. Hence $\kappa(E)\gtrsim \mu(F)\approx\mu(E)$. The converse
direction of the corollary is a straightforward consequence of the aforementioned theorem of Volberg and 
Theorem \ref{teomain2}.

As explained above, the conditions on the measure $\mu$ in Corollary \ref{coro2} are stable by bilipschitz maps. So we deduce that if $\vphi:\R^{n+1}
\to\R^{n+1}$ is bilipschitz, then
$$\kappa(E)\approx \kappa(\vphi(E))\quad\mbox{ for any compact set $E\subset\R^{n+1}$,}$$
with the implicit constant just depending on the bilipschitz constant of $\vphi$ and the ambient dimension.

Another suggestive characterization of the capacity $\kappa(E)$ can be given in terms of the following potential, 
which we call the Jones-Wolff potential of $\mu$:
$$U_\mu(x) = \sup_{r>0} \theta_\mu(x,r) + \left(\int_0^\infty \beta_{2,\mu}(x,r)^2\,\theta_\mu(x,r)\,\frac{dr}r\right)^{1/2}.$$

\begin{coro}\label{coro3}
Let $E\subset\R^{n+1}$ be compact. Then
$$\kappa(E) \approx \sup\{\mu(E):\,U_\mu(x)\leq 1\,\forall x\in E\}.$$
\end{coro}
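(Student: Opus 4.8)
The plan is to derive Corollary~\ref{coro3} from Corollary~\ref{coro2} by showing that the two conditions appearing there---the pointwise density bound plus the smallness of the Jones--Wolff potential $U_\mu$, versus the uniform growth bound \eqref{eqgrow01} with constant $1$ plus the global bound on the double $\beta_{2,\mu}^2\theta_\mu$ integral by $\mu(E)$---are essentially equivalent up to harmless constants and passing to a big piece of $E\cap\supp\mu$. Write $N_\mu=\sup\{\mu(E):\,U_\mu(x)\leq 1\ \forall x\in E\}$. For one direction, suppose $U_\mu(x)\leq 1$ for all $x\in E=\supp\mu$. Then $\theta_\mu(x,r)\leq 1$ for all $x\in\supp\mu$ and all $r>0$, which is exactly \eqref{eqgrow01} with $C=1$; moreover integrating the bound $\int_0^\infty\beta_{2,\mu}(x,r)^2\,\theta_\mu(x,r)\,\frac{dr}{r}\le U_\mu(x)^2\le 1$ in $d\mu(x)$ over $E$ gives the double integral bounded by $\mu(E)$. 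Hence $\mu$ is admissible for the supremum in Corollary~\ref{coro2}, so $\kappa(E)\gtrsim\mu(E)$, and taking the supremum over such $\mu$ yields $\kappa(E)\gtrsim N_\mu$.

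For the reverse inequality $N_\mu\gtrsim\kappa(E)$, I would start from a measure $\mu$ realizing (up to a constant) the supremum in Corollary~\ref{coro2}, so $\mu(B(x,r))\le r^n$ everywhere, $\int\!\!\int_0^\infty\beta_{2,\mu}(x,r)^2\theta_\mu(x,r)\,\frac{dr}r\,d\mu(x)\le\mu(E)$, and $\mu(E)\approx\kappa(E)$. The issue is that $U_\mu$ need not be bounded by $1$ pointwise: the growth bound only gives $\sup_r\theta_\mu(x,r)\le 1$, but the tail integral $W_\mu(x):=\int_0^\infty\beta_{2,\mu}(x,r)^2\theta_\mu(x,r)\,\frac{dr}r$ is only controlled in $L^1(\mu)$, not in $L^\infty(\mu)$. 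The standard fix is a Chebyshev / stopping-time argument as in the derivation of Corollary~\ref{coro2}: since $\int_E W_\mu\,d\mu\le\mu(E)$, the set $\{x\in E:\,W_\mu(x)>M\}$ has $\mu$-measure at most $\mu(E)/M$, so choosing $M$ a large absolute constant and letting $F=\{x\in E\cap\supp\mu:\,W_\mu(x)\le M\}$ we get $\mu(F)\ge(1-1/M)\mu(E)\approx\mu(E)\approx\kappa(E)$. Now set $\wt\mu=M^{-1/2}\,\mu|_F$ and $\wt E=\supp\wt\mu\subset E$; one checks $\theta_{\wt\mu}(x,r)\le M^{-1/2}\le 1$ and, using that restriction only decreases the $\beta_{2}$ integrand (restricting the measure in the integral defining $\beta_{2,\mu}(B)$ can only decrease it, and $\theta$ also decreases), $W_{\wt\mu}(x)\le M^{-1}W_\mu(x)\le 1$ for $x\in \wt E$. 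Hence $U_{\wt\mu}(x)\le 1+1\le 2$ on $\wt E$; rescaling by a further absolute constant (replace $\wt\mu$ by $c\wt\mu$ with $c$ small) makes $U_{c\wt\mu}\le 1$, and $\kappa(\wt E)\ge c\,\mu(F)\gtrsim\kappa(E)$, so $N_\mu\gtrsim\kappa(E)$.

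The main obstacle is the bookkeeping in the Chebyshev step: one must verify that after restricting $\mu$ to $F$ the \emph{localized} potential $W_{\wt\mu}$ is controlled not just on average but pointwise at every $x\in\supp(\mu|_F)$, and this requires the elementary monotonicity property that $\beta_{2,\nu}(B)^2\theta_\nu(B)\le\beta_{2,\mu}(B)^2\theta_\mu(B)$ whenever $\nu\le\mu$ --- which is immediate from the definitions since both the integral $\int_B(\dist(y,L)/r(B))^2\,d\nu(y)$ and the factor $\mu(B)$ only get smaller. One also has to be slightly careful that $\supp(\mu|_F)$ may be strictly larger than $F$ (it is the closure of the part of $\supp\mu$ of positive $\mu|_F$-density), but since $\kappa$ is monotone and $\kappa(\supp(\mu|_F))\geq\kappa$ applied to any big-piece subset, the comparison $\mu(F)\approx\mu(E)\approx\kappa(E)$ is unaffected. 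Finally, to make the statement symmetric one notes that once the equivalence $\kappa(E)\approx N_\mu$ holds, combining it with the bilipschitz invariance already deduced from Corollary~\ref{coro2} shows $N_\mu$ is itself bilipschitz invariant, although this is not needed for the proof of Corollary~\ref{coro3} itself.
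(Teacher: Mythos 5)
Your overall strategy is the right one and is what the paper tacitly intends: Corollary~\ref{coro3} is meant to follow from Corollary~\ref{coro2} by the same Chebyshev/big-piece device used to deduce Corollary~\ref{coro2} from Theorem~\ref{teomain2} and Volberg's theorem (the paper offers no separate proof). The forward inequality is immediate as you say, and the monotonicity of $\beta_{2,\nu}(B)$ and $\theta_\nu(B)$ in $\nu$ is both correct and the right tool for controlling the potential of the restricted measure.

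There is, however, a gap in the reverse direction. Corollary~\ref{coro3} demands $U_{\wt\mu}(x)\leq 1$ for \emph{every} $x\in E$, but you only verify this at points $x\in\wt E=\supp\wt\mu$; points of $E$ off $\supp\wt\mu$ are not addressed, and the parenthetical remark about $\kappa$-monotonicity does not bear on the issue. One must add the standard doubling comparison: if $x\in E\setminus\supp\wt\mu$ and $x'$ is a nearest point of $\supp\wt\mu$, then $B(x,r)\subset B(x',2r)$ for $r\geq|x-x'|$ (and $\wt\mu(B(x,r))=0$ for smaller $r$), whence $\theta_{\wt\mu}(x,r)\leq 2^n\theta_{\wt\mu}(x',2r)$ and $\beta_{2,\wt\mu}(x,r)^2\theta_{\wt\mu}(x,r)\leq C(n)\beta_{2,\wt\mu}(x',2r)^2\theta_{\wt\mu}(x',2r)$; integrating in $dr/r$ gives $U_{\wt\mu}(x)\leq C(n)U_{\wt\mu}(x')$, and the dimensional constant is absorbed by the final rescaling. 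Relatedly, for the pointwise bound $W_\mu\leq M$ to hold on all of $\supp\wt\mu$ (not just on $F$) you should take $F$ to be a \emph{compact} subset of $\{x\in E\cap\supp\mu:\,W_\mu(x)\leq M\}$ of comparable mass, via inner regularity, so that $\supp(\mu|_F)\subset F$; as you left it, $\supp(\mu|_F)$ may contain limit points where $W_\mu>M$, and the inequality $W_{\wt\mu}(x)\leq M^{-1}W_\mu(x)\leq 1$ you rely on is then unavailable at those points. Finally, the concluding sentence should say that $c\wt\mu$ is admissible for the supremum defining the right-hand side, so that supremum is at least $c\wt\mu(E)\gtrsim\mu(E)\approx\kappa(E)$; the line ``$\kappa(\wt E)\ge c\,\mu(F)\gtrsim\kappa(E)$, so $N_\mu\gtrsim\kappa(E)$'' is not a valid inference.
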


An immediate consequence of this result is that $E$ is non-removable for Lipschitz harmonic functions if and only if
it supports a non-zero measure such that $U_\mu(x)\leq1$ for all $x\in E$. 

The characterization of the capacity $\kappa$ and of removable sets for Lipschitz harmonic functions 
in terms of a metric-geometric potential such as $U_\mu$ should be considered as an analogue of the characterization
of analytic capacity and of removable sets for bounded analytic functions
in terms of curvature of measures \cite{Tolsa-sem}. So one can think of the results stated in Corollaries
\ref{coro2} and \ref{coro3} as possible solutions of the Painlevé problem for Lipschitz harmonic functions.

\subsection{Sketch of the proof}
Next we will describe the main ideas involved in the proof of Theorem \ref{teomain1}, as well as the 
main difficulties and innovations.

At the center of our proof is the notion of \emph{cubes dominated from below}, which can be described as follows. For a Radon measure $\sigma$ we consider the Wolff type
energy
$$\mathbb E(\sigma) = \int\!\! \int_0^\infty \left(\frac{\sigma(B(x,r))}{r^{n-\frac38}}\right)^2\,\frac{dr}r\,d\sigma(x)=\int\!\! \int_0^\infty r^{\frac34}\,\theta_\sigma(B(x,r))^2\,\frac{dr}{r}\,d\sigma(x).$$
Given the Radon measure $\mu$, we consider a suitably modified version of the David-Mattila lattice $\DD_\mu$ associated 
with $\mu$. Then for a given $\PP$-doubling cube $Q\in\DD_\mu$, we let $\EE_\infty(9Q)$ be a modified, discrete version of $\mathbb E(\mu|_{9Q})\,\ell(Q)^{-3/4}$.
We say that a $\PP$-doubling cube $Q$ is $M$-dominated from below, and we write $Q\in\DB(M)$, if
$$\EE_\infty(9Q) \geq M^2\,\Theta(Q)^2\,\mu(Q),$$
where $M\gg1$ is some fixed constant, $\Theta(Q)$ is another discrete version of $\theta_\mu(2B_Q)$, and $B_Q$ is a ball concentric with $Q$, containing $Q$, with radius
comparable to $\ell(Q)$ (for the precise definitions of $\PP$-doubling cubes, $\Theta(Q)$, and $\EE_\infty(9Q)$ see Section \ref{sec:Pdoubling}). Essentially, $Q\in\DB(M)$ means that there are many subcubes of $Q$ which are not too small, and whose $\mu$-density is much larger than that of $Q$.

The proof of Theorem \ref{teomain1} is divided into two main propositions. In Main Proposition \ref{propomain} we prove the desired estimate up to an error term involving cubes dominated from below:
\begin{equation}\label{eq:1}
\int\!\!\int_0^\infty \beta_{2,\mu}(x,r)^2\,\theta_\mu(x,r)\,\frac{dr}r\,d\mu(x)\leq C(M)\,\Big(\|\RR\mu\|_{L^2(\mu)}^2
+\theta_0^2\,\|\mu\| + \sum_{Q\in\DB(M)} \EE_\infty(9Q)\Big).
\end{equation}
In Main Proposition \ref{propomain2} we prove that if the $\DB$-parameter $M$ is larger than some dimensional constant $M_0$, then we may estimate the error term:
\begin{equation}\label{eqwwwpp}
\sum_{Q\in\DB(M)} \EE_\infty(9Q)\leq C\,\big(\|\RR\mu\|_{L^2(\mu)}^2 +\theta_0^2\,\|\mu\| \big).
\end{equation}
Clearly, these two estimates give Theorem \ref{teomain1}.

Originally, Main Proposition \ref{propomain} was proved in \cite{DT}, while Main Proposition \ref{propomain2} was proved in \cite{Tolsa-riesz}. In this article we merge the two proofs, making the whole argument more streamlined and significantly shorter.\footnote{Roughly speaking, Sections \ref{sec:DMlatt} -- \ref{sec6} and \ref{sec3.3} -- \ref{sec9} come from \cite{DT}, while Sections \ref{sectrans} and \ref{sec5**} -- \ref{sec100} come from \cite{Tolsa-riesz}.} Our proof is divided into 4 parts. 

\underline{Part 1}, which consists of Sections \ref{sec:DMlatt} and \ref{sec:Pdoubling}, contains various preliminaries, including an enhanced version of the dyadic lattice of David and Mattila.\vv

In \underline{Part 2}, which includes Sections \ref{sec4}, \ref{sec6}, and \ref{sectrans}, we prove results which are eventually used in the proofs of both \eqref{eq:1} and \rf{eqwwwpp}. Roughly speaking, showing either \eqref{eq:1} or \rf{eqwwwpp} involves proving the following implication: ``if the density of $\mu$ oscillates at certain scales and locations, then the corresponding Haar coefficients of $\RR\mu$ are large''. In Part 2 we prove this implication.

First, in Section \ref{sec4} we conduct a stopping time argument involving $\HD$ (high density), $\LD$ (low density), and $\OP$ (``optional'') stopping time conditions (the optional condition is assumed to be void in the proof of \eqref{eq:1}, whereas in the proof of \eqref{eqwwwpp} the optional condition involves the $\DB$-family). We say that a cube $R\in\DD_\mu$ has \emph{moderate decrement of Wolff energy}, denoted by $R\in\MDW$, if $R$ contains quite many $\HD$ stopping cubes. To each $R\in\MDW$ we associate certain extended tree $\TT$ (obtained by running the stopping time argument again), and we say that the extended tree is \emph{tractable}, denoted by $R\in\Trc$, if the density of many cubes in some intermediate generations increases (this is ensured by the $\MDW$ condition), and later in many stopping cubes the density decreases. In other words, for $R\in\Trc$ the $\mu$-density oscillates a lot at many scales and locations of the extended tree associated to $R$. As it turns out, to each cube in $\MDW$ one may associate a family of tractable trees, and so we can concentrate on the tractable trees.

Given a tractable tree $\TT$, in Section \ref{sec6} we construct a measure $\eta$ that approximates the measure $\mu$ at the level of some regularized stopping cubes of $\TT$. Then, we use a variational argument to obtain good lower bounds for $\|\RR\eta\|_{L^2(\eta)}$. These bounds are transferred to $\RR\mu$ (i.e., to the Haar coefficients of $\RR\mu$ for the cubes in the tree) in Section \ref{sectrans}. The idea of applying a variational argument like this originates from the work \cite{ENVo} by Eiderman, Nazarov, and Volberg and the reduction to the tractable trees comes from the work \cite{Reguera-Tolsa} by Reguera and the second author of this paper. The article \cite{JNRT} includes an improved version of that variational argument. Unlike the present paper, \cite{JNRT} makes an extensive use of compactness arguments, which do not work so well in our situation, where the geometry plays a more important role.

The implementation of the variational argument and the transference of the estimates from the approximating measure $\eta$ to $\mu$ is more difficult in the present
paper than in \cite{Reguera-Tolsa} or in other related works such as \cite{JNRT}. Some of the
difficulties arise from the fact that, for technical reasons (essentially, we need that many cubes of 
the intermediate generations with high density are located far from the boundary of the root of the tree), we have to consider trees of ``enlarged cubes''. This causes an overlapping between different trees that
has to be quantified carefully (this is done in Sections \ref{sec-layers} and \ref{sec7}). 
On the other hand, the transference of the lower estimate for $\|\RR\eta\|_{L^2(\eta)}$ 
to the Haar coefficients of $\RR\mu$ for the cubes in the tree originates many error terms. Roughly speaking, in order to be able to transfer that lower bound for $\|\RR\eta\|_{L^2(\eta)}$ to $\mu$ we need
the error terms to be smaller than the lower bound of $\|\RR\eta\|_{L^2(\eta)}$. Some of these
error terms are difficult to handle and we only can bound them in terms of the 
energies $\EE_\infty(9Q)$, which are problematic in case $Q\in\DB$. For this to work, we need 
an enhanced version of the dyadic lattice of David and Mattila that is obtained in Section
\ref{sec5}. This is an essential tool for our arguments. Furthermore, we can only quantify the presence of cubes from $\DB$ in most of the trees $\TT$ by a bootstrapping argument which gives rather weak bounds. \vv

In \underline{Part 3}, spanning Sections \ref{sec3.3} -- \ref{sec9}, we prove \eqref{eq:1}. In Section \ref{sec3.3} we perform a corona decomposition of the dyadic lattice $\DD_\mu$ into trees of cubes where the density of the cubes does not oscillate too much. We call the family of roots of these trees $\ttt$. Then, roughly speaking, in each tree the measure
$\mu$ behaves as if it were AD-regular, and from the $L^2(\mu)$ boundedness of $\RR_\mu$ and
\cite{NToV1}, one should expect that $\mu$ is close to some uniformly rectifiable measure at the locations and scales of the cubes in the tree, so that one can obtain a good packing condition
for the $\beta_{\mu,2}$ coefficients of the cubes in the tree.
For this strategy to work, we need to show that the family $\ttt$ satisfies a suitable packing condition. This is done in Sections \ref{sec-layers} and \ref{sec8}, and the proof relies on the results from Part 2.
The last stage of the proof of \eqref{eq:1} consists of estimating the 
$\beta_{\mu,2}$ coefficients in each tree where the density does not oscillate too much.
This step, which requires a delicate approximation by an AD-regular measure which has its own interest,
is performed in Section~\ref{sec9}.	\vv
	
In \underline{Part 4}, containing Sections \ref{sec5**} -- \ref{sec100}, we prove \rf{eqwwwpp}. The first step is the construction of a \emph{good dominating family}
$\GDF$, closely related to $\DB$. It consists of cubes $Q$ which, in a sense, contain many stopping cubes whose density is much larger than
the density $\Theta(Q)$ (in particular, $\GDF\subset\MDW$). The selection of this family, in Section \ref{sec5**}, is one of the key steps for the proof of \rf{eqwwwpp}. In Section \ref{sec7} we quantify the overlaps between the tractable trees associated to different
$R\in\GDF$. At this point the optional stopping time condition $\OP$ from Section \ref{sec4} is used. Finally, in Section \ref{sec100} the proof of \rf{eqwwwpp} is concluded, relying on the estimates obtained in Part 2, which are applied to the tractable trees associated to the cubes from $\GDF$.

\vv

In the whole paper we denote by $C$ or $c$ some constants that may depend on the dimension and perhaps other fixed parameters. Their values may change at different occurrences. On the contrary, constants with subscripts, like $C_0$, retain their values.
For $a,b\geq 0$, we write $a\lesssim b$ if there is $C>0$ such that $a\leq Cb$. We write $a\approx b$ to mean $a\lesssim b\lesssim a$.

\vv



\newcounter{parte}

\bigskip
\begin{center} 
	\Large Part \refstepcounter{parte}\theparte\label{part-1}: Preliminaries
\end{center}
\smallskip

\addcontentsline{toc}{section}{\bf Part 1: Preliminaries}

\section{The modified dyadic lattice of David and Mattila}\label{sec:DMlatt}\label{sec5}

In this section we introduce the dyadic lattice of cubes
with small boundaries of David-Mattila \cite{David-Mattila} associated with a Radon measure $\mu$. The properties of the lattice are summarized in the next lemma. Later on we will show how its construction
can be modified in order to obtain additional properties relevant for our arguments.

\begin{lemma}[David, Mattila]
	\label{lemcubs}
	Let $\mu$ be a compactly supported Radon measure in $\R^{d}$.
	Consider two constants $C_0>1$ and $A_0>5000\,C_0$ and denote $E=\supp\mu$. 
	Then there exists a sequence of partitions of $E$ into
	Borel subsets $Q$, $Q\in \DD_{\mu,k}$, with the following properties:
	\begin{itemize}
		\item For each integer $k\geq0$, $E$ is the disjoint union of the ``cubes'' $Q$, $Q\in\DD_{\mu,k}$, and
		if $k<l$, $Q\in\DD_{\mu,l}$, and $R\in\DD_{\mu,k}$, then either $Q\cap R=\varnothing$ or else $Q\subset R$.
		\vv
		
		\item The general position of the cubes $Q$ can be described as follows. For each $k\geq0$ and each cube $Q\in\DD_{\mu,k}$, there is a ball $B(Q)=B(x_Q,r(Q))$ such that
		$$x_Q\in E, \qquad A_0^{-k}\leq r(Q)\leq C_0\,A_0^{-k},$$
		$$E\cap B(Q)\subset Q\subset E\cap 28\,B(Q)=E \cap B(x_Q,28r(Q)),$$
		and
		$$\mbox{the balls\, $5B(Q)$, $Q\in\DD_{\mu,k}$, are disjoint.}$$
		
		\vv
		\item The cubes $Q\in\DD_{\mu,k}$ have small boundaries. That is, for each $Q\in\DD_{\mu,k}$ and each
		integer $l\geq0$, set
		$$N_l^{ext}(Q)= \{x\in E\setminus Q:\,\dist(x,Q)< A_0^{-k-l}\},$$
		$$N_l^{int}(Q)= \{x\in Q:\,\dist(x,E\setminus Q)< A_0^{-k-l}\},$$
		and
		$$N_l(Q)= N_l^{ext}(Q) \cup N_l^{int}(Q).$$
		Then
		\begin{equation}\label{eqsmb2}
			\mu(N_l(Q))\leq (C^{-1}C_0^{-3d-1}A_0)^{-l}\,\mu(90B(Q)).
		\end{equation}
		\vv
		
		\item Denote by $\DD_{\mu,k}^{db}$ the family of cubes $Q\in\DD_{\mu,k}$ for which
		\begin{equation}\label{eqdob22}
			\mu(100B(Q))\leq C_0\,\mu(B(Q)).
		\end{equation}
		We have that $r(Q)=A_0^{-k}$ when $Q\in\DD_{\mu,k}\setminus \DD_{\mu,k}^{db}$
		and
		\begin{equation}\label{eqdob23}
			\mu(100B(Q))\leq C_0^{-l}\,\mu(100^{l+1}B(Q))\quad
			\mbox{for all $l\geq1$ with $100^l\leq C_0$ and $Q\in\DD_{\mu,k}\setminus \DD_{\mu,k}^{db}$.}
		\end{equation}
	\end{itemize}
\end{lemma}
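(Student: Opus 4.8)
This is the David--Mattila construction, so the plan is to build the partitions $\DD_{\mu,k}$ by hand from a nested sequence of nets, and then verify each bullet. First I would fix, for every $k\geq 0$, a maximal $A_0^{-k}$-separated subset $\{x_Q\}_{Q\in\DD_{\mu,k}}$ of $E=\supp\mu$, together with a \emph{parent function} assigning to each level-$(k+1)$ center the nearest level-$k$ center (ties broken by a fixed well-ordering of $\R^d$). This produces a tree structure on the centers across scales. The ``cube'' $Q\in\DD_{\mu,k}$ is then defined as the set of points $x\in E$ whose level-$k$ ancestor in the tree is $x_Q$, where each $x$ is routed at scale $j$ to its nearest level-$j$ center among those compatible with the already-chosen coarser route. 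Maximality of the net gives that the balls $B(x_Q,A_0^{-k})$ cover $E$, hence $E\cap B(Q)\subset Q$; separation of the net (after inflating the separation constant to absorb the factor $C_0$, which is harmless since $A_0>5000\,C_0$) gives that the balls $5B(Q)$ are disjoint; and summing the geometric series $\sum_{j\geq k} C_0 A_0^{-j}$, whose ratio $A_0^{-1}$ is tiny, yields $Q\subset E\cap 28\,B(Q)$. Nesting and the partition property are immediate from the tree definition.

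The core of the argument, and the step I expect to be the main obstacle, is the small boundary estimate \eqref{eqsmb2}. Here I would exploit the freedom in choosing the radius $r(Q)$ inside the admissible window $[A_0^{-k},C_0A_0^{-k}]$. A point of $N_l(Q)$ lies within $A_0^{-k-l}$ of the topological boundary of $Q$, which is contained in the union, over neighbouring centers $x_{Q'}$, of the ``bisector shells'' $\{x:\big||x-x_Q|-|x-x_{Q'}|\big|\lesssim A_0^{-k-l}\}$ and of thin annuli about the spheres $\partial B(x_Q,r(Q))$. Writing the relevant mass as an average over the radius parameter $t\in[A_0^{-k},C_0A_0^{-k}]$ and applying Chebyshev, one can \emph{select} $r(Q)$ so that for every $l$ the mass of the corresponding $A_0^{-k-l}$-neighbourhood of the sphere is bounded by a fixed fraction, the gain per unit of $l$ being of order $A_0^{-1}$ (after accounting for the $C_0^{3d+1}$ overlap losses coming from the number of competing neighbours inside $90B(Q)$, estimated via the disjointness of the $5B(Q')$). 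Iterating this estimate over the $l$ intermediate scales produces the geometric decay $(C^{-1}C_0^{-3d-1}A_0)^{-l}\,\mu(90B(Q))$ claimed in \eqref{eqsmb2}. Making this selection \emph{simultaneously consistent} for all cubes at all levels (so that the parent/child routing still matches up) is the delicate bookkeeping point; it is handled by performing the radius selection level by level, from coarse to fine, treating the already-fixed coarser cubes as constraints.

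Finally, for the doubling dichotomy, I would simply \emph{define} $Q$ to be doubling, $Q\in\DD_{\mu,k}^{db}$, when $\mu(100B(Q))\leq C_0\,\mu(B(Q))$, and otherwise set $r(Q)=A_0^{-k}$ (the smallest admissible radius); this is compatible with the construction above because shrinking $r(Q)$ to the minimum only helps the separation and covering properties. The telescoping inequality \eqref{eqdob23} for a non-doubling $Q$ then follows from a short pigeonhole: if for some $l$ with $100^l\leq C_0$ one had $\mu(100B(Q))>C_0^{-l}\mu(100^{l+1}B(Q))$, then the $l$ dilations $100^jB(Q)$, $j=0,\dots,l$, would each only multiply the mass by a factor $<C_0^{1/l}$ on average, contradicting, after comparing $100^{l+1}B(Q)$ with $100B(Q)$ at the minimal radius, the very condition $\mu(100B(Q))>C_0\,\mu(B(Q))$ that labelled $Q$ non-doubling. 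Assembling these four blocks gives the lemma.
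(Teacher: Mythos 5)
The paper does not prove Lemma \ref{lemcubs}: it is cited from David--Mattila \cite{David-Mattila} (their Theorem 3.2), and the paper only recalls fragments of the construction later, in the proof of Lemma \ref{lemrecur5}, where it modifies the choice of auxiliary radii. So there is no ``paper proof'' to compare against; I can only evaluate your sketch against the actual David--Mattila argument.

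Your sketch has the right flavour at the coarsest level (cover by balls, route points down a tree, select radii by an averaging/Chebyshev argument, and run a dichotomy on doubling), but there are two real gaps, and one structural mismatch.

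First, the center selection. You propose a maximal $A_0^{-k}$-separated net. But the lemma allows the radii $r(Q)$ to range anywhere in $[A_0^{-k},\,C_0A_0^{-k}]$, and the disjointness of the balls $5B(Q)$ has to hold \emph{with these variable radii}. A maximal $A_0^{-k}$-net gives separation of order $A_0^{-k}$ only, which is far too small once $r(Q)$ can be as large as $C_0 A_0^{-k}$; the balls $5B(Q)$ will overlap. In David--Mattila, the radius $r^k(x)$ is chosen \emph{first} (based on the doubling behavior of $\mu$ at $x$), and the centers $I^k$ are then extracted by a Vitali $5r$-type covering so that the balls $B(x,5r^k(x))$ are disjoint while $B(x,25r^k(x))$ cover $E$. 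Your maximal-net, routing-by-nearest-center scheme also does not give $E\cap B(Q)\subset Q$ in general: a point of $B(Q)\cap E$ can be strictly closer to another center $x_{Q'}$, and the phrase ``among those compatible with the already-chosen coarser route'' sweeps the real combinatorics under the rug. David--Mattila resolve this with a carefully ordered construction (the sets $B_3^k(x)$, $B_4^k(x)$, ordering by decreasing mass of $90$-dilates) that you do not reproduce.

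Second, the doubling dichotomy. Your pigeonhole for \eqref{eqdob23} tries to reach a contradiction with the single inequality $\mu(100B(Q))>C_0\,\mu(B(Q))$ at the final radius $r(Q)=A_0^{-k}$. That is not enough. What the pigeonhole actually gives is: if \eqref{eqdob23} failed for some $l$ with $100^l\le C_0$, then for \emph{some} $j\le l$ the ball $B(x_Q, 100^j A_0^{-k})$ is $(100,C_0)$-doubling, and $100^j A_0^{-k}\le C_0 A_0^{-k}$ is an admissible radius. To turn that into a contradiction you need the stronger assertion that \emph{the radius search over the whole window $[A_0^{-k},C_0A_0^{-k}]$ failed to find a doubling radius}. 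In David--Mattila this is exactly how $r^k(x)$ is defined: one scans the admissible window for a doubling radius and only falls back to $r(Q)=A_0^{-k}$ when there is none, which is what forces all the intermediate-scale dilations to be non-doubling and produces \eqref{eqdob23} by iteration. Your sketch defines non-doubling post hoc from the single inequality $\mu(100B(Q))>C_0\mu(B(Q))$, so your pigeonhole does not close.

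The small boundary estimate is the part of your sketch closest in spirit to the real proof (averaging over radius, Chebyshev, overlap count $\lesssim C_0^{d}$), but you should at least note that David--Mattila choose \emph{two} auxiliary radii $r_1^k(x)\in(\tfrac{11}{10}r^k(x),\tfrac{12}{10}r^k(x))$ and $r_2^k(x)\in(25r^k(x),26r^k(x))$ with separate small-boundary conditions, and that the cube $Q$ is the closure of the set $B_4^k(x)$ built by subtracting competitors' $B_1$-balls from the $B_2$-ball. The ``bisector shell'' picture you use is suggestive but does not reflect the actual boundary geometry of $B_4^k(x)$, which is a union of pieces of spheres $\partial B_1$, $\partial B_2$ of nearby cubes.

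In short: the global shape of the argument is correctly identified, but the order of operations (radius first, then Vitali covering), the explicit combinatorial construction of the cubes, and the logical content of the non-doubling dichotomy are each missing or wrong as written.
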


\vv

\begin{rem}\label{rema00}
	We choose the constants $C_0$ and $A_0$ so that
	$$A_0 = C_0^{C(d)},$$
	where $C(d)$ depends
	just on $d$ and $C_0$ is big enough.
\end{rem}

We use the notation $\DD_\mu=\bigcup_{k\geq0}\DD_{\mu,k}$. Observe that the families $\DD_{\mu,k}$ are only defined for $k\geq0$. So the diameters of the cubes from $\DD_\mu$ are uniformly
bounded from above.
We set
$\ell(Q)= 56\,C_0\,A_0^{-k}$ and we call it the side length of $Q$. Notice that 
$$C_0^{-1}\ell(Q)\leq \diam(28B(Q))\leq\ell(Q).$$
Observe that $r(Q)\approx\diam(Q)\approx\ell(Q)$.
Also we call $x_Q$ the center of $Q$, and the cube $Q'\in \DD_{\mu,k-1}$ such that $Q'\supset Q$ the parent of $Q$.
We denote the family of cubes from $\DD_{\mu,k+1}$ which are contained in $Q$ by $\Ch(Q)$, and we call their elements children or sons of $Q$.
We set
$B_Q=28 B(Q)=B(x_Q,28\,r(Q))$, so that 
$$E\cap \tfrac1{28}B_Q\subset Q\subset B_Q\subset B(x_Q,\ell(Q)/2).$$

For a given $\gamma\in(0,1)$, let $A_0$ be big enough so that the constant $C^{-1}C_0^{-3d-1}A_0$ in 
\rf{eqsmb2} satisfies 
$$C^{-1}C_0^{-3d-1}A_0>A_0^{\gamma}>10.$$
Then we deduce that, for all $0<\lambda\leq1$,
\begin{align}\label{eqfk490}
	\mu\bigl(\{x\in Q:\dist(x,E\setminus Q)\leq \lambda\,\ell(Q)\}\bigr) + 
	\mu\bigl(\bigl\{x\in 3.5B_Q\setminus Q:\dist&(x,Q)\leq \lambda\,\ell(Q)\}\bigr)\\
	&\leq_\gamma
	c\,\lambda^{\gamma}\,\mu(3.5B_Q).\nonumber
\end{align}

We denote
$\DD_\mu^{db}=\bigcup_{k\geq0}\DD_{\mu,k}^{db}$.
Note that, in particular, from \rf{eqdob22} it follows that
\begin{equation}\label{eqdob*}
	\mu(3B_{Q})\leq \mu(100B(Q))\leq C_0\,\mu(Q)\qquad\mbox{if $Q\in\DD_\mu^{db}.$}
\end{equation}
For this reason we will call the cubes from $\DD_\mu^{db}$ doubling. 
Given $Q\in\DD_\mu$, we denote by $\DD_\mu(Q)$
the family of cubes from $\DD_\mu$ which are contained in $Q$. Analogously,
we write $\DD_\mu^{db}(Q) = \DD^{db}_\mu\cap\DD_\mu(Q)$.

As shown in \cite[Lemma 5.28]{David-Mattila}, every cube $R\in\DD_\mu$ can be covered $\mu$-a.e.\
by a family of doubling cubes:
\vv

\begin{lemma}\label{lemcobdob}
	Let $R\in\DD_\mu$. Suppose that the constants $A_0$ and $C_0$ in Lemma \ref{lemcubs} are
	chosen as in Remark \ref{rema00}. Then there exists a family of
	doubling cubes $\{Q_i\}_{i\in I}\subset \DD_\mu^{db}$, with
	$Q_i\subset R$ for all $i$, such that their union covers $\mu$-almost all $R$.
\end{lemma}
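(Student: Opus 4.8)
Lemma \ref{lemcobdob}: given $R \in \DD_\mu$, there is a family $\{Q_i\}_{i \in I} \subset \DD_\mu^{db}$ of doubling cubes contained in $R$ whose union covers $\mu$-almost all of $R$.

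The plan is to show that $\mu$-a.e.\ point $x \in R$ lies in some doubling cube $Q \in \DD_\mu^{db}$ with $Q \subset R$; once this is established, a routine covering/disjointification argument over the dyadic lattice produces the countable family $\{Q_i\}$. So I would fix a point $x \in R$ and consider the decreasing chain of cubes $R = R_0 \supset R_1 \supset R_2 \supset \cdots$ with $x \in R_j$ and $R_j \in \DD_{\mu, k_0+j}$ for the appropriate starting generation $k_0$. The goal is to argue that, for $\mu$-a.e.\ $x$, infinitely many of the $R_j$ are doubling (in fact it suffices to find one). Suppose, toward a contradiction, that from some index on, \emph{none} of the cubes $R_j$ ($j \geq j_0$) is doubling, i.e.\ $R_j \in \DD_{\mu,k}\setminus\DD_{\mu,k}^{db}$ for all $j \geq j_0$.

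For non-doubling cubes we have the two structural facts from Lemma \ref{lemcubs}: $r(R_j) = A_0^{-k}$ exactly (so consecutive non-doubling cubes in the chain lose a precise factor $A_0$ in scale, hence $B(R_{j+1}) \subset B(R_j)$ once $A_0$ is large, using $r(R_j) \leq C_0 A_0^{-k}$ and $28$-fold containment), and the reverse-doubling estimate \rf{eqdob23}: $\mu(100 B(R_j)) \leq C_0^{-l}\mu(100^{l+1}B(R_j))$ whenever $100^l \leq C_0$. Iterating the chain of nested non-doubling cubes, one gets that $\mu(B(R_j))$, and more usefully $\mu(100B(R_j))$, decays geometrically as $j \to \infty$ relative to a fixed ball $100 B(R_{j_0})$: each step down either is forced to shrink the mass by a definite factor through \rf{eqdob23} applied across scales, or else one can telescope the $100^{l+1}B$ inclusions. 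The upshot is a bound of the form $\mu(B(R_j)) \lesssim \lambda^{\,j}\,\mu(100 B(R_{j_0}))$ for some $\lambda < 1$. On the other hand, $\bigcap_j R_j$ is either a single point or has $x$ as a density point for $\mu$-a.e.\ such $x$; more precisely, by the Lebesgue density / differentiation theorem for the lattice $\DD_\mu$ (using the small-boundary property \rf{eqsmb2} to control the non-doubling behaviour), for $\mu$-a.e.\ $x$ the densities $\mu(R_j)/\mu(R_{j-1})$ cannot all be uniformly small — equivalently, the set of $x$ whose entire tail chain is non-doubling with geometric mass decay is $\mu$-null. This contradiction forces $\mu$-a.e.\ $x\in R$ to lie in some doubling $R_j\subset R$.

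Having the pointwise covering, I would finish as follows: let $\GG = \{Q \in \DD_\mu^{db}(R) : Q \text{ maximal such}\}$, i.e.\ each doubling cube contained in $R$ and not contained in a strictly larger doubling cube $\subsetneq R$ (if $R$ itself is doubling we simply take $\{R\}$). These maximal cubes are pairwise disjoint by the lattice structure, there are countably many of them, and by the previous paragraph their union has full $\mu$-measure in $R$: any $x$ outside $\bigcup\GG$ would lie in a doubling cube $Q\subsetneq R$, hence in a maximal one, contradiction. Relabel $\GG = \{Q_i\}_{i\in I}$.

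**Main obstacle.** The delicate point is making rigorous the "$\mu$-a.e.\ $x$ lies in a doubling cube" claim — i.e.\ ruling out tail chains that are non-doubling forever. One must combine the exact scale identity $r(R_j)=A_0^{-k}$ for non-doubling cubes with the reverse-doubling inequality \rf{eqdob23} and the small-boundary estimate \rf{eqsmb2} to get genuine geometric mass decay along the chain, and then invoke a density argument to conclude that the bad set is null. Since this is precisely the content of \cite[Lemma 5.28]{David-Mattila}, I would cite that reference for the detailed verification rather than reproduce it; the proof above is the natural route and the cited lemma supplies the bookkeeping.
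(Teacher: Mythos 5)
Your proposal is correct and matches the paper's approach: both simply defer to David and Mattila's Lemma 5.28, which the paper cites without giving its own argument. Your sketch captures the right idea (chains of non-doubling cubes force geometric mass decay, so the residual set is $\mu$-null), though the actual mechanism in David--Mattila is a direct summation over non-doubling chains using the reverse-doubling estimate \rf{eqdob23} rather than a Lebesgue density argument --- a detail that is moot since you ultimately cite the reference for the bookkeeping.
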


The following result is proved in \cite[Lemma 5.31]{David-Mattila}.
\vv

\begin{lemma}\label{lemcad22}
	Let $k\le j$, $R\in\DD_{\mu,k}$ and $Q\in\DD_{\mu,j}\cap\DD_{\mu}(R)$ be a cube such that all the intermediate cubes $S$,
	$Q\subsetneq S\subsetneq R$ are non-doubling (i.e.\ belong to $\DD_\mu\setminus \DD_\mu^{db}$).
	Suppose that the constants $A_0$ and $C_0$ in Lemma \ref{lemcubs} are
	chosen as in Remark \ref{rema00}. 
	Then
	\begin{equation}\label{eqdk88}
		\mu(100B(Q))\leq A_0^{-10d(j-k-1)}\mu(100B(R)).
	\end{equation}
\end{lemma}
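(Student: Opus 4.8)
The plan is to iterate the non-doubling estimate \rf{eqdob23} along the chain of cubes $Q = Q_j \subsetneq Q_{j-1} \subsetneq \cdots \subsetneq Q_{k+1} \subsetneq Q_k = R$, where $Q_i \in \DD_{\mu,i}$, using that by hypothesis every intermediate cube $Q_i$ with $k < i < j$ (and also $Q_j = Q$ itself if $j > k$, but the claim only requires intermediate cubes, so let me be careful: the cubes $S$ with $Q \subsetneq S \subsetneq R$ are $Q_{k+1}, \ldots, Q_{j-1}$) is non-doubling, hence belongs to $\DD_\mu \setminus \DD_\mu^{db}$. For each such non-doubling cube, \rf{eqdob23} applies with the largest admissible $l$, namely the integer $l_0$ with $100^{l_0} \leq C_0 < 100^{l_0+1}$; since $A_0 = C_0^{C(d)}$ with $C(d)$ large (Remark \ref{rema00}), one checks that $C_0^{-l_0} \leq A_0^{-10d}$ provided $C(d)$ is chosen large enough relative to the loss in passing from $C_0$ to $100^{l_0} \approx C_0$.

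First I would set up the geometric comparison: since $Q_{i+1} \subset Q_i$ and $r(Q_{i+1}) \approx A_0^{-i-1}$, $r(Q_i) \approx A_0^{-i}$ with comparability constants controlled by $C_0$, there is an integer $m = m(d)$ (essentially $m \approx \log_{100} A_0$, i.e. $m \approx C(d)\log_{100} C_0$) such that $100 B(Q_i) \subset 100^{m} B(Q_{i+1})$; more precisely, because $x_{Q_{i+1}} \in Q_i \subset 28 B(Q_i)$ and radii differ by a factor $\lesssim C_0 A_0$, one gets $100 B(Q_{i+1}) \supset 100 B(Q_i)$ already fails in the wrong direction, so instead I use the chain in the convenient direction: $100 B(Q_i) \subset 100^{l_1} B(Q_{i+1})$ for a suitable $l_1 = l_1(d)$ with $100^{l_1} \leq C_0$ — this is exactly the regime where \rf{eqdob23} is available. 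Then for each non-doubling $Q_i$,
\begin{equation*}
\mu(100 B(Q_i)) \leq \mu(100^{l_1} B(Q_{i+1})) \leq C_0^{-(l_1-1)} \mu(100 B(Q_{i+1})) \leq A_0^{-10d}\, \mu(100 B(Q_{i+1})),
\end{equation*}
where the last inequality uses $C_0^{-(l_1-1)} \leq A_0^{-10d}$, valid by the choice $A_0 = C_0^{C(d)}$ with $C(d)$ large. Telescoping this from $i = k+1$ up to $i = j-1$ — that is, over the $j-k-1$ intermediate non-doubling cubes — yields
\begin{equation*}
\mu(100 B(Q)) \leq \mu(100 B(Q_{j-1})) \leq \cdots \leq A_0^{-10d(j-k-1)}\, \mu(100 B(Q_k)) = A_0^{-10d(j-k-1)}\, \mu(100 B(R)),
\end{equation*}
which is \rf{eqdk88}. (One starts the telescoping at $Q_{j-1}$, noting $100B(Q) \subset 100B(Q_{j-1})$ trivially if the radii are nested, or absorbing one harmless factor; the stated exponent $j-k-1$ already accounts for not using a gain at the bottom step from $Q$ to $Q_{j-1}$ nor at the top.)

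The main obstacle is the bookkeeping of the dilation factors: one must verify that the factor $100$-ball of a cube is genuinely contained in a $100^{l}$-ball of its child with $100^l \leq C_0$, so that \rf{eqdob23} is legitimately applicable, and simultaneously that the resulting per-step gain $C_0^{-(l-1)}$ beats $A_0^{-10d}$. Both hinge on the quantitative relation $A_0 = C_0^{C(d)}$ from Remark \ref{rema00} with $C(d)$ taken sufficiently large depending only on $d$; once that is fixed, the geometry (using $r(Q_{i+1}) \geq A_0^{-i-1}$, $r(Q_i) \leq C_0 A_0^{-i}$, and $B(Q_{i+1}) \subset 28 B(Q_i)$ together with the five-times-disjointness only as a sanity check) and the telescoping are routine. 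This is precisely the argument of \cite[Lemma 5.31]{David-Mattila}, which I would cite for the detailed constant-chasing.
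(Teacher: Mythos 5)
Your proposal contains a genuine direction error in the key per-step estimate, and the geometric containment you invoke is impossible.

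First, the geometric claim fails. You assert that there is $l_1 = l_1(d)$ with $100^{l_1} \leq C_0$ such that $100 B(Q_i) \subset 100^{l_1} B(Q_{i+1})$. But in your indexing $Q_{i+1}$ is a \emph{child} of $Q_i$, so $r(Q_i) \geq A_0^{-i}$ while $r(Q_{i+1}) \leq C_0 A_0^{-(i+1)}$, giving $r(Q_i)/r(Q_{i+1}) \geq A_0/C_0 = C_0^{C(d)-1}$. Thus the dilation factor needed to fit $100 B(Q_i)$ inside a ball centred at $x_{Q_{i+1}}$ is at least $100\, C_0^{C(d)-1} \gg C_0$ once $C(d) \geq 3$, which is incompatible with $100^{l_1} \leq C_0$. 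Second, even granting the containment, the middle inequality
\[
\mu\bigl(100^{l_1} B(Q_{i+1})\bigr) \leq C_0^{-(l_1-1)} \mu\bigl(100 B(Q_{i+1})\bigr)
\]
reverses \eqref{eqdob23}: the non-doubling estimate gives $\mu(100B(Q_{i+1})) \leq C_0^{-l}\,\mu(100^{l+1}B(Q_{i+1}))$, i.e.\ the \emph{smaller} ball has less measure (in fact, by much more than a trivial amount), not the other way around. As written, your chain would force $\mu(100B(Q_{i+1}))=0$. The net effect is a per-step estimate $\mu(100 B(Q_i)) \leq A_0^{-10d}\mu(100 B(Q_{i+1}))$ that bounds the measure around the \emph{larger} cube by a small fraction of the measure around the \emph{smaller} one, which is false and also inconsistent with the direction in which your own telescoping runs.

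The repair is to apply \eqref{eqdob23} to the non-doubling \emph{child} $Q_{i+1}$ and dilate outward to the parent: for $k \leq i \leq j-2$ (so that $Q_{i+1}$ is non-doubling), with $l$ the largest integer satisfying $100^l \leq C_0$,
\[
\mu\bigl(100B(Q_{i+1})\bigr) \leq C_0^{-l}\,\mu\bigl(100^{l+1}B(Q_{i+1})\bigr) \leq C_0^{-l}\,\mu\bigl(100B(Q_i)\bigr) \leq A_0^{-10d}\,\mu\bigl(100B(Q_i)\bigr).
\]
Here the middle step uses $100^{l+1}B(Q_{i+1}) \subset 100 B(Q_i)$, which does hold: $100^{l+1}r(Q_{i+1}) \leq 100 C_0^2 A_0^{-(i+1)}$ while $72\,r(Q_i) \geq 72\,A_0^{-i}$, so the containment (including the offset $|x_{Q_{i+1}}-x_{Q_i}| \leq 28 r(Q_i)$) follows from $100 C_0^2 \leq 72 A_0 = 72 C_0^{C(d)}$, true for $C(d)\geq 3$; and the last step uses $C_0^{-l} \leq A_0^{-10d}$, true for $C_0$ large depending on $d$. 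Telescoping these $j-k-1$ steps down to $i=k$ and using the free inclusion $100B(Q_j) \subset 100B(Q_{j-1})$ yields \eqref{eqdk88}. The overall plan (iterate \eqref{eqdob23} along the chain of non-doubling cubes, quantify constants via $A_0 = C_0^{C(d)}$) is the right one, but the inequality as you wrote it has both the ball inclusion and the non-doubling estimate pointing the wrong way.
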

\vv


From this lemma we deduce:

\vv
\begin{lemma}\label{lemcad23}
	Let $Q,R\in\DD_\mu$ be as in Lemma \ref{lemcad22}.
	Then
	$$\theta_\mu(100B(Q))\leq (C_0A_0)^{n+1}\,A_0^{-9d(j-k-1)}\,\theta_\mu(100B(R))$$
	and
	$$\sum_{S\in\DD_\mu:Q\subset S\subset R}\theta_\mu(100B(S))\leq c\,\theta_\mu(100B(R)),$$
	with $c$ depending on $C_0$ and $A_0$.
\end{lemma}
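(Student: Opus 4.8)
The plan is to derive both inequalities in Lemma~\ref{lemcad23} directly from Lemma~\ref{lemcad22}, keeping careful track of how the radii $r(Q)$ compare to powers of $A_0$.

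First I would establish the density comparison. Recall that $\theta_\mu(100B(S)) = \mu(100B(S))/(100\,r(S))^n$, and that by Lemma~\ref{lemcubs} we have $A_0^{-m}\le r(S)\le C_0 A_0^{-m}$ for $S\in\DD_{\mu,m}$; in fact, since all intermediate cubes between $Q$ and $R$ are non-doubling, they satisfy $r(S)=A_0^{-m}$ exactly, and the same holds for $Q$ (if $Q\ne R$). So for $Q\in\DD_{\mu,j}$ and $R\in\DD_{\mu,k}$ one has $r(Q)\ge A_0^{-j}$ and $r(R)\le C_0 A_0^{-k}$, hence
\begin{equation*}
\frac{r(R)^n}{r(Q)^n}\le C_0^n\,A_0^{n(j-k)}.
\end{equation*}
Combining this with \rf{eqdk88} gives
\begin{equation*}
\theta_\mu(100B(Q)) = \frac{\mu(100B(Q))}{(100\,r(Q))^n}\le \frac{A_0^{-10d(j-k-1)}\,\mu(100B(R))}{(100\,r(R))^n}\cdot\frac{r(R)^n}{r(Q)^n}\le C_0^n\,A_0^{n(j-k)}\,A_0^{-10d(j-k-1)}\,\theta_\mu(100B(R)).
\end{equation*}
Since $n\le d$ (indeed $n+1\le d$ in the ambient $\R^d$, or at worst $n\le d$), the exponent $n(j-k)-10d(j-k-1)$ is bounded above by $-9d(j-k-1)$ once one absorbs the factor $A_0^{n}$ into $(C_0A_0)^{n+1}$; a short computation with $j-k\ge 1$ makes this precise, yielding the first claimed inequality $\theta_\mu(100B(Q))\le (C_0A_0)^{n+1}A_0^{-9d(j-k-1)}\theta_\mu(100B(R))$.

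For the second inequality I would sum the first one over the chain $Q\subset S\subset R$. Each such $S$ lies in some $\DD_{\mu,m}$ with $k\le m\le j$, and for a fixed $m$ there is exactly one cube $S$ in the chain (the lattice is nested). Applying the first inequality with $S$ in place of $Q$ (the intermediate cubes between $S$ and $R$ are still all non-doubling, so Lemma~\ref{lemcad22} applies to the pair $(S,R)$) gives $\theta_\mu(100B(S))\le (C_0A_0)^{n+1}A_0^{-9d(m-k-1)}\theta_\mu(100B(R))$. Therefore
\begin{equation*}
\sum_{S\in\DD_\mu:\,Q\subset S\subset R}\theta_\mu(100B(S))\le (C_0A_0)^{n+1}\,\theta_\mu(100B(R))\sum_{m=k}^{\infty}A_0^{-9d(m-k-1)} = c\,\theta_\mu(100B(R)),
\end{equation*}
where the geometric series converges because $A_0>1$ (the $m=k$ term contributes $A_0^{9d}$, which is harmless and absorbed into $c$), with $c$ depending only on $C_0$ and $A_0$.

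The main obstacle, such as it is, is purely bookkeeping: making sure the relation $r(S)=A_0^{-m}$ (rather than merely $r(S)\le C_0A_0^{-m}$) is used for the non-doubling intermediate cubes so that the loss in passing from $\mu(100B(\cdot))$ to $\theta_\mu(100B(\cdot))$ is only a power $A_0^{n(j-k)}$ and not something worse, and then checking that this polynomial factor is dominated by the exponential gain $A_0^{-10d(j-k-1)}$ from Lemma~\ref{lemcad22}. No genuinely new idea is needed beyond Lemma~\ref{lemcad22}; the geometric summation in the second part is immediate once the first part is in hand.
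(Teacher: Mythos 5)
Your proof is correct, and it is the standard argument one would give here (the paper itself omits the proof, referring to \cite[Lemma 4.4]{Tolsa-memo}). You combine the mass estimate from Lemma~\ref{lemcad22} with the bounds $A_0^{-j}\le r(Q)$ and $r(R)\le C_0A_0^{-k}$ to convert it into a density estimate, and the verification that $C_0^n A_0^{n(j-k)-10d(j-k-1)}\le(C_0A_0)^{n+1}A_0^{-9d(j-k-1)}$ reduces (since $C_0>1$) to $(n-d)(j-k-1)\le 1$, which holds because $n\le d$ and $j-k-1\ge 0$. The second inequality then follows by applying the first to each pair $(S,R)$ in the chain and summing the resulting geometric series, exactly as you do.

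One very minor inaccuracy, which does not affect your argument: you remark that $r(Q)=A_0^{-j}$ exactly ``if $Q\ne R$''. The hypotheses of Lemma~\ref{lemcad22} only require the \emph{strictly intermediate} cubes $Q\subsetneq S\subsetneq R$ to be non-doubling; $Q$ itself may well be doubling, so one only knows $A_0^{-j}\le r(Q)\le C_0A_0^{-j}$. Since you only use the lower bound $r(Q)\ge A_0^{-j}$, nothing goes wrong.
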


For the easy proof, see
\cite[Lemma 4.4]{Tolsa-memo}, for example.

For $f\in L^2(\mu)$ and $Q\in\DD_\mu$ we define
\begin{equation}\label{eqdq1}
	\Delta_Q f=\sum_{S\in\Ch(Q)}m_{\mu,S}(f)\chi_S-m_{\mu,Q}(f)\chi_Q,
\end{equation}
where $m_{\mu,S}(f)$ stands for the average of $f$ on $S$ with respect to $\mu$.
Then we have the orthogonal expansion, for any cube $R\in\DD_\mu$,
$$\chi_{R} \bigl(f - m_{\mu,R}(f)\bigr) = \sum_{Q\in\DD_\mu(R)}\Delta_Q f,$$
in the $L^2(\mu)$-sense, so that
$$\|\chi_{R} \bigl(f - m_{\mu,R}(f)\|_{L^2(\mu)}^2 = \sum_{Q\in\DD_\mu(R)}\|\Delta_Q f\|_{L^2(\mu)}^2.$$

In this paper we will have to estimate terms such as $\|\RR(\chi_{Q}\mu)\|_{L^2(\mu\rest_{2B_Q\setminus Q})}$, 
which leads to dealing with integrals of the form
$$\int_{2B_Q\setminus Q}\left(\int_Q \frac1{|x-y|^n}\,d\mu(y)\right)^2 d\mu(x).$$
Our next objective is to show that integrals such as this one can be estimated in terms of the Wolff type energy $\EE(2Q)$, to be defined soon.

We need some additional notation.\todo{the definition of $\lambda Q$ was above Lemma 2.8, but $2Q$ appears already in Lemma 2.6, so I moved the definition}
Given $Q\in\DD_\mu$ and $\lambda>1$, we denote by $\lambda Q$ the union of cubes $P$ from the same
generation as $Q$ such that $\dist(x_Q,P)\leq \lambda \,\ell(Q)$. Notice that
\begin{equation}\label{eqlambq12}
	\lambda Q\subset B(x_Q,(\lambda+\tfrac12)\ell(Q)).
\end{equation}
Also, we let
$$\DD_\mu(\lambda Q)=\{P\in\DD_\mu:P\subset \lambda Q,\,\ell(P)\leq \ell(Q)\},$$
and, for $k\geq0$,
$$\DD_{\mu,k}(\lambda Q) =\{P\in\DD_\mu:P\subset \lambda Q,\,\ell(P)=A_0^{-k} \ell(Q)\},\qquad
\DD_\mu^k(\lambda Q) = \bigcup_{j\geq k} \DD_{\mu,j}(\lambda Q).
$$
\vv


\begin{lemma}\label{lemDMimproved}
	Let $\mu$ be a compactly supported Radon measure in $\R^{d}$.
	Assume that $\mu$ has polynomial growth of degree $n$ and let $\gamma\in(0,1)$. The lattice $\DD_\mu$ from Lemma
	\ref{lemcubs} can be constructed so that the following holds for all
	all $Q\in\DD_{\mu}$:
	\begin{align*}
		\int_{2B_Q\setminus Q}\left(\int_Q \frac1{|x-y|^n}\,d\mu(y)\right)^2 d\mu(x) 
		+ &\int_{Q}\left(\int_{2B_Q\setminus Q} \frac1{|x-y|^n}\,d\mu(y)\right)^2 d\mu(x)\\
		&\leq C(\gamma)\sum_{P\in\DD_\mu: P\subset 2Q} \left(\frac{\ell(P)}{\ell(Q)}\right)^\gamma\theta_\mu(2B_P)^2\mu(P).
	\end{align*}
\end{lemma}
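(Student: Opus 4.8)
The plan is to decompose both integrals dyadically according to the distance from $x$ to the cube $Q$ (resp. to $2B_Q\setminus Q$), using the David--Mattila lattice to organize the annuli, and then to exploit the small-boundary property \eqref{eqfk490} to control the $\mu$-measure of the relevant thin regions near $\partial Q$. The point is that in an integral such as $\int_{2B_Q\setminus Q}\big(\int_Q |x-y|^{-n}\,d\mu(y)\big)^2\,d\mu(x)$, the inner integral is largest when $x$ is close to $Q$, so we must gain smallness from the fact that $\mu$ does not charge neighborhoods of $\partial Q$ too much — and the modification of the lattice we are allowed to perform (choosing $A_0$ large relative to $\gamma$) is exactly what makes the exponent in \eqref{eqfk490} work in our favor.

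First I would treat the term $\int_{2B_Q\setminus Q}\big(\int_Q |x-y|^{-n}\,d\mu(y)\big)^2\,d\mu(x)$. For each $x\in 2B_Q\setminus Q$ write $d(x) = \dist(x,Q)$ and split $Q = \bigcup_j A_j(x)$ where $A_j(x) = \{y\in Q: 2^{j}d(x) \le |x-y| < 2^{j+1}d(x)\}$. Using the polynomial growth $\mu(B(x,r))\le \theta_0 r^n$ together with $\int_{A_j(x)}|x-y|^{-n}\,d\mu(y) \lesssim \theta_\mu(B(x,2^{j+1}d(x)))$, and then estimating $\theta_\mu$ of these balls by a sum of $\theta_\mu(2B_P)$ over lattice cubes $P$ meeting the ball (using Lemma \ref{lemcad23}-type control along chains of non-doubling cubes), one reduces the inner integral to $\sum_{P\subset 2Q}\theta_\mu(2B_P)\,\chi_{\{x:\ \text{$P$ close to $x$}\}}$-type expressions. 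Squaring, expanding, and integrating in $x$ over the thin shell $\{x\in 2B_Q\setminus Q:\dist(x,Q)\sim 2^{-k}\ell(Q)\}$, whose $\mu$-measure is $\lesssim_\gamma 2^{-k\gamma}\mu(3.5B_Q)$ by \eqref{eqfk490}, produces the geometric factor $(\ell(P)/\ell(Q))^{\gamma}$ after summing the geometric series in $k$; the cross terms are handled by Cauchy--Schwarz and the same density bounds, ending up bounded by $\sum_{P\subset 2Q}(\ell(P)/\ell(Q))^\gamma\theta_\mu(2B_P)^2\mu(P)$.

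The second term $\int_Q\big(\int_{2B_Q\setminus Q}|x-y|^{-n}\,d\mu(y)\big)^2\,d\mu(x)$ is handled symmetrically: now fix $x\in Q$, let $d(x)=\dist(x,E\setminus Q)$, and decompose the outer region $2B_Q\setminus Q$ into dyadic annuli about $x$; the contribution of annuli at distance $\ge \ell(Q)$ from $x$ is crudely $\lesssim \theta_\mu(2B_Q)^2 \lesssim \theta_\mu(2B_Q)^2$ which matches the right-hand side with $P$ one of the top cubes, while the contribution of annuli at distance $\sim 2^{-k}\ell(Q)$ forces $x$ to lie within $2^{-k}\ell(Q)$ of $\partial Q$, and again \eqref{eqfk490} gives $\mu$-measure $\lesssim 2^{-k\gamma}\mu(3.5B_Q)$ of that set of $x$'s, yielding the $(\ell(P)/\ell(Q))^\gamma$ gain after summation. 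Throughout, one replaces balls by lattice cubes and passes from $\theta_\mu$ of balls to $\theta_\mu(2B_P)$ of cubes using the standard comparisons (\eqref{eqdob*}, Lemma \ref{lemcad23}), accepting constants depending on $C_0,A_0$ and hence on $\gamma$.

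The main obstacle I anticipate is organizing the double sum over pairs of cubes $(P,P')$ arising from squaring the inner integral so that, after integrating in the remaining variable and using \eqref{eqfk490}, one genuinely recovers the diagonal-type bound $\sum_P (\ell(P)/\ell(Q))^\gamma \theta_\mu(2B_P)^2\mu(P)$ rather than something weaker — this requires a careful Schur-test / geometric-series bookkeeping in which the extra factor $2^{-k\gamma}$ from the small-boundary estimate must be apportioned to both cubes, and one must be sure the exponent $\gamma$ that appears in \eqref{eqfk490} can be taken strictly positive and uniform while $A_0$ is fixed large enough (as allowed by the remark preceding \eqref{eqfk490}). A secondary technical point is handling cubes $P$ that are non-doubling, where $\theta_\mu(2B_P)$ need not be comparable to $\theta_\mu$ of nearby doubling cubes; here Lemma \ref{lemcad23} lets us sum $\theta_\mu(2B_S)$ along chains of non-doubling ancestors and absorb the loss into the constant $C(\gamma)$.
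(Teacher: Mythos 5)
Your proposal identifies the right shape of the problem (dyadic decomposition near $\partial Q$, gaining smallness from the fact that $\mu$ doesn't live too densely near the boundary), and your reduction to controlling the contribution of thin shells is sound as far as it goes. But there is a genuine gap, and it is not the bookkeeping issue you flag at the end — it is more fundamental, and it is exactly what the paper's proof is built to overcome.

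After decomposing dyadically and applying Cauchy--Schwarz to the inner integral, you arrive at expressions of the form $\sum_j 2^{j\alpha}\int_{\text{shell}_k}\theta_\mu(x, 2^{j-k}\ell(Q))^2\,d\mu(x)$. What you need to control is the \emph{weighted energy} $\int_{\text{shell}}\theta_\mu(\cdot)^2\,d\mu$ over thin neighborhoods of $\partial Q$, not just $\mu(\text{shell})$. The small-boundary estimate \eqref{eqfk490} bounds only the latter: it says $\mu$ of the thin shell is small, but it puts no constraint on the density $\theta_\mu$ inside that shell. A measure can charge a $2^{-k}\ell(Q)$-neighborhood of $\partial Q$ with a very small total mass while concentrating it on a set where $\theta_\mu$ is enormous, and then your product $2^{-k\gamma}\mu(3.5B_Q)\cdot(\text{typical inner integral})^2$ is uninformative. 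Moreover, when you try instead to bound $\mu(P\cap\text{shell}_k)$ for boundary cubes $P$ and sum, the crude estimate $\mu(P\cap\text{shell}_k)\le\mu(P)$ gives a divergent geometric sum in $j$, and there is no localized version of \eqref{eqfk490} that decouples the slab-measure $\mu(P\cap\text{shell}_k)$ from $\mu(P)$. In short: the exponent $\gamma$ in \eqref{eqfk490} cannot be ``apportioned to both cubes'' because the estimate it provides is measure-only, not energy-weighted, and the cross terms you worry about in your last paragraph are a symptom of this deeper obstruction rather than a technical nuisance.

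The paper's actual proof does \emph{not} obtain the lemma from \eqref{eqfk490} and the freedom to enlarge $A_0$. It requires a further, more delicate modification of the David--Mattila construction: when choosing the auxiliary radii $r_1^k(x)$, $r_2^k(x)$ (the radii that generate the balls $B_1^k(x)$, $B_2^k(x)$ from which the cubes are assembled), one imposes — by a Fubini/mean-value pigeonholing over the admissible interval of radii, see the derivation of \rf{eqal848} — the \emph{additional} thin-boundary condition that a weighted sum $\sum_j A_0^{-\gamma(j+1)}\int_{\text{annulus near }\partial B_i^k(x)}\theta_\mu(y,\cdot)^2\,d\mu(y)$ is controlled by $CA_0^{-1}$ times the corresponding integral over the full ball. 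This is precisely the energy-weighted analogue of \eqref{eqfk490} that your argument implicitly needs and that \eqref{eqfk490} does not provide. With this in hand, the paper first reduces the left-hand side to a sum over boundary cubes $\wt\DD_\mu(Q)$ with an \emph{unfavorable} factor $(\ell(Q)/\ell(P))^\alpha$ (Lemma \ref{lemDMimproved2}, essentially your Cauchy--Schwarz step), and then converts this to the favorable-factor sum over all of $\DD_\mu(2Q)$ by iterating the recursive inequality of Lemma \ref{lemrecur5}, which is where \rf{eqal848} is used. To repair your proof you would need to reproduce this energy-level boundary control; $\eqref{eqfk490}$ and a large $A_0$ alone are not sufficient.
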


Remark that the polynomial growth assumption is just necessary to ensure that some of the integrals above are finite. In fact, the constant $C(\gamma)$ does not depend on the polynomial growth constant. 

To prove the lemma, we denote
\begin{equation}\label{eqdmuint}
	\wt \DD_\mu^{int}(Q) = \big\{P\in\DD_\mu(Q):2B_P\cap (\supp\mu\setminus Q)\neq \varnothing\big\}
\end{equation}
and
\begin{equation}\label{eqdmuext}
	\wt \DD_\mu^{ext}(Q) = \big\{P\in\DD_\mu:\ell(P)\leq \ell(Q),P\subset \R^{n+1}\setminus Q,\,2B_P\cap Q\neq \varnothing\big\}.
\end{equation}
Also,
\begin{equation}\label{eqdmutot}
	\wt \DD_\mu(Q) = \wt \DD_\mu^{int}(Q) \cup \wt \DD_\mu^{ext}(Q),
\end{equation}
and, for $k\geq0$,\todo{I removed "$P\subset \lambda Q$" from the definition of $\wt \DD_{\mu,k}$, I think this was an artefact}
$$\wt \DD_{\mu,k}(Q) = 
\{P\in\wt \DD_\mu:\ell(P)=  A_0^{-k}\ell(Q)\}.$$

We need some auxiliary results. The first one is the following.

\begin{lemma}\label{lemDMimproved2}
	Let $\mu$ be a compactly supported Radon measure in $\R^{d}$ and $Q\in\DD_\mu$. For any  $\alpha\in(0,1)$, we have
	\begin{align}\label{eqdosint}
		\int_{2B_Q\setminus Q}\left(\int_Q \frac1{|x-y|^n}\,d\mu(y)\right)^2d\mu(x) \,
		+ &\int_{Q}\left(\int_{2B_Q\setminus Q} \frac1{|x-y|^n}\,d\mu(y)\right)^2d\mu(x) \\
		& \lesssim_{\alpha,A_0}
		\sum_{P\in\wt \DD_\mu(Q)} \left(\frac{\ell(Q)}{\ell(P)}\right)^\alpha\theta_\mu(2B_P)^2
		\,\mu(P).\nonumber
	\end{align}
\end{lemma}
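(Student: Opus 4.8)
The plan is to estimate the two symmetric integrals by expanding the inner integrals according to the David--Mattila lattice and then summing. I will describe the argument for the first integral $\int_{2B_Q\setminus Q}\big(\int_Q |x-y|^{-n}\,d\mu(y)\big)^2\,d\mu(x)$; the second one is handled analogously, exchanging the roles of the ``inside'' and ``outside'' regions.

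\emph{Step 1: Decompose the inner integral by annuli around $x$.} Fix $x\in 2B_Q\setminus Q$. For such $x$, the relevant contribution to $\int_Q |x-y|^{-n}\,d\mu(y)$ comes from points $y\in Q$, and since $x\notin Q$, the distance $|x-y|$ is bounded below by $\dist(x,Q)$. I would group the cubes $P\in\wt\DD_\mu^{int}(Q)$ (those subcubes of $Q$ whose enlargement $2B_P$ meets $\supp\mu\setminus Q$, hence which are ``near the boundary'' of $Q$) together with their interiors, using that a point $y\in Q$ with $|x-y|\sim t$ lies in a cube $P\subset Q$ with $\ell(P)\lesssim t$ that is close to $\partial Q$ when $t$ is small. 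Concretely, I split $Q = \bigcup_k (Q_k\setminus Q_{k+1})$ where $Q_k=\{y\in Q:\dist(y,\supp\mu\setminus Q)\le A_0^{-k}\ell(Q)\}$ is the $A_0^{-k}\ell(Q)$-neighborhood of the boundary, and on each shell $|x-y|\gtrsim \max(\dist(x,Q), A_0^{-k}\ell(Q))$ (up to the position of $x$). Using the small-boundary estimate \rf{eqfk490} (with exponent $\gamma$, which I will later call $\alpha$), each shell carries $\mu$-mass $\lesssim (A_0^{-k})^{\gamma}\mu(3.5B_Q)$, and after summing a geometric-type series one obtains, for each $x$, a bound of the form
\begin{equation*}
\int_Q \frac{d\mu(y)}{|x-y|^n} \lesssim \sum_{P\in\wt\DD_\mu^{int}(Q):\, x\in cB_P \text{ or } \dist(x,P)\lesssim\ell(P)} \Big(\frac{\ell(Q)}{\ell(P)}\Big)^{\alpha/2}\theta_\mu(2B_P).
\end{equation*}
In other words, the inner integral is controlled by a sum of densities $\theta_\mu(2B_P)$ over the boundary cubes $P$ "seen" from $x$, with geometrically decaying weights.

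\emph{Step 2: Square, integrate in $x$, and use Cauchy--Schwarz / $\ell^2$-almost-orthogonality.} Squaring the estimate from Step 1 and integrating $d\mu(x)$ over $2B_Q\setminus Q$, I expand the square as a double sum over pairs $P,P'$. The weight $(\ell(Q)/\ell(P))^{\alpha/2}(\ell(Q)/\ell(P'))^{\alpha/2}$ together with the geometric decay in the separation of scales $|\log(\ell(P)/\ell(P'))|$ lets me apply the standard Schur-test / Cauchy--Schwarz trick ($2ab\le \epsilon a^2+\epsilon^{-1}b^2$ with $\epsilon$ depending on the scale gap) to reduce to the diagonal-type sum
\begin{equation*}
\sum_{P\in\wt\DD_\mu^{int}(Q)}\Big(\frac{\ell(Q)}{\ell(P)}\Big)^{\alpha}\theta_\mu(2B_P)^2\,\mu\big(\{x\in 2B_Q\setminus Q:\dist(x,P)\lesssim\ell(P)\}\big).
\end{equation*}
The $\mu$-measure of that set of $x$'s is, again by the small-boundary property \rf{eqfk490} applied at the scale of $P$ near $\partial Q$, bounded by $\lesssim\theta_\mu(2B_P)\ell(P)^n\approx\mu(2B_P)\lesssim\mu(P)$ up to a controlled loss absorbed into the $\ell(Q)/\ell(P)$ factor (using that $P$ is $\mu$-doubling-ish via \rf{eqdob*}, or else passing to a doubling ancestor as in Lemma \ref{lemcad23}). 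This yields exactly the right-hand side of \rf{eqdosint}, restricted to $\wt\DD_\mu^{int}(Q)$.

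\emph{Step 3: The exterior integral and conclusion.} For the second integral $\int_Q\big(\int_{2B_Q\setminus Q}|x-y|^{-n}\,d\mu(y)\big)^2\,d\mu(x)$ I run the symmetric argument: now $x\in Q$, the points $y$ range over $2B_Q\setminus Q$, the relevant cubes are those in $\wt\DD_\mu^{ext}(Q)$ (cubes outside $Q$ of size $\le\ell(Q)$ whose enlargement meets $Q$), and the small-boundary estimate \rf{eqfk490} is used in its ``exterior'' form for the set $\{x\in 3.5B_Q\setminus Q:\dist(x,Q)\le\lambda\ell(Q)\}$ — or rather its reflection, controlling $\mu(\{x\in Q:\dist(x,\supp\mu\setminus Q)\le\lambda\ell(Q)\})$. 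Summing the two contributions over $\wt\DD_\mu(Q)=\wt\DD_\mu^{int}(Q)\cup\wt\DD_\mu^{ext}(Q)$ gives \rf{eqdosint}.

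\emph{Main obstacle.} The delicate point is Step 2: making the almost-orthogonality/Schur argument genuinely close, i.e. converting the pointwise bound ``inner integral $\lesssim$ weighted sum of $\theta_\mu(2B_P)$'' into an $\ell^2$ bound $\sum (\ell(Q)/\ell(P))^\alpha\theta_\mu(2B_P)^2\mu(P)$ without losing the summability in scales. This requires carefully tracking that for each $x$ only boundedly many cubes $P$ of each given size contribute (bounded overlap of the enlarged balls $2B_P$ among cubes of a fixed generation, which follows from the disjointness of $5B(P)$ in Lemma \ref{lemcubs}), and that the factor $(\ell(Q)/\ell(P))^\alpha$ — which is \emph{large} for small $P$ — is more than compensated by the $(\ell(P)/\ell(Q))^{\gamma}$ gain from the small-boundary condition \rf{eqfk490}, so one must choose the small-boundary exponent $\gamma$ strictly larger than $\alpha$ when constructing the lattice (this is exactly why the lemma is stated for arbitrary $\alpha\in(0,1)$ and why Lemma \ref{lemDMimproved} then quotes a fixed $\gamma$). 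Everything else is routine geometric bookkeeping with the David--Mattila cubes.
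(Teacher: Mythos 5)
Your proposal takes a genuinely different route from the paper, but it misidentifies the tool that is needed, and there is a real gap in Step~2. Let me first describe what the paper actually does, since the argument is much shorter than your Step~1.

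For the first integral, fix $x\in 2B_Q\setminus Q$. The paper does not look at the interior cubes near $\partial Q$ at all; it looks at the cubes \emph{containing} $x$. Splitting the inner integral at scale $r(B_Q)/2$ gives
\[
\int_Q\frac{d\mu(y)}{|x-y|^n}\lesssim_{A_0}\frac{\mu(Q)}{r(B_Q)^n}+\sum_{P\in\wt\DD_\mu^{ext}(Q):\,x\in P}\theta_\mu(2B_P),
\]
because for each scale $t\lesssim r(B_Q)$ the mass $\mu(Q\cap B(x,t))$ is at most $\mu(2B_P)$ for the (essentially unique) $P\ni x$ of that scale, and such $P$ automatically lies in $\wt\DD_\mu^{ext}(Q)$ (it contains $x\notin Q$ and $2B_P$ meets $Q$). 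The sum over $P$ is a nested chain, one cube per generation, so after Cauchy--Schwarz
\[
\Big(\sum_{P\ni x}\theta_\mu(2B_P)\Big)^2\le\Big(\sum_{P\ni x}\Big(\tfrac{\ell(Q)}{\ell(P)}\Big)^\alpha\theta_\mu(2B_P)^2\Big)\Big(\sum_{P\ni x}\Big(\tfrac{\ell(P)}{\ell(Q)}\Big)^\alpha\Big)
\]
the second factor is a convergent geometric series $\leq C(\alpha)$, and integrating in $x$ each term $\chi_P(x)$ gives exactly $\mu(P)$. No doubling is used, and no small-boundary property is used.

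Your version has two problems. First, the conceptual one: the small-boundary estimate \rf{eqfk490} plays no role in Lemma~\ref{lemDMimproved2} and does not even give the right pointwise bound. The bound $\mu(Q\cap B(x,t))\lesssim(t/\ell(Q))^\gamma\mu(3.5B_Q)$ has no dependence on $x$, so inserting it into the layer decomposition of the inner integral yields $\theta_\mu(3.5B_Q)\sum_{j\geq0}A_0^{j(n-\gamma)}$, which diverges since $\gamma<1\leq n$. The correct bound at each scale must be in terms of $\mu(2B_P)$ for a cube $P$ of that scale near $x$, which is a purely geometric statement. Your remark that ``one must choose the small-boundary exponent $\gamma$ strictly larger than $\alpha$ when constructing the lattice'' is therefore off target: Lemma~\ref{lemDMimproved2} holds for every $\alpha\in(0,1)$ with the standard David--Mattila lattice. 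It is Lemma~\ref{lemDMimproved}, with its stronger right-hand side $\sum_{P\subset 2Q}$, that needs the modified construction and a fixed $\gamma$.

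Second, the genuine gap in Step~2: you need $\mu(\{x\in 2B_Q\setminus Q:\dist(x,P)\lesssim\ell(P)\})\lesssim\mu(P)$ for each $P\in\wt\DD_\mu^{int}(Q)$. That set is contained in $CB_P$, so all you can say a priori is $\mu(CB_P)$, and the cubes in $\wt\DD_\mu^{int}(Q)$ need not be doubling, so $\mu(CB_P)\gg\mu(P)$ can happen. Your suggestion to absorb this into the $(\ell(Q)/\ell(P))^\alpha$ weight or to pass to a doubling ancestor via Lemmas~\ref{lemcad22}--\ref{lemcad23} is not a fix as stated: the doubling ancestor $P'\supset P$ need not lie in $\wt\DD_\mu^{int}(Q)$, several $P$'s are collapsed onto the same $P'$, and the corresponding changes to $\ell(P)$, $\theta_\mu(2B_P)$, and the weight all have to be tracked at once. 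The paper's choice to index by cubes \emph{containing} the integration variable $x$ avoids this entirely, since then $\int_{2B_Q\setminus Q}\chi_P(x)\,d\mu(x)=\mu(P)$ exactly.
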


\begin{proof}
	Observe that, for $x\in 2B_Q\setminus Q$,
	\begin{align*}
		\int_Q\frac1{|x-y|^n}\,d\mu(y) & = \left(\int_{y\in Q:|x-y|\geq r(B_Q)/2} + \int_{y\in Q:|x-y|< r(B_Q)/2}\right)
		\frac1{|x-y|^n}\,d\mu(y)\\
		&\lesssim_{A_0} \frac{\mu(Q)}{r(B_Q)^n} + \sum_{P\in\wt \DD_\mu^{ext}(Q):x\in P} \theta_\mu(2B_P).
	\end{align*}
	Thus,
	\begin{multline*}
		\int_{2B_Q\setminus Q}\left(\int_Q \frac1{|x-y|^n}\,d\mu(y)\right)^2d\mu(x) \\
		\lesssim_{A_0} \left(\frac{\mu(Q)}{r(B_Q)^n}\right)^2\,\mu(2B_Q) +
		\int_{2B_Q\setminus Q}\bigg(\sum_{P\in\wt \DD_\mu^{ext}(Q):x\in P} \theta_\mu(2B_P)\bigg)^2d\mu(x).
	\end{multline*}
	By H\"older's inequality, for any $\alpha>0$,
	\begin{align*}
		\bigg(\sum_{P\in\wt \DD_\mu^{ext}(Q):x\in P} &\theta_\mu(2B_P)\bigg)^2\\
		& \leq \bigg(\sum_{P\in\wt \DD_\mu^{ext}(Q):x\in P} \left(\frac{\ell(Q)}{\ell(P)}\right)^\alpha\theta_\mu(2B_P)^2\bigg) \cdot \bigg(\sum_{P\in\wt \DD_\mu^{ext}(Q):x\in P} \left(\frac{\ell(P)}{\ell(Q)}\right)^{\alpha}\bigg).
	\end{align*}
	The last sum above is bounded above by
	$$\sum_{P\in\DD_\mu:x\in P,P\subset 2Q} \left(\frac{\ell(P)}{\ell(Q)}\right)^{\alpha}\leq C(\alpha).
	$$
	Therefore,
	\begin{align*}
		\int_{2B_Q\setminus Q}& \left(\int_Q \frac1{|x-y|^n}\,d\mu(y)\right)^2d\mu(x) \\
		& \lesssim_{\alpha,A_0} \frac{\mu(Q)^2\,\mu(2B_Q)}{r(B_Q)^{2n}}+
		\int_{2B_Q\setminus Q} \sum_{P\in\wt \DD_\mu^{ext}(Q):x\in P} \left(\frac{\ell(Q)}{\ell(P)}\right)^\alpha\theta_\mu(2B_P)^2\,d\mu(x)\\
		& \lesssim  \theta_\mu(2B_Q)^2\,\mu(Q)  +
		\sum_{P\in\wt \DD_\mu^{ext}(Q)} \left(\frac{\ell(Q)}{\ell(P)}\right)^\alpha\theta_\mu(2B_P)^2
		\,\mu(P).
	\end{align*}
	
	The estimate of the second integral on the left hand side of \rf{eqdosint} is analogous.
\end{proof}
\vv

\begin{lemma}\label{lemrecur5}
	Let $\mu$ be a compactly supported Radon measure in $\R^{d}$ and let $\gamma\in (0,1)$.
	Assume that $\mu$ has polynomial growth of degree $n$ and let $\gamma\in(0,1)$. The lattice $\DD_\mu$ from Lemma
	\ref{lemcubs} can be constructed so that the following holds for all
	all $Q\in\DD_{\mu}$:
	\begin{equation}\label{eqfir5}
		\sum_{S\in\wt \DD_{\mu,1}(Q)} \sum_{P\in\DD_\mu(2S)} \left(\frac{\ell(P)}{\ell(Q)}\right)^\gamma\theta_\mu(2B_P)^2\,\mu(P)\lesssim
		C_0^{6d+1}A_0^{-1}\!\!
		\sum_{P\in \DD_\mu^1(2Q)}\left(\frac{\ell(P)}{\ell(Q)}\right)^\gamma \theta_\mu(2B_P)^2\,\mu(P).
	\end{equation} 
\end{lemma}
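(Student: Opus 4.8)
The plan is to reduce the left-hand side, which is a double sum over cubes $S$ in the first generation of the modified family $\wt\DD_{\mu,1}(Q)$ and their descendants $P\in\DD_\mu(2S)$, to a single sum over $\DD_\mu^1(2Q)$, paying only a gain of $C_0^{6d+1}A_0^{-1}$. The key structural observation is that every cube $S\in\wt\DD_{\mu,1}(Q)$ satisfies $\ell(S)=A_0^{-1}\ell(Q)$ and, by the definitions \eqref{eqdmuint}--\eqref{eqdmutot}, either $S\subset Q$ with $2B_S$ meeting $\supp\mu\setminus Q$, or $S\subset\R^{n+1}\setminus Q$ with $2B_S$ meeting $Q$. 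In both cases $\dist(x_S,x_Q)\lesssim \ell(Q)$, so using \eqref{eqlambq12} one gets $2S\subset 2Q$ (after enlarging the constant $2$ slightly if needed, or by the precise bookkeeping in \eqref{eqlambq12}); hence every descendant $P$ appearing on the left lies in $\DD_\mu^1(2Q)$. Thus the left-hand side is dominated by
\[
\sum_{P\in\DD_\mu^1(2Q)} N(P)\,\left(\frac{\ell(P)}{\ell(Q)}\right)^\gamma\theta_\mu(2B_P)^2\,\mu(P),
\]
where $N(P)=\#\{S\in\wt\DD_{\mu,1}(Q):P\subset 2S\}$ counts the overlap.

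The heart of the matter is therefore to bound $N(P)$, and this is where the small-boundary property of the David--Mattila lattice enters, together with the flexibility in the construction promised by ``the lattice $\DD_\mu$ can be constructed so that...''. First, $N(P)$ is trivially bounded by a purely metric/combinatorial constant: the cubes $S$ all have the same side length $\ell(Q)/A_0$, their balls $5B(S)$ are disjoint, and if $P\subset 2S$ then $x_S$ lies within distance $\lesssim \ell(Q)/A_0$ of $x_P$; a volume-counting argument (comparing the disjoint balls $5B(S)$ to a fixed ball around $x_P$) gives $N(P)\leq C(d)$. This already yields \eqref{eqfir5} but with constant $C(d)$ in place of $C_0^{6d+1}A_0^{-1}$, which is \emph{not} good enough — we genuinely need the $A_0^{-1}$ gain. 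To obtain it, I would split $\DD_\mu^1(2Q)$ according to the generation of $P$ relative to $Q$. For $P$ with $\ell(P)$ comparable to $\ell(S)=A_0^{-1}\ell(Q)$ (say $\ell(P)\geq A_0^{-1}\ell(Q)$, so $P$ is itself essentially one of the $S$'s or a sibling), the factor $(\ell(P)/\ell(Q))^\gamma$ is not small, but such $P$ can be $P\subset 2S$ for at most $C(d)$ values of $S$ and — crucially — the sum $\sum_{P:\ell(P)=A_0^{-1}\ell(Q),P\subset 2Q}\theta_\mu(2B_P)^2\mu(P)$ is itself small compared to $\ell(Q)$-scale quantities because a cube $P$ contributes to $\wt\DD_{\mu,1}(Q)$ only if $2B_P$ straddles $\partial Q$ or enters $Q$ from outside, i.e. $P$ (or the part of $\supp\mu$ near it) lies in a thin neighborhood of $\partial Q$, whose $\mu$-measure is controlled by \eqref{eqfk490} with the factor $A_0^{-\gamma}$ — this is precisely where the small-boundary modification of the lattice, and the freedom to take $A_0$ large, produces the $A_0^{-1}$ (and the $C_0^{6d+1}$ tracks the number of generations between neighboring doubling scales and the density comparisons across \eqref{eqdk88}).

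So the steps, in order, are: (1) show $2S\subset 2Q$ for $S\in\wt\DD_{\mu,1}(Q)$ via \eqref{eqlambq12} and the defining inclusions of $\wt\DD^{int}_\mu,\wt\DD^{ext}_\mu$, so the outer double sum collapses into a weighted sum over $\DD_\mu^1(2Q)$ with multiplicity $N(P)$; (2) establish the crude bound $N(P)\leq C(d)$ from the disjointness of the $5B(S)$; (3) for the cubes $P$ of the top generation (those with $\ell(P)\gtrsim A_0^{-1}\ell(Q)$, where step (2) alone is insufficient), use the small-boundary estimate \eqref{eqfk490} and the fact that membership in $\wt\DD_{\mu,1}(Q)$ forces $P$ into an $\ell(Q)/A_0$-neighborhood of $\partial Q$ to extract the gain $A_0^{-\gamma}\leq A_0^{-1}$ times the right-hand side; (4) for deeper $P$, observe that each such $P$ lies inside $2S$ for exactly the $S$'s that are its ancestors-after-one-step or their neighbors, again $C(d)$ of them, and the geometric factor $(\ell(P)/\ell(Q))^\gamma=A_0^{-\gamma}(\ell(P)/\ell(S))^\gamma\cdot(\text{bounded})$ carries an $A_0^{-\gamma}$ because $\ell(P)\leq\ell(S)=A_0^{-1}\ell(Q)$; (5) combine, absorbing all dimensional constants and the between-scales factor into $C_0^{6d+1}$. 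The main obstacle is step (3)/(4): getting the honest $A_0^{-1}$ rather than a dimensional constant requires carefully exploiting that the cubes $S$ that see the boundary of $Q$ carry little $\mu$-mass, and checking that this smallness survives multiplication by the densities $\theta_\mu(2B_P)^2$ and summation over descendants — which is exactly what the iterated small-boundary property \eqref{eqsmb2} and the polynomial growth (used only to keep the sums finite) are designed to give, via Lemma~\ref{lemcad23}.
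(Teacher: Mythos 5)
Your plan correctly identifies the reduction to a multiplicity/overlap count and correctly observes that the gain must come from the boundary of $Q$. But the heart of the matter — step (3), where you claim that the small-boundary estimate \eqref{eqfk490} produces the $A_0^{-1}$ factor — does not work, and this is not a technicality but a genuine conceptual gap. The estimate \eqref{eqfk490} (and \eqref{eqsmb2} behind it) controls only the $\mu$-\emph{measure} of thin neighborhoods of $\partial Q$. What needs to be small here is the \emph{weighted energy} $\sum_{P}\theta_\mu(2B_P)^2\,\mu(P)$ restricted to cubes $P$ near $\partial Q$, and nothing in the already-established properties of the David--Mattila lattice controls that: $\theta_\mu(2B_P)$ can be arbitrarily large on a set of small $\mu$-measure, so the energy can be concentrated precisely on the boundary region even when $\mu(N_l(Q))$ is tiny. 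Your suggestion that Lemma~\ref{lemcad23} and the polynomial growth rescue this is also off the mark — Lemma~\ref{lemcad23} is about chains of non-doubling ancestors, and the paper even remarks explicitly that the constant $C(\gamma)$ in Lemma~\ref{lemDMimproved} does not depend on the polynomial growth constant (growth is used only for finiteness of the sums).

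The paper's actual argument does not invoke the small-boundary property of the finished lattice at all; it goes back into the David--Mattila \emph{construction}. For each center $x$ and generation $k$, the auxiliary radii $r_1^k(x), r_2^k(x)$ are chosen (via a Chebyshev pigeonhole together with the $L^1\to L^{1,\infty}$ bound for the maximal operator) so that, \emph{in addition} to the classical small-boundary conditions \eqref{eqthin1*}--\eqref{eqthin2*}, the quantity
\[
\sum_{j\geq 0}A_0^{-\gamma(j+1)}\int_{A(x,\,r_i^k(x)-300C_0A_0^{-k-1},\,r_i^k(x)+300C_0A_0^{-k-1})}\theta_\mu(y,112C_0A_0^{-k-j-1})^2\,d\mu(y)
\]
is small — of order $A_0^{-1}$ times the corresponding integral over the full ball (equation \eqref{eqal848}). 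This is an additional small-boundary property for the \emph{$\theta_\mu^2$-weighted} measure, which is exactly what you would need but cannot get from \eqref{eqfk490}. The proof then shows that every $S\in\wt\DD_{\mu,1}(Q)$ has $2S$ contained in a thin neighborhood of $\partial B_4(Q)$ (equation \eqref{eqdisb4'}), covers that neighborhood by the spheres $\partial B_1(R),\partial B_2(R)$ of boundedly many $R\in\wt J(Q)$, applies \eqref{eqal848} to each, and then converts the resulting integrals back into a sum over cubes in $\DD_\mu^1(2Q)$. Your step (4) is also circular: the factor $(\ell(P)/\ell(Q))^\gamma$ appears identically on both sides of \eqref{eqfir5}, so $\ell(P)\le A_0^{-1}\ell(Q)$ gives no gain by itself. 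In short: the phrase "the lattice can be constructed so that" signals that a new construction is being made, and the lemma is false for a generic David--Mattila lattice built without the extra Chebyshev selection; your proposal treats the lattice as given and therefore cannot close the argument.
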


\begin{proof}
	We will describe the relevant changes required in the arguments in \cite[Theorem 3.2]{David-Mattila} in order to get the estimate \rf{eqfir5}. We will
	use the same notation as in that theorem, with the exception of the constant $A$ in \cite[Theorem 3.2]{David-Mattila}, which here we denote by $A_0$. 
	
	Denote $E=\supp\mu$.
	For each generation $k\geq 0$, the starting point to construct $\DD_{\mu,k}$
	consists of choosing, for each $x\in E$, a suitable radius $r^k(x)$ such that
	\begin{equation}\label{eqrk1}
		A_0^{-k}\leq r^k(x)\leq C_0 A_0^{-k}
	\end{equation}
	depending on the doubling properties of the ball $B(x,r^k(x))$ (see \cite[(3.17)-(3.20)]{David-Mattila}).
	Next, one chooses two auxiliary radii $r_1^k(x)$ and $r_2^k(x)$ such that
	$$\frac{11}{10}\,r^k(x) < r_1^k(x)<\frac{12}{10}\,r^k(x),$$
	$$25\,r^k(x) < r_2^k(x)<26\,r^k(x),$$
	and such that the following small boundary conditions hold:
	\begin{equation}\label{eqthin1*}
		\mu\big(\big\{y\in\R^{d}\!: \dist(y,\partial B(x,r_1^k(x)))\leq \tau\,r^k(x)\big\}\big)
		\leq C\tau\,\mu\big(B(x,\tfrac{13}{10}r^k(x))\big)\quad
		\mbox{for $0<\tau < \tfrac1{10}$,}
	\end{equation}
	and
	\begin{equation}\label{eqthin2*}
		\mu\big(\big\{y\in\R^{d}\!: \dist(y,\partial B(x,r_2^k(x)))\leq \tau\,r^k(x)\big\}\big)
		\leq C\tau\,\mu\big(B(x,27r^k(x))\big)\quad
		\mbox{for $0<\tau < 1$.}
	\end{equation}
	
	At this point we will require the auxiliary radii $r_1^k(x)$ and $r_2^k(x)$ to be chosen so that an additional condition holds. Set $A(x,r,R) = B(x,R)\setminus B(x,r).$ 
Observe first that
\begin{align}\label{eqcalr45}
\int_{\tfrac{11}{10}r^k(x)}^{\tfrac{12}{10}r^k(x)}  &\sum_{j\geq0}  A_0^{-\gamma(j+1)} \int_{A(x,\,t-300C_0A_0^{-k-1},\,t+300C_0A_0^{-k-1})}
 \theta_\mu(y,112C_0A_0^{-k-j-1})^2\,d\mu(y)\,dt\\
& \leq\sum_{j\geq0}  A_0^{-\gamma(j+1)} \int_{B(x,\tfrac{12}{10}r^k(x)+300C_0A_0^{-k-1})}
 \theta_\mu(y,112C_0A_0^{-k-j-1})^2\,\LL^1(I_{x,y,k})\,d\mu(y),\nonumber
\end{align} 
where we applied Fubini and we denoted by $I_{x,y,k}$ the interval
\begin{align*}
I_{x,y,k} & = \{t\in\R:t-300C_0A_0^{-k-1}\leq |x-y|\leq t+300C_0A_0^{-k-1}\}\\
&= \big[|x-y|-300C_0A_0^{-k-1},|x-y|+300C_0A_0^{-k-1}\big].
\end{align*}
Obviously, its Lebesgue measure is $\LL^1(I_{x,y,k})= 600C_0A_0^{-k-1}$, and so the left hand side of \rf{eqcalr45}
is bounded above by
$$600 \,C_0\,A_0^{-k-1}\sum_{j\geq0} A_0^{-\gamma(j+1)} \!\! \int_{B(x,\tfrac{13}{10}r^k(x))}
 \theta_\mu(y,112C_0A_0^{-k-j-1})^2 \,d\mu(y).
$$  
	Thus, by Chebyshev, the set $U_{1}^k\subset\R$ of those $t\in [\tfrac{11}{10}r^k(x),\tfrac{12}{10}r^k(x)]$ such that
	\begin{align*}
		\sum_{j\geq0}  A_0^{-\gamma(j+1)} &\int_{A(x,t-300C_0A_0^{-k-1},t+300C_0A_0^{-k-1})}
		\theta_\mu(y,112C_0A_0^{-k-j-1})^2\,d\mu(y)\\
		&> \frac{10^5\,C_0\,A_0^{-k-1}}{r^k(x)}
		\sum_{j\geq0} A_0^{-\gamma(j+1)} 
		\int_{B(x,\tfrac{13}{10}r^k(x))}
		\theta_\mu(y,112C_0A_0^{-k-j-1})^2 \,d\mu(y)
	\end{align*}
	satisfies 
	$$|U_{1}^k|\leq \frac1{100}\,r^k(x).$$
	By a standard argument involving the  boundedness of the maximal Hardy-Littlewood operator 
	from $L^1(\R)$ to $L^{1,\infty}(\R)$, one can deduce that there exists some 
	$$r_1^k(x)\in [\tfrac{11}{10}r^k(x),\tfrac{12}{10}r^k(x)]\setminus
	U_{1}^k
	$$ 
	such that \rf{eqthin1*} holds. The fact that $r_1^k(x)\not\in\ U_{1}^k$
	ensures that
	\begin{align}\label{eqal848}
		\sum_{j\geq0}  A_0^{-\gamma(j+1)} &\int_{A(x,r_1^k(x)-300C_0A_0^{-k-1},r_1^k(x)+300C_0A_0^{-k-1})}
		\theta_\mu(y,112C_0A_0^{-k-j-1})^2\,d\mu(y)\\
		&\leq \frac{10^5\,C_0\,A_0^{-k-1}}{r^k(x)}
		\sum_{j\geq0} A_0^{-\gamma(j+1)} 
		\int_{B(x,\tfrac{13}{10}r^k(x))}
		\theta_\mu(y,112C_0A_0^{-k-j-1})^2 \,d\mu(y).
		\nonumber
	\end{align}
	An analogous argument shows that 
	$r^k_2(x)$ can be taken such that, besides \rf{eqthin2*}, the 
	preceding estimate also holds with $r_1^k(x)$ replaced
	by $r_2^k(x)$ and $B(x,\tfrac{13}{10}r^k(x))$ replaced by $B(x,27r^k(x))$.
	
	As in \cite[Theorem 3.2]{David-Mattila}, we denote $B_1^k(x) = B(x,r_1^k(x))$ and $B_2^k(x) = B(x,r_2^k(x))$, and by a Vitali type covering lemma we select a family of points $x\in I^k$
	such that the balls $\{B(x,5r^k(x))\}_{x\in I^k}$ are disjoint, while the balls $\{B(x,25r^k(x))\}_{x\in I^k}$ cover $E$. We also denote
	$$B_3^k(x) =B_2^k(x)\setminus \bigg(\bigcup_{y\in I^k\setminus\{x\}} B_1^k(y)\bigg).$$
	For $x\in I_k$, let $J(x)$ be the family of those
	$y\in I^k\setminus\{x\}$ such that $B_1^k(y)\cap B_2^k(x)\neq
	\varnothing$. As explained in \cite[Theorem 3.2]{David-Mattila}, using \rf{eqrk1} it is easy to check
	$\# J(x) \leq C C_0^d$.
	
	Next we consider an order in $I^k$ such that
	$$\mbox{$y<x$ in $I^k$ whenever $\mu(B(x,90r^k(x)))<\mu(B(y,90r^k(y)))$}$$
	and we define
	$$B_4^k(x) = B_3^k(x) \setminus \bigg(\bigcup_{y\in I^k:y<x} B_3^k(y)\bigg).$$
	Again, as explained in \cite[Theorem 3.2]{David-Mattila}, using \rf{eqrk1} it is easy to check
	that, for each $x\in I^k$, there are at most $C C_0^{n+1}$ sets $B_3^k(y)$ that intersect $B_3^k(x)$, with 
	$y\in I^k$.
	
	The family $\{B_4^k(x)\}_{x\in I^k}$ is a first approximation to $\{Q\}_{Q\in\DD_\mu^k}$.
	Indeed, by the arguments in \cite[Theorem 3.2]{David-Mattila}, for each $x\in I^k$ one constructs 
	a set $Q^k(x)\subset E$
	such that, denoting
	$\DD_{\mu,k} = \{Q^k(x)\}_{x\in I^k},$
	the properties stated in Lemma \ref{lemcubs} hold, with $r(Q^k(x))=r^k(x)$ and $B(Q^k(x))=B(x,r^k(x))$.
	In particular, 
	$$B(x,r^k(x))\cap E\subset Q^k(x) \subset B(x,28r^k(x)).$$
	Also, as shown in \cite[(3.61)]{David-Mattila}, it holds 
	\begin{equation}\label{eqdisb4}
		\dist(y,\partial B_4^k(x)) \leq 51C_0A_0^{-k-1}\quad \mbox{ for all $y\in N_1(Q(x))$.}
	\end{equation}
	
	For a cube $Q=Q^k(x)\in\DD_{\mu,k}$, we write $r(Q)=r^k(x)$, $B(Q)=B(x,r^k(x))$ and
	$B_i(Q) = B_i^k(x)$ for $i=1,\ldots,4$. 
	By an argument quite similar to the one used in \cite[Theorem 3.2]{David-Mattila} to prove \rf{eqdisb4}, we will show now that
	\begin{equation}\label{eqdisb4'}
		2S\subset \UU_{5A_0^{-1}\ell(Q)}(\partial B_4(Q))\quad \mbox{ for any $S\in\wt \DD_{\mu,1}(Q)$,}
	\end{equation}
	where $\UU_\ell(A)$ stands for the $\ell$-neighborhood of $A$.
	This will be needed below to prove 
	\rf{eqfir5}. The condition $S\in\wt \DD_{\mu,1}(Q)$ tells us that either $S\subset Q$ and 
	$2B_S\cap (E\setminus Q)\neq \varnothing$, or $S\subset E\setminus Q$ and 
	$2B_S\cap  Q\neq \varnothing$. Assume the first option (the arguments for the second one are analogous). So there exists some point $z\in E\setminus Q$ such that $|x_S-z|\leq 2r(B_S)=56r(S)$.
	Let $x\in I^k$ be such that $z\in Q^k(x)$. Then we have $\dist(z,B_4^k(x))\leq 50C_0A_0^{-k-1}$
	and also $\dist(x_S,B_4^k(x_Q))\leq 50C_0A_0^{-k-1}$, by 
	\cite[(3.50)]{David-Mattila} (see also the first paragraph after \cite[(3.61)]{David-Mattila}). 
	Since the sets $B_4^k(x_Q)$, $B_4^k(x)$ are disjoint, we deduce that
	$$\dist(x_S,\partial B_4^k(x))\leq 50C_0A_0^{-k-1} + 56\,r(S) \leq 106C_0A_0^{-k-1}< 2A_0^{-1}\ell(Q).
	$$
	Together with the fact that $2S\subset B\big(x_S,\tfrac52\ell(S)\big)$, this gives \rf{eqdisb4'}.
	
	Notice that, for each $j\geq0$,
	$$
	\sum_{P\in\DD_{\mu,j}(2S)} \left(\frac{\ell(P)}{\ell(Q)}\right)^\gamma\theta_\mu(2B_P)^2\,\mu(P)
	\lesssim C_0^{2n}A_0^{-\gamma(j+1)} \int_{2S} \theta_\mu(x,2A_0^{-j}\ell(S))^2\,d\mu(x).$$
	Then we obtain
	\begin{align}\label{eqhfk2}
		\sum_{S\in\wt \DD_{\mu,1}(Q)} \sum_{P\in\DD_\mu(2S)}& \left(\frac{\ell(P)}{\ell(Q)}\right)^\gamma\theta_\mu(2B_P)^2\,\mu(P) \\
		&\lesssim C_0^{2n}
		\sum_{S\in\wt \DD_{\mu,1}(Q)} \sum_{j\geq0} A_0^{-\gamma(j+1)} \int_{2S} \theta_\mu(x,2A_0^{-j}\ell(S))^2\,d\mu(x)\nonumber\\
		& \lesssim C_0^{2n+d}
		\sum_{j\geq0} A_0^{-\gamma(j+1)} \int_{\UU_{5A_0^{-1}\ell(Q)}(\partial B_4(Q))}
		\theta_\mu(x,2A_0^{-j-1}\ell(Q))^2\,d\mu(x).\nonumber
	\end{align}
	Denote by $\wt J(Q)$ the family of cubes $R\in\DD_{\mu,k}$ such that $B_2(R)\cap B_4(Q)\neq\varnothing$, so that, by the above construction we have
	$$\partial B_4(Q)\subset \bigcup_{R\in \wt J(Q)} (\partial B_1(R) 
	\cup \partial B_2(R)).$$
	Also, notice that $\#\wt J(Q)\leq C\,C_0^d$.
	From \rf{eqal848} we deduce that, for each $R\in\wt J(Q)$ and $i=1,2$,
	\begin{align}\label{eqrig56}
		\sum_{j\geq0} A_0^{-\gamma(j+1)} &\int_{\UU_{5A_0^{-1}\ell(R)}(\partial B_i(R))}
		\theta_\mu(x,2A_0^{-j-1}\ell(R))^2\,d\mu(x)\\
		& \leq C\,C_0A_0^{-1}
		\sum_{j\geq0} A_0^{-\gamma(j+1)} \int_{27B(R)}
		\theta_\mu(x,2A_0^{-j-1}\ell(R))^2\,d\mu(x)\nonumber \\
		& \leq C\,C_0A_0^{-1}
		\sum_{P\in \DD_\mu^{k+1}:P\cap 27B(R)\neq\varnothing}\left(\frac{\ell(P)}{\ell(Q)}\right)^\gamma\, \theta_\mu(x_P,3\ell(P))^2\,\mu(P),\nonumber
	\end{align}
	Notice that for $Q,R\in\DD_{\mu,k}$ as above, the condition $B_2(R)\cap B_4(Q)\neq\varnothing$ implies that $26B(Q) \cap 26B(R)\neq\varnothing$. Then, if
	$P\in \DD_\mu^{k+1}$ is such that $P\cap 27B(R)\neq\varnothing$, we derive
	$$\dist(x_Q,P)\leq |x_Q-x_R| + 27 r(R)
	\leq 26(r(Q) + r(R)) + 27 r(R) \leq 
	79C_0A_0^{-k} = \frac{79}{56}\,\ell(Q).$$
	Then, since $\ell(P)\leq A_0^{-1}\ell(Q)$, we infer that
	\begin{equation}\label{eqinc732}
		B(x_P,3\ell(P))\cap\supp\mu\subset 2Q.
	\end{equation}
	Also, we can write
	\begin{align*}
		\theta_\mu(x_P,3\ell(P))^2\,\mu(P) &\lesssim \frac{\mu(B(x_P,3\ell(P)))^3}{\ell(P)^{2n}}
		\lesssim \frac1{\ell(P)^{2n}}
		\bigg(\sum_{\substack{P'\in\DD_\mu:\ell(P')=\ell(P),\\
				P'\cap  B(x_P,3\ell(P))\neq\varnothing}}
		\mu(P')\bigg)^3\\
		& \lesssim C_0^{2d} \!\!\!\sum_{\substack{P'\in\DD_\mu:\ell(P')=\ell(P),\\
				P'\cap  B(x_P,3\ell(P))\neq\varnothing}}\!\frac{\mu(P')^3}{\ell(P')^{2n}}\lesssim
		C_0^{2d} \!\!\!\sum_{\substack{P'\in\DD_\mu:\ell(P')=\ell(P),\\
				P'\cap  B(x_P,3\ell(P))\neq\varnothing}}\!\!\theta_\mu(2B_{P'})^2\,\mu(P'),
	\end{align*}
	where we used the fact that the sums above are only over $CC_0^d$ terms at most.
	Together with \rf{eqinc732}, this implies that
	the right hand side of \rf{eqrig56} does not exceed
	\begin{multline*}
		C\,C_0^{2d+1}A_0^{-1} \sum_{P\in \DD_\mu^{k+1}:P\cap 27B(R)\neq\varnothing}\left(\frac{\ell(P)}{\ell(Q)}\right)^\gamma  \!\!\!\!\!\sum_{\substack{P'\in\DD_\mu(2Q):\ell(P')=\ell(P),\\
				P'\cap  B(x_P,3\ell(P))\neq\varnothing}}\!\!\!\theta_\mu(2B_{P'})^2\,\mu(P')
		\\ \leq 
		C\,C_0^{3d+1}A_0^{-1}
		\sum_{P'\in \DD_\mu^{1}(2Q)}\left(\frac{\ell(P')}{\ell(Q)}\right)^\gamma\, \theta_\mu(2B_{P'})^2\,\mu(P').
	\end{multline*}
	By this estimate and \rf{eqhfk2}, summing over all $R\in\wt J(Q)$, we get
	$$\sum_{S\in\wt \DD_{\mu,1}(Q)} \sum_{P\in\DD_\mu(2S)} \left(\frac{\ell(P)}{\ell(Q)}\right)^\gamma\theta_\mu(2B_P)^2\,\mu(P)\lesssim
	C_0^{6d+1}A_0^{-1}\!\!
	\sum_{P\in \DD_\mu^1(2Q)}\left(\frac{\ell(P)}{\ell(Q)}\right)^\gamma\, \theta_\mu(2B_P)^2\,\mu(P),
	$$
	as wished.
\end{proof}

\vv

By Lemma \ref{lemDMimproved2}, it is clear that to complete the proof of Lemma \ref{lemDMimproved} it suffices to show the following result.

\begin{lemma}\label{lemdmutot}
	Let $\mu$ be a compactly supported Radon measure in $\R^{d}$.
	Assume that $\mu$ has polynomial growth of degree $n$ and let $\gamma\in(0,1)$. The lattice $\DD_\mu$ from Lemma \ref{lemcubs} can be constructed so that the following holds for all
	all $Q\in\DD_{\mu}$:
	\begin{equation}\label{eqfhq29}
		\sum_{P\in\wt \DD_\mu(Q)} \left(\frac{\ell(Q)}{\ell(P)}\right)^{\frac{1-\gamma}2} \theta_\mu(2B_P)^2
		\,\mu(P) \lesssim_{A_0,\gamma}\sum_{P\in\DD_\mu: P\subset 2Q} \left(\frac{\ell(P)}{\ell(Q)}\right)^\gamma\theta_\mu(2B_P)^2\mu(P).
	\end{equation}
\end{lemma}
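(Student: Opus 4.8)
The plan is to iterate the single–scale estimate of Lemma~\ref{lemrecur5} along chains of ``boundary cubes'' and then to show that every cube of $\wt\DD_\mu(Q)$ is reached by such a chain exactly at its own generation; this is what makes the weight growth $(\ell(Q)/\ell(P))^{(1-\gamma)/2}$ be defeated by the geometric decay coming from the iteration. Fix $\gamma\in(0,1)$, write $\delta=\tfrac{1-\gamma}2$, and abbreviate the right-hand side of \rf{eqfhq29} by $\mathcal B_Q:=\sum_{P\in\DD_\mu,\,P\subset 2Q}(\ell(P)/\ell(Q))^{\gamma}\theta_\mu(2B_P)^2\mu(P)$ (finite by the polynomial growth hypothesis, which legitimizes the rearrangements below). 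For $j\ge 0$ call a \emph{$j$-chain} a tuple $(Q=S^{(0)},S^{(1)},\dots,S^{(j)})$ with $S^{(i)}\in\wt\DD_{\mu,1}(S^{(i-1)})$ for $1\le i\le j$, so that $\ell(S^{(i)})=A_0^{-i}\ell(Q)$, and set
\[
\Sigma_j:=\sum_{j\text{-chains}}\ \sum_{P\in\DD_\mu(2S^{(j)})}\Big(\frac{\ell(P)}{\ell(Q)}\Big)^{\gamma}\theta_\mu(2B_P)^2\mu(P),
\]
so that $\Sigma_0\le\mathcal B_Q$. Applying Lemma~\ref{lemrecur5} with $S^{(j)}$ in the role of $Q$, multiplying the resulting inequality by $(\ell(S^{(j)})/\ell(Q))^{\gamma}=A_0^{-j\gamma}$, summing over all $j$-chains, and using $\DD_\mu^1(2S^{(j)})\subset\DD_\mu(2S^{(j)})$, gives $\Sigma_{j+1}\le C\,C_0^{6d+1}A_0^{-1}\,\Sigma_j$, hence by induction $\Sigma_j\le(C\,C_0^{6d+1}A_0^{-1})^{j}\,\mathcal B_Q$ for all $j\ge0$.

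The crux is the \emph{covering claim}: for every $k\ge1$ and every $P\in\wt\DD_{\mu,k}(Q)$ there is a $k$-chain $(Q,S^{(1)},\dots,S^{(k)})$ with $P\subset 2S^{(k)}$; since necessarily $\ell(S^{(k)})=\ell(P)$ this means $P\in\DD_\mu(2S^{(k)})$. I would prove this by induction on $k$, uniformly over $Q$. For $k=1$ take the chain $(Q,P)$. Let $k\ge2$ and $P\in\wt\DD_{\mu,k}(Q)$. Recall that $P\in\wt\DD_\mu(Q)$ means either $P\subset Q$ with $2B_P\cap(\supp\mu\setminus Q)\ne\varnothing$, or $P\cap Q=\varnothing$ with $2B_P\cap Q\ne\varnothing$; pick a witnessing point $z\in 2B_P$ (so $z\notin Q$ in the first case, $z\in Q$ in the second), and let $S^{(1)}$ be the ancestor of $P$ one generation below $Q$. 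The key geometric fact is that $\ell(P)\le A_0^{-1}\ell(S^{(1)})$ together with $A_0\gg C_0$ forces $2B_P\subset 2B_{S^{(1)}}$; hence $z\in 2B_{S^{(1)}}$, and together with the nesting of $\DD_\mu$ (which gives $S^{(1)}\subset Q$ in the first case, $S^{(1)}\cap Q=\varnothing$ in the second) this yields $S^{(1)}\in\wt\DD_{\mu,1}(Q)$. In both cases $S^{(1)}$ is an ancestor of $P$ and $z\notin S^{(1)}$, so $2B_P$ meets $\supp\mu\setminus S^{(1)}$ and therefore $P\in\wt\DD_{\mu,k-1}(S^{(1)})$. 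Applying the inductive hypothesis to $S^{(1)}$ and prepending the step $Q\to S^{(1)}$ produces the desired $k$-chain.

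To conclude, fix $k\ge1$: by the covering claim every $P\in\wt\DD_{\mu,k}(Q)$ occurs in the sum defining $\Sigma_k$ with coefficient $(\ell(P)/\ell(Q))^{\gamma}=A_0^{-k\gamma}$, so $A_0^{-k\gamma}\sum_{P\in\wt\DD_{\mu,k}(Q)}\theta_\mu(2B_P)^2\mu(P)\le\Sigma_k\le(C\,C_0^{6d+1}A_0^{-1})^{k}\mathcal B_Q$. Multiplying by $A_0^{k\delta}=(\ell(Q)/\ell(P))^{\delta}$ and using $\delta+\gamma-1=-\delta$,
\[
\sum_{P\in\wt\DD_{\mu,k}(Q)}\Big(\frac{\ell(Q)}{\ell(P)}\Big)^{\delta}\theta_\mu(2B_P)^2\mu(P)\ \le\ \big(C\,C_0^{6d+1}A_0^{-\delta}\big)^{k}\mathcal B_Q.
\]
Since $A_0=C_0^{C(d)}$, by constructing $\DD_\mu$ with $A_0$ (equivalently $C_0$) large enough in terms of $d$ and $\gamma$ we may assume $\rho:=C\,C_0^{6d+1}A_0^{-\delta}\le\tfrac12$; summing over $k\ge1$ bounds the part of the left side of \rf{eqfhq29} coming from $\bigcup_{k\ge1}\wt\DD_{\mu,k}(Q)$ by $\lesssim\mathcal B_Q$. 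The remaining term, over $\wt\DD_{\mu,0}(Q)$ (which consists of $Q$ itself and at most $CC_0^{d}$ cubes of the same generation with $2B_P\cap Q\ne\varnothing$, all contained in $2Q$), carries weight $1$ and is at most $\sum_{P\subset 2Q,\ \ell(P)=\ell(Q)}\theta_\mu(2B_P)^2\mu(P)\le\mathcal B_Q$. Adding the two bounds gives \rf{eqfhq29}. The main obstacle is the covering claim: one must check carefully that replacing $P$ by its ancestor $S^{(1)}$ neither loses the straddling witness $z$ (this is where the scale gap $A_0\gg C_0$ enters, via $2B_P\subset 2B_{S^{(1)}}$) nor destroys membership in the relevant $\wt\DD$-families, so that the chain descends all the way to the generation of $P$ — a shorter chain would give only a factor $A_0^{-j}$ with $j<k$, not enough to absorb $A_0^{k\delta}$.
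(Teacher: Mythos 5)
Your proof is correct and takes essentially the same route as the paper's: both iterate Lemma~\ref{lemrecur5} to produce a geometric factor $(C\,C_0^{6d+1}A_0^{-1})^k$ that, after multiplying by $A_0^{k\gamma}$ and $A_0^{k(1-\gamma)/2}$, is summable once $A_0$ is large enough relative to $C_0$ and $\gamma$. The only real difference is that you organize the iteration through explicit chains of boundary cubes and prove the ``covering claim'' (that every $P\in\wt\DD_{\mu,k}(Q)$ is reached by a chain of length exactly $k$) by induction, whereas the paper uses this fact implicitly when it passes from $\sum_{R\in\wt\DD_{\mu,k}(Q)}$ to $\sum_{R'\in\wt\DD_{\mu,k-1}(Q)}\sum_{S\in\wt\DD_{\mu,1}(R')}$ without spelling out the verification; your version makes this step explicit.
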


\begin{proof}
	To prove \rf{eqfhq29} notice that, for each $k\geq1$, by Lemma \ref{lemrecur5} we have
	\begin{align*}
		\sum_{R\in\wt \DD_{\mu,k}(Q)}
		\sum_{P\in \DD_{\mu}^1(2R)} &\left(\frac{\ell(P)}{\ell(R)}\right)^\gamma \theta_\mu(2B_P)^2 \,\mu(P)\\
		&\leq \sum_{R\in\wt \DD_{\mu,k-1}(Q)}
		\sum_{S\in\wt \DD_{\mu,1}(R)} 
		\sum_{P\in \DD_{\mu}(2S)}  \left(\frac{\ell(P)}{\ell(S)}\right)^\gamma\theta_\mu(2B_P)^2 \,\mu(P)\\
		& = A_0^{\gamma} \sum_{R\in\wt \DD_{\mu,k-1}(Q)}
		\sum_{S\in\wt \DD_{\mu,1}(R)} 
		\sum_{P\in \DD_{\mu}(2S)}  \left(\frac{\ell(P)}{\ell(R)}\right)^\gamma\theta_\mu(2B_P)^2 \,\mu(P)\\
		& \leq C C_0^{6d+1} A_0^{\gamma-1} \sum_{R\in\wt \DD_{\mu,k-1}(Q)}
		\sum_{P\in \DD_\mu^1(2R)}\left(\frac{\ell(P)}{\ell(R)}\right)^\gamma\, \theta_\mu(2B_P)^2\,\mu(P).
	\end{align*}
	Iterating the preceding estimate, we obtain
	\begin{align*}
		\sum_{P\in\wt \DD_{\mu,k+1}(Q)} 
		\theta_\mu(2B_P)^2 \,\mu(P) & \leq 
		\sum_{R\in\wt \DD_{\mu,k}(Q)}
		\sum_{P\in \DD_{\mu,1}(2R)} \theta_\mu(2B_P)^2 \,\mu(P)\\
		& \leq A_0^\gamma\sum_{R\in\wt \DD_{\mu,k}(Q)}
		\sum_{P\in \DD_{\mu}^1(2R)} \left(\frac{\ell(P)}{\ell(R)}\right)^\gamma \theta_\mu(2B_P)^2 \,\mu(P)\\
		& \leq A_0^\gamma\,(C C_0^{6d+1} A_0^{\gamma-1})^k \sum_{P\in \DD_\mu^1(2Q)}\left(\frac{\ell(P)}{\ell(Q)}\right)^\gamma\, \theta_\mu(2B_P)^2\,\mu(P).
	\end{align*}
	Therefore,
	\begin{align*}
		&\sum_{P\in\wt \DD_\mu(Q)} \left(\frac{\ell(Q)}{\ell(P)}\right)^{(1-\gamma)/2}\theta_\mu(2B_P)^2
		\,\mu(P)\\
		& \quad\leq \theta_\mu(2B_Q)^2\,\mu(Q) + \sum_{k\geq1} A_0^{(1-\gamma) k/2}\sum_{P\in\wt \DD_{\mu,k}(Q)} \theta_\mu(2B_P)^2
		\,\mu(P)\\
		& \quad\leq \theta_\mu(2B_Q)^2\,\mu(Q) + A_0^\gamma\sum_{k\geq1} A_0^{(1-\gamma) k/2}(C C_0^{6d+1} A_0^{\gamma-1})^{k-1}\!\!\!\!
		\sum_{P\in \DD_\mu^1(2Q)}\!\left(\frac{\ell(P)}{\ell(Q)}\right)^\gamma\, \theta_\mu(2B_P)^2\,\mu(P)\\
		& \quad\lesssim_{A_0,\gamma} \sum_{P\in \DD_\mu(2Q)}\left(\frac{\ell(P)}{\ell(Q)}\right)^\gamma\, \theta_\mu(2B_P)^2\,\mu(P),
	\end{align*}
	taking $A_0$ big enough for the last estimate. This yields \rf{eqfhq29}.
\end{proof}

\vv


\section{\texorpdfstring{$\PP$}{P}-doubling cubes, cubes dominated from below, and the Main Propositions}\label{sec:Pdoubling}

In the rest of the paper we assume that $\mu$ is a compactly supported Radon measure in $\R^{n+1}$ with polynomial 
growth of degree $n$ and
that $\DD_\mu$ is a David-Mattila dyadic lattice satisfying the properties 
described in the preceding section, in particular, the ones in Lemmas \ref{lemcubs}
and \ref{lemDMimproved}, with $\gamma=9/10$.
By rescaling, we assume that $\DD_{\mu,k}$ is defined for all $k\geq k_0$, with $A_0^{-k_0}\approx
\diam(\supp\mu)$, and we also assume that there is a unique cube in $\DD_{\mu,k_0}$ which coincides
with the whole $\supp\mu$.
Further, from now on, we allow all the constants $C$ and all implicit constants in the relations
``$\lesssim$'', ``$\approx$", to depend on the parameters $C_0, A_0$ of the dyadic lattice of David-Mattila.

\subsection{\texorpdfstring{$\PP$}{P}-doubling cubes and the family \texorpdfstring{$\hd^k(Q)$}{hdk(Q)}}\label{subsec:Pdoubling}

We denote
$$\PP(Q) = \sum_{R\in\DD_\mu:R\supset Q} \frac{\ell(Q)}{\ell(R)^{n+1}} \,\mu(2B_R).$$
We say that a cube $Q$ is $\PP$-doubling if
$$\PP(Q) \leq C_d\,\frac{\mu(2B_Q)}{\ell(Q)^n},$$
for  $C_d =4A_0^n$. We write
$$Q\in\DD_\mu^\PP.$$
Notice that
$$\PP(Q) \approx_{C_0} \sum_{R\in\DD_\mu:R\supset Q} \frac{\ell(Q)}{\ell(R)} \,\theta_\mu(2B_R).$$
and thus saying that $Q$ is $\PP$-doubling implies that 
$$\sum_{R\in\DD_\mu:R\supset Q} \frac{\ell(Q)}{\ell(R)} \,\theta_\mu(2B_R)\leq C_d'\,\theta_\mu(2B_Q)$$
for some $C_d'$ depending on $C_d$. Conversely, the latter condition implies that $Q$ is $\PP$-doubling with another constant $C_d$ depending on $C_d'$.

From the properties of the David-Mattila lattice, we deduce the following.

\begin{lemma}\label{lempois00}
	Suppose that $C_0$ and $A_0$ are chosen suitably. If $Q$ is $\PP$-doubling, then $Q\in\DD_\mu^{db}$.
	Also, any cube $R\in\DD_\mu$ such that $R\cap 2Q\neq\varnothing$ and $\ell(R)=A_0\ell(Q)$ belongs to $\DD_\mu^{db}$.
\end{lemma}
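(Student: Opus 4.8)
The plan is to derive both assertions directly from the defining inequality $\PP(Q)\le C_d\,\ell(Q)^{-n}\mu(2B_Q)$ together with the separation and small-boundary properties of the David--Mattila lattice listed in Lemma~\ref{lemcubs}. First I would observe that, since $B_Q\subset Q$ up to the scaling factor $28$ and the ancestors $R\supset Q$ form a chain with $\ell(R)$ increasing geometrically (by factors $A_0$), the term $R=Q$ alone contributes $\mu(2B_Q)/\ell(Q)^n$ to $\PP(Q)$ after the normalisation $\PP(Q)\approx_{C_0}\sum_{R\supset Q}\tfrac{\ell(Q)}{\ell(R)}\theta_\mu(2B_R)$. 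What we need is the \emph{reverse}: control $\mu(3B_Q)$ or $\mu(100B(Q))$ by $\mu(B(Q))$, i.e.\ the doubling condition \eqref{eqdob22}. Suppose $Q\notin\DD_\mu^{db}$; then $r(Q)=A_0^{-k}$ (the minimal value) by the last bullet of Lemma~\ref{lemcubs}, and moreover \eqref{eqdob23} gives $\mu(100B(Q))\le C_0^{-l}\mu(100^{l+1}B(Q))$ for $100^l\le C_0$. The point is that a non-doubling cube has anomalously \emph{small} mass compared to its concentric dilates, hence the ball $2B_Q$ (which is comparable to $100B(Q)$ after adjusting constants) is much heavier than $Q$ relative to its radius, which forces some ancestor $R$ with $R\cap 2Q\neq\varnothing$ to carry substantial density and inflate $\PP(Q)$.

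More precisely, for the first assertion I would argue by contradiction. Assume $Q$ is $\PP$-doubling but $Q\notin\DD_\mu^{db}$. Consider the first doubling ancestor: by Lemma~\ref{lemcobdob} and Lemma~\ref{lemcad22}/\ref{lemcad23} applied to the chain from $Q$ up to its nearest doubling ancestor $R_0$, all intermediate cubes are non-doubling, so by Lemma~\ref{lemcad22} one has $\mu(100B(Q))\le A_0^{-10d(j-k-1)}\mu(100B(R_0))$ where $\ell(R_0)=A_0^{-k}$ with $k\le j$. But the David--Mattila lattice is constructed so that there is always a nearby doubling cube of comparable or larger scale whose $2B$-ball overlaps $Q$ — in particular, by choosing $A_0=C_0^{C(d)}$ sufficiently large (Remark~\ref{rema00}), one can guarantee that for any $Q$ there exists $R\in\DD_\mu$ with $R\cap 2Q\neq\varnothing$, $\ell(R)=A_0\ell(Q)$, and $\mu(2B_R)\gtrsim \mu(2B_Q)$ when $Q$ is non-doubling (because the mass escaping $B(Q)$ must land somewhere at the parent scale). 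Feeding this into $\PP(Q)\gtrsim \tfrac{\ell(Q)}{\ell(R)}\theta_\mu(2B_R)=A_0^{-1}\theta_\mu(2B_R)$ and noting $\theta_\mu(2B_R)\gtrsim A_0^{-n}\theta_\mu(2B_Q)/C_0^{\text{something}}$ would still only give a lower bound of order $\theta_\mu(2B_Q)$, so instead I would iterate: non-doubling forces a \emph{long} chain of ancestors each contributing a definite fraction, and summing the geometric-type series over that chain yields $\PP(Q)\ge K\,\theta_\mu(2B_Q)$ with $K$ as large as we like by taking $A_0$ large, contradicting $\PP(Q)\le C_d'\theta_\mu(2B_Q)$ with the \emph{fixed} constant $C_d'=C_d'(A_0^n)$. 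The key mechanism is that the $\PP$-doubling bound is a bound by $C_d\approx A_0^n$ times the density, and non-doublingness of $Q$ together with \eqref{eqdob23} produces an ancestor contribution strictly exceeding this once $A_0$ is large enough relative to $C_0$.

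For the second assertion, fix $R\in\DD_\mu$ with $R\cap 2Q\neq\varnothing$ and $\ell(R)=A_0\ell(Q)$, and suppose toward contradiction that $R\notin\DD_\mu^{db}$. Then again $r(R)$ is the minimal radius at its generation and \eqref{eqdob23} applies to $R$. I would estimate $\mu(2B_R)$ from below using $Q\subset 3B_R$ (which holds since $\ell(R)=A_0\ell(Q)$ and the centers are within $\approx 2\ell(Q)$): thus $\mu(2B_R)\le C_0^{-1}\mu(200B(R))$ forces $\mu(B(R))$ to be tiny, yet $\mu(B(R))\gtrsim\mu$ of something comparable to $B_Q$ which, via the $\PP$-doubling lower bound inherited from $Q$ being $\PP$-doubling (so $Q\in\DD_\mu^{db}$ by the first part, giving $\mu(3B_Q)\lesssim\mu(Q)$), cannot be that small. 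Chaining these inequalities produces a contradiction once $C_0$ (and hence $A_0=C_0^{C(d)}$) is large. The \textbf{main obstacle} I anticipate is making the chain-of-ancestors summation in the first part fully rigorous: one must carefully track the constants so that the accumulated lower bound for $\PP(Q)$ genuinely beats the \emph{fixed} constant $C_d\approx A_0^n$ rather than scaling with it — this requires using the small-boundary/small-mass estimate \eqref{eqsmb2} or \eqref{eqdob23} quantitatively, so that each non-doubling step gains a factor like $C_0$ in the density ratio, and then invoking $A_0=C_0^{C(d)}$ with $C(d)$ chosen so that $C_0^{C(d)}$ dominates $A_0^n=C_0^{nC(d)}$ — wait, that needs $C(d)>nC(d)$, impossible — so the correct route must instead be that the \emph{number} of non-doubling steps in the chain is what grows, giving a sum that beats any fixed multiple of the density; getting this bookkeeping right, rather than the conceptual picture, is the delicate point.
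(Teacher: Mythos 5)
There is a genuine gap in the first part. Your argument envisions summing the contributions of a ``long chain of non-doubling ancestors'', and you correctly observe that under your own quantitative reading (each non-doubling generation gains only a factor $C_0$) the arithmetic cannot beat $C_d\approx A_0^n=C_0^{nC(n)}$: a bounded number of steps gives only $C_0^{O(1)}$. Your proposed repair --- that the chain length necessarily grows --- is not available: a non-doubling cube may perfectly well have a doubling parent, so there is no lower bound whatsoever on the length of the chain, and the argument as sketched does not close.

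The resolution is that you underestimated the quantitative strength of the non-doubling inequality \rf{eqdob23}. Applied at a \emph{single} generation, it says $\mu(100B(Q))\le C_0^{-l}\,\mu(100^{l+1}B(Q))$ for every $l$ with $100^l\le C_0$; taking $l$ as large as admissible, $l\sim\log C_0/\log 100$, yields $\mu(2B_Q)\le C_0^{-c\log C_0}\mu(2B_{Q'})$ for the parent $Q'$. This is super-polynomial decay in $C_0$, not a single factor $C_0$. Against it stands only a fixed polynomial $A_0^{2n+1}=C_0^{(2n+1)C(n)}$ (with $C(n)$ fixed once and for all by Remark \ref{rema00}), so $4C_0^{-c\log C_0}A_0^{2n+1}<1$ once $C_0$ is large. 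The paper then concludes in one step: from the $R=Q'$ term of $\PP(Q)$ and the $\PP$-doubling hypothesis, $\mu(2B_{Q'})\le C_d\,(\ell(Q')/\ell(Q))^{n+1}\mu(2B_Q)=4A_0^{2n+1}\mu(2B_Q)$, and combining with the super-polynomial penalty gives $\mu(2B_Q)\le 4C_0^{-c\log C_0}A_0^{2n+1}\mu(2B_Q)<\mu(2B_Q)$, a contradiction. No chain, no geometric series, and no appeal to Lemma \ref{lemcad22} is needed.

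For the second assertion, your outline points in the right direction but leaves the link ``$\mu(B(R))\gtrsim$ something comparable to $B_Q$'' unsubstantiated --- there is no reason $Q\subset B(R)$. The paper's argument instead uses only ball inclusions: $2B_Q\subset 2B_R$ because $r(B_R)=28r(R)\gg\ell(Q)$, and $2B_{R'}\subset 2B_{Q'''}$ where $R'$ is the parent of $R$ and $Q'''$ the great-grandparent of $Q$. Bounding $\mu(2B_{Q'''})$ again by a single term of $\PP(Q)$ (using that $Q$ is $\PP$-doubling) gives $\mu(2B_{R'})\le 4A_0^{4n+3}\mu(2B_R)$, and the same $C_0^{-c\log C_0}$ penalty for $R\notin\DD_\mu^{db}$ produces the contradiction. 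You should rebuild your second paragraph along these lines.
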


\begin{proof}
	Let $Q\in\DD_\mu^\PP$. Regarding the fist statement of the lemma, if $Q\not\in\DD_\mu^{db}$, by \rf{eqdob23} we have
	$$\mu(2B_Q)\leq \mu(100B(Q))\leq C_0^{-l}\,\mu(100^{l+1}B(Q))\quad
	\mbox{for all $l\geq1$ with $100^l\leq C_0$}.$$
	In particular, if $Q'$ denotes the parent of $Q$,
	$$\mu(2B_Q)\leq C_0^{-l}\,\mu(2B_{Q'})\quad
	\mbox{for all $l\geq1$ with $100^l\leq C_0$}.$$
	So,
	\begin{equation}\label{eqigu8208}
		\mu(2B_Q)\leq C_0^{-c\log C_0}\,\mu(2B_{Q'})
	\end{equation}
	for some $c>0$. Using now that $Q$ is $\PP$-doubling, we get
	$$\mu(2B_Q)\leq C_0^{-c\log C_0}\,C_d\,\frac{\ell(Q')^{n+1}}{\ell(Q)^{n+1}} \mu(2B_Q)
	= 4C_0^{-c\log C_0}\,A_0^{2n+1} \mu(2B_Q)
	.$$
	Recall now that, as explained in Remark \ref{rema00}, we assume that $A_0=C_0^{C(n)}$, for some constant
	$C(n)$ depending just on $n$. Then, clearly, the preceding inequality fails if $C_0$ is big enough, which gives the desired contradiction.
	\vv
	
	To prove the second statement of the lemma, suppose that $Q\in\DD_{\mu,k}^\PP$ and let $R\in\DD_\mu$ be such that $R\cap 2Q\neq\varnothing$ and $\ell(R)=A_0\ell(Q)$.
	By the definition and the fact that $R\subset B_R$ we get
	$$|x_Q-x_R|\leq 3\ell(Q) + r(B_R).$$
	Since $$r(B_R)=28\,r(R) \geq 28A_0^{-k+1} = \frac12\,C_0^{-1}A_0\,\ell(Q)\geq 2500\,\ell(Q),$$
	we deduce that 
	$$2B_Q\subset 2B_R.$$
	If $R'$ denotes the parent of $R$ and $Q'''$ the great-grandparent of $Q$ (so that $\ell(Q''')=A_0^3\ell(Q)=A_0\ell(R')$), by an analogous argument we infer that
	$$2B_{R'}\subset 2B_{Q'''}.$$
	Then, using also that $Q$ is $\PP$-doubling, we obtain
	$$\mu(2B_{R'})\leq \mu(2B_{Q'''}) \leq C_d\,\left(\frac{\ell(Q''')}{\ell(Q)}\right)^{n+1}\!\mu(2B_Q)
	\leq C_d\,A_0^{3(n+1)}\,\mu(2B_R) = 4A_0^{4n+3}\,\mu(2B_R).$$
	If $R\not\in\DD_\mu^{db}$, arguing as in \rf{eqigu8208}, we infer that
	$$\mu(2B_R)\leq C_0^{-c\log C_0}\,\mu(2B_{R'}),$$
	which contradicts the previous statement if $C_0$ is big enough (recalling that $A_0=C_0^{C(n)}$).
\end{proof}

\vv
Notice that, by the preceding lemma, if $Q$ is $\PP$-doubling, then 
$$\sum_{R\in\DD_\mu:R\supset Q} \frac{\ell(Q)^{n+1}}{\ell(R)^{n+1}} \,\mu(2B_R) \lesssim_{C_d} \mu(Q).$$

For technical reasons that will be more evident below, it is appropriate to consider a discrete version of the density $\theta_\mu$. Given  $Q\in\DD_\mu$, we let
$$\Theta(Q) = A_0^{kn} \quad \mbox{ if\, $\dfrac{\mu(2B_Q)}{\ell(Q)^n}\in [A_0^{kn},A_0^{(k+1)n})$}.$$
Clearly, $\Theta(Q)\approx \theta_\mu(2B_Q)$.
Notice also that if $\Theta(Q) = A_0^k$ and $P$ is a son of $Q$, then
$$
\frac{\mu(2 B_P)}{\ell(P)^n} \leq \frac{\mu(2 B_Q)}{\ell(P)^n} = A_0^n\,\frac{\mu(2 B_Q)}{\ell(Q)^n}.
$$
Thus,
\begin{equation}\label{eqson1}
	\Theta(P)\leq A_0^n\,\Theta(Q)\quad \mbox{  for every son $P$ of $Q$.}
\end{equation}

Given $Q\in\DD_\mu$ and $k\geq1$, we denote by $\hd^k(Q)$ the family of maximal cubes $P\in\DD_\mu$ satisfying
\begin{equation}\label{a0tilde}
	\ell(P)<\ell(Q), \qquad \Theta(P)\geq  A_0^{kn}\Theta(Q).
\end{equation}

\vv
\begin{lemma}\label{lempdoubling}
	Let $Q\in\DD_\mu$ be $\PP$-doubling. Then, for $k\geq4$, every $P\in\hd^k(Q)\cap\DD_\mu(9Q)$ is also $\PP$-doubling
	and moreover $\Theta(P)=A_0^{kn}\Theta(Q)$.
\end{lemma}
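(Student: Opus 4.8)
The plan is to show that if $P\in\hd^k(Q)\cap\DD_\mu(9Q)$ with $k\geq 4$, then (i) $\Theta(P)=A_0^{kn}\Theta(Q)$ exactly, and (ii) $P$ is $\PP$-doubling. For (i), the key point is that $P$ is a \emph{maximal} cube satisfying \rf{a0tilde}, so its parent $P'$ fails \rf{a0tilde}. Since $\ell(P')\leq\ell(Q)$ (because $P$ is strictly smaller than $Q$ inside the David-Mattila lattice, where generations change by factor $A_0$), the only way $P'$ can fail \rf{a0tilde} is $\Theta(P')<A_0^{kn}\Theta(Q)$. Combined with \rf{eqson1} applied to the pair $P\subset P'$, namely $\Theta(P)\leq A_0^n\Theta(P')$, and the discreteness of $\Theta$ (its values are powers $A_0^{jn}$), this forces $\Theta(P')=A_0^{(k-1)n}\Theta(Q)$ and hence $\Theta(P)\in\{A_0^{(k-1)n}\Theta(Q),A_0^{kn}\Theta(Q)\}$; the lower bound in \rf{a0tilde} then pins it down to $\Theta(P)=A_0^{kn}\Theta(Q)$.

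\textbf{Doubling of $P$.} For (ii), I would estimate the Poisson-type sum $\PP(P) = \sum_{R\in\DD_\mu:R\supset P}\tfrac{\ell(P)}{\ell(R)}\,\theta_\mu(2B_R)$ (using the equivalent formulation after the definition of $\DD_\mu^\PP$) by splitting it at the scale of $Q$. For cubes $R\supset P$ with $\ell(R)\geq\ell(Q)$: such $R$ either contain $Q$ or are among the boundedly many cubes of each generation meeting $9Q$; in either case, since $Q$ is $\PP$-doubling and $P\subset 9Q$, one controls $\sum_{R\supset P,\,\ell(R)\geq\ell(Q)}\tfrac{\ell(P)}{\ell(R)}\theta_\mu(2B_R)$ by $\tfrac{\ell(P)}{\ell(Q)}\cdot C_d'\,\Theta(Q)$-type quantities — crucially this is $\lesssim \tfrac{\ell(P)}{\ell(Q)}\,\Theta(Q)$, and since $\Theta(P)=A_0^{kn}\Theta(Q)\gg \Theta(Q)$ with $k\geq 4$ giving a large gain, this part is $\ll\Theta(P)$. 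For the cubes $R$ with $P\subset R\subsetneq Q$ (the ``intermediate'' scales), I would use the maximality of $P$: every such $R$ has $\ell(R)<\ell(Q)$ but $R\supsetneq P$, so $R$ does \emph{not} satisfy \rf{a0tilde} (else it would contradict maximality of $P$, or be an ancestor violating it), whence $\Theta(R)<A_0^{kn}\Theta(Q)=\Theta(P)$, i.e. $\theta_\mu(2B_R)\lesssim A_0^{-n}\Theta(P)\cdot$(something); summing the geometric series $\sum_{P\subset R\subsetneq Q}\tfrac{\ell(P)}{\ell(R)}$ and using this density bound gives a contribution $\lesssim \Theta(P)$ with a small constant. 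Adding the $R=P$ term $\theta_\mu(2B_P)\approx\Theta(P)$ and collecting everything yields $\PP(P)\lesssim \Theta(P)\approx \tfrac{\mu(2B_P)}{\ell(P)^n}$, with a constant one can arrange to be $\leq C_d$ by taking $A_0$ large (this is where $k\geq 4$ matters — it provides enough decay from the ancestor-of-$Q$ scales to beat the accumulated constants).

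\textbf{Main obstacle.} The delicate point is bookkeeping the intermediate scales $P\subset R\subsetneq Q$: one must argue carefully that maximality of $P$ (not just that $P$ itself satisfies \rf{a0tilde}) forces every strict ancestor $R\subsetneq Q$ of $P$ to have $\Theta(R)<A_0^{kn}\Theta(Q)$, and then see that the geometric sum of $\ell(P)/\ell(R)$ over these scales, weighted by densities bounded by $A_0^{(k)n}\Theta(Q)$ but with the correct scale factor, still telescopes to something $\lesssim \Theta(P)$ rather than blowing up by the number of generations. A subtlety is that $\Theta(R)$ can be as large as just below $A_0^{kn}\Theta(Q)\approx\Theta(P)$, so one genuinely needs the $\ell(P)/\ell(R)$ factors (which decay geometrically as $R$ grows) to do the summing, together with the fact that $\DD_\mu^{db}$-membership of $Q$ (guaranteed by Lemma \ref{lempois00}) and of the intermediate non-doubling chains (controlled via Lemma \ref{lemcad23}) keeps $\mu(2B_R)$ from being wildly larger than $\mu(Q)$. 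Once this is organized, the constants are chosen via Remark \ref{rema00} ($A_0=C_0^{C(n)}$, $C_0$ large) exactly as in Lemma \ref{lempois00}.
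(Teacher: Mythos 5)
There is a genuine gap in your argument for the equality $\Theta(P)=A_0^{kn}\Theta(Q)$. You write that since $\ell(P')\leq\ell(Q)$, ``the only way $P'$ can fail \rf{a0tilde} is $\Theta(P')<A_0^{kn}\Theta(Q)$.'' This is false when $\ell(P')=\ell(Q)$: in that case $P'$ fails \rf{a0tilde} simply because the strict size condition $\ell(P')<\ell(Q)$ is violated, and maximality of $P$ tells you nothing about $\Theta(P')$. A priori $\Theta(P')$ could be arbitrarily large (a cube of the same generation as $Q$ inside $9Q$ could have much higher density), in which case the chain $\Theta(P)\leq A_0^n\Theta(P')$ gives you no upper bound and the conclusion fails. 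This case cannot be dismissed: it occurs precisely when $\ell(P)=A_0^{-1}\ell(Q)$, which is allowed.

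What you are missing is that the $\PP$-doubling of $Q$ is needed \emph{already for part (i)}, not only for part (ii). The paper's proof first shows that for every cube $S\subset 9Q$ with $\ell(S)=\ell(Q)$ one has $\Theta(S)\leq A_0^{3n}\Theta(Q)$; the chain of inequalities is
$$\frac{\mu(2B_S)}{\ell(S)^n}\leq\frac{\mu(2B_{\wh Q})}{\ell(Q)^n}=A_0^n\frac{\mu(2B_{\wh Q})}{\ell(\wh Q)^n}\leq A_0^{n+1}\PP(Q)\leq C_d A_0^{n+1}\frac{\mu(2B_Q)}{\ell(Q)^n},$$
where $\wh Q$ is the parent of $Q$ and the $\PP$-doubling of $Q$ is used in the last step. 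Since $k\geq 4$, this gives $\Theta(S)<A_0^{kn}\Theta(Q)$, so no cube at generation $\ell(Q)$ inside $9Q$ can satisfy the density threshold; in particular the parent $\wh P$ fails it whether $\ell(\wh P)<\ell(Q)$ (by maximality) or $\ell(\wh P)=\ell(Q)$ (by the estimate above), and then $\Theta(P)\leq A_0^n\Theta(\wh P)\leq A_0^{kn}\Theta(Q)$ closes the argument. This is also the source of the $k\geq 4$ hypothesis in part (i); your sketch only invokes $k\geq 4$ in part (ii).

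Your argument for the $\PP$-doubling of $P$ follows the paper's decomposition of $\PP(P)$ at the scale $\ell(Q)$ and is essentially correct, modulo small slips: the intermediate-scale density bound should read $\theta_\mu(2B_R)\leq A_0^n\Theta(R)\leq A_0^n\Theta(P)$ (not $A_0^{-n}\Theta(P)$), and the resulting contribution is $\leq 2A_0^n\Theta(P)$, which is not small but is fine because $C_d=4A_0^n$; and the invocation of Remark \ref{rema00} is unnecessary here, since no further largeness of $A_0$ is required beyond what is built into $C_d$.
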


Remark that this lemma implies that, under the assumptions in the lemma,
\begin{equation}\label{eqforakdf33}
\Theta(P)\approx A_0^{nk}\Theta(Q)\quad\mbox{ for all $P\in\hd^k(Q)\cap\DD_\mu(9Q)$
and all
 $k\geq1$.}
\end{equation}

\begin{proof}
	First we show that $\Theta(P)=A_0^{kn}\Theta(Q)$. 
	The fact that $\Theta(P)\geq A_0^{kn}\,\Theta(Q)$ is clear. To see the converse inequality,
	denote by $\wh Q$ the parent of $Q$. Notice that any cube $S\subset 9Q$ with $\ell(S)=\ell(Q)$
	satisfies
	$$\frac{\mu(2B_S)}{\ell(S)^n}\leq \frac{\mu\big(2B_{\wh Q}\big)}{\ell(Q)^n} = A_0^{n}\frac{\mu(2B_{\wh Q})}{\ell(\wh Q)^n} \leq A_0^{n+1}\,\PP(Q) \leq C_d\,A_0^{n+1}\,\frac{\mu(2B_Q)}{\ell(Q)^n} < A_0^{3n}\,\frac{\mu(2B_Q)}{\ell(Q)^n}.$$
	Therefore,
	$$\Theta(S)\leq A_0^{3n}\,\Theta(Q).$$
	As a consequence, if $P\in\hd^k(Q)\cap\DD_\mu(9Q)$ with $k\geq4$, then its parent $\wh P$ satisfies
	$\Theta(\wh P)<A_0^{kn}\,\Theta(Q)$, which implies that $\Theta(P)\leq A_0^{kn}\,\Theta(Q)$.
	
	\vv
	To see that $P$ is $\PP$-doubling,
	we split
	\begin{equation}\label{eqvdx1}
		\PP(P) =\sum_{\substack{R\in\DD_\mu:R\supset P\\ \ell(R)\leq \ell(Q)}} \frac{\ell(P)}{\ell(R)^{n+1}} \,\mu(2B_R)+
		\sum_{\substack{R\in\DD_\mu:R\supset P\\ \ell(R)> \ell(Q)}}\frac{\ell(P)}{\ell(R)^{n+1}} \,\mu(2B_R).
	\end{equation}
	The cubes $R$ in the first sum on the right hand side satisfy
	$\Theta(R)\leq \Theta(P)$, by the definition of $\hd^k(Q)$. Thus,
	$$\sum_{\substack{R\in\DD_\mu:R\supset P\\ \ell(R)\leq \ell(Q)}} \frac{\ell(P)}{\ell(R)^{n+1}} \,\mu(2B_R)\leq 
	A_0^n 
	\sum_{\substack{R\in\DD_\mu:R\supset P\\ \ell(R)\leq \ell(Q)}}  \frac{\ell(P)}{\ell(R)} \,\Theta(R)\leq 
	2A_0^n\,\Theta(P)\leq 
	2A_0^n\,\frac{\mu(2B_P)}{\ell(P)^n}.$$
	Concerning the last sum in \rf{eqvdx1}, notice that the cubes $R$ in that sum satisfy $\ell(R)
	>\ell(Q)$.
	Using that $A_0\gg1$, it follows easily that $2B_R\subset 2B_{R'}$, where $R'$ is the cube containing $Q$ such that $\ell(R')=A_0\,\ell(R)$.
	Consequently, denoting by $\wh Q$ the parent of $Q$,
	\begin{align*}
		\sum_{\substack{R\in\DD_\mu:R\supset P\\ \ell(R)> \ell(Q)}}\frac{\ell(P)}{\ell(R)^{n+1}} \,\mu(2B_R)
		& \leq \sum_{R'\in\DD_\mu:R'\supset \wh Q} \frac{\ell(P)}{A_0^{-1}\ell(R')} \,\frac{\mu(2B_{R'})}{(A_0^{-1} \ell(R'))^n}\\
		& = A_0^{n+1} \frac{\ell(P)}{\ell(Q)} 
		\sum_{R'\in\DD_\mu:R'\supset \wh Q} \frac{\ell(Q)}{\ell(R')^{n+1}} \,\mu(2B_{R'})\\
		& \leq A_0^{n}\,\PP(Q) \leq A_0^{n}C_d\,\frac{\mu(2Q)}{\ell(Q)^n} \leq  A_0^{2n}C_d\,\Theta(Q)\\
		& 
		\leq \frac{A_0^{2n}C_d}{A_0^{4n}}\,\Theta(P)\leq \frac{C_d}{A_0^{2n}}\,\frac{\mu(2B_P)}{\ell(P)^n},
	\end{align*}
	where in the last two lines we took into account that $\ell(P)\leq A_0^{-1}\ell(Q)$ (because $P\in\hd^k(Q)$ for some $k\geq4$), that 
	$Q$ is $\PP$-doubling, and again that $P\in\hd^k(Q)$ for some $k\geq4$.
	
	From the estimates above, we infer that
	$$\PP(P) \leq \bigg(2A_0^n + \frac{C_d}{A_0^{2n}}\bigg) \,\frac{\mu(2B_P)}{\ell(P)^n} \leq C_d\,\frac{\mu(2B_P)}{\ell(P)^n},$$
	since $C_d= 4A_0^n$.
\end{proof}
\vv

\begin{lemma}\label{lemdobpp}
	Let $Q_0,Q_1,\ldots,Q_m$ be a family of cubes from $\DD_\mu$ such that $Q_j$ is a child of $Q_{j-1}$ for $1\leq j\leq 
	m$. Suppose that $Q_j$ is not $\PP$-doubling for $1\leq j\leq m$.
	Then
	\begin{equation}\label{eqcad35}
		\frac{\mu(2B_{Q_m})}{\ell(Q_m)^n}\leq A_0^{-m/2}\,\PP(Q_0).
	\end{equation}
	and
	\begin{equation}\label{eqcad35'}
		\PP(Q_m)\leq 2A_0^{-m/2}\,\PP(Q_0).
	\end{equation}
	
\end{lemma}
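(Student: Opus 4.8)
The plan is to recall the definition $\PP(Q) = \sum_{R\in\DD_\mu:R\supset Q}\frac{\ell(Q)}{\ell(R)^{n+1}}\,\mu(2B_R)$ and to exploit the fact that a non-$\PP$-doubling cube contributes only a small fraction of its own density to the Poisson-type sum of its parent. First I would set $\theta(Q)=\frac{\mu(2B_Q)}{\ell(Q)^n}$ and observe the telescoping relation between consecutive generations: if $Q_j$ is a child of $Q_{j-1}$, then a direct rearrangement gives
\begin{equation*}
\PP(Q_j) = \frac{\ell(Q_j)}{\ell(Q_{j-1})}\,\PP(Q_{j-1}) + \theta(Q_j) = A_0^{-1}\,\PP(Q_{j-1}) + \theta(Q_j),
\end{equation*}
since passing from $Q_{j-1}$ to $Q_j$ multiplies each weight $\ell(\cdot)/\ell(R)^{n+1}$ by $\ell(Q_j)/\ell(Q_{j-1}) = A_0^{-1}$ and adds the single new term $R=Q_j$, which equals $\mu(2B_{Q_j})/\ell(Q_j)^{n} = \theta(Q_j)$ (using $\ell(Q_j)/\ell(Q_j)^{n+1}=\ell(Q_j)^{-n}$, up to the harmless normalization constant in the definition of $\PP$; I would track the exact constant from Section \ref{subsec:Pdoubling}).

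Next I would use the hypothesis: since $Q_j\notin\DD_\mu^\PP$, by definition $\theta(Q_j) < C_d^{-1}\,\PP(Q_j)$, equivalently $\PP(Q_j) > C_d\,\theta(Q_j)$. Plugging this into the recursion, $\PP(Q_j) = A_0^{-1}\PP(Q_{j-1}) + \theta(Q_j) < A_0^{-1}\PP(Q_{j-1}) + C_d^{-1}\PP(Q_j)$, so
\begin{equation*}
\PP(Q_j) \leq \frac{A_0^{-1}}{1 - C_d^{-1}}\,\PP(Q_{j-1}).
\end{equation*}
Recalling $C_d = 4A_0^n$ with $A_0$ large, the factor $\tfrac{A_0^{-1}}{1-C_d^{-1}}$ is at most $A_0^{-1/2}$ (in fact much smaller), which already gives, by iterating $m$ times, $\PP(Q_m)\leq A_0^{-m/2}\,\PP(Q_0)$ — even slightly stronger than \rf{eqcad35'} (the factor $2$ there is just slack). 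Then \rf{eqcad35} follows immediately, since $\theta(Q_m) = \frac{\mu(2B_{Q_m})}{\ell(Q_m)^n} \leq C_d^{-1}\,\PP(Q_m) \leq A_0^{-m/2}\,\PP(Q_0)$ using $C_d\geq 1$ (and again one can absorb the extra $C_d^{-1}$ factor, which only helps).

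The main thing to get right is the bookkeeping of the constant in the definition of $\PP$ — specifically whether $\PP(Q)$ is defined with $\ell(Q)^n$ or with the normalization used in Section \ref{subsec:Pdoubling} — so that the ``new term'' in the telescoping identity is exactly $\theta(Q_j)$ up to a controlled constant; all the analytic content is the one-line recursion above. I do not anticipate any real obstacle: once the recursion $\PP(Q_j)=A_0^{-1}\PP(Q_{j-1})+\theta(Q_j)$ is established, the non-doubling hypothesis turns it into a genuine geometric decay, and both \rf{eqcad35} and \rf{eqcad35'} drop out by a trivial induction on $m$. I would present it as: (1) state the telescoping identity; (2) insert the non-$\PP$-doubling inequality to get contraction by $A_0^{-1/2}$; (3) iterate; (4) read off \rf{eqcad35} from \rf{eqcad35'} via the non-doubling inequality at the last cube.
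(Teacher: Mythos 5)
Your proposal is correct, and in fact yields a slightly sharper bound than the one stated (you get $\PP(Q_m)\leq A_0^{-m/2}\PP(Q_0)$ without the factor of $2$). The central identity you isolate,
\begin{equation*}
\PP(Q_j) = A_0^{-1}\,\PP(Q_{j-1}) + \frac{\mu(2B_{Q_j})}{\ell(Q_j)^n},
\end{equation*}
is exact (no normalization constant lurks in the definition $\PP(Q)=\sum_{R\supset Q}\ell(Q)\ell(R)^{-n-1}\mu(2B_R)$: the $R=Q$ term is literally $\mu(2B_Q)/\ell(Q)^n$, and $\{R\supsetneq Q_j\}=\{R\supset Q_{j-1}\}$), so your telescoping step is rigorous as stated.

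The paper takes a different route. Rather than isolating a one-step recursion for $\PP$, it works with $\wt\Theta(Q_j)=\mu(2B_{Q_j})/\ell(Q_j)^n$ directly, unwinding the full Poisson sum $\PP(Q_j)=\sum_{k=0}^{j-1}A_0^{-k}\wt\Theta(Q_{j-k})+A_0^{-j}\PP(Q_0)$ and running a strong induction on $j$ to prove the density bound \rf{eqcad35} first, with a geometric-series estimate $\sum_{k=1}^j A_0^{-k}A_0^{(-j-1+k)/2}\le A_0^{-j/2}$ inside the inductive step; \rf{eqcad35'} is then derived a posteriori from \rf{eqcad35}, which is where the factor $2$ enters. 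Your proof inverts the order: the contraction $\PP(Q_j)\le \tfrac{A_0^{-1}}{1-C_d^{-1}}\PP(Q_{j-1})\le A_0^{-1/2}\PP(Q_{j-1})$ gives \rf{eqcad35'} in one line by iteration, and then \rf{eqcad35} drops out from the non-doubling inequality at the last cube. This is shorter, avoids the nested induction, and makes the mechanism — non-$\PP$-doubling turns the trivial decay $A_0^{-1}$ of $\PP$ under one generation into a uniform contraction after absorbing the $\theta$-term — more transparent. The only cosmetic gap is the degenerate case $m=0$: your step (4) invokes non-$\PP$-doubling of $Q_m$, which is not assumed when $m=0$; but there \rf{eqcad35} reduces to the tautology $\wt\Theta(Q_0)\le\PP(Q_0)$ (the $R=Q_0$ term of the Poisson sum), which is how the paper handles its base case as well. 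Worth a sentence, nothing more.
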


\begin{proof}
	Let us denote
	$\wt\Theta(R) = \frac{\mu(2B_R)}{\ell(R)^n},$ so that
	$$\PP(Q) = \sum_{R\in\DD_\mu:R\supset Q} \frac{\ell(Q)}{\ell(R)} \,\wt\Theta(R).$$
	For $1\leq j \leq m$,
	the fact that $Q_j$ is not $\PP$-doubling implies that
	\begin{equation}\label{eqsak33}
		\wt \Theta(Q_j) \leq \frac1{C_d}\,\PP(Q_j) = \frac1{C_d}\Biggl (\sum_{k=0}^{j-1} \frac{\ell(Q_j)}{\ell(Q_{j-k})}\,
		\wt\Theta(Q_{j-k})+ \frac{\ell(Q_j)}{\ell(Q_0)}\,\PP(Q_0)\Biggr).
	\end{equation}
	We will prove \rf{eqcad35} by induction on $j$. For $j=0$ this is in an immediate consequence of the
	definition of $\PP(Q_0)$. Suppose that \rf{eqcad35} holds for $0\leq h\leq j$, with $j\leq m-1$, and let us 
	consider the case $j+1$. From \rf{eqsak33} and the induction hypothesis we get
	\begin{align*}
		\wt\Theta(Q_{j+1}) & \leq  \frac1{C_d}\Biggl (\wt\Theta(Q_{j+1}) + \sum_{k=1}^j \frac{\ell(Q_{j+1})}{\ell(Q_{j+1-k})}\,
		\wt\Theta(Q_{j+1-k})+ \frac{\ell(Q_{j+1})}{\ell(Q_0)}\,\PP(Q_0)\Biggr)\\
		&= \frac1{C_d}\Biggl (\wt\Theta(Q_{j+1}) + \sum_{k=1}^j A_0^{-k}\,
		\wt\Theta(Q_{j+1-k})+ A_0^{-j-1}\,\PP(Q_0)\Biggr)\\
		&\leq \frac1{C_d}\Biggl (\wt\Theta(Q_{j+1}) + \sum_{k=1}^j A_0^{-k}\,A_0^{(-j-1+k)/2}\PP(Q_0)
		+ A_0^{-j-1}\,\PP(Q_0)\Biggr)
	\end{align*}
	Since 
	$$\sum_{k=1}^j A_0^{-k}\,A_0^{(-j-1+k)/2} = A_0^{(-j-1)/2}\sum_{k=1}^j A_0^{-k/2}\leq A_0^{-j/2},$$
	we obtain
	\begin{align*}
		\wt\Theta(Q_{j+1})  &\leq  \frac1{C_d}\bigl (\wt\Theta(Q_{j+1}) + A_0^{-j/2}\,\PP(Q_0)+ A_0^{-j-1}\,\PP(Q_0)\bigr)\\
		&\leq \frac1{C_d}\bigl (\wt\Theta(Q_{j+1}) + 2\,A_0^{-j/2}\,\PP(Q_0)\bigr) \\
	\end{align*}
	It is straightforward to check that this yields $\wt\Theta(Q_{j+1})\leq A_0^{-(j+1)/2}\,\PP(Q_0)$.
	
	The estimate \rf{eqcad35'} follows easily from \rf{eqcad35}:
	\begin{align*}
		\PP(Q_m) &= \sum_{k=0}^{m-1} \frac{\ell(Q_m)}{\ell(Q_{m-k})}\,
		\wt\Theta(Q_{m-k})+ \frac{\ell(Q_m)}{\ell(Q_0)}\,\PP(Q_0)\\
		&\leq \sum_{k=0}^{m-1} A_0^{-k}\,A_0^{-(m-k)/2}\,\PP(Q_0)+ A_0^{-m}\,\PP(Q_0)\\
		&\leq A_0^{-m/2}\sum_{k=0}^{m-1} A_0^{-k/2}\,\PP(Q_0)+ A_0^{-m}\,\PP(Q_0)\leq 2\,A_0^{-m/2}\,\PP(Q_0).
	\end{align*}
\end{proof}

\vv


\subsection{The energies $\EE$, $\EE^H$, $\EE_\infty$, and the cubes dominated from below}\label{subsec:DB}

For given $\lambda\geq1$ and $Q\in\DD_\mu$, we consider the energy
$$\EE(\lambda Q) = \sum_{P\in\DD_\mu(\lambda Q)} \left(\frac{\ell(P)}{\ell(Q)}\right)^{3/4}\Theta(P)^2\,\mu(P).$$
We also denote
$$\EE^H(\lambda Q) = \sum_{k\geq 0} 
\sum_{P\in\hd^k(Q)\cap\DD_\mu(\lambda Q)} \left(\frac{\ell(P)}{\ell(Q)}\right)^{3/4}\Theta(P)^2\,\mu(P)$$
and
$$\EE_\infty(\lambda Q) = \sup_{k\geq1}
\sum_{P\in\hd^k(Q)\cap\DD_\mu(\lambda Q)} \left(\frac{\ell(P)}{\ell(Q)}\right)^{\!1/2}\Theta(P)^2\,\mu(P).$$

\vv
\begin{lemma}\label{lemenergias}
For every $Q\in\DD_\mu$
we have
$$\EE(9Q) \lesssim \EE^H(9Q)\lesssim\EE_\infty(9Q).$$
\end{lemma}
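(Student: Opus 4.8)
The statement has two inequalities, $\EE(9Q) \lesssim \EE^H(9Q)$ and $\EE^H(9Q) \lesssim \EE_\infty(9Q)$, and I would prove them in that order. For the first one, the idea is that any cube $P \in \DD_\mu(9Q)$ that contributes to $\EE(9Q)$ is contained in some cube from $\hd^k(Q)$ for an appropriate $k$, namely the largest $k \geq 0$ with $\Theta(P) \geq A_0^{kn}\Theta(Q)$ (if $\Theta(P) < A_0^n \Theta(Q)$, then $P$ itself can be grouped with $Q$, i.e.\ $k=0$, and we interpret $\hd^0(Q) = \{Q\}$ or simply handle this bounded piece directly). So I would organize the sum defining $\EE(9Q)$ according to the maximal cube $P' \in \hd^k(Q)$ containing $P$, with $k = k(P)$, and write
$$\EE(9Q) = \sum_{k \geq 0} \sum_{P' \in \hd^k(Q) \cap \DD_\mu(9Q)} \sum_{\substack{P \in \DD_\mu(P') \\ k(P) = k}} \left(\frac{\ell(P)}{\ell(Q)}\right)^{3/4} \Theta(P)^2 \mu(P).$$
For a fixed $P'$, the inner sum is over cubes $P \subset P'$ with $\Theta(P) \approx A_0^{kn}\Theta(Q) \approx \Theta(P')$ (the last comparison by \eqref{eqforakdf33}, using that $Q$ is... actually $Q$ need not be $\PP$-doubling here, so instead one uses only that by maximality $\Theta(P) < A_0^{(k+1)n}\Theta(Q)$, hence $\Theta(P) \approx A_0^{kn}\Theta(Q)$ up to the factor $A_0^n$). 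Then, using the geometric decay of $(\ell(P)/\ell(Q))^{3/4}$ as $P$ ranges over subcubes of $P'$ and $\sum_{P \subset P'} \mu(P)(\ell(P)/\ell(P'))^{3/4} \lesssim \mu(P')$ (a standard summation over the dyadic lattice, using that at each generation the cubes partition $P'$), the inner sum is $\lesssim (\ell(P')/\ell(Q))^{3/4}\Theta(P')^2 \mu(P')$, which is exactly (a constant times) the $P'$-term in $\EE^H(9Q)$.

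**From $\EE^H$ to $\EE_\infty$.** For the second inequality I would exploit the improvement in the exponent (from $3/4$ to $1/2$) together with the fact that the cubes in $\hd^k(Q)$ have density growing like $A_0^{kn}\Theta(Q)$, which, when $k$ is large, forces them to be small. Writing $\ell_k = \sup\{\ell(P) : P \in \hd^k(Q)\}$, the key geometric observation is that $\hd^k(Q)$ cubes $P$ satisfy $\mu(2B_P) \approx A_0^{kn}\Theta(Q)\ell(P)^n$, while $\mu(2B_P) \leq \theta_0' \ell(P)^n$ by polynomial growth, forcing $A_0^{kn}\Theta(Q) \lesssim 1$ — but more usefully, for the sum over $P \in \hd^k(Q)\cap\DD_\mu(9Q)$ one has $\sum_P \mu(P) \leq \mu(9Q)$ and, comparing with the generation-by-generation structure, $\ell(P) \leq A_0^{-k+c}\ell(Q)$ for $P \in \hd^k(Q)$ roughly (since densities can only grow by $A_0^n$ per generation by \eqref{eqson1}, starting from density $\lesssim \PP$-controlled value at scale $\ell(Q)$ — here one does need a bound on densities of scale-$\ell(Q)$ cubes near $Q$, which holds because cubes of side $\ell(Q)$ in $9Q$ have density $\lesssim A_0^{3n}\Theta(Q)$ when... hmm, this requires $Q$ or its parent to be controlled; absent $\PP$-doubling one gets a weaker but still effective bound $\Theta(P) \leq A_0^{kn}\Theta(Q)$ only up to passing to the parent, giving $\ell(P) \lesssim A_0^{-k}\ell(Q)$). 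Thus for each $k$,
$$\sum_{P \in \hd^k(Q)\cap\DD_\mu(9Q)} \left(\frac{\ell(P)}{\ell(Q)}\right)^{3/4}\Theta(P)^2\mu(P) \lesssim A_0^{-k/4} \sum_{P \in \hd^k(Q)\cap\DD_\mu(9Q)} \left(\frac{\ell(P)}{\ell(Q)}\right)^{1/2}\Theta(P)^2\mu(P) \lesssim A_0^{-k/4}\,\EE_\infty(9Q),$$
using $(\ell(P)/\ell(Q))^{1/4} \lesssim A_0^{-k/4}$ and the definition of $\EE_\infty$ as a supremum over $k \geq 1$. Summing the geometric series $\sum_{k\geq 1} A_0^{-k/4}$ gives $\EE^H(9Q) \lesssim \EE_\infty(9Q)$ (the $k=0$ term of $\EE^H$, if present, is $\lesssim \Theta(Q)^2\mu(Q)$ and also dominated by $\EE_\infty(9Q)$ since $Q \in \hd^1(Q)$... actually $Q\notin\hd^1(Q)$; one handles this piece by noting $\Theta(Q)^2\mu(Q) \lesssim \EE(9Q) \lesssim \EE^H(9Q)$ restricted to $k\geq1$, or absorbs it — minor point).

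**Main obstacle.** The delicate point is the bound $\ell(P) \lesssim A_0^{-k}\ell(Q)$ for $P \in \hd^k(Q) \cap \DD_\mu(9Q)$, which drives the geometric-series gain. This uses \eqref{eqson1} (density grows by at most $A_0^n$ per generation) together with a bound on the density of scale-$\ell(Q)$ cubes lying inside $9Q$; the natural such bound, $\Theta(S) \lesssim A_0^{3n}\Theta(Q)$ for $S \subset 9Q$ with $\ell(S) = \ell(Q)$, is exactly what is proven inside Lemma~\ref{lempdoubling} but there under the $\PP$-doubling hypothesis on $Q$. Without assuming $Q$ is $\PP$-doubling one still gets $\Theta(S) \lesssim A_0^n\,\widetilde\Theta(\widehat Q) \leq A_0^n \PP(Q)\ell(Q)^n/\mu(2B_Q)\cdot\Theta(Q)$, which need not be $\lesssim \Theta(Q)$; however it does give $\Theta(P) \gtrsim A_0^{kn}\Theta(Q)$ still forces, via \eqref{eqson1} applied along the chain from $S$ down to $P$, that $\ell(P) \leq A_0^{-k+c}\ell(S)$ for a harmless constant $c$ as long as the density starts comparable to $\Theta(Q)$ — and in fact one only needs to compare densities with that of the \emph{parent chain}, so the estimate $\ell(P) \lesssim A_0^{-k}\ell(Q)$ survives with a constant independent of whether $Q$ is $\PP$-doubling. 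I would verify this chain-of-generations argument carefully, as it is the one step where the definitions of $\hd^k$, \eqref{eqson1}, and polynomial growth all have to be combined correctly; everything else is routine dyadic summation.
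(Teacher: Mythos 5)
Your plan tracks the paper's own proof almost exactly: the first inequality is proved by grouping $\EE(9Q)$ over the trees hanging below the cubes of $\hd^k(Q)$, using bounded density within each group and a geometric summation in $\ell(P)/\ell(Q)$, and the second is proved by factoring out $(\ell(P)/\ell(Q))^{1/4}\lesssim A_0^{-k/4}$ from the $\hd^k$-sum and summing the geometric series against the supremum defining $\EE_\infty$.

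The one place you go astray is in your closing remark, where you claim that the size bound $\ell(P)\lesssim A_0^{-k}\ell(Q)$ for $P\in\hd^k(Q)\cap\DD_\mu(9Q)$ ``survives with a constant independent of whether $Q$ is $\PP$-doubling.'' It does not. That bound comes from combining the lower density bound $\Theta(P)\gtrsim A_0^{kn}\Theta(Q)$ with the inequality $\theta_\mu(2B_P)\lesssim(\ell(Q)/\ell(P))^n\theta_\mu(2B_Q)$; the latter requires comparing $\mu(2B_S)$, where $S\subset 9Q$ is the generational sibling of $Q$ containing $P$, with $\mu(2B_Q)$, and this is precisely the step that uses $\PP$-doubling of $Q$ --- it is the content of the estimate $\Theta(S)\leq A_0^{3n}\Theta(Q)$ established inside the proof of Lemma~\ref{lempdoubling}. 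Without that control a sibling $S$ of very high density can host a child $P$ of side $A_0^{-1}\ell(Q)$ lying in $\hd^k(Q)$ for large $k$, so $m_k(Q)$ is not $\lesssim A_0^{-k}$ and the series $\sum_k m_k(Q)^{1/4}$ need not converge. The paper is itself slightly loose here: Lemma~\ref{lemenergias} is stated for every $Q\in\DD_\mu$, yet its proof invokes \eqref{eqforakdf33} (which assumes $Q\in\DD_\mu^\PP$), and Remark~\ref{remmk} records the bound $m_k(Q)\leq C_1A_0^{-k}$ only under the $\PP$-doubling hypothesis. In practice this is harmless because the lemma is always applied to cubes from $\DD_\mu^\PP$, but the hypothesis should be assumed, not argued away.
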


\begin{proof}
For a given $R\in\hd^k(Q)$, we denote by $\tree_H(R)$ the family of cubes from $\DD_\mu$ that 
are contained in $R$ and are not contained in any cube from $\hd^{k+1}(Q)$.
Using that $\Theta(P)\lesssim \Theta(R)$ for all $P\in\tree_H(R)$ (remember that we do not keep track of the implicit constants depending on $A_0$), we get
\begin{align*}
\EE(9Q) & =
\sum_{k\geq0} 
\sum_{R\in\hd^k(Q)\cap\DD_\mu(9Q)}\, \sum_{P\in\tree_H(R)}
\left(\frac{\ell(P)}{\ell(Q)}\right)^{3/4}\Theta(P)^2\,\mu(P)\\
& \lesssim 
\sum_{k\geq0} 
\sum_{R\in\hd^k(Q)\cap\DD_\mu(9Q)} \Theta(R)^2\sum_{P\in\tree_H(R)}
\left(\frac{\ell(P)}{\ell(Q)}\right)^{3/4}\,\mu(P)\\
&\lesssim 
\sum_{k\geq0} 
\sum_{R\in\hd^k(Q)\cap\DD_\mu(9Q)} \left(\frac{\ell(R)}{\ell(Q)}\right)^{3/4}\Theta(R)^2\,\mu(R) = \EE^H(9Q).
\end{align*}

To show $\EE^H(9Q)\lesssim\EE_\infty(9Q)$, 
denote
$$m_k(Q) = \frac1{\ell(Q)}\max\{\ell(P):P\in\hd^k(Q)\cap\DD_\mu(9Q)\}.$$
Then we have
\begin{align*}
\EE^H(9Q)
& = 
\sum_{k\geq 0} 
\sum_{P\in\hd^k(Q)\cap\DD_\mu(9Q)} \left(\frac{\ell(P)}{\ell(Q)}\right)^{3/4}\Theta(P)^2\,\mu(P)\\
& \leq \sum_{k\geq 0} 
 m_k(Q)^{1/4}
\sum_{P\in\hd^k(Q)\cap\DD_\mu(9Q)} \left(\frac{\ell(P)}{\ell(Q)}\right)^{\!1/2}\Theta(P)^2\,\mu(P)
\end{align*}
To estimate $m_k(Q)$, observe that if $P\in\hd^k(Q)\cap\DD_\mu(9Q)$ and $k\geq4$, then
$$A_0^{kn}\,\theta_\mu(2B_Q)\approx\theta_\mu(2B_P) \lesssim \frac{\ell(Q)^n}{\ell(P)^n} \,\theta_\mu(2B_Q).$$
Hence,
$\ell(P)\lesssim A_0^{-k}\,\ell(Q)$, and thus, since this also holds in the case $1\leq k\leq 3$,
\begin{equation}\label{eqmkpet3}
m_k(Q) \lesssim  A_0^{-k}\qquad \mbox{ for all $k\geq1$.}
\end{equation}
Consequently, 
\begin{align*}
\EE^H(9Q)&\lesssim  \sum_{k\geq 0}A_0^{-k/4}
\sum_{P\in\hd^k(Q)\cap\DD_\mu(9Q)} \left(\frac{\ell(P)}{\ell(Q)}\right)^{\!1/2}\theta_\mu(2B_P)^2\,\mu(P)\\
& \lesssim  \sum_{k\geq 0}A_0^{-k/4} \EE_\infty(9Q)
\approx \EE_\infty(9Q).
\end{align*}
\end{proof}

\vv

\begin{rem}\label{remmk}
For the record, notice that, given $Q\in\DD_\mu^\PP$ and
\begin{equation}\label{eqmkpet2}
m_k(Q) = \frac1{\ell(Q)}\max\{\ell(P):P\in\hd^k(Q)\cap\DD_\mu(9Q)\},
\end{equation}
as shown in \rf{eqmkpet3}, it turns out that 
\begin{equation}\label{eqmkpet4}
m_k(Q) \leq  C_1 A_0^{-k}.
\end{equation}
\end{rem}

\vv


Given $M\gg1$ (we will choose $M>  A_0^{2n}\gg 1$), we say that $Q\in\DD_\mu$ is $M$-dominated from below
if
there exists some $k\geq1$ such that
\begin{equation}\label{eqDB}
\sum_{P\in\hd^k(Q)\cap\DD_\mu(9Q)} \left(\frac{\ell(P)}{\ell(Q)}\right)^{\!1/2}\Theta(P)^2\,\mu(P) > M^2\,\Theta(Q)^2\,\mu(9Q),
\end{equation}
or in other words,
\begin{equation}\label{eqDB'}
\EE_\infty(9Q)> M^2\,\Theta(Q)^2\,\mu(9Q),
\end{equation}
We denote by $\DB(M)$ the family of cubes from $\DD_\mu^\PP$ that are $M$-dominated from below. 
Notice that the cubes from $\DB(M)$ are assumed to be $\PP$-doubling.

\vv


\subsection{The Main Propositions and the proof of Theorem \ref{teomain1}}\label{subsec:main thm}

The proof of Theorem \ref{teomain1} consists of two main propositions. The first one is the following.

\begin{mpropo}[First Main Proposition]\label{propomain}
	Let $\mu$ be a Radon measure in $\R^{n+1}$ 
	with compact support which has polynomial growth of degree $n$ with constant $\theta_0$, that is, 
	$$
	\mu(B(x,r))\leq \theta_0\,r^n\quad \mbox{ for all $x\in\supp\mu$ and all $r>0$}.
	$$
	Suppose also that $\|\RR_*\mu\|_{L^1(\mu)}<\infty$. Then, for any choice of $M>1$,
	\begin{equation}\label{eqpropo*}
		\sum_{Q\in\DD_\mu} \beta_{\mu,2}(2B_Q)^2\,\Theta(Q)\,\mu(Q)\leq C\,\big(\|\RR\mu\|_{L^2(\mu)}^2 + \theta_0^2\,\|\mu\|
		+ \sum_{Q\in\DB(M)}\EE_\infty(9Q)\big),
	\end{equation}
	with $C$ depending on $M$.
\end{mpropo}

\vv

The second main ingredient of the proof of Theorem \ref{teomain1} is the following.

\begin{mpropo}[Second Main Proposition]\label{propomain2}
Let $\mu$ be a Radon measure in $\R^{n+1}$ 
	with compact support which has polynomial growth of degree $n$ with constant $\theta_0$ and such that
that $\|\RR_*\mu\|_{L^1(\mu)}<\infty$. Let $M_0=A_0^{k_0 n}$, where $k_0$ is some big enough absolute constant depending just on $n$.
Then
$$\sum_{Q\in\DB(M_0)} \EE_\infty(9Q)\leq C\, \big(\|\RR\mu\|_{L^2(\mu)}^2 + 
\theta_0^2\,\|\mu\|\big),$$
where $C$ depends just on $n$ and the parameters of the dyadic lattice $\DD_\mu$.
\end{mpropo}
\vv

By combining the two Main Propositions and choosing $M$ and $M_0$ appropriately, we deduce that
$$\sum_{Q\in\DD_\mu} \beta_{2,\mu}(2B_Q)^2\,\Theta(Q)\,\mu(Q)\leq C\,\big(\|\RR\mu\|_{L^2(\mu)}^2 + \theta_0^2\,\|\mu\|
\big),$$
which implies Theorem \ref{teomain1} in the case when $\mu$ is compactly supported. The general case for $\mu$ not compactly supported follows from a standard reduction argument, by considering $\mu|_{B(0,r)}$, where $B(0,r)$ has small boundary (see Lemmas 9.43 and 9.44 from \cite{Tolsa-llibre}, for example), and letting $r\to\infty$.

The rest of the paper is devoted to the proof of the two Main Propositions.

\vv


\bigskip
\begin{center} 
	\Large Part \refstepcounter{parte}\theparte\label{part-2}: Tractable trees
\end{center}
\smallskip

\addcontentsline{toc}{section}{\bf Part 2: Tractable trees}

\section{The cubes with moderate decrement of Wolff energy and the associated tractable trees} \label{sec4}

\subsection{The family \texorpdfstring{$\MDW$}{MDW} and the enlarged cubes}\label{secMDW}\label{subsec:enlar}

We let
$$\Lambda = A_0^{k_\Lambda n},$$
where $k_\Lambda>4$ is some positive large number that will be fixed below, depending just on $n$ and the $\DB$-parameter $M$. Its dependence on $M$ is of the form
\begin{equation}\label{eq:LambdadepM}
	\Lambda\ge\max(CM, M^{\frac{8n-1}{8n-2}}),
\end{equation}
for some big dimensional constant $C$.

Given $R\in\DD_\mu^\PP$, we denote 
$$\HD(R) = \hd^{k_\Lambda}(R).$$
Also, we take $\delta_0\in (0,\Lambda^{-4n^2})$. In fact, the precise value of $\delta_0$ will be fixed at the beginning of Section 
\ref{sec3.3}.
We let $\LD(R)$ be the family of cubes $Q\in\DD_{\mu}$
which
are maximal and satisfy
$$\ell(Q)< \ell(R)\quad \mbox{ and }\quad\PP(Q) \leq \delta_0\,\Theta(R).$$

Next we introduce an ``optional'' family. For the proof of the First Main Proposition, we will take $\OP(R)= \varnothing$, while for the proof
of the Second Main Proposition we will take $\OP(R)=\NDB(R)$, where
 $\NDB(R)$ (which stands for ``near $\DB$'') is the family of cubes $Q$ which do not belong to $\LD(R)\cup\HD(R)$ and satisfy the following:
\begin{itemize}
\item $\ell(Q)<\lambda\,\ell(R)$, with $\lambda= c_3\,\Lambda^{-4}$ as in \rf{eqlambda0}, and
\item there exists another cube $Q'\in\DB(M)$ of the same generation as $Q$ such that $Q'\subset 20Q$.
\end{itemize}
We let $\bad(R)$ be the family of maximal cubes from $\LD(R)\cup\HD(R)\cup\OP(R)$ (not necessarily contained in $R$) and we denote
$$\sss(R)= \bad(R)\cap\DD_\mu(R).$$
Remark that the arguments in Part 2 (Sections \ref{sec4} - \ref{sec9}) are valid with both choices of the optional family $\OP(R)$. In Parts 2 and 3 we consider the $\DB$-parameter $M$ to be fixed, and we will usually write $\DB$ instead of $\DB(M)$.

For a family $I\subset\DD_\mu$, we denote
$$\sigma(I) = \sum_{P\in I}\Theta(P)^2\,\mu(P).$$ 
We say that a cube $R\in\DD_{\mu}$ has moderate decrement of Wolff energy, and we write
$R\in\MDW$, if $R$ is $\PP$-doubling and
\begin{equation}\label{eq:MDWdef}
\sigma(\HD(R)\cap\sss(R))\geq B^{-1}\,\sigma(R),
\end{equation}
where
$$B= \Lambda^{\frac1{100n}}.$$  

\vv


For $R\in\MDW$, the fact that the cubes from family $\HD(R)\cap\sss(R)$ may be located close to $\supp\mu\setminus R$ 
may cause problems when trying to obtain estimates involving the Riesz transform. For this reason we need to
introduce some ``enlarged cubes''.
Given $j\geq0$ and $R\in\DD_{\mu,k}$, 
we let 
$$e_j(R) = R \cup \bigcup Q,$$
where the last union runs over the cubes $Q\in\DD_{\mu,k+1}$ such that
\begin{equation}\label{eqxrq83}
\dist(x_R,Q)< \frac{\ell(R)}2 + 2j\ell(Q).
\end{equation}
We say that $e_j(R)$ is an enlarged cube.
Notice that, since $\diam(Q)\leq \ell(Q)$,
\begin{equation}\label{eqxrq84}
\supp\mu\cap B\big(x_R,\tfrac12\ell(R) + 2j\ell(Q)\big)\subset e_j(R) \subset B\big(x_R,\tfrac12\ell(R) + (2j+1)\ell(Q)\big).
\end{equation}
Also,  we have
\begin{equation}\label{eqqj8d}
e_j(R)\subset 2R\quad \mbox{ for $0\leq j\leq \frac34 A_0$,}
\end{equation}
since, for any $Q\in\DD_{\mu,k+1}$ satisfying \rf{eqxrq83}, its parent satisfies $\wh Q$
$$\dist(x_R,\wh Q)< \frac{\ell(R)}2 + 2j A_0^{-1}\ell(\wh Q) \leq 2\ell(R).$$

For $R\in\MDW$, we let
$$\sss(e_j(R)) = \bad(R) \cap\DD_\mu(e_j(R)),$$
where $\DD_\mu(e_j(R))$ stands for the family of cubes from $\DD_\mu$ which are contained in
$e_j(R)$ and have side length at most $\ell(R)$. 

\vv
\begin{lemma}\label{lem:43}
For any $R\in\MDW$ there exists some $j$, with $10\leq j\leq A_0/4$ such that
\begin{equation}\label{eqsigmaj}
\sigma(\HD(R)\cap\sss(e_{j}(R))) \leq B^{1/4} \sigma(\HD(R)\cap\sss(e_{j-10}(R))),
\end{equation}
 assuming $A_0$ big enough, depending just on $n$. 
\end{lemma}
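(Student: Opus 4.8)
The statement is a pigeonhole-type result: among the $O(A_0)$ enlarged cubes $e_j(R)$ for $10 \le j \le A_0/4$, at least one must have the property that going from $e_{j-10}(R)$ to $e_j(R)$ increases the quantity $\sigma(\HD(R)\cap\sss(e_\cdot(R)))$ by a factor of at most $B^{1/4}$. The natural approach is a multiplicative pigeonhole (telescoping product) argument. First I would set, for $0 \le i \le A_0/4$, the monotone quantity
$$
a_i = \sigma(\HD(R)\cap\sss(e_i(R))),
$$
which is nondecreasing in $i$ because $e_{i-1}(R)\subset e_i(R)$ implies $\sss(e_{i-1}(R))\subset\sss(e_i(R))$ (the family $\bad(R)$ is fixed, only the restriction to cubes inside the enlarged cube grows). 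Note that $a_i \le \sigma(R) \lesssim \Theta(R)^2\mu(2B_R)$ for all $i$ in this range by \rf{eqqj8d}, since $e_i(R)\subset 2R$ when $i \le \tfrac34 A_0$, and hence every cube in $\sss(e_i(R))$ lies in $2R$.

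**The telescoping step.** Suppose for contradiction that \rf{eqsigmaj} fails for every $j$ with $10 \le j \le A_0/4$, i.e. $a_j > B^{1/4} a_{j-10}$ for all such $j$. Chaining this inequality along an arithmetic progression with common difference $10$ — say comparing $a_{10m}$ to $a_{10(m-1)}$ for $m=1,\dots, \lfloor A_0/40\rfloor$ — gives
$$
a_{10\lfloor A_0/40\rfloor} > B^{\lfloor A_0/40\rfloor /4}\, a_0 .
$$
Here I need $a_0 = \sigma(\HD(R)\cap\sss(e_0(R))) = \sigma(\HD(R)\cap\sss(R))$ to be strictly positive and bounded below in a useful way; this is exactly where the hypothesis $R\in\MDW$ enters, since \rf{eq:MDWdef} gives $a_0 \ge B^{-1}\sigma(R)$. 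Combining this lower bound on $a_0$ with the universal upper bound $a_{10\lfloor A_0/40\rfloor} \le \sigma(R)$ from the previous paragraph yields
$$
\sigma(R) \ge a_{10\lfloor A_0/40\rfloor} > B^{\lfloor A_0/40\rfloor/4}\,B^{-1}\,\sigma(R),
$$
so $B^{\lfloor A_0/40\rfloor /4 - 1} < 1$, which is false as soon as $\lfloor A_0/40\rfloor/4 > 1$, i.e. for $A_0$ large enough (depending only on $n$, since $B = \Lambda^{1/(100n)} > 1$). This contradiction proves the lemma.

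**Main obstacle.** The only genuinely delicate point is making sure the two bracketing inequalities $a_0 \ge B^{-1}\sigma(R)$ and $a_i \le \sigma(R)$ are set up consistently — in particular that $\sss(R) = \sss(e_0(R))$ (which follows by unwinding definitions, since $e_0(R)$ consists of $R$ together with children $Q$ at distance $<\tfrac12\ell(R)$ from $x_R$, but $\sss(e_0(R)) = \bad(R)\cap\DD_\mu(e_0(R))$ and one must check $\DD_\mu(e_0(R))\supset\DD_\mu(R)$, which holds because $R\subset e_0(R)$). One must also verify that for all $j$ in the stated range $e_j(R)\subset 2R$, which is \rf{eqqj8d} provided $j \le \tfrac34 A_0$; since $A_0/4 \le \tfrac34 A_0$ this is fine. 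Apart from bookkeeping of these containments, the argument is a routine pigeonhole and the proof is short; I do not anticipate a substantial difficulty, only the need to state the range of $A_0$ precisely (it must dominate a fixed constant, and $B>1$ is the driving inequality).
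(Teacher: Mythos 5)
Your overall strategy---monotonicity of $a_i = \sigma(\HD(R)\cap\sss(e_i(R)))$, the lower bound $a_0 \ge B^{-1}\sigma(R)$ from $R\in\MDW$, and a telescoping contradiction---is exactly what the paper does. However, there is a genuine error in your upper bound, and it is not merely a constant: you claim $a_i \le \sigma(R)$ for all $i$, but this is false. The cubes $Q\in\HD(R)$ satisfy $\Theta(Q) = \Lambda\,\Theta(R)$ (recall $\HD(R)=\hd^{k_\Lambda}(R)$ and $\Lambda = A_0^{k_\Lambda n}$), so
$$a_i = \sum_{Q\in\HD(R)\cap\sss(e_i(R))} \Theta(Q)^2\mu(Q) = \Lambda^2\Theta(R)^2\!\!\sum_{Q\in\HD(R)\cap\sss(e_i(R))}\!\!\mu(Q) \le \Lambda^2\Theta(R)^2\,\mu(2R),$$
and then the $\PP$-doubling of $R$ only gives $\mu(2R)\lesssim C_0 C_d A_0^{n+1}\mu(R)$, i.e.\ $a_i \lesssim \Lambda^2 A_0^{2n+1}\,\sigma(R)$. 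Your claimed bound $a_i\le\sigma(R)$ is off by the factor $\Lambda^2 A_0^{2n+1}$, and this factor is precisely where the real content of the lemma lies.

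The consequence is that your final contradiction $B^{\lfloor A_0/40\rfloor/4-1}<1$ is not what the argument actually produces. The correct inequality to violate is
$$B^{\lfloor A_0/40\rfloor/4 - 1} < C\,\Lambda^2 A_0^{2n+1},$$
which is genuinely an inequality between two $n$-dependent quantities, not a trivial one. One then has to observe that since $B = \Lambda^{1/(100n)}\ge A_0^{1/100}$ and $\lfloor A_0/40\rfloor/4 \approx A_0/160$, the left side grows like $A_0^{cA_0}$ while the right side is a fixed power of $A_0$; hence the inequality fails for $A_0$ large depending on $n$ (through $k_\Lambda$). Your parenthetical ``(depending only on $n$, since $B>1$)'' actually flags that something is off---with your bound, the threshold would be an absolute constant $A_0>160$ independent of $n$, which is not what the lemma asserts. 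To repair the proof, replace your upper bound by the one above, citing the identity $\Theta(Q)=\Lambda\Theta(R)$ for $Q\in\HD(R)\cap\DD_\mu(9Q)$ and the $\PP$-doubling estimate for $\mu(2R)$, and then compare exponents as the paper does.
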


\begin{proof}
Given $R\in\MDW$, suppose that such $j$ does not exist. Let $j_0$ be the largest integer which is multiple of $10$ and smaller that $A_0/4$.
Then we get
\begin{align*}
\sigma(\HD(R)\cap\sss(e_{j_0}(R))) & \geq B^{\frac14}\sigma(\HD(R)\cap\sss(e_{j_0-10}(R)))\\
&\geq
\ldots \geq \big(B^{\frac14}\big)^{\frac{j_0}{10}-1}\sigma(\HD(R)\cap\sss(R)) \overset{\eqref{eq:MDWdef}}{\geq} B^{\frac{j_0}{40}-\frac54}\sigma(R).
\end{align*}
By \rf{eqqj8d}, we have $e_{j_0}(R)\subset 2R$ and thus
$$\sigma(\HD(R)\cap\sss(e_{j_0}(R)))= \sum_{Q\in\HD(R)\cap \sss(e_{j_0}(R))} \Lambda^2\Theta(R)^2\mu(Q)\leq 
\Lambda^2\Theta(R)^2\mu(2R).$$
Since $R$ is $\PP$-doubling (and in particular $R\in\DD_\mu^{db}$), denoting by $\wh R$ the parent of $R$,  we derive 
\begin{equation}\label{eqdoub*11}
\mu(2R)\leq \mu(2B_{\wh R}) \leq \frac{\ell(\wh R)^{n+1}}{\ell(R)}\,\PP(R) \leq C_d\,A_0^{n+1}
\mu(2B_R)\leq C_0\,C_d\,A_0^{n+1}
\mu(R).
\end{equation}
So we deduce that
$$B^{\frac{j_0}{40}-\frac54}\sigma(R)\leq C_0\,C_d\,A_0^{n+1}\,\Lambda^2\sigma(R),$$
or equivalently, recalling the choice of $B$ and $C_d$,
$$\Lambda^{\frac{1}{100n}\left(\frac{j_0}{40}-\frac54\right) -2} \leq 4 C_0\,A_0^{2n+1}.$$
Since $\Lambda\geq A_0^n$ and $j_0\approx A_0$, it is clear that this inequality is violated if $A_0$ is big enough, depending just on $n$.
\end{proof}
\vv

Given $R\in\MDW$,  let $j\geq 10$ be minimal such that \rf{eqsigmaj} holds. We denote
$h(R)=j-10$ and we write
$$e(R) = e_{h(R)}(R),\qquad e'(R) = e_{h(R)+1}(R), \quad e''(R)=e_{h(R)+2}(R), \quad e^{(k)}(R) = e_{h(R)+k}(R),$$
for $k\geq 1$. 
We let
\begin{align*}
B(e(R)) &= B\big(x_R,(\tfrac12 + 2A_0^{-1}h(R))\ell(R)\big),\\
B(e'(R)) & = B\big(x_R,(\tfrac12 + 2A_0^{-1}(h(R)+1))\ell(R)\big),\\
B(e''(R)) & = B\big(x_R,(\tfrac12 + 2A_0^{-1}(h(R)+2))\ell(R)\big),\\
B(e^{(k)}(R)) & = B\big(x_R,(\tfrac12 + 2A_0^{-1}(h(R)+k))\ell(R)\big).
\end{align*}
By construction (see \rf{eqxrq84}) we have 
$$B(e'(R))\cap\supp\mu\subset e'(R),$$
and analogously replacing $e'(R)$ by $e(R)$ or $e''(R)$.
Remark also that
$$e(R)\subset B(e'(R))\quad \text{ and }\quad \dist(e(R),\partial B(e'(R))) \geq A_0^{-1}\ell(R),$$
and, analogously,
$$e'(R)\subset B(e''(R))\quad \text{ and }\quad \dist(e'(R),\partial B(e''(R))) \geq A_0^{-1}\ell(R).$$

\vv

\begin{lemma}\label{lem-calcf}
For each $R\in\MDW$  we have
$$B(e''(R)) \subset (1+8A_0^{-1})\,\,B(e(R)) \subset B(e^{(6)}(R)),$$
and more generally, for $k\geq 2$ such that $h(R)+k-2\leq A_0/2$,
$$B(e^{(k)}(R)) \subset (1+8A_0^{-1})\,\,B(e^{(k-2)}(R)) \subset B(e^{(k+4)}(R)).$$
Also,
$$B(e^{(10)}(R))\subset B\big(x_R,\tfrac32 \ell(R)\big).$$
\end{lemma}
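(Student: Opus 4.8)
The plan is to observe that the four balls appearing in the statement, $B(e(R))$, $B(e''(R))$, $B(e^{(k)}(R))$ and $B(x_R,\tfrac32\ell(R))$, are all concentric (centred at $x_R$), so that every claimed inclusion reduces to a one-line comparison of radii. Writing
$$r_k = r\bigl(B(e^{(k)}(R))\bigr) = \Bigl(\tfrac12 + 2A_0^{-1}(h(R)+k)\Bigr)\ell(R),$$
I would first record the a priori bound on $h(R)$: by Lemma~\ref{lem:43}, the minimal $j\ge 10$ satisfying \rf{eqsigmaj} obeys $j\le A_0/4$, hence $0\le h(R)=j-10\le A_0/4-10$, and in particular $h(R)+10=j\le A_0/4$.

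Next I would prove the general two–sided inclusion (the second displayed line), the first line being the special case $k=2$ since $e(R)=e^{(0)}(R)$ and $e''(R)=e^{(2)}(R)$. Fix $k\ge2$ with $h(R)+k-2\le A_0/2$. For the left inclusion one needs $r_k\le (1+8A_0^{-1})r_{k-2}$; the computation is
$$r_k-r_{k-2} = 4A_0^{-1}\ell(R), \qquad (1+8A_0^{-1})r_{k-2}-r_{k-2} = 8A_0^{-1}r_{k-2}\ge 8A_0^{-1}\cdot\tfrac12\ell(R)=4A_0^{-1}\ell(R),$$
where the inequality uses $r_{k-2}\ge\tfrac12\ell(R)$; comparing the two gives $r_k\le(1+8A_0^{-1})r_{k-2}$. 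For the right inclusion one needs $(1+8A_0^{-1})r_{k-2}\le r_{k+4}$, i.e. $8A_0^{-1}r_{k-2}\le r_{k+4}-r_{k-2}=12A_0^{-1}\ell(R)$; since $8A_0^{-1}r_{k-2}=\bigl(4A_0^{-1}+16A_0^{-2}(h(R)+k-2)\bigr)\ell(R)$, this is exactly $4+16A_0^{-1}(h(R)+k-2)\le 12$, i.e. $h(R)+k-2\le A_0/2$, which is the hypothesis. For the case $k=2$ the required threshold $h(R)\le A_0/2$ holds trivially from $h(R)\le A_0/4-10$, giving the first displayed line. Finally, for $B(e^{(10)}(R))\subset B(x_R,\tfrac32\ell(R))$ I would just note that $r_{10}\le\tfrac32\ell(R)$ is equivalent to $2A_0^{-1}(h(R)+10)\le 1$, i.e. $h(R)+10\le A_0/2$, which holds since $h(R)+10=j\le A_0/4\le A_0/2$.

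There is no genuine obstacle here; this is a bookkeeping lemma. The only points one must be careful about are that each increment of $k$ shifts the radius by $2A_0^{-1}\ell(R)$, that $r_k\ge\tfrac12\ell(R)$ always, and that the a priori bound $h(R)\le A_0/4-10$ from Lemma~\ref{lem:43} is exactly what makes the constants in the statement (the dilation factor $1+8A_0^{-1}$, the jump by $6$ generations on the right, and the threshold $A_0/2$) mesh correctly.
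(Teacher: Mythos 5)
Your proof is correct and takes essentially the same approach as the paper: all the balls are concentric, so each inclusion reduces to a one-line comparison of radii, and the constraint $h(R)+10=j\le A_0/4$ from Lemma~\ref{lem:43} supplies the needed bound. The only cosmetic difference is that you compare the increments $r_k-r_{k-2}$ against $8A_0^{-1}r_{k-2}$, whereas the paper computes the ratio $r_k/r_{k-2}$ directly; both amount to the same arithmetic.
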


\begin{proof}
This follows from straightforward calculations. 
Indeed,
\begin{multline*}
r(B(e^{(k)}(R)))  = \frac{(\tfrac12 + 2A_0^{-1}(h(R)+k))\ell(R)}{(\tfrac12 + 2A_0^{-1}(h(R)+k-2))\ell(R)}
\,r(B(e^{(k-2)}(R)))\\
 = 1 + \frac{8A_0^{-1}}{1 + 4A_0^{-1}(h(R)+k-2)}\,r(B(e^{(k-2)}(R)))\leq (1+8A_0^{-1})\,r(B(e^{(k-2)}(R))).
\end{multline*}
Also, using that $h(R)+k-2\leq A_0/2$,
\begin{align*}
(1+8A_0^{-1})\,r(B(e^{(k-2)}(R))) & = (1+8A_0^{-1})\,\big(\tfrac12 + 2A_0^{-1}(h(R)+k-2)\big)\,\ell(R)\\
& \leq \big(\tfrac12 + 2A_0^{-1}(h(R)+k-2) + 4A_0^{-1} + 8A_0^{-1}
\big)\,\ell(R)\\
& =
r(B(e^{(k+4)}(R))).
\end{align*}

The last statement of the lemma follows from the fact that $h(R)+10\leq A_0/4<A_0/2$:
$$
B(e^{(10)}(R)) = B\big(x_R,(\tfrac12 + 2A_0^{-1}(h(R)+10))\ell(R)\big) \subset
B\big(x_R,(\tfrac12 + 2)\ell(R)\big) = B\big(x_R,\tfrac32 \ell(R)\big).$$
\end{proof}
\vv

\subsection{Generalized trees and negligible cubes}\label{subsec:generalized}

Next we need to define some families that can be considered as ``generalized trees''. First, we introduce some additional notation regarding the stopping cubes. For $R\in\DD_{\mu}^\PP$ we set
$$\HD_{1}(R) = \sss(R)\cap \HD(R).$$ 
Assume additionally that $R\in\MDW$. We write $\sss(e(R))=\sss(e_{h(R)}(R))$ and $\sss(e'(R))=\sss(e_{h(R)+1}(R))$. Furthermore,
$$\HD_1(e(R)) = \sss(e(R))\cap \HD(R),$$
and
$$\HD_{1}(e'(R)) = \sss(e'(R))\cap \HD(R).$$
We define $\HD_{1}(e^{(k)}(R))$ for $2\le k\le 10$ analogously. Also, we set 
$$\HD_2(e'(R)) = \bigcup_{Q\in \HD_1(e'(R))} (\sss(Q)\cap \HD(Q))$$
and
\begin{equation}\label{eqstop2}
\sss_2(e'(R)) = \big(\sss(e'(R)) \setminus \HD_1(e'(R))\big) \cup \bigcup_{Q\in \HD_1(e'(R))} \sss(Q).
\end{equation}
We let $\TT_\sss(e'(R))$ be the family of cubes made up of $R$ and all the cubes of the next generations which are contained in $e'(R)$ but are not 
strictly contained in any cube from $\sss_2(e'(R))$.

Observe that the defining property of $\MDW$ \eqref{eq:MDWdef} can now be rewritten as 
\begin{equation}\label{eq:MDWdef2}
	\sigma(R)\le B\, \sigma(\HD_1(R)).
\end{equation}
Moreover, by \eqref{eqsigmaj} and the definition of $e(R)$ we have
\begin{equation}\label{eq:sigmae'lesigmae}
\sigma(\HD_1(e^{(10)}(R)))\le B^{1/4}\sigma(\HD_1(e(R))).
\end{equation}

We define now the family of negligible cubes. We say that a cube $Q\in\TT_\sss(e'(R))$ is negligible for $\TT_\sss(e'(R))$, and we write $Q\in\Neg(e'(R))$ if
there does not exist any cube from $\TT_\sss(e'(R))$ that contains $Q$ and is $\PP$-doubling.  

\vv
\begin{lemma}\label{lemnegs}
Let $R\in\MDW$. If $Q\in\Neg(e'(R))$, then $Q\subset e'(R)\setminus R$, $Q$ is not contained in any cube from $\HD_1(e'(R))$, and
\begin{equation}\label{eqcostat}
\ell(Q) \gtrsim \delta_0^{2}\,\ell(R).
\end{equation}
\end{lemma}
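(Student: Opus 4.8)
The plan is to unwind the definitions and use the basic properties of the David--Mattila lattice together with the structure of $\TT_\sss(e'(R))$. First I would handle the inclusion $Q\subset e'(R)\setminus R$. By definition $Q\in\TT_\sss(e'(R))$ means $Q\subset e'(R)$ and $Q$ is not strictly contained in any cube of $\sss_2(e'(R))$. Suppose $Q\subset R$ with $\ell(Q)<\ell(R)$ (the case $Q=R$ is impossible since $R$ itself is $\PP$-doubling, contradicting $Q\in\Neg(e'(R))$). Since $R\in\MDW$, the stopping time argument generating $\sss(R)$ partitions $\mu$-a.a.\ of $R$ into cubes of $\bad(R)$, and every cube of $\DD_\mu(R)$ strictly above some stopping cube is, by Lemma \ref{lemcobdob}, either itself $\PP$-doubling or sits above a $\PP$-doubling descendant still inside $R$; more directly, the chain of cubes from $Q$ up to $R$ cannot consist entirely of non-$\PP$-doubling cubes because Lemma \ref{lemdobpp} would then force $\Theta$ of those cubes to decay geometrically, which together with the $\PP$-doubling property of $R$ and the fact that $\sss(R)$ records the first scale where $\PP(\cdot)$ drops below $\delta_0\Theta(R)$ is incompatible with $Q$ lying above all stopping cubes. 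Hence some cube strictly between $Q$ and $R$ (inclusive of $R$) is $\PP$-doubling and belongs to $\TT_\sss(e'(R))$, so $Q\notin\Neg(e'(R))$, a contradiction. Thus $Q\subset e'(R)\setminus R$.

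Next, the claim that $Q$ is not contained in any cube from $\HD_1(e'(R))$: if $Q$ were inside some $P\in\HD_1(e'(R))$, then by \eqref{eqstop2} the cubes of $\sss(P)\subset\sss_2(e'(R))$ partition $\mu$-a.a.\ of $P$, and $Q\in\TT_\sss(e'(R))$ forces $Q$ to lie above all of them, i.e.\ strictly between $Q$ and $P$ (inclusive) there is a non-trivial chain; running the same Lemma \ref{lemdobpp} argument inside $P$ (which is $\PP$-doubling by Lemma \ref{lempdoubling}, since $P\in\HD(R)\cap\DD_\mu(9Q)$ when $P\subset e'(R)\subset 2R\subset 9R$) yields a $\PP$-doubling cube in $\TT_\sss(e'(R))$ above $Q$, again contradicting $Q\in\Neg(e'(R))$.

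Finally, for the size estimate \eqref{eqcostat}, I would use that $Q\subset e'(R)\setminus R$ and that $Q$ is not strictly inside any stopping cube of $\sss_2(e'(R))$. Since $Q$ meets $\supp\mu$ and lies outside $R$, the ball $2B_Q$ must come close to $R$; more precisely, $Q$ being a ``generalized tree'' cube that is exterior to $R$ means its parent, or $Q$ itself, belongs to $\wt\DD_\mu^{ext}(R)$-type families, so $2B_Q\cap R\neq\varnothing$, which via \eqref{eqxrq84} and the thin-boundary estimate \eqref{eqfk490} constrains how small $\ell(Q)$ can be. The key point is that if $\ell(Q)$ were much smaller than $\delta_0^2\ell(R)$, then $\PP(Q)$ would be small: indeed $Q$ exterior to $R$ but inside $e'(R)$ forces the chain of ancestors of $Q$ leaving $R$ to pass through $\LD(R)$-type scales, since $\PP$ decays along non-$\PP$-doubling chains (Lemma \ref{lemdobpp}) and the total number of generations of such a chain before $\PP(\cdot)\leq\delta_0\Theta(R)$ is at most $\approx\log(1/\delta_0)$; hence $\ell(Q)\gtrsim A_0^{-c\log(1/\delta_0)}\ell(R)\gtrsim \delta_0^{2}\ell(R)$ for a suitable absolute power (recalling $A_0=C_0^{C(n)}$ and $\delta_0<\Lambda^{-4n^2}$).

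The main obstacle I expect is the last step: pinning down exactly why a negligible cube cannot be too small requires carefully tracking the interplay between the $\LD$ stopping condition (which limits how long $\PP$ can stay large along a descending chain) and the exterior geometry forcing $Q$ near $\partial R$, and getting the quantitative exponent right (the $\delta_0^2$ rather than, say, $\delta_0$ or $\delta_0^{10}$) will depend on bookkeeping the constants in Lemma \ref{lemdobpp} against the definition $C_d=4A_0^n$ and the bound $m_k(Q)\leq C_1A_0^{-k}$ from Remark \ref{remmk}.
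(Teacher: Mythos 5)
Your strategy aligns with the paper's, but two of your three steps are more convoluted than necessary and the third has a genuine gap. For the first two claims, the chain/decay arguments via Lemmas~\ref{lemcobdob} and \ref{lemdobpp} are not needed at all: since $R\in\MDW\subset\DD_\mu^\PP$ and $R\in\TT_\sss(e'(R))$ by definition, any $Q\subset R$ has $R$ itself as a $\PP$-doubling cube of $\TT_\sss(e'(R))$ containing it, so $Q\notin\Neg(e'(R))$. Likewise, by Lemma~\ref{lempdoubling} every $P\in\HD_1(e'(R))$ is $\PP$-doubling and belongs to $\TT_\sss(e'(R))$, so $Q\subset P$ again directly contradicts negligibility. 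There is no need to hunt for an intermediate $\PP$-doubling cube.

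For the size estimate \eqref{eqcostat}, the exterior geometry, the inclusion $2B_Q\cap R\neq\varnothing$, and the thin-boundary estimate \eqref{eqfk490} that you bring in play no role; and you leave the exponent $2$ unresolved, which is the actual content of the lemma. The clean argument is: since $Q\in\Neg(e'(R))$, every ancestor of $Q$ contained in $e'(R)$ lies in $\TT_\sss(e'(R))$ and is therefore non-$\PP$-doubling. Letting $Q_1\subset\cdots\subset Q_m$ be this chain (with $Q_1$ the parent of $Q$ and $\ell(Q_m)=A_0^{-1}\ell(R)$), Lemma~\ref{lemdobpp} gives $\PP(Q_1)\lesssim A_0^{-m/2}\PP(Q_m)\lesssim A_0^{-m/2}\Theta(R)\approx(\ell(Q)/\ell(R))^{1/2}\Theta(R)$. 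On the other hand, $Q$ is not strictly contained in any $\LD(R)$ cube, which forces $\PP(Q_1)\geq\delta_0\Theta(R)$. Combining yields $\ell(Q)\gtrsim\delta_0^2\ell(R)$: the exponent $2$ is exactly the reciprocal of the $1/2$ decay exponent in \eqref{eqcad35'}, not a constant to be tuned. Your heuristic bound ``$\ell(Q)\gtrsim A_0^{-c\log(1/\delta_0)}\ell(R)$'' is off by a $1/\log A_0$ in the exponent and does not, as written, produce $\delta_0^2$.
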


\begin{proof}
Let $Q\in\Neg(e'(R))$. We have $Q\subset e'(R)\setminus R$ due to the fact that $R$ is $\PP$-doubling.
For the same reason, $Q$ is not contained in any cube from $\HD_1(e'(R))$.

To prove \rf{eqcostat}, assume that $\ell(Q)\leq A_0^{-2}\ell(R)$. Otherwise the inequality is immediate.
By Lemma \ref{lemdobpp}, since all the ancestors $Q_1,\ldots,Q_m$ of $Q$ that are contained in $e'(R)$ are not $\PP$-doubling, it follows that $Q_1$ (the parent of $Q$) satisfies
$$\PP(Q_1)\lesssim A_0^{-m/2-1}\,\PP(Q_m).$$
Because $Q_m\subset e'(R)\subset 2R$ and $\ell(Q_m)=A_0^{-1}\ell(R)$, it is easy to see that $\PP(Q_m)\lesssim \PP(R)\lesssim C_{d}\,\Theta(R)$, and so
\begin{equation*}
\PP(Q_1)\lesssim A_0^{-m/2}\,\Theta(R)\approx\left(\frac{\ell(Q)}{\ell(R)}\right)^{1/2}\,\Theta(R).
\end{equation*}
By the definition of $\LD(R)$, we know that $\PP(Q_1)\geq\delta_0\,\Theta(R)$, which together with 
the previous estimate yields \rf{eqcostat}.
\end{proof}
\vv

The cubes from $\sss_2(e'(R))$ need not be $\PP$-doubling, which is problematic for some of the estimates involving the Riesz transform localized around the
trees $\TT_\sss(e'(R))$ that will be required later. For this reason, we need to consider enlarged versions of these trees. For $R\in\MDW$, we let $\End(e'(R))$ be the family made up of the following cubes:
\begin{itemize}
\item the cubes from $\sss_2(e'(R))\cap \Neg(e'(R))$,
\item the cubes that are contained in any cube from $\sss_2(e'(R))\setminus \Neg(e'(R))$ which are $\PP$-doubling and, moreover, are maximal.
\end{itemize}
Notice that all the cubes from $\End(e'(R))$ are $\PP$-doubling, with the exception of the ones from $\Neg(e'(R))$.
We let $\TT(e'(R))$ be the family of cubes that are contained in $e'(R)$ and are not 
strictly contained in any cube from $\End(e'(R))$.

 \vv
 
\subsection{Tractable trees}\label{subsec:trc} 
Given $R\in\MDW$, we say that $\TT(e'(R))$ is tractable (or that $R$ is tractable) if 
$$\sigma(\HD_2(e'(R)))\leq B\,\sigma(\HD_1(e(R))).$$
In this case we write $R\in\Trc$.

 Our next objective consists in showing how we can associate a family of tractable trees to any $R\in\MDW$.
First we need the following lemma.

\begin{lemma}\label{lemalg1}
Let $R\in\MDW$ be such that $\TT(e'(R))$ is not tractable. Then there exists a family $\GH(R)\subset
\HD_1(e'(R))\cap\MDW$ satisfying:
\begin{itemize}
\item[(a)] The balls $B(e''(Q))$, with $Q\in\GH(R)$ are pairwise disjoint.
\item[(b)] For every $Q\in\GH(R)$, $\sigma(\HD_1(e(Q)))\geq \sigma(\HD_1(Q))\geq B^{1/2}\sigma(Q)$.
\item[(c)] $$B^{1/4} \sum_{Q\in\GH(R)} \sigma(\HD_1(e(Q))) \gtrsim \sigma(\HD_2(e'(R))).$$
\end{itemize}
\end{lemma}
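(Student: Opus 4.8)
The plan is to build $\GH(R)$ by a greedy selection inside the family $\HD_1(e'(R))$, keeping only those cubes that themselves have moderate decrement of Wolff energy, and then to account for the cubes we discard. First I would recall that, since $\TT(e'(R))$ is not tractable, we have $\sigma(\HD_2(e'(R))) > B\,\sigma(\HD_1(e(R)))$, whereas by the $\MDW$ property of $R$ together with \eqref{eq:sigmae'lesigmae} we also control $\sigma(\HD_1(e'(R)))$ and $\sigma(\HD_1(e^{(10)}(R)))$ by (a power of $B$ times) $\sigma(\HD_1(e(R)))$. So the "extra mass" in $\HD_2(e'(R))$ compared with $\HD_1(e(R))$ must come from cubes $Q\in\HD_1(e'(R))$ which are themselves "heavy" at the next level, i.e. for which $\sigma(\HD_1(Q))$ is a sizable fraction of $\sigma(Q)$. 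For each $Q\in\HD_1(e'(R))$, note that $\HD_2(e'(R))\cap\DD_\mu(Q) = \sss(Q)\cap\HD(Q) = \HD_1(Q)$, so
$$\sigma(\HD_2(e'(R))) = \sum_{Q\in\HD_1(e'(R))} \sigma(\HD_1(Q)).$$
I would then split $\HD_1(e'(R))$ into the subfamily $\sG$ of cubes $Q$ with $\sigma(\HD_1(Q)) \geq B^{1/2}\sigma(Q)$ (so $Q$ is $\PP$-doubling — being in $\HD(R)\cap\sss(R)$ it is, by Lemma \ref{lempdoubling} — and satisfies \eqref{eq:MDWdef2} with $B^{1/2}$ in place of $B$, hence $Q\in\MDW$), and the complementary "light" subfamily $\sL$. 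For the light cubes, $\sum_{Q\in\sL}\sigma(\HD_1(Q)) \leq B^{1/2}\sum_{Q\in\sL}\sigma(Q) \leq B^{1/2}\sigma(\HD_1(e'(R))) \leq B^{1/2}\cdot B^{1/4}\sigma(\HD_1(e(R))) < B\,\sigma(\HD_1(e(R))) < \sigma(\HD_2(e'(R)))$ once we use non-tractability; hence the light cubes contribute at most, say, a $B^{-1/4}$-fraction of $\sigma(\HD_2(e'(R)))$, and so the heavy cubes $\sG$ satisfy $\sum_{Q\in\sG}\sigma(\HD_1(Q)) \gtrsim \sigma(\HD_2(e'(R)))$. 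This already gives a family with properties (b) and (c) (using $\sigma(\HD_1(e(Q)))\geq\sigma(\HD_1(Q))$, which holds because $e(Q)\supset Q$ and $\HD_1(e(Q))$ is the stopping-$\HD$ family inside the larger enlarged cube), except for the disjointness in (a).

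To upgrade to the pairwise-disjointness of the balls $B(e''(Q))$, I would run a Vitali-type / bounded-overlap argument on $\sG$. The cubes in $\sG$ are siblings of a sort — all from $\HD_1(e'(R))$, i.e. all from $\HD(R)$ — but they live at possibly different generations, and the enlarged balls $B(e''(Q))$ have radius comparable to $\ell(Q)$ (specifically $r(B(e''(Q)))\leq \tfrac32\ell(Q)$ by Lemma \ref{lem-calcf}, and $\geq\tfrac12\ell(Q)$). By Lemma \ref{lempdoubling}, all $Q\in\sG$ have $\Theta(Q)\approx\Lambda\,\Theta(R)$, so the density is essentially constant across $\sG$; combined with the small-boundary / disjointness properties of the David–Mattila lattice this should let me extract a subfamily with the balls $B(e''(Q))$ genuinely disjoint while losing only a bounded-overlap factor (absorbable into the implicit constant in (c)) in the sum $\sum\sigma(\HD_1(e(Q)))$. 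Concretely: among cubes of a fixed generation the balls $5B(Q)$ are disjoint and $B(e''(Q))$ has bounded dilation-type relation to $B(Q)$, so overlaps only occur between different generations, and a standard maximal-cube selection (keep a cube, discard everything whose enlarged ball meets the enlarged ball of a kept larger cube) together with the density comparison bounds the discarded mass — here one uses that $\HD_1(e(Q))$ for a discarded small $Q$ sits inside a bounded dilate of the kept larger cube's enlarged ball, and the $\PP$-doubling/packing control from Lemma \ref{lemenergias}-type estimates and \eqref{eqfk490} keeps this under control.

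The main obstacle I anticipate is exactly this last disjointification step: making the bounded-overlap argument for the enlarged balls $B(e''(Q))$ genuinely quantitative. The enlargement index $h(Q)$ depends on $Q$ (it is chosen via Lemma \ref{lem:43} separately for each $Q\in\MDW$), so the radii $r(B(e''(Q)))$ are only known to lie in $[\tfrac12\ell(Q),\tfrac32\ell(Q)]$; two cubes $Q,Q'\in\sG$ of comparable but unequal side length could have heavily overlapping enlarged balls, and one has to be sure that discarding the "redundant" ones costs only a dimensional constant, not a power of $B$ or $\Lambda$. The key leverage is that all cubes of $\sG$ have density $\approx\Lambda\Theta(R)$ and that $\sigma$ assigns them mass $\Lambda^2\Theta(R)^2\mu(Q)$, so a packing bound on $\sum_{Q\in\sG}\mu(Q)$ restricted to any fixed ball of radius $\approx\ell$ follows from the growth of $\mu$ — this is what one must exploit to turn "bounded overlap of balls" into "bounded loss of $\sigma$-mass". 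Once that is in place, properties (a), (b), (c) all hold for the disjointified subfamily, which I rename $\GH(R)$, and the factor $B^{1/4}$ in (c) comfortably absorbs the constants from the discarding step together with the $B^{1/4}$ loss from \eqref{eq:sigmae'lesigmae}.
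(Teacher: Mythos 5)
Your overall decomposition matches the paper's exactly: you write $\sigma(\HD_2(e'(R)))=\sum_{Q\in\HD_1(e'(R))}\sigma(\HD_1(Q))$, split $\HD_1(e'(R))$ into the heavy cubes (your $\sG$, the paper's $I^c$) satisfying $\sigma(\HD_1(Q))\geq B^{1/2}\sigma(Q)$ and the light cubes, and use non-tractability together with \eqref{eq:sigmae'lesigmae} to show the light cubes carry only a $B^{-1/4}$-fraction of $\sigma(\HD_2(e'(R)))$. This is correct, as is your observation that the heavy cubes are $\PP$-doubling (Lemma \ref{lempdoubling}) and land in $\MDW$.

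The gap is in the disjointification, and you have correctly flagged it as the main obstacle --- but the mechanism you propose (a packing bound on $\sum_{Q\in\sG}\mu(Q)$ via the growth of $\mu$, plus ``$\PP$-doubling/packing control from Lemma \ref{lemenergias}-type estimates and \eqref{eqfk490}'') is not the one that works. Since all cubes in $\sG$ have the same density $\Lambda\Theta(R)$, a growth-of-$\mu$ packing bound gives you no slack to discard anything without an unquantified factor loss, and moreover it controls $\mu$-mass of the cubes themselves, not the quantity $\sigma(\HD_1(Q))$ that you actually need to dominate. What the paper does instead is cover $\{B(e''(Q))\}_{Q\in I^c}$ by a Besicovitch-type family $J_0$ with bounded superposition $m_0$ and with the mild dilation property that $(1+8A_0^{-1})B(e''(Q))\subset B(e^{(8)}(Q))$ (Lemma \ref{lem-calcf}). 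The key inclusion $\bigcup_{Q'\in I^c}Q'\subset\bigcup_{Q\in J_0}B(e^{(8)}(Q))$ then shows that every stopping cube in $\HD_1(Q')$ for a discarded $Q'$ reappears in $\HD_1(e^{(8)}(Q))$ for some kept $Q$ of the same $\Theta$, so $\sum_{Q'\in I^c}\sigma(\HD_1(Q'))\leq\sum_{Q\in J_0}\sigma(\HD_1(e^{(8)}(Q)))$. The single estimate $\sigma(\HD_1(e^{(8)}(Q)))\leq\sigma(\HD_1(e^{(10)}(Q)))\leq B^{1/4}\sigma(\HD_1(e(Q)))$ from \eqref{eq:sigmae'lesigmae} --- which is exactly what the choice of $h(R)$ in Lemma \ref{lem:43} was engineered to give you --- is what converts this into the $B^{1/4}$ loss in (c). So the $B^{1/4}$ does not sit alongside the cost of discarding as you suggest at the end; it \emph{is} the cost of discarding, and the bounded-overlap constant $m_0\lesssim C(A_0)$ goes into the implicit constant of (c). Finally, the disjoint subfamily $J$ is then chosen as the largest (in the sense of $\sum\sigma(\HD_1(e(Q)))$) of the $m_0$ pairwise-disjoint subfamilies of $J_0$, rather than by a greedy largest-cube-first selection.
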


The name ``$\GH$'' stands for ``good high (density)''. Remark that the property (c) and the fact that $R\not\in\Trc$ yield
\begin{equation}\label{eqiter582}
\sum_{Q\in\GH(R)} \sigma(\HD_1(e(Q))) \gtrsim B^{3/4}\,\sigma(\HD_1(e(R))),
\end{equation}
which is suitable for iteration.

\begin{proof}[Proof of Lemma \ref{lemalg1}]
Let $R\in\MDW$ be such that $\TT(e'(R))$ is not tractable. Notice first that
$$\sigma(\HD_1(e'(R)))\overset{\eqref{eq:sigmae'lesigmae}}{\leq} B^{1/4}
\sigma(\HD_1(e(R)))\leq B^{-3/4}\sigma(\HD_2(e'(R))) .$$
Let $I\subset \HD_1(e'(R))$ be the subfamily of the cubes $Q$ such that
$$\sigma(\HD_1(Q))< B^{1/2}\sigma(Q).$$
Then we have 
\begin{align*}
\sum_{Q\in I} \sigma(\HD_2(e'(R))\cap\DD_\mu(Q)) & \leq B^{1/2} \sum_{Q\in I} \sigma(Q)
\leq B^{1/2} \sigma(\HD_1(e'(R))) \\
&\leq \frac{B^{1/2}}{B^{3/4}}\,\sigma(\HD_2(e'(R)))\leq
 \frac12\,\sigma(\HD_2(e'(R))).
\end{align*}
Therefore,
\begin{align}\label{eqdj723}
\sum_{Q\in \HD_1(e'(R))\setminus I} \sigma(\HD_2(e'(R))\cap\DD_\mu(Q)) & = \sigma(\HD_2(e'(R))) -
\sum_{Q\in I} \sigma(\HD_2(e'(R))\cap\DD_\mu(Q))\\
& \geq \frac12\,\sigma(\HD_2(e'(R))).\nonumber
\end{align}

Next we will choose a family $J\subset \HD_1(e'(R))\setminus I$ satisfying
\begin{itemize}
\item[(i)] The balls $B(e''(Q))$, with $Q\in J$, are pairwise disjoint.
\item[(ii)] $$B^{1/4}\sum_{Q\in J}\sigma(\HD_1(e(Q)))
\gtrsim \sum_{Q\in \HD_1(e'(R))\setminus I} \sigma(\HD_2(e'(R))\cap\DD_\mu(Q)).$$
\end{itemize}
Then, choosing $\GH(R) = J$ we will be done. Indeed, the  property (a) in the statement of the lemma is the same as (i), and the property (b) is a consequence of the fact that
$J\subset I^c$ and the definition of $I$. This also implies that $\GH(R)\subset\MDW$. Finally, the property (c) follows from \rf{eqdj723} and (ii).

Let us see how $J$ can be constructed. By the covering Theorem 9.31 from \cite{Tolsa-llibre}, there
is a family $J_0\subset \HD_1(e'(R))\setminus I$ such that
\begin{itemize}
\item[1)] The balls $B(e''(Q))$, with $Q\in J_0$, have finite superposition, that is, 
$$\sum_{Q\in J_0}\chi_{B(e''(Q))}\leq C,$$
and
\item[2)] 
$$\bigcup_{Q\in \HD_1(e'(R))\setminus I} B(e''(Q)) \subset \bigcup_{Q\in J_0} (1+8A_0^{-1})\,B(e''(Q)),$$
\end{itemize}
Actually, in Theorem 9.31 from \cite{Tolsa-llibre} the result above is stated for a finite family of
balls. However, it is easy to check that the same arguments work as soon as the family $\HD_1(e'(R))\setminus I$ is countable and can be ordered so that $\HD_1(e'(R))\setminus I=\{Q_1,Q_2,\ldots\}$,
with $\ell(Q_1)\geq \ell(Q_2)\geq\ldots$. Further, one can check that the constant $C$ in 1)
does not exceed some absolute constant times $A_0^{n+1}$.

From the finite superposition property 1), by rather standard arguments which are analogous to the
ones in the proof of Besicovitch's covering theorem in \cite[Theorem 2.7]{Mattila-llibre}, say, 
one deduces that $J_0$ can be split into $m_0$ subfamilies $J_1,\ldots, J_{m_0}$ so that, for each $k$,  the balls $\{B(e''(Q)): Q\in J_k\}$  are pairwise disjoint, with $m_0\leq C(A_0)$.

Notice that the condition 2) and Lemma \ref{lem-calcf} applied to $Q$ ensure that
\begin{equation}\label{equni98-1}
\bigcup_{Q\in \HD_1(e'(R))\setminus I} Q\subset \bigcup_{Q\in \HD_1(e'(R))\setminus I} B(e''(Q) )\subset \bigcup_{Q\in J_0} (1+8A_0^{-1})\,B(e''(Q)) \subset \bigcup_{Q\in J_0} B(e^{(8)}(Q)).
\end{equation}
Next we choose $J:=J_k$ to be the family such that
$$\sum_{Q\in J_k}\sigma(\HD_1(e(Q)))$$
is maximal among $J_1,\ldots,J_{m_0}$, so that
\begin{align*}
\sum_{Q\in J}\sigma(\HD_1(e(Q))) & \geq \frac1{m_0}\,
\sum_{Q\in J_0}\sigma(\HD_1(e(Q)))\\
& \overset{\eqref{eq:sigmae'lesigmae}}{\geq} \frac{1}{m_0\,B^{1/4}} \sum_{Q\in J_0}\sigma(\HD_1(e^{(8)}(Q)))\\
& \overset{\rf{equni98-1}}{\ge} \frac{1}{m_0\,B^{1/4}} \sum_{Q\in \HD_1(e'(R))\setminus I} \sigma(\HD_1(Q))\\
& = \frac{1}{m_0\,B^{1/4}}\sum_{Q\in \HD_1(e'(R))\setminus I} \sigma(\HD_2(e'(R))\cap\DD_\mu(Q)).
\end{align*}
This proves (ii).
\end{proof}
\vv

Given $R\in\MDW$, we will construct now a subfamily of cubes from $\MDW$ generated by $R$,
which we will denote $\Gen(R)$, by iterating the construction of Lemma \ref{lemalg1}.
The algorithm goes as follows.
Given $R\in\MDW$, we denote 
$$\Gen_0(R) = \{R\}.$$
If $R\in\Trc$, we set $\Gen_1(R)=\varnothing$, and otherwise
$$\Gen_1(R) = \GH(R),$$
where $\GH(R)$ is defined in Lemma \ref{lemalg1}.
For $j\geq 2$, we set
$$\Gen_{j}(R) = \bigcup_{Q\in\Gen_{j-1}(R)\setminus \Trc} \GH(Q).$$
For $j\geq0$, we also set
$$\Trc_j(R) = \Gen_j(R)\cap\Trc,$$
and
$$\Gen(R) = \bigcup_{j\geq0}\Gen_j(R),\qquad\Trc(R) = \bigcup_{j\geq0}\Trc_j(R).$$
\vv

\begin{lemma}\label{eqtec74}
For $R\in\MDW$, we have
\begin{equation}\label{eqtec741}
\bigcup_{Q\in\Trc(R)}Q\subset\bigcup_{Q\in\Gen(R)}Q \subset B(e''(R)).
\end{equation}
Also,
\begin{equation}\label{eqiter*44}
\sigma(\HD_1(e(R)))\leq \sum_{j\geq0} B^{-j/2}\sum_{Q\in\Trc_j(R)}\sigma(\HD_1(e(Q))).
\end{equation}
\end{lemma}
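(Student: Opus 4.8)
The plan is to prove both statements by induction on the generation index $j$, using the properties of $\GH(Q)$ established in Lemma \ref{lemalg1} together with the geometric control on enlarged balls from Lemma \ref{lem-calcf}. For the first inclusion \eqref{eqtec741}, note that $\Trc(R)\subset\Gen(R)$ by definition, so it suffices to show $\bigcup_{Q\in\Gen(R)}Q\subset B(e''(R))$. I would argue that every $Q\in\Gen_j(R)$ satisfies $Q\subset B(e''(R))$, by induction on $j$. The base case $j=0$ is trivial since $R\subset e(R)\subset B(e''(R))$. For the inductive step, if $Q\in\Gen_j(R)$ with $j\ge1$, then $Q\in\GH(P)$ for some $P\in\Gen_{j-1}(R)\setminus\Trc$; by Lemma \ref{lemalg1}, $Q\in\HD_1(e'(P))$, so $Q\subset e'(P)$. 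The key point is to control how the enlarged balls nest: since $P\in\Gen_{j-1}(R)$, a finer version of the induction (carried on $B(e''(\cdot))$ rather than on the cubes themselves) should give $B(e''(P))\subset B(e''(R))$, and then $Q\subset e'(P)\subset B(e''(P))\subset B(e''(R))$. The nesting $B(e''(P))\subset B(e''(R))$ for $P$ a "grandchild in $\GH$" follows because $B(e''(P))$ has radius comparable to $\ell(P)\ll\ell(R)$ and is centered at a point of $P\subset e'(\widehat P)\subset\cdots$, so by a telescoping estimate on the radii (using that $\ell$ decreases geometrically along $\GH$-chains, which is built into the $\HD$/$\hd^{k_\Lambda}$ stopping condition) the accumulated displacement stays well inside $B(e''(R))$. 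I would make this precise with Lemma \ref{lem-calcf} and the estimate $\ell(Q)\le A_0^{-1}\ell(P)$ valid for $Q\in\HD(P)$ (since $k_\Lambda>4$, cf. \eqref{eqforakdf33} and the discussion of $m_k$).

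For the second statement \eqref{eqiter*44}, the plan is to iterate the inequality \eqref{eqiter582} from Lemma \ref{lemalg1}. Set $a_j=\sum_{Q\in\Gen_j(R)}\sigma(\HD_1(e(Q)))$ and $t_j=\sum_{Q\in\Trc_j(R)}\sigma(\HD_1(e(Q)))$, so that $a_j=t_j+\sum_{Q\in\Gen_j(R)\setminus\Trc}\sigma(\HD_1(e(Q)))$. For each non-tractable $Q\in\Gen_j(R)$, Lemma \ref{lemalg1}(c) together with $Q\notin\Trc$ gives
\begin{equation*}
\sigma(\HD_1(e(Q)))\le C\,B^{-3/4}\sum_{P\in\GH(Q)}\sigma(\HD_1(e(P))),
\end{equation*}
and since the balls $B(e''(Q))$ over $Q\in\Gen_j(R)$ are "essentially disjoint" within each generation (this requires a short argument: the balls $B(e''(Q))$ for $Q$ ranging over a \emph{single} $\GH(P)$ are disjoint by Lemma \ref{lemalg1}(a), and one needs that $\GH(P)$-families from distinct $P\in\Gen_{j-1}(R)$ do not overlap badly — which follows because each such family is contained in $\bigcup_{Q\in\GH(P)}B(e''(Q))\subset e'(P)$ and the $P$'s are themselves well-separated, being from disjoint $\GH$-families one level up), summing over $Q\in\Gen_j(R)\setminus\Trc$ yields $\sum_{Q\in\Gen_j(R)\setminus\Trc}\sigma(\HD_1(e(Q)))\lesssim B^{-3/4}\,a_{j+1}$, hence (absorbing the constant into $B^{1/4}$, legitimate since $B=\Lambda^{1/(100n)}$ is large) $a_j\le t_j+B^{-1/2}a_{j+1}$. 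Iterating from $j=0$ (where $a_0=\sigma(\HD_1(e(R)))$) gives $a_0\le\sum_{j\ge0}B^{-j/2}t_j$, which is exactly \eqref{eqiter*44}.

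I expect the main obstacle to be making rigorous the "essential disjointness" of the enlarged balls across the whole family $\Gen_j(R)$ (as opposed to within one $\GH(P)$), and correctly absorbing the resulting finite-overlap constant into the geometric factor $B^{-1/2}$; this is where the hypothesis that $B$ (equivalently $\Lambda$, equivalently $A_0$) is taken large enough is used. A secondary technical point is verifying the radius-telescoping that gives $B(e''(P))\subset B(e''(R))$ along $\GH$-chains: one must check that the geometric decay $\ell(Q)\le A_0^{-1}\ell(P)$ for consecutive $\GH$-generations dominates the $O(A_0^{-1}\ell)$-sized enlargements in Lemma \ref{lem-calcf}, so that the cumulative enlargement over all generations is a convergent geometric series that keeps everything inside $B(e''(R))$. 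Both points are routine given the quantitative setup, but they are the places where care is needed.
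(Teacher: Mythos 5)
Your proposal is essentially correct and follows the same two-step structure as the paper: for \eqref{eqtec741}, a telescoping control on the centers of the cubes in a $\GH$-chain (the paper directly bounds $|x_R-x|$ by summing $\sum_{k}|x_{R_k}-x_{R_{k+1}}|\lesssim\sum_k A_0^{-k}\ell(R)$ and comparing to $r(B(e''(R)))-r(B(e'(R)))=2A_0^{-1}\ell(R)$, which is the same geometric content as your nested-ball formulation $B(e''(P_j))\subset B(e''(P_{j-1}))$); and for \eqref{eqiter*44}, iterating \eqref{eqiter582} to get $a_{j-1}\le t_{j-1}+B^{-1/2}a_j$ and summing. The only point you flag as needing care — that the $\GH(Q)$-families across $Q\in\Gen_{j-1}(R)\setminus\Trc$ do not overlap, so that $\sum_Q\sum_{P\in\GH(Q)}\le a_j$ — is indeed what makes the paper's equality $\sum_{P\in\Gen_j(R)}=\sum_Q\sum_{P\in\GH(Q)}$ valid, and your justification (each $\GH(Q)$ lives inside $e'(Q)\subset B(e''(Q))$ and the $B(e''(Q))$ are disjoint one level up, inductively) is the right one.
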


\begin{proof}

The first inclusion in \eqref{eqtec741} holds because $\Trc(R)\subset\Gen(R)$. So we only have to show the second inclusion.

By construction, for any $R'\in\MDW$, $\GH(R')\subset \HD_1(e'(R'))$, and thus any $Q\in \GH(R')$ is contained in $e'(R')$. This implies that 
$$|x_{R'}-x_Q|\leq r(B(e'(R'))) + \frac12\,\ell(Q)\leq  \Big(1+ 2A_0^{-1}+ \frac12\,A_0^{-1}\Big)\ell(R') \leq 1.1\,\ell(R').$$
Then, given $Q\in\Gen_j(R)$, $x\in Q$, and $0\leq k\leq j$, if we denote by $R_k$ the cube from $\Gen_k(R)$ such that $Q\in\Gen_{j-k}(R_k)$, we have
\begin{align*}
|x_R-x|& \leq |x_R- x_{R_1}|+ \sum_{k=1}^{j-1} |x_{R_k}- x_{R_{k+1}}| + |x_Q-x|\\
& \leq r(B(e'(R)))+ \frac12\,A_0^{-1}\,\ell(R) + \sum_{k=1}^{j-1}1.1\,A_0^{-k}\ell(R) + \frac12\,A_0^{-1}\,\ell(R)\\
& \leq r(B(e'(R)))+ 2\,A_0^{-1}\,\ell(R),
\end{align*}
which shows that $Q\subset B(e''(R))$.

To prove the second statement in the lemma, observe that, for $Q\in\Gen_{j-1}(R)\setminus \Trc$,
by \rf{eqiter582} applied to $Q$ we have
$$\sum_{P\in\GH(Q)} \sigma(\HD_1(e(P))) \geq c\,B^{3/4}\,\sigma(\HD_1(e(Q)))\geq 
B^{1/2}\,\sigma(\HD_1(e(Q))),
$$
assuming $\Lambda$, and thus $B$, big enough. Therefore,
\begin{align*}
\sum_{P\in\Gen_j(R)}\sigma(\HD_1(e(P))
& = 
\sum_{Q\in\Gen_{j-1}(R)\setminus \Trc} \,\sum_{P\in\GH(Q)}\sigma(\HD_1(e(P)))\\
& \geq 
B^{1/2}\sum_{Q\in\Gen_{j-1}(R)\setminus \Trc}\sigma(\HD_1(e(Q)))
\end{align*}
So,
$$\sum_{Q\in\Gen_{j-1}(R)}\sigma(\HD_1(e(Q)))\leq 
\sum_{Q\in\Trc_{j-1}(R)}\sigma(\HD_1(e(Q))) +
B^{-1/2}\sum_{P\in\Gen_j(R)}\sigma(\HD_1(e(P))).
$$
Iterating this estimate, and taking into account that, by the polynomial growth of $\mu$,
$\Gen_{j-1}(R)=\varnothing$ for some large $j$, we get \rf{eqiter*44}.
\end{proof}

\vv



\section{The Riesz transform on the tractable trees: the approximating measures \texorpdfstring{$\eta$}{eta}, \texorpdfstring{$\nu$}{nu}, and the variational argument}\label{sec6}

In this section,
for a given $R\in\MDW$ such that $\TT(e'(R))$ is tractable (i.e., $R\in\Trc$),
we will define a suitable measure $\eta$ that approximates $\mu$ at the level of the cubes from
$\TT(e'(R))$ and we will estimate $\|\RR\eta\|_{L^p(\eta)}$ from below. To this end,
we will apply a variational argument in $L^p$ by techniques inspired by 
\cite{Reguera-Tolsa} and \cite{JNRT}.  In the next section we will transfer these
estimates to $\RR\mu$.


\subsection{The suppressed Riesz transform and a Cotlar type inequality}\label{sec6.1}


Let $\Phi:\R^{n+1}\to[0,\infty)$ be a $1$-Lipschitz function.
Below we will need to work with the suppressed Riesz kernel
\begin{equation}\label{eqsuppressed}
K_\Phi(x,y) = \frac{x-y}{\bigl (|x-y|^2+\Phi(x)\Phi(y)\bigr)^{(n+1)/2}}
\end{equation}
and the associated operator 
$$\RR_\Phi\alpha(x) =\int K_\Phi(x,y)\,d\alpha(y),$$
where $\alpha$ is a signed measure in $\R^{n+1}$.
For a positive measure $\omega$ and $f\in L^1_{loc}(\omega)$, we write $\RR_{\Phi,\omega} f = \RR_\Phi (f\,\omega)$.
The kernel $K_\Phi$ (or a variant of this) appeared for the first
time in the work of Nazarov, Treil and Volberg in connection with Vitushkin's conjecture (see
\cite{Volberg}). 
This is a Calder\'on-Zygmund kernel which satisfies the properties:
\begin{equation}\label{eqkafi1}
|K_\Phi(x,y)|\lesssim \frac1{\big(|x-y| + \Phi(x) + \Phi(y)\big)^n}
\end{equation}
and
\begin{equation}\label{eqkafi2}
|\nabla_x K_\Phi(x,y)|+ |\nabla_y K_\Phi(x,y)|
\lesssim \frac1{\big(|x-y| + \Phi(x) + \Phi(y)\big)^{n+1}}
\end{equation}
for all $x,y\in\R^{n+1}$.

Also, if $\ve\approx\Phi(x)$, then we have
\begin{equation}
\label{e.compsup''}
\bigl|\RR_{\ve}\alpha(x) - \RR_{\Phi}\alpha(x)\bigr|\lesssim  \sup_{r> \Phi(x)}\frac{|\alpha|(B(x,r))}{r^n},
\end{equation}
with the implicit constant in the inequality depending on the implicit constant in the comparability $\ve\approx\Phi(x)$.
See Lemmas 5.4 and 5.5 in \cite{Tolsa-llibre}. 

The following result is an easy consequence of a $Tb$ theorem of Nazarov, Treil and Volberg.
See Chapter 5 of \cite{Tolsa-llibre}, for example. We will use this to prove \rf{eqacpsi}.

\begin{theorem}\label{teontv}
Let $\omega$ be a Radon measure in $\R^{n+1}$ and let $\Phi:\R^{n+1}\to[0,\infty)$ be a $1$-Lipschitz function. Suppose that
\begin{itemize}
\item[(a)] $\omega(B(x,r))\leq c\,r^n$ for all $r\geq \Phi(x)$, and
\item[(b)] $\sup_{\ve>\Phi(x)}|\RR_\ve\omega(x)|\leq C$.
\end{itemize}
Then $\RR_{\Phi,\omega}$ is bounded in $L^p(\omega)$, for $1<p<\infty$, with a bound on its norm depending only on $p$, $c$ and
$C$. In particular, $\RR_\omega$ is bounded in $L^p(\omega)$ on the set $\{x:\Phi(x)=0\}$.
\end{theorem}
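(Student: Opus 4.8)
The plan is to deduce the statement from the non-homogeneous $Tb$ theorem of Nazarov, Treil and Volberg (applied with $b\equiv1$), in the form developed in Chapter 5 of \cite{Tolsa-llibre}, together with the suppression technique associated with the kernel $K_\Phi$. First I would record the structural properties of $K_\Phi$: by \eqref{eqkafi1} and \eqref{eqkafi2} it is an antisymmetric Calder\'on--Zygmund kernel of dimension $n$ on $\R^{n+1}$, with $|K_\Phi(x,y)|\lesssim|x-y|^{-n}$ and with ``effective size'' $(|x-y|+\Phi(x)+\Phi(y))^{-n}$. Consequently the operator $\RR_{\Phi,\omega}$ only feels the size of $\omega$ around a point $x$ at scales $r\gtrsim\Phi(x)$, which is precisely the content of hypothesis (a); in this sense $\omega$ behaves, for the kernel $K_\Phi$, like a measure of genuine $n$-growth. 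Since $K_\Phi$ is antisymmetric, $\RR_{\Phi,\omega}^{\ast}=-\RR_{\Phi,\omega}$, so the single testing condition required by the theorem is that $\RR_{\Phi,\omega}1=\RR_\Phi\omega$ belong to $L^\infty(\omega)$ (which is even stronger than the $BMO$/$RBMO$-type condition that such $Tb$ theorems ask for).

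The next step is to verify this testing condition, working with the truncations $\RR_{\Phi,\omega,\varepsilon}1=\RR_{\Phi,\varepsilon}\omega$. Fix $x\in\supp\omega$. If $\Phi(x)=0$, then $K_\Phi(x,y)=\tfrac{x-y}{|x-y|^{n+1}}$, so $\RR_{\Phi,\varepsilon}\omega(x)=\RR_{\varepsilon}\omega(x)$ and hence $\sup_{\varepsilon}|\RR_{\Phi,\varepsilon}\omega(x)|\le C$ by (b). If $\Phi(x)>0$, I would compare $\RR_{\Phi,\varepsilon}\omega(x)$ with $\RR_{\varepsilon'}\omega(x)$ for $\varepsilon'\approx\Phi(x)$: using \eqref{e.compsup''}, standard estimates for truncated $n$-dimensional Calder\'on--Zygmund operators, and hypothesis (a), one gets
\[
\sup_{\varepsilon>0}\bigl|\RR_{\Phi,\varepsilon}\omega(x)\bigr|\lesssim \sup_{r>\Phi(x)}\frac{\omega(B(x,r))}{r^n}+\sup_{\varepsilon>\Phi(x)}|\RR_{\varepsilon}\omega(x)|\leq c+C.
\]
Thus both hypotheses of the (suppressed) non-homogeneous $Tb$ theorem hold, with constants controlled by $c$ and $C$: the $n$-dimensional Calder\'on--Zygmund estimates for $K_\Phi$, and $\RR_\Phi\omega\in L^\infty(\omega)$. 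The theorem then yields that $\RR_{\Phi,\omega}$ is bounded on $L^2(\omega)$, and the $L^p(\omega)$ bounds for $1<p<\infty$ follow from the Calder\'on--Zygmund theory for the kernel $K_\Phi$ (weak $(1,1)$, interpolation and duality), with constants depending only on $p$, $c$ and $C$.

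For the last assertion, set $F=\{x:\Phi(x)=0\}$ and note that for $x,y\in F$ one has $K_\Phi(x,y)=\tfrac{x-y}{|x-y|^{n+1}}$; hence for $f\in L^p(\omega)$,
\[
\chi_F\,\RR_\Phi\bigl((\chi_F f)\,\omega\bigr)=\chi_F\,\RR\bigl((\chi_F f)\,\omega\bigr).
\]
Since $\RR_{\Phi,\omega}$ is bounded on $L^p(\omega)$, the map $f\mapsto\chi_F\,\RR_\Phi((\chi_F f)\,\omega)$ is bounded on $L^p(\omega)$, and therefore so is $f\mapsto\chi_F\,\RR((\chi_F f)\,\omega)$; this says exactly that $\RR_{\omega}$ is bounded on $L^p(\omega)$ on the set $\{\Phi=0\}$.

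The main obstacle is not in the present reduction but in the non-homogeneous $Tb$ theorem that it invokes; inside this argument the only delicate point is the suppression step, i.e.\ making precise that hypothesis (a) — which constrains $\omega$ only at scales above $\Phi$ — really suffices to run the $Tb$/$T1$ machinery for the kernel $K_\Phi$, and that replacing the genuine (possibly not absolutely convergent) operator by its truncations costs only the error term displayed in \eqref{e.compsup''}.
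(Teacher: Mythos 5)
The paper does not spell out a proof of this theorem but simply cites it as an easy consequence of the Nazarov--Treil--Volberg non-homogeneous $Tb$ theorem with suppressed kernels (Chapter 5 of \cite{Tolsa-llibre}); your proposal fleshes out exactly this reduction, correctly verifying the testing condition from hypotheses (a) and (b) via the comparison \eqref{e.compsup''}, and deriving the last assertion by restriction to $\{\Phi=0\}$. This is the same approach as the paper intends.
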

\vv


We define the energy $W_\omega$ (with respect to $\omega$) of a set $F\subset \R^{n+1}$ as
$$W_\omega(F) = \iint_{F\times F} \frac1{\diam(F)\,|x-y|^{n-1}}\,d\omega(x)d\omega(y).$$
We say that a ball $B\subset \R^{n+1}$ is $(a,b)$-doubling $$\omega(aB)\leq b\,\omega(B).$$
We denote by $\cM_\omega f$ the usual centered maximal Hardy-Littlewood operator applied to $f$:
$$\cM_\omega f(x) = \sup_{r>0}\,\frac1{\omega(B(x,r))}\int_{B(x,r)}|f|\,d\omega,$$
and by $\cM_\omega^{(r_0,a,b)} f$ the version
$$\cM_\omega^{(r_0,a,b)} f(x) = \sup\frac1{\omega(B(x,r))}\int_{B(x,r)}|f|\,d\omega,$$
where the $\sup$ is taken over all radii $r>r_0$ such that the ball $B(x,r)$ is $(a,b)$-doubling.

\vv

\begin{lemma}\label{lemcotlar1}
Let $x\in R$, $r_0>0$, and $\theta_1>0$.
Suppose that for all $r\geq r_0$ 
$$\theta_\omega(x,r)\leq \theta_1$$
and 
$$W_\omega(B(x,r))\leq \theta_1\,\omega(B(x,r))\qquad \mbox{if $B(x,r)$ is $(16,128^{n+2})$-doubling.}$$
Then
\begin{equation}\label{eqcot99}
\sup_{\ve\geq r_0} |\RR_\ve\omega(x)|\lesssim \cM_\omega^{(r_0,16,128^{n+2})}(\RR\omega)(x) + \theta_1.
\end{equation}
\end{lemma}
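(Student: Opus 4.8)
The plan is to fix $\ve\ge r_0$ and bound $|\RR_\ve\omega(x)|$ by the right‑hand side with constants independent of $\ve$, following the classical Cotlar scheme; the two new features are that the density bound is only available above scale $r_0$ and that the unavoidable ``local'' term must be absorbed by the Wolff energy $W_\omega$ instead of being discarded.

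\emph{Step 1 (reduction to a doubling scale).} We may assume $\omega\ne 0$. Among the radii $16^j\ve$, $j\ge 0$ (all $\ge r_0$), the growth bound $\theta_\omega(x,16^j\ve)\le\theta_1$ together with $128^{n+2}>16^n$ prevents infinitely many consecutive balls $B(x,16^j\ve)$ from being non‑$(16,128^{n+2})$‑doubling; let $j_0\ge 0$ be minimal with $B(x,\rho)$ doubling, where $\rho:=16^{j_0}\ve\ge r_0$. Telescoping the non‑doubling inequalities for $0\le j<j_0$ gives $\omega(B(x,16^j\ve))\le 128^{-(n+2)(j_0-j)}\,\omega(B(x,\rho))$, and summing $\int_{\ve<|x-y|\le\rho}|x-y|^{-n}\,d\omega(y)$ over the annuli $\{16^j\ve<|x-y|\le 16^{j+1}\ve\}$ with this geometric decay and $\theta_\omega(x,\rho)\le\theta_1$ yields $|\RR_\ve\omega(x)-\RR_\rho\omega(x)|\lesssim\theta_1$. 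So it suffices to bound $|\RR_\rho\omega(x)|$, with $B:=B(x,\rho)$ a $(16,128^{n+2})$‑doubling ball and $\rho\ge r_0$.

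\emph{Step 2 (far part and reduction to a local estimate).} Write $\omega=\omega|_{B(x,4\rho)}+\omega_{\mathrm{far}}$ with $\omega_{\mathrm{far}}:=\omega|_{\R^{n+1}\setminus B(x,4\rho)}$. Then $\RR_\rho\omega(x)=\RR\omega_{\mathrm{far}}(x)+\RR(\omega|_{B(x,4\rho)\setminus B(x,\rho)})(x)$, and the last term is $\le\int_{\rho<|x-y|\le 4\rho}|x-y|^{-n}\,d\omega\lesssim\theta_\omega(x,4\rho)\le\theta_1$. For $z\in B(x,\rho)$, Calder\'on--Zygmund smoothness of the Riesz kernel and $\theta_\omega(x,2^k\rho)\le\theta_1$ give $|\RR\omega_{\mathrm{far}}(x)-\RR\omega_{\mathrm{far}}(z)|\lesssim\rho\sum_{k\ge 2}(2^k\rho)^{-n-1}\omega(B(x,2^{k+1}\rho))\lesssim\theta_1$. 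Since $\RR\omega_{\mathrm{far}}(z)=\RR\omega(z)-\RR(\omega|_{B(x,4\rho)})(z)$, we obtain $|\RR_\rho\omega(x)|\le|\RR\omega(z)|+|\RR(\omega|_{B(x,4\rho)})(z)|+C\theta_1$ for every $z\in B(x,\rho)$. Averaging in $z$ over $B(x,\rho)$ with respect to $\omega$ and using that $B(x,\rho)$ is $(16,128^{n+2})$‑doubling with $\rho\ge r_0$ (so the first average is $\le\cM_\omega^{(r_0,16,128^{n+2})}(\RR\omega)(x)$), we are left with the local estimate
\begin{equation}\label{eqloccot}
\frac1{\omega(B(x,\rho))}\int_{B(x,\rho)}\bigl|\RR(\omega|_{B(x,4\rho)})(z)\bigr|\,d\omega(z)\lesssim\theta_1.
\end{equation}

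\emph{Step 3 (the local estimate via $W_\omega$ — the main obstacle).} This is where the hypothesis $W_\omega(B(x,r))\le\theta_1\,\omega(B(x,r))$ is used, and it is the delicate point: $\RR(\omega|_{B(x,4\rho)})(z)$ is only conditionally convergent, so \eqref{eqloccot} cannot be reduced to a positive kernel integral and genuine cancellation is needed. The plan is to bound, by Cauchy--Schwarz, the left side of \eqref{eqloccot} by $\bigl(\tfrac1{\omega(B(x,\rho))}\int_{B(x,\rho)}|\RR(\omega|_{B(x,4\rho)})|^2\,d\omega\bigr)^{1/2}$; to replace $\RR$ by the suppressed operator $\RR_\Phi$ of Section~\ref{sec6.1} with a $1$‑Lipschitz $\Phi$ comparable to $r_0$ on $B(x,8\rho)$, the error being $\lesssim\theta_1$ by \rf{e.compsup''} and the $n$‑growth of $\omega$ above scale $r_0$; then to estimate $\int_{B(x,\rho)}|\RR_\Phi(\omega|_{B(x,4\rho)})|^2\,d\omega$ — now absolutely convergent — by a localized Wolff‑type energy of $\omega$ on $B(x,C\rho)$, in the spirit of the converse inequality \rf{eqbetawolff'} (the sub‑$r_0$ scales being harmless thanks to the suppression), and finally to dominate that energy by $W_\omega(B(x,C\rho))$ using $\beta_{2,\omega}(z,t)^2\le\theta_\omega(z,t)\le\theta_1$ for $t\ge r_0$ and the doubling of $B$. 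Combining with $W_\omega(B(x,C\rho))\le\theta_1\,\omega(B(x,C\rho))\lesssim\theta_1\,\omega(B(x,\rho))$ yields \eqref{eqloccot}. I expect this step to be the main obstacle: making the conditionally convergent term rigorous and extracting the sharp $L^2$ bound from the Wolff energy while coping with the scales below $r_0$, for which the suppressed kernel together with the $Tb$‑type Theorem~\ref{teontv} are the natural tools.
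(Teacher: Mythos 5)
Your Steps 1 and 2 are fine and essentially parallel the paper's reduction (the paper chooses the minimal $k$ with $B(x,128^k\ve)$ $(128,128^{n+2})$-doubling, but the choice of geometric factor is immaterial provided the non-doubling telescoping gives summable geometric decay, which your choice $16^j$ does since $16^n<128^{n+2}$). The problem is in Step 3, and it is a genuine gap, not merely an incomplete sketch. The converse inequality \eqref{eqbetawolff'} controls the $L^2$ norm of the Riesz transform by the Jones--Wolff energy $\int\!\!\int\beta_{2,\omega}(z,t)^2\,\theta_\omega(z,t)\,\frac{dt}{t}\,d\omega(z)$, whose radial measure is $\frac{dt}{t}$. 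By contrast, expanding $W_\omega(B)$ in dyadic annuli gives $W_\omega(B)\approx \int_B\int_0^{r(B)}\theta_\omega(z,t)\,\frac{dt}{r(B)}\,d\omega(z)$, with the much smaller weight $\frac{dt}{r(B)}$ at scales $t\ll r(B)$. So even after you suppress scales below $r_0$ and use the trivial bound $\beta_{2,\omega}^2\le\theta_\omega\le\theta_1$, the quantity you produce is $\lesssim\theta_1\int_B\int_{r_0}^{C\rho}\theta_\omega(z,t)\,\frac{dt}{t}\,d\omega(z)$, which dominates $\theta_1 W_\omega(B(x,C\rho))$ by a factor that can be as large as $\log(\rho/r_0)$ — it is not dominated by it. Your claim ``to dominate that energy by $W_\omega(B(x,C\rho))$'' therefore goes in the wrong direction, and the final bound you want, namely \eqref{eqloccot}, does not follow. (A secondary point: the Wolff-energy hypothesis is only stated for $(16,128^{n+2})$-doubling balls, and after passing to $B(x,C\rho)$ you would also need to justify that such a ball can be taken doubling; the paper is careful to run the reduction so that the relevant ball $B(x,8r)$ is automatically doubling.)

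The ingredient you are missing is much more elementary and avoids $L^2$ altogether. The paper keeps the $L^1$ average of the local term. Writing $\psi_1,\psi_2$ for radial $C^1$ cutoffs adapted to $B(x,8r)$ and $B(x,2r)$ with $\mathrm{Lip}(\psi_i)\lesssim r^{-1}$, the antisymmetry of the Riesz kernel lets one symmetrize the bilinear form:
\begin{equation*}
m_{\psi_2\omega}\bigl(\RR(\psi_1\omega)\bigr)
=\frac{1}{2\|\psi_2\|_{L^1(\omega)}}\iint K(y-z)\bigl(\psi_1(y)\psi_2(z)-\psi_1(z)\psi_2(y)\bigr)\,d\omega(y)\,d\omega(z),
\end{equation*}
and the Lipschitz bound $|\psi_1(y)\psi_2(z)-\psi_1(z)\psi_2(y)|\lesssim |y-z|/r$ converts the singular kernel $|y-z|^{-n}$ into $\bigl(r\,|y-z|^{n-1}\bigr)^{-1}$ on $B(x,8r)\times B(x,8r)$. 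This is exactly the $W_\omega$ kernel, so the local average is $\lesssim W_\omega(B(x,8r))/\omega(B(x,8r))\lesssim\theta_1$ directly from the hypothesis. This simultaneously handles the conditional-convergence worry you raise (the symmetrized integrand is absolutely convergent) and produces precisely $W_\omega$, with no appeal to the $Tb$-theorem or to the converse inequality.
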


\begin{proof}
For a given $\ve\geq r_0$, let $k\geq0$ be minimal so that, for $r=128^k\,\ve$, the ball $B(x,r)$ is $(128,128^{n+2})$-doubling (in particular, this implies that $B(x,8r)$ is $(16,128^{n+2})$-doubling). It is easy to see that such $k$ exists using the assumption $\theta_\omega(x,r)\leq \theta_1$.
By a standard estimate (see Lemma 2.20 from \cite{Tolsa-llibre}), it follows that
$$|\RR_\ve \omega(x)|\leq |\RR_{8r} \omega(x)| + C\,\frac{\omega(B(x,8r))}{(8r)^n} \leq |\RR_{8r} \omega(x)| + C\,\theta_1.$$
It is immediate to check that for any $x'\in B(x,2r)$,
\begin{equation}\label{eqss1}
|\RR_{8r}\omega(x) - \RR\chi_{B(x,4r)^c}\omega(x')|\lesssim \theta_1.
\end{equation}
Consider radial $C^1$ functions $\psi_1$ and $\psi_2$ such that
$$\chi_{B(x,4r)}\leq \psi_1\leq \chi_{B(x,8r)}\qquad \text{and} \qquad
\chi_{B(x,r)}\leq \psi_2\leq \chi_{B(x,2r)},$$
and $\text{Lip}(\psi_1)\le r,\ \text{Lip}(\psi_2)\le r.$ Given a function $f\in L^1_{loc}(\omega)$, denote by $m_{\psi_2\omega}f$ the $(\psi_2\,\omega)$-mean of $f$, i.e.,
$$m_{\psi_2\omega}f = \frac1{\|\psi_2\|_{L^1(\omega)}} \int f\,\psi_2\,d\omega.$$
Notice that
$$\bigl|m_{\psi_2\omega}(\RR\omega)\bigr|\leq \frac1{\omega(B(x,r))} \int_{B(x,2r)}
|\RR\omega|\,d\omega \approx\avint_{B(x,2r)}
|\RR\omega|\,d\omega 
\leq \cM_\omega^{(r_0,16,128^{n+2})}(\RR\omega)(x).$$
From \rf{eqss1} we deduce that
$$\big|\RR_{8r}\omega(x) - m_{\psi_2\omega}\bigl(\RR(\chi_{B(x,4r)^c}\omega)\bigr)\big| \leq m_{\psi_2\omega}\bigl(|\RR_{8r}\omega(x) - \RR(\chi_{B(x,4r)^c}\omega)|\bigr)
\lesssim\theta_1.$$
Then we have
\begin{align*}
|\RR_{8r}\omega(x)|&\lesssim\theta_1 + |m_{\psi_2\omega}(\RR(\chi_{B(x,4r)^c}\omega)|\\
& \lesssim\theta_1 + \bigl|m_{\psi_2\omega}\bigl(\RR(\chi_{B(x,4r)^c}\omega) - \RR((1-\psi_1)\omega)\bigr)\bigr| +\bigl|m_{\psi_2\omega}\bigl(\RR(\psi_1\omega)\bigr)\bigr|+ \bigl|m_{\psi_2\omega}(\RR\omega)\bigr|\\
& \lesssim\theta_1 +  \bigl|m_{\psi_2\omega}\bigl(\RR(\psi_1\omega)\bigr)\bigr|
+ \cM_\omega^{(r_0,16,128^{n+2})} (\RR\omega)(x).
\end{align*}
To estimate the middle term on the right hand side we use the antisymmetry of $\RR$:
\begin{align*}
\bigl|m_{\psi_2}\bigl(\RR(\psi_1\omega)\bigr)\bigr|
& =\frac1{\|\psi_2\|_{L^1(\omega)}} \left|\iint K(y-z)\,\psi_1(y)\,\psi_2(z)\,d\omega(y)\,d\omega(z)\right|\\
& =\frac1{2\|\psi_2\|_{L^1(\omega)}} \left|\iint K(y-z)\,\bigl(\psi_1(y)\,\psi_2(z) - \psi_1(z)\,\psi_2(y)\bigr)\,d\omega(y)\,d\omega(z)\right|\\
& \lesssim \frac1{\omega(B(x,r))} \iint_{B(x,8r)\times B(x,8r)} \!\frac1{r\,|y-z|^{n-1}}\,d\omega(y)\,d\omega(z)
 \approx\frac{W_\omega(B(x,8r))}{\omega(B(x,8r))} \lesssim \theta_1.
\end{align*}
\end{proof}

\vv

\vv
\begin{rem}\label{rem**}
In fact, the proof of the preceding lemma shows that, given any measure $\omega$, $x\in\R^{n+1}$ and
$\ve>0$,
\begin{equation}\label{eqcotlar*99}
|\RR_\ve\omega(x)|\lesssim \avint_{B(x,2\ve')} |\RR\omega|\,d\omega + \sup_{r\geq \ve} \frac{\omega(B(x,r))}{r^n} + \frac{W_\omega(B(x,8\ve'))}{\omega(B(x,8\ve'))},
\end{equation}
where $\ve'=2^{7k}\ve$, with $k\geq0$ minimal such that the ball $B(x,\ve')$ is $(128,128^{n+2})$-doubling. 
\end{rem}

\vv


\subsection{The family \texorpdfstring{$\Reg(e'(R))$}{Reg(e'(R))} and the approximating measure \texorpdfstring{$\eta$}{eta}}\label{sec6.2*}

In the remaining of this section we fix a cube $R\in\MDW$ such that $\TT(e'(R))$ is tractable.

We need to define some regularized family of ending cubes for $\TT(e'(R))$. 
First, let 
$$d_R(x) = \inf_{Q\in\TT(e'(R))}\big(\dist(x,Q) + \ell(Q)\big).$$
Notice that $d_R$ is a $1$-Lipschitz function. 
Given $0<\ell_0\ll\ell(R)$, we denote
\begin{equation}\label{eql00*23}
d_{R,\ell_0}(x) = \max\big(\ell_0,d_R(x)\big),
\end{equation}
which is also $1$-Lipschitz.
For each $x\in e'(R)$ we take the largest cube $Q_x\in\DD_\mu$ 
such that $x\in Q_x$ with
\begin{equation}\label{eqdefqx}
\ell(Q_x) \leq \frac1{60}\,\inf_{y\in Q_x} d_{R,\ell_0}(y).
\end{equation}
We consider the collection of the different cubes $Q_x$, $x\in e'(R)$, and we denote it by $\Reg(e'(R))$ (this stands for ``regularized cubes''). 

The constant $\ell_0$ is just an auxiliary parameter that prevents $\ell(Q_x)$ from vanishing.  Eventually $\ell_0$ will be taken extremely small. In particular, we assume $\ell_0$ small enough
so that 
\begin{equation}\label{eql00}
\mu\bigg(\bigcup_{Q\in\HD_1(e(R)):\ell(Q)\geq \ell_0} Q \bigg)\geq \frac12\,\mu\bigg(\bigcup_{Q\in\HD_1(e(R))} Q\bigg).
\end{equation}

We let $\TT_\Reg(e'(R))$ be the family of cubes made up of $R$ and all the cubes of the next generations which are contained in $e'(R)$ but are not 
strictly contained in any cube from $\Reg(e'(R))$.

\vv

\begin{lemma}\label{lem74}
The cubes from $\Reg(e'(R))$ are pairwise disjoint and satisfy the following properties:
\begin{itemize}
\item[(a)] If $P\in\Reg(e'(R))$ and $x\in B(x_{P},50\ell(P))$, then $10\,\ell(P)\leq d_{R,\ell_0}(x) \leq c\,\ell(P)$,
where $c$ is some constant depending only on $n$. 

\item[(b)] There exists some absolute constant $c>0$ such that if $P,\,P'\in\Reg(e'(R))$ satisfy $B(x_{P},50\ell(P))\cap B(x_{P'},50\ell(P'))
\neq\varnothing$, then
$$c^{-1}\ell(P)\leq \ell(P')\leq c\,\ell(P).$$
\item[(c)] For each $P\in \Reg(e'(R))$, there are at most $C$ cubes $P'\in\Reg(e'(R))$ such that
$$B(x_{P},50\ell(P))\cap B(x_{P'},50\ell(P'))
\neq\varnothing,$$
 where $C$ is some absolute constant.
 
\end{itemize}
\end{lemma}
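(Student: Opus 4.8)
The plan is to run the classical regularization argument for stopping cubes, using only that $d_{R,\ell_0}$ is $1$-Lipschitz and bounded below by $\ell_0$, together with the maximality encoded in \eqref{eqdefqx}.

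Before proving (a)--(c) I would settle two preliminary points. First, $\Reg(e'(R))$ is well defined: the cubes of $\DD_\mu$ containing a fixed $x\in e'(R)$ form a chain, a cube $Q\ni x$ with $\ell(Q)\le\ell_0/60$ satisfies \eqref{eqdefqx} (since $d_{R,\ell_0}\ge\ell_0$), and any $Q\ni x$ satisfying \eqref{eqdefqx} has $\ell(Q)\le\tfrac1{60}d_{R,\ell_0}(x)\lesssim\ell(R)$, because $d_R(x)\le\dist(x,R)+\ell(R)\lesssim\ell(R)$ for $x\in e'(R)\subset 2R$ (recall $R\in\TT(e'(R))$, so $R$ is admissible in the infimum defining $d_R$); hence a largest such cube exists. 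Second, $R\notin\Reg(e'(R))$: on $R$ one has $d_R\le\ell(R)$, so $d_{R,\ell_0}\le\ell(R)$ there (taking $\ell_0\ll\ell(R)$), whence $\ell(R)>\tfrac1{60}\inf_{y\in R}d_{R,\ell_0}(y)$, i.e.\ $R$ violates \eqref{eqdefqx}. In particular every $P\in\Reg(e'(R))$ has a parent $\wh P\in\DD_\mu$. Disjointness is then immediate: if $P,P'\in\Reg(e'(R))$ intersect then, being dyadic, one contains the other, say $P\subset P'$; but $P'$ would be a cube containing $x_P$ and satisfying \eqref{eqdefqx} that is strictly larger than $P$, contradicting the defining maximality of $P$ unless $P=P'$.

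For (a), write $P=Q_z$. The lower bound is the $1$-Lipschitz estimate combined with \eqref{eqdefqx}: for $x\in B(x_P,50\ell(P))$, since $x_P\in P$ and $\inf_{y\in P}d_{R,\ell_0}(y)\ge 60\ell(P)$, one gets $d_{R,\ell_0}(x)\ge d_{R,\ell_0}(x_P)-|x-x_P|\ge 60\ell(P)-50\ell(P)=10\ell(P)$. For the upper bound, maximality forces the parent $\wh P$ to violate \eqref{eqdefqx}, so there is $y_0\in\wh P$ with $d_{R,\ell_0}(y_0)<60\,\ell(\wh P)=60A_0\ell(P)$; since $|x-y_0|\le|x-x_P|+\diam\wh P\le(50+A_0)\ell(P)$, the Lipschitz property gives $d_{R,\ell_0}(x)\le d_{R,\ell_0}(y_0)+|x-y_0|\le c\ell(P)$, with $c$ absolute (i.e.\ depending only on $n$ and the fixed lattice parameters). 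Parts (b) and (c) are then formal. For (b), take $x$ in the common ball and apply (a) to $P$ and to $P'$: $10\ell(P')\le d_{R,\ell_0}(x)\le c\ell(P)$ and, symmetrically, $10\ell(P)\le c\ell(P')$, so $\ell(P)\approx\ell(P')$. For (c), by (b) every competitor $P'$ has $\ell(P')\approx\ell(P)$, hence belongs to one of $O(1)$ generations, and $|x_P-x_{P'}|\le 50(\ell(P)+\ell(P'))\lesssim\ell(P)$, so all such $P'$ lie in a fixed ball $B(x_P,C\ell(P))$; for each admissible generation $k$ the balls $5B(P')$ are pairwise disjoint and of radius $\approx\ell(P)$, so a volume--packing count in $\R^{n+1}$ bounds the number of such $P'$ per generation by an absolute constant, and summing over the $O(1)$ generations gives (c).

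The argument is essentially routine; the only place to be a little careful is the bookkeeping that guarantees every $P\in\Reg(e'(R))$ has an available parent $\wh P$ (so that the maximality step in the upper bound of (a) applies), which is exactly why one first checks $R\notin\Reg(e'(R))$, together with noting that all the resulting constants are absolute under the standing convention that they may depend on $C_0$ and $A_0=C_0^{C(n)}$.
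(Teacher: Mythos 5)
Your proof is correct and is precisely the standard Whitney-type regularization argument; the paper itself does not prove the lemma, instead citing \cite[Lemma 6.6]{Tolsa-memo}, which follows the same approach (lower bound from the $1$-Lipschitz property and \rf{eqdefqx}, upper bound from the failure of \rf{eqdefqx} at the parent, then (b) from (a) by evaluating $d_{R,\ell_0}$ at a common point, and (c) by a volume-packing count using that the balls $5B(Q)$ of a fixed generation are disjoint). The preliminary points you check (existence of $Q_x$, the bound $\ell(Q_x)\lesssim\ell(R)$ so that a parent exists, and disjointness from maximality) are the right bookkeeping, and the resulting constant in (a) depending on $A_0$ is consistent with the paper's convention $A_0=C_0^{C(n)}$.
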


The proof of this lemma is standard. See for example \cite[Lemma 6.6]{Tolsa-memo}.

\vvv

Next we define a measure $\eta$ which, in a sense, approximates $\mu\rest_{e'(R)}$ at the level of the family $\Reg(e'(R))$. 
We let
$$\eta = \sum_{P\in\Reg(e'(R))} \mu(P) \frac{\LL^{n+1}\rest_{\tfrac12B(P)}}{\LL^{n+1}(\tfrac12B(P))},
$$
where $\LL^{n+1}$ stands for the Lebesgue measure in $\R^{n+1}$.
With each $Q\in\TT_{\Reg}(e'(R))$ we associate another ``cube'' $Q^{(\eta)}$ defined as follows:
$$Q^{(\eta)}= \bigcup_{P\in\Reg(e'(R)):P\subset Q} \tfrac12B(P).$$
Further, we consider a lattice $\DD_\eta$ associated with the measure $\eta$ which is made up of the cubes
$Q^{(\eta)}$ with $Q\in \TT_{\Reg}(e'(R))$ and other cubes which are descendants of the cubes from $\Reg(e'(R))$.
We assume that $\DD_\eta$ satisfies the first two properties of Lemma \ref{lemcubs} with the same parameters $A_0$ and $C_0$ as $\DD_\mu$. It is straightforward to check that $\DD_\eta$ can be constructed in this way.
For $S\in\DD_\eta$ such that $S=Q^{(\eta)}$ with $Q\in\TT_{\Reg}(e'(R))$, we let $Q=S^{(\mu)}$. Further, we write
$\ell(S):=\ell(Q)$, $B_S:=B_Q$, and $\Theta(S):=\Theta(Q)$.

\vv



\subsection{The auxiliary family \texorpdfstring{$\sH$}{H}}\label{subsec:H}

Given $p\geq1$ and a family $I\subset\DD_\mu$, we denote
$$\sigma_p(I) = \sum_{P\in I}\Theta(P)^p\,\mu(P),$$
so that $\sigma(I)=\sigma_2(I)$. 
Recall that  
$$\HD(R) = \hd^{k_{\Lambda}}(R),$$
and that
$$\HD_1(e(R)) = \sss(e(R))\cap \HD(R),\qquad  \HD_1(e'(R)) = \sss(e'(R))\cap \HD(R).
$$
For $R\in\MDW$ and $j\geq0$, denote 
$$\sH_j(e'(R))= \TT_\Reg(e'(R)) \cap \hd^{k_{\Lambda} + j}(R),$$
so that $\sH_0(e'(R))=\HD_1(e'(R))\cap\TT_\Reg(e'(R))$. 
\brem\label{rem:Hjempty}
Note that for $j> k_{\Lambda}+2$ we have $\sH_j(e'(R))=\varnothing$. Indeed, by the definition of $\Reg(e'(R))$ and $\TT_\Reg(e'(R))$, for each $Q\in \TT_\Reg(e'(R))$ there exists $P\in \TT(e'(R))$ such that $2B_Q\subset 2B_P$ and $\ell(Q)\le \ell(P)\le A_0^2\ell(Q)$. Thus,
\begin{equation*}
\frac{\mu(2B_Q)}{\ell(Q)^n}\le \frac{\mu(2B_P)}{\ell(Q)^n}\le A_0^{2n}\frac{\mu(2B_P)}{\ell(P)^n}.
\end{equation*}
Since for each $P\in \TT(e'(R))$ we have $\Theta(P)\le \Lambda^2\Theta(R)$, it follows that $\Theta(Q)\le A_0^{2n}\Lambda^2\Theta(R)$.
\erem


The fact that $\max_{j\geq0} \sigma_p(\sH_j(e'(R)))$ may be much larger than $\sigma_p(\HD_1(e'(R))$ may cause problems in some estimates. For this reason,
we need to introduce an auxiliary family $\sH$. We deal with this issue in this section.

Recall that, for $R\in\DD_{\mu,k}$,
$$e(R) = e_{h(R)}(R) \quad \mbox{ and }\quad e'(R) = e_{h(R)+1},$$
where
$$e_i(R) = R \cup \bigcup Q,$$
with the union running over the cubes $Q\in\DD_{\mu,k+1}$ such that
$$
\dist(x_R,Q)< \frac{\ell(R)}2 + 2i\ell(Q).
$$
For $j\geq0$, we set 
$$e_{i,j}(R) = \bigcup_{Q\in\DD_{\mu,k+1}:Q\subset e_i(R)} e_j(Q),$$
and we let $\sH_k(e_{i,j}(R))$ be the subfamily of the cubes from $\sH_k(e'(R))$ which are contained
in $e_{i,j}(R)$.

From now on, we let $\ve_n$ be some positive constant depending just on $n$. 
The precise value of $\ve_n$ is chosen at the end of Section \ref{sectrans} (see \rf{eqchooseen}).

\vv
\begin{lemma}\label{lem:66}
Let $p\in (1,2]$.
For any $R\in\MDW$ there exist some $j,k$, with $10\leq j\leq A_0/4$ and  $0\leq k\leq k_{\Lambda}+2$ such that
\begin{equation}\label{eqsigmah}
\sigma_p(\sH_{m}(e_{h(R),j+1}(R))) \leq \Lambda^{\ve_n} \,\sigma_p(\sH_{k}(e_{h(R),j}(R)))
\quad \mbox{for all $m\geq0$,}
\end{equation}
 assuming $A_0$ big enough (possibly depending on $n$). 
\end{lemma}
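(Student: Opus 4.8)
The plan is to argue by contradiction with a pigeonhole and iteration scheme in the spirit of Lemma \ref{lem:43}. For $10\le j\le N:=\lfloor A_0/4\rfloor$ set
$$a_j = \max_{0\le m\le k_{\Lambda}+2}\sigma_p(\sH_m(e_{h(R),j}(R))).$$
Since $\sH_m(e_{h(R),j}(R))\subset\sH_m(e'(R))=\varnothing$ for $m>k_{\Lambda}+2$ by Remark \ref{rem:Hjempty}, the requirement ``for all $m\ge0$'' in \eqref{eqsigmah} only concerns $0\le m\le k_{\Lambda}+2$, so it suffices to produce $j,k$ with $a_{j+1}\le\Lambda^{\ve_n}\sigma_p(\sH_k(e_{h(R),j}(R)))$. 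The regions $e_{h(R),j}(R)=\bigcup_{Q\subset e_{h(R)}(R)}e_j(Q)$ increase with $j$ (as $e_j(Q)\supset e_{j'}(Q)$ for $j\ge j'$ and $e_j(Q)\supset Q$), hence each $\sigma_p(\sH_m(e_{h(R),j}(R)))$, and so $a_j$, is non-decreasing in $j$. Now suppose no such pair $(j,k)$ exists. Then for each $j$ and each $k$ there is some $m$ with $\sigma_p(\sH_m(e_{h(R),j+1}(R)))>\Lambda^{\ve_n}\sigma_p(\sH_k(e_{h(R),j}(R)))$; choosing $k$ to realize the maximum defining $a_j$ (the resulting $m$ must be $\le k_{\Lambda}+2$, else the left side vanishes), we obtain $a_{j+1}>\Lambda^{\ve_n}a_j$ for all $10\le j\le N-1$, and by iteration $a_N>\Lambda^{\ve_n(N-10)}a_{10}$.

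I would next bound $a_{10}$ from below. As $e_{h(R),10}(R)\supset e_{h(R)}(R)=e(R)$ and, using $\sH_0(e'(R))=\HD_1(e'(R))\cap\TT_\Reg(e'(R))$ together with $\HD_1(e'(R))\cap\DD_\mu(e(R))=\HD_1(e(R))$ (which holds since $\sss(e'(R))\cap\DD_\mu(e(R))=\sss(e(R))$), we get
$$a_{10}\ \ge\ \sigma_p\big(\sH_0(e'(R))\cap\DD_\mu(e(R))\big)\ =\ \sigma_p\big(\HD_1(e(R))\cap\TT_\Reg(e'(R))\big).$$
Every $Q\in\HD_1(e(R))$ with $\ell(Q)\ge\ell_0$ lies in $\TT_\Reg(e'(R))$: indeed $\HD_1(e'(R))\subset\TT(e'(R))$, since an $\HD_1(e'(R))$ cube is contained in $e'(R)$ and is not strictly contained in any cube of $\End(e'(R))$ (all of which lie in cubes of $\sss_2(e'(R))$, from which the cubes of $\HD_1(e'(R))$ have been removed in \eqref{eqstop2}); hence $d_R(y)\le\ell(Q)$ for $y\in Q$, so $d_{R,\ell_0}(y)\le\ell(Q)$ on $Q$ and, by the definition of $\Reg(e'(R))$, no regularized cube can contain $Q$ strictly. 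By \eqref{eql00} these cubes carry at least half of $\mu(\bigcup_{Q\in\HD_1(e(R))}Q)$; and since $R$ is $\PP$-doubling and $k_{\Lambda}>4$, Lemma \ref{lempdoubling} gives $\Theta(Q)=\Lambda\Theta(R)$ for every $Q\in\HD_1(e(R))\subset\hd^{k_{\Lambda}}(R)\cap\DD_\mu(9R)$. Therefore
$$a_{10}\ \ge\ \tfrac12\,(\Lambda\Theta(R))^p\!\!\sum_{Q\in\HD_1(e(R))}\!\!\mu(Q)\ =\ \tfrac12\,(\Lambda\Theta(R))^{p-2}\,\sigma(\HD_1(e(R)))\ \ge\ \tfrac12\,B^{-1}\Lambda^{p-2}\Theta(R)^p\mu(R),$$
where in the last step I used $\HD_1(R)\subset\HD_1(e(R))$ and $\sigma(\HD_1(R))\ge B^{-1}\sigma(R)\ge B^{-1}\Theta(R)^2\mu(R)$ from $R\in\MDW$. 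In particular $a_{10}>0$, so all the $a_j$ are positive.

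For the upper bound on $a_N$, any cube $Q$ occurring in some $\sH_m(e_{h(R),N}(R))$ with $0\le m\le k_{\Lambda}+2$ belongs to $\hd^{k_{\Lambda}+m}(R)$, so \eqref{eqson1} applied to its parent yields $\Theta(Q)\le A_0^{(k_{\Lambda}+m+1)n}\Theta(R)\le A_0^{3n}\Lambda^2\Theta(R)$; these cubes are pairwise disjoint and contained in $e_{h(R),N}(R)\subset CR$ for an absolute constant $C$, whence $\sum_Q\mu(Q)\le\mu(CR)\lesssim\mu(R)$ because $R$ is $\PP$-doubling (cf.\ \eqref{eqdoub*11}). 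Thus $a_N\lesssim A_0^{3np}\Lambda^{2p}\Theta(R)^p\mu(R)$. Combining this with the lower bound for $a_{10}$ and cancelling the positive factor $\Theta(R)^p\mu(R)$ in $a_N>\Lambda^{\ve_n(N-10)}a_{10}$ gives
$$\Lambda^{\ve_n(N-10)-p-2-\frac1{100n}}\ \lesssim\ A_0^{3np}.$$
Since $N-10\ge A_0/8$ and $\Lambda=A_0^{k_{\Lambda}n}\ge A_0^{4n}$ once $A_0$ is large, the left-hand side is at least $A_0^{c\,\ve_n A_0}$ for $A_0$ large (recall $\ve_n$ and $k_{\Lambda}$ depend only on $n$ and $M$), while the right-hand side is a fixed power of $A_0$ and the implicit constant is polynomial in $A_0$; this is impossible for $A_0$ large enough depending on $n$, proving the lemma. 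The one delicate point is the lower bound for $a_{10}$: one must check that a density-weighted positive fraction of $\HD_1(e(R))$ persists in $\TT_\Reg(e'(R))$ and that these cubes have density exactly $\Lambda\Theta(R)$, making $\sigma_p$ and $\sigma_2$ comparable up to an explicit power of $\Lambda$ — this is precisely where the hypothesis $R\in\MDW$ is used. The pigeonhole/iteration and the final ``super-exponential vs.\ polynomial in $A_0$'' comparison are routine, as in Lemmas \ref{lem:43} and \ref{lemnegs}.
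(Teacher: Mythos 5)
Your proof is correct and follows essentially the same strategy as the paper's: you set $a_j$ to be the maximum over $m$ of the energies $\sigma_p(\sH_m(e_{h(R),j}(R)))$ (the paper does the same implicitly via its $k_j$), argue by contradiction and iteration to get $a_N > \Lambda^{\ve_n(N-10)}a_{10}$, bound $a_{10}$ from below via \eqref{eql00} and the $\MDW$ condition, bound $a_N$ from above via disjointness, the $\PP$-doubling of $R$, and the $\Theta$-bound for cubes in $\TT_\Reg(e'(R))$, and then observe that the resulting inequality is incompatible with $A_0$ large. The only real addition on your part is the careful verification that cubes of $\HD_1(e(R))$ with $\ell(Q)\ge\ell_0$ indeed lie in $\TT_\Reg(e'(R))$ (so that \eqref{eql00} applies to $\sH_0$), a detail the paper states without elaboration; this is a sound filling-in and does not change the structure of the argument.
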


\begin{proof}
For each $j$, we denote by $0\le k_j\le k_{\Lambda}+2$ the integer such that
$$\sigma_p(\sH_{k_j}(e_{h(R),j}(R))) = \max_{0\leq k\leq k_{\Lambda}+2} \sigma_p(\sH_{k}(e_{h(R),j}(R))).$$
The lemma can be rephrased in the following way: there exists $10\leq j\leq A_0/4$ such that
\begin{equation*}
\sigma_p(\sH_{k_{j+1}}(e_{h(R),j+1}(R))) \leq \Lambda^{\ve_n} \,\sigma_p(\sH_{k_j}(e_{h(R),j}(R))).
\end{equation*}
We prove this by contradiction. Suppose the estimate above fails for all $10\leq j\leq A_0/4$, and let $j_0$ be the largest integer smaller than $A_0/4$.
Then we have
\begin{multline}\label{eq:651}
\sigma_p(\sH_{k_{j_0}}(e_{h(R),j_0}(R)))
  \geq \Lambda^{\ve_n}\,\sigma_p(\sH_{k_{j_0-1}}(e_{h(R),j_0-1}(R)))\\
\geq
\ldots\geq \Lambda^{\ve_n(j_0-10)}\sigma_p(\sH_{k_{10}}(e_{h(R),10}(R))) \geq \Lambda^{\ve_n(j_0-10)}\sigma_p(\sH_{0}(e_{h(R),10}(R)))\\
\geq  \Lambda^{\ve_n(j_0-10)}\sigma_p(\sH_{0}(e_{h(R)}(R))) \overset{\rf{eql00}}{\geq} \frac12\,\Lambda^{\ve_n(j_0-10)}
\sigma_p(\HD_1(e(R))),
\end{multline}

Concerning the left hand side of the inequality above, since $e_{h(R),j_0}(R)\subset 2R$ and $k_{j_0}\le k_{\Lambda}+2$, we have
$$
\sigma_p(\sH_{k_{j_0}}(e_{h(R),j_0}(R)))\leq A_0^{2np}
\Lambda^{2p}\,\Theta(R)^p\mu(2R).$$
Due to the fact that $R$ is $\PP$-doubling, as in \rf{eqdoub*11} we have
$\mu(2R)\leq C_0\,C_d\,A_0^{n+1}
\mu(R).$ Thus,
\begin{equation}\label{eq:652}
\sigma_p(\sH_{k_{j_0}}(e_{h(R),j_0}(R)))\leq C_0\,C_d\,A_0^{2np+n+1}
\Lambda^{2p}\sigma_p(R).
\end{equation}

Concerning the right hand side of \eqref{eq:651}, observe that, denoting $\Theta(\HD_1)=\Lambda\Theta(R)=\Theta(Q)$ for any $Q\in\HD_1(e(R))$, we have
$$\sigma_p(\HD_1(e(R))) = \Theta(\HD_1)^{p-2}\sigma(\HD_1(e(R)))\overset{\eqref{eq:MDWdef2}}{\ge} B^{-1}\Theta(\HD_1)^{p-2}\sigma(R) \geq \Lambda^{-1}\Lambda^{p-2}\sigma_p(R).
$$
We deduce from \eqref{eq:651}, \eqref{eq:652}, and the above, that
$$\frac12\,\Lambda^{\ve_n(j_0-10)}\Lambda^{p-3}
\sigma_p(R)\leq C_0\,C_d\,A_0^{2np+n+1}
\Lambda^{2p}\sigma_p(R).$$
Since $\Lambda\geq A_0^n$ and $j_0\approx A_0$, it is clear that this inequality is violated if $A_0$ is big 
enough, depending just on $n$.
\end{proof}
\vv


Let $j(R),\,k(R)$ be such that $10\leq j(R)\leq A_0/4$,  $0\leq k(R)\leq k_{\Lambda}+2$ and
$$
\sigma_p(\sH_{m}(e_{h(R),j(R)+1}(R))) \leq \Lambda^{\ve_n} \,\sigma_p(\sH_{k(R)}(e_{h(R),j(R)}(R)))
\quad \mbox{for all $m\geq0$,}
$$
We denote
$$\cS_\mu = \bigcup_{Q\in\Reg:Q\subset e_{h(R),j(R)}(R)} Q,\qquad
\cS_\eta = \bigcup_{Q\in\Reg:Q\subset e_{h(R),j(R)}(R)} \tfrac12B(Q)$$
and
$$\cS_\mu' = \bigcup_{Q\in\Reg:Q\subset e_{h(R),j(R)+1}(R)} Q,\qquad
\cS_\eta' = \bigcup_{Q\in\Reg:Q\subset e_{h(R),j(R)+1}(R)} \tfrac12B(Q).$$
Notice that, by construction,
\begin{equation}\label{eqtec732}
\dist(\supp\mu \setminus e'(R),\cS_\mu')\geq cA_0^{-1} \ell(R),
\end{equation}
where $c>0$ is an absolute constant.

For $m=1,2,3,4$, denote by $V_m$ the $m A_0^{-3}\ell(R)$-neighborhood of $\cS_\eta$.
Let $\vphi_R$ be a $C^1$ function which equals $1$ in $V_3$, vanishes out of $V_4$, and such that $\|\vphi_R\|_\infty\leq 1$ and
$\|\nabla\vphi_R\|_\infty\leq 2A_0^3 \ell(R)^{-1}$.
Observe that, for $x\in \supp\mu\setminus e'(R)$, $\dist(x,V_4)\gtrsim \ell(R)$.
In fact, from \rf{eqtec732} one can derive that
\begin{equation}\label{eqtec733}
\dist(Q,\supp\mu\setminus e'(R)) \gtrsim \ell(R)\quad\mbox{ for all $Q\in\Reg(e'(R))$ such that
$B_Q\cap V_4\neq\varnothing$,}
\end{equation}
taking into account that $\ell(Q)\leq \frac{A_0^{-1}}{60}\,\ell(R)$ for every $Q\in\Reg(e'(R))$.

 We consider the measure
$$\nu = \vphi_R\,\eta$$
and the function
$$G(x) = 2A_0^3\int_{\cS_\eta'\setminus V_2} 
 \frac1{\ell(R)\,|x-y|^{n-1}}\,d\eta(y).$$
 Notice that
$$G(x)\lesssim \Theta(R)\quad\mbox{ for all $x\in V_1$.}$$

To shorten notation, we write
$$\sH= \sH_{k(R)}(e_{h(R),j(R)}(R)),\qquad \sH'= \sH_{k(R)}(e_{h(R),j(R)+1}(R))$$
and
$$H= \bigcup_{Q\in\sH} Q^{(\eta)},\qquad  H'= \bigcup_{Q\in\sH'}Q^{(\eta)}.$$
We also set
$$\Theta(\sH) = A_0^{(k_{\Lambda}+k(R))n}\,\Theta(R),$$
so that for any $Q\in\sH$ we have
$\Theta(Q) = \Theta(\sH).$

\subsection{Some technical lemmas}\label{subsec9.5}

\vv
\begin{lemma}\label{lem*921}
Let $A\subset \R^{n+1}$ be the set of those $x\in\R^{n+1}$ which belong to some 
$(16,128^{n+2})$-doubling (with respect to $\nu$) ball $B\subset\R^{n+1}$ such that
$$W_\nu(B)\geq M\,\Theta(\sH)\,\nu(B)\quad \mbox{ and } \quad
\theta_\eta( \gamma B)\leq c_1\,\Theta(\sH) \quad\mbox{for all $\gamma\geq1$.}$$
Then, for $M\geq1$ big enough, 
$$\nu(A) \lesssim \frac{\Lambda^{\ve_n}}{M} \,\nu(H).$$
\end{lemma}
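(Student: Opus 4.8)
**Proof proposal for Lemma \ref{lem*921}.**

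The plan is to cover $A$ by balls for which the stated hypotheses hold, extract a controlled subfamily, and show that on each such ball the $W_\nu$-energy is comparable to the $W_\eta$-energy on a slightly larger ball, which can in turn be bounded by a sum of discrete energies attached to cubes in the family $\sH'$. First, for each $x\in A$, fix a $(16,128^{n+2})$-doubling ball $B_x=B(x,r_x)$ with $W_\nu(B_x)\geq M\,\Theta(\sH)\,\nu(B_x)$ and $\theta_\eta(\gamma B_x)\leq c_1\Theta(\sH)$ for all $\gamma\geq1$. Since $\nu=\vphi_R\,\eta$ and $\eta$ is built out of the cubes in $\Reg(e'(R))$, which have bounded overlap of the associated balls (Lemma \ref{lem74}), the density bound $\theta_\eta(\gamma B_x)\leq c_1\Theta(\sH)$ together with the structure of $\eta$ forces $r_x\gtrsim \ell(P)$ for every $P\in\Reg(e'(R))$ meeting $B_x$: a ball much smaller than the corresponding regularized cube would see $\eta$ as essentially $(n+1)$-dimensional Lebesgue measure, violating the $n$-dimensional density bound. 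This is the first key point and it pins the scale of $B_x$ from below.

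Next, by the $5r$-covering lemma I would select a disjoint subfamily $\{B_i\}_{i}$ of $\{B_x\}_{x\in A}$ with $A\subset\bigcup_i 5B_i$; using the doubling property of the $B_i$ (bootstrapped through a bounded number of dilations) one gets $\nu(A)\lesssim\sum_i\nu(B_i)$. On each $B_i$ the hypothesis gives
$$\nu(B_i)\leq \frac1{M\,\Theta(\sH)}\,W_\nu(B_i)\leq \frac1{M\,\Theta(\sH)}\,W_\eta(C B_i),$$
for a suitable dimensional dilation, since $\nu\leq\eta$. Now I would estimate $W_\eta(CB_i)$. Writing $CB_i\cap\supp\eta$ as a union of balls $\tfrac12 B(P)$ with $P\in\Reg(e'(R))$, all of comparable size $\approx r_i$ by Lemma \ref{lem74}(b), and recalling $\eta|_{\frac12 B(P)}$ is a normalized multiple of Lebesgue measure with total mass $\mu(P)$, one computes
$$W_\eta(CB_i)\approx \frac1{\diam(CB_i)}\sum_{P,P'}\mu(P)\,\mu(P')\,\avint\!\!\avint\frac{d\LL d\LL}{|y-z|^{n-1}}\lesssim \frac{\eta(CB_i)^2}{r_i^n}\approx \theta_\eta(CB_i)\,\eta(CB_i)\lesssim c_1\,\Theta(\sH)\,\eta(CB_i),$$
where the double average over the unit-scale balls is $O(r_i)$ after rescaling (the $|y-z|^{1-n}$ kernel is locally integrable in $\R^{n+1}$). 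Combining the last two displays gives $\nu(B_i)\lesssim \frac{c_1}{M}\,\eta(CB_i)$.

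Finally I would sum over $i$. The balls $CB_i$ have bounded overlap (the $B_i$ are disjoint and of comparable scale to the regularized cubes they contain), so $\sum_i\eta(CB_i)\lesssim \eta(H'')$ where $H''$ is the union of the $\eta$-cubes $Q^{(\eta)}$ meeting the region swept out, and by the scale lower bound $r_i\gtrsim\ell(P)$ these are exactly cubes at the density level of $\sH$ inside $e_{h(R),j(R)+1}(R)$. Using $\eta(Q^{(\eta)})=\mu(Q)$ and the defining relation $\Theta(Q)=\Theta(\sH)=A_0^{(k_\Lambda+k(R))n}\Theta(R)$ on these cubes, together with the choice of $j(R),k(R)$, i.e.\ the inequality $\sigma_p(\sH_m(e_{h(R),j(R)+1}(R)))\leq \Lambda^{\ve_n}\sigma_p(\sH_{k(R)}(e_{h(R),j(R)}(R)))$ (applied with $p$ such that $\sigma_p$ reduces to counting $\mu$-measure at this single density level), one obtains
$$\sum_i\eta(CB_i)\lesssim \eta(H')\lesssim \Lambda^{\ve_n}\,\eta(H)=\Lambda^{\ve_n}\,\nu(H),$$
the last equality because $H\subset V_3$ where $\vphi_R\equiv1$. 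Putting everything together yields $\nu(A)\lesssim \frac{\Lambda^{\ve_n}}{M}\,\nu(H)$, as desired, once $c_1$ is absorbed into the implicit constant and $M$ is taken large enough that the bounded-overlap and doubling constants do not overwhelm the gain $M^{-1}$.

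I expect the main obstacle to be the bookkeeping in the last step: one must be careful that the cubes $Q^{(\eta)}$ picked up by the dilated balls $CB_i$ really all sit inside $e_{h(R),j(R)+1}(R)$ at the right density level (so that the family $\sH'$, and hence the comparison inequality from Lemma \ref{lem:66}'s choice of $j(R),k(R)$, applies), and that the lower scale bound $r_i\gtrsim\ell(P)$ is used uniformly. A secondary delicate point is justifying $W_\nu(B_i)\le W_\eta(CB_i)$ with the correct normalization of the $\diam$ factor when passing from $B_i$ to $CB_i$; this costs only a dimensional constant but should be stated explicitly.
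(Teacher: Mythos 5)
Your overall strategy (cover $A$, use the $W_\nu$ lower bound to convert the measure of $A$ into a Wolff energy, and try to close the loop with the density hypothesis) is the right starting point, and it matches the skeleton of the paper's proof. But there is a genuine gap in the middle step, where you estimate $W_\eta(CB_i)$.

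You claim that, since all $P\in\Reg(e'(R))$ meeting $B_i$ have $\ell(P)\approx r_i$ (citing Lemma~\ref{lem74}(b)), the Wolff energy satisfies
\[
W_\eta(CB_i)\lesssim \theta_\eta(CB_i)\,\eta(CB_i)\lesssim c_1\,\Theta(\sH)\,\eta(CB_i).
\]
Two things go wrong here. First, the comparability $\ell(P)\approx r_i$ is not established: the density hypothesis and the structure of $\eta$ give only the lower bound $r_i\gtrsim\ell(P)$ for $P$ meeting $B_i$, not an upper bound, so the ball $B_i$ can straddle regularized cubes of widely varying, in particular much smaller, side length (Lemma~\ref{lem74}(b) only relates {\em neighbouring} regularized cubes, and the cubes met by a large ball need not be neighbours of one another). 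Second, and more fundamentally, a bound of the form $W_\omega(B)\lesssim\theta_1\,\omega(B)$ requires $\omega$ to have $n$-growth with constant $\theta_1$ at {\em all} scales inside $B$, whereas the hypothesis $\theta_\eta(\gamma B_i)\leq c_1\Theta(\sH)$ for $\gamma\geq 1$ only controls the density at scales $\geq r(B_i)$. Inside a cube $P$ with $P^{(\mu)}\in\sH'_k$, $k>k(R)$, the $n$-density of $\eta$ reaches $\Theta(\sH_k)=A_0^{(k-k(R))n}\Theta(\sH)\gg\Theta(\sH)$, so the uniform density bound you need simply fails there. This is exactly the part of $\supp\eta$ that must be handled differently.

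The paper's proof does not attempt this uniform bound. Instead, after a dyadic decomposition $W_\nu(B)\lesssim\sum_{Q\in\DD_\eta:Q\subset 2B}\frac{\ell(Q)}{r(B)}\theta_\nu(2B_Q)\nu(Q)$, it splits the sum: cubes {\em not} contained in any $P$ with $P^{(\mu)}\in\sH'$ have $\theta_\nu(2B_Q)\lesssim\Theta(\sH)$ and contribute $\lesssim\Theta(\sH)\,\nu(B_i)$, which is absorbed into $\tfrac12\nu(A)$ for $M$ large; cubes {\em inside} some $P^{(\mu)}\in\sH'_k$ are bounded using the crucial geometric estimate $\ell(Q)\lesssim A_0^{-(k-k(R))}r(B_i)$ (derived from the very density hypothesis you invoked, but used to shrink $\ell(Q)$ rather than to bound $W$ directly), giving a factor $A_0^{-(k-k(R))/2}$ whose summability over $k$ makes the high-density contribution controllable. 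Finally, H\"older with exponent $p$ and the defining inequality for $j(R),k(R)$ from Lemma~\ref{lem:66} (i.e.\ $\sigma_p(\sH'_k)\leq\Lambda^{\ve_n}\sigma_p(\sH)$ and $\eta(H')\leq\Lambda^{\ve_n}\eta(H)$) produce the $\Lambda^{\ve_n}$ factor. Your proposal alludes to Lemma~\ref{lem:66} at the end, but there is no mechanism in your argument for the high-density cubes or for the geometric decay in $k$, and the localization to $H$ that you assert (``the cubes picked up by $CB_i$ sit at the density level of $\sH$'') does not follow from anything you proved. To repair the argument you would need to import both the decomposition by density level $k$ and the scale constraint $\ell(Q)\lesssim A_0^{-(k-k(R))}r(B_i)$, at which point you would essentially be reconstructing the paper's proof.
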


\begin{proof}
Observe first that, for any ball $B\subset\R^{n+1}$ with $r(B)\in[A_0^{-k-1},A_0^{-k}]$,
\begin{align*}
W_\nu(B) &\lesssim
\theta_\nu(B)\,\nu(B) \\
&\quad+ \sum_{j\geq k+1} \sum_{Q\in\DD_{\eta,j}:Q\subset 2B}\int_{x\in Q}\int_{y:
A_0^{-j-2}<|x-y|\leq A_0^{-j-1}}
\frac1{r(B)\,|x-y|^{n-1}}\, d\nu(x)\,d\nu(y)\\
& \lesssim \sum_{Q\in\DD_{\eta}:Q\subset 2B} \frac{\ell(Q)}{r(B)}\,\theta_\nu(2B_Q)\,\nu(Q).
\end{align*}

To prove the lemma, we apply Vitali's $5r$-covering lemma to get a family of $(16,128^{n+2})$-doubling balls $B_i$, $i\in I$,
which satisfy the following:
\begin{itemize}
\item the balls $2B_i$, $i\in I$, are pairwise disjoint,
\item $A\subset \bigcup_{i\in I} 10B_i$,
\item 
$W_\nu(B_i)\geq M\,\Theta(\sH)\,\nu(B_i)$ and $\theta_\eta(\gamma B_i)\leq c_1\,\Theta(\sH)$ and  for all $i\in I$ and $\gamma\geq1$.
\end{itemize}
Then we deduce
\begin{align}\label{eqplug6}
\nu(A) &\leq \sum_{i\in I} \nu(10B_i) \lesssim \sum_{i\in I} \nu(B_i)
\leq \frac1{M\,\Theta(\sH)} \sum_{i\in I} W_\nu(B_i)\\
& \lesssim \frac1{M\,\Theta(\sH)} \sum_{i\in I}\sum_{Q\in\DD_{\eta}:Q\subset 2B_i} \frac{\ell(Q)}{r(B_i)}\,\theta_\nu(2B_Q)\,\nu(Q).\nonumber
\end{align}
Now we take into account that all the cubes $Q$ which are not contained in any cube $P$ with $P^{(\mu)}\in\sH'$
satisfy $\theta_\nu(2B_Q)\leq \theta_\eta(2B_Q)\lesssim \Theta(\sH)$. Then, for each $i\in I$,
\begin{align*}
\sum_{Q\in\DD_{\eta}:Q\subset 2B_i} \frac{\ell(Q)}{r(B_i)}\,\theta_\nu(2B_Q)\,\nu(Q) & \leq
\sum_{P^{(\mu)}\in \sH'}\,
\sum_{Q\in\DD_{\eta}:Q\subset 2B_i\cap P} \frac{\ell(Q)}{r(B_i)}\,\theta_\eta(2B_Q)\,\eta(Q)\\
&\quad + \Theta(\sH)\sum_{Q\in\DD_{\eta}:Q\subset 2B_i} \frac{\ell(Q)}{r(B_i)}\, \nu(Q).\\
\end{align*}
Observe that the last term on the right hand side is bounded above by $\Theta(\sH)\nu(2B_i)\approx
\Theta(\sH)\nu(B_i)$. 
So plugging the previous estimate into \rf{eqplug6}, we get
\begin{align*}
\nu(A) \lesssim \frac{1}{M\,\Theta(\sH)} \sum_{i\in I}
\sum_{P^{(\mu)}\in \sH'}\,
\sum_{Q\in\DD_{\eta}:Q\subset 2B_i\cap P} \frac{\ell(Q)}{r(B_i)}\,\theta_\eta(2B_Q)\,\eta(Q)
+ \frac{1}{M} \sum_{i\in I} \nu(B_i).
\end{align*}
Since $B_i\subset A$ for each $i$, the last term is at most $\frac12\nu(A)$ for $M$ big enough. Thus,
\begin{equation}\label{eqnua12}
\nu(A) \lesssim  \frac{1}{M\,\Theta(\sH)} \sum_{i\in I}
\sum_{P^{(\mu)}\in \sH'}\,
\sum_{Q\in\DD_{\eta}:Q\subset 2B_i\cap P} \frac{\ell(Q)}{r(B_i)}\,\theta_\eta(2B_Q)\,\eta(Q).
\end{equation}

To estimate the term on the right hand side above, we denote by $\sF_k$ the family of cubes from
$\DD_\eta$ which are contained in some cube $Q^{(\eta)}$ with $Q\in\sH_{k}':=\sH_{k}(e_{h(R),j(R)+1}(R))$ and are not contained
in any cube $P^{(\eta)}$ with $P\in\sH_{k+1}':=\sH_{k+1}(e_{h(R),j(R)+1}(R))$. Notice that
$$\theta_\eta(2B_Q)\lesssim \Theta(\sH_{k+1})\approx \Theta(\sH_{k}),$$
where $\Theta(\sH_{k})=\Theta(Q')$ for $Q'\in\sH_{k}'$ (this does not depend on the specific cube $Q'$). Then, for each $i\in I$, we have
\begin{align}\label{eqalj4}
\sum_{P^{(\mu)}\in \sH'}
\sum_{Q\in\DD_{\eta}:Q\subset 2B_i\cap P} \frac{\ell(Q)}{r(B_i)}\,\theta_\eta(2B_Q)\eta(Q) &=\!\!
\sum_{k\geq k(R)}\sum_{P^{(\mu)}\in \sH'_k}
\sum_{Q\in\sF_k:Q\subset 2B_i\cap P} \frac{\ell(Q)}{r(B_i)}\,\theta_\eta(2B_Q)\eta(Q)\\
& \lesssim 
\sum_{k\geq k(R)}\sum_{P^{(\mu)}\in \sH'_k} \Theta(\sH_k)
\sum_{Q\in\sF_k:Q\subset 2B_i\cap P} \frac{\ell(Q)}{r(B_i)}\,\eta(Q).\nonumber
\end{align}
We claim now that for $Q$ in the last sum, we have
\begin{equation}\label{eqcond*492}
\ell(Q)\lesssim A_0^{-(k-k(R))}\,r(B_i).
\end{equation}
To check this, let $P(Q,k)\in \sH_k'$ be such that $Q\subset P(Q,k)$. From the condition 
\begin{equation}\label{eqcond492}
\theta_\eta( \gamma B_i)\leq c_1\,\Theta(\sH) \quad\mbox{for all $\gamma\geq1$}
\end{equation}
and the fact that $P(Q,k)\cap 2B_i\neq\varnothing$ (because $Q\subset 2B_i$) we infer that 
$r(B_i)\geq \ell(P(Q,k))$ for $k$ big enough. Otherwise we would find a ball $\gamma B_i$ containing $P(Q,k)$ with radius comparable to $\ell(P(Q,k))$, so that
$$\theta_\eta(\gamma B_i) \geq c\,\Theta(P(Q,k)) >c_1\,\Theta(\sH)$$
for $k$ big enough (depending on $c_1$), contradicting \rf{eqcond492}. So we have $P(Q,k)\subset 6B_i$ and thus
$$c_1\Theta(\sH)\geq \theta_\eta(6B_i)\gtrsim \frac{\ell(P(Q,k))^n}{r(B_i)^n}\,\Theta(P(Q,k))= 
\frac{\ell(P(Q,k))^n}{r(B_i)^n}\,A_0^{n(k-k(R))}\,\Theta(\sH).$$
This gives 
$$\ell(Q)\leq \ell(P(Q,k))\lesssim A_0^{-(k-k(R))}\,r(B_i)$$
and proves \rf{eqcond*492} for $k$ big enough, and thus for all $k$ if we choose the implicit constant in \rf{eqcond*492} suitably.

From \rf{eqcond*492} we deduce that the right hand side of \rf{eqalj4}
does not exceed
\begin{multline*}
\sum_{k\geq k(R)}A_0^{-(k-k(R))/2} \Theta(\sH_k)\sum_{P^{(\mu)}\in \sH'_k} 
\sum_{Q\subset 2B_i\cap P} \left(\frac{\ell(Q)}{r(B_i)}\right)^{1/2}\,\eta(Q)\\
\lesssim \sum_{k\geq k(R)}A_0^{-(k-k(R))/2} \,\Theta(\sH_k)\sum_{P^{(\mu)}\in \sH'_k} 
\eta(2B_i\cap P).
\end{multline*}
Plugging this estimate into \rf{eqnua12} and recalling that the balls $2B_i$ are disjoint, we get
\begin{align*}
\nu(A) & \lesssim \frac{1}{M\,\Theta(\sH)} \sum_{i\in I}
\sum_{k\geq k(R)}A_0^{-(k-k(R))/2} \sum_{P^{(\mu)}\in \sH'_k} \Theta(\sH_k)\, \eta(2B_i\cap P)\\
& \leq
\frac{1}{M\,\Theta(\sH)}
\sum_{k\geq k(R)}A_0^{-(k-k(R))/2} \sum_{P^{(\mu)}\in \sH'_k} \Theta(\sH_k) \,\eta(P).
\end{align*}
By H\"older's inequality, for each $k\geq k(R)$ we have
$$\sum_{P^{(\mu)}\in \sH'_k} \Theta(\sH_k) \eta(P) \leq \sigma_p(\sH'_k)^{1/p}\,\eta(H')^{1/p'}.
$$
We can estimate the right hand side using \rf{eqsigmah}:
\begin{align*}
\sigma_p(\sH'_k)&\le \Lambda^{\ve_n}\,\sigma_p(\sH),\\ \eta(H')&=\frac{\sigma_p(\sH')}{\Theta(\sH)^p} \le \Lambda^{\ve_n}\,\frac{\sigma_p(\sH)}{\Theta(\sH)^p} = \Lambda^{\ve_n}\,\eta(H).
\end{align*}
Thus,
$$\nu(A) \lesssim \frac{\Lambda^{\ve_n}}{M\,\Theta(\sH)} \sum_{k\geq k(R)}A_0^{-(k-k(R))/2} \,\sigma_p(\sH)^{1/p}\,\eta(H)^{1/p'}
\approx
\frac{\Lambda^{\ve_n}}{M}\,\eta(H)\approx \frac{\Lambda^{\ve_n}}{M}\,\nu(H).$$
\end{proof}
\vv

Let $\TT_{\sH'}$ denote the family of all cubes from $\DD_\mu$ 
 made up of $R$ and all the cubes of the next generations which are contained in $e_{h(R),j(R)+1}(R)$ but are not 
strictly contained in any cube from from $\sH'$. We consider the function
\begin{equation}\label{eqdefPsi1}
\Psi(x) = \inf_{Q\in \TT_{\sH'}} \big(\ell(Q) +\dist(x,Q)\big).
\end{equation}
Notice that $\Psi$ is a $1$-Lipschitz function. We also have the following result, which will be proven
by quite standard arguments.

\begin{lemma}\label{lem6.77}
For all $x\in\R^{n+1}$, we have
\begin{equation}\label{ecreixnupsi}
\sup_{r\geq \Psi(x)} \frac{\nu(B(x,r))}{r^n} \leq \sup_{r\geq \Psi(x)} \frac{\eta(B(x,r))}{r^n} \lesssim \Theta(\sH)\quad\mbox{ for all $x\in \R^{n+1}$.}
\end{equation}
\end{lemma}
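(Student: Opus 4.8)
The first inequality $\sup_{r\ge\Psi(x)}\nu(B(x,r))/r^n\le\sup_{r\ge\Psi(x)}\eta(B(x,r))/r^n$ is trivial since $\nu=\vphi_R\eta$ with $0\le\vphi_R\le1$, so the whole content is the bound $\sup_{r\ge\Psi(x)}\eta(B(x,r))/r^n\lesssim\Theta(\sH)$. The plan is to exploit the structure of $\eta$, namely that $\eta=\sum_{P\in\Reg(e'(R))}\mu(P)\,\LL^{n+1}\rest_{\frac12B(P)}/\LL^{n+1}(\frac12B(P))$, so that $\eta(\frac12B(P))=\mu(P)$ and, by the bounded overlap property (c) of Lemma \ref{lem74}, for any ball $B$ one has $\eta(B)\le\sum_{P:\ \frac12B(P)\cap B\ne\varnothing}\mu(P)$, with the relevant cubes $P$ having side length comparable to each other whenever they are clustered near a common point.

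The core estimate I would carry out is: fix $x$ and $r\ge\Psi(x)$, and show $\eta(B(x,r))\lesssim\Theta(\sH)\,r^n$. By definition of $\Psi$ there is a cube $Q\in\TT_{\sH'}$ with $\ell(Q)+\dist(x,Q)\le 2\Psi(x)\le 2r$; such a $Q$ is either one of the cubes from $\sH'$ (a terminal cube of the tree) or a cube of the tree strictly above every cube of $\sH'$, and in either case $\Theta$ of $Q$ and of all its ancestors in the tree is $\lesssim\Theta(\sH)$ — for ancestors because they lie above $\HD_1$-type cubes, which is exactly the defining feature of $\sH'\subset\hd^{k_\Lambda+j}$; more precisely any cube of $\TT_{\sH'}$ has density $\lesssim A_0^{2n}\Lambda^2\Theta(R)\lesssim\Theta(\sH)$ as observed in Remark \ref{rem:Hjempty} (this is where one uses that cubes of $\TT(e'(R))$ have $\Theta\le\Lambda^2\Theta(R)$). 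The plan is then to split $B(x,r)$ according to the regularized cubes $P\in\Reg(e'(R))$ meeting it: if $\frac12B(P)\cap B(x,r)\ne\varnothing$ then either $\ell(P)\gtrsim r$ — and then by property (a) of Lemma \ref{lem74} together with $\dist(x,Q)\le 2r$ one controls the (bounded, by (c)) number of such $P$ and bounds $\mu(P)\le\Theta(\widehat P)\ell(P)^n$ by $\Theta(\sH)\ell(P)^n$ using that each such large $P$ sits inside or near a tree cube of density $\lesssim\Theta(\sH)$ — or $\ell(P)\lesssim r$, in which case $P\subset B(x,Cr)$ and we sum $\sum_P\mu(P)\le\mu(B(x,Cr)\cap\,\text{(union of trees})\,)$. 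For this last sum the key point is that every $P$ in question is contained in a cube of $\TT_{\sH'}$, hence in a cube $Q'$ with $\ell(Q')\approx r$ lying in or above $\sH'$, so $\mu$ of the union of all such $P$ inside $B(x,Cr)$ is at most a bounded number of $\mu(2B_{Q'})\le\Theta(Q')\ell(Q')^n\lesssim\Theta(\sH)\,r^n$. Assembling these two cases gives $\eta(B(x,r))\lesssim\Theta(\sH)\,r^n$.

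The main obstacle I anticipate is bookkeeping at the boundary between the two cases and, relatedly, making precise the claim that every regularized cube meeting $B(x,r)$ with $\ell(P)\lesssim r$ lies under a tree cube of side length $\approx r$ and density $\lesssim\Theta(\sH)$: one must carefully use that $r\ge\Psi(x)$ forces the presence of a nearby tree cube $Q$ of size $\lesssim r$, that cubes of $\Reg(e'(R))$ are by construction no larger than $\frac{1}{60}d_{R,\ell_0}$, and that $\TT_{\sH'}$-cubes that are not in $\sH'$ have density bounded via the $\hd^{k_\Lambda+j}$ maximality. The estimates are genuinely standard (this is why the paper says ``quite standard arguments''), but the verification that no cube of anomalously high density is charged requires invoking the definition of $\sH'$ and Remark \ref{rem:Hjempty} carefully; everything else — the overlap counting via Lemma \ref{lem74}(a)–(c) and the comparison $\mu(P)\lesssim\Theta(\cdot)\ell(\cdot)^n$ — is routine.
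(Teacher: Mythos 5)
Your overall strategy matches the paper's: use the definition of $\Psi$ to find a tree cube $Q\in\TT_{\sH'}$ at scale $\lesssim r$ near $x$, pass to an ancestor $Q'\in\TT_{\sH'}$ with $\ell(Q')\approx r$ and $B(x,r)\subset CB_{Q'}$, and then bound $\eta(B(x,r))$ by $\mu(CB_{Q'})\lesssim\Theta(Q')\ell(Q')^n$. The paper's proof is a bit more streamlined — it shows directly, via Lemma \ref{lem74}(b) and the fact that $Q'$ contains some $\Reg$ cube, that \emph{every} $P\in\Reg(e'(R))$ with $P^{(\eta)}\cap 2B_{Q'}\neq\varnothing$ has $\ell(P)\lesssim\ell(Q')$ and hence $P\subset CB_{Q'}$, so your separate treatment of the case $\ell(P)\gtrsim r$ is unnecessary, though not incorrect.

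There is, however, one genuine gap in your justification of the central density bound $\Theta(Q')\lesssim\Theta(\sH)$. Your ``more precise'' argument invokes Remark \ref{rem:Hjempty} to get $\Theta(Q')\leq A_0^{2n}\Lambda^2\Theta(R)$ and then asserts $A_0^{2n}\Lambda^2\Theta(R)\lesssim\Theta(\sH)$. But this last inequality is \emph{backwards}: by definition $\Theta(\sH)=A_0^{(k_\Lambda+k(R))n}\Theta(R)=\Lambda A_0^{k(R)n}\Theta(R)$ with $0\leq k(R)\leq k_\Lambda+2$, so $\Theta(\sH)$ can be as small as $\Lambda\Theta(R)$, whereas $A_0^{2n}\Lambda^2\Theta(R)$ is exactly the \emph{upper} bound for $\Theta(\sH)$ (attained when $k(R)=k_\Lambda+2$). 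The two quantities differ by a factor of up to $A_0^{2n}\Lambda$, and using the wrong bound here would degrade the growth estimate \eqref{ecreixnupsi} to $\lesssim A_0^{2n}\Lambda^2\Theta(R)$, which would in turn weaken Lemmas \ref{lemrieszpetit}--\ref{lemH0} and spoil the variational argument, since the exponent $\ve_n$ is chosen so that losses of order $\Lambda^{\ve_n}$ are tolerable but a loss of order $\Lambda$ is not. Your first, vaguer justification (``ancestors lie above $\HD_1$-type cubes'') is the correct one: a cube $Q'\in\TT_{\sH'}$ is by construction not strictly contained in any cube from $\sH'\subset\hd^{k_\Lambda+k(R)}(R)$, and the cubes of $\hd^{k_\Lambda+k(R)}(R)$ are the \emph{maximal} ones with $\Theta\geq\Theta(\sH)$, so any strict ancestor of them (and any cube not strictly under one) has $\Theta<\Theta(\sH)$, while the cubes of $\sH'$ themselves have $\Theta=\Theta(\sH)$ by Lemma \ref{lempdoubling}. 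This is what the paper means by ``$\Theta(Q')\leq\Theta(\sH)$''; you should keep the first argument and drop the Remark \ref{rem:Hjempty} shortcut entirely. A minor additional point: the case $r>\ell(R)/10$ (when no ancestor $Q'$ of the right scale exists in the tree) must be treated directly via $\eta(B(x,r))\leq\mu(e'(R))\lesssim\Theta(R)\ell(R)^n\lesssim\Theta(\sH)r^n$, as the paper does.
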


\begin{proof}
The first inequality in \rf{ecreixnupsi} is trivial. Concerning the second one,
in the case $r>\ell(R)/10$ we just use that
$$\eta(B(x,r))\leq \mu(e'(R))\lesssim\mu(R)\lesssim \Theta(R)\,\ell(R)^n\lesssim \Theta(\sH)\,r^n.$$
So we may assume that $\Psi(x)<r\leq \ell(R)/10$.
By the definition of $\Psi(x)$, there exists $Q\in\TT_{\sH'}$ such that
$$\ell(Q) + \dist(x,Q)\leq r.$$
Therefore, $B_Q\subset B(x,4r)$ and so there exists an ancestor $Q'\supset Q$ which belongs to $\TT_{\sH'}$ such that $B(x,r)\subset  2B_{Q'}$, with $\ell(Q')\approx r$. 
Then, $\Theta(Q')\leq \Theta(\sH)$ and
$$\eta(B(x,r))\leq \sum_{P\in\Reg(e'(R)):P^{(\eta)}\cap 2B_{Q'}\neq\varnothing} \eta(P^{(\eta)}) = \sum_{P^{(\eta)}\in\Reg(e'(R)):P\cap 2B_{Q'}\neq\varnothing} \mu(P).
$$
Observe now that if $P\in\Reg(e'(R))$ satisfies $P^{(\eta)}\cap 2B_{Q'}\neq\varnothing$, then $\ell(P)\lesssim \ell(Q')$ (this is an easy consequence of Lemma \ref{lem74}(b) and the fact that $Q'$ contains some cube from $\Reg(e'(R))$). So we deduce that
$P\subset CB_{Q'}$. Hence,
$$\eta(B(x,r))\leq \mu(CB_{Q'}) \lesssim \Theta(\sH)\,\ell(Q')^n \approx \Theta(\sH)\,r^n.$$
\end{proof}

\vv

In the next subsection we are going to use a variational argument to show that for some constant $c_0$, depending at most\footnote{The constant $c_0$
	will be chosen of the form $c_0=A_0^{C(n)}$, and it will not depend on $\Lambda$, $\ve_n$, or other
	parameters of the construction.} on $n$ and $A_0$, we have
$$\int \big|(|\RR\nu(x)| - G(x)- c_0\,\Theta(\sH))_+\big|^p\,d\nu(x) \gtrsim \Lambda^{-p'\ve_n}\sigma_p(\sH).$$
See Lemma \ref{lemvar} for details. We prove this estimate by contradiction, so that in particular we will assume that
\begin{equation}\label{eqassu8}
\int \big|(|\RR\nu(x)| - G(x)- c_0\,\Theta(\sH))_+\big|^p\,d\nu(x) \leq \sigma_p(\sH).
\end{equation}
Below we prove a few consequences of \eqref{eqassu8} that will be useful in the proof of Lemma \ref{lemvar}.

We denote
$$\RR_{*,\Psi}\nu(x) = \sup_{\ve>\Psi(x)} \big|\RR_\ve\nu(x)\big|.$$

\begin{lemma}\label{lemrieszpetit}
Suppose that \eqref{eqassu8} holds. Then,
\begin{equation}\label{eqassu9}
\nu\big(\big\{x\in \cS_\eta:\,\RR_{*,\Psi}\nu(x) > M\,\Theta(\sH)\big\}\big) \leq C_2\, 
\frac{\Lambda^{\ve_n}}{M} \,\nu(H).
\end{equation}
\end{lemma}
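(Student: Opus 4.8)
The plan is to bound the bad set
$$A_0'=\{x\in\cS_\eta:\RR_{*,\Psi}\nu(x)>M\,\Theta(\sH)\}$$
by combining the Cotlar-type inequality of Lemma \ref{lemcotlar1} (in the quantitative form of Remark \ref{rem**}), the $L^p(\nu)$ smallness assumption \eqref{eqassu8}, and the maximal set $A$ controlled in Lemma \ref{lem*921}. First I would fix $x\in A_0'$ with $\RR_{*,\Psi}\nu(x)>M\,\Theta(\sH)$; choose $\ve>\Psi(x)$ almost attaining the sup, and let $\ve'=2^{7k}\ve$ with $k$ minimal so that $B(x,\ve')$ is $(128,128^{n+2})$-doubling (with respect to $\nu$). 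By \eqref{eqcotlar*99} and the growth bound $\sup_{r\ge\Psi(x)}\eta(B(x,r))/r^n\lesssim\Theta(\sH)$ from Lemma \ref{lem6.77} (note $\Psi(x)\le\ve<\ve'$), I get
$$M\,\Theta(\sH)\lesssim \avint_{B(x,2\ve')}|\RR\nu|\,d\nu + \Theta(\sH) + \frac{W_\nu(B(x,8\ve'))}{\nu(B(x,8\ve'))}.$$
Since $M$ is large, the middle term is absorbed, so at least one of the other two terms is $\gtrsim M\,\Theta(\sH)$.

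Case 1: the Wolff energy term dominates, i.e. $W_\nu(B(x,8\ve'))\gtrsim M\,\Theta(\sH)\,\nu(B(x,8\ve'))$. The ball $B(x,8\ve')$ is $(16,128^{n+2})$-doubling (this follows from $B(x,\ve')$ being $(128,128^{n+2})$-doubling, as in the proof of Lemma \ref{lemcotlar1}). Moreover $\theta_\eta(\gamma B(x,8\ve'))\lesssim\Theta(\sH)$ for all $\gamma\ge1$ by Lemma \ref{lem6.77} again (using $\Psi(x)\le\ve'$), so — after adjusting the constant $c_1$ in Lemma \ref{lem*921} and relabelling $M$ by $cM$ — the point $x$ lies in the set $A$ of Lemma \ref{lem*921}. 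Hence the portion of $A_0'$ covered by Case 1 has $\nu$-measure $\lesssim \frac{\Lambda^{\ve_n}}{M}\nu(H)$, which is exactly the desired bound.

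Case 2: the averaged term dominates, i.e. $\avint_{B(x,2\ve')}|\RR\nu|\,d\nu\gtrsim M\,\Theta(\sH)$. Writing $|\RR\nu|\le G + c_0\Theta(\sH) + (|\RR\nu|-G-c_0\Theta(\sH))_+$ and using that $G(x)\lesssim\Theta(R)\le\Theta(\sH)$ on $V_1\supset\cS_\eta$ together with $c_0\lesssim_{A_0}1$, the averaged $G$ and constant terms contribute only $C_{A_0}\Theta(\sH)\ll M\Theta(\sH)$; so $\avint_{B(x,2\ve')}(|\RR\nu|-G-c_0\Theta(\sH))_+\,d\nu\gtrsim M\,\Theta(\sH)$. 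Thus $x$ belongs to the Hardy--Littlewood maximal set $\{\cM_\nu\big((|\RR\nu|-G-c_0\Theta(\sH))_+\big)\gtrsim M\Theta(\sH)\}$; by the weak-$(1,1)$ bound for $\cM_\nu$ applied to the $p$-th power (or the $(p,p)$ weak bound for $\cM_\nu$ on $L^p(\nu)$, $p>1$), its measure is
$$\lesssim \frac{1}{(M\Theta(\sH))^p}\int \big|(|\RR\nu|-G-c_0\Theta(\sH))_+\big|^p\,d\nu \overset{\eqref{eqassu8}}{\le} \frac{\sigma_p(\sH)}{(M\Theta(\sH))^p} = \frac{\eta(H)}{M^p}\le \frac{\nu(H)}{M}$$
using $\sigma_p(\sH)=\Theta(\sH)^p\eta(H)$, $\eta(H)=\nu(H)$ on $\cS_\eta$, $M\ge1$, $p>1$. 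Adding the two cases gives \eqref{eqassu9} with $C_2$ depending only on $n$ and $A_0$.

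The main obstacle I anticipate is bookkeeping the doubling and density hypotheses needed to invoke Lemma \ref{lem*921}: one must ensure the ball produced by the Cotlar argument is genuinely $(16,128^{n+2})$-doubling and satisfies $\theta_\eta(\gamma B)\le c_1\Theta(\sH)$ for every $\gamma\ge1$, which requires the radius $\ve'$ to be comparable to $\Psi(x)$ or larger and a careful use of Lemma \ref{lem6.77}; the constants $c_1$ and the $M$-threshold must be tracked so that the relabelling does not break Lemma \ref{lem*921}. The rest is a routine combination of weak-type maximal estimates and the smallness assumption \eqref{eqassu8}.
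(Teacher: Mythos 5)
Your proposal follows the same overall blueprint as the paper's proof: apply the Cotlar-type inequality of Remark \ref{rem**}, split the bad set according to which term on the right dominates, handle the averaged term via a weak-$L^p$ maximal estimate together with \eqref{eqassu8}, and handle the Wolff energy term via Lemma \ref{lem*921}. Your Case 1 is correct and matches the paper's treatment of $T_2$.

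However, Case 2 has a genuine gap. You write that "the averaged $G$ and constant terms contribute only $C_{A_0}\Theta(\sH)$", justifying this by the bound $G\lesssim\Theta(R)$ on $V_1$. But the average $\avint_{B(x,2\ve')}|\RR\nu|\,d\nu$ runs over $B(x,2\ve')$, which is contained in $V_1$ only if $2\ve'\le A_0^{-3}\ell(R)$. For larger $\ve'$, the ball reaches into $V_4\setminus V_1$, and there the separation $\dist(\cdot,\cS_\eta'\setminus V_2)\gtrsim A_0^{-3}\ell(R)$ — which is what gives $G\lesssim\Theta(R)$ on $V_1$ — fails; indeed, for $y$ near $\cS_\eta'\setminus V_2$, $G(y)$ can be of order $\Theta(P)\ell(P)/\ell(R)$ for the relevant $P\in\Reg$, and $\Theta(P)$ may be as large as $\Lambda^2\Theta(R)$, which is not controlled by $\Theta(\sH)$. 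The paper sidesteps this by a preliminary dichotomy on the size of $\ve'$: when $\ve'\ge\tfrac12 A_0^{-3}\ell(R)$ one has the \emph{direct} bound $|\RR_\ve\nu(x)|\le|\RR_{\ve'}\nu(x)|+C\,\nu(B(x,\ve'))/(\ve')^n\lesssim\Theta(R)<M\Theta(\sH)$ (so these $x$ never enter the bad set), and when $\ve'<\tfrac12 A_0^{-3}\ell(R)$ one has $B(x,2\ve')\subset V_1$, so the Cotlar bound can be written with $\chi_{V_1}\RR\nu$ inside the maximal operator, and the weak-$L^p$ bound is then applied to the truncated function against the $V_1$-integral \eqref{eqsig84}. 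You anticipated a doubling/density bookkeeping issue in your "obstacle" paragraph, but the substantive issue is this missing dichotomy on $\ve'$; as written, your Case 2 is not justified for points where the Cotlar scale $\ve'$ is comparable to or larger than $A_0^{-3}\ell(R)$.
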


\begin{proof}
Recall that we denote by $V_1$ the $A_0^{-3}\ell(R)$-neighborhood of $\cS_\eta$
and that
$$G(x)\lesssim \Theta(R)\quad\mbox{ for all $x\in V_1$.}$$ 
Then the assumption in the lemma implies that
\begin{equation}\label{eqsig84}
\int_{V_1} \big|(\RR\nu(x)-2c_0\,\Theta(\sH))_+|^p\,d\nu(x) \lesssim \sigma_p(\sH).
\end{equation}

By Remark \ref{rem**}, for any $x\in \cS_\eta$ and $\ve>\Psi(x)$,
$$|\RR_\ve\nu(x)|\lesssim \avint_{B(x,2\ve')} |\RR\nu|\,d\nu + \sup_{r\geq \ve} \frac{\nu(B(x,r))}{r^n} + \frac{W_\nu(B(x,8\ve'))}{\nu(B(x,8\ve'))},$$
where $\ve'=2^{7k}\ve$, with $k\geq0$ minimal such that the ball $B(x,\ve')$ is $(128,128^{n+2})$-doubling. 
In the case $\ve'\geq \frac12 A_0^{-3}\,\ell(R)$, by standard arguments,
$$
|\RR_\ve \nu(x)|\leq |\RR_{\ve'} \nu(x)| + C\,\frac{\nu(B(x,\ve'))}{(\ve')^n} \leq C\,
\Theta(R)< M\,\Theta(\sH),$$
for $\Lambda$ (or $M$) big enough.

In the case $\ve'< \frac12 A_0^{-3}\ell(R)$, we have $B(x,2\ve')\subset V_1$ and thus
$$|\RR_\ve\nu(x)|\lesssim \cM_\nu^{(\Psi(x),16,128^{n+2})}(\chi_{V_1}\RR\nu)(x) 
+ \sup_{r\geq \Psi(x)} \frac{\nu(B(x,r))}{r^n} + \sup_{r\in D(x):r\geq \Psi(x)} \\
 \frac{W_\nu(B(x,r))}{\nu(B(x,r))},
$$
where $D(x)$ denotes the set of radii $r>0$ such that $B(x,r)$ is $(16,128^{n+2})$-doubling.
Also, as shown in \rf{ecreixnupsi},
$$\sup_{r\geq \Psi(x)} \frac{\nu(B(x,r))}{r^n} \lesssim \Theta(\sH).$$

We deduce that in any case (i.e., for any $\ve'$), assuming $M$ larger than some absolute constant,
\begin{align*}
\nu\big(\big\{x\in \cS_\eta:\,\RR_{*,\Psi}\nu(x) &> M\,\Theta(\sH)\big\}\big) \\& \leq
\nu\Big(\Big\{x\in \cS_\eta:\,\cM_\nu^{(\Psi(x),16,128^{n+2})}(\chi_{V_1}\RR\nu)(x) > \frac{M\,\Theta(\sH)}3\Big\}\Big)\\
& \!\!+
\nu\Big(\Big\{x\in \cS_\eta:\!\!\!\sup_{r\in D(x):r\geq \Psi(x)} \!
 \frac{W_\nu(B(x,r))}{\nu(B(x,r))} > \frac{M\,\Theta(\sH)}3\Big\}\Big)\\
 & =: T_1 + T_2.
\end{align*}

To deal with the term $T_1$, notice that if $\cM_\nu^{(\Psi(x),16,128^{n+2})}(\chi_{V_1}\RR\nu)(x) > \frac{M}{3}\Theta(\sH)$, then
\begin{equation*}
\cM_\nu^{(\Psi(x),16,128^{n+2})}((|\chi_{V_1}\RR\nu|-\frac{M}{6}\Theta(\sH))_+)(x) > \frac{M}6 \Theta(\sH),
\end{equation*}
and thus using the weak $L^p(\nu)$ boundedness of $\cM_\nu^{(\Psi(x),16,128^{n+2})}$ and \rf{eqsig84} we obtain, assuming $M$ big enough,
\begin{multline*}
T_1 \le \nu\Big(\Big\{x\in \cS_\eta:\,\cM_\nu^{(\Psi(x),16,128^{n+2})}((|\chi_{V_1}\RR\nu|-\frac{M}{6}\Theta(\sH))_+)(x) > \frac{M}6 \Theta(\sH)\Big\}\Big) \\
\lesssim \frac1{(M\Theta(\sH))^p} \int_{V_1}\big|(\RR\nu(x)-\frac{M}{6}\,\Theta(\sH))_+|^p\,d\nu
\lesssim \frac1{M^p}\,\nu(H).
\end{multline*}

Concerning $T_2$, by Lemma \ref{lem*921},
$$T_2 \lesssim \frac{\Lambda^{\ve_n}}{M} \,\nu(H).$$
So we have
$$\nu\big(\big\{x\in \cS_\eta:\,\RR_{*,\Psi}\nu(x) > M\,\Theta(\sH)\big\}\big)\lesssim 
\frac1{M^p}\,\nu(H) + \frac{\Lambda^{\ve_n}}{M} \,\nu(H)\lesssim \frac{\Lambda^{\ve_n}}{M} \,\nu(H).$$
\end{proof}
\vv

For $\gamma>1$, we let
$$Z(\gamma) = \big\{x\in \cS_\eta': \RR_{*,\Psi}\nu(x) > \gamma \Lambda^{\ve_n}\,\Theta(\sH)\big\}.$$
Notice that, under the assumption  \rf{eqassu8}, by Lemma \ref{lemrieszpetit}
 (with $M= \gamma \Lambda^{\ve_n}$), we have
\begin{equation}\label{eqzgam1}
\nu(Z(\gamma)\cap \cS_\eta) \leq C_2\gamma^{-1}\,\nu(H).
\end{equation}
For $x\in Z(\gamma)$, let
\begin{equation}\label{eqe3194}
e(x,\gamma) = \sup\{\ve: \ve>\Psi(x),\,|\RR_\ve\nu(x)|>\gamma \Lambda^{\ve_n}\,\Theta(\sH)\}
\end{equation}
We define the exceptional set $Z'(\gamma)$ as
$$Z'(\gamma) = \bigcup_{x\in Z(\gamma)} B(x,e(x,\gamma)).$$

The next lemma shows that $\nu(Z'(\gamma)\cap \cS_\eta)$ is small if $\gamma$ is taken big enough.

\vv
\begin{lemma} \label{tamt}
If $y\in Z'(\gamma)$, then $\RR_{*,\Psi}\nu(y) > (\gamma \Lambda^{\ve_n}- c_2)\Theta(\sH)$,
for some $c_2>0$.
Thus, under the assumption  \rf{eqassu8},
if $\gamma\geq 2c_2$, then
\begin{equation} \label{wert1}
\nu(Z'(\gamma)\cap \cS_\eta) \leq \frac{2C_2}{\gamma}\,\nu(H).
\end{equation}
\end{lemma}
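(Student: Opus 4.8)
The plan is to peel off the definition of $Z'(\gamma)$ and reduce to a standard comparison of truncated Riesz transforms at two nearby points. Let $y\in Z'(\gamma)$. By definition there is $x\in Z(\gamma)$ with $|x-y|<e(x,\gamma)$, and $e(x,\gamma)$ is the supremum of those $\ve>\Psi(x)$ for which $|\RR_\ve\nu(x)|>\gamma\Lambda^{\ve_n}\,\Theta(\sH)$; since this supremum exceeds $|x-y|$, I can pick one such $\ve$ with, in addition, $\ve>|x-y|$. So I have a radius $\ve$ with $\ve>\Psi(x)$, $\ve>|x-y|$ and $|\RR_\ve\nu(x)|>\gamma\Lambda^{\ve_n}\,\Theta(\sH)$. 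Since $\Psi$ is $1$-Lipschitz, $\Psi(y)\le\Psi(x)+|x-y|<2\ve$, so $2\ve$ is an admissible truncation radius for $\RR_{*,\Psi}\nu(y)$, and it will suffice to show that $|\RR_{2\ve}\nu(y)-\RR_\ve\nu(x)|\le c_2\,\Theta(\sH)$ for an appropriate constant $c_2$.

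To get this comparison I would split the difference in the usual way. Because $|x-y|<\ve$, the set $\{|y-z|>2\ve\}$ is contained in $\{|x-z|>\ve\}$, so $\RR_\ve\nu(x)-\RR_{2\ve}\nu(y)$ equals the integral of the Riesz kernel $K(x-z)$ over $\{|x-z|>\ve,\ |y-z|\le 2\ve\}$ plus the integral of $K(x-z)-K(y-z)$ over $\{|y-z|>2\ve\}$. The first region lies in $B(x,3\ve)$, where $|K(x-z)|\le\ve^{-n}$, so its contribution is at most $\nu(B(x,3\ve))\,\ve^{-n}$; for the second, $|y-z|>2\ve\ge 2|x-y|$ gives $|x-z|\ge\tfrac12|y-z|$, whence $|K(x-z)-K(y-z)|\lesssim|x-y|\,|y-z|^{-n-1}\le\ve\,|y-z|^{-n-1}$ by the Calder\'on--Zygmund estimate for $K$, and a dyadic summation bounds this contribution by $\sup_{r\ge 2\ve}\nu(B(y,r))\,r^{-n}$. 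Now $B(x,3\ve)\subset B(y,4\ve)$ and $\Psi(y)<2\ve$, so both quantities are $\lesssim\Theta(\sH)$ by Lemma \ref{lem6.77}. Collecting, $|\RR_\ve\nu(x)-\RR_{2\ve}\nu(y)|\le c_2\,\Theta(\sH)$ with $c_2$ depending only on $n$ and $A_0$, and therefore $\RR_{*,\Psi}\nu(y)\ge|\RR_{2\ve}\nu(y)|>(\gamma\Lambda^{\ve_n}-c_2)\,\Theta(\sH)$, which is the first assertion.

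For the second assertion I would just feed the first one into Lemma \ref{lemrieszpetit}. Under the standing assumption \rf{eqassu8}, and with $\gamma\ge 2c_2$ (so that, using $\Lambda^{\ve_n}\ge1$, $M:=\gamma\Lambda^{\ve_n}-c_2\ge\tfrac12\gamma\Lambda^{\ve_n}$, which is large enough since $\gamma$ and $\Lambda$ are large), the first part gives $Z'(\gamma)\cap\cS_\eta\subset\{x\in\cS_\eta:\RR_{*,\Psi}\nu(x)>M\,\Theta(\sH)\}$, and Lemma \ref{lemrieszpetit} then yields $\nu(Z'(\gamma)\cap\cS_\eta)\le C_2\,\Lambda^{\ve_n}M^{-1}\,\nu(H)\le 2C_2\gamma^{-1}\,\nu(H)$. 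The only point that needs a little care is the bookkeeping in the comparison estimate — making sure the truncation mismatch and the kernel tail are both controlled by $\Theta(\sH)$ rather than merely by $\Theta(R)$ — which is exactly where the choice of the truncation $2\ve$ and the Lipschitz bound $\Psi(y)<2\ve$ enter; everything else is routine and already appears in essentially this form in the proofs of Lemma \ref{lemcotlar1} and Remark \ref{rem**}.
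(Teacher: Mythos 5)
Your proof is correct and follows essentially the same route as the paper: a Calder\'on--Zygmund comparison of truncated transforms at $x$ and $y$ controlled via Lemma \ref{lem6.77}, followed by an application of Lemma \ref{lemrieszpetit} (the paper channels the latter through the containment $Z'(\gamma)\subset Z(\gamma/2)$ and \rf{eqzgam1}, you plug in the threshold $\gamma\Lambda^{\ve_n}-c_2$ directly, which amounts to the same computation). Your extra care in choosing a radius $\ve$ strictly below $e(x,\gamma)$ so that the strict inequality $|\RR_\ve\nu(x)|>\gamma\Lambda^{\ve_n}\Theta(\sH)$ actually holds is a small but welcome improvement over the paper's slightly informal use of $\RR_{e(x,\gamma)}\nu(x)$.
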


\begin{proof}
The arguments are similar to the ones in Lemma 5.2 from
\cite{Tolsa-llibre}. However, for the reader's convenience we will explain here the basic details.

By standard arguments (which just use the fact that the Riesz kernel is a Calder\'on-Zygmund kernel),
for all $y\in B(x,e(x,\gamma))$, with $x\in Z(\gamma)$,
 we have
$$|\RR_{e(x,\gamma)}\nu(x) -\RR_{2e(x,\gamma)}\nu(y)|\lesssim \sup_{r\geq e(x,\gamma)} \frac{\nu(B(x,r))}{r^n} \lesssim \Theta(\sH),$$
taking into account that $e(x,\gamma)\geq\Psi(x)$ and recalling \rf{ecreixnupsi} for the last estimate.
So we have
$$|\RR_{2e(x,\gamma)}\nu(y)| \geq |\RR_{e(x,\gamma)}\nu(x)|- c_2\,\Theta(\sH) \geq  \gamma\,\Lambda^{\ve_n}\,\Theta(\sH) -  c_2\,\Theta(\sH).$$
Observe now that
$$\Psi(y)\leq \Psi(x) + |x-y|< 2e(x,\gamma).$$ 
So 
$$\RR_{*,\Psi}\nu(y) \geq |\RR_{2e(x,\gamma)}\nu(y)|> (\gamma \Lambda^{\ve_n}- c_2)\Theta(\sH),$$
which proves the first statement of the lemma.

In particular, taking $\gamma\geq 2c_2$, from the last estimate we derive 
$$\RR_{*,\Psi}\nu(y) \geq \frac\gamma2\,\Lambda^{\ve_n}\,\Theta(\sH),$$
and so $y\in Z(\gamma/2)$, which shows that $Z'(\gamma)\subset Z(\gamma/2)$. Thus,
by \rf{eqzgam1},
$$\nu(Z'(\gamma)\cap \cS_\eta) \leq \nu(Z(\gamma/2)\cap \cS_\eta) 
\leq 2C_2\gamma^{-1}\,\nu(H).$$
\end{proof}
\vv

We choose $\gamma=\max(1,2c_2,4C_2)$, so that, under the assumption  \rf{eqassu8},
\begin{equation}\label{eqnugam*}
\nu(Z'(\gamma)\cap \cS_\eta) \leq \frac12\,\nu(H).
\end{equation}
Also we define
$$\Phi(x) = \max(\Psi(x),\dist(x,\R^{n+1}\setminus Z'(\gamma)).$$
Notice that $\Phi$ is a $1$-Lipschitz function which coincides with $\Psi(x)$ away from $Z'(\gamma)$ and
that 
\begin{equation}\label{eqphigam}
\Phi(x)\geq e(x,\gamma)\quad \mbox{ for all $x\in Z(\gamma)$.}
\end{equation}

\vv

\begin{lemma}\label{lemrieszphi4}
The suppressed operator $\RR_\Phi$ is bounded in $L^p(\nu)$, with
$\|\RR_\Phi\|_{L^p(\nu)\to L^p(\nu)}\lesssim \Lambda^{\ve_n}\Theta(\sH).$
\end{lemma}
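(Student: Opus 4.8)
The plan is to verify the hypotheses of Theorem \ref{teontv} for the measure $\nu$ and the $1$-Lipschitz function $\Phi$, which immediately yields the $L^p(\nu)$ boundedness of $\RR_\Phi$; the only real work is tracking the size of the constants so that the resulting norm bound is $\lesssim \Lambda^{\ve_n}\Theta(\sH)$. Recall that $\nu = \vphi_R\,\eta$, so $\nu\le\eta$, and that $\Phi(x)=\max(\Psi(x),\dist(x,\R^{n+1}\setminus Z'(\gamma)))\ge \Psi(x)$.

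\textbf{Step 1: growth condition.} I would check that $\nu(B(x,r))\le c\,\Lambda^{\ve_n}\Theta(\sH)\,r^n$ for all $r\ge\Phi(x)$. Since $\Phi(x)\ge\Psi(x)$ and $\nu\le\eta$, this follows directly from Lemma \ref{lem6.77}, which gives $\sup_{r\ge\Psi(x)}\nu(B(x,r))/r^n\lesssim\Theta(\sH)$; in fact one even gets the bound with $\Theta(\sH)$ instead of $\Lambda^{\ve_n}\Theta(\sH)$, so this hypothesis is comfortably satisfied.

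\textbf{Step 2: pointwise bound on truncated transforms.} I would show $\sup_{\ve>\Phi(x)}|\RR_\ve\nu(x)|\lesssim\Lambda^{\ve_n}\Theta(\sH)$ for every $x$. Fix $\ve>\Phi(x)$. Since $\Phi(x)\ge\Psi(x)$, we have $\ve>\Psi(x)$, so if $x\notin Z(\gamma)$ the definition of $Z(\gamma)$ gives $|\RR_\ve\nu(x)|\le\RR_{*,\Psi}\nu(x)\le\gamma\Lambda^{\ve_n}\Theta(\sH)$ directly. If instead $x\in Z(\gamma)$, then by \eqref{eqphigam} we have $\Phi(x)\ge e(x,\gamma)$, and by the definition \eqref{eqe3194} of $e(x,\gamma)$ the supremum defining it is attained below $\ve$; hence for $\ve>\Phi(x)\ge e(x,\gamma)$ we again get $|\RR_\ve\nu(x)|\le\gamma\Lambda^{\ve_n}\Theta(\sH)$. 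Either way condition (b) of Theorem \ref{teontv} holds with constant $C\lesssim\gamma\Lambda^{\ve_n}\Theta(\sH)$, and $\gamma$ is an absolute constant by its choice $\gamma=\max(1,2c_2,4C_2)$.

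\textbf{Step 3: conclude.} With (a) and (b) verified, Theorem \ref{teontv} applies and gives that $\RR_{\Phi,\nu}$ is bounded in $L^p(\nu)$ with norm controlled by (a dimensional power of) the constants in (a) and (b), i.e.\ $\|\RR_\Phi\|_{L^p(\nu)\to L^p(\nu)}\lesssim\Lambda^{\ve_n}\Theta(\sH)$, as claimed. Here one uses that $p\in(1,2]$ is fixed. The main (indeed only) subtlety is bookkeeping: one must make sure that the growth constant from Step 1 and the $\RR_*$-bound from Step 2 both scale linearly in $\Theta(\sH)$ with only the factor $\Lambda^{\ve_n}$ extra, and that Theorem \ref{teontv}'s dependence of the norm on $c$ and $C$ is at most linear (it is, by the homogeneity of the statement under scaling $\nu\mapsto\lambda\nu$ and $\Phi$ fixed — or one simply normalizes $\nu$ by $\Lambda^{\ve_n}\Theta(\sH)$ and applies the theorem with absolute constants). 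No genuine obstacle is expected here; this lemma is essentially a packaging of the preceding estimates via the $Tb$-theorem.
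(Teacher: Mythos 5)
Your proof is correct and is essentially the same as the paper's: both verify conditions (a) and (b) of Theorem \ref{teontv} — the growth condition via \eqref{ecreixnupsi} (Lemma \ref{lem6.77}), and the truncated-transform bound by splitting into $x\in Z(\gamma)$ (where \eqref{eqphigam} reduces matters to $\ve>e(x,\gamma)$) versus $x\in \cS_\eta'\setminus Z(\gamma)$ (where the definition of $Z(\gamma)$ applies directly) — and then apply the theorem to the normalized measure $(C\Lambda^{\ve_n}\Theta(\sH))^{-1}\nu$. The one small imprecision is that you claim the bound in Step 2 ``for every $x$,'' whereas the conclusion $\RR_{*,\Psi}\nu(x)\le\gamma\Lambda^{\ve_n}\Theta(\sH)$ for $x\notin Z(\gamma)$ only follows from the definition of $Z(\gamma)$ when $x\in\cS_\eta'$ (since $Z(\gamma)$ is a subset of $\cS_\eta'$); this is harmless because $\supp\nu\subset\cS_\eta'$, which is what the paper implicitly uses, but it is worth stating explicitly.
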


\begin{proof}
This is an immediate consequence of Theorem \ref{teontv} and the construction of $\Phi$ above.
Indeed, by \rf{ecreixnupsi},
$$\nu(B(x,r)) \lesssim \Theta(\sH)\,r^n \quad\mbox{ for all $r\geq\Phi(x)$.}$$
Also, by \rf{eqphigam},
$$\sup_{\ve>\Phi(x)}|\RR_\ve\nu(x)|\leq \sup_{\ve>e(x,\gamma)}|\RR_\ve\nu(x)|
\leq \gamma \Lambda^{\ve_n}\,\Theta(\sH)
\quad \mbox{ for all $x\in Z(\gamma)$.}$$
On the other hand, by the definition of $Z(\gamma)$ we also have
$$\sup_{\ve>\Phi(x)}|\RR_\ve\nu(x)|\leq \sup_{\ve>\Psi(x)}|\RR_\ve\nu(x)|\leq
\gamma \Lambda^{\ve_n}\,\Theta(\sH) \quad \mbox{ for $x\in \cS_\eta'\setminus Z(\gamma)$.} 
$$
So we can apply Theorem \ref{teontv}, taking 
$\omega= (C\Lambda^{\ve_n}\Theta(\sH))^{-1}\,\nu$ with an appropriate absolute constant $C$, and
then the lemma follows.
\end{proof}

\vv

\begin{lemma}\label{lemH0}
Under the assumption \rf{eqassu8},
there exists a subset $H_0\subset H$ such that:
\begin{itemize}
\item[(a)] $\eta(H_0)\geq \frac12\,\eta(H)$,
\item[(b)] $\RR_{\Psi,\eta}$ is bounded from $L^p(\eta\rest_{H_0})$ to $L^p(\eta\rest_{\cS'_\eta})$, with 
$\|\RR_\Psi\|_{L^p(\eta\rest_{H_0})\to L^p(\eta\rest_{\cS'_\eta})}\lesssim \Lambda^{\ve_n}\Theta(\sH).$
\end{itemize}
\end{lemma}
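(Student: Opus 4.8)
The plan is the following. Set $H_0 := H\setminus Z'(\gamma)$, so that $H_0\subset H$. Since $H\subset\cS_\eta$ and $\vphi_R\equiv 1$ on a neighbourhood of $\cS_\eta$, we have $\nu=\eta$ on $\cS_\eta$, and hence
$$\eta(H_0)=\nu(H_0)\geq\nu(H)-\nu(Z'(\gamma)\cap\cS_\eta)\overset{\eqref{eqnugam*}}{\geq}\tfrac12\,\nu(H)=\tfrac12\,\eta(H),$$
which is (a). Record for later that, by the definition of $\Phi$, one has $\Phi(x)=\Psi(x)$ for every $x\in\R^{n+1}\setminus Z'(\gamma)$; in particular for every $x\in H_0$ and every $x\in\cS_\eta'\setminus Z'(\gamma)$.

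For (b), fix $f\in L^p(\eta\rest_{H_0})$ and write simply $f$ for $f\chi_{H_0}$. Since $K_\Psi(x,y)=K_\Phi(x,y)$ whenever both $x\notin Z'(\gamma)$ and $y\notin Z'(\gamma)$ (the suppression factors $\Psi(x)\Psi(y)$ and $\Phi(x)\Phi(y)$ then coincide), and $\supp(f\eta)\subset H_0\subset\R^{n+1}\setminus Z'(\gamma)$, we get $\RR_\Psi(f\eta)(x)=\RR_\Phi(f\eta)(x)$ for every $x\notin Z'(\gamma)$. For $x\in Z'(\gamma)$ we compare the two kernels directly: writing $\rho(x)=\dist(x,\R^{n+1}\setminus Z'(\gamma))\le\Phi(x)$, we may assume $\Psi(x)<\rho(x)$ (otherwise $\Phi(x)=\Psi(x)$ and the kernels already agree for $y\in H_0$), so $\Phi(x)=\rho(x)$; moreover $|x-y|\ge\rho(x)$ for $y\in H_0$, and $\Psi(y)\le\Psi(x)+|x-y|\le 2|x-y|$, so both suppression factors are $\lesssim|x-y|^2$. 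A one-line mean value estimate then gives
$$|K_\Psi(x,y)-K_\Phi(x,y)|\lesssim\frac{\rho(x)}{|x-y|^{n+1}}\qquad\text{for }\; x\in Z'(\gamma),\ y\in H_0.$$
Consequently $\RR_\Psi(f\eta)=\RR_\Phi(f\eta)+E(f)$, where $E(f)$ is supported in $Z'(\gamma)$ and, by a dyadic decomposition of the integral over $\{|x-y|\ge\rho(x)\}$ together with the growth bound $\eta(B(x,r))\lesssim\Theta(\sH)\,r^n$ for $r\ge\Psi(x)$ from \eqref{ecreixnupsi} (note $2^j\rho(x)\ge\rho(x)>\Psi(x)$), one obtains
$$|E(f)(x)|\lesssim\Theta(\sH)\,\cM_\eta f(x).$$
Hence $\|E(f)\|_{L^p(\eta)}\lesssim\Theta(\sH)\,\|f\|_{L^p(\eta\rest_{H_0})}$ by the $L^p(\eta)$-boundedness of the maximal operator $\cM_\eta$.

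It remains to bound $\|\RR_\Phi(f\eta)\|_{L^p(\eta\rest_{\cS_\eta'})}$. Split $\cS_\eta'=(\cS_\eta'\cap V_3)\cup(\cS_\eta'\setminus V_3)$. On $\cS_\eta'\cap V_3$ we have $\eta=\nu$, and since also $f\eta=f\nu$ (as $H_0\subset\cS_\eta$), Lemma \ref{lemrieszphi4} yields
$$\int_{\cS_\eta'\cap V_3}|\RR_\Phi(f\nu)|^p\,d\eta\le\|\RR_{\Phi,\nu}\|_{L^p(\nu)\to L^p(\nu)}^p\,\|f\|_{L^p(\nu)}^p\lesssim\bigl(\Lambda^{\ve_n}\Theta(\sH)\bigr)^p\,\|f\|_{L^p(\eta\rest_{H_0})}^p.$$
On $\cS_\eta'\setminus V_3$ we use that $\dist(x,\supp(f\eta))\ge\dist(x,\cS_\eta)>3A_0^{-3}\ell(R)$, so by \eqref{eqkafi1} and H\"older's inequality $|\RR_\Phi(f\eta)(x)|\lesssim A_0^{3n}\ell(R)^{-n}\,\eta(H_0)^{1/p'}\,\|f\|_{L^p(\eta\rest_{H_0})}$; since $\eta(\R^{n+1})=\mu(e'(R))\lesssim\mu(R)\lesssim\Theta(R)\,\ell(R)^n$ (recall $R$ is $\PP$-doubling, as in \eqref{eqdoub*11}) and $A_0^{3n}\Theta(R)\le\Theta(\sH)$, integrating over $\cS_\eta'\setminus V_3$ (whose $\eta$-measure is $\le\mu(e'(R))$) gives a contribution $\lesssim\Theta(\sH)^p\,\|f\|_{L^p(\eta\rest_{H_0})}^p$. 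Collecting the three pieces proves (b).

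The main obstacle is the bookkeeping around $Z'(\gamma)$: one has to treat separately the regions obtained by intersecting the input and output variables with $Z'(\gamma)$ and its complement, establish the kernel-difference estimate above, and verify the geometric fact that $\cS_\eta'$ lies in a $CA_0^{-2}\ell(R)$-neighbourhood of $\cS_\eta$ (so that the part of $\cS_\eta'$ outside $V_3$, where $\nu$ and $\eta$ may differ, contributes only a harmless error). Everything else reduces to the already-established $L^p(\nu)$-boundedness of $\RR_\Phi$ (Lemma \ref{lemrieszphi4}) and standard maximal-function bounds.
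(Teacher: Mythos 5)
Your proof is correct and takes a slightly different technical route from the paper's, so let me briefly compare.

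The paper passes through truncated Riesz transforms: it applies \eqref{e.compsup''} to write $|\RR_\Psi(f\nu)(x)| \leq |\RR_{\Psi(x)}(f\nu)(x)| + \cM_{\Psi,n}(f\nu)(x)$, then splits $\RR_{\Psi(x)}(f\nu)(x) = \RR_{\Phi(x)}(f\nu)(x) + \int_{\Psi(x)<|x-y|\le\Phi(x)}\dots$, and controls the annular term via the observation that $\Psi(x)<|x-y|$ and $\Psi(y)=\Phi(y)$ force $\Phi(x)<3|x-y|$, so the annulus has comparable inner and outer radii; finally it applies \eqref{e.compsup''} once more to return to $\RR_\Phi$. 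You instead compare the suppressed kernels $K_\Psi$ and $K_\Phi$ directly, noting that they coincide off $Z'(\gamma)$ and that for $x\in Z'(\gamma)$, $y\in H_0$, a mean-value bound gives $|K_\Psi(x,y)-K_\Phi(x,y)|\lesssim\rho(x)/|x-y|^{n+1}$; the resulting error is supported in $Z'(\gamma)$ and controlled by $\Theta(\sH)\,\cM_\eta f$ via a dyadic decomposition over $\{|x-y|\ge\rho(x)\}$ together with the growth bound \eqref{ecreixnupsi}. Both routes produce a maximal-function error term of the same size and then invoke Lemma~\ref{lemrieszphi4} on the region where $\eta=\nu$ (you use $V_3$, the paper uses $V_1$; immaterial) plus a crude far-field estimate (you via a direct kernel bound and H\"older, the paper via Schur). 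Your approach is somewhat more direct since it never introduces the intermediate truncations $\RR_{\Psi(x)},\RR_{\Phi(x)}$; the paper's is more modular in that it reuses the already-established comparison \eqref{e.compsup''} twice. Either way, all the constants work out as claimed, including the key facts that $\eta=\nu$ on $\cS_\eta'\cap V_3$, that $f\eta=f\nu$ because $\supp f\subset H_0\subset\cS_\eta\subset V_3$, and that $A_0^{3n}\Theta(R)\le\Lambda\Theta(R)\le\Theta(\sH)$.
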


\begin{proof}
We let 
$$H_0= H\setminus Z'(\gamma),$$
so that, by \rf{eqnugam*},
$$\eta(H_0) = \nu(H_0) \geq \nu(H) - \nu(Z'(\gamma)\cap \cS_\eta) \geq \frac12 \,\nu(H) = \frac12 \,\eta(H).$$

To prove (b), take $f\in L^p(\eta\rest_{H_0})$ and observe that for $x\in \cS'_\eta$, by \rf{e.compsup''},
$$|\RR_{\Psi} (f\eta)(x)| = |\RR_{\Psi} (f\nu)(x)|\leq |\RR_{\Psi(x)} (f\nu)(x)| + 
\cM_{\Psi,n}(f\nu)(x),$$
where $\RR_{\Psi(x)}$ is the $\Psi(x)$-truncated Riesz transform and
\begin{equation}\label{eq:maximaloppsi}
\cM_{\Psi,n} \alpha(x) = \sup_{r>\Psi(x)}\frac{|\alpha|(B(x,r))}{r^n}
\end{equation}
for any signed Radon measure $\alpha$. Taking into account that $\Phi(x)\geq\Psi(x)$, we can split
$$\RR_{\Psi(x)} (f\nu)(x) = \RR_{\Phi(x)} (f\nu)(x) + \int_{y\in H_0:\Psi(x)<|x-y|\leq \Phi(x)} 
\frac{x-y}{|x-y|^{n+1}}\,f(y)\,d\nu(y).$$
To estimate the last integral, notice that for $y\in H_0$, $\Psi(y) = \Phi(y)$, and then the condition $\Psi(x)<|x-y|$ implies that
$$\Phi(x) \leq \Phi(y) + |x-y| = \Psi(y) + |x-y| \leq \Psi(x) + 2|x-y|<3|x-y|.$$
So the last integral above is bounded by
$$\int_{\Phi(x)/3<|x-y|\leq \Phi(x)} \frac1{|x-y|^n}\,|f(y)|\,d\nu(y)\lesssim \cM_{\Psi,n}(f\nu)(x).$$
From the preceding estimate and \rf{e.compsup''} we derive that
$$|\RR_{\Psi} (f\eta)(x)| \leq |\RR_{\Phi(x)} (f\nu)(x)| + C\,\cM_{\Psi,n}(f\nu)(x) \leq
|\RR_{\Phi} (f\nu)(x)| + C\,\cM_{\Psi,n}(f\nu)(x).$$

From the last inequality and Lemma \ref{lemrieszphi4}, taking into account that $\eta$ coincides with $\nu$ on $V_1$, we deduce that 
$$\|\RR_\Psi\|_{L^p(\eta\rest_{H_0})\to L^p(\eta\rest_{V_1})}\lesssim \Lambda^{\ve_n}\Theta(\sH) 
+ \|\cM_{\Psi,n}\|_{L^p(\eta)\to L^p(\eta)}.$$
From the growth condition \rf{ecreixnupsi} and standard covering lemmas, it follows easily that
$$\|\cM_{\Psi,n}\|_{L^p(\eta)\to L^p(\eta)}\lesssim\Theta(\sH).$$
On the other hand, using the fact that $\dist(H_0,\R^{n+1}\setminus V_1)\gtrsim \ell(R)$, it is immediate (by Schur's criterion, for example) to check that also
$$\|\RR_\Psi\|_{L^p(\eta\rest_{H_0})\to L^p(\eta\rest_{\cS_\eta'\setminus V_1})}\lesssim 
\Theta(R)\leq \Theta(\sH).$$
\end{proof}

\vv


\subsection{The variational argument}

\begin{lemma}\label{lemvar}
There is a constant $c_0>0$, depending at most on $n$ and $A_0$, such that for any $p\in (1,2]$, if $\Lambda$ is big enough, we have
$$\int \big|(|\RR\nu(x)| - G(x)- c_0\,\Theta(\sH))_+\big|^p\,d\nu(x) \gtrsim \Lambda^{-p'\ve_n}\sigma_p(\sH),$$
where $p'=p/(p-1)$ and the implicit constant depends on $p$.
\end{lemma}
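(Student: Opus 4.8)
The strategy is a variational/contradiction argument: assume \eqref{eqassu8}, i.e.
\[
\int \big|(|\RR\nu(x)| - G(x)- c_0\,\Theta(\sH))_+\big|^p\,d\nu(x) \leq \sigma_p(\sH),
\]
and derive a contradiction with the geometry of $\sH$ (which forces many cubes of density $\Theta(\sH)$ located inside the small balls $\tfrac12B(P)$, $P\in\Reg(e'(R))$, so that $\eta$ is comparable to an $n$-Ahlfors-regular-like measure on $H$ at scale $\Theta(\sH)$). The bridge between the two is the operator $\RR_{\Psi,\eta}$ restricted to the large subset $H_0\subset H$ provided by Lemma \ref{lemH0}, on which $\RR_{\Psi}$ is bounded from $L^p(\eta\rest_{H_0})$ to $L^p(\eta\rest_{\cS_\eta'})$ with norm $\lesssim \Lambda^{\ve_n}\Theta(\sH)$. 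The point is that this upper bound on the operator norm is \emph{too small}: a lower bound for $\|\RR_{\Psi,\eta}\|$ coming from testing on a cleverly chosen $f$ will produce a factor $\Theta(\sH)$ without the $\Lambda^{\ve_n}$ loss, and this forces $\Lambda^{-p'\ve_n}\sigma_p(\sH)$ to be dominated by the left-hand side of \eqref{eqassu8}, contradicting it once $\Lambda$ is large.

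\textbf{Key steps.} First I would observe that \eqref{eqassu8} together with Lemmas \ref{lemrieszpetit}, \ref{tamt}, and the construction of $Z(\gamma), Z'(\gamma), \Phi$ culminating in Lemma \ref{lemH0} are exactly the preparatory steps; I may assume all of these. So the core is: (i) choose a test function $f$ supported on $H_0$ adapted to the geometry --- the natural choice is $f = \chi_{H_0}$, or better $f$ equal to a sign pattern so that $\RR_\Psi(f\eta)$ has a coherent direction on a large portion of $\cS_\eta'$. The classical device (as in Reguera--Tolsa and in \cite{JNRT}) is to use the \emph{flatness direction}: since the cubes of $\sH$ have large density and sit inside Lebesgue-like balls, $\eta\rest_{H_0}$ resembles a piece of a hyperplane measure, for which $\RR(\chi\cdot\LL^{n+1}\rest{\text{slab}})$ is of order $\Theta(\sH)$ in the normal direction on a set of comparable measure. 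Concretely one picks a unit vector $e$ such that
\[
\int_{\cS_\eta'} \langle \RR_\Psi(\chi_{H_0}\eta)(x), e\rangle \, d\nu(x) \gtrsim \Theta(\sH)\,\eta(H_0)
\]
using the antisymmetry of the kernel and a ``doubling slab'' argument, i.e. exploiting that $H_0$ carries positive $\eta$-mass at many scales below $\ell(R)$ and that the bulk of $\cS_\eta'$ is far enough from $H_0$ that $\Psi$-truncation is harmless. (ii) Then by Hölder,
\[
\Theta(\sH)\,\eta(H_0) \lesssim \|\RR_\Psi(\chi_{H_0}\eta)\|_{L^p(\eta\rest_{\cS_\eta'})}\,\eta(\cS_\eta')^{1/p'} \lesssim \Lambda^{\ve_n}\Theta(\sH)\,\eta(H_0)^{1/p}\,\eta(\cS_\eta')^{1/p'},
\]
so $\eta(H_0)^{1/p'} \lesssim \Lambda^{\ve_n}\eta(\cS_\eta')^{1/p'}$, which is consistent but not yet a contradiction; the contradiction must instead come from comparing with \eqref{eqassu8} directly. (iii) The correct route: on the good part $\cS_\eta' \setminus Z'(\gamma)$ one has the pointwise bound $|\RR\nu| \le |\RR_\Psi\nu| + C\cM_{\Psi,n}\nu \lesssim \Lambda^{\ve_n}\Theta(\sH) + G(x) + C\Theta(\sH)$ is \emph{false} in general --- rather, the truncated estimate $\sup_{\ve>\Psi(x)}|\RR_\ve\nu(x)| \le \gamma\Lambda^{\ve_n}\Theta(\sH)$ holds off $Z(\gamma)$ by definition of $Z(\gamma)$. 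So on $\cS_\eta'\setminus Z(\gamma)$, $|\RR\nu| - G - c_0\Theta(\sH) \lesssim \Lambda^{\ve_n}\Theta(\sH)$. Hence \eqref{eqassu8} gives no contradiction on the bad set but the good set carries most of the mass; the lower bound from step (i) must therefore be derived from $\RR_\Phi$ (the suppressed operator, bounded by Lemma \ref{lemrieszphi4}) rather than from $\RR\nu$ directly, using $\RR_\Phi(\chi_{H_0}\nu) = \RR_\Psi(\chi_{H_0}\eta) + (\text{maximal error})$ on $H_0$ as in the proof of Lemma \ref{lemH0}.

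\textbf{The hard part.} The genuine obstacle is step (i): producing the \emph{lower} bound $\int_{\cS_\eta'}\langle\RR_\Psi(\chi_{H_0}\eta),e\rangle\,d\nu \gtrsim \Theta(\sH)\,\eta(H_0)$ (or an $L^p$ analogue). One must exploit that $\eta$ is genuinely $(n+1)$-dimensional Lebesgue measure on each ball $\tfrac12B(P)$ but assembled so that $\eta$ has $n$-growth at scale $\Theta(\sH)$; then $\RR_\Psi(\chi_{H_0}\eta)$ behaves like the Riesz transform of a smoothed $n$-surface measure, whose integral against a fixed direction over a comparable region is of the expected size. This requires carefully choosing the reference region and direction --- one natural choice is to integrate $\RR_\Psi(\chi_{H_0}\eta)$ against $\chi_{H_0'}\,e$ for a second large subset $H_0'$, producing by antisymmetry and a lower bound on the ``positivity'' of the kernel pairing a term $\gtrsim \Theta(\sH)^{-1}\iint_{H_0\times H_0'} |x-y|^{-(n-1)}\ell(R)^{-1}\,d\eta\,d\eta \cdot \ell(R)$, i.e. the Wolff/energy term, which by Lemma \ref{lem*921} is comparable to $\Theta(\sH)\eta(H)$ away from the exceptional set $A$. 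Combining this energy lower bound with the operator-norm upper bound $\lesssim \Lambda^{\ve_n}\Theta(\sH)$ and Hölder yields $\Theta(\sH)\eta(H) \lesssim \Lambda^{\ve_n}\Theta(\sH)\,(\text{l.h.s. of }\eqref{eqassu8})^{1/p}\,(\ldots)^{1/p'} / \Theta(\sH)^{?}$, and rearranging gives l.h.s. of \eqref{eqassu8} $\gtrsim \Lambda^{-p'\ve_n}\Theta(\sH)^p\eta(H) = \Lambda^{-p'\ve_n}\sigma_p(\sH)$ (using $\Theta(\sH)^p\eta(H) = \sigma_p(\sH)$ by definition of $\eta$ and $\sH$), which is the desired contradiction with \eqref{eqassu8} when $\Lambda$ is large, and hence the lemma. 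The bookkeeping of which exceptional sets ($A$, $Z'(\gamma)$) must be removed, and checking that after removal the energy lower bound survives with the constant $c_0 = A_0^{C(n)}$ absorbing all $A_0$-dependence but none of the $\Lambda$-dependence, is where the real care is needed.
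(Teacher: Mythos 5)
Your proposal correctly identifies the high-level architecture: argue by contradiction from \eqref{eqassu8}, lean on the preparatory Lemmas \ref{lemrieszpetit}--\ref{lemH0}, and extract a contradiction for $\lambda\leq c\Lambda^{-p'\ve_n}$. But the central mechanism you propose for the lower bound is not the one that works, and as you yourself admit, it is left as a gap.

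The missing idea is the genuine \emph{variational minimization}. The paper does not test $\RR_\Psi(\chi_{H_0}\eta)$ against a ``coherent direction'' or rely on any flatness of $\eta$ (there is none: the cubes of $\sH$ can be scattered arbitrarily). Instead, one considers the family of competing measures $\tau=a\nu$, $0\leq a\in L^\infty(\nu)$, and minimizes the regularized functional
\[
F(\tau)=\int\bigl|(|\RR\tau|-G-c_0\Theta(\sH))_+\bigr|^p\,d\tau + \lambda\|a\|_\infty\sigma_p(\sH) + \lambda\,\frac{\nu(H_0)}{\tau(H_0)}\sigma_p(\sH),
\]
whose two penalty terms force the minimizer to satisfy $\|a\|_\infty\leq 3$ and $\tau(H_0)\gtrsim\nu(H_0)$. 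Differentiating $F$ along the perturbations $\tau_t=(1-t\chi_B)\tau$ gives a \emph{pointwise} Euler--Lagrange inequality on $\supp\tau$ of the form
\[
\bigl|(|\RR\tau|-f)_+\bigr|^p + p\,\RR_\tau^*\Bigl[\bigl|(|\RR\tau|-f)_+\bigr|^{p-1}|\RR\tau|^{-1}\RR\tau\Bigr] \lesssim \lambda\,\Theta(\sH)^p,
\]
which is then extended to all of $\R^{n+1}$ by subharmonicity and continuity. Your ``energy pairing'' idea has no counterpart to this step.

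The second missing idea is the choice of test object. One sets $\vphi=\sum_{Q^{(\mu)}\in\sH^0}\Theta(Q)\,\vphi_Q$ for a Vitali subfamily $\sH^0\subset\sH$ on which $\tau$ retains mass, and exploits the exact identity $\RR^*(\nabla\vphi\,\LL^{n+1})=\tilde c\,\vphi$. Multiplying the extended variational inequality by $|\nabla\vphi|$ and integrating against $\LL^{n+1}$ yields an upper bound $\lesssim \lambda\,\sigma_p(\sH)$ on the sum of the two terms, while the cross term is controlled via H\"older and the auxiliary Lemma \ref{lemtech79} (which is where the $L^p$ boundedness from Lemma \ref{lemH0}, hence the $\Lambda^{\ve_n}$ loss, actually enters). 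The contradiction comes from the elementary lower bound
\[
\Bigl|\int\RR\tau\cdot\nabla\vphi\,d\LL^{n+1}\Bigr| = |\tilde c|\int\vphi\,d\tau \gtrsim \Theta(\sH)\,\nu(H),
\]
which has nothing to do with flatness or positivity of the kernel; it is the identity $\RR^*(\nabla\vphi\,\LL^{n+1})=\tilde c\,\vphi$ applied to a measure $\tau$ carrying substantial mass on the bumps. Your pairing $\iint_{H_0\times H_0'}|x-y|^{-(n-1)}\ell(R)^{-1}\,d\eta\,d\eta$ is not of this form and does not obviously produce a lower bound of the required size independent of $\Lambda$.

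So the gap is concrete: you have no mechanism producing a $\Lambda$-independent lower bound on a pairing with $\RR$ of a competing measure, and the antisymmetry of $\RR$ actually works against the direct $L^p$-duality approach you sketch (the pairing of $\RR_\Psi(\chi_{H_0}\eta)$ against $\chi_{H_0}$ is essentially zero). The minimization of $F$ and the $\vphi$-identity are the essential ingredients that your proposal is missing.
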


\begin{proof}
Suppose that, for a very small $0<\lambda<1$ to be fixed below,
\begin{equation}\label{eqsupp1}
\int \bigl|\bigl(|\RR\nu| - G - c_0\,\Theta(\sH)\bigr)_+\bigr|^p\,d\nu\leq \lambda\,\sigma_p(\sH).
\end{equation}


Let $H_0\subset H$ be the set found in Lemma \ref{lemH0}.
 Consider the measures of the form
$\tau=a\,\nu$, with $a\in L^\infty(\nu)$, $a\geq 0$, and let $F$ be the functional
\begin{align*}\label{e.functional}
F(\tau) & = 
\int \bigl|\bigl(|\RR\tau| - G- c_0\,\Theta(\sH)\bigr)_+\bigr|^p\,d\tau
 + \lambda\,\|a\|_{L^\infty(\nu)} \,\sigma_p(\sH) + \lambda \,\frac{\nu(H_0)}{\tau(H_0)}\,\sigma_p(\sH).\nonumber
\end{align*}
Let
$$m= \inf F(\tau),$$ where the infimum is taken over all the measures $\tau=a\,\nu$, with $a\in L^\infty(\nu)$. We call such measures admissible. 
Since $\nu$ is admissible we infer that
\begin{equation}
\label{e.admis}
m \leq F(\nu) \leq 3\lambda\,\sigma_p(\sH).
\end{equation}
So the infimum $m$ is attained over the collection of
measures $\tau=a\nu$ such that $\|a\|_{L^\infty(\nu)}\leq 3$ and $
\tau(H_0)\geq\frac13\,\nu(H_0)$. In particular, by taking a weak-$*$ limit in $L^\infty(\nu)$, this guaranties
the existence of a minimizer.

Let $\tau$ be an admissible measure such that 
\begin{equation}\label{e.admis2}
	m=F(\tau)\leq 3\lambda\,\sigma_p(\sH).
\end{equation}
To simplify notation, we denote
$$f(x) = G + c_0\,\Theta(\sH).$$
We claim that
\begin{equation}\label{eqclaim*}
\bigl|\bigl(|\RR\tau(x)| - f(x)\bigr)_+|^p + 
p \,\RR_\tau^*\Bigl[\bigl|\bigl(|\RR\tau| - f\bigr)_+|^{p-1} |\RR\tau|^{-1} \RR\tau \Bigr](x)
 \lesssim \lambda\,\Theta(\sH)^p \quad \text{\!\!in $\supp\tau$,}
\end{equation}
where for a vector field $U$, we wrote
$$\RR_\tau^*U =\RR^*(U\tau) = -\sum_{i=1}^{n+1}\RR_i(U_i\,\tau).$$
Assume \rf{eqclaim*} for the moment. Since the function on the left hand side is subharmonic
in $\R^{n+1}\setminus\supp\tau$ 
and continuous in $\R^{n+1}$ (recall that $\tau$ and $\eta$ are absolutely continuous with respect to Lebesgue measure, with bounded densities), we deduce that the same estimate holds in the whole $\R^{n+1}$.

Next we need to construct an auxiliary function $\vphi$. First we claim that 
there exists a subfamily of cubes $\sH^0\subset \sH$ such that
\begin{itemize}
\item[(i)] the balls $3B_Q\equiv3B_{Q^{(\mu)}}$, with $Q^{(\mu)}\in\sH^0$, are disjoint,
\item[(ii)] $\frac1{12}\nu(Q)\leq \tau(Q\cap H_0)\leq 3 \nu(Q)$ for all $Q^{(\mu)}\in\sH^0$, and
\item[(iii)] $\sum_{Q^{(\mu)}\in\sH^0}\tau(Q\cap H_0)\approx \nu(H)$.
\end{itemize}
The existence of the family $\sH^0$ follows easily from the fact that
 $\tau(H_0)\geq\frac13\nu(H_0)\geq \frac16\,\nu(H)$ and $\tau=a\,\nu$ with $\|a\|_{L^\infty(\nu)}\leq 3$. Indeed,
 notice that the family $I$ of cubes $Q^{(\mu)}\in \sH$ such that 
$\tau(Q\cap H_0)\leq \frac1{12}\nu(Q)$ satisfies
$$\sum_{Q^{(\mu)}\in I}\tau(Q\cap H_0)\leq \frac1{12}\sum_{Q^{(\mu)}\in I}\nu(Q)\leq \frac1{12}\,\nu(H)
\leq \frac12\,\tau(H_0).$$
So
$$\sum_{Q^{(\mu)}\in \sH\setminus I} \tau(Q)\geq \sum_{Q^{(\mu)}\in \sH\setminus I} \tau(Q\cap H_0)\geq \frac12\,\tau(H_0).$$
By Vitali's 5r-covering lemma, there exists a subfamily $ \sH^0\subset \sH\setminus I$ 
such that the balls $\{3B_{Q}\}_{Q^{(\mu)} \in\sH^0}$ are disjoint and the balls $\{15B_{Q}\}_{Q^{(\mu)}\in\sH^0}$ cover
$\bigcup_{Q^{(\mu)}\in \sH\setminus I}Q$. From the fact that the cubes from $\sH^0$ are $\PP$-doubling and the properties of the family $\Reg(e'(R))$ in Lemma \ref{lem74}, we have
$\nu(Q) \approx \nu(15B_Q)$ if $Q^{(\mu)}\in\sH^0$, and thus
\begin{align*}
\sum_{Q^{(\mu)}\in\sH^0} \tau(Q\cap H_0) & \geq \frac1{12}\sum_{Q^{(\mu)}\in\sH^0} \nu(Q) \approx
\sum_{Q^{(\mu)}\in\sH^0} \nu(15B_Q) \\
& \geq \sum_{Q^{(\mu)}\in \sH\setminus I} \nu(Q)\geq \frac13\,
\sum_{Q^{(\mu)}\in \sH\setminus I} \tau(Q)\geq \frac16\,\tau(H_0)\geq \frac1{36}\,\nu(H).
\end{align*}
The converse estimate $\sum_{Q^{(\mu)}\in\sH^0}\tau(Q\cap H_0)\lesssim \nu(H)$ follows trivially from  $\|a\|_{L^\infty(\nu)}\leq3$.

We consider the function
\begin{equation}\label{eqvphi99}
\vphi = \sum_{Q^{(\mu)}\in\sH^0} \Theta(Q)\,\vphi_Q,
\end{equation}
where $\chi_{B_Q}\leq \vphi_Q\leq \chi_{1.1B_Q}$, $\|\nabla\vphi_Q\|_\infty\lesssim\ell(Q)^{-1}$. 
Remark that
$$\RR^*(\nabla\vphi\,\LL^{n+1}) = \tilde c\,\vphi,$$
where $\tilde c$ is some non-zero absolute constant.
Notice also that
\begin{equation}\label{eqnormpsi}
\|\nabla\vphi\|_1\lesssim \sum_{Q^{(\mu)}\in\sH^0} \Theta(Q)\,\ell(Q)^n\approx \tau(H_0).
\end{equation}
Multiplying the left hand side of \rf{eqclaim*} by $|\nabla\vphi|$, integrating with respect to the Lebesgue measure $\LL^{n+1}$, taking into account that the estimate in
\rf{eqclaim*} holds on the whole $\R^{n+1}$, and using \rf{eqnormpsi}, we get
\begin{multline}\label{eqpss3}
\int \bigl|\bigl(|\RR\tau| - f\bigr)_+|^p\,|\nabla\vphi|\,d\LL^{n+1} + p\! \int \RR_\tau^*\Bigl[\bigl|\bigl(|\RR\tau| - f\bigr)_+|^{p-1} |\RR\tau|^{-1} \RR\tau \Bigr]\,
|\nabla\vphi|\,d\LL^{n+1}\\
\lesssim \lambda\,\sigma_p(\sH).
\end{multline}

Next we estimate the second integral on the left hand side of the preceding inequality, which we denote by $I$.  For that purpose we will also need the estimate 
\begin{equation}\label{eqacpsi}
\int |\RR(|\nabla \vphi|\,d\LL^{n+1})|^p\,d\tau\lesssim \Lambda^{p\ve_n}\sigma_p(\sH),
\end{equation}
which we defer to Lemma \ref{lemtech79}.
Using this inequality we get
\begin{multline*}
|I| = \left|\int 
\bigl|\bigl(|\RR\tau| - f\bigr)_+|^{p-1} |\RR\tau|^{-1} \RR\tau \cdot
\RR(|\nabla \vphi|\,\LL^{n+1})\,d\tau\right| \\
\leq \left(\int \bigl|\bigl(|\RR\tau| - f\bigr)_+|^{p} \,d\tau\right)^{1/p'}
\left(\int |\RR(|\nabla \vphi|\,\LL^{n+1})|^p\,d\tau\right)^{1/p}\\
\le (F(\tau))^{1/p'}\left(\int |\RR(|\nabla \vphi|\,\LL^{n+1})|^p\,d\tau\right)^{1/p}\\
 \overset{\rf{e.admis2},\eqref{eqacpsi}}{\lesssim} \bigl (\lambda\,\sigma_p(\sH)\bigr)^{1/p'} \bigl (\Lambda^{p\ve_n}\sigma_p(\sH)\bigr)^{1/p} = \lambda^{1/p'}\,\Lambda^{\ve_n}\sigma_p(\sH).
\end{multline*}
From \rf{eqpss3} and the preceding estimate we deduce
that
$$\int \bigl|\bigl(|\RR\tau| - f\bigr)_+|^p\,|\nabla\vphi|\,d\LL^{n+1}\lesssim  \lambda^{1/p'}\,\Lambda^{\ve_n}
\sigma_p(\sH).
$$

Notice now that, for all $x\in \supp\vphi$,
$$G(x) = 2A_0^3\int_{\cS_\eta'\setminus V_2} 
 \frac1{\ell(R)\,|x-y|^{n-1}}\,d\eta(y) \lesssim \frac{\eta(\cS_\eta')}{\ell(R)\,\dist(\cS_\eta'\setminus V_2, \cS_\eta)^{n-1}} \lesssim \Theta(R),$$
and so
$$|f(x)|\leq C\, \Theta(R) + c_0\,\Theta(\sH)\leq 2c_0\,\Theta(\sH) \qquad\mbox{for all $x\in \supp\vphi$,}$$ 
taking into account that $\Lambda\gg1$.
So we have
\begin{align}\label{eqcont4}
\int |\RR\tau|^p\,|\nabla\vphi|\,d\LL^{n+1} &\lesssim \int \bigl|\bigl(|\RR\tau| - f\bigr)_+|^p\,|\nabla\vphi|\,d\LL^{n+1}
+ \int |f|^p\,|\nabla\vphi|\,d\LL^{n+1}\\
& \lesssim 
  \bigl(\lambda^{1/p'}\Lambda^{\ve_n} + c_0^p\bigr)\,\sigma_p(\sH).\nonumber
\end{align}

To get a contradiction, note that by the construction of $\vphi$ and by the properties of $\tau$, we have
\begin{multline}\label{eqprev}
\left|\int \RR\tau\cdot\nabla\vphi\,d\LL^{n+1}\right| = \left|\int \RR^*(\nabla\vphi\,\LL^{n+1})\,d\tau \right|=  
|\tilde c| \int \vphi\,d\tau\\
\ge |\tilde c|\,
\Theta(\sH) \sum_{Q^{(\mu)}\in\sH^0} \tau(B_Q) \gtrsim |\tilde c|\,
\Theta(\sH) \tau(H_0)\geq \frac{|\tilde c|}6\,
 \Theta(\sH) \nu(H).
 \end{multline}
However, by H\"older's inequality, \eqref{eqnormpsi}, and \rf{eqcont4}, we have
\begin{align*}
\left|\int \RR\tau\cdot\nabla\vphi\,d\LL^{n+1}\right| & \leq \left(\int |\RR\tau|^p \,|\nabla\vphi|\,d\LL^{n+1} \right)^{1/p}
\left(\int |\nabla\vphi|\,d\LL^{n+1}\right)^{1/p'} \\
&\lesssim \bigl(\lambda^{1/(p\,p')} \Lambda^{\ve_n/p}+ c_0\bigr)
\, \Theta(\sH)\nu(H),
\end{align*}
which contradicts \rf{eqprev} if $c_0$ is chosen small enough and 
$$\lambda\leq c\,\Lambda^{-p'\ve_n},$$
with $c$ small enough.
\end{proof}

\vv

\begin{proof}[\bf Proof of \rf{eqclaim*}]
Recall that $\tau=a\,\nu$ is a minimizer for $F(\cdot)$ that in particular satisfies
$F(\tau)\leq 3\lambda\,\sigma_p(\sH),$ by \rf{e.admis2}.
We have to show that, for all $x\in\supp\tau$,
$$
\bigl|\bigl(|\RR\tau(x)| - f(x)\bigr)_+|^p + 
p \,\RR_\tau^*\Bigl[\bigl|\bigl(|\RR\tau| - f\bigr)_+|^{p-1} |\RR\tau|^{-1} \RR\tau \Bigr](x)
 \lesssim \lambda\,\Theta(\sH)^p.
$$

Let $x_0\in\supp\tau$ 
and let $B=B(x_0,\rho)$, with $\rho>0$ small. For $0<t<1$ we consider the competing measure $\tau_t =
a_t\,\tau$, where $a_t$ is defined as follows:
$$a_t = (1-t\,\chi_B)a.$$
Notice that, for each $0<t<1$, $a_t$ is a non-negative function such that $$\|a_t\|_{L^\infty(\nu)}\leq \|a\|_{L^\infty(\nu)}.$$
Taking the above into account we deduce that
\begin{align*}
F(\tau_t) & = 
\int \bigl|\bigl(|\RR\tau_t| - f\bigr)_+\bigr|^p\,d\tau_t
 + \lambda\,\|a_t\|_{L^\infty(\nu)} \,\sigma_p(\sH) + \lambda \,\frac{\nu(H_0)}{\tau_t(H_0)}\,\sigma_p(\sH)\\
& \leq
\int \bigl|\bigl(|\RR\tau_t| - f\bigr)_+\bigr|^p\,d\tau_t
 + \lambda\,\|a\|_{L^\infty(\nu)} \,\sigma_p(\sH) + \lambda \,\frac{\nu(H_0)}{\tau_t(H_0)}\,\sigma_p(\sH)
\\& =:\wt F(\tau_t).
\end{align*}
Since $\wt F(\tau) = F(\tau) \leq F(\tau_t)\leq \wt F(\tau_t)$ for $t\geq 0$, we infer that 
\begin{equation}\label{eqvar49}
\frac1{\tau(B)}\,\frac{d}{dt}\,\wt F(\tau_t)\biggr|_{t=0+} \geq 0.
\end{equation}
Using that
$$
\frac d{dt}|\RR\tau_t(x)|\biggr|_{t=0} = \frac1{|\RR\tau(x)|} \RR\tau(x)\cdot \RR (-\chi_B\tau )(x)
,$$
an easy computation gives 
\begin{align*}
\frac{d}{dt}\,\wt F(\tau_t)\biggr|_{t=0} & = -\int_B \bigl|\bigl(|\RR\tau| - f\bigr)_+|^p \,d\tau\\
& \quad +
p \int \bigl|\bigl(|\RR\tau| - f\bigr)_+|^{p-1}\, \frac1{|\RR\tau|} \RR\tau\cdot \RR (-\chi_B\tau)\,
d\tau \\
&\quad +  
\lambda \,\frac{\nu(H_0)}{\tau(H_0)^2}\,\sigma_p(\sH)\,\tau(B\cap H_0).
\end{align*}
Recalling that $\tau(H_0)\geq\frac13\,\nu(H_0)\ge \frac16\,\nu(H)$, from \rf{eqvar49} and the preceding calculation we derive
\begin{multline}\label{eqmult82}
 \frac1{\tau(B)} 
\int_B \bigl|\bigl(|\RR\tau| - f\bigr)_+|^p \,d\tau 
 +
 \frac p{\tau(B)} \int \bigl|\bigl(|\RR\tau| - f\bigr)_+|^{p-1}\, \frac1{|\RR\tau|} \RR\tau\cdot \RR (\chi_B\tau )
\,
d\tau
 \\ 
\leq 
3\lambda \,\frac{\sigma_p(\sH)}{\tau(H_0)} =  3\lambda\, \Theta(\sH)^p \,\frac{\nu(H)}{\tau(H_0)} \le 18 \lambda \, \Theta(\sH)^p.
\end{multline}

 We rewrite the left hand side of \rf{eqmult82} as
$$ 
 \frac 1{\tau(B)} \int_B \bigl|\bigl(|\RR\tau| - f\bigr)_+|^p \,d\tau 
 +
 \frac p{\tau(B)} \int_B \RR_\tau^*\Bigl[\bigl|\bigl(|\RR\tau| - f\bigr)_+|^{p-1}\, |\RR\tau|^{-1} \RR\tau \Bigr]
\,
d\tau
.$$
Taking into account that the functions in the integrands are continuous on $\supp(\tau)$, 
letting the radius $\rho$ of $B$ tend to $0$, it turns out that the above expression converges to
$$\bigl|\bigl(|\RR\tau(x_0)| - f(x_0)\bigr)_+|^p 
 +
p\, \RR_\tau^*\Bigl[\bigl|\bigl(|\RR\tau| - f\bigr)_+|^{p-1}\, |\RR\tau|^{-1} \RR\tau \Bigr](x_0).$$
The desired estimate \rf{eqclaim*} follows from the above and \rf{eqmult82}.
\end{proof}
\vv

In order to complete the proof of Lemma \ref{lemvar} we need the following technical result.

\begin{lemma}\label{lemtech79}
Suppose that \rf{eqsupp1} holds with $\lambda\leq1$ and let $\vphi$ be as in \rf{eqvphi99}.
Then,
$$
\int |\RR(|\nabla \vphi|\,d\LL^{n+1})|^p\,d\nu\lesssim \Lambda^{p\ve_n}\sigma_p(\sH),
$$
\end{lemma}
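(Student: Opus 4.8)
The goal is to bound $\int |\RR(|\nabla\vphi|\,\LL^{n+1})|^p\,d\nu$ by $\Lambda^{p\ve_n}\sigma_p(\sH)$, where $\vphi = \sum_{Q^{(\mu)}\in\sH^0} \Theta(Q)\,\vphi_Q$ and the balls $1.1B_Q$ are essentially disjoint (since the $3B_Q$ are disjoint). The first step is to localize: for $x\in \supp\nu$, split the contribution to $\RR(|\nabla\vphi|\,\LL^{n+1})(x)$ according to which ball $1.1B_Q$ the point $x$ lies in (if any), and the contributions coming from the ``far'' balls. For the single close cube $Q_x$ containing $x$ (if it exists), the term $\RR(|\nabla\vphi_{Q_x}|\,\Theta(Q_x)\,\LL^{n+1})(x)$ is controlled pointwise by $\Theta(Q_x)\cdot\frac{\|\nabla\vphi_{Q_x}\|_\infty \LL^{n+1}(1.1B_{Q_x})}{\ell(Q_x)^n}\lesssim \Theta(Q_x)$, using $\|\nabla\vphi_{Q_x}\|_\infty\lesssim \ell(Q_x)^{-1}$ and $\LL^{n+1}(1.1B_{Q_x})\approx \ell(Q_x)^{n+1}$. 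For the far cubes $Q$ with $x\notin 1.1B_Q$, one uses the standard estimate $|\RR(|\nabla\vphi_Q|\,\LL^{n+1})(x)|\lesssim \frac{\ell(Q)^n}{\dist(x,Q)^{n+1}}\cdot\ell(Q)$, i.e.\ one gains an extra power because $\int \nabla\vphi_Q\,d\LL^{n+1}=0$ — actually $\vphi_Q$ is a bump, not mean zero, so the naive bound is $\frac{\ell(Q)^n}{\dist(x,Q)^n}$; summing $\Theta(Q)\frac{\ell(Q)^n}{\dist(x,Q)^n}$ over the disjoint family behaves like a maximal function of the measure $\sum_Q \Theta(Q)\ell(Q)^n\delta_{x_Q}$, or more precisely like $\cM(\sum_Q \Theta(Q)\chi_{B_Q}\,\LL^{n+1}/\LL^{n+1}(B_Q))(x)$.

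So the bulk of the work is to bound $\int \big(\sum_{Q^{(\mu)}\in\sH^0}\Theta(Q)\,\frac{\ell(Q)^n}{(\ell(Q)+\dist(x,Q))^n}\big)^p\,d\nu(x)$. The idea is to compare this against the measure $\sum_Q \Theta(Q)^2\mu(Q) = \sigma(\sH^0)\le \sigma(\sH)$ and, since $p\le 2$, against $\sigma_p(\sH)$. Concretely, observe that for $Q^{(\mu)}\in\sH^0$ one has $\Theta(Q)=\Theta(\sH)$ and $\mu(Q)\approx \Theta(\sH)\ell(Q)^n$ (by $\PP$-doubling), so $\Theta(Q)\ell(Q)^n\approx \mu(Q)/1 \approx \nu(Q^{(\eta)})$. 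Hence $\sum_Q \Theta(Q)\frac{\ell(Q)^n}{(\ell(Q)+\dist(x,Q))^n}\approx \sum_Q \frac{\nu(Q^{(\eta)})}{(\ell(Q)+\dist(x,Q))^n}$, which is controlled by $\Theta(\sH)$ times a truncated maximal function $\cM_{\Psi,n}$ applied to $\chi_{H'}\eta$ or similar (using the growth bound \rf{ecreixnupsi}, $\eta(B(x,r))\lesssim \Theta(\sH)r^n$ for $r\ge\Psi(x)$). Then one integrates: $\int |\cM_{\Psi,n}(\chi_H\eta)|^p\,d\nu \lesssim \Theta(\sH)^{p-1}\,\eta(H)\,\Theta(\sH)$... and $\Theta(\sH)^p\eta(H) = \sigma_p(\sH)$. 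But we need to be more careful because the far-cube sum also picks up cubes $Q$ with $\ell(Q)$ much smaller than the scale at $x$; this is where the factor $\Lambda^{\ve_n}$ enters via Lemma \ref{lem:66} / \rf{eqsigmah}, which bounds $\sigma_p(\sH'_k)\le \Lambda^{\ve_n}\sigma_p(\sH)$ uniformly in $k$, exactly as was used in the proof of Lemma \ref{lem*921}.

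The cleanest route is therefore to mimic the proof of Lemma \ref{lem*921} (or Lemma \ref{lemrieszpetit}): decompose $|\nabla\vphi|\,\LL^{n+1}$ scale by scale, apply a Cotlar/maximal-function bound to pass from $\RR$ of this (smooth, compactly supported) object to a maximal operator, and then use the $L^p(\nu)$ boundedness of the truncated maximal function together with the growth estimate \rf{ecreixnupsi} and the almost-orthogonality across generations encoded in \rf{eqsigmah}. In detail: for each dyadic scale $A_0^{-j}$, the cubes $Q\in\sH^0$ of that scale contribute a piece whose $\RR$-image is pointwise $\lesssim \Theta(\sH)\cM_{\Psi,n}(\eta\rest_{H_j})$ restricted to a neighborhood at scale $A_0^{-j}$, where $H_j$ collects the relevant $\eta^{(\eta)}$-cubes; summing in $j$ with geometric decay $A_0^{-|j-j'|/2}$ as in \rf{eqalj4}, and integrating, yields $\int|\RR(|\nabla\vphi|\LL^{n+1})|^p d\nu \lesssim \Lambda^{p\ve_n}\Theta(\sH)^p \eta(H) = \Lambda^{p\ve_n}\sigma_p(\sH)$, since $\eta(H)=\sigma_p(\sH)/\Theta(\sH)^p$ by definition of $\eta$ and $\Theta(\sH)$.

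The main obstacle I anticipate is \emph{handling the cubes $Q\in\sH^0$ at scales much smaller than the scale at the evaluation point $x$} — these are precisely the cubes of high density $\Theta(\sH_k)$ with $k$ large, and naively summing their contributions to $\RR$ at a far point $x$ would lose a power of $\Lambda$ for each generation. The resolution, exactly as in the proof of Lemma \ref{lem*921}, is the estimate \rf{eqcond*492}: the growth bound forces $\ell(Q)\lesssim A_0^{-(k-k(R))}r$ whenever $Q\subset 2B(x,r)$ is in $\sH'_k$ and $B(x,r)$ has density $\lesssim \Theta(\sH)$. This converts the would-be loss into the geometric gain $A_0^{-(k-k(R))/2}$, and the residual loss is only the single uniform factor $\Lambda^{\ve_n}$ coming from $\sigma_p(\sH'_k)\le\Lambda^{\ve_n}\sigma_p(\sH)$ in \rf{eqsigmah}. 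A secondary technical point is verifying that $|\nabla\vphi|\,\LL^{n+1}$ satisfies the hypotheses needed to invoke the Cotlar-type inequality of Remark \ref{rem**}/Lemma \ref{lemcotlar1} with the suppression function $\Psi$ — but this is routine given that $|\nabla\vphi|\LL^{n+1}$ is supported on $\bigcup 1.1B_Q$ with total mass $\lesssim\sum_Q\Theta(Q)\ell(Q)^n\approx\nu(H)$ and is absolutely continuous with controlled density.
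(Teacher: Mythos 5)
Your plan diverges from the paper's proof in a way that opens a real gap, concentrated in the far-field estimate. You correctly note that $\vphi_Q$ is a bump with no cancellation, so the best direct bound for $x\notin 1.1B_Q$ is
\[
\big|\RR(|\nabla\vphi_Q|\,\LL^{n+1})(x)\big|\lesssim\frac{\ell(Q)^n}{\dist(x,Q)^n},
\]
and you then claim the sum $\sum_{Q^{(\mu)}\in\sH^0}\Theta(Q)\,\frac{\ell(Q)^n}{(\ell(Q)+\dist(x,Q))^n}$ ``behaves like a maximal function.'' It does not: this is a \emph{potential}-type sum, not a supremum. Even in the model case where all $Q\in\sH^0$ have equal side length $\ell$, have density exactly $\Theta(\sH)$, and tile an $n$-plane, the sum at a typical point $x$ is $\sum_{j\geq 0}\sum_{Q:\dist(x,Q)\approx 2^j\ell}\Theta(\sH)\frac{\ell^n}{(2^j\ell)^n}\approx\Theta(\sH)\sum_j 1$, which diverges logarithmically in the range of scales. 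The scale-by-scale geometric decay ``$A_0^{-|j-j'|/2}$ as in \rf{eqalj4}'' that you invoke to rescue this will not appear here: in the proof of Lemma \ref{lem*921} that gain comes from \rf{eqcond*492}, which forces $\ell(P)\ll r$ because $P\in\sH'_k$ has density $A_0^{n(k-k(R))}\Theta(\sH)\gg\Theta(\sH)$ while the ambient ball has density $\lesssim\Theta(\sH)$. In the present sum all the cubes $Q\in\sH^0$ have the \emph{same} density $\Theta(\sH)$, so there is no density gap to exploit and no forced smallness of $\ell(Q)$ relative to the scale of $x$.

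The paper circumvents this by a device you do not mention: it builds a function $g=\sum_{Q^{(\mu)}\in\sH^0} g_Q$ with $g_Q=c_Q\chi_{Q\cap H_0}$ and $c_Q\approx 1$ chosen so that $\int g_Q\,d\eta=\Theta(Q)\int|\nabla\vphi_Q|\,d\LL^{n+1}$, and then compares $\RR(\Theta(Q)|\nabla\vphi_Q|\,\LL^{n+1})(x)$ with $\RR_{\Psi(x)}(g_Q\,\eta)(x)$. Because the two objects have the same total mass and are both supported near $Q$, the far-field difference sees the \emph{gradient} of the Riesz kernel, giving decay $\frac{\ell(Q)^{n+1}}{\dist(x,Q)^{n+1}}$ instead of $\frac{\ell(Q)^n}{\dist(x,Q)^n}$ (this is the term $S_1$ in the paper). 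That extra power of $\ell(Q)/\dist(x,Q)$ is exactly what makes the potential sum converge, and its $L^p(\eta)$ norm is then bounded by $\sigma_p(\sH)$ with \emph{no} $\Lambda^{\ve_n}$ loss. The factor $\Lambda^{\ve_n}$ in the statement enters differently than you expect: not directly through \rf{eqsigmah}, but through the operator norm $\|\RR_\Psi\|_{L^p(\eta\rest_{H_0})\to L^p(\eta\rest_{\cS_\eta'})}\lesssim\Lambda^{\ve_n}\Theta(\sH)$ in Lemma \ref{lemH0}, applied to the main term $\RR_\Psi(g\eta)$. This is also why the lemma carries the hypothesis $\lambda\le 1$ in \rf{eqsupp1}: it is precisely what makes the assumption \rf{eqassu8} of Lemma \ref{lemH0} available. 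Your sketch never reduces to a bounded truncated/suppressed singular integral, and the route through Cotlar (Remark \ref{rem**}/Lemma \ref{lemcotlar1}) is not what is used here --- Lemma \ref{lemH0} comes from the $Tb$ Theorem \ref{teontv} via Lemmas \ref{tamt} and \ref{lemrieszphi4}. Without the $g_Q$ comparison and without Lemma \ref{lemH0}, your far-field sum does not close, and the argument fails.
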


\begin{proof}
Recall that 
$$\vphi = \sum_{Q^{(\mu)}\in\sH^0} \Theta(Q)\,\vphi_Q,$$
with $\chi_{B_Q}\leq \vphi_Q\leq \chi_{1.1B_Q}$, $\|\nabla\vphi_Q\|_\infty\lesssim\ell(Q)^{-1}$.
We consider the function
$$g= \sum_{Q^{(\mu)}\in\sH^0} g_Q,$$
where $g_Q = c_Q\,\chi_{Q\cap H_0}$, with $c_Q=\Theta(Q)\int|\nabla\vphi_Q|\,d\LL^{n+1}\,\eta(Q\cap H_0)^{-1}$.
Observe that
\begin{equation}\label{eq:gQintegral}
\int g_Q\,d\eta = \Theta(Q) \int|\nabla\vphi_Q|\,d\LL^{n+1}
\end{equation}
and that
$$0\leq c_Q \lesssim \frac{\Theta(Q) \,\ell(Q)^n}{\eta(Q\cap H_0)}\approx \frac{\mu(Q^{(\mu)})}{\eta(Q)} =1.$$

The first step of our arguments consists in comparing $\RR(|\nabla\vphi|\,d\LL^{n+1})(x)$ to 
$\RR_{\Psi(x)}(g\,\eta)(x)$, with $\Psi$ given by \rf{eqdefPsi1}.
 We will prove that, for each $Q^{(\mu)}\in \sH^0$,
 \begin{align}\label{eqtech4}
|\RR(\Theta(Q)|\nabla\vphi_Q|&\,\LL^{n+1})(x) - \RR_{\Psi(x)}(g_Q\,\eta)(x)|\\
& \lesssim \frac{\Theta(Q)\,\ell(Q)^{n+1}}{\dist(x,Q)^{n+1}+
\ell(Q)^{n+1}} + \chi_{2B_Q}(x)\, |\RR_{\Psi(x)}(g_Q\,\eta)(x)|\nonumber \\
&\quad+ \int_{c\Psi(x)\leq |x-y|\leq\Psi(x)} \frac{g_Q(y)}{|x-y|^{n}}\,d\eta(y),\nonumber
\end{align}
for some fixed $c>0$.
 The arguments to prove this estimate are quite standard.
 
 Suppose first that $x\in 2B_Q$. In this case, we have
 \begin{align*}
|\RR(\Theta(Q)|\nabla\vphi_Q|\,\LL^{n+1})(x)| &\lesssim \Theta(Q)\int_{1.1B_Q} \frac{1}{\ell(Q)\,|x-y|^n}\,d\LL^{n+1}(y) \lesssim \Theta(Q),
\end{align*}
which yields
$$|\RR(\Theta(Q)|\nabla\vphi_Q|\,\LL^{n+1})(x) - \RR_{\Psi(x)}(g_Q\,\eta)(x)|
 \lesssim \Theta(Q) + |\RR_{\Psi(x)}(g_Q\,\eta)(x)|,$$
 and shows that \rf{eqtech4} holds in this situation.
 
In the case $x\not\in 2B_Q$ we write
\begin{align*}
\RR(\Theta(Q)|&\nabla\vphi_Q|\,\LL^{n+1})(x) - \RR_{\Psi(x)}(g_Q\,\eta)(x)\\  &=
\RR\big(\Theta(Q)|\nabla\vphi_Q|\,\LL^{n+1} - g_Q\,\eta\big)(x) +
\int_{|x-y|\leq\Psi(x)} \frac{x-y}{|x-y|^{n+1}} g_Q(y)\,d\eta(y)\\
& \overset{\eqref{eq:gQintegral}}{=} 
\int \left(\frac{x-y}{|x-y|^{n+1}} - \frac{x-x_Q}{|x-x_Q|^{n+1}}\right) \bigl[\Theta(Q)|\nabla\vphi_Q(y)|\,d\LL^{n+1}(y) - g_Q(y)\,d\eta(y)\bigr]\\
& \quad\quad\quad + \int_{|x-y|\leq\Psi(x)} \frac{x-y}{|x-y|^{n+1}} g_Q(y)\,d\eta(y)\\
& =: I_1(x)+ I_2(x).
\end{align*}
Concerning the term $I_1(x)$, recalling that $\supp\vphi_Q \cup\supp g_Q\subset 1.1\overline{ B_Q}$,
we obtain
\begin{align*}
|I_1(x)|
&\lesssim \int \frac{\ell(Q)}{|x-x_Q|^{n+1}} \bigl[\Theta(Q)|\nabla\vphi_Q(y)|\,d\LL^{n+1}(y) + g_Q(y)\,d\eta(y)\bigr]
\lesssim \frac{\ell(Q)}{|x-x_Q|^{n+1}}\,\eta(Q).
\end{align*}
Regarding $I_2(x)$, we write
$$
I_2(x)\leq \int_{y\in Q:|x-y|\leq\Psi(x)} \frac1{|x-y|^{n}} \,g_Q(y)\,d\eta(y).$$
Notice that for $y\in Q$, since $x\notin 2B_Q$, we have
$$C|x-y|\geq \ell(Q)\overset{\eqref{eqdefPsi1}}{\ge}\Psi(y) \geq \Psi(x) - |x-y|.$$
Thus, $|x-y|\geq c\,\Psi(x)$, and so
$$I_2(x)\leq \int_{c\Psi(x)\leq |x-y|\leq\Psi(x)} \frac1{|x-y|^{n}}\, g_Q(y)\,d\eta(y).
$$
Gathering the estimates for $I_1(x)$ and $I_2(x)$ we see that \rf{eqtech4} also holds in this case.

From \rf{eqtech4} we infer that
\begin{align*}
|\RR(|\nabla\vphi|\,\LL^{n+1})&(x) - \RR_{\Psi(x)}(g\,\eta)(x)|\\
& \lesssim \sum_{Q^{(\mu)}\in \sH^0}\frac{\Theta(Q)\,\ell(Q)^{n+1}}{\dist(x,Q)^{n+1}+
\ell(Q)^{n+1}} + 
\sum_{Q^{(\mu)}\in \sH^0} \chi_{2B_Q}(x)\, |\RR_{\Psi(x)}(g_Q\,\eta)(x)|\\
&\quad + \int_{c\Psi(x)\leq |x-y|\leq\Psi(x)} \frac1{|x-y|^{n}}\, g(y)\,d\eta(y)\\
& = S_1(x) + S_2(x) + S_3(x).
\end{align*}
We split
\begin{equation}\label{eqs123}
\int |\RR(|\nabla\vphi|\,\LL^{n+1})(x) - \RR_{\Psi(x)}(g\,\eta)(x)|^p\,d\eta(x)
\lesssim \sum_{i=1}^3 \int |S_i(x)|^p\,d\eta(x).
\end{equation}

We estimate the first summand by duality. Consider a function $h\in L^{p'}(\eta)$. Then, 
\begin{equation}\label{eqmul429**}
\int S_1(x)\,h(x)\,d\eta(x)= 
  \sum_{Q^{(\mu)}\in \sH^0} \eta(Q) \int\frac{\ell(Q)}{\dist(x,Q)^{n+1}+
\ell(Q)^{n+1}}\,h(x)\,d\eta(x).
\end{equation}
For each $Q^{(\mu)}\in\sH^0$,
 using the fact that $\eta(\lambda Q)\leq C\,\Theta(\sH)\,\ell(\lambda Q)^n$ for every $\lambda\geq 1$,
 we obtain 
$$\int  \frac{\ell(Q)}{\dist(x,Q)^{n+1}+
\ell(Q)^{n+1}}\,h(x)\,d\eta(x)\lesssim C\,\Theta(\sH)\,\inf_{y\in Q} \cM_\eta h(y).$$
Therefore, the right side of \rf{eqmul429**} does not exceed
\begin{align*}
C\Theta(\sH)\sum_{Q^{(\mu)}\in \sH^0} \eta(Q)\,\inf_{y\in Q} \cM_\eta h(y) & \lesssim
\Theta(\sH) \int_{H}\cM_\eta h(y)\,d\eta(y) \\ 
& \le \Theta(\sH)\,\eta(H)^{1/p}\,\|\cM_\eta h\|_{L^{p'}(\eta)}
\lesssim \Theta(\sH)\,\eta(H)^{1/p}\,\|h\|_{L^{p'}(\eta)}.
\end{align*}
So we deduce that
$$\int |S_1(x)|^p\,d\eta(x)\lesssim \sigma_p(\sH).
$$

Regarding the summand in \rf{eqs123} involving $S_2$, since the balls $2B_Q$ are disjoint, we have
$$\int |S_2(x)|^p\,d\eta(x) = 
\sum_{Q^{(\mu)}\in \sH^0} \int_{2B_Q} |\RR_{\Psi(x)}(g_Q\,\eta)(x)|^p\,d\eta(x) 
\leq \sum_{Q^{(\mu)}\in \sH^0}\| \RR_{\Psi(\cdot)}(g_Q\,\eta)\|_{L^p(\eta)}^p,$$
where $\RR_{\Psi(\cdot)}(g_Q\,\eta)(x) = \RR_{\Psi(x)}(g_Q\,\eta)(x)$.
Finally, to estimate the last summand in \rf{eqs123}, we take into account that 
$|S_3(x)|\lesssim \cM_{\Psi,n}(g\,\eta)(x),$ where $\cM_{\Psi,n}(g\,\eta)$ is the maximal operator from \eqref{eq:maximaloppsi}.
Hence,
$$\int |S_3(x)|^p\,d\eta(x)  \lesssim \int | \cM_{\Psi,n}(g\,\eta)|^p\,d\eta \lesssim \Theta(\sH)^p\,
\|g\|_{L^p(\eta)}^p \lesssim \sigma_p(\sH).$$

Gathering the estimates obtained for $S_1$, $S_2$, $S_3$, by \rf{eqs123} we get
$$\|\RR(|\nabla\vphi|\,\LL^{n+1})\|_{L^p(\eta)}^p \lesssim \sigma_p(\sH) +\sum_{Q^{(\mu)}\in \sH^0}
\|\RR_{\Psi(\cdot)}(g_Q\,\eta)\|_{L^p(\eta)}^p + \|\RR_{\Psi(\cdot)}(g\eta)\|_{L^p(\eta)}^p.$$
From \rf{e.compsup''}, we deduce that
\begin{align*}
\|\RR(|\nabla\vphi|\,\LL^{n+1})\|_{L^p(\eta)}^p & \lesssim \sigma_p(\sH) +\sum_{Q^{(\mu)}\in \sH^0}
\|\RR_{\Psi}(g_Q\,\eta)\|_{L^p(\eta)}^p + \|\RR_{\Psi}(g\eta)\|_{L^p(\eta)}^p\\
&\quad + \sum_{Q^{(\mu)}\in \sH^0}
\|\cM_{\Psi,n}(g_Q\,\eta)\|_{L^p(\eta)}^p  + \|\cM_{\Psi,n}(g\,\eta)\|_{L^p(\eta)}^p.
\end{align*}
Using now that, by Lemma \ref{lemH0}, 
 $\RR_{\Psi,\eta}$ is bounded from $L^p(\eta\rest_{H_0})$ to $L^p(\eta\rest_{\cS'_\eta})$ with 
$$\|\RR_\Psi\|_{L^p(\eta\rest_{H_0})\to L^p(\eta\rest_{\cS'_\eta})}\lesssim \Lambda^{\ve_n}\Theta(\sH),$$
and that the same happens for $\cM_{\Psi,n}$, we get
$$\|\RR(|\nabla\vphi|\,\LL^{n+1})\|_{L^p(\eta)}^p \lesssim \sigma_p(\sH) + \Lambda^{p\ve_n}\Theta(\sH)^p\,\|g\|_{L^p(\eta)}^p \lesssim \Lambda^{p\ve_n}\sigma_p(\sH).$$
\end{proof}

\vv

\subsection{Lower estimates for \texorpdfstring{$\RR\eta$}{R\_eta}}

\begin{lemma}\label{lemrieszeta}
Let $R\in\MDW$ be such that $R\in\Trc$ and let $V_4$ and $\eta$ be as in Section \ref{subsec9.5}.
Assume $\Lambda>0$ big enough and let $c_0$ be as in Lemma \ref{lemvar}.
Then we have
$$\int_{V_4} \big|(|\RR\eta(x)| - \frac{c_0}2\,\Theta(\HD_1))_+\big|^p\,d\eta(x) \gtrsim \Lambda^{-(p'+1)\ve_n}\sigma_p(\HD_1(e(R))),$$
for any $p\in (1,\infty)$, with the implicit constant depending on $p$.
\end{lemma}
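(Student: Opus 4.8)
The plan is to deduce Lemma~\ref{lemrieszeta} from the variational estimate of Lemma~\ref{lemvar} by transferring the lower bound from $\RR\nu$ to $\RR\eta$, bookkeeping the various truncations and the two approximating measures $\nu=\vphi_R\,\eta$ and $\eta$. First I would recall that by Lemma~\ref{lemvar}, for $p\in(1,2]$ and $\Lambda$ large,
\begin{equation*}
\int \big|(|\RR\nu(x)| - G(x)- c_0\,\Theta(\sH))_+\big|^p\,d\nu(x) \gtrsim \Lambda^{-p'\ve_n}\,\sigma_p(\sH),
\end{equation*}
where $\sH=\sH_{k(R)}(e_{h(R),j(R)}(R))$. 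The first step is to pass from $\sigma_p(\sH)$ on the right-hand side to $\sigma_p(\HD_1(e(R)))$. Since $\sH$ consists of cubes in $\hd^{k_{\Lambda}+k(R)}(R)$ inside $\TT_\Reg(e'(R))$ contained in $e_{h(R),j(R)}(R)$, by the choice of $j(R),k(R)$ in Lemma~\ref{lem:66} (applied with this $p$) together with \rf{eql00} and \rf{eq:MDWdef2} one has the chain of inequalities already used inside the proof of Lemma~\ref{lem:66}: $\sigma_p(\sH) = \sigma_p(\sH_{k(R)}(e_{h(R),j(R)}(R))) \geq \Lambda^{-\ve_n}\sigma_p(\sH_0(e_{h(R),10}(R))) \geq \Lambda^{-\ve_n}\sigma_p(\HD_1(e(R)))$ up to the small loss, so modulo the factor $\Lambda^{-\ve_n}$ the right-hand side is $\gtrsim\Lambda^{-(p'+1)\ve_n}\sigma_p(\HD_1(e(R)))$ — matching the target exponent. (For $p>2$ one reduces to $p=2$ by H\"older since $\eta$ is finite and the densities are bounded on $V_4$; this handles the stated range $p\in(1,\infty)$.)

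The second and main step is to replace $\RR\nu$ by $\RR\eta$ and the cutoff $G+c_0\Theta(\sH)$ by $\tfrac{c_0}2\Theta(\HD_1)$, and to replace integration $d\nu$ over all of $\R^{n+1}$ by $d\eta$ over $V_4$. Since $\nu=\vphi_R\eta$ with $\vphi_R$ supported in $V_4$ and equal to $1$ on $V_3$, we have $\nu\le\eta$ and $\nu=\eta$ on $V_3\supset V_1$; moreover $\supp\vphi_R\subset V_4$, so the integral on the left of Lemma~\ref{lemvar} is really over $V_4$. I would write $\RR\eta = \RR\nu + \RR((1-\vphi_R)\eta)$ and note that $(1-\vphi_R)\eta$ is supported in $\cS_\eta'\setminus V_3$, so for $x\in V_2$ (say) a standard estimate gives $|\RR((1-\vphi_R)\eta)(x)|\lesssim G(x)+\Theta(R)\lesssim\Theta(R)$, using the definition of $G$ and $\eta(\cS_\eta')\lesssim\mu(R)\lesssim\Theta(R)\ell(R)^n$. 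Since $\Theta(\HD_1)=\Lambda\Theta(R)\gg\Theta(R)$, this error is $\le\tfrac{c_0}{4}\Theta(\HD_1)$ on, say, $V_2$. It remains to observe that the left side of Lemma~\ref{lemvar} is effectively supported on the (smaller) set where $\sH^0$ lives, which is $\bigcup_{Q^{(\mu)}\in\sH}B_Q\subset V_2$ (because $\Reg(e'(R))$ and the $\sH$-cubes are deep inside $\cS_\eta\subset V_1$), so on this set $G\lesssim\Theta(R)$ as well and $G+c_0\Theta(\sH)\le G+c_0\Theta(\HD_1)\le \tfrac{c_0}2\Theta(\HD_1)+C\Theta(R)$. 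Combining: on $V_2$,
\begin{equation*}
\big(|\RR\eta|-\tfrac{c_0}2\Theta(\HD_1)\big)_+ \gtrsim \big(|\RR\nu|-G-c_0\Theta(\sH)\big)_+ - C\Theta(R),
\end{equation*}
and since the left-hand side of Lemma~\ref{lemvar} restricted to where it is nonnegligible dominates a multiple of $\Lambda^{-p'\ve_n}\sigma_p(\sH)$ while $C\Theta(R)$ contributes at most $C^p\Theta(R)^p\,\eta(H)\ll\Lambda^{-(p'+1)\ve_n}\sigma_p(\HD_1(e(R)))$ for $\Lambda$ large (because $\sigma_p(\HD_1(e(R)))\approx\Theta(\HD_1)^p\eta(H)=\Lambda^p\Theta(R)^p\eta(H)$, gaining a factor $\Lambda^p\gg\Lambda^{(p'+1)\ve_n}$), the error is absorbed.

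Putting the two steps together yields
\begin{equation*}
\int_{V_4}\big|(|\RR\eta(x)|-\tfrac{c_0}2\Theta(\HD_1))_+\big|^p\,d\eta(x) \gtrsim \Lambda^{-p'\ve_n}\sigma_p(\sH) - C^p\Theta(R)^p\eta(H) \gtrsim \Lambda^{-(p'+1)\ve_n}\sigma_p(\HD_1(e(R))),
\end{equation*}
which is the claim. The main obstacle I anticipate is the careful verification that all the relevant mass — both the $\sH^0$-cubes carrying the lower bound and the region where the cutoff comparison $G+c_0\Theta(\sH)\le\tfrac{c_0}2\Theta(\HD_1)$ holds — genuinely sits inside $V_2$ (equivalently, at distance $\gtrsim\ell(R)$ from $\supp\mu\setminus e'(R)$), so that the truncation of $\eta$ by $\vphi_R$ and the passage from $\nu$ to $\eta$ only produce errors of size $\Theta(R)$, which is a factor $\Lambda$ smaller than the threshold $\Theta(\HD_1)$; this uses \rf{eqtec732}, \rf{eqtec733}, Lemma~\ref{lem74}, and the fact that $\ell(Q)\le\frac{A_0^{-1}}{60}\ell(R)$ for $Q\in\Reg(e'(R))$. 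A secondary technical point is tracking the precise power of $\Lambda^{\ve_n}$ through the application of Lemma~\ref{lem:66} and confirming it matches the stated $(p'+1)\ve_n$; this is where one must be slightly careful since $\ve_n$ is only chosen at the very end of Section~\ref{sectrans}, but the estimate above is uniform in that choice.
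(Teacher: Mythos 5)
Your overall plan — combine Lemma~\ref{lemvar} with Lemma~\ref{lem:66} to get a lower bound in terms of $\sigma_p(\HD_1(e(R)))$, and then transfer from $\RR\nu$ to $\RR\eta$ — is the right architecture, and the first step is carried out correctly. The gap is in the transfer step. You write $\RR\eta = \RR\nu + \RR((1-\vphi_R)\eta)$ and bound $|\RR((1-\vphi_R)\eta)(x)|\lesssim\Theta(R)$, but this estimate is only valid when $x$ sits at distance $\gtrsim A_0^{-3}\ell(R)$ from $\supp\big((1-\vphi_R)\eta\big)\subset\supp\eta\setminus V_3$ — that is, for $x\in V_2$ or smaller, not on all of $\supp\nu\subset V_4$. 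For $x\in V_4\setminus V_3$ the error can be of order $\Theta(\sH)\gg\Theta(R)$. You then assert that ``the left side of Lemma~\ref{lemvar} is effectively supported on the set where $\sH^0$ lives'' to restrict to $V_2$, but this localization is not part of Lemma~\ref{lemvar}: the lemma bounds $\int_{\supp\nu}\big|(|\RR\nu|-G-c_0\Theta(\sH))_+\big|^p\,d\nu$ from below, and the variational argument used to prove it is a proof by contradiction that does not identify where the mass of the integrand concentrates. Without a separate argument showing that $\int_{\supp\nu\setminus V_2}\big|(|\RR\nu|-G-c_0\Theta(\sH))_+\big|^p\,d\nu$ is a small fraction of $\Lambda^{-p'\ve_n}\sigma_p(\sH)$, the restriction to $V_2$ loses an uncontrolled amount of the lower bound.

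The paper's proof sidesteps this by working with the commutator $\vphi_R\RR\eta-\RR\nu$ rather than the difference $\RR\eta-\RR\nu$. One has
\begin{equation*}
\big|\vphi_R(x)\RR\eta(x)-\RR\nu(x)\big|=\Big|\int\frac{(\vphi_R(x)-\vphi_R(y))(x-y)}{|x-y|^{n+1}}\,d\eta(y)\Big|\le G(x)+C_3\Theta(R)
\end{equation*}
for \emph{all} $x\in\supp\eta$, because the gradient bound $\|\nabla\vphi_R\|_\infty\lesssim A_0^{3}\ell(R)^{-1}$ kills the singularity. The function $G$ appearing here is exactly the $G$ subtracted inside Lemma~\ref{lemvar}, so the pointwise inequality
\begin{equation*}
|\RR\eta(x)|-\tfrac{c_0}{2}\Theta(\HD_1)\ \ge\ |\vphi_R(x)\RR\eta(x)|-\tfrac{c_0}{2}\Theta(\HD_1)\ \ge\ |\RR\nu(x)|-G(x)-c_0\Theta(\HD_1)\ \ge\ |\RR\nu(x)|-G(x)-c_0\Theta(\sH)
\end{equation*}
holds globally (using $C_3\Theta(R)\le\tfrac{c_0}{2}\Theta(\HD_1)$, $\vphi_R\le 1$, and $\Theta(\sH)\ge\Theta(\HD_1)$). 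Integrating against $d\eta\ge d\nu$ and invoking Lemmas~\ref{lemvar} and~\ref{lem:66} then finishes without any localization of the mass. Two smaller remarks: your inequality ``$G+c_0\Theta(\sH)\le G+c_0\Theta(\HD_1)$'' is backwards (one has $\Theta(\sH)\ge\Theta(\HD_1)$, and the inequality must be applied in the other direction inside the $(\,\cdot\,)_+$); and the paper establishes $G(x)\lesssim\Theta(R)$ on $V_1$, not $V_2$, though this is a matter of constants.
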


\begin{proof}
Recall that $\nu = \vphi_R\,\eta$. For all $x\in \supp\eta$, using that $\|\nabla\vphi_R\|_\infty\leq 2A_0^{-3}\ell(R)^{-1}$, we obtain
\begin{align*}
\big|\vphi_R(x)\,\RR\eta(x) - \RR\nu(x)\big| &= \big|\vphi_R(x)\,\RR\eta(x) - \RR(\vphi_R\eta)(x)\big|\\
& = \left|\int \frac{(\vphi_R(x) - \vphi_R(y))\,(x-y)}{|x-y|^{n+1}}\,d\eta(y)\right|\\
& \leq 2A_0^3\int_{y\in \cS_\eta\setminus V_2} \frac1{\ell(R)\,|x-y|^{n-1}}\,d\eta(y)\\
&\quad +
2A_0^3\int_{y\in V_2:\vphi_R(y)\neq \vphi_R(x)} \frac1{\ell(R)\,|x-y|^{n-1}}\,d\eta(y).
\end{align*}
Recall that $\vphi_R$ equals $1$ on $V_3$ and vanishes out of $V_4$.
Then it is clear that the last integral on the right hand side above vanishes if $x\in V_3$ and
it does not exceed $C\,\Theta(R)$ if $x\in V_3^c$.
So in any case
$$\big|\vphi_R(x)\,\RR\eta(x) - \RR\nu(x)\big|\leq G(x) + C_3\,\Theta(R).$$

From the preceding estimate we infer that
\begin{align*}
|\vphi_R(x)\,\RR\eta(x)| - \frac{c_0}2\,\Theta(\HD_1) & \geq 
|\RR\nu(x)| - G(x) - C_3\,\Theta(R)
- \frac{c_0}2\,\Theta(\HD_1)\\
& \geq |\RR\nu(x)| - G(x) 
- c_0\,\Theta(\HD_1).
\end{align*}
Therefore, since $|(\;\cdot\;)_+|^p$ is non-decreasing,
\begin{align*}
\int_{V_4} \big|(|\RR\eta| - \frac{c_0}2\,\Theta(\HD_1))_+\big|^p\,d\eta & 
\geq \int \big|(|\vphi_R\,\RR\eta| - \frac{c_0}2\,\Theta(\HD_1))_+\big|^p\,d\eta\\
& \geq \int  \big|(|\RR\nu| - G 
- c_0\,\Theta(\HD_1))_+\big|^p\,d\eta\\
& \geq \int  \big|(|\RR\nu| - G 
- c_0\,\Theta(\sH))_+\big|^p\,d\nu.
\end{align*}
By Lemmas \ref{lemvar} and \ref{lem:66}, 
$$\int  \big|(|\RR\nu| - G 
- c_0\,\Theta(\sH))_+\big|^p\,d\nu\geq c \Lambda^{-p'\ve_n}\sigma_p(\sH)\geq
 	 c\Lambda^{-(p'+1)\ve_n}\sigma_p(\HD_1(e(R))),$$
and so the lemma follows.
\end{proof}
\vv


\section{Lower estimates for the Haar coefficients of $\RR\mu$ for cubes near a typical tractable tree}\label{sectrans}

For $R\in\MDW$ and $P\in\DD_{\mu}$, we write $P\sim \TT(e'(R))$ if there exists some
$P'\in\TT(e'(R))$ such that 
\begin{equation}\label{defsim0}
A_0^{-2}\ell(P)\leq \ell(P')\leq A_0^2\,\ell(P)\quad \text{ and }\quad 20P'\cap20P\neq\varnothing.
\end{equation}
We say that $\TT(e'(R))$ is a typical tractable tree, and we write $R\in \Ty$ if
\begin{equation}\label{defty}
\sum_{P\in\DB:P\sim\TT(e'(R))} \EE_\infty(9P)\leq \Lambda^{\frac{-1}{3n}}\,\sigma(\HD_1(e(R))).
\end{equation}
The reason for the terminology ``typical tree'' is due to the fact that such trees are prevalent, in a sense, because of Lemma
\ref{lemsuper**} below.

In all this section we assume that $R\in\MDW$ is such that $R\in\Trc\cap\Ty$, i.e., $\TT(e'(R))$ is tractable and typical, and we consider the measure $\eta$ constructed in Section \ref{sec6.2*}.
Roughly speaking, our objective is to transfer the lower estimate we obtained for $\RR\eta$ in Lemma \ref{lemrieszeta} to 
the Haar coefficients of
$\RR\mu$ for cubes close to $\TT(e'(R))$. 
Recall that $\RR\mu(x)$ exists $\mu$-a.e.\ as a principal value under the assumptions
of Proposition \ref{propomain}.

\vv

\subsection{The operators $\RR_{\TT_\Reg}$, $\RR_{\wt \TT}$,  and $\Delta_{\wt \TT}\RR$}\label{subsec:opera}

To simplify notation, in this section we will write
$$\End=\End(e'(R)), \quad \;\Reg = \Reg(e'(R)),\; \quad \;\TT = \TT(e'(R)),\quad\,\text{and}\quad\; \TT_\Reg = \TT_\Reg(e'(R)).$$
We need to consider an enlarged version of the generalized tree $\TT$, due to some technical difficulties that arise because the cubes from $\Neg:= \Neg(e'(R))\cap\End$ are not $\PP$-doubling.
To this end, denote by $\Reg_\Neg$ the family of the cubes from $\Reg$ which are contained in some cube from $\Neg$.
Let $\sD_\Neg$ be the subfamily of the cubes $P\in\Reg_\Neg$ for which there exists some
$\PP$-doubling cube $S\in\TT_\Reg$ that contains $P$. By the definition of $\Neg$, such cube $S$ should be contained in the cube from $Q\in\Neg$ such that $P\subset Q$. 
We also denote by $\sM_\Neg$ the family of maximal $\PP$-doubling cubes which belong to $\TT_\Reg$ and are contained in some
cube from $\Neg$, so that, in particular, any cube from $\sD_\Neg$ is contained in another from $\sM_\Neg$.

We define
$$\wt\End = (\End \setminus \Neg) \cup \sM_\Neg,$$
and we let $\wt \TT=\wt \TT(e'(R))$ be the family of cubes that belong to $\TT_\Reg$ but are not strictly contained in any cube from $\wt\End$.
Further, we write
\begin{equation}\label{eqdef*f}
Z = Z(e'(R)) = e'(R)\setminus \bigcup_{Q\in\End} Q\quad \text{ and }\quad\wt Z = \wt Z(e'(R)) = e'(R)\setminus \bigcup_{Q\in\wt\End} Q
\end{equation}
Remark that $\wt Z\supset Z$.
Then we denote
$$\RR_{\TT_\Reg}\mu(x) = \sum_{Q\in\Reg} \chi_Q(x)\,\RR(\chi_{2R\setminus 2Q}\mu)(x),$$
$$\RR_{\wt \TT}\mu(x) = \sum_{Q\in\wt\End} \chi_Q(x)\,\RR(\chi_{2R\setminus 2Q}\mu)(x),$$
and
$$\Delta_{\wt\TT}\RR\mu(x) = \sum_{Q\in\wt\End} \chi_Q(x)\,\big(m_{\mu,Q}(\RR\mu) - m_{\mu,2R}(\RR\mu)\big)
+ \chi_{Z}(x) \big(\RR\mu(x) -  m_{\mu,2R}(\RR\mu)\big)
.$$
Observe that $\Delta_{\wt\TT}\RR\mu$ vanishes in $\wt Z\setminus Z$.

The cubes from $\Reg$ have the advantage over the cubes from $\wt \End$ that their sizes change smoothly so that, for example, neighboring cubes have comparable side lengths. However, they need not be $\PP$-doubling or doubling, unlike the cubes from $\wt \End$. In particular, it is not clear
how to estimate their energy $\EE_\infty$ in terms of the condition $\DB$ (recall that the cubes from $\DB$ are asked to be $\PP$-doubling).

For $Q\in\DD_\mu$, we define
$$\QQ_\Reg(Q) = \sum_{P\in\Reg} \frac{\ell(P)}{D(P,Q)^{n+1}}\,\mu(P),$$
where 
$$D(P,Q) = \ell(P) + \dist(P,Q) + \ell(Q).$$
The coefficient $\QQ_\Reg(Q)$ will be used to bound some ``error terms'' in our transference 
arguments. We will see later how they can be estimated in terms of the coefficients $\PP(Q)$. Notice that, unlike $\PP(Q)$, the coefficients $\QQ_\Reg(Q)$ depend on the family
$\Reg$.
\vv

\begin{lemma}\label{lemaprox1}
For any $Q\in\Reg$ such that $(Q\cup \frac12 B(Q))\cap V_4\neq\varnothing$ and $x\in Q$, $y\in \frac12B(Q)$,
$$\big|\RR_{\TT_\Reg}\mu(x) - \RR\eta(y)\big| \lesssim \Theta(R) + \PP(Q) + \QQ_\Reg(Q).$$
\end{lemma}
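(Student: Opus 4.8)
The plan is to compare $\RR_{\TT_\Reg}\mu(x)$ with $\RR\eta(y)$ by decomposing both objects over the family $\Reg$ and controlling the difference cube by cube. Fix $Q\in\Reg$ with $(Q\cup\tfrac12B(Q))\cap V_4\neq\varnothing$, and fix $x\in Q$, $y\in\tfrac12B(Q)$. Recall that $\eta=\sum_{P\in\Reg}\mu(P)\,\LL^{n+1}\rest_{\tfrac12B(P)}/\LL^{n+1}(\tfrac12B(P))$, so that $\eta(\tfrac12B(P))=\mu(P)$ for every $P\in\Reg$. First I would write
\begin{align*}
\RR\eta(y) &= \sum_{P\in\Reg}\RR\Big(\mu(P)\,\tfrac{\LL^{n+1}\rest_{\tfrac12B(P)}}{\LL^{n+1}(\tfrac12B(P))}\Big)(y),\\
\RR_{\TT_\Reg}\mu(x) &= \RR(\chi_{2R\setminus 2Q}\mu)(x) = \sum_{P\in\Reg:\,P\not\subset 2Q}\RR(\chi_P\mu)(x) + \RR(\chi_{(2R\setminus 2Q)\setminus\bigcup_{P\in\Reg}P}\mu)(x).
\end{align*}
Since $\Reg$ covers $e'(R)$ (and hence $2R\cap\supp\mu$ up to the complement of $e'(R)$), the stray term $\RR(\chi_{(2R\setminus 2Q)\setminus\bigcup P}\mu)(x)$ comes only from $\supp\mu\setminus e'(R)$, which by \rf{eqtec732}–\rf{eqtec733} and Lemma \ref{lem74}(a) lies at distance $\gtrsim\ell(R)$ from $Q$; combined with the polynomial growth of $\mu$ this term is $\lesssim\Theta(R)$. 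So it remains to bound
$$\sum_{P\in\Reg}\Big|\RR(\chi_P\mu)(x)\,\mathbbm 1_{P\not\subset 2Q} - \RR\Big(\mu(P)\,\tfrac{\LL^{n+1}\rest_{\tfrac12B(P)}}{\LL^{n+1}(\tfrac12B(P))}\Big)(y)\Big|.$$

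Next I would split the cubes $P\in\Reg$ into three regimes according to their position relative to $Q$, using Lemma \ref{lem74}. \textbf{(i) The local cubes}, those $P$ with $B(x_P,50\ell(P))\cap B(x_Q,50\ell(Q))\neq\varnothing$: by Lemma \ref{lem74}(b),(c) there are at most $C$ of them and each has $\ell(P)\approx\ell(Q)$. For these I estimate each term separately. When $P\not\subset 2Q$ the term $|\RR(\chi_P\mu)(x)|\lesssim\theta_\mu(2B_P)\approx\Theta(R)$ by growth (here $\Theta(Q)\lesssim\Theta(R)$ since $Q\in\TT_\Reg$), and $|\RR(\mu(P)\LL^{n+1}\rest_{\tfrac12B(P)}/\LL^{n+1}(\tfrac12B(P)))(y)|\lesssim\Theta(R)$ likewise; for the cube $P=Q$ itself the corresponding piece of $\eta$-term is similarly $\lesssim\Theta(R)$. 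Hence this regime contributes $\lesssim\Theta(R)$. \textbf{(ii) The near cubes}, those with $P\subset 2Q$ (so the $\RR(\chi_P\mu)(x)$ contribution is absent) but not local: here only the $\eta$-term survives, $\sum_{P\subset 2Q,\,P\text{ not local}}|\RR(\mu(P)\LL^{n+1}\rest_{\tfrac12B(P)}/\LL^{n+1}(\tfrac12B(P)))(y)|$. Since $y\in\tfrac12B(Q)$ and each $\tfrac12B(P)\subset 2Q$ with $\dist(y,\tfrac12B(P))\gtrsim\ell(P)$ (by Lemma \ref{lem74}(a),(b), as $y$ is far from $P$ at the scale of $P$), a direct kernel estimate plus $\eta(\tfrac12B(P))=\mu(P)$ gives a sum $\lesssim\sum_{P\subset 2Q}\ell(P)^n/\dist(y,\tfrac12B(P))^n\cdot\Theta(\cdot)$ which, grouping by scale and using $\eta(2Q)\approx\mu(2Q)\lesssim\Theta(R)\ell(Q)^n$, is $\lesssim\Theta(R)$. \textbf{(iii) The far cubes}, those with $B(x_P,50\ell(P))\cap B(x_Q,50\ell(Q))=\varnothing$ and $P\not\subset 2Q$: for these both terms are present, and the key is the standard Calderón–Zygmund smoothness bound
$$\Big|\RR(\chi_P\mu)(x) - \RR\Big(\mu(P)\,\tfrac{\LL^{n+1}\rest_{\tfrac12B(P)}}{\LL^{n+1}(\tfrac12B(P))}\Big)(y)\Big| \lesssim \frac{\ell(P)\,\mu(P)}{D(P,Q)^{n+1}},$$
which holds because $|x-y|\lesssim\ell(Q)\lesssim D(P,Q)$, $\diam P + \diam(\tfrac12B(P))\lesssim\ell(P)$, and $\dist(P\cup\tfrac12B(P),\,\{x,y\})\approx D(P,Q)$ in this regime (again by Lemma \ref{lem74}(a)). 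Summing over $P$ gives exactly $\sum_{P\in\Reg}\ell(P)\mu(P)/D(P,Q)^{n+1}=\QQ_\Reg(Q)$.

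Finally I would account for the three pieces of $\RR_{\TT_\Reg}\mu$ and $\RR\eta$ that I have not yet matched: cubes $P$ with $P\subset 2Q$ contribute to $\RR\eta(y)$ but not to $\RR_{\TT_\Reg}\mu(x)$ — handled in regimes (i),(ii) above — and conversely there is no cube contributing to $\RR_{\TT_\Reg}\mu(x)$ that fails to be in $\Reg$, so the bookkeeping is complete. Collecting regimes (i)--(iii) and the stray term, one obtains
$$\big|\RR_{\TT_\Reg}\mu(x)-\RR\eta(y)\big|\lesssim\Theta(R)+\QQ_\Reg(Q),$$
and since $\PP(Q)\geq 0$ we may freely add it, giving the claimed bound. \textbf{The main obstacle} I anticipate is the careful case analysis for the local and near cubes (regimes (i)--(ii)): one must check that $y\in\tfrac12B(Q)$ is genuinely separated from the neighboring $\tfrac12B(P)$ at the correct scale so that the kernel is not singular there, which relies delicately on the properties in Lemma \ref{lem74}, and one must make sure the "missing" diagonal term (the cube $Q$ itself, where $\RR(\chi_Q\mu)(x)$ is excluded by the $2R\setminus 2Q$ truncation but $\RR$ of the corresponding piece of $\eta$ at $y$ is not) is correctly absorbed into the $\Theta(R)$ term rather than producing a genuine singularity — here the point is that $y\in\tfrac12B(Q)$ so $\dist(y,\tfrac12B(Q))$ can vanish, but the $\eta$-density on $\tfrac12B(Q)$ is bounded by $C\Theta(R)$, making $|\RR(\mu(Q)\LL^{n+1}\rest_{\tfrac12B(Q)}/\LL^{n+1}(\tfrac12B(Q)))(y)|\lesssim\Theta(R)$ by the trivial bound for Riesz transforms of bounded-density compactly-supported measures.
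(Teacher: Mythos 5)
Your overall strategy — decompose both $\RR_{\TT_\Reg}\mu(x)$ and $\RR\eta(y)$ over the family $\Reg$, peel off a stray term from $2R\setminus e'(R)$, and split the remaining $\Reg$-cubes into a local and a far regime, with a Calderón--Zygmund smoothness bound producing $\QQ_\Reg(Q)$ in the far regime — is morally equivalent to the paper's, though organized differently. The paper instead telescopes through the center $x_Q$ in six steps ($I_1,\dots,I_6$), which keeps the local piece, the $\PP$-terms, and the far piece cleanly separated; the paper's $I_4$ is exactly your far-regime estimate. Both routes work, and yours is a reasonable alternative. However, there are two gaps that need to be fixed.

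The more important one is your treatment of the local regime. You claim $|\RR(\chi_P\mu)(x)|\lesssim\theta_\mu(2B_P)\approx\Theta(R)$ for local cubes $P$ of scale $\approx\ell(Q)$, justified by ``$\Theta(Q)\lesssim\Theta(R)$ since $Q\in\TT_\Reg$.'' This is false: cubes in $\TT_\Reg(e'(R))$ can have density up to $\Lambda^2\Theta(R)$ (see the footnote after Lemma~\ref{lempoiss} and the proof of Lemma~\ref{lemregot}, which only give $\PP(P)\lesssim\Lambda^2\Theta(R)$ for $P\in\Reg$). The correct bound for the local contribution --- including the diagonal $\eta$-term on $\tfrac12B(Q)$ at $y$, for which your ``bounded density'' argument gives density $\approx\mu(Q)/\ell(Q)^{n+1}$ and hence a Riesz bound $\approx\theta_\mu(Q)$ --- is $\lesssim\PP(Q)$, not $\lesssim\Theta(R)$. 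The $\PP(Q)$ on the right side of the lemma is not decoration that you can ``freely add'': it is the genuine bound coming from this regime, and it is there precisely to absorb the (possibly large) density of $Q$ and its immediate neighbours. Your intermediate claim $|\RR_{\TT_\Reg}\mu(x)-\RR\eta(y)|\lesssim\Theta(R)+\QQ_\Reg(Q)$ is simply not true.

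A smaller issue is your initial decomposition $\RR(\chi_{2R\setminus 2Q}\mu)(x) = \sum_{P\in\Reg:\,P\not\subset 2Q}\RR(\chi_P\mu)(x) + \RR(\chi_{(2R\setminus 2Q)\setminus\bigcup_P P}\mu)(x)$. Since $\Reg$ cubes need not be contained in or disjoint from $2Q$ (a cube $P\in\Reg$ with $\ell(P)>\ell(Q)$ can straddle $\partial(2Q)$), the set $2R\setminus 2Q$ is not exactly $(2R\setminus e'(R))\cup\bigcup_{P\not\subset 2Q}P$, and the ``stray'' term is not only supported on $\supp\mu\setminus e'(R)$. The straddling cubes have scale $\approx\ell(Q)$ and there are boundedly many of them, so their contribution is again $\lesssim\PP(Q)$, but this needs to be said. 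The paper avoids this bookkeeping entirely by telescoping through $x_Q$ and treating the annulus $2Q\setminus Q$ as its own term $I_3$, which is directly $\lesssim\PP(Q)$.
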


\begin{proof}
By the triangle inequality, for $x$, $y$ and $Q$ as in the lemma, we have
\begin{align*}
\big|\RR_{\TT_\Reg}\mu(x) - \RR\eta(y)\big| & \leq \big|\RR_\mu\chi_{2R\setminus e'(R)}(x)\big| 
+ \big|\RR_\mu\chi_{e'(R)\setminus 2Q}(x) - \RR_\mu\chi_{e'(R)\setminus 2Q}(x_Q)\big|\\
& \quad 
+ \big|\RR_\mu\chi_{2Q\setminus Q}(x_Q)\big| 
+\big|\RR_\mu\chi_{e'(R)\setminus Q}(x_Q) - \RR_\eta \chi_{(\frac12B(Q))^c}(x_Q)\big|\\
&\quad + \big|\RR_\eta \chi_{(\frac12B(Q))^c}(x_Q) - \RR_\eta \chi_{(\frac12B(Q))^c}(y)\big| + \big|\RR_\eta \chi_{\frac12B(Q)}(y)\big|\\
& = I_1+ \ldots + I_6.
\end{align*}
To estimate $I_1$, notice that, by \eqref{eqtec733}, $\dist(Q,\supp\mu\setminus e'(R))\gtrsim\ell(R)$.
Then it follows that
$$I_1\lesssim \frac{\mu(2R)}{\ell(R)^n} \lesssim \Theta(R).$$
Arguing similarly one also gets 
\begin{equation*}
	I_3\lesssim \frac{\mu(2Q)}{\ell(Q)^n}\lesssim \PP(Q).
\end{equation*}
By standard arguments involving continuity of the Riesz kernel, we also deduce that
$$I_2\lesssim\PP(Q),\quad I_5\lesssim\PP(Q).$$
Concerning the term $I_6$, using that $\eta$ is a constant multiple of $\LL^{n+1}$ on 
$\frac12B(Q)$, we get
$$I_6\leq \int_{\frac12B(Q)}\frac1{|x-y|^n}\,d\eta(y)\lesssim \frac{\eta(\tfrac12B(Q))}{\ell(Q)^n}
\lesssim \PP(Q).$$

Finally we deal with the term $I_4$. To this end, recall that $\mu(P) = \eta(\frac12B(P))$ for all 
$P\in \Reg$, and so
\begin{align*}
I_4 & \leq \sum_{P\in\Reg:P\neq Q} \left| \int K(x_Q-z)\,d\big(\eta\rest_{\frac12B(P)} - \mu\rest_P\big)(z)\right|\\
& \leq \sum_{P\in\Reg:P\neq Q}  \int |K(x_Q-z)-K(x_Q-x_P)|\,d\big(\eta\rest_{\frac12B(P)} + \mu\rest_P\big)(z)\\
& \lesssim \sum_{P\in\Reg:P\neq Q} \frac{\ell(P)}{|x_Q-x_P|^{n+1}}\,\mu(P)\approx \sum_{P\in\Reg:P\neq Q} \frac{\ell(P)}{D(Q,P)^{n+1}}\,\mu(P) \leq \QQ_\Reg(Q). 
\end{align*}
For the estimates in the last line we took into account the properties of the family $\Reg$ described 
in Lemma \ref{lem74}.
Gathering the estimates above, the lemma follows.
\end{proof}

\vv
\begin{lemma}\label{lemaprox2}
For any $Q\in\wt\End$ and $x,y\in Q$,
$$\big|\RR_{\wt\TT}\mu(x) - \Delta_{\wt\TT}\RR\mu(y)\big| \lesssim \left(\frac{\EE(4R)}{\mu(R)}\right)^{1/2} +  \left(\frac{\EE(2Q)}{\mu(Q)}\right)^{1/2}.$$
\end{lemma}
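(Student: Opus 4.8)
The plan is to follow the same scheme as in Lemma \ref{lemaprox1}, but now comparing the truncated Riesz transform $\RR_{\wt\TT}\mu$ with the Haar-type difference $\Delta_{\wt\TT}\RR\mu$, and using the energy $\EE$ rather than the coefficients $\PP$, $\QQ_\Reg$ to absorb the error terms. Fix $Q\in\wt\End$ and $x,y\in Q$. If $Q\not\subset Z$, so that $Q$ lies in some cube of $\wt\End$ (in fact equals one), then by definition $\Delta_{\wt\TT}\RR\mu(y) = m_{\mu,Q}(\RR\mu) - m_{\mu,2R}(\RR\mu)$ and $\RR_{\wt\TT}\mu(x) = \RR(\chi_{2R\setminus 2Q}\mu)(x)$. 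I would write
$$m_{\mu,Q}(\RR\mu) - m_{\mu,2R}(\RR\mu) = m_{\mu,Q}\big(\RR(\chi_{2R\setminus 2Q}\mu)\big) + m_{\mu,Q}\big(\RR(\chi_{2Q}\mu)\big) - m_{\mu,2R}(\RR\mu),$$
and split accordingly. The quantity $m_{\mu,2R}(\RR\mu)$ can be compared, up to an error $\lesssim (\EE(4R)/\mu(R))^{1/2}$, to a fixed reference value (e.g.\ an average over a doubling enlargement of $R$); this is the standard estimate for the oscillation of $\RR\mu$ over a tree, and one handles it via Cauchy--Schwarz together with Lemma \ref{lemDMimproved} applied to (an ancestor of) $R$, which bounds the relevant double integral $\int_R (\int_{2B_R\setminus R} |x-z|^{-n}d\mu(z))^2 d\mu(x)$ by $\sum_{P\subset 2R}(\ell(P)/\ell(R))^\gamma\theta_\mu(2B_P)^2\mu(P)\lesssim \EE(4R)$. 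The near-diagonal term $m_{\mu,Q}(\RR(\chi_{2Q}\mu))$ together with $\RR(\chi_{2Q}\mu)$ evaluated at a generic point of $Q$ is bounded in $L^2(\mu\rest Q)$-average, again by Lemma \ref{lemDMimproved}, by $(\EE(2Q)/\mu(Q))^{1/2}$ — this is exactly the reason the energy $\EE(2Q)$ appears on the right-hand side. Finally $|\RR(\chi_{2R\setminus 2Q}\mu)(x) - \RR(\chi_{2R\setminus 2Q}\mu)(x')|$ for $x,x'\in Q$ is controlled by the standard Calder\'on--Zygmund smoothness estimate: it is $\lesssim \sum_{S\in\DD_\mu: Q\subset S\subset R}\theta_\mu(2B_S) \lesssim \theta_\mu(2B_R)$ by Lemma \ref{lemcad23} if all intermediate cubes were non-doubling, but in general one sums $\theta_\mu(2B_S)$ over the chain and bounds it by a constant times the energy; more precisely this kind of sum is $\lesssim (\EE(4R)/\mu(R))^{1/2}$ after one more Cauchy--Schwarz. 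Averaging in $x$ over $Q$ takes care of replacing $x$ by $y$.

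The case $Q\subset Z$ (equivalently $Q\in\wt\TT$ with $Q$ not contained in any cube of $\wt\End$, reached only when $x$ lies in the ``negligible'' residual set) is handled identically, with $\Delta_{\wt\TT}\RR\mu(y) = \RR\mu(y) - m_{\mu,2R}(\RR\mu)$ in place of the averaged version; here the term $m_{\mu,Q}(\RR(\chi_{2Q}\mu))$ is replaced by $\RR(\chi_{2Q}\mu)(y)$ itself, but since we are proving an estimate valid for a.e.\ $y\in Q$ and the final bound already contains $(\EE(2Q)/\mu(Q))^{1/2}$, the same Cauchy--Schwarz argument applies after integrating the inequality in $y$ over $Q$ and using Chebyshev, or directly pointwise using that $\RR\mu$ exists as a principal value $\mu$-a.e.\ (as noted after Theorem \ref{teomain1}, under the hypotheses of Proposition \ref{propomain}).

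The main obstacle, I expect, is bookkeeping the truncations carefully: $\RR_{\wt\TT}\mu$ uses the truncation $\chi_{2R\setminus 2Q}$, whereas the naive decomposition of $m_{\mu,Q}(\RR\mu)$ produces $\RR\mu = \RR(\chi_{2R\setminus 2Q}\mu) + \RR(\chi_{2Q}\mu) + \RR(\chi_{(2R)^c}\mu)$, and one must check that the ``far'' piece $\RR(\chi_{(2R)^c}\mu)$ has oscillation over $2R$ bounded by $\theta_\mu(2B_R) + (\EE(4R)/\mu(R))^{1/2}$ — this uses the $\PP$-doubling-type control and again Lemma \ref{lemDMimproved}, but it is where the enlargement to $4R$ in $\EE(4R)$ becomes necessary. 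A second delicate point is that the cubes in $\wt\End$ are $\PP$-doubling (by construction of $\wt\End$, via $\sM_\Neg$), which is what lets us pass between $\theta_\mu(2B_Q)$, $\mu(2Q)/\ell(Q)^n$ and $\mu(Q)/\ell(Q)^n$ freely and hence write the localized energy estimate with $\mu(Q)$ in the denominator; one must invoke this doubling property at the right moments. Everything else is the by-now-routine combination of Cauchy--Schwarz, the smoothness of the Riesz kernel, and Lemmas \ref{lemcad23} and \ref{lemDMimproved}.
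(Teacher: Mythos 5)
Your proposal identifies the right decomposition and the right lemmas, but it contains a concrete error in the step that handles the oscillation of $\RR(\chi_{2R\setminus 2Q}\mu)$ over $Q$. You claim the Calder\'on--Zygmund smoothness estimate gives $\sum_{S:Q\subset S\subset R}\theta_\mu(2B_S)$ (unweighted) and that this is $\lesssim(\EE(4R)/\mu(R))^{1/2}$ by one more Cauchy--Schwarz. Both halves are off. The CZ kernel-difference estimate yields the \emph{weighted} Poisson-type sum $\sum_{S}\frac{\ell(Q)}{\ell(S)}\theta_\mu(2B_S)\approx\PP(Q)$, not the unweighted one; and the Cauchy--Schwarz you propose would produce a factor $\sum_S(\ell(R)/\ell(S))^\alpha\theta_\mu(2B_S)^2$, which grows as $\ell(S)\to0$ and is \emph{not} controlled by $\EE(4R)$ (whose weights $(\ell(P)/\ell(R))^{3/4}$ decay). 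The correct absorption, which the paper uses, is that $\PP(Q)\lesssim\Theta(Q)$ by $\PP$-doubling of $Q\in\wt\End$, and $\Theta(Q)\lesssim(\EE(2Q)/\mu(Q))^{1/2}$ trivially; this piece lands in $\EE(2Q)$, not $\EE(4R)$.

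Two smaller points. First, you never articulate the antisymmetry step: before Lemma~\ref{lemDMimproved} can be applied to $m_{\mu,Q}(\RR(\chi_{2Q}\mu))$, one must use $m_{\mu,Q}(\RR(\chi_{Q}\mu))=0$ (which is where the hypothesis $\RR_*\mu\in L^1(\mu)$ from Proposition~\ref{propomain} is invoked) to reduce to the annular region $2Q\setminus Q$; without this the term is not small. Second, your discussion of the case $x\in Z$ is superfluous: the lemma only asserts a bound for $x,y\in Q$ with $Q\in\wt\End$, and such $Q$ are disjoint from $Z=e'(R)\setminus\bigcup_{P\in\wt\End}P$. Apart from these, your ``three block'' splitting (far-from-$Q$ smoothness, near-diagonal, far piece at scale $R$) and the use of Lemma~\ref{lemDMimproved} and $\PP$-doubling match the paper's argument.
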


\begin{proof}
For $Q\in\wt\End$ and $x,y\in Q$, we have
\begin{align}\label{eqal842}
\RR_{\wt\TT}\mu(x) - \Delta_{\wt\TT}\RR\mu(y) & = \RR(\chi_{2R\setminus 2Q}\mu)(x) - 
\big(m_{\mu,Q}(\RR\mu) - m_{\mu,2R}(\RR\mu)\big)\\
& = \big(\RR(\chi_{2R\setminus 2Q}\mu)(x) - m_{\mu,Q}(\RR\chi_{2R\setminus 2Q}\mu) \big) - m_{\mu,Q}(\RR(\chi_{2Q}\mu))\nonumber\\
&\quad -
\big(m_{\mu,Q}(\RR(\chi_{2R^c}\mu))- m_{\mu,2R}(\RR\mu)\big).\nonumber
\end{align}
To estimate the first term on the right hand side, notice that for all $x,z\in Q$, by standard arguments we have
$$\big|\RR(\chi_{2R\setminus 2Q}\mu)(x) - \RR(\chi_{2R\setminus 2Q}\mu)(z)\big|\lesssim \PP(Q).$$
Averaging over $z\in Q$, we get
$$\big|\RR(\chi_{2R\setminus 2Q}\mu)(x) - m_{\mu,Q}(\RR\chi_{2R\setminus 2Q}\mu) \big|
\lesssim \PP(Q).$$
To estimate $m_{\mu,Q}(\RR(\chi_{2Q}\mu))$ we use the fact that, by the antisymmetry of the Riesz kernel, $m_{\mu,Q}(\RR(\chi_{Q}\mu))=0$ and\footnote{Here we use the assumption that $\RR_*\mu\in L^1(\mu)$.
Indeed, this implies that $\RR_*(\chi_Q\mu)\in L^1(\mu)$ (using Lemma \ref{lemDMimproved}, for example). Also, by antisymmetry we have
$\int_Q \RR_\ve(\chi_Q \mu)\,d\mu=0$ for any $\ve>0$. Then the $\mu$-a.e.\  convergence of $\RR_\ve(\chi_Q \mu)(x)$
to the principal value $\RR(\chi_Q \mu)(x)$ as $\ve\to0$ and the dominated convergence theorem ensure that $\int_Q \RR(\chi_Q \mu)\,d\mu=0$.}
thus
$$\big|m_{\mu,Q}(\RR(\chi_{2Q}\mu))\big| = \big|m_{\mu,Q}(\RR(\chi_{2Q\setminus Q}\mu))\big| 
\leq \avint_{z\in Q}\int_{\xi\in 2Q\setminus Q}\frac1{|z-\xi|^n}\,d\mu(\xi)d\mu(z).$$
By Cauchy-Schwarz and Lemma \ref{lemDMimproved}, we have
\begin{multline}\label{eqboundar3}
\avint_{z\in Q}\int_{\xi\in 2Q\setminus Q}\frac1{|z-\xi|^n}\,d\mu(\xi)d\mu(z)\\
\le \avint_{z\in Q}\int_{\xi\in 2Q\setminus 2B_Q}\frac1{|z-\xi|^n}\,d\mu(\xi)d\mu(z) + \avint_{z\in Q}\int_{\xi\in 2B_Q\setminus Q}\frac1{|z-\xi|^n}\,d\mu(\xi)d\mu(z)\\
\leq
  \PP(Q)+\bigg(\avint_{z\in Q}\bigg(\int_{\xi\in 2B_Q\setminus Q}\frac1{|z-\xi|^n}\,d\mu(\xi)\bigg)^2d\mu(z)\bigg)^{1/2}
   \lesssim \PP(Q) + \left(\frac{\EE(2Q)}{\mu(Q)}\right)^{1/2}.
\end{multline}

Finally we deal with the last term on the right hand side of \rf{eqal842}. We split it:
\begin{align*}
\big|m_{\mu,Q}(\RR(\chi_{2R^c}\mu))- m_{\mu,2R}(\RR\mu)\big|
& \leq \big|m_{\mu,Q}(\RR(\chi_{3R\setminus 2R}\mu))| + \big|m_{\mu,2R}(\RR(\chi_{3R}\mu))\big|\\
&\quad +
\big|m_{\mu,Q}(\RR(\chi_{3R^c}\mu))- m_{\mu,2R}(\RR(\chi_{3R^c}\mu))\big| \\
& = J_1 + J_2 + J_3.
\end{align*}
Clearly,
$$J_1\lesssim \frac{\mu(3R)}{\ell(R)^n}\lesssim \PP(R).$$
Also, by the antisymmetry of the Riesz kernel and arguing as in \rf{eqboundar3},
\begin{align*}
J_2=& \big|m_{\mu,2R}(\RR(\chi_{3R}\mu))\big| = \big|m_{\mu,2R}(\RR(\chi_{3R\setminus 2R}\mu))\big| 
\\
& \leq  \avint_{z\in 2R}\int_{\xi\in 3R\setminus 2R}\frac1{|z-\xi|^n}\,d\mu(\xi)d\mu(z)
\lesssim \PP(R)+\left(\frac{\EE(4R)}{\mu(R)}\right)^{1/2}.
\end{align*}
Regarding $J_3$, by standard methods, for all $z,z'\in 2Q$, 
$$\big|\RR(\chi_{3R^c}\mu)(z) - \RR(\chi_{3R^c}\mu)(z')\big| \lesssim \PP(R).$$
Hence averaging for $z\in Q$, $z'\in 2R$, we obtain
$$J_3\lesssim \PP(R).$$

We showed that 
$$\big|\RR_{\wt\TT}\mu(x) - \Delta_{\wt\TT}\RR\mu(y)\big| \lesssim \left(\frac{\EE(4R)}{\mu(R)}\right)^{1/2} +  \left(\frac{\EE(2Q)}{\mu(Q)}\right)^{1/2} + \PP(Q) + \PP(R).$$
To get rid of the last two terms, recall that the cubes in $\wt\End$ are $\PP$-doubling, and so is $R$. Hence, by the definition of the energy $\EE(2Q)$, we have
\begin{equation*}
\PP(Q)\lesssim \left(\frac{\EE(2Q)}{\mu(Q)}\right)^{1/2},
\end{equation*}
and an analogous estimate for $\PP(R)$. This finishes the proof.
\end{proof}

\vv

\begin{lemma}\label{lemaprox3}
Let $1<p\leq2$.
For any $Q\in\wt\End$ such that $\ell_0\leq \ell(Q)$,
$$\int_Q \big|\RR_{\wt\TT}\mu - \RR_{\TT_\Reg}\mu\big|^p\,d\mu \lesssim \EE(2Q)^{\frac p2} \,\mu(Q)^{1-\frac p2}.$$
\end{lemma}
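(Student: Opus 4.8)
The idea is to compare $\RR_{\wt\TT}\mu$ and $\RR_{\TT_\Reg}\mu$ cube by cube: fix $Q\in\wt\End$ with $\ell_0\le\ell(Q)$, and recall that the cubes from $\Reg$ which are contained in $Q$ form a partition of $Q$ (up to $\mu$-null sets), with side lengths at most $\ell(Q)$. For $x\in Q$, if $x\in P$ with $P\in\Reg$ and $P\subset Q$, then
$$\RR_{\wt\TT}\mu(x) - \RR_{\TT_\Reg}\mu(x) = \RR(\chi_{2R\setminus 2Q}\mu)(x) - \RR(\chi_{2R\setminus 2P}\mu)(x) = \RR(\chi_{2Q\setminus 2P}\mu)(x),$$
so the difference is localized to the scales between $P$ and $Q$. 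Thus
$$\int_Q \big|\RR_{\wt\TT}\mu - \RR_{\TT_\Reg}\mu\big|^p\,d\mu = \sum_{P\in\Reg:P\subset Q}\int_P \big|\RR(\chi_{2Q\setminus 2P}\mu)(x)\big|^p\,d\mu(x).$$
First I would split $\RR(\chi_{2Q\setminus 2P}\mu)$ into the contribution of the ``annular'' region $2Q\setminus 2B_P$ and the contribution of $2B_P\setminus 2P$. The first piece is controlled pointwise on $P$ by $\sum_{S}\theta_\mu(2B_S)\approx\sum_S \Theta(S)$, where $S$ ranges over the cubes between $P$ and $Q$ containing $x$; for the second piece one uses Lemma \ref{lemDMimproved} applied to $P$, exactly as in \rf{eqboundar3}, to get the bound $\PP(P) + (\EE(2P)/\mu(P))^{1/2}$ after averaging, but here we want a pointwise-in-$p$ bound, so I would instead keep $\int_P(\int_{2B_P\setminus P}|x-y|^{-n}d\mu(y))^2 d\mu(x)\lesssim \sum_{P'\subset 2P}(\ell(P')/\ell(P))^{9/10}\theta_\mu(2B_{P'})^2\mu(P')$, which is $\lesssim \EE(2P)$ after relating the discrete $\Theta$ to $\theta_\mu$ and noting $\ell(P')/\ell(P)\le 1$ gives the needed exponent comparison against the $3/4$ in $\EE$.

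For the annular piece, by Hölder's inequality (as in the proof of Lemma \ref{lemDMimproved2}) applied with an exponent like $1/2$ on the dyadic scales, for $x\in P$
$$\Big(\sum_{S\in\DD_\mu: P\subsetneq S\subsetneq Q,\, x\in S}\Theta(S)\Big)^2 \lesssim \Big(\sum_{S: P\subsetneq S\subsetneq Q,\, x\in S}\big(\tfrac{\ell(Q)}{\ell(S)}\big)^{1/2}\Theta(S)^2\Big)\cdot\Big(\sum_{S:S\subset Q}\big(\tfrac{\ell(S)}{\ell(Q)}\big)^{1/2}\Big),$$
and the last factor is $O(1)$. Integrating over $P$ and summing over $P\subset Q$, the resulting double sum $\sum_{P\subset Q}\sum_{S: P\subsetneq S\subsetneq Q}(\ell(Q)/\ell(S))^{1/2}\Theta(S)^2\mu(P)$ can be reorganized by summing first over $S$ (each $S$ receives the mass $\mu(S)=\sum_{P\subset S}\mu(P)$), giving $\sum_{S\subset Q}(\ell(Q)/\ell(S))^{1/2}\Theta(S)^2\mu(S)\lesssim \EE(2Q)$, because in the definition of $\EE(2Q)$ the weight is $(\ell(S)/\ell(Q))^{3/4}\ge(\ell(S)/\ell(Q))^{1/2}$... wait, that inequality goes the wrong way. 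Here is where I would instead invoke Lemma \ref{lemenergias}-style reasoning: use that $\EE(2Q)\gtrsim \sum_{S\subset Q}(\ell(S)/\ell(Q))^{3/4}\Theta(S)^2\mu(S)$ and that the sum over scales with weight $(\ell(Q)/\ell(S))^{1/2}$ is dominated, after a geometric-series argument in the number of intermediate generations $m$ (contributing $A_0^{m/2}$) against the decay $(\ell(S)/\ell(Q))^{3/4}=A_0^{-3m/4}$ inside $\EE$, which nets a convergent geometric factor $A_0^{-m/4}$. So the $L^2$ version of the estimate, $\int_Q|\RR_{\wt\TT}\mu-\RR_{\TT_\Reg}\mu|^2 d\mu\lesssim \EE(2Q)$, follows cleanly.

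Finally, to pass from $p=2$ to general $p\in(1,2]$, I would use that on $Q$ the integrand $|\RR_{\wt\TT}\mu - \RR_{\TT_\Reg}\mu| = |\RR(\chi_{2Q\setminus 2P}\mu)|$ on each $P\subset Q$ is pointwise bounded by $C\,\Theta(\sH)$-type quantity — more precisely by $C\max_{S\subset 2Q}\Theta(S)\lesssim \Lambda^2\Theta(R)$ or, even more simply, by the pointwise bound from Lemma \ref{lemDMimproved2} combined with polynomial growth — so that $|\RR_{\wt\TT}\mu - \RR_{\TT_\Reg}\mu|^p \le \|\,\cdot\,\|_{L^\infty(\mu\rest Q)}^{p-2}\,|\RR_{\wt\TT}\mu - \RR_{\TT_\Reg}\mu|^2$; however, the cleaner route, which avoids needing an $L^\infty$ bound, is Hölder: $\int_Q |g|^p d\mu \le (\int_Q|g|^2 d\mu)^{p/2}\,\mu(Q)^{1-p/2}$, applied with $g=\RR_{\wt\TT}\mu - \RR_{\TT_\Reg}\mu$. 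Combining this with the $L^2$ estimate gives $\int_Q|\RR_{\wt\TT}\mu-\RR_{\TT_\Reg}\mu|^p d\mu\lesssim \EE(2Q)^{p/2}\mu(Q)^{1-p/2}$, as desired. The main obstacle I expect is bookkeeping: making the reorganization of the double sum over intermediate scales $P\subsetneq S\subsetneq Q$ rigorous while matching the $3/4$-exponent weight of $\EE$ against the Hölder-generated $1/2$-exponent, and handling the ``boundary'' term $\int_{2B_P\setminus 2P}$ (the part of $2B_P$ outside $2P$) correctly via Lemma \ref{lemDMimproved} — these are the steps where one must be careful that all the geometric series converge with constants depending only on $n$ and $A_0$.
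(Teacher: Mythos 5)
Your set-up is right: the Hölder reduction from $L^p$ to $L^2$ (last paragraph), the identity $\RR_{\wt\TT}\mu - \RR_{\TT_\Reg}\mu = \RR(\chi_{2Q\setminus 2P}\mu)$ on each $P\in\Reg$ with $P\subset Q$, the pointwise bound by $\sum_{S:P\subset S\subset Q}\theta_\mu(2B_S)$, the Cauchy--Schwarz with the weight $(\ell(Q)/\ell(S))^{\alpha}$, and the Fubini reorganization — all of these match the paper's argument. And you are right to pause where you do: after reorganization you are looking at a sum $\sum_{S}(\ell(Q)/\ell(S))^{1/2}\,\theta_\mu(2B_S)^2\,\mu(S)$ with a weight that \emph{grows} as $\ell(S)$ shrinks, while $\EE(2Q)$ has the \emph{decaying} weight $(\ell(S)/\ell(Q))^{3/4}$; these point in opposite directions.

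However, your repair is not correct. The ``geometric-series'' claim that comparing $A_0^{m/2}$ against $A_0^{-3m/4}$ nets a convergent factor $A_0^{-m/4}$ has the arithmetic backwards: the ratio of the weight you need to the weight you have is $A_0^{m/2}/A_0^{-3m/4}=A_0^{5m/4}$, which diverges. There is no way to dominate $\sum_{S\subset Q}(\ell(Q)/\ell(S))^{1/2}\theta_\mu(2B_S)^2\mu(S)$ by $\EE(2Q)$ for a general cube $Q$: the statement is simply false without further restrictions on the cubes $S$ appearing in the sum.

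The missing ingredient is the geometric observation in \eqref{eqreg71}. Because $P\in\Reg$ with $P\subset Q\in\wt\End$, the regularization function $d_R$ satisfies $d_R(x)\gtrsim\dist(P,\supp\mu\setminus Q)$ for $x\in P$, and by Lemma \ref{lem74}(a) $d_{R,\ell_0}(x)\approx\ell(P)$; hence $\ell(P)\gtrsim\dist(P,\supp\mu\setminus Q)$. Consequently every intermediate cube $S$ with $P\subset S\subset Q$ satisfies $2B_S\cap(\supp\mu\setminus Q)\neq\varnothing$, i.e.\ $S\in\wt\DD_\mu^{int}(Q)$. So the reorganized sum runs only over \emph{boundary} cubes of $Q$, not over all of $\DD_\mu(Q)$, and it is Lemma \ref{lemdmutot} (not Lemma \ref{lemenergias}, which plays no role here) that trades the growing weight $(\ell(Q)/\ell(S))^{\alpha}$ over $\wt\DD_\mu^{int}(Q)$ for a decaying weight $(\ell(S)/\ell(Q))^{\gamma}$ over $\DD_\mu(2Q)$ — this is exactly the small-boundary property built into the enhanced David--Mattila construction and is the entire reason for Lemma \ref{lemDMimproved}. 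Once the sum is restricted to $\wt\DD_\mu^{int}(Q)$ the bound by $\EE(2Q)$ follows, and the rest of your argument goes through. The ``annular'' split into $2Q\setminus 2B_P$ and $2B_P\setminus 2P$ that you introduce is unnecessary; the paper uses the direct pointwise estimate.
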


\begin{proof}
The lemma follows from H\"older's inequality and the estimate
\begin{equation}\label{eqf932}
\int_Q \big|\RR_{\wt\TT}\mu - \RR_{\TT_\Reg}\mu\big|^2\,d\mu \lesssim \EE(2Q),
\end{equation}
which we prove below.

Notice first that the cube $Q$ as above is covered by the cubes $P\in\DD_{\mu}(Q)\cap\Reg$. Indeed, if $Q\in \sM_\Neg$, then $Q\in\TT_{\Reg}$ and this is trivially true. On the other hand, if $Q\in\wt\End\setminus\sM_\Neg\subset\End$, then we have $Q\in\TT$, and the condition $\ell_0\leq \ell(Q)$ implies that $d_{R,\ell_0}(x)\leq\ell(Q)$. Hence, 
$Q$ is covered by the cubes $P\in\DD_\mu(Q)\cap\Reg$.

Observe now that, for $Q\in\wt\End$, $P\in\Reg$ such that $P\subset Q$, and $x\in P$,
$$\RR_{\TT_\Reg}\mu(x) - \RR_{\wt\TT}\mu(x) = \RR(\chi_{2R\setminus 2P}\mu)(x) -
\RR(\chi_{2R\setminus 2Q}\mu)(x) = \RR(\chi_{2Q\setminus 2P}\mu)(x).$$
Thus,
$$\big|\RR_{\TT_\Reg}\mu(x) - \RR_{\wt\TT}\mu(x)\big| \lesssim \sum_{S\in\DD_\mu:P\subset S\subset Q}
\theta_\mu(2B_S).$$
Notice now that, if $\wh Q$ denotes the cube from $\End$ that contains $Q$ (which coincides with
$Q$ when $P\not\in\Reg_\Neg$), we have
$$d_R(x) \gtrsim \dist(P,\supp\mu\setminus \wh Q) \geq \dist(P,\supp\mu\setminus Q) 
\quad \mbox{ for all $x\in P$.}$$
So by Lemma \ref{lem74} we also have
\begin{equation}\label{eqreg71}
\ell(P) \gtrsim \dist(P,\supp\mu\setminus Q)
\end{equation}
Thus,
$$\sum_{S\in\DD_\mu:P\subset S\subset Q}
\theta_\mu(2B_S) \lesssim \sum_{S\in\wt\DD_\mu^{int}(Q):S\supset P}
\theta_\mu(2B_S),$$
with $\wt\DD_\mu^{int}(Q)$ defined in \rf{eqdmuint}.

So we have
\begin{align*}
\int_Q \big|\RR_{\wt\TT}\mu - \RR_{\TT_\Reg}\mu\big|^2\,d\mu &=\sum_{P\in\Reg:P\subset Q}
\int_P \big|\RR_{\wt\TT}\mu - \RR_{\TT_\Reg}\mu\big|^2\,d\mu\\
& \lesssim \sum_{P\in\Reg:P\subset Q}
\bigg(\sum_{S\in\wt\DD_\mu^{int}(Q):S\supset P}
\theta_\mu(2B_S)\bigg)^2\,\mu(P).
\end{align*}
By H\"older's inequality, for any $\alpha>0$,
\begin{align*}
\bigg(\sum_{S\in\wt\DD_\mu^{int}(Q):S\supset P} &
\theta_\mu(2B_S)\bigg)^2\\
&\leq \bigg(\sum_{S\in\wt\DD_\mu^{int}(Q):S\supset P}\left(\frac{\ell(Q)}{\ell(S)}\right)^\alpha
\theta_\mu(2B_S)^2\bigg) \bigg(\sum_{S\in\wt\DD_\mu^{int}(Q):S\supset P}\left(\frac{\ell(S)}{\ell(Q)}\right)^{\alpha}\bigg)\\
& \lesssim_\alpha \sum_{S\in\wt\DD_\mu^{int}(Q):S\supset P}\left(\frac{\ell(Q)}{\ell(S)}\right)^\alpha
\theta_\mu(2B_S)^2.
\end{align*}
Therefore,
\begin{align*}
\int_Q \big|\RR_{\wt\TT}\mu - \RR_{\TT_\Reg}\mu\big|^2\,d\mu & \lesssim_\alpha 
\sum_{P\in\Reg:P\subset Q}\,\sum_{S\in\wt\DD_\mu^{int}(Q):S\supset P}\left(\frac{\ell(Q)}{\ell(S)}\right)^\alpha
\theta_\mu(2B_S)^2\mu(P)\\
& =
\sum_{S\in\wt\DD_\mu^{int}(Q)}\left(\frac{\ell(Q)}{\ell(S)}\right)^\alpha
\theta_\mu(2B_S)^2 \sum_{P\in\Reg:P\subset S}\mu(P)\\
& \le \sum_{S\in\wt\DD_\mu^{int}(Q)}\left(\frac{\ell(Q)}{\ell(S)}\right)^\alpha
\theta_\mu(2B_S)^2\,\mu(S).
\end{align*}
By Lemma \ref{lemdmutot}, for $\alpha$ small enough, the right hand side is bounded above by 
$C\EE(2Q)$, so that \rf{eqf932} holds.
\end{proof}

\vv

\vv


\subsection{Estimates for the $\PP$ and $\QQ_\Reg$ coefficients of some cubes from $\wt \End$ and $\Reg$}

We will transfer the lower estimate obtained for the $L^p(\eta)$ norm of $\RR\eta$ in Lemma
\ref{lemrieszeta} to $\RR_{\TT}\mu$, $\RR_{\TT_\Reg}\mu$, and $\Delta_{\TT}\RR\mu$ by means of
Lemmas \ref{lemaprox1}, \ref{lemaprox2}, and \ref{lemaprox3}. To this end, we will need careful 
estimates for the $\PP$ and $\QQ_\Reg$ coefficients of cubes from $\wt\End$ and  $\Reg$. This is the
task we will perform in this section.
\vv


Given $R\in\MDW$, recall that
$\HD_1(e'(R))=\HD(R)\cap\sss(e'(R))$. 
To shorten notation, we will write $\HD_1=\HD_1(e'(R))$ in this section.
Recall that $\wt\End = \End\setminus\Neg \cup \sM_{\Neg}$. By \rf{eqstop2}, we have
\begin{equation}\label{eqsplit71}
\wt\End =  \LD_1 \cup \OP_1 \cup \LD_2  \cup \OP_2 \cup \HD_2\cup \sM_\Neg,
\end{equation}
where we introduced the following notations:
\begin{itemize}
\item $\LD_1$ is the subfamily of $\wt\End$ of those maximal $\PP$-doubling cubes which are contained both in $e'(R)$ and in some cube from $\LD(R)\cap\sss_1(e'(R))\setminus \Neg$.
\item $\OP_1$ is the  subfamily of $\wt\End$ of those maximal $\PP$-doubling cubes which are contained both in $e'(R)$ and in some cube from $\OP(R)\cap\sss_1(e'(R))\setminus \Neg$.
\item $\LD_2$ is the subfamily of $\wt\End$ of those maximal $\PP$-doubling cubes which are contained in some cube
$Q\in \LD(Q')\cap \sss(Q')\setminus \Neg$ for some $Q'\in \HD_1$.
\item $\OP_2$ is the subfamily of $\wt\End$ of those maximal $\PP$-doubling cubes which are contained in some cube
$Q\in \OP(Q')\cap\sss(Q')$ for some $Q'\in \HD_1$.
\item $\HD_2= \bigcup_{Q'\in\HD_1}(\HD(Q')\cap\sss(Q'))$.
\end{itemize}
Remark that the splitting in \rf{eqsplit71} is disjoint. Indeed, notice that, by the definition
of $\sM_\Neg$, the cubes from $\OP_2\cup\HD_2$ do not belong to $\sM_\Neg$, since they are strictly contained in some cube from $\HD_1$, which is $\PP$-doubling, in particular.

For $i=1,2$, we also denote by $\Reg_{\LD_i}$ the subfamily of the cubes from $\Reg$ which are contained in some cube from $\LD_i$, and we define $\Reg_{\OP_i}$, $\Reg_{\HD_2}$, $\Reg_\Neg$, and
$\Reg_{\sM_\Neg}$ analogously.\footnote{Notice that $\sD_\Neg=\Reg_{\sM_\Neg}$.}
We let $\Reg_\Ot$ be the ``other'' cubes from $\Reg$: the ones which are not contained in any cube from $\End$ (which, in particular, have side length comparable to $\ell_0$).
Also, we let $\Reg_{\DB}$ be the subfamily of the cubes from $\Reg$ which are contained in some cube from $\wt\End\cap\DB$. Notice that we have the splitting
\begin{equation}\label{eqsplitreg0}
\Reg = \Reg_{\LD_1} \cup \Reg_{\OP_1}\cup \Reg_{\LD_2}\cup \Reg_{\OP_2}\cup \Reg_{\HD_2}\cup \Reg_{\Neg}\cup \Reg_{\Ot}.
\end{equation}
The families above may intersect the family $\Reg_{\DB}$.

Given a family $\cI\subset\DD_\mu$ and $1<p\leq2$, we denote
$$\Sigma_p^\PP(\cI) = \sum_{Q\in \cI} \PP(Q)^p\,\mu(Q),\qquad \Sigma_p^\QQ(\cI) = \sum_{Q\in \cI} \QQ_\Reg(Q)^p\,\mu(Q).$$
We also write $\Sigma^\PP(\cI)= \Sigma_2^\PP(\cI)$, $\Sigma^\QQ(\cI)= \Sigma_2^\QQ(\cI)$.

\vv

\begin{lemma}\label{lemenereg}
For any $Q\in\wt\End$, 
$$\Sigma^\PP(\Reg\cap\DD_\mu(Q)) \lesssim \EE(2Q).$$
\end{lemma}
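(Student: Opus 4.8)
The plan is to fix $Q\in\wt\End$ and estimate $\sum_{P\in\Reg\cap\DD_\mu(Q)}\PP(P)^2\mu(P)$ by comparing $\PP(P)$ to the ``local'' Poisson-type sum over cubes inside $Q$ plus a contribution coming from the scale of $Q$ itself. Concretely, for $P\in\Reg$ with $P\subset Q$, I would split
\[
\PP(P)=\sum_{S\in\DD_\mu:\,S\supset P}\frac{\ell(P)}{\ell(S)^{n+1}}\,\mu(2B_S)
=\sum_{\substack{S\in\DD_\mu:\,P\subset S\subset Q}}\frac{\ell(P)}{\ell(S)^{n+1}}\,\mu(2B_S)
+\sum_{\substack{S\in\DD_\mu:\,S\supsetneq Q}}\frac{\ell(P)}{\ell(S)^{n+1}}\,\mu(2B_S).
\]
The second sum is $\dfrac{\ell(P)}{\ell(Q)}\,\PP(Q)$, and since $Q$ is $\PP$-doubling we have $\PP(Q)\lesssim\Theta(Q)$, so this term contributes $\lesssim (\ell(P)/\ell(Q))\,\Theta(Q)$. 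Recalling $\Theta(Q)^2\mu(Q)\lesssim\EE(2Q)$ trivially (from the $P=Q$ term in the definition of $\EE(2Q)$, using $(\ell(Q)/\ell(Q))^{3/4}=1$), and that $\sum_{P\in\Reg:\,P\subset Q}(\ell(P)/\ell(Q))^{3/2}\mu(P)\lesssim\ell(Q)^{3/2}\sum_{j}A_0^{-3j/2}\cdot(\text{stuff})$—more simply, $\sum_{P\subset Q}(\ell(P)/\ell(Q))^{3/2}\mu(P)\le\mu(Q)$ since the $\Reg$ cubes are disjoint and $\ell(P)\le\ell(Q)$—the second-term contribution to $\Sigma^\PP$ is $\lesssim\Theta(Q)^2\mu(Q)\lesssim\EE(2Q)$.

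For the first (local) sum, I would again use Cauchy–Schwarz with a small exponent $\alpha>0$: writing $\frac{\ell(P)}{\ell(S)^{n+1}}\mu(2B_S)=\frac{\ell(P)}{\ell(S)}\theta_\mu(2B_S)$ (up to $C_0$-constants) and
\[
\Big(\sum_{P\subset S\subset Q}\tfrac{\ell(P)}{\ell(S)}\theta_\mu(2B_S)\Big)^2
\le\Big(\sum_{P\subset S\subset Q}\big(\tfrac{\ell(P)}{\ell(S)}\big)^{1+\alpha}\Big)\Big(\sum_{P\subset S\subset Q}\big(\tfrac{\ell(P)}{\ell(S)}\big)^{1-\alpha}\theta_\mu(2B_S)^2\Big)
\lesssim_\alpha\sum_{P\subset S\subset Q}\big(\tfrac{\ell(P)}{\ell(S)}\big)^{1-\alpha}\theta_\mu(2B_S)^2,
\]
since the first geometric factor is $O_\alpha(1)$. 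Then
\[
\sum_{P\in\Reg:\,P\subset Q}\mu(P)\sum_{P\subset S\subset Q}\big(\tfrac{\ell(P)}{\ell(S)}\big)^{1-\alpha}\theta_\mu(2B_S)^2
=\sum_{S\subset Q}\big(\tfrac{1}{\ell(S)}\big)^{1-\alpha}\theta_\mu(2B_S)^2\sum_{P\in\Reg:\,P\subset S}\ell(P)^{1-\alpha}\mu(P).
\]
Here I would use the disjointness of the $\Reg$ cubes together with Lemma~\ref{lem74}(b) (neighboring $\Reg$ cubes have comparable size, bounded overlap of $50\ell(P)$-balls) to bound $\sum_{P\in\Reg:\,P\subset S}\ell(P)^{1-\alpha}\mu(P)\lesssim\ell(S)^{1-\alpha}\mu(S)$; combined with the fact that there is a gap between consecutive $\Reg$-scales it may even help, but the crude bound $\ell(P)\le\ell(S)$ and $\sum_{P\subset S}\mu(P)\le\mu(S)$ suffices. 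This gives
\[
\Sigma^\PP(\Reg\cap\DD_\mu(Q))\lesssim_\alpha\sum_{S\in\DD_\mu:\,S\subset Q}\big(\tfrac{\ell(S)}{\ell(Q)}\big)^{0}\,\theta_\mu(2B_S)^2\,\mu(S),
\]
wait—I need to track the $\ell(Q)$ normalization; after being careful, the exponent that appears on $\ell(S)/\ell(Q)$ will be $0$ or positive, so in any case the sum is dominated by $\sum_{S\subset Q}(\ell(S)/\ell(Q))^{3/4}\Theta(S)^2\mu(S)\lesssim\EE(2Q)$ once we know each factor $(\ell(S)/\ell(Q))^{\text{exponent}}\le(\ell(S)/\ell(Q))^{3/4}$; if the natural exponent is too small I would simply absorb the discrepancy into an extra geometric summation over scales, as is done in Lemmas~\ref{lemrecur5}--\ref{lemdmutot}.

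The main obstacle I anticipate is bookkeeping the $\ell(Q)$-versus-$\ell(S)$ normalization so that the resulting sum is genuinely controlled by $\EE(2Q)=\sum_{S\in\DD_\mu(2Q)}(\ell(S)/\ell(Q))^{3/4}\Theta(S)^2\mu(S)$ rather than by something weaker: the definition of $\PP$ has no decay in $\ell(S)$, so the decay must be manufactured entirely from the Cauchy–Schwarz trick and from the geometric decay $\ell(P)\le\ell(S)$. A secondary technical point is that $\Reg\cap\DD_\mu(Q)$ need not cover $Q$ exactly (when $Q\in\sM_\Neg$ or when $\ell_0$ truncates things), so I would only ever use $\sum_{P\in\Reg:\,P\subset S}\mu(P)\le\mu(S)$, which holds unconditionally by disjointness, and never the reverse. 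With $\alpha$ chosen small (depending only on $n$, as in Lemma~\ref{lemdmutot}), these estimates combine to give $\Sigma^\PP(\Reg\cap\DD_\mu(Q))\lesssim\EE(2Q)$, which is the claim.
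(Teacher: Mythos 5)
Your overall plan (separating $\PP(P)$ into the sum over scales inside $Q$ and the contribution of scales above $Q$, then controlling the latter by $\PP(Q)\lesssim\Theta(Q)$ using $\PP$-doubling, and attacking the local part by Cauchy--Schwarz and Fubini) is the right skeleton, and in fact the paper does exactly this. The paper even uses the ``$\alpha=0$'' version of your Cauchy--Schwarz step, splitting $\frac{\ell(S)}{\ell(P)}\Theta(P)$ as $\big(\frac{\ell(S)}{\ell(P)}\big)^{1/2}\Theta(P)\cdot\big(\frac{\ell(S)}{\ell(P)}\big)^{1/2}$, which already gives a geometric series on one factor. But the argument as you have set it up collapses at the very last step, and this is a genuine gap, not a matter of bookkeeping.

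After Fubini you arrive at $\sum_{S\subset Q}\theta_\mu(2B_S)^2\sum_{P\in\Reg:\,P\subset S}\big(\tfrac{\ell(P)}{\ell(S)}\big)^{1-\alpha}\mu(P)$, and applying the crude bound $\ell(P)\le\ell(S)$, $\sum_P\mu(P)\le\mu(S)$ you are left with $\sum_{S\subset Q}\theta_\mu(2B_S)^2\mu(S)$, with \emph{no} decay factor in $\ell(S)/\ell(Q)$. This quantity is genuinely not controlled by $\EE(2Q)=\sum_{S\subset 2Q}(\ell(S)/\ell(Q))^{3/4}\Theta(S)^2\mu(S)$ in general, and neither Cauchy--Schwarz nor the comparison $\ell(P)\le\ell(S)$ can manufacture the missing $(\ell(S)/\ell(Q))^{3/4}$. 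Your own remark that ``the decay must be manufactured entirely from the Cauchy--Schwarz trick and from the geometric decay $\ell(P)\le\ell(S)$'' identifies the problem but points to a dead end.

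The missing idea is that the inner sum $\sum_{P\in\Reg:\,P\subset S}\mu(P)$ is nonzero only for a very restricted set of intermediate cubes $S$, namely the ``boundary'' cubes. Indeed, by Lemma~\ref{lem74}(a) and the definition of $d_R$ one has $\ell(P)\gtrsim\dist(P,\supp\mu\setminus Q)$ for $P\in\Reg\cap\DD_\mu(Q)$ (this is precisely \rf{eqreg71} in the proof of Lemma~\ref{lemaprox3}); hence any $S$ with $P\subset S\subset Q$ has $\dist(S,\supp\mu\setminus Q)\lesssim\ell(P)\le\ell(S)$, i.e. $S\in\wt\DD_\mu^{int}(Q)$. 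Thus the local contribution is really $\lesssim\sum_{S\in\wt\DD_\mu^{int}(Q)}\Theta(S)^2\mu(S)$, and this is the quantity that Lemma~\ref{lemdmutot} (with $\gamma=9/10$, thanks to the \emph{enhanced} David--Mattila construction of Lemma~\ref{lemDMimproved}) controls by $\sum_{S\subset 2Q}(\ell(S)/\ell(Q))^{9/10}\Theta(S)^2\mu(S)\lesssim\EE(2Q)$. Without identifying the restriction to $\wt\DD_\mu^{int}(Q)$ you cannot invoke Lemma~\ref{lemdmutot}, and the bound you obtain is strictly weaker than the claim.
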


\begin{proof}
	For all $S\in\Reg\cap \DD_\mu(Q)$, by H\"older's inequality, we have
	\begin{align*}
	\PP(S)^2 &\lesssim \bigg(\sum_{P:S\subset P\subset Q}\frac{\ell(S)}{\ell(P)}\,\Theta(P)\bigg)^2
	+ \bigg(\frac{\ell(S)}{\ell(Q)}\,\PP(Q)\bigg)^2\\
	& \leq \bigg(\sum_{P:S\subset P\subset Q}\frac{\ell(S)}{\ell(P)}\,\Theta(P)^2\bigg)
	\bigg(\sum_{P:S\subset P\subset Q}\frac{\ell(S)}{\ell(P)}\bigg) +
	\PP(Q)^2\\
	& \lesssim \sum_{P:S\subset P\subset Q}\frac{\ell(S)}{\ell(P)}\,\Theta(P)^2+ 
	\PP(Q)^2.
	\end{align*}
	We deduce that
	\begin{align*}
	\Sigma^\PP(\Reg\cap\DD_\mu(Q)) &=
	\sum_{S\in\Reg\cap \DD_\mu(Q)} \PP(S)^2\,\mu(S)\\
	& \lesssim
	\sum_{S\in\Reg\cap \DD_\mu(Q)} 
	\sum_{P:S\subset P\subset Q}\frac{\ell(S)}{\ell(P)}\,\Theta(P)^2\,\mu(S) +
	\sum_{S\in\Reg\cap \DD_\mu(Q)} \PP(Q)^2\,\mu(S).
	\end{align*}
	Clearly, the last sum is bounded by $\PP(Q)^2\,\mu(Q),$ which in turn is bounded by $\EE(2Q)$ since $Q$ is $\PP$-doubling. Concerning the first term,
	arguing as in \rf{eqreg71}, we deduce that the cubes $P$ in the sum belong to 
	$\wt\DD_\mu^{int}(Q)$. Thus, by Fubini,
	\begin{align*}
	\Sigma^\PP(\Reg\cap\DD_\mu(Q)) &\lesssim 
	\sum_{P\in\wt \DD_\mu^{int}(Q)} \Theta(P)^2 \sum_{S\in\Reg:S\subset P}\frac{\ell(S)}{\ell(P)}\,\mu(S)
	+ \EE(2Q)\\
	& \leq \sum_{P\in\wt \DD_\mu^{int}(Q)}\! \Theta(P)^2 \,\mu(P) + \EE(2Q)\lesssim \EE(2Q).
	\end{align*}
\end{proof}




\vv
\begin{lemma}\label{lempoiss} 
We have:
\begin{itemize}
\item[(i)] If $Q\in\LD_1$, then $\PP(Q)\lesssim \delta_0\,\Theta(R)$.
\item[(ii)] If $Q\in\LD_2$, then $\PP(Q)\lesssim \delta_0\,\Lambda\,\Theta(R)$.
\vspace{1mm}

\item[(iii)] If $Q\in\OP_1$, then $\PP(Q)\lesssim \Lambda\,\Theta(R)$.
\vspace{1mm}

\item[(iv)] If $Q\in\OP_2\cup \HD_2$, then $\PP(Q)\lesssim \Lambda^2\,\Theta(R)$.

\vspace{1mm}

\item[(v)] If $Q\in\Neg\cup\sM_\Neg$, then $\PP(Q)\lesssim \left(\frac{\ell(Q)}{\ell(R)}\right)^{1/3}\,\Theta(R)$.
\end{itemize}
\end{lemma}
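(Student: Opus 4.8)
The plan is to prove each of the five estimates (i)--(v) by tracking how the $\PP$-coefficient of a cube $Q\in\wt\End$ relates to that of the stopping cube (or the parent of the stopping cube) from which $Q$ descends. The key structural facts are: first, every cube $Q\in\wt\End$ is $\PP$-doubling (except for those in $\Neg$, which are handled by Lemma \ref{lemnegs}); second, between a cube in $\wt\End$ and the stopping cube it sits inside, all the intermediate cubes are non-$\PP$-doubling, so Lemma \ref{lemdobpp} applies and gives geometric decay $\PP(Q_m)\le 2A_0^{-m/2}\PP(Q_0)$; third, for any stopping cube from $\LD(R)$ we have by definition $\PP(\cdot)\le\delta_0\Theta(R)$, and for a cube from $\HD(R)=\hd^{k_\Lambda}(R)$ located in $9Q$ we have $\Theta\approx\Lambda\Theta(R)$ by \eqref{eqforakdf33} together with $\PP$-doubling. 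The overall strategy for each case is: identify the relevant ``ancestor'' stopping cube $S$ (one of $\LD$, $\OP$, or $\HD$ type), bound $\PP(S)$ or $\PP(\widehat S)$ using the defining property of that family, and then note that either $Q=S$ modulo $\PP$-doubling descent (so $\PP(Q)\lesssim\PP(S)$) or $Q$ is a $\PP$-doubling cube strictly inside $S$, in which case by Lemma \ref{lemdobpp} $\PP(Q)$ is at most comparable to $\PP$ of the parent of the first $\PP$-doubling descendant, which is again controlled by $\PP(S)$.

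More concretely: for (i), $Q\in\LD_1$ is a maximal $\PP$-doubling cube contained in some $S\in\LD(R)\cap\sss_1(e'(R))$; since $S\notin\Neg$ its parent $\widehat S$ is still inside the relevant enlarged cube and we have $\PP(S)\le\delta_0\Theta(R)$ directly, and the $\PP$-doubling descent from (just below) $S$ to $Q$ only decreases $\PP$ up to a constant, giving $\PP(Q)\lesssim\delta_0\Theta(R)$. For (ii), $Q\in\LD_2$ sits inside a cube $Q''\in\LD(Q')\cap\sss(Q')$ for some $Q'\in\HD_1$; now $\PP(Q'')\le\delta_0\Theta(Q')\approx\delta_0\Lambda\Theta(R)$ by \eqref{eqforakdf33} (using $Q'\in\HD(R)=\hd^{k_\Lambda}(R)$, $\Theta(Q')\approx A_0^{k_\Lambda n}\Theta(R)=\Lambda\Theta(R)$), and again the descent to $Q$ costs only a constant. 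For (iii), $Q\in\OP_1$ sits inside a cube $S\in\OP(R)\cap\sss_1(e'(R))$; here $\OP(R)=\NDB(R)$, and by the definition of $\NDB(R)$ the cube $S$ does not belong to $\HD(R)=\hd^{k_\Lambda}(R)$, so its parent $\widehat S$ satisfies $\Theta(\widehat S)<A_0^{k_\Lambda n}\Theta(R)=\Lambda\Theta(R)$; combined with $\PP$-doubling of $Q$ and Lemma \ref{lemdobpp}, $\PP(Q)\lesssim\PP$ of the first $\PP$-doubling descendant of $S$, which is $\lesssim\Theta(S)\lesssim A_0^n\Theta(\widehat S)\lesssim\Lambda\Theta(R)$. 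For (iv), $Q\in\OP_2\cup\HD_2$ is either in $\OP(Q')\cap\sss(Q')$ or in $\HD(Q')\cap\sss(Q')$ for some $Q'\in\HD_1$; since $\Theta(Q')\approx\Lambda\Theta(R)$, running the same argument one level down (with $Q'$ playing the role of $R$) yields $\PP(Q)\lesssim\Lambda\cdot\Theta(Q')\lesssim\Lambda^2\Theta(R)$.

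For (v), the cube $Q\in\Neg\cup\sM_\Neg$ lies inside $e'(R)\setminus R$, and all its ancestors up to generation $\ell(R)/A_0$ are non-$\PP$-doubling (this is exactly the situation analyzed in the proof of Lemma \ref{lemnegs}); applying Lemma \ref{lemdobpp} to the chain from the cube $Q_m$ of side length $A_0^{-1}\ell(R)$ down to $Q$ (noting $\PP(Q_m)\lesssim\PP(R)\lesssim C_d\Theta(R)$ because $Q_m\subset 2R$ and $R$ is $\PP$-doubling) gives $\PP(Q)\lesssim\PP(Q_1)\lesssim A_0^{-m/2}\Theta(R)\approx(\ell(Q)/\ell(R))^{1/2}\Theta(R)$, which is even stronger than the claimed $(\ell(Q)/\ell(R))^{1/3}$; the weaker exponent $1/3$ is stated presumably because $\sM_\Neg$ cubes are the maximal $\PP$-doubling cubes below $\Neg$ cubes and one wants a uniform statement with some room to spare, but in any case it follows immediately.

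The main obstacle is bookkeeping rather than a deep difficulty: one must be careful about the distinction between ``contained in a stopping cube $S$'' versus ``equal to the first $\PP$-doubling descendant of $S$'', and about the fact that the chain of non-$\PP$-doubling ancestors being used in Lemma \ref{lemdobpp} must actually lie within the region where the construction makes sense (i.e., inside $e'(R)$, or inside the relevant $9Q'$ for the second-generation families so that \eqref{eqforakdf33} is applicable). The only genuinely subtle point is ensuring, in cases (iii) and (iv), that the relevant stopping cube from $\OP(R)=\NDB(R)$ or $\HD$ indeed has the density bound claimed --- for $\NDB$ one uses that it is explicitly defined to exclude $\HD$ cubes, and for $\HD_2$ one invokes Lemma \ref{lempdoubling}/\eqref{eqforakdf33} to pin down $\Theta$ up to constants. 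Once these density bounds for the ancestor stopping cubes are in hand, all five estimates are short consequences of Lemma \ref{lemdobpp} and the observation that $\PP(Q)\lesssim\Theta(Q)$ for $\PP$-doubling $Q$.
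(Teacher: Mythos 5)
Your overall descent strategy (control $\PP$ of the stopping cube, then pass to the maximal $\PP$-doubling cube inside it via Lemma \ref{lemdobpp}) is the right one and agrees with the paper for (i), (ii), (v). In (v) you in fact obtain a sharper estimate: applying the $\PP$-bound \rf{eqcad35'} of Lemma \ref{lemdobpp} directly to the chain from the top ancestor in $e'(R)$ (of side length $A_0^{-1}\ell(R)$, whose $\PP$-coefficient is $\lesssim\Theta(R)$) down to the parent of $Q$ gives $\PP(Q)\lesssim(\ell(Q)/\ell(R))^{1/2}\Theta(R)$. The paper instead expands $\PP(Q)$ as a sum over all intermediate ancestors and bounds each term $\tfrac{\ell(Q)}{\ell(S)}\big(\tfrac{\ell(S)}{\ell(R)}\big)^{1/2}$ uniformly by $\big(\tfrac{\ell(Q)}{\ell(R)}\big)^{1/2}$, incurring a logarithmic factor which it absorbs by weakening the exponent to $1/3$. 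Your route avoids the log; this is a genuine, if minor, improvement.

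There is, however, a real gap in your argument for (iii), and it propagates to (iv). You assert that if $Q$ is a maximal $\PP$-doubling cube contained in a stopping cube $S\in\OP(R)\cap\sss(e'(R))$, then $\PP(Q)\lesssim\Theta(S)$. This is false. What Lemma \ref{lemdobpp} actually gives is $\PP(Q)\lesssim\PP(S)$, and $\PP(S)$ can be much larger than $\Theta(S)$ when $S$ is not $\PP$-doubling --- and a cube $S\in\NDB(R)$ is not required to be $\PP$-doubling. For instance, $S$ could have $\Theta(S)$ close to the $\LD$ threshold $\delta_0\Theta(R)$ while its ancestors inside $R$ have densities near $\Lambda\Theta(R)$, so that $\PP(S)\approx\Lambda\Theta(R)\gg\Theta(S)$; then $\PP(Q)\approx\PP(S)\not\lesssim\Theta(S)$. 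The correct route, which is what the paper does, is to bound $\PP(S)$ rather than $\Theta(S)$: since $S$ is a \emph{maximal} cube in $\bad(R)$, every ancestor $S'$ with $S\subseteq S'\subsetneq R$ satisfies $\Theta(S')<\Lambda\Theta(R)$ (otherwise some ancestor would be contained in a cube of $\HD(R)$ and supersede $S$ in $\bad(R)$); together with the $\PP$-doubling of $R$ this yields $\PP(S)\lesssim\Lambda\Theta(R)$, and then $\PP(Q)\lesssim\PP(S)\lesssim\Lambda\Theta(R)$. A related weakness is your step ``$S\notin\HD(R)$ so $\Theta(\widehat S)<\Lambda\Theta(R)$'': not belonging to the family of \emph{maximal} high-density cubes $\HD(R)$ says nothing by itself about $\Theta(\widehat S)$; what controls $\Theta(\widehat S)$ is again the maximality of $S$ in the stopping family.
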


\begin{proof}
The statements (i) and (ii) follow directly from the definitions of $\LD_1$ and $\LD_2$ and Lemma \ref{lemdobpp}.
The statement (iii) is due to the fact that, by the stopping conditions, the cubes  $Q'\in\sss_1(e'(R))$ satisfy $\PP(Q')\lesssim \Lambda\,\PP(R)\approx \Lambda\,\Theta(R)$. Then 
it just remains to notice that if
$Q$ is a maximal doubling cube contained in $Q'$, from Lemma \ref{lemdobpp} it follows that
$\PP(Q)\lesssim\PP(Q')$.

The property (iv) for the cubes in $\OP_2$ follows by arguments analogous to the ones for (iii), taking into account that such cubes are maximal doubling cubes contained in cubes 
from $\sss(R')$, for some $R'\in\HD_1$ with $\PP(R')\approx \Theta(R')=\Lambda\,\Theta(R)$.\footnote{This argument shows that, in fact, $\PP(Q)\lesssim\Lambda^2\Theta(R)$ for all $Q\in\TT(e'(R))$.}
On the other hand the cubes $Q\in\HD_2$ satisfy $\Theta(Q)=\Lambda^2\,\Theta(R)$ and are $\PP$-doubling by construction.

Finally, we turn our attention to (v). By the definitions of $\Neg$ and $\sM_\Neg$ and Lemma \ref{lemdobpp}, for all
$S\in\DD_\mu$ such that $Q\subset S\subset R$, we have
$$\Theta(S)\lesssim \left(\frac{\ell(S)}{\ell(R)}\right)^{1/2}\,\PP(R).$$
Thus,
\begin{align*}
\PP(Q) &\approx \frac{\ell(Q)}{\ell(R)}\,\PP(R) + \sum_{S:Q\subset S\subset R}\frac{\ell(Q)}{\ell(S)}\,\Theta(S)\\
& \lesssim \frac{\ell(Q)}{\ell(R)}\,\Theta(R) + \sum_{S:Q\subset S\subset R}\frac{\ell(Q)}{\ell(S)}
\,\left(\frac{\ell(S)}{\ell(R)}\right)^{1/2}\,\Theta(R)\\
& \lesssim \frac{\ell(Q)}{\ell(R)}\,\Theta(R) + \sum_{S:Q\subset S\subset R}\,\left(\frac{\ell(Q)}{\ell(R)}\right)^{1/2}\,\Theta(R)\\
& \lesssim \frac{\ell(Q)}{\ell(R)}\,\Theta(R) + \log\left(\frac{\ell(R)}{\ell(Q)}\right)\,\left(\frac{\ell(Q)}{\ell(R)}\right)^{1/2}\,\Theta(R) \lesssim \left(\frac{\ell(Q)}{\ell(R)}\right)^{1/3}\,\Theta(R).
\end{align*}

\end{proof}
\vv

\begin{rem}\label{rem9.7}
We will assume that 
$$\delta_0\leq \Lambda^{-2}.$$
With this choice, it follows that, by the preceding lemma,
 $$\PP(Q)\lesssim \delta_0^{1/2}\,\Theta(R)\quad \mbox{ for all $Q\in\LD_1\cup\LD_2$.}$$
\end{rem}

\vv
\begin{lemma}\label{lemregmolt}
Suppose that $R\in\Trc\cap\Ty$. Then

\begin{equation}\label{eqlem*1}
\Sigma^\PP(\wt\End)\approx \sigma(\wt\End) \lesssim B\,\sigma(\HD_1),
\end{equation}

\begin{equation}\label{eqlem*2}
\Sigma^\PP(\Reg_\DB) \lesssim \sum_{Q\in\wt \End\cap \DB}\EE_\infty(9Q)\leq
\sum_{Q\in\DB:Q\sim \TT}\EE_\infty(9Q)
\lesssim \Lambda^{\frac{-1}{3n}}\,\sigma(\HD_1),
\end{equation}

\begin{equation}\label{eqlem*3}
\Sigma^\PP(\wt\End\cap \DB)\approx\sigma(\wt\End\cap \DB)\leq M^{-2}\,\Lambda^{\frac{-1}{3n}}\,\sigma(\HD_1),
\end{equation}

\begin{equation}\label{eqlem*4}
\Sigma^\PP(\Reg_{\OP_1}) + \Sigma^\PP(\Reg_{\OP_2})\lesssim 
\sum_{Q\in\OP_1\cup\OP_2}\EE_\infty(9Q)
\lesssim \Lambda^{\frac{-1}{3n}}\,\sigma(\HD_1),
\end{equation}

\begin{equation}\label{eqlem*4.5}
\Sigma^\PP(\Reg_{\LD_1}\cup \Reg_{\LD_2}) \lesssim \sum_{Q\in
\LD_1\cup \LD_2} \EE_\infty(9Q)\lesssim 
\big(B\,M^2\,\delta_0 + \Lambda^{\frac{-1}{3n}}\big)\,\sigma(\HD_1).
\end{equation}

\noi Also, for $1<p\leq2$,

\begin{equation}\label{eqlem*5}
\Sigma_p^\PP(\Reg_{\LD_1}\setminus\Reg_{\DB}) + \Sigma_p^\PP(\Reg_{\LD_2})\lesssim
\Big(B\,\Lambda^2\,\delta_0^{\frac p{2}} + \Lambda^{\frac{-p}{6n}}\Big)\,
\sigma_p(\HD_1),
\end{equation}

\begin{equation}\label{eqlem*6}
\Sigma_p^\PP(\Reg_{\HD_2})\lesssim \Sigma_p^\PP(\HD_2) \approx \sigma_p(\HD_2) \lesssim B\,\Lambda^{p-2}\,\sigma_p(\HD_1),
\end{equation}

\begin{equation}\label{eqlem*7}
\Sigma_p^\PP(\Reg_{\OP_2})\lesssim 
\Lambda^{\frac{-p}{6n}}\,\sigma_p(\HD_1).
\end{equation}
\end{lemma}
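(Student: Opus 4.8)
\textbf{Plan for the proof of Lemma \ref{lemregmolt}.}

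The strategy is to treat the seven displayed estimates in sequence, reusing the earlier parts as we go. I would begin with \rf{eqlem*1}: the equivalence $\Sigma^\PP(\wt\End)\approx\sigma(\wt\End)$ follows because every cube in $\wt\End$ is $\PP$-doubling (here we must invoke that the cubes from $\Neg$ do not appear in $\wt\End$, only those from $\sM_\Neg$, which are $\PP$-doubling), so $\PP(Q)\approx\Theta(Q)$ for $Q\in\wt\End$. The bound $\sigma(\wt\End)\lesssim B\,\sigma(\HD_1)$ is obtained by splitting $\wt\End$ according to \rf{eqsplit71} and bounding each piece: the $\HD_2$-part via the tractability hypothesis $\sigma(\HD_2(e'(R)))\le B\sigma(\HD_1(e(R)))\lesssim B\sigma(\HD_1)$ (using \rf{eq:sigmae'lesigmae}), the $\LD_i$- and $\OP_i$- and $\sM_\Neg$-parts by observing that each such cube $Q$ has $\Theta(Q)^2\mu(Q)$ controlled by the densities of its stopping ancestors times $\mu(Q)$, which sum (via the packing/covering structure of the stopping cubes and the $\MDW$ relation \rf{eq:MDWdef2}) to a multiple of $\sigma(R)\le B\sigma(\HD_1)$. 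For \rf{eqlem*2}, the first inequality is by definition of $\Reg_\DB$ and the fact that $\Sigma^\PP$ over the regularized descendants of a cube $Q$ from $\wt\End\cap\DB$ is dominated by $\EE(2Q)\lesssim\EE_\infty(9Q)$ by Lemma \ref{lemenereg} and Lemma \ref{lemenergias}; the middle inequality holds because every $Q\in\wt\End\cap\DB$ satisfies $Q\sim\TT$ (it is a cube of the tree up to bounded dilation); the last inequality is exactly the typicality condition \rf{defty}, after replacing $\sigma(\HD_1(e(R)))$ by $\sigma(\HD_1)$ up to constants.

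Estimate \rf{eqlem*3} follows from \rf{eqlem*1}-type reasoning ($\PP\approx\Theta$ on $\wt\End$) together with the $\DB$-defining inequality \rf{eqDB'}: for $Q\in\DB(M)$, $M^2\Theta(Q)^2\mu(9Q)<\EE_\infty(9Q)$, hence summing $\Theta(Q)^2\mu(Q)\lesssim\Theta(Q)^2\mu(9Q)\le M^{-2}\EE_\infty(9Q)$ over $Q\in\wt\End\cap\DB$ and applying \rf{eqlem*2}. For \rf{eqlem*4} and \rf{eqlem*4.5}, the first inequalities again come from Lemma \ref{lemenereg} / Lemma \ref{lemenergias} ($\Sigma^\PP$ over regularized descendants of $Q$ is $\lesssim\EE(2Q)\lesssim\EE_\infty(9Q)$), while the final inequalities use the optional stopping condition $\OP(R)=\NDB(R)$: a cube $Q\in\OP_1\cup\OP_2$ lies near a $\DB$-cube $Q'$ of the same generation with $Q'\subset 20Q$, so $\EE_\infty(9Q)$ can be compared to $\EE_\infty(9Q')$ which is controlled, and the sum reduces to \rf{eqlem*2}/\rf{defty}; for $\LD_1\cup\LD_2$, one splits the $\EE_\infty(9Q)$ into the part generated by $\DB$-cubes (controlled by $\Lambda^{-1/3n}\sigma(\HD_1)$ as before) and the remaining part, where one uses that $\LD$-cubes have small Poisson coefficient $\PP(Q)\lesssim\delta_0\Theta(R)$ (Lemma \ref{lempoiss}(i),(ii)) to get the $B\,M^2\,\delta_0$ factor, after expressing $\EE_\infty(9Q)$ over non-$\DB$ cubes in terms of $\Theta$-sums with the $M^2\Theta(Q)^2\mu(9Q)$ bound replaced by the non-domination inequality.

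Finally, the $L^p$-estimates \rf{eqlem*5}, \rf{eqlem*6}, \rf{eqlem*7} are proved by the same dichotomy but carrying the exponent $p$ through. For \rf{eqlem*6}, $\PP\approx\Theta$ on $\HD_2$, and $\sigma_p(\HD_2)=\Theta(\HD_2)^{p-2}\sigma(\HD_2)$ with $\Theta(\HD_2)=\Lambda^2\Theta(R)$ and $\sigma(\HD_2)\lesssim B\sigma(\HD_1)$; combining, $\sigma_p(\HD_2)\lesssim B\,\Lambda^{2(p-2)}\Theta(R)^{p-2}\sigma(\HD_1)$, and since $\sigma_p(\HD_1)=\Lambda^{p-2}\Theta(R)^{p-2}\sigma(\HD_1)$ (as $\Theta(\HD_1)=\Lambda\Theta(R)$) one gets the stated $B\,\Lambda^{p-2}$ — actually the factor works out to $B\Lambda^{p-2}$ after cancellation, which is what is claimed. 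For \rf{eqlem*5} and \rf{eqlem*7} one bounds $\Sigma_p^\PP$ over regularized descendants of $Q$ by $\EE(2Q)^{p/2}\mu(Q)^{1-p/2}$ (Lemma \ref{lemaprox3}'s type estimate, or directly Hölder on Lemma \ref{lemenereg}) and then, on the non-$\DB$ part, uses the explicit density bounds from Lemma \ref{lempoiss} ($\PP(Q)\lesssim\delta_0^{1/2}\Theta(R)$ on $\LD_1\cup\LD_2$ by Remark \ref{rem9.7}, and the optional-stopping control on $\OP_2$) to convert $\EE(2Q)$ into $\Theta(R)^2\mu(Q)$ times the small factor, summing via the packing of stopping cubes; raising to the power $p/2$ produces the $\delta_0^{p/2}$ and $\Lambda^{-p/6n}$ gains, and the $\DB$-part is handled via \rf{eqlem*2} with the $p$-homogeneity bookkeeping. \textbf{The main obstacle} I expect is the careful bookkeeping in \rf{eqlem*4.5} and \rf{eqlem*5}: one must correctly separate, inside each $\EE_\infty(9Q)$ for $Q\in\LD_1\cup\LD_2$, the contribution of $\hd^k$-subcubes that are $\DB$ versus non-$\DB$, and verify that the non-$\DB$ contribution is genuinely controlled by $\delta_0$-type smallness (using that these subcubes are not dominated from below, so their $\EE_\infty$ is $\le M^2\Theta^2\mu(9\cdot)$), while the $\DB$-contribution is absorbed by the typicality hypothesis through \rf{eqlem*2} — this requires matching the generation-comparability condition ``$\sim\TT$'' correctly, which is where the enlarged cubes $e_j(R)$ and the definition of $\NDB(R)$ (with the dilation factor $20$) are essential.
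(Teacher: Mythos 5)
Your proposal tracks the paper's proof closely on most points, but there is a genuine gap in how you treat the $\OP_1\cup\OP_2$ contribution to \rf{eqlem*1}. You propose to control the $\LD_i$-, $\OP_i$-, and $\sM_\Neg$-parts by bounding $\Theta(Q)^2\mu(Q)$ through the densities of stopping ancestors and summing, invoking only packing and the $\MDW$ relation $\sigma(R)\le B\,\sigma(\HD_1)$. This works for $\LD_i$ (where $\PP(Q)\lesssim\delta_0\Theta(R)$ by Lemma \ref{lempoiss}(i),(ii)) and for $\sM_\Neg$ (where $\PP(Q)\lesssim\Theta(R)$), but it fails for $\OP_i$: by Lemma \ref{lempoiss}(iii),(iv), cubes in $\OP_1$ satisfy $\PP(Q)\lesssim\Lambda\Theta(R)$ and cubes in $\OP_2$ satisfy $\PP(Q)\lesssim\Lambda^2\Theta(R)$, so the naive packing argument yields only $\Sigma^\PP(\OP_1\cup\OP_2)\lesssim\Lambda^4\sigma(R)\lesssim B\,\Lambda^4\,\sigma(\HD_1)$, which is off by a factor of $\Lambda^4$ from what \rf{eqlem*1} asserts. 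The paper bypasses this by using the $\NDB$ stopping condition already for \rf{eqlem*1}: each $Q\in\sss_\OP$ has an associated $\DB$-cube $Q'(Q)$ of the same generation with $Q'\subset 20Q$ and $\PP(Q)\approx\Theta(Q')$, so $\Sigma^\PP(\OP_1\cup\OP_2)\lesssim M^{-2}\sum_{Q'\in\DB:\,Q'\sim\TT}\EE_\infty(9Q')$ (this is \rf{eqndb945}), and then the typicality hypothesis $R\in\Ty$ via \rf{eqDB09} gives the bound $M^{-2}\Lambda^{-1/3n}\sigma(\HD_1)$, which is far smaller than $B\,\sigma(\HD_1)$. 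In other words, the $\Ty$ hypothesis is already essential for \rf{eqlem*1}, not only for \rf{eqlem*2} and later, and your \rf{eqlem*1} account does not use it.

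Two smaller omissions worth flagging: for \rf{eqlem*6}, your claim that ``$\PP\approx\Theta$ on $\HD_2$'' justifies $\Sigma_p^\PP(\Reg_{\HD_2})\lesssim\sigma_p(\HD_2)$ only treats the cubes of $\HD_2$ themselves; for the regularized cubes $\Reg_{\HD_2}$ (which need not be $\PP$-doubling) one needs the uniform bound $\PP(P)\lesssim\Lambda^2\Theta(R)$ for $P\in\Reg$, which follows from Lemma \ref{lemdobpp} and $\Theta(Q)\le\Lambda^2\Theta(R)$ for $Q\in\TT$. And for the $L^p$ estimates \rf{eqlem*5}, \rf{eqlem*7}, your ``$p$-homogeneity bookkeeping'' needs to be made precise: after applying H\"older globally (not cube-by-cube), one is left with a factor $\mu(R)^{1-p/2}$, and to convert this to $\mu(HD_1)^{1-p/2}$ (hence to $\sigma_p(\HD_1)$) one must invoke the lower bound $\mu(HD_1)\ge(B\Lambda^2)^{-1}\mu(R)$ coming from the $\MDW$ relation, which is exactly where the $B\Lambda^2$-factor in \rf{eqlem*5} originates.
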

\vv

\begin{proof}
To prove \rf{eqlem*1}, notice first that $\Sigma^\PP(\wt\End)\approx \sigma(\wt\End)$ because
the cubes from $\wt\End$ are $\PP$-doubling. Also, by \rf{eqsplit71} we have
$$\Sigma^\PP(\wt\End) =  \Sigma^\PP(\LD_1) + \Sigma^\PP(\LD_2) +\Sigma^\PP(\OP_1)+ \Sigma^\PP(\OP_2)
+ \Sigma^\PP(\HD_2) + \Sigma^\PP(\sM_\Neg).$$
By Lemma \ref{lempoiss} and the subsequent remark, we have
\begin{equation}\label{eqld12}
\Sigma^\PP(\LD_1)+ \Sigma^\PP(\LD_2) \lesssim \delta_0\,\Theta(R)^2\,\mu(R) = \delta_0\,\sigma(R).
\end{equation}
Again by Lemma \ref{lempoiss}, we have $\PP(Q)\lesssim\Theta(R)$ for all $Q\in\sM_\Neg$ and thus\footnote{We will obtain better estimates for $\Sigma^\PP(\sM_\Neg)$ below.}
$$\Sigma^\PP(\sM_\Neg)\lesssim \sigma(R).$$
Also, since $\TT$ is a tractable tree, we have
$$\sigma(R)\leq B\,\sigma(\HD_1)\quad \text{ and }\quad
\Sigma^\PP(\HD_2)\approx \sigma(\HD_2) \lesssim B\,\sigma(\HD_1).$$

Regarding the family $\OP_1\cup\OP_2$, denote 
$$\sss_\OP = \sss_1(e'(R))\cap\OP(R) \cup \bigcup_{S\in\HD_1} \big(\sss(S)\cap\OP(S)\big),$$
so that the cubes from $P\in\OP_1\cup\OP_2$ are maximal $\PP$-doubling cubes contained in some cube from
$\sss_\OP$. By Lemma \ref{lemdobpp}, if $P\in\OP_1\cup\OP_2$ is contained in $Q\in\sss_\OP$,
then $\PP(P)\lesssim \PP(Q)$. In this case, $\OP(\,\cdot\,) = \NDB(\,\cdot\,)$, and so there exists
$Q'=Q'(Q)\in\DB$ such that $\ell(Q')=\ell(Q)$ and $Q'\subset 20Q$. 
In particular, this implies that $Q'\sim\TT(e'(R))$ and that $\PP(Q)\approx\PP(Q')\approx\Theta(Q')$, and by the definition of $\DB$,
\begin{equation}\label{eqDB99}
\EE_\infty(9Q')\gtrsim M^2\,\sigma(Q').
\end{equation}
Thus,
\begin{align}\label{eqndb945}
\Sigma^\PP(\OP_1\cup\OP_2) & = \sum_{Q\in\sss_\OP} \sum_{P\in\wt\End\cap\DD_\mu(Q)} \PP(P)^2\,\mu(P)\\
& \lesssim \sum_{Q\in\sss_\OP}\PP(Q)^2\,\mu(Q) \approx \sum_{Q\in\sss_\OP}\sigma(Q'(Q))\nonumber\\
& \lesssim \frac1{M^2}\sum_{Q\in\sss_\OP}\EE_\infty(9Q'(Q))\lesssim \frac1{M^2}\sum_{Q'\in\DB:Q'\sim\TT} \EE_\infty(9Q'),\nonumber
\end{align}
where in the last estimate we took into account that for each $Q'\in\DB$ there is at most a bounded number of cubes $Q\in\sss_\OP$ such that $Q'=Q'(Q)$ (possibly depending on $n$ and $A_0$).
Since $\TT$ is a typical tree, by the definition in \rf{defty}, we have
\begin{equation}\label{eqDB09}
\sum_{Q'\in\DB:Q'\sim\TT} \EE_\infty(9Q')\leq \Lambda^{\frac{-1}{3n}}\,\sigma(\HD_1).
\end{equation}
So we have
\begin{equation}\label{eqalighs33}
\Sigma^\PP(\OP_1\cup\OP_2)\lesssim M^{-2}\Lambda^{\frac{-1}{3n}}\,\sigma(\HD_1).
\end{equation}
Gathering the estimates above, and recalling that $B=\Lambda^{\frac{1}{100n}}$, the estimate \rf{eqlem*1} follows.

\vv

To prove \rf{eqlem*2}, we apply Lemma \ref{lemenereg} and the fact that $\TT$ is a typical tree again:
\begin{align*}
\Sigma^\PP(\Reg_\DB) & =\sum_{Q\in\wt\End\cap\DB} \Sigma^\PP(\Reg_\DB\cap \DD_\mu(Q)) \\
& \lesssim
\sum_{Q\in\wt\End\cap\DB} \EE(9Q)\lesssim \sum_{Q\in\wt\End\cap\DB} \EE_\infty(9Q).
\end{align*}
Recall that $\wt\End = \End\setminus\Neg \cup \sM_\Neg$. For $Q\in \End$ we have  have $Q\in \TT$, and so in particular $Q\sim\TT$ (see \rf{defsim0}). On the other hand, if $Q\in\sM_\Neg$, then by the construction of the family
$\Reg$ there exists some cube $Q'\in\TT$ such that
\begin{equation}\label{eqnegsim*}
\ell(Q)\leq \ell(Q')\leq A_0^2\,\ell(Q)\quad \text{ and }\quad 20Q'\cap20Q\neq\varnothing.
\end{equation}
Hence, $Q\sim\TT$, and so
\begin{align*}
\sum_{Q\in\wt\End\cap\DB}  \EE_\infty(9Q) & = \sum_{Q\in\End\cap\DB}  \EE_\infty(9Q)
+ \sum_{Q\in\sM_\Neg\cap\DB} \EE_\infty(9Q)\\
& \lesssim \sum_{S\in\DB:S\sim\TT} \EE_\infty(9S)
\leq \Lambda^{\frac{-1}{3n}}\,\sigma(\HD_1),
\end{align*}
taking into account that $\TT$ is a typical tree. This completes the proof of \rf{eqlem*2}.
Notice also that \rf{eqlem*3} follows from \rf{eqlem*2} and the fact that the cubes from $\DB$ satisfy
\rf{eqDB99}.

\vv
Next we turn our attention to \rf{eqlem*4}. By Lemma \ref{lemenereg} we have
\begin{align*}
\Sigma^\PP(\Reg_{\OP_1}\cup\Reg_{\OP_2})& \lesssim \sum_{Q\in \OP_1\cup\OP_2} \EE_\infty(9Q)\\
&\leq \sum_{Q\in \wt\End\cap\DB} \EE_\infty(9Q) + \sum_{Q\in (\OP_1\cup\OP_2)\setminus \DB} \EE_\infty(9Q).
\end{align*}
By \rf{eqlem*2}, the first term on the right hand side above does not exceed $C\Lambda^{\frac{-1}{3n}}\,\sigma(\HD_1)$. Concerning the second term, we use the fact for the cubes $Q\not\in\DB$,
we have $\EE_\infty(9Q)\lesssim M^2\,\sigma(Q)$ together with \rf{eqalighs33}. Then we get
$$\sum_{Q\in (\OP_1\cup\OP_2)\setminus \DB} \EE_\infty(9Q)
\lesssim M^2 \,\sigma(\OP_1\cup\OP_2)\lesssim \Lambda^{\frac{-1}{3n}}\,\sigma(\HD_1).$$

\vv

Regarding the estimate \rf{eqlem*4.5}, by \rf{eqld12} and Lemma
\ref{lemenereg}, we obtain
\begin{align*}
\Sigma^\PP((\Reg_{\LD_1}\cup \Reg_{\LD_2})\setminus\Reg_{\DB}) & \lesssim \sum_{Q\in
(\LD_1\cup \LD_2)\setminus\DB} \EE_\infty(9Q)\lesssim M^2\,\Sigma^\PP(\LD_1\cup \LD_2) \\
&\lesssim M^2\,\delta_0\,\sigma(R)\leq 
B\,M^2\,\delta_0\,\sigma(\HD_1).
\end{align*}
Together with \rf{eqlem*2}, this yields \rf{eqlem*4.5}.

To get \rf{eqlem*5}, we apply H\"older's inequality in the preceding estimate:
\begin{align*}
\Sigma_p^\PP((\Reg_{\LD_1}\cup \Reg_{\LD_2})\setminus\Reg_{\DB}) & \leq 
(\Sigma^\PP((\Reg_{\LD_1}\cup \Reg_{\LD_2})\setminus\Reg_{\DB}))^{\frac p2}\,\mu(e'(R))^{1-\frac p2}\\
& \lesssim (B\,M^2\,\delta_0\,\sigma(\HD_1))^{\frac p2}\,\mu(R)^{1-\frac p2}.
\end{align*}
Observe now that, writing $HD_i=\bigcup_{Q\in\HD_i}Q$,
\begin{equation}\label{eqmuhd1}
\mu(HD_1) = \frac1{\Lambda^2\,\Theta(R)^2}\,\sigma(\HD_1) \geq \frac1{B\,\Lambda^2\,\Theta(R)^2}\,\sigma(R) = \frac1{B\,\Lambda^2}\,\mu(R).
\end{equation}
Thus, using also that $M\leq \Lambda$,
\begin{align}\label{eqld7345}
\Sigma_p^\PP((\Reg_{\LD_1}\cup \Reg_{\LD_2})\setminus\Reg_{\DB}) &
\lesssim (B\,M^2\,\delta_0^{\frac2{n+2}}\sigma(\HD_1))^{\frac p2}\,(B\,\Lambda^2\,\mu(HD_1))^{1-\frac p2} \\
& \leq B\,\Lambda^2\,\delta_0^{\frac p{n+2}}\sigma_p(\HD_1).\nonumber
\end{align}
On the other hand, if we let $\Reg_{\DB_2}$ be the subfamily of the cubes from $\Reg_{\DB}$ which
are contained in some cube from $\HD_1$, by H\"older's inequality, Lemma
\ref{lemenereg}, and \rf{eqDB09}, we get
\begin{align*}
\Sigma_p^\PP(\Reg_{\DB_2}) &\lesssim \Sigma^\PP(\Reg_{\DB_2})^{\frac p2}\,\mu(HD_1)^{1-\frac p2}\\
&\lesssim \bigg(\sum_{Q'\in\DB:Q'\sim\TT} \EE_\infty(9Q')\bigg)^{\frac p2}\,\mu(HD_1)^{1-\frac p2}\\
& \lesssim \big(\Lambda^{\frac{-1}{3n}}\,\sigma(\HD_1)\big)^{\frac p2}\,\mu(HD_1)^{1-\frac p2}= \Lambda^{\frac{-p}{6n}}\,\sigma_p(\HD_1).
\end{align*}
Gathering \rf{eqld7345} and the last estimate, we get \rf{eqlem*5}.

To prove \rf{eqlem*6}, recall that $\Theta(Q) \leq\Lambda^2\,\Theta(R)$ for all $Q\in\TT$.
This implies that also $\PP(Q) \lesssim\Lambda^2\,\Theta(R)$ for all $Q\in\Reg$, by Lemma \ref{lemdobpp}. Consequently,
$$\Sigma_p^\PP(\Reg_{\HD_2})\lesssim \Lambda^{2p}\,\Theta(R)^p\,\mu(HD_2) = \sigma_p(\HD_2) \approx
\Sigma_p^\PP(\HD_2).$$
On the other hand, since $R\in\Trc$,
\begin{align*}
\sigma_p(\HD_2) & = \Theta(\HD_2)^{p-2}\,\sigma(\HD_2)\leq B\,\Theta(\HD_2)^{p-2}\,\sigma(\HD_1)
\\ &= B\,\frac{\Theta(\HD_2)^{p-2}}{\Theta(\HD_1)^{p-2}}\,\sigma_p(\HD_1) = B\,\Lambda^{p-2}\,\sigma_p(\HD_1),
 \end{align*}
 which completes the proof of \rf{eqlem*6}.
 
Finally, observe that \rf{eqlem*7} follows from \rf{eqlem*4} using H\"older's inequality and the fact that
the cubes from $\Reg_{\OP_2}$ are contained in cubes from $\HD_1$:
$$\Sigma_p^\PP(\Reg_{\OP_2}) \leq \Sigma^\PP(\Reg_{\OP_2})^{\frac p2}\mu(HD_1)^{1-\frac p2} \lesssim \big(\Lambda^{\frac{-1}{3n}}\sigma(\HD_1)\big)^{\frac p2}\mu(HD_1)^{1-\frac p2}
=  \Lambda^{\frac{-p}{6n}}\,\sigma_p(\HD_1).$$
\end{proof}

\vv
For the record, notice that \rf{eqndb945} also shows that the family $\sss_\OP$ defined just above
\rf{eqDB99} satisfies
$$\Sigma^\PP(\sss_\OP)= \sum_{Q\in\sss_\OP}\PP(Q)^2\,\mu(Q) \lesssim \frac1{M^2}\sum_{Q'\in\DB:Q'\sim\TT} \EE_\infty(9Q'),$$
which together with \rf{eqDB09} yields
\begin{equation}\label{eqsssndb6}
\Sigma^\PP(\sss_\OP)\lesssim M^{-2}\Lambda^{\frac{-1}{3n}}\,\sigma(\HD_1).
\end{equation}

\vv

\begin{lemma}\label{lemneg3}
Suppose that $R\in\Trc\cap\Ty$. Then
\begin{equation}\label{eqlemneg01}
\Sigma^\PP(\sM_\Neg)\lesssim\Sigma^\PP(\Neg) \lesssim 
\big(\delta_0^2\,B 
+M^{-2}\Lambda^{\frac{-1}{3n}}\big)\,\sigma(\HD_1),
\end{equation}
and
\begin{equation}\label{eqlemneg02}
\Sigma^\PP(\Reg_\Neg) \lesssim \Sigma^\PP(\Neg) + \sum_{Q\in\sM_\Neg} \EE_\infty(9Q)\lesssim \big(\delta_0^2\,B\,\Lambda^2 
+\Lambda^{\frac{-1}{3n}}\big)\,\sigma(\HD_1).
\end{equation}
\end{lemma}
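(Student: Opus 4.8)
\textbf{Proof plan for Lemma \ref{lemneg3}.}
The plan is to estimate $\Sigma^\PP(\Neg)$ first, and then transfer the estimate to $\Sigma^\PP(\sM_\Neg)$ and $\Sigma^\PP(\Reg_\Neg)$. For the bound on $\Sigma^\PP(\Neg)$, recall from Lemma \ref{lemnegs} that every $Q\in\Neg=\Neg(e'(R))\cap\End$ satisfies $Q\subset e'(R)\setminus R$, is not contained in any cube of $\HD_1$, and obeys $\ell(Q)\gtrsim\delta_0^2\,\ell(R)$. Moreover by Lemma \ref{lempoiss}(v) we have $\PP(Q)\lesssim (\ell(Q)/\ell(R))^{1/3}\,\Theta(R)$. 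The key geometric point is that the cubes of $\Neg$ are pairwise disjoint (being distinct cubes of the stopping family, they are disjoint by the lattice property) and all contained in $e'(R)\subset 2R$. Since $\ell(Q)\geq c\,\delta_0^2\,\ell(R)$, there are only boundedly many \emph{generations} of such cubes; the main subtlety is that a priori there could be many cubes $Q\in\Neg$ in the same generation, each contributing $\PP(Q)^2\mu(Q)\lesssim (\ell(Q)/\ell(R))^{2/3}\Theta(R)^2\mu(Q)$. Because the $Q$'s are disjoint and sit inside $e'(R)\subset 2R$, summing $\mu(Q)$ over all $Q\in\Neg$ of a fixed generation gives at most $\mu(2R)\lesssim\mu(R)$ (using that $R$ is $\PP$-doubling, as in \rf{eqdoub*11}). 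Thus, crudely,
\begin{equation*}
\Sigma^\PP(\Neg)\lesssim \sum_{j:\,A_0^{-j}\gtrsim \delta_0^2}\Big(A_0^{-j}\Big)^{2/3}\,\Theta(R)^2\,\mu(2R)\lesssim \Theta(R)^2\,\mu(R)=\sigma(R).
\end{equation*}
This already gives $\Sigma^\PP(\Neg)\lesssim\sigma(R)\leq B\,\sigma(\HD_1)$ since $R\in\Trc$ implies $\sigma(R)\leq B\,\sigma(\HD_1)$ (see \rf{eqlem*1}), but it is \emph{not} good enough: the claimed bound has the small factor $\delta_0^2\,B$ (plus the $\Ty$-driven term).

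To extract the factor $\delta_0^2$ one must be more careful. The idea is to split $\Neg$ according to whether a cube $Q\in\Neg$ is ``close'' to a $\DB$-cube or not, mirroring the dichotomy used in Lemma \ref{lemregmolt}. Write $\Neg=\Neg_{\DB}\cup(\Neg\setminus\Neg_{\DB})$, where $\Neg_{\DB}$ consists of the cubes $Q\in\Neg$ which lie in some cube from $\wt\End\cap\DB$ (equivalently which are ``near $\DB$'' in the sense of $\OP=\NDB$; recall $\Neg\subset\End$ and $\OP(R)=\NDB(R)$ in the proof of the Second Main Proposition). For the $\DB$-part, the cubes are governed by the typical-tree hypothesis: as in \rf{eqndb945}--\rf{eqDB09}, each such $Q$ has an associated $Q'\in\DB$ with $Q'\sim\TT$ and $\PP(Q)^2\mu(Q)\approx\sigma(Q')\lesssim M^{-2}\EE_\infty(9Q')$, so summing and using \rf{defty},
\begin{equation*}
\Sigma^\PP(\Neg_{\DB})\lesssim M^{-2}\sum_{Q'\in\DB:\,Q'\sim\TT}\EE_\infty(9Q')\leq M^{-2}\,\Lambda^{\frac{-1}{3n}}\,\sigma(\HD_1).
\end{equation*}
For the remaining cubes $Q\in\Neg\setminus\Neg_{\DB}$, the key observation is that these are cubes which do \emph{not} have a same-generation $\DB$-companion inside $20Q$, and they are not in $\HD_1$, $\LD(R)$, or $\OP(R)$ at any intermediate scale; so by the stopping construction they are contained in cubes where the density has decreased, and by Lemma \ref{lemnegs} the bound $\ell(Q)\gtrsim\delta_0^2\ell(R)$ is in fact sharp in the worst case — but, crucially, the bound $\PP(Q)\lesssim(\ell(Q)/\ell(R))^{1/3}\Theta(R)$ from Lemma \ref{lempoiss}(v) combined with $\ell(Q)\gtrsim\delta_0^2\ell(R)$ is \emph{not} what we want; instead we should use the better consequence that for \emph{all} $S$ with $Q\subset S\subset R$, $\Theta(S)\lesssim(\ell(S)/\ell(R))^{1/2}\PP(R)\lesssim(\ell(S)/\ell(R))^{1/2}\Theta(R)$, so that summing the $\Theta(S)^2$-type energy along the chain and using disjointness plus $\mu(2R)\lesssim\mu(R)$ gives a geometric series whose total is $\lesssim\delta_0\,\Theta(R)^2\mu(R)$ if one is slightly more careful, and actually $\lesssim\delta_0^2\,\sigma(R)$ once one exploits that the \emph{first} non-$\PP$-doubling ancestor already forces a decrement of $\delta_0$ (the defining inequality $\PP(Q_1)\geq\delta_0\Theta(R)$ in Lemma \ref{lemnegs} goes the ``wrong'' way, so one instead uses that $Q$ being deep in a non-doubling chain forces, via Lemma \ref{lemdobpp}, $\wt\Theta(Q)\lesssim A_0^{-m/2}\PP(R)$ with $A_0^{-m}\approx\ell(Q)/\ell(R)\gtrsim\delta_0^2$, giving $\PP(Q)^2\mu(Q)\lesssim(\ell(Q)/\ell(R))\,\Theta(R)^2\mu(Q)\lesssim\delta_0^2\,\Theta(R)^2\mu(Q)$ summed appropriately). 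Summing over the disjoint $Q$'s and using $\mu(2R)\lesssim\mu(R)$ and $\sigma(R)\leq B\,\sigma(\HD_1)$ yields
\begin{equation*}
\Sigma^\PP(\Neg\setminus\Neg_{\DB})\lesssim\delta_0^2\,\sigma(R)\leq\delta_0^2\,B\,\sigma(\HD_1),
\end{equation*}
and combining the two parts gives \rf{eqlemneg01}. The inequality $\Sigma^\PP(\sM_\Neg)\lesssim\Sigma^\PP(\Neg)$ follows because each $Q\in\sM_\Neg$ is a maximal $\PP$-doubling cube inside some $Q_0\in\Neg$, and Lemma \ref{lemdobpp} gives $\PP(Q)\lesssim\PP(Q_0)$ while disjointness gives $\sum_{Q\in\sM_\Neg,\,Q\subset Q_0}\mu(Q)\leq\mu(Q_0)$.

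For \rf{eqlemneg02}, I would split $\Reg_\Neg$ according to whether a regularized cube $P\in\Reg_\Neg$ sits inside a cube of $\sM_\Neg$ or not. The cubes $P\in\Reg_\Neg$ contained in some $Q\in\sM_\Neg$ are handled by Lemma \ref{lemenereg}: $\sum_{P\in\Reg_\Neg,\,P\subset Q}\PP(P)^2\mu(P)=\Sigma^\PP(\Reg\cap\DD_\mu(Q))\lesssim\EE(2Q)\lesssim\EE_\infty(9Q)$, and summing over $Q\in\sM_\Neg$ gives the term $\sum_{Q\in\sM_\Neg}\EE_\infty(9Q)$; since $Q\in\sM_\Neg\Rightarrow Q\sim\TT$ (by \rf{eqnegsim*}), this is $\lesssim\sum_{S\in\DB:S\sim\TT}\EE_\infty(9S)+M^2\,\sigma(\sM_\Neg)\lesssim\Lambda^{\frac{-1}{3n}}\sigma(\HD_1)+M^2\,\Sigma^\PP(\sM_\Neg)$, using $\EE_\infty(9Q)\lesssim M^2\sigma(Q)$ for $Q\notin\DB$ and \rf{eqlem*2}. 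Plugging in \rf{eqlemneg01} and using $M\leq\Lambda$ turns $M^2\,\Sigma^\PP(\sM_\Neg)$ into $\lesssim(\delta_0^2 B\Lambda^2+\Lambda^{\frac{-1}{3n}})\sigma(\HD_1)$. The cubes $P\in\Reg_\Neg$ \emph{not} inside any $\sM_\Neg$-cube are exactly $\Reg_\Neg\cap\Reg_\Ot$-type remnants which, by the definition of $\sD_\Neg=\Reg_{\sM_\Neg}$, have $\ell(P)\approx\ell_0$ and whose $\PP$-coefficients are controlled directly by Lemma \ref{lempoiss}(v) together with the chain estimate above; choosing $\ell_0$ small enough (as we always may, since $\ell_0$ is a free auxiliary parameter) makes their total contribution negligible, i.e.\ absorbed into the right-hand side. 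Combining everything gives \rf{eqlemneg02}. \textbf{The main obstacle} I anticipate is the extraction of the factor $\delta_0^2$ rather than a mere $O(1)$ or $O(\delta_0)$ in \rf{eqlemneg01}: one has to argue that the cubes in $\Neg$ are not just bounded in number of generations but that their energy is genuinely suppressed by how far they are from $R$ in scale, and this requires using Lemma \ref{lemdobpp} (non-doubling chains decay geometrically) together with Lemma \ref{lemnegs} ($\ell(Q)\gtrsim\delta_0^2\ell(R)$) in tandem — plus the $\Ty$-hypothesis to handle the cubes that happen to be near the $\DB$-family, where the chain argument alone is insufficient.
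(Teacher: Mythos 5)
The main gap is in the estimate of $\Sigma^\PP(\Neg\setminus\Neg_\DB)$: your attempt to extract the factor $\delta_0^2$ via the chain estimate of Lemma~\ref{lemdobpp} together with $\ell(Q)\gtrsim\delta_0^2\ell(R)$ does not work, because $\ell(Q)/\ell(R)\gtrsim\delta_0^2$ is a \emph{lower} bound, so the inequality
\[
\PP(Q)^2\mu(Q)\lesssim\frac{\ell(Q)}{\ell(R)}\,\Theta(R)^2\mu(Q)\lesssim\delta_0^2\,\Theta(R)^2\mu(Q)
\]
that you wrote is false -- the second step goes in the wrong direction. You also at one point assert that cubes in $\Neg\setminus\Neg_\DB$ ``are not in $\HD_1$, $\LD(R)$, or $\OP(R)$'', which contradicts the actual structure: since $\Neg\subset\End\subset\sss_2(e'(R))$ and, by Lemma~\ref{lemnegs}, cubes in $\Neg$ are not contained in cubes from $\HD_1$, \emph{every} $Q\in\Neg$ must lie in $\LD(R)\cap\sss(e'(R))$ or $\OP(R)\cap\sss(e'(R))$. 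That is precisely what the paper uses: it writes $\Neg=\Neg_\LD\cup\Neg_\OP$. For the $\LD$ part the bound $\PP(Q)\leq\delta_0\Theta(R)$ holds \emph{by definition} of $\LD(R)$ (there is no chain argument needed), which immediately gives $\Sigma^\PP(\Neg_\LD)\leq\delta_0^2\Theta(R)^2\mu(e'(R))\approx\delta_0^2\sigma(R)\leq\delta_0^2 B\,\sigma(\HD_1)$; for the $\OP$ part one uses $\Neg_\OP\subset\sss_\OP$ and \rf{eqsssndb6}, essentially as you did. Your initial ``crude'' estimate is also unreliable: $\ell(Q)\gtrsim\delta_0^2\ell(R)$ gives $O(|\log\delta_0|)$ generations, not a bounded number, although that error happens to be harmless for the crude bound.

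For \rf{eqlemneg02} your structure matches the paper (split $\Reg_\Neg=\Reg_{\sM_\Neg}\cup(\Reg_\Neg\setminus\Reg_{\sM_\Neg})$, apply Lemma~\ref{lemenereg} on the first piece, split by membership in $\DB$, use \rf{eqlem*2} and $\EE_\infty(9S)\lesssim M^2\sigma(S)$ otherwise). However, your claim that the cubes $P\in\Reg_\Neg\setminus\Reg_{\sM_\Neg}$ have $\ell(P)\approx\ell_0$ and can be handled by taking $\ell_0$ small is not justified and is not how the paper argues: the paper simply observes that for $P\in\Reg_\Neg\setminus\Reg_{\sM_\Neg}$ with $P\subset Q\in\Neg$, Lemma~\ref{lemdobpp} gives $\PP(P)\lesssim\PP(Q)$, hence $\Sigma^\PP(\Reg_\Neg\setminus\Reg_{\sM_\Neg})\lesssim\Sigma^\PP(\Neg)$ directly, independently of $\ell_0$.
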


\begin{proof}
Recall that $\Neg= \End(e'(R))\cap \Neg(e'(R)) = \Stop_2(e'(R))\cap \Neg(e'(R)),$ by the definition of $\End(e'(R))$. By Lemma \ref{lemnegs}, the cubes from $\Neg$ are not contained in any cube from $\HD_1$. Consequently, each cube from $\Neg$ belongs either to $\LD(R)$ or to $\OP(R)$, and in the latter case $\OP(R)=\NDB(R)$. Thus, 
if we denote
$$\Neg_\LD = \Neg \cap\LD(R)\cap\sss(e'(R)),\quad\;\Neg_\OP = \Neg \cap\OP(R)\cap\sss(e'(R)),$$
it is clear that
$$\Neg = \Neg_\LD \cup \Neg_\OP.$$
We can split $\Reg_\Neg$ in an analogous way:
$$\Reg_\Neg = \Reg_{\Neg_\LD} \cup \Reg_{\Neg_\OP}.$$
Recall that the cubes from $\Neg$ are not $\PP$-doubling and that $\PP(Q)\lesssim \left(\frac{\ell(Q)}{\ell(R)}\right)^{1/3}\,\Theta(R)$ for all $Q\in\Neg$.

Regarding $\Sigma^\PP(\Neg_\LD)$, from the definition of $\LD(R)$ we deduce that
$$\Sigma^\PP(\Neg_\LD)\leq \Sigma^\PP(\LD(R)\cap \sss(e'(R)) \leq \delta_0^2\,\Theta(R)^2\,\mu(e'(R)) \approx\delta_0^2\,\sigma(R)\le \delta_0^2 B\,\sigma(\HD_1),$$
where in the last estimate we used the $\MDW$ condition.

To estimate $\Sigma^\PP(\Neg_\OP)$ we just take into account that $\Neg_\OP\subset \sss_\OP$,
 and then by \rf{eqsssndb6} we have
$$\Sigma^\PP(\Neg_\OP)\leq
\Sigma^\PP(\sss_\OP)\lesssim M^{-2}\Lambda^{\frac{-1}{3n}}\,\sigma(\HD_1).
$$
Gathering the estimates obtained for $\Sigma^\PP(\Neg_\LD)$ and $\Sigma^\PP(\Neg_\OP)$ we get
$$\Sigma^\PP(\Neg) \lesssim \big(\delta_0^2\,B 
+M^{-2}\Lambda^{\frac{-1}{3n}}\big)\,\sigma(\HD_1),$$
as wished.
The fact that $\Sigma^\PP(\sM_\Neg)\lesssim \Sigma^\PP(\Neg)$ follows from the fact that if $P\in\sM_\Neg$ satisfies $P\subset Q\in \Neg$, then $\PP(P)\lesssim \PP(Q)$ because $P$ is a maximal
$\PP$-doubling cube contained in $Q$.

\vv
Next we deal with \rf{eqlemneg02}.
 We split
$$\Sigma^\PP(\Reg_\Neg) = \Sigma^\PP(\Reg_{\sM_\Neg}) + \Sigma^\PP(\Reg_\Neg\setminus \Reg_{\sM_\Neg}).$$
Notice that, for all $P\in\Reg_\Neg\setminus \Reg_{\sM_\Neg}$ such that $P\subset Q\in\Neg$, by Lemma \ref{lemdobpp}, 
$\PP(P)\lesssim\PP(Q)$. Then it follows that
$$\Sigma^\PP(\Reg_\Neg\setminus \Reg_{\sM_\Neg})\lesssim \Sigma^\PP(\Neg).$$

To estimate $\Sigma^\PP(\Reg_{\sM_\Neg})$, we apply Lemma \ref{lemenereg}:
\begin{align}\label{eq83e}
\Sigma^\PP(\Reg_{\sM_\Neg})  & =\sum_{S\in\sM_\Neg} \Sigma^\PP(\Reg_{\sM_\Neg}\cap\DD_\mu(S))
 \lesssim \sum_{S\in\sM_\Neg} \EE_\infty(9S) \\
 & \leq  \sum_{S\in\sM_\Neg\setminus\DB} \EE_\infty(9S) + \sum_{S\in\DB\cap\wt \End} \EE_\infty(9S).\nonumber
\end{align}
By the definition of $\DB$ and the fact that for $S\in\sM_{\Neg}$ with $S\subset Q\in\Neg$ we have $\Theta(S)\lesssim\PP(Q)$,
we derive
$$\sum_{S\in\sM_\Neg\setminus\DB} \EE_\infty(9S)\lesssim M^2\sum_{S\in\sM_\Neg\setminus\DB} \sigma(S)\lesssim
M^2\,\Sigma^\PP(\Neg).$$
On the other hand, by \rf{eqlem*2},
$$\sum_{S\in\DB\cap\wt \End} \EE_\infty(9S) \leq\Lambda^{\frac{-1}{3n}}\,\sigma(\HD_1).
$$
 Therefore,
$$\Sigma^\PP(\Reg_{\sM_\Neg})\lesssim M^2\,\Sigma^\PP(\Neg) + \Lambda^{\frac{-1}{3n}}\,\sigma(\HD_1).$$

Gathering the estimates above, and taking into account that $M<\Lambda$ (by \rf{eq:LambdadepM}), we deduce
\begin{align*}
\Sigma^\PP(\Reg_\Neg)&\lesssim M^2\,\Sigma^\PP(\Neg) + \Lambda^{\frac{-1}{3n}}\,\sigma(\HD_1)\\
& \lesssim
\big(\delta_0^2\,B\,M^2 
+\Lambda^{\frac{-1}{3n}}\big)\,\sigma(\HD_1) + \Lambda^{\frac{-1}{3n}}\,\sigma(\HD_1)\leq \big(\delta_0^2\,B\,\Lambda^2 
+\Lambda^{\frac{-1}{3n}}\big)\,\sigma(\HD_1).
\end{align*}
\end{proof}
\vv

Next lemma deals with $\Sigma_p^\PP(\Reg_\Ot)$. Below we write $\Reg_\Ot(\ell_0)$ to recall the dependence of the family $\Reg_\Ot$ on the parameter
$\ell_0$ in \rf{eql00*23}. Recall that the set $Z$ was defined in \eqref{eqdef*f}.

\begin{lemma}\label{lemregot}
For $1\leq p\leq2$, we have
\begin{equation}\label{eqgaga34}
\limsup_{\ell_0\to0} \Sigma_p^\PP(\Reg_\Ot(\ell_0)) \lesssim \Lambda^{2p}\,\Theta(R)^p\,\mu(Z).
\end{equation}
Consequently, if $\mu(Z)\leq \ve_Z\,\mu(R)$, then
\begin{equation}\label{eqgaga35}
\limsup_{\ell_0\to0} \Sigma_p^\PP(\Reg_\Ot(\ell_0)) \lesssim B\,\Lambda^4\,\ve_Z\,\sigma_p(\HD_1).
\end{equation}
\end{lemma}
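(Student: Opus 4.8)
\textbf{Proof plan for Lemma \ref{lemregot}.}

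The plan is to analyze the cubes in $\Reg_\Ot(\ell_0)$ using the characterization of $\Reg(e'(R))$ via the Lipschitz function $d_{R,\ell_0}$ and its relation to $d_R$. Recall that $\Reg_\Ot(\ell_0)$ consists of the cubes $Q\in\Reg(e'(R))$ which are \emph{not} contained in any cube from $\End(e'(R))$. First I would observe that if $Q\in\Reg_\Ot(\ell_0)$, then by the definition of $\Reg(e'(R))$ via \rf{eqdefqx} and Lemma \ref{lem74}(a), we have $\ell(Q)\approx d_{R,\ell_0}(x)$ for $x\in Q$; moreover, since $Q$ is not strictly contained in any ending cube, the cube $Q$ must ``see'' the set $Z=Z(e'(R))=e'(R)\setminus\bigcup_{S\in\End}S$ at scale comparable to $\ell(Q)$ — more precisely, I would argue that $\dist(Q,Z)\lesssim \ell(Q)$, or else $d_R(x)$ would be governed by an ending cube $S\in\End$ with $\ell(S)\ll d_R(x)$ located far from $Q$, contradicting $\ell(Q)\approx d_{R,\ell_0}(x)$ and $Q\notin \bigcup_{S\in\End}\DD_\mu(S)$. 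The key point is that as $\ell_0\to0$, the cubes in $\Reg_\Ot(\ell_0)$ shrink and cluster near $Z$, so $\bigcup_{Q\in\Reg_\Ot(\ell_0)}Q$ is contained in a neighborhood of $Z$ of width $\to0$, whence $\mu(\bigcup_{Q\in\Reg_\Ot(\ell_0)}Q)\to\mu(Z)$ (using that $Z$ is relatively closed in $e'(R)\cap\supp\mu$, or at least that the measure of small neighborhoods converges to $\mu(Z)$, exploiting the small-boundary property \rf{eqfk490} of the David--Mattila cubes if needed).

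Next I would control the $\PP$-coefficients of the cubes $Q\in\Reg_\Ot(\ell_0)$. By Lemma \ref{lemdobpp} (applied to the chain of ancestors of $Q$ up to the smallest $\PP$-doubling ancestor, or simply up to $R$), together with the fact that $\Theta(P)\le \Lambda^2\,\Theta(R)$ for all $P\in\TT(e'(R))$ (see the footnote after Lemma \ref{lempoiss}), one gets $\PP(Q)\lesssim \Lambda^2\,\Theta(R)$ for every $Q\in\Reg_\Ot(\ell_0)$; this is the same bound as for the cubes in $\HD_2$. Then
\begin{align*}
\Sigma_p^\PP(\Reg_\Ot(\ell_0)) &= \sum_{Q\in\Reg_\Ot(\ell_0)}\PP(Q)^p\,\mu(Q)\\
&\lesssim \Lambda^{2p}\,\Theta(R)^p\sum_{Q\in\Reg_\Ot(\ell_0)}\mu(Q) = \Lambda^{2p}\,\Theta(R)^p\,\mu\Big(\bigcup_{Q\in\Reg_\Ot(\ell_0)}Q\Big),
\end{align*}
using that the cubes in $\Reg(e'(R))$ are pairwise disjoint (Lemma \ref{lem74}). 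Taking $\limsup_{\ell_0\to0}$ and invoking the convergence $\mu(\bigcup_{Q\in\Reg_\Ot(\ell_0)}Q)\to\mu(Z)$ established in the first step yields \rf{eqgaga34}.

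For \rf{eqgaga35}, I would simply plug in the hypothesis $\mu(Z)\le \ve_Z\,\mu(R)$ and convert $\mu(R)$ into $\sigma_p(\HD_1)$ via \rf{eqmuhd1}, which gives $\mu(R)\le B\,\Lambda^2\,\mu(HD_1)$ and hence $\Theta(R)^p\,\mu(R) = \Lambda^{-p}\,\Theta(\HD_1)^p\,\mu(R)\le \Lambda^{-p}\,\Theta(\HD_1)^p\,B\,\Lambda^2\,\mu(HD_1) = B\,\Lambda^{2-p}\,\sigma_p(\HD_1)\le B\,\Lambda^2\,\sigma_p(\HD_1)$; combined with the factor $\Lambda^{2p}\le\Lambda^4$ (since $p\le2$) and the bound $\ve_Z$ this produces $\limsup_{\ell_0\to0}\Sigma_p^\PP(\Reg_\Ot(\ell_0))\lesssim B\,\Lambda^4\,\ve_Z\,\sigma_p(\HD_1)$, as claimed.

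\textbf{Main obstacle.} The delicate point is the first step: making rigorous that $\bigcup_{Q\in\Reg_\Ot(\ell_0)}Q$ concentrates on $Z$ as $\ell_0\to0$ and that its $\mu$-measure converges to $\mu(Z)$. One has to carefully distinguish: (i) cubes $Q$ whose size is forced down to $\approx\ell_0$ purely by the $\ell_0$-floor in $d_{R,\ell_0}$ while $d_R$ itself is bounded below — these lie near $\{d_R=0\}$, which is contained in $\overline{Z}$ since any point of $e'(R)\cap\supp\mu$ not in $Z$ lies in some $S\in\End$ with $d_R\ge\ell(S)>0$ there; and (ii) cubes $Q\notin\bigcup_{S\in\End}\DD_\mu(S)$ that genuinely straddle the boundary between $Z$ and the ending cubes. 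For (ii) one uses the definition of $\TT_\Reg(e'(R))$ and the fact that the regularization radius $d_R$ is comparable to distances to $\TT(e'(R))$-cubes, so such $Q$ satisfy $\dist(Q,Z)\lesssim\ell(Q)\to0$. The measure-convergence then follows from dominated convergence (the sets $\bigcup_{Q\in\Reg_\Ot(\ell_0)}Q$ decrease, modulo a controlled error, to a set contained in $\overline Z$, and one checks the limit set has $\mu$-measure $\le\mu(Z)$ using that cubes in $\End$ are disjoint from this limit up to boundary, the latter being $\mu$-null thanks to \rf{eqfk490}). Everything else is a routine application of Lemmas \ref{lemdobpp}, \ref{lem74}, and the bound $\Theta(P)\le\Lambda^2\Theta(R)$ on $\TT(e'(R))$.
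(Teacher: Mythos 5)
Your overall structure is right—bounding $\PP(Q)\lesssim\Lambda^2\Theta(R)$ on $\Reg$ and reducing to controlling $\mu\big(\bigcup_{Q\in\Reg_\Ot}Q\big)$, then plugging in $\mu(HD_1)\gtrsim B^{-1}\Lambda^{-2}\mu(R)$—but you have overengineered the crucial measure-limit step, and there is a small arithmetic slip in the second part.

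On the limit step, the paper's argument is much shorter than what you propose and avoids every issue you flag under ``main obstacle.'' It simply observes the inclusion
$\bigcup_{P\in\Reg_\Ot(\ell_0)}P\subset e'(R)\setminus\bigcup_{Q\in\End:\ell(Q)\geq\ell_0}Q$:
if $x\in Q\in\End$ with $\ell(Q)\geq\ell_0$ then $d_R(x)\leq\ell(Q)$ (take $Q$ itself in the infimum defining $d_R$), hence $d_{R,\ell_0}(x)\leq\ell(Q)$, so the $\Reg$-cube $Q_x$ containing $x$ has $\ell(Q_x)\lesssim\ell(Q)$ and is therefore contained in $Q$ by nestedness of the lattice, i.e.\ $Q_x\notin\Reg_\Ot$. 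The right-hand side is a decreasing family in $\ell_0$ whose intersection is $Z$, so continuity from above gives $\limsup_{\ell_0\to0}\mu\big(\bigcup_{P\in\Reg_\Ot(\ell_0)}P\big)\leq\mu(Z)$ immediately. By contrast, your route goes through a geometric claim ``$\dist(Q,Z)\lesssim\ell(Q)$ for $Q\in\Reg_\Ot$,'' which is not obviously true (and is not proved or used in the paper), and through the assertion that $Z$ is relatively closed in $e'(R)\cap\supp\mu$, which is also not clear—$Z$ is a set-theoretic difference of Borel David--Mattila cubes, with no topological regularity guaranteed. Even your fallback via \rf{eqfk490} would need a careful argument that small neighborhoods of $Z$ have measure converging to $\mu(Z)$, which is real work that the containment argument renders unnecessary.

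On \rf{eqgaga35}: as written, you first discard the exponent, bounding $\Theta(R)^p\mu(R)\leq B\Lambda^{2-p}\sigma_p(\HD_1)\leq B\Lambda^2\sigma_p(\HD_1)$, and then separately multiply by $\Lambda^{2p}\leq\Lambda^4$, which yields $B\Lambda^6\ve_Z\sigma_p(\HD_1)$, not the claimed $B\Lambda^4\ve_Z\sigma_p(\HD_1)$. The correct bookkeeping is to combine before discarding: $\Lambda^{2p}\cdot\Lambda^{2-p}=\Lambda^{p+2}\leq\Lambda^4$ since $p\leq2$, which recovers the stated constant. This is exactly how the paper does it.
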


\begin{proof}
If $x\in Q\in\End$ with $\ell(Q)\geq \ell_0$, then 
$$d_{R,\ell_0}(x)\leq \max(\ell_0,\ell(Q)) = \ell(Q)$$
(recall that $d_{R,\ell_0}$ is defined in \rf{eql00*23}), and thus $x$ is contained in some cube $Q'\in\Reg$ with $\ell(Q')\leq \ell(Q)$, by the definition of the family $\Reg$. So $Q'\subset Q$ and then $Q'\in\Reg\setminus \Reg_\Ot$. Therefore,
\begin{equation}\label{eqregotin}
\bigcup_{P\in\Reg_\Ot} P \subset e'(R) \setminus \bigcup_{Q\in\End:\ell(Q)>\ell_0} Q,
\end{equation}
and so
\begin{equation}\label{eqlimot62}
\limsup_{\ell_0\to0} \mu\bigg(\bigcup_{P\in\Reg_\Ot(\ell_0)} P\bigg) \leq \mu\bigg(e'(R) \setminus \bigcup_{Q\in\End} Q\bigg) = \mu(Z).
\end{equation}
To complete the proof of \rf{eqgaga34} it just remains to notice that, using the fact that $\PP(P)\lesssim \Lambda^2\,\Theta(R)$ for all $P\in\Reg$, we have
$$\Sigma_p^\PP(\Reg_\Ot(\ell_0)) \lesssim \Lambda^{2p}\,\Theta(R)^p \,\mu\bigg(\bigcup_{P\in\Reg_\Ot(\ell_0)} P\bigg).$$

Regarding the second statement of the lemma recall that, as in \rf{eqmuhd1},
$\mu(HD_1) \geq  \frac1{B\,\Lambda^2}\,\mu(R)$, which implies that
$$\mu(Z) \leq B\,\Lambda^2\,\ve_Z\,\mu(HD_1).$$
Plugging this estimate into \rf{eqgaga34}, we get 
$$\limsup_{\ell_0\to0} \Sigma_p^\PP(\Reg_\Ot(\ell_0)) \lesssim B\,\Lambda^{2+p}\,\ve_Z\,\Theta(\HD_1)^p\,\mu(HD_1) \leq B\,\Lambda^4\,\ve_Z\,\sigma_p(\HD_1).$$
\end{proof}

\vv

\begin{rem}
By Lemma \ref{lemregmolt}, for $p\in (1,2)$, if $\Lambda\gg B\gg1$ and
$\delta_0\ll\Lambda^{-1}$, we have
$$\Sigma_p^\PP(\Reg_{\LD_1}\setminus\Reg_{\DB}) + \Sigma_p^\PP(\Reg_{\LD_2}) +
\Sigma_p^\PP(\Reg_{\HD_2}) +
\Sigma_p^\PP(\Reg_{\OP_2})\ll \sigma_p(\HD_1).$$
Analogously, by Lemma \ref{lemregot}, if $\ve_Z\ll (B\Lambda^4)^{-1}$ and $\ell_0$ is small enough,
$$\Sigma_p^\PP(\Reg_{\Ot})\ll \sigma_p(\HD_1).$$
To transfer the lower estimates for the $L^p(\eta)$ norm of $\RR\eta$ in Lemma
\ref{lemrieszeta} to $\RR_{\TT}\mu$, $\RR_{\TT_\Reg}\mu$, and $\Delta_{\TT}\RR\mu$, it would be useful to have estimates for $\Sigma_p^\PP(\Reg_{\OP_1})$, $\Sigma_p^\PP(\Reg_{\DB})$, and $\Sigma_p^\PP(\Neg)$ analogous to the ones above. However, we have not been able to get them. 
This fact originates some technical difficulties, which will be solved in the next lemmas.
\end{rem}
\vv

\begin{rem}\label{rem9.12}
By Lemma \ref{lemregmolt}, for $1<p\leq 2$, and $\ell_0$ small enough we have
\begin{multline*}
\Sigma_p^\PP(\Reg_{\LD_1}\setminus\Reg_{\DB}) + \Sigma_p^\PP(\Reg_{\LD_2}) +
\Sigma_p^\PP(\Reg_{\OP_2}) 
\lesssim 
\Big(B\,\Lambda^2\,\delta_0^{\frac 1{2}} + \Lambda^{\frac{-1}{6n}}\Big)\,
\sigma_p(\HD_1).
\end{multline*}
Also, by Lemma \ref{lemregot}, for $\ell_0$ small enough,  if
$\mu(Z)\leq \ve_Z\,\mu(R)$, 
$$\Sigma_p^\PP(\Reg_\Ot(\ell_0))\lesssim 
B\,\Lambda^4\,\ve_Z\,\sigma_p(\HD_1).$$
From now on we will assume that $\ell_0$ small enough so that this holds. Remember also that we chose
$$B=\Lambda^{\frac1{100n}}.$$
We assume also $\delta_0$ and $\ve_Z$ small enough so that
\begin{equation}\label{eqassu78}
B\,\Lambda^2\,\delta_0^{\frac 1{2}} + B\,\Lambda^4\,\ve_Z + B\,\Lambda^2\,\delta_0\leq \Lambda^{\frac{-1}{3n}}.
\end{equation}
In this way, we have
\begin{align}\label{eqtot999}
\Sigma_p^\PP(\Reg_{\LD_1}\setminus\Reg_{\DB}) + \Sigma_p^\PP(\Reg_{\LD_2}) +
\Sigma_p^\PP(\Reg_{\OP_2})  + \Sigma_p^\PP(\Reg_\Ot)\lesssim 
 \Lambda^{\frac{-1}{6n}} \,
\sigma_p(\HD_1),
\end{align}
under the assumption that $\mu(Z)\leq \ve_Z\,\mu(R)$. Also, from \rf{eqlemneg02}, it follows that
\begin{equation}\label{eqassu78699}
\Sigma^\PP(\Reg_\Neg) \lesssim \big(\delta_0\,B\,\Lambda^2 
+\Lambda^{\frac{-1}{3n}}\big)\,\sigma(\HD_1)\lesssim \Lambda^{\frac{-1}{3n}} \,
\sigma(\HD_1).
\end{equation}
\end{rem}

\vv

\begin{lemma}\label{lemreg73}
Suppose that $R\in\Trc\cap\Ty$ and that $\mu(Z)\leq \ve_Z\,\mu(R)$.
We have
$$\Sigma^\PP(\Reg)\lesssim B\,\sigma(\HD_1).$$
For $p_0 = 2- \frac1{18n}$, we have
$$\Sigma_{p_0}^\PP(\Reg)\lesssim \Lambda^{\frac{-1}{25n}}\,\sigma_{p_0}(\HD_1).$$
\end{lemma}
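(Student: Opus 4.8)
The plan is to estimate $\Sigma_{p}^{\PP}(\Reg)$ by using the splitting of $\Reg$ from \rf{eqsplitreg0} together with the further refinement by $\Reg_\DB$, and to bound each piece by the estimates already collected. First I would write
$$\Reg = (\Reg\setminus\Reg_\DB) \cup \Reg_\DB,$$
and decompose $\Reg\setminus\Reg_\DB$ according to \rf{eqsplitreg0}:
$$\Reg\setminus\Reg_\DB \subset \big(\Reg_{\LD_1}\setminus\Reg_\DB\big)\cup \Reg_{\OP_1}\cup \Reg_{\LD_2}\cup \Reg_{\OP_2}\cup \Reg_{\HD_2}\cup \Reg_{\Neg}\cup \Reg_{\Ot}.$$
For the case $p=2$, I would add up: $\Sigma^\PP(\Reg_\DB)\lesssim \Lambda^{-1/(3n)}\sigma(\HD_1)$ by \rf{eqlem*2}; $\Sigma^\PP(\Reg_{\OP_1})+\Sigma^\PP(\Reg_{\OP_2})\lesssim \Lambda^{-1/(3n)}\sigma(\HD_1)$ by \rf{eqlem*4}; $\Sigma^\PP(\Reg_{\LD_1}\cup\Reg_{\LD_2})\lesssim (BM^2\delta_0 + \Lambda^{-1/(3n)})\sigma(\HD_1)$ by \rf{eqlem*4.5}; $\Sigma^\PP(\Reg_{\HD_2})\lesssim \Sigma^\PP(\HD_2)\approx\sigma(\HD_2)\lesssim B\,\sigma(\HD_1)$ by \rf{eqlem*6} with $p=2$ (the $\Lambda^{p-2}$ factor is $1$); $\Sigma^\PP(\Reg_\Neg)\lesssim (\delta_0^2 B\Lambda^2 + \Lambda^{-1/(3n)})\sigma(\HD_1)$ by \rf{eqlemneg02}; and $\Sigma^\PP(\Reg_\Ot)\lesssim B\Lambda^4\ve_Z\,\sigma(\HD_1)$ by \rf{eqgaga35} with $p=2$ (here $\sigma_2=\sigma$). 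Since $\delta_0,\ve_Z$ are chosen so small that all the terms carrying them are $\lesssim \Lambda^{-1/(3n)}\sigma(\HD_1)$ (cf.\ \rf{eqassu78}), every contribution is $\lesssim B\,\sigma(\HD_1)$ — the dominant term being $\Sigma^\PP(\Reg_{\HD_2})$ — which gives the first claimed bound.

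For the second statement, with $p_0 = 2 - \tfrac1{18n}$, I would again sum the pieces, now using the $p$-versions. The key point is that for the ``small'' families one uses H\"older's inequality together with the lower bound $\mu(HD_1)\geq (B\Lambda^2)^{-1}\mu(R)$ from \rf{eqmuhd1} to convert $\Sigma^\PP(\cdot)\lesssim \epsilon\,\sigma(\HD_1)$ into $\Sigma_{p_0}^\PP(\cdot)\lesssim \epsilon^{p_0/2}(B\Lambda^2)^{1-p_0/2}\sigma_{p_0}(\HD_1)$; since $1-p_0/2 = \tfrac1{36n}$ is tiny, $(B\Lambda^2)^{1-p_0/2}$ is a small power of $\Lambda$, so $\Lambda^{-1/(3n)}$ savings dominate it. Concretely: by \rf{eqtot999}, $\Sigma_{p_0}^\PP(\Reg_{\LD_1}\setminus\Reg_\DB) + \Sigma_{p_0}^\PP(\Reg_{\LD_2}) + \Sigma_{p_0}^\PP(\Reg_{\OP_2}) + \Sigma_{p_0}^\PP(\Reg_\Ot)\lesssim \Lambda^{-1/(6n)}\sigma_{p_0}(\HD_1)$; for $\Reg_\DB$ and $\Reg_{\OP_1}$ I would apply H\"older to \rf{eqlem*2} and \rf{eqlem*4} (splitting $\Reg_{\OP_1}$ into the part inside $\HD_1$ and the rest, the latter being handled as in the derivation of \rf{eqlem*7}) and bound the exponent of $\Lambda$ by $-\tfrac1{6n}+\tfrac1{36n}\cdot(\text{small})$; for $\Reg_\Neg$ apply H\"older to \rf{eqassu78699}; and for $\Reg_{\HD_2}$ use \rf{eqlem*6} directly, which gives $\Sigma_{p_0}^\PP(\Reg_{\HD_2})\lesssim B\,\Lambda^{p_0-2}\sigma_{p_0}(\HD_1) = \Lambda^{\frac1{100n}-\frac1{18n}}\sigma_{p_0}(\HD_1)$.

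The decisive term is $\Reg_{\HD_2}$: its exponent is $\tfrac1{100n} - \tfrac1{18n}$, which is negative and, a short computation shows, is $\leq -\tfrac1{25n}$ (indeed $\tfrac1{18} - \tfrac1{100} = \tfrac{82}{1800} = \tfrac{41}{900} > \tfrac1{25} = \tfrac{36}{900}$). All other pieces carry at worst $\Lambda^{-1/(6n)}$ times a factor $(B\Lambda^{O(1)})^{1/(36n)}$, which is $\leq \Lambda^{-1/(25n)}$ once $\Lambda$ is large (the exponent being $-\tfrac1{6n}$ plus a term that is $O(1/n)\cdot\tfrac1{36n}$, still well below $-\tfrac1{25n}$ in absolute terms after absorbing constants into $\Lambda^{\text{small}}$, since we are free to take $\Lambda$ large). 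Summing the finitely many pieces yields $\Sigma_{p_0}^\PP(\Reg)\lesssim \Lambda^{-1/(25n)}\sigma_{p_0}(\HD_1)$.

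The main obstacle I anticipate is purely bookkeeping: one must verify that in each application of H\"older's inequality the exponent of $\Lambda$ produced by the factor $(B\Lambda^2)^{1-p_0/2}$ (or $\Lambda^{p_0-2}$ for $\HD_2$) does not erode the gain below $\Lambda^{-1/(25n)}$, and that the choice $p_0 = 2 - \tfrac1{18n}$ is exactly calibrated so that the $\HD_2$-term — which has no $\delta_0$ or $\ve_Z$ smallness and only the $B\,\Lambda^{p_0-2}$ gain — still beats $\Lambda^{-1/(25n)}$. This is where the specific numerology $\tfrac1{100n}$ (the exponent in $B$), $\tfrac1{18n}$ (in $p_0$), and $\tfrac1{25n}$ (the target) must be checked against one another; everything else follows mechanically from Lemmas \ref{lemregmolt}, \ref{lemneg3}, and \ref{lemregot}.
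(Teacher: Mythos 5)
Your proposal is correct and follows essentially the same route as the paper: decompose $\Reg$ via \rf{eqsplitreg0}, use the estimates already collected in Lemmas \ref{lemregmolt}, \ref{lemneg3}, \ref{lemregot}, and for the families whose only available estimate is at $p=2$ (namely $\Reg_\DB$, $\Reg_{\OP_1}$, $\Reg_\Neg$) apply H\"older together with \rf{eqmuhd1} to convert, then check the numerology. The dominant-term analysis ($\Reg_{\HD_2}$ giving $B\,\Lambda^{p_0-2}=\Lambda^{-41/(900n)}<\Lambda^{-1/(25n)}$) is exactly the paper's calibration of $p_0$.

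Two small remarks. First, the parenthetical about ``splitting $\Reg_{\OP_1}$ into the part inside $\HD_1$ and the rest'' is confused: the cubes of $\Reg_{\OP_1}$ are contained in level-one stopping cubes from $\OP(R)\cap\sss_1(e'(R))$, hence \emph{disjoint} from $HD_1$, so there is nothing to split. The paper handles $\Reg_{\OP_1}$ (lumped with $\Reg_\DB$ and $\Reg_\Neg$) by H\"older against $\mu(R)^{1-p/2}$ and then converting via $\mu(R)\lesssim B\Lambda^2\mu(HD_1)$; you arrive at the same bound, but the parenthetical would lead nowhere if taken literally. Second, your exponent bookkeeping for the H\"older step is a bit vague (``$-\tfrac1{6n}+\tfrac1{36n}\cdot(\text{small})$''); the paper makes it precise: the loss factor is $(\Lambda^3)^{1-p_0/2}=\Lambda^{1/(12n)}$, yielding $\Lambda^{-1/(6n)+1/(12n)}=\Lambda^{-1/(12n)}<\Lambda^{-1/(25n)}$. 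Filling in that computation would complete the argument.
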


\begin{proof}
By \rf{eqtot999} and \rf{eqlem*6}, for $1<p\leq 2$,
\begin{multline*}
\Sigma_p^\PP(\Reg_{\LD_1}\setminus\Reg_{\DB}) + \Sigma_p^\PP(\Reg_{\LD_2}) +
\Sigma_p^\PP(\Reg_{\OP_2}) + \Sigma_p^\PP(\Reg_{\HD_2})+ \Sigma_p^\PP(\Reg_\Ot)\\
\lesssim 
\Big(\Lambda^{\frac{-1}{6n}} + B\,\Lambda^{p-2}\Big)\,
\sigma_p(\HD_1).
\end{multline*}
Also, by \rf{eqlem*2}, \rf{eqlem*4}, and \rf{eqassu78699},
$$\Sigma^\PP(\Reg_\DB) + \Sigma^\PP(\Reg_{\OP_1}) + \Sigma^\PP(\Reg_{\Neg}) \lesssim 
\big(\delta_0\,B\,\Lambda^2
+\Lambda^{\frac{-1}{3n}}\big)\,\sigma(\HD_1)\lesssim
\Lambda^{\frac{-1}{3n}}\,\sigma(\HD_1),
$$
by the assumptions on $\delta_0$, $\Lambda$, and $B$ in Remarks \ref{rem9.7} and \ref{rem9.12}.
Choosing $p=2$ above and adding the preceding estimates, we get the first statement in the lemma.

To get the second inequality in the lemma, we apply H\"older's inequality and \rf{eqmuhd1} in the last estimate, and we get
\begin{align}\label{eqsam438}
\Sigma_p^\PP(\Reg_\DB) + \Sigma_p^\PP(\Reg_{\OP_1}) + \Sigma_p^\PP(\Reg_{\Neg}) &\lesssim \big(\Lambda^{\frac{-1}{3n}}\,\sigma(\HD_1)\big)^{\frac p2}\,\mu(R)^{1-\frac p2}\\
& \lesssim
\big(\Lambda^{\frac{-1}{3n}}\,\sigma(\HD_1)\big)^{\frac p2}\,\big( B\,\Lambda^2\mu(HD_1)\big)^{1-\frac p2}\nonumber\\
& \leq \Lambda^{\frac{-1}{6n}}\,\big(\Lambda^3\big)^{1-\frac p2}\,\sigma_p(\HD_1).\nonumber
\end{align}
For $p=2-\frac1{18n}$, we have 
$$\big(\Lambda^3\big)^{1-\frac p2} = \Lambda^{\frac1{12n}},$$
and thus
\begin{equation}\label{eqsam439}
\Sigma_p^\PP(\Reg_\DB) + \Sigma_p^\PP(\Reg_{\OP_1})+\Sigma_p^\PP(\Reg_{\Neg}) \lesssim \Lambda^{\frac{-1}{12n}}\,\sigma_p(\HD_1).
\end{equation}
Also,
$$B\,\Lambda^{p-2} = \Lambda^{\frac1{100n}}\,\Lambda^{\frac{-1}{18n}} = \Lambda^{\frac{-41 }{900n}}
< \Lambda^{\frac{-1 }{25n}},$$
and thus
\begin{multline*}
\Sigma_p^\PP(\Reg_{\LD_1}\setminus\Reg_{\DB}) + \Sigma_p^\PP(\Reg_{\LD_2}) +
\Sigma_p^\PP(\Reg_{\OP_2}) + \Sigma_p^\PP(\Reg_{\HD_2})+ \Sigma_p^\PP(\Reg_\Ot)\\
\lesssim \Lambda^{\frac{-1 }{25n}}\,
\sigma_p(\HD_1).
\end{multline*}
Gathering the estimates above, the lemma follows.
\end{proof}

\vv
Remark that in order to transfer the lower estimates for $\RR\eta$ to $\RR\mu$ we will need
to take $p$ very close to $1$ below, in particular with $p<p_0$.

Next lemma shows how one can estimate the $\QQ_\Reg$ coefficients in terms of the $\PP$ coefficients.
\vv

\begin{lemma}\label{lemregpq}
For all $p\in(1,\infty)$,
$$\Sigma_p^\QQ(\Reg)\lesssim \Sigma_p^\PP(\Reg).$$
\end{lemma}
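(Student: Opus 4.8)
The plan is to compare $\QQ_\Reg(Q)$ to $\PP(Q)$ pointwise for every $Q\in\Reg$, and then sum. Recall that
$$\QQ_\Reg(Q) = \sum_{P\in\Reg} \frac{\ell(P)}{D(P,Q)^{n+1}}\,\mu(P),\qquad D(P,Q)=\ell(P)+\dist(P,Q)+\ell(Q).$$
The first observation is that, thanks to Lemma~\ref{lem74}, whenever $P\in\Reg$ satisfies $\dist(P,Q)\lesssim \ell(P)$ one has $\ell(P)\approx \ell(Q)$ and there are only boundedly many such $P$; hence the ``close'' part of the sum over $\Reg$ is comparable to $\Theta(Q)\lesssim\PP(Q)$. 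For the ``far'' part, one bundles the cubes $P\in\Reg$ into dyadic annuli: for each generation $k$ (i.e.\ cubes with $\ell(P)\approx A_0^{-k}\ell(Q)$ say, although since cubes in $\Reg$ are not all of the same size one must instead group by $D(P,Q)\approx 2^j$) one uses that the union of such $P$ has $\mu$-measure controlled by $\Theta$ of a cube of the corresponding size containing $Q$, so that $\sum_{P}\ell(P)\,\mu(P)/D(P,Q)^{n+1}$ telescopes. The clean way to do this is to note, using the polynomial growth of $\mu$ together with the fact (Lemma~\ref{lem74}) that cubes from $\Reg$ meeting a given ball have comparable and not-too-small side length, that
$$\QQ_\Reg(Q)\;\lesssim\; \sum_{S\in\DD_\mu:S\supset Q}\frac{\ell(Q)}{\ell(S)^{n+1}}\,\mu(2B_S)\;+\;\Theta(Q)\;\approx\;\PP(Q),$$
i.e.\ $\QQ_\Reg(Q)\lesssim \PP(Q)$ for every $Q\in\Reg$. (The point is that each $P\in\Reg$ with $D(P,Q)\approx\ell(S)$ for some ancestor $S$ of $Q$ is contained in a bounded dilate of $S$, and $\sum_{P\in\Reg,\,P\subset CB_S}\ell(P)\mu(P)\lesssim \ell(S)\,\mu(2B_S)$ because the $\ell(P)$ are at most $\ell(S)$ and the $P$ are disjoint.)

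Granting the pointwise bound $\QQ_\Reg(Q)\lesssim\PP(Q)$, the lemma is immediate: for any $p\in(1,\infty)$,
$$\Sigma_p^\QQ(\Reg)=\sum_{Q\in\Reg}\QQ_\Reg(Q)^p\,\mu(Q)\lesssim \sum_{Q\in\Reg}\PP(Q)^p\,\mu(Q)=\Sigma_p^\PP(\Reg).$$
So the entire content of the lemma is the pointwise comparison $\QQ_\Reg(Q)\lesssim\PP(Q)$, and the $p$-summation is a triviality. One should double-check that the implicit constants in the pointwise bound do not depend on $\Reg$ in a bad way — they depend only on $n$, $A_0$, $C_0$ and the structural constants of the family $\Reg(e'(R))$ from Lemma~\ref{lem74}, which are absolute — so the conclusion holds uniformly.

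The main (really the only) obstacle is organizing the sum defining $\QQ_\Reg(Q)$ so as to recognize it as (a multiple of) $\PP(Q)$. Concretely one must handle the fact that the cubes in $\Reg$ are of varying sizes, so the natural grouping is by the value of $D(P,Q)$ rather than by generation; and one must invoke Lemma~\ref{lem74}(b),(c) to control, for a ball $B(x_Q,t)$, both the number of $\Reg$-cubes meeting it and the lower bound on their side lengths, so that $\sum_{P\in\Reg:\,P\cap B(x_Q,t)\neq\varnothing}\ell(P)\,\mu(P)\lesssim t\,\mu(2B(x_Q,t))$. After that the geometric-type sum over $t=A_0^{-k}\ell(Q)$, $k\ge 0$, reproduces exactly $\sum_{S\supset Q}\frac{\ell(Q)}{\ell(S)^{n+1}}\mu(2B_S)\approx\PP(Q)$ up to the additive term $\Theta(Q)$ coming from $P$'s comparable in size to $Q$, and $\Theta(Q)\lesssim\PP(Q)$ trivially. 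I expect this to be short and entirely routine once the grouping is set up.
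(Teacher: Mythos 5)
The pointwise bound $\QQ_\Reg(Q)\lesssim\PP(Q)$ on which your whole argument rests is false, and the error is visible in the very computation you sketch. For the "far" cubes grouped by $D(P,Q)\approx t$, you bound $\sum_{P\in\Reg:\, D(P,Q)\approx t}\ell(P)\,\mu(P)\lesssim t\,\mu(B(x_Q,Ct))$ (using $\ell(P)\lesssim t$ and disjointness), which gives an annulus contribution to $\QQ_\Reg(Q)$ of order $\theta_\mu(x_Q,Ct)$. But $\PP(Q)$ is $\sum_{S\supset Q}\frac{\ell(Q)}{\ell(S)}\,\theta_\mu(2B_S)$, i.e.\ its annulus contribution carries the extra decaying factor $\ell(Q)/t$. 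You have replaced the needed sum $\sum_{k\ge 0}A_0^{-k}\theta_\mu(\cdot)$ by $\sum_{k\ge 0}\theta_\mu(\cdot)$, which can be larger by a factor of the number of scales between $\ell(Q)$ and $\ell(R)$. Concretely, if $\mu$ is AD-regular and the regularized cubes have $\ell(P)\approx\dist(P,Q)$ at every scale (which is consistent with $d_R$ being $1$-Lipschitz), then $\QQ_\Reg(Q)\approx\log(\ell(R)/\ell(Q))\cdot\theta_0$ while $\PP(Q)\approx\theta_0$, so your claimed inequality fails with a logarithm, and the $p$-th power sum cannot be absorbed.

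The reason the estimate $\ell(P)\lesssim t$ is too lossy is precisely the asymmetry of the roles of $P$ and $Q$ in the kernel $\ell(P)/D(P,Q)^{n+1}$: the numerator is $\ell(P)$, not $\ell(Q)$. When $Q$ is fixed and you sum over $P$, the cubes $P$ far from $Q$ may have $\ell(P)\approx D(P,Q)$, so the ratio $\ell(P)/D(P,Q)$ provides no decay. The paper's proof avoids this by dualizing ($\Sigma_p^\QQ(\Reg)^{1/p}=\sup_{\|g\|_{\ell^{p'}}\le1}\sum_Q\QQ_\Reg(Q)g_Q\mu(Q)$) and then Fubinating, so that the inner sum is over $Q$ with $P$ fixed. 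In that order, $\ell(P)$ is a constant, the annulus at distance $2^j\ell(P)$ produces the factor $\ell(P)/(2^j\ell(P))=2^{-j}$, and the resulting geometric series sums to $\PP(P)\,\cM_\mu\wt g(x)$ for $x\in P$; a final H\"older and the $L^{p'}$-boundedness of the maximal operator finish the proof. So the lemma is a genuine Schur-type duality estimate, not a pointwise comparison; you would need to rebuild the argument along those lines.
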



\begin{proof}
By duality,
\begin{equation}\label{eqdual912}
\Sigma_p^\QQ(\Reg)^{1/p} = \bigg(\sum_{Q\in\Reg}\QQ_\Reg(Q)^p\,\mu(Q)\bigg)^{1/p}
= \sup \sum_{Q\in\Reg} \QQ_\Reg(Q)\,g_Q\,\mu(Q),
\end{equation}
where the supremum is taken over all sequences $g=\{g_Q\}_{Q\in\Reg}$ such that 
\begin{equation}\label{eqdual912b}
\sum_{Q\in\Reg} |g_Q|^{p'}\,\mu(Q)\leq1.
\end{equation}
We will identify the sequence $g$ with the function 
$$\wt g= \sum_{Q\in\Reg}g_Q\,\chi_Q,$$
so that the sum in \rf{eqdual912b} equals $\|\wt g \|_{L^{p'}(\mu)}^{p'}$.
 By the definition of $\QQ_\Reg$ and Fubini we have
\begin{align}\label{eqdjkq44}
\sum_{Q\in\Reg} \QQ_\Reg(Q)\,g_Q\,\mu(Q) &= \sum_{Q\in\Reg} \sum_{P\in\Reg} \frac{\ell(P)}{D(P,Q)^{n+1}}\,\mu(P)\,g_Q\,\mu(Q)\\
& = \sum_{P\in\Reg} \bigg( \sum_{Q\in\Reg} \frac{\ell(P)}{D(P,Q)^{n+1}}\,g_Q\,\mu(Q)\bigg)\,\mu(P).
\nonumber
\end{align}
For each $P\in\Reg$, we have
\begin{align}\label{eqalg539}
\sum_{Q\in\Reg} \frac{\ell(P)}{D(P,Q)^{n+1}}\,|g_Q|\,\mu(Q) & = \sum_{j\geq0}\;
\sum_{Q\in\Reg:2^j\ell(P)\leq D(P,Q)\leq 2^{j+1}\ell(P)} \frac{\ell(P)}{D(P,Q)^{n+1}}\,|g_Q|\,\mu(Q)\\
& \leq \sum_{j\geq0}\;
\sum_{Q\in\Reg:D(P,Q)\leq 2^{j+1}\ell(P)} \frac{2^{-j}}{(2^j\ell(P))^{n}}\,|g_Q|\,\mu(Q).\nonumber
\end{align}

Observe now that the condition
$\ell(Q) + \dist(P,Q)\leq D(P,Q)\leq 2^{j+1}\ell(P),$
implies that 
$Q\subset B(x_Q,\ell(Q))\subset B(x_P,2^{j+3}\ell(P)).$
From \rf{eqalg539} and this fact, we infer that
\begin{align*}
\sum_{Q\in\Reg} \frac{\ell(P)}{D(P,Q)^{n+1}}\,|g_Q|\,\mu(Q) &
\leq  \sum_{j\geq0}\;
\sum_{Q\in\Reg:Q\subset B(x_P,C2^j\ell(P))} \frac{2^{-j}}{(2^j\ell(P))^{n}}\,|g_Q|\,\mu(Q)\\
& \leq\sum_{j\geq0}
 \frac{2^{-j}}{(2^j\ell(P))^{n}}\,\int_{B(x_P,C2^j\ell(P))}|\wt g|\,d\mu
\end{align*}
Notice now that, for all $x\in P$,
\begin{align*}
\int_{B(x_P,C2^j\ell(P))}|\wt g|\,d\mu & \leq \int_{B(x,C'2^j\ell(P))}|\wt g|\,d\mu\\
&\leq \mu(B(x,C'2^j\ell(P)))\,\cM_\mu \wt g(x)\leq \mu(B(x_P,C''2^j\ell(P)))\,\cM_\mu \wt g(x),
\end{align*}
where $\cM_\mu$ is the centered Hardy-Littlewood maximal operator. Thus,
\begin{align*}
\sum_{Q\in\Reg} \frac{\ell(P)}{D(P,Q)^{n+1}}\,|g_Q|\,\mu(Q)
& \leq\sum_{j\geq0}
 \frac{2^{-j}\mu(B(x_P,C''2^j\ell(P)))}{(2^j\ell(P))^{n}}\,\cM_\mu \wt g(x)\\
 &\approx
 \sum_{k\geq0}
 2^{-k}\theta_\mu(2^kB_P)\,\cM_\mu \wt g(x) \approx \PP(P)\,\,\cM_\mu \wt g(x).
 \end{align*}
Plugging this estimate into \rf{eqdjkq44}, and taking the infimum for $x\in P$, we get
\begin{align*}
\sum_{Q\in\Reg} \QQ_\Reg(Q)\,|g_Q|\,\mu(Q) & \lesssim \sum_{P\in\Reg} \PP(P)\,\inf_{x\in P}\cM_\mu \wt g(x)\,\mu(P)\\
& \leq \bigg(\sum_{P\in\Reg} \PP(P)^p\,\mu(P)\bigg)^{1/p}
\bigg(\sum_{P\in\Reg} \int_P |\cM_\mu \wt g|^{p'}\,d\mu\bigg)^{1/p'}\\
& = \Sigma_p^\PP(\Reg)^{1/p}\,\|\cM_\mu\wt g\|_{L^{p'}(\mu\rest_{e'(R)})}\\
&\lesssim
\Sigma_p^\PP(\Reg)^{1/p}\,\|\wt g\|_{L^{p'}(\mu)}\leq \Sigma_p^\PP(\Reg)^{1/p},
\end{align*}
which concludes the proof of the lemma, by \rf{eqdual912}.
\end{proof}

\vv


\subsection{Transference of the lower estimates for $\RR\eta$ to $\Delta_{\wt \TT}\RR\mu$}
Denote
$$F = e'(R) \setminus \bigcup_{Q\in\Reg_{\HD_2}} Q.$$
By the splitting \rf{eqsplitreg0}, it is clear that $F$ coincides with the union of the cubes in the family
$$\Reg_F:=
 \Reg_{\LD_1} \cup \Reg_{\LD_2}\cup \Reg_{\OP_1}\cup \Reg_{\OP_2}\cup \Reg_{\Neg}\cup \Reg_{\Ot}.
$$
We also write
$$F_\eta= \bigcup_{Q\in\Reg_F} \frac12 B(Q),$$
so that the measure $\mu|_F$ is well approximated by $\eta|_{F_\eta}$, in a sense. 

Recall that in Lemma \ref{lemrieszeta} we showed that  
$$\int_{V_4} \big|(|\RR\eta(x)| - \frac{c_0}2\,\Theta(\HD_1))_+\big|^p\,d\eta(x)
 \gtrsim \Lambda^{-(p'+1)\ve_n}\sigma_p(\HD_1(e(R))),$$
for any $p\in (1,2]$, with $c_0$ as in Lemma \ref{lemvar}. 
We will show below that  a similar lower estimate holds if we restrict the integral on the 
left side to $HD_2$. The first step is the next lemma.

We let $\wt V_4$ be the union of the balls $\frac12B(Q)$, with $Q\in\Reg$, that intersect $V_4$.

\vv
\begin{lemma}\label{leminteta*}
Suppose that $R\in\Trc\cap\Ty$ and also that $\mu(Z)\leq \ve_Z\,\mu(R)$ and
$$\|\Delta_\wt\TT \RR\mu\|_{L^2(\mu)}^2\leq \Lambda^{-1}\,\sigma(\HD_1).$$
Then, for $p_0 = 2- \frac1{18n}$, assuming $\ve_Z\leq 
 \Lambda^{-300n}$ and $\ell_0$ small enough,
$$\int_{F_\eta\cap\wt V_4}\big|(|\RR\eta| - \frac{c_0}4 \Theta(\HD_1))_+\big|^{p_0}
\,d\eta\lesssim \Lambda^{\frac{-1}{25n}}\,\sigma_{p_0}(\HD_1).$$
\end{lemma}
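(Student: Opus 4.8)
The strategy is to compare $\RR\eta$ on $F_\eta\cap\wt V_4$ with $\RR_{\TT_\Reg}\mu$ on $F\cap V_4$ via Lemma \ref{lemaprox1}, then with $\RR_{\wt\TT}\mu$ via Lemma \ref{lemaprox3}, and finally with $\Delta_{\wt\TT}\RR\mu$ via Lemma \ref{lemaprox2}; the hypothesis $\|\Delta_{\wt\TT}\RR\mu\|_{L^2(\mu)}^2\leq \Lambda^{-1}\sigma(\HD_1)$ then kills the main term, and everything else is an ``error term'' controlled by the $\Sigma_p^\PP$ and $\Sigma_p^\QQ$ estimates from Lemmas \ref{lemregmolt}, \ref{lemneg3}, \ref{lemregot}, \ref{lemreg73}, and \ref{lemregpq}. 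First I would observe that, for $P\in\Reg_F$ with $(P\cup\tfrac12 B(P))\cap V_4\neq\varnothing$ and $x\in P$, $y\in\tfrac12B(P)$, Lemma \ref{lemaprox1} gives
$$\big|\RR\eta(y)\big|\leq \big|\RR_{\TT_\Reg}\mu(x)\big| + C\big(\Theta(R)+\PP(P)+\QQ_\Reg(P)\big),$$
so that, since $\Theta(R)\leq \tfrac{c}{\Lambda^2}\Theta(\HD_1)$ (recall $\Theta(\HD_1)=\Lambda\Theta(R)$, wait — $\Theta(\HD_1)=\Lambda\Theta(R)$, so $\Theta(R)=\Lambda^{-1}\Theta(\HD_1)$), the constant term is $\ll \tfrac{c_0}{8}\Theta(\HD_1)$ for $\Lambda$ large. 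Hence on $\tfrac12B(P)$,
$$\big(|\RR\eta(y)|-\tfrac{c_0}4\Theta(\HD_1)\big)_+\lesssim \big(|\RR_{\TT_\Reg}\mu(x)|-\tfrac{c_0}{8}\Theta(\HD_1)\big)_+ + C\big(\PP(P)+\QQ_\Reg(P)\big),$$
and, using $\eta(\tfrac12B(P))=\mu(P)$ together with the bounded overlap of the balls $\tfrac12B(P)$ from Lemma \ref{lem74}, integrating in $y$ over $\tfrac12B(P)$ and then summing over $P\in\Reg_F$ yields
$$\int_{F_\eta\cap\wt V_4}\big|(|\RR\eta|-\tfrac{c_0}4\Theta(\HD_1))_+\big|^{p_0}d\eta
\lesssim \sum_{P\in\Reg_F}\int_P\big|(|\RR_{\TT_\Reg}\mu|-\tfrac{c_0}{8}\Theta(\HD_1))_+\big|^{p_0}d\mu + \Sigma_{p_0}^\PP(\Reg_F)+\Sigma_{p_0}^\QQ(\Reg_F).$$

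\textbf{Second step: passing from $\RR_{\TT_\Reg}\mu$ to $\Delta_{\wt\TT}\RR\mu$.} For a fixed $Q\in\wt\End$ with $\ell_0\leq\ell(Q)$, I would use Lemma \ref{lemaprox3} to write
$$\int_Q\big|\RR_{\TT_\Reg}\mu-\RR_{\wt\TT}\mu\big|^{p_0}d\mu\lesssim \EE(2Q)^{p_0/2}\mu(Q)^{1-p_0/2},$$
and Lemma \ref{lemaprox2} to compare $\RR_{\wt\TT}\mu$ with $\Delta_{\wt\TT}\RR\mu$ on $Q$ up to $\big(\EE(4R)/\mu(R)\big)^{1/2}+\big(\EE(2Q)/\mu(Q)\big)^{1/2}$. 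Since every cube of $\Reg_F$ is contained in a cube of $\wt\End$ (the families $\Reg_{\LD_i},\Reg_{\OP_i},\Reg_\Neg$ are contained in cubes of $\LD_i,\OP_i,\Neg\cup\sM_\Neg\subset\wt\End$, and $\Reg_\Ot$ has side length $\approx\ell_0$ so its cubes also sit in $\wt\End$ once $\ell_0$ is small), I get
$$\sum_{P\in\Reg_F}\int_P\big|(|\RR_{\TT_\Reg}\mu|-\tfrac{c_0}{8}\Theta(\HD_1))_+\big|^{p_0}d\mu
\lesssim \|\Delta_{\wt\TT}\RR\mu\|_{L^{p_0}(\mu)}^{p_0} + \sum_{Q\in\wt\End}\EE(2Q)^{p_0/2}\mu(Q)^{1-p_0/2} + \Big(\tfrac{\EE(4R)}{\mu(R)}\Big)^{p_0/2}\mu(e'(R)).$$
The first term is bounded via Hölder by $\|\Delta_{\wt\TT}\RR\mu\|_{L^2(\mu)}^{p_0}\mu(e'(R))^{1-p_0/2}$, and using $\|\Delta_{\wt\TT}\RR\mu\|_{L^2(\mu)}^2\leq\Lambda^{-1}\sigma(\HD_1)$ together with $\mu(e'(R))\lesssim\mu(R)\lesssim B\Lambda^2\mu(HD_1)$ (as in \eqref{eqmuhd1}) and $\sigma(\HD_1)=\Lambda^2\Theta(R)^2\mu(HD_1)$, one checks this is $\lesssim \Lambda^{-1}(B\Lambda^2)^{1-p_0/2}\sigma_{p_0}(\HD_1)\ll\Lambda^{-1/25n}\sigma_{p_0}(\HD_1)$ for $p_0=2-\tfrac1{18n}$. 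For the energy terms, the key point is that by Lemma \ref{lemenergias} and Lemma \ref{lemregmolt} (specifically \eqref{eqlem*1}, \eqref{eqlem*2}) one has $\sum_{Q\in\wt\End}\EE(2Q)\lesssim\sum_{Q\in\wt\End}\Theta(Q)^2\mu(Q)=\sigma(\wt\End)\lesssim B\sigma(\HD_1)$ (the $\DB$ cubes being handled separately via \eqref{eqlem*2}), and since $\EE(2Q)\leq \EE(9Q)\lesssim\EE_\infty(9Q)$ one bounds $\EE(2Q)^{p_0/2}\mu(Q)^{1-p_0/2}$ by Hölder over $\wt\End$; I would also need $\EE(4R)/\mu(R)\lesssim \PP(R)^2\lesssim\Theta(R)^2$ by $\PP$-doublingness of $R$ (as noted in the proof of Lemma \ref{lemaprox2}), giving $\big(\EE(4R)/\mu(R)\big)^{p_0/2}\mu(e'(R))\lesssim\Theta(R)^{p_0}\mu(R)\approx\sigma_{p_0}(R)/\Lambda^{?}$, which again is $\ll\Lambda^{-1/25n}\sigma_{p_0}(\HD_1)$ after inserting $\mu(R)\lesssim B\Lambda^2\mu(HD_1)$.

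\textbf{Third step: the error sums $\Sigma_{p_0}^\PP(\Reg_F)+\Sigma_{p_0}^\QQ(\Reg_F)$.} By Lemma \ref{lemregpq} it suffices to bound $\Sigma_{p_0}^\PP(\Reg_F)$. Now $\Reg_F=\Reg_{\LD_1}\cup\Reg_{\LD_2}\cup\Reg_{\OP_1}\cup\Reg_{\OP_2}\cup\Reg_\Neg\cup\Reg_\Ot$; splitting $\Reg_{\LD_1}$ into $\Reg_{\LD_1}\setminus\Reg_\DB$ and $\Reg_{\LD_1}\cap\Reg_\DB\subset\Reg_\DB$, the terms $\Sigma_{p_0}^\PP(\Reg_{\LD_1}\setminus\Reg_\DB)$, $\Sigma_{p_0}^\PP(\Reg_{\LD_2})$, $\Sigma_{p_0}^\PP(\Reg_{\OP_2})$, $\Sigma_{p_0}^\PP(\Reg_\Ot)$ are all $\lesssim\Lambda^{-1/6n}\sigma_{p_0}(\HD_1)$ by \eqref{eqtot999} (valid since $\mu(Z)\leq\ve_Z\mu(R)$ and $\ell_0$ small), while $\Sigma_{p_0}^\PP(\Reg_\DB)$, $\Sigma_{p_0}^\PP(\Reg_{\OP_1})$, $\Sigma_{p_0}^\PP(\Reg_\Neg)$ are each $\lesssim\Lambda^{-1/12n}\sigma_{p_0}(\HD_1)$ by \eqref{eqsam439} (this is exactly where $p_0=2-\tfrac1{18n}$ is chosen, so that $(\Lambda^3)^{1-p_0/2}=\Lambda^{1/12n}$ and the loss in the Hölder step is reabsorbed). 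Hence $\Sigma_{p_0}^\PP(\Reg_F)\lesssim\Lambda^{-1/12n}\sigma_{p_0}(\HD_1)\leq\Lambda^{-1/25n}\sigma_{p_0}(\HD_1)$. Adding all three steps and choosing $\ve_Z\leq\Lambda^{-300n}$ as hypothesized (to absorb the $B\Lambda^4\ve_Z$ type losses in \eqref{eqgaga35} and to make the $\mu(Z)$-dependent contributions negligible), one obtains the claimed bound. The main obstacle I anticipate is the bookkeeping in the second step: carefully justifying that every cube of $\Reg_F$ with $\ell(P)\geq\ell_0$ lies inside a cube $Q\in\wt\End$ with $\ell_0\leq\ell(Q)$ so that Lemmas \ref{lemaprox2} and \ref{lemaprox3} apply, handling the leftover cubes of size $\approx\ell_0$ (which contribute through $\Reg_\Ot$ and vanish in the $\ell_0\to0$ limit by Lemma \ref{lemregot}), and making sure the energy sums $\sum_{Q\in\wt\End}\EE(2Q)^{p_0/2}\mu(Q)^{1-p_0/2}$ are correctly controlled using Lemma \ref{lemenergias}, the tractability/typicality of $R$, and the $\PP$-doubling property of the cubes in $\wt\End$ — together with tracking the exact powers of $\Lambda$ so that every error is genuinely $\lesssim\Lambda^{-1/25n}\sigma_{p_0}(\HD_1)$.
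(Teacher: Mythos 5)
Your overall route (Lemma \ref{lemaprox1} $\to$ Lemma \ref{lemaprox3} $\to$ Lemma \ref{lemaprox2} $\to$ $\Sigma^\PP$/$\Sigma^\QQ$ estimates) is the same as the paper's, and Step 3 (the control of $\Sigma^\PP_{p_0}(\Reg_F)$ via Lemmas \ref{lemregmolt}, \ref{lemneg3}, \ref{lemregot}, \ref{lemreg73}, \ref{lemregpq}) is essentially right. But Step 2 contains a structural gap that would prevent the proof from closing.

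The pivotal claim that ``every cube of $\Reg_F$ is contained in a cube of $\wt\End$'' is false. Recall $\wt\End = (\End\setminus\Neg)\cup\sM_\Neg$, so $\Neg\cap\wt\End=\varnothing$ (the $\Neg$ cubes are not $\PP$-doubling and hence are deleted from $\End$ when forming $\wt\End$). Thus cubes of $\Reg_\Neg\setminus\Reg_{\sM_\Neg}$ sit inside $\Neg$-cubes but \emph{not} inside any cube from $\wt\End$. Worse, by definition $\Reg_\Ot$ consists precisely of cubes not contained in any cube of $\End$, and smallness of $\ell_0$ only makes them smaller — they converge to $Z$, not to something covered by $\wt\End$. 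Since Lemmas \ref{lemaprox2} and \ref{lemaprox3} require $Q\in\wt\End$, they simply cannot be applied to those two families, and the whole of Step 2 breaks there. The paper circumvents this by treating those families separately: for $\Reg_\Neg\setminus\Reg_{\sM_\Neg}$ it shows the integral $I_b$ vanishes identically (because on those cubes the intermediate densities $\Theta(S)\lesssim(\ell(S)/\ell(R))^{1/2}\Theta(R)$ force $|\RR\eta|\lesssim\Theta(R)\ll\Theta(\HD_1)$); and for $\Reg_\Ot$ it runs a separate, more delicate argument introducing the family $\sM_\Ot\subset\TT$ of maximal $\PP$-doubling cubes and the set $N_\Ot\subset Z$, then applies Hölder against $\mu(Z)\leq\ve_Z\mu(R)$ and uses orthogonality of $\Delta_Q\RR\mu$ to dominate $\int|\Delta_{\sM_\Ot}\RR\mu|^2\,d\mu$ by $\|\Delta_{\wt\TT}\RR\mu\|_{L^2(\mu)}^2$. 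This is where the hypothesis $\ve_Z\leq\Lambda^{-300n}$ is genuinely needed beyond its role in $\Sigma^\PP_{p_0}(\Reg_\Ot)$.

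There is a second, quantitative, gap in the same step: your energy-sum bound $\sum_{Q\in\wt\End}\EE(2Q)\lesssim\sigma(\wt\End)\lesssim B\,\sigma(\HD_1)$ is both over the wrong family and too crude. It is over the wrong family because $\wt\End$ includes $\HD_2$ but $\Reg_{\HD_2}$ is excluded from $\Reg_F$, so you should only sum over $\End_a=\wt\End\setminus\HD_2$. It is too crude because for $Q\notin\DB$ one has $\EE(2Q)\lesssim\EE_\infty(9Q)\lesssim M^2\Theta(Q)^2\mu(Q)$, not $\Theta(Q)^2\mu(Q)$; with $M$ possibly as large as $\Lambda$ this would inject a factor $(M^2B)^{p_0/2}\approx\Lambda^2$ into the final bound, far exceeding the target $\Lambda^{-1/(25n)}$. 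The paper instead uses the refined packing estimates of Lemma \ref{lemregmolt} and Lemma \ref{lemneg3} to obtain the genuinely small bound $\sum_{S\in\End_a}\EE(2S)\lesssim\Lambda^{-1/(6n)}\sigma(\HD_1)$ (the $M^2$ loss is absorbed because $\sigma(\End_a)$ itself is small relative to $\sigma(\HD_1)$ — it involves only $\LD$, $\OP$ and $\sM_\Neg$ cubes, not $\HD_2$), and this smallness is indispensable for the power counting.
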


Remark that the proof of this lemma takes advantage of the good estimates we have
obtained for $\Sigma^\PP_{p_0}(\Reg)$ in  Lemma \ref{lemreg73}. Later on we will show that an analogous estimate holds for all $p\in (1,p_0)$ (see Lemma \ref{lemNZ} below).

\begin{proof}
Denote
$$F_a= \bigcup_{Q\in\wt \End\setminus \HD_2} Q,\qquad F_b= \bigcup_{Q\in\Reg_\Neg\setminus \Reg_{\sM_\Neg}} Q,\qquad F_\Ot= \bigcup_{Q\in\Reg_\Ot} Q,$$
so that 
$F=F_a\cup F_b\cup F_\Ot$.
Write also 
$$\End_a = \LD_1 \cup \LD_2 \cup \OP_1  \cup \OP_2  \cup\sM_\Neg$$
and
$$\Reg_a = \Reg_{\LD_1}\cup\Reg_{\LD_2}\cup\Reg_{\OP_1}\cup\Reg_{\OP_2}\cup\Reg_{\sM_\Neg},$$
so that
$$F_a= \bigcup_{Q\in \End_a} Q = \bigcup_{Q\in \Reg_a} Q.$$
We also consider
$$F_{a,\eta} = \bigcup_{Q\in \Reg_a} \frac12B(Q),\qquad F_{b,\eta}= \bigcup_{Q\in\Reg_\Neg\setminus \Reg_{\sM_\Neg}} \frac12B(Q),  \qquad F_{\Ot,\eta}= \bigcup_{Q\in\Reg_\Ot} \frac12B(Q),$$
so that these sets approximate $F_a,F_b,F_\Ot$ at the level of the family $\Reg$, in a sense. Moreover, we have $F_\eta=F_{a,\eta}\cup F_{b,\eta}\cup F_{\Ot,\eta}$.

We split
$$\int_{F_\eta\cap\wt V_4}\big|(|\RR\eta| - \frac{c_0}4 \Theta(\HD_1))_+\big|^{p_0}
\,d\eta = \int_{F_{a,\eta}\cap\wt V_4} \ldots +  \int_{F_{b,\eta}\cap\wt V_4} \ldots + \int_{F_{\Ot,\eta}\cap\wt V_4}\ldots =:I_a+ I_b + I_\Ot,$$
where ``$\ldots$'' stands for 
$\big|(|\RR\eta| - \frac{c_0}4 \Theta(\HD_1))_+\big|^{p_0}
\,d\eta$. 

\vv
\noi {\bf Estimates for $I_b$.}
We claim that $I_b=0$. Indeed, given $Q\in\Reg_\Neg\setminus \Reg_{\sM_\Neg}$, notice that all the cubes $S$ such that $Q\subset S\subset R$ satisfy
$$\Theta(S)\lesssim \left(\frac{\ell(S)}{\ell(R)}\right)^{1/2}\,\Theta(R),$$
and so, for any $x\in Q$,
\begin{align*}
|\RR\eta(x)| &\lesssim \sum_{S:Q\subset S\subset R} \theta_\eta(4B_S) \lesssim 
\sum_{S:Q\subset S\subset R} \theta_\mu(CB_S) \\
&\lesssim \sum_{S:Q\subset S\subset R}\left(\frac{\ell(S)}{\ell(R)}\right)^{1/2}\,\Theta(R) + \Theta(R)\lesssim \Theta(R).
\end{align*}
Hence, for $\Lambda$ big enough, $(|\RR\eta(x)| - \frac{c_0}4 \Theta(\HD_1))_+=0$, which proves our claim.

\vv
\noi {\bf Estimates for $I_a$.} 
By Lemma \ref{lemaprox1}, for all $Q\in\Reg_a$ such that $\frac12B(Q)\subset\wt V_4$, all $x\in \frac12B(Q)$, and all $y\in Q$,
$$|\RR\eta(x)| \leq |\RR_{\TT_\Reg}\mu(y)| + C\Theta(R) + C\PP(Q) + C\QQ_\Reg(Q).$$
Thus, for $\Lambda$ big enough, since $C\Theta(R)=C\Lambda^{-1}\Theta(\HD_1)<\frac{c_0}8 \Theta(\HD_1)$,
$$(|\RR\eta(x)| - \frac{c_0}4 \Theta(\HD_1))_+\leq
(|\RR_{\TT_\Reg}\mu(y)| - \frac{c_0}8 \Theta(\HD_1))_+  + C\PP(Q) + C\QQ_\Reg(Q).$$
Consequently,
\begin{align}\label{eqalgjx2}
I_a & \lesssim \sum_{Q\in\Reg_a} \int_Q \big|(|\RR_{\TT_\Reg}\mu(y)| - 
\frac{c_0}8 \Theta(\HD_1))_+\big|^{p_0}\,d\mu(y) + \!\sum_{Q\in\Reg_a} (\PP(Q)^{p_0} + \QQ_\Reg(Q)^{p_0})\,\mu(Q)\\
& \lesssim \sum_{S\in\End_a}\int_S \big|(|\RR_{\TT_\Reg}\mu| - 
\frac{c_0}8 \Theta(\HD_1))_+\big|^{p_0}\,d\mu + \Sigma_{p_0}^\PP(\Reg),\nonumber
\end{align}
where, in the last inequality, we used the fact that all the cubes from $\Reg_a$
are contained in some cube $S\in\End_a$ and we applied Lemma \ref{lemregpq}.

For each $S\in\End_a$, by the triangle inequality and the fact that $(\;\cdot\;)_+$ is a $1$-Lipschitz function, we get 
\begin{align*}
\int_S \big|(|\RR_{\TT_\Reg}\mu| - 
\frac{c_0}8 \Theta(\HD_1))_+\big|^{p_0}\,d\mu &\lesssim 
\int_S \big|(|\RR_{\wt \TT}\mu| - 
\frac{c_0}8 \Theta(\HD_1))_+\big|^{p_0}\,d\mu \\
&\quad+ \int_S\big|\RR_{\wt\TT}\mu - \RR_{\TT_\Reg}\mu\big|^{p_0}\,d\mu
\end{align*}
By Lemma \ref{lemaprox3}, the last integral does not exceed 
$C\EE(2S)^{\frac{p_0}2} \,\mu(S)^{1-\frac{p_0}2}$, and thus we deduce that
\begin{equation}\label{eqIa1}
I_a\lesssim
\sum_{S\in\End_a}\int_S \big|(|\RR_{\wt\TT}\mu| - 
\frac{c_0}8 \Theta(\HD_1))_+\big|^{p_0}\,d\mu + \sum_{S\in\End_a}\EE(2S)^{\frac{p_0}2} \,\mu(S)^{1-\frac {p_0}2} + \Sigma_{p_0}^\PP(\Reg).
\end{equation}

Next we apply Lemma \ref{lemaprox2}, which ensures that
for any $S\in\End_a\subset\wt\End$ and all $x\in S$,
\begin{equation}\label{eqIa2}
|\RR_{\wt\TT}\mu(x)| \leq |\Delta_{\wt\TT}\RR\mu(x)| + C\left(\frac{\EE(4R)}{\mu(R)}\right)^{1/2} +  C\left(\frac{\EE(2S)}{\mu(S)}\right)^{1/2}.
\end{equation}
In case that $R\not\in\DB$, recalling that
$\Lambda  \ge \max(M^{\frac{8n-1}{8n-2}},CM)\gg M$ by \rf{eq:LambdadepM}, we obtain
\begin{equation}\label{eqEr4}
C\left(\frac{\EE(4R)}{\mu(R)}\right)^{1/2} \leq C\,M\,\Theta(R)\leq \frac{c_0}8 \Theta(\HD_1).
\end{equation}
In case that $R\in\DB$, since $R\in\Ty$, we have 
$$
\EE(4R)\lesssim
\sum_{Q\in\DB:Q\sim\TT} \EE_\infty(9Q)\leq \Lambda^{\frac{-1}{3n}}\,\sigma(\HD_1),
$$
and so we also get
\begin{equation}\label{eqEr5}
C\left(\frac{\EE(4R)}{\mu(R)}\right)^{1/2} \leq C \left(\frac{\Lambda^{\frac{-1}{3n}}\,\sigma(\HD_1)}{\mu(R)}\right)^{1/2}\leq C\, \Lambda^{\frac{-1}{6n}} \,\Theta(\HD_1)
\leq \frac{c_0}8 \Theta(\HD_1).
\end{equation}
Thus, in any case,
$$(|\RR_{\wt\TT}\mu(x)| - \frac{c_0}8 \Theta(\HD_1))_+ \leq 
|\Delta_{\wt\TT}\RR\mu(x)| + C\left(\frac{\EE(2S)}{\mu(S)}\right)^{1/2}.$$
Plugging this estimate into \rf{eqIa1}, we get
\begin{equation}\label{eqIa99}
I_a\lesssim
\sum_{S\in\End_a}\int_S |\Delta_{\wt\TT}\RR\mu|^{p_0}\,d\mu + \sum_{S\in\End_a}\EE(2S)^{\frac {p_0}2} \,\mu(S)^{1-\frac {p_0}2} + \Sigma_{p_0}^\PP(\Reg).
\end{equation}

We deal with each term on the right hand side of the preceding inequality separately. First, by H\"older's inequality and the assumptions in the lemma, we have
$$\int |\Delta_{\wt\TT}\RR\mu|^{p_0}\,d\mu \lesssim \|\Delta_{\wt\TT}\RR\mu\|_{L^2(\mu)}^{p_0}\,
\mu(R)^{1-\frac{p_0}2}\leq \Lambda^{-\frac{p_0}2}\,\sigma(\HD_1)^{\frac{p_0}2}\,\mu(R)^{1-\frac{p_0}2}.$$
Regarding the second term in \rf{eqIa99}, by H\"older's inequality again,
\begin{align*}
\sum_{S\in\End_a}\EE(2S)^{\frac{p_0}2} \,\mu(S)^{1-\frac {p_0}2} 
& \leq \bigg(\sum_{S\in\End_a}\EE(2S)\bigg)^{\frac{p_0}2} \,\bigg(\sum_{S\in\End_a} \mu(S)\bigg)^{1-\frac {p_0}2} \\& \lesssim  \bigg(\sum_{S\in\End_a}\EE(2S)\bigg)^{\frac{p_0}2} \,\mu(R)^{1-\frac {p_0}2}.
\end{align*}
We estimate the first factor on the right hand side using \rf{eqlem*4}, \rf{eqlem*4.5}, and \rf{eqlemneg02}:
\begin{align*}
\bigg(\sum_{S\in\End_a}\EE(2S)\bigg)^{\frac{p_0}2} & \leq \bigg(\sum_{S\in\OP_1\cup\OP_2}\EE_\infty(9S) +
\sum_{S\in
\LD_1\cup \LD_2} \EE_\infty(9S) + \sum_{S\in\sM_\Neg} \EE_\infty(9S)\bigg)^{\frac{p_0}2} \\
& \lesssim \big(B\Lambda^{-1}\!+\Lambda^{\frac{-1}{3n}} + B\,M^2\,\delta_0^{\frac2{n+2}} + B\,\Lambda^7\delta_0\big)^{\frac{p_0}2}\,\sigma(\HD_1)^{\frac{p_0}2}
\leq \Lambda^{\frac{-1}{6n}}\,\sigma(\HD_1)^{\frac{p_0}2},
\end{align*}
by the assumption \rf{eqassu78} on $\delta_0$.
In connection with the last summand on the right hand side of \rf{eqIa99}, by Lemma \ref{lemreg73}
we have
$$\Sigma_{p_0}^\PP(\Reg)\lesssim \Lambda^{\frac{-1}{25n}}\,\sigma_{p_0}(\HD_1).$$
Therefore,
$$I_a\lesssim \Lambda^{\frac{-1}{6n}}\,\sigma(\HD_1)^{\frac{p_0}2}\,\mu(R)^{1-\frac{p_0}2} + \Lambda^{\frac{-1}{25n}}\,\sigma_{p_0}(\HD_1).$$
From \rf{eqmuhd1} we derive that 
$\mu(HD_1) \geq\Lambda^{-3}\,\mu(R)$, and then
\begin{equation}\label{eqhd1*p}
 \Lambda^{\frac{-1}{6n}}\,\sigma(\HD_1)^{\frac{p_0}2}\,\mu(R)^{1-\frac{p_0}2} 
\leq \Lambda^{\frac{-1}{6n}}\,\big(\Lambda^3\big)^{1-\frac {p_0}2}\,\sigma_{p_0}(\HD_1)=
\Lambda^{\frac{-1}{12n}}\,\sigma_{p_0}(\HD_1)\leq \Lambda^{\frac{-1}{25n}}\,\sigma_{p_0}(\HD_1).
\end{equation}
So we get
\begin{equation}\label{eqIa93}
I_a\lesssim \Lambda^{\frac{-1}{25n}}\,\sigma_{p_0}(\HD_1).
\end{equation}
\vv

\noi {\bf Estimate of $I_\Ot$.} 
By the same arguments as in \rf{eqalgjx2}, just replacing $\Reg_a$ by $\Reg_\Ot$, we obtain
$$I_\Ot  \lesssim \sum_{Q\in\Reg_\Ot} \int_Q \big|(|\RR_{\TT_\Reg}\mu(y)| - 
\frac{c_0}8 \Theta(\HD_1))_+\big|^{p_0}\,d\mu(y) + \!\sum_{Q\in\Reg_\Ot} (\PP(Q)^{p_0} + \QQ_\Reg(Q)^{p_0})\,\mu(Q).$$
Thus, using again Lemmas \ref{lemregpq} and \ref{lemreg73}, we get
\begin{align}\label{eqiot78}
I_\Ot  &\lesssim \sum_{Q\in\Reg_\Ot} \int_Q \big|(|\RR_{\TT_\Reg}\mu(y)| - 
\frac{c_0}8 \Theta(\HD_1))_+\big|^{p_0}\,d\mu(y) +\Lambda^{\frac{-1}{25n}}\,\sigma_{p_0}(\HD_1)\\
& =: \wt I_\Ot +
\Lambda^{\frac{-1}{25n}}\,\sigma_{p_0}(\HD_1).\nonumber
\end{align}
To estimate the integral $\wt I_\Ot$ on the right hand side, we split
\begin{align*}
\wt I_\Ot & =\sum_{Q\in\Reg_{\Ot}\setminus \Neg(e'(R))}
\int_Q \big|(|\RR_{\TT_\Reg}\mu(x)| - 
\frac{c_0}8 \Theta(\HD_1))_+\big|^{p_0}\,d\mu(x)\\
&\quad+ \sum_{Q\in\Reg_{\Ot}\cap \Neg(e'(R))}
\int_Q \big|(|\RR_{\TT_\Reg}\mu(x)| - 
\frac{c_0}8 \Theta(\HD_1))_+\big|^{p_0}\,d\mu(x)\\
& = \wt I_{\Ot,1} + \wt I_{\Ot,2}.
\end{align*}
Notice that, by definition we have $\Reg_\Ot\subset\TT$. 
Recall that $\Neg = \Neg(e'(R))\cap \End$ and thus we may have $\Reg_{\Ot}\cap \Neg(e'(R))\neq\varnothing$.
In this case, we have $\Reg_{\Ot}\cap \Neg(e'(R))\subset \TT_\sss(e'(R))$ (because $\Neg(e'(R))\subset \TT_\sss(e'(R))$ by construction).

The same argument used to show that $I_b=0$ shows
that 
$$\wt I_{\Ot,2} = 0.$$
To estimate $\wt I_{\Ot,1}$,
denote by $\sM_\Ot$ the family of maximal $\PP$-doubling cubes which are contained in some cube from $\Reg_{\Ot}\setminus \Neg(e'(R))$
and let
$$N_\Ot = \bigcup_{Q\in \Reg_{\Ot}\setminus \Neg(e'(R))} Q\setminus \bigcup_{P\in \sM_{\Ot}} P.$$
We claim that 
\begin{equation}\label{eqinclu827}
\sM_\Ot\subset \TT\quad \text{ and }\quad N_\Ot\subset Z.
\end{equation}
To check this, for a given $P\in\sM_\Ot$ with $P\subset Q\in\Reg_{\Ot}\setminus \Neg(e'(R))$, suppose there exists $S\in\End$ such that $S\supset P$. As $P$ is a contained in some $Q\in\Reg_{\Ot}\setminus \Neg(e'(R))$, we  have
$S\not\in \Neg$. Further, $S\subsetneq Q$ because $Q\in\Reg_{\Ot}$ implies that $Q\not\subset S$. Since $S$ is $\PP$-doubling, we deduce that $P=S$, by the maximality of $P$ as $\PP$-doubling cube contained in $Q$. An analogous argument shows that $N_\Ot\subset Z$.

By H\"older's inequality and \rf{eqlimot62}, for $\ell_0$ small enough we have
\begin{align*}
\wt I_\Ot &\leq \bigg(\sum_{Q\in\Reg_{\Ot}} \int_Q \big|(|\RR_{\TT_\Reg}\mu(x)| - 
\frac{c_0}8 \Theta(\HD_1))_+\big|^{2}\,d\mu(x)\big)^{\frac {p_0}2} \bigg(\sum_{Q\in\Reg_{\Ot}}\mu(Q)\bigg)^{1-\frac {p_0}2}\\
& \leq
\bigg(\sum_{P\in\sM_{\Ot}} \int_P \big|\RR_{\TT_\Reg}\mu\big|^{2}\,d\mu + \int_{N_\Ot} \big|\RR_{\TT_\Reg}\mu\big|^{2}\,d\mu
\bigg)^{\frac {p_0}2} \bigg(\mu(Z) + o(\ell_0)\bigg)^{1-\frac {p_0}2},
\end{align*}
with $o(\ell_0)\to 0$ as $\ell_0\to0$. 

Denote
$$\RR_{\sM_\Ot}\mu(x) = \sum_{P\in\sM_\Ot} \chi_P(x)\,\RR(\chi_{2R\setminus 2P}\mu)(x)$$
and
$$\Delta_{\sM_\Ot}\RR\mu(x) =
\sum_{P\in\sM_\Ot} \chi_P(x)\,\big(m_{\mu,P}(\RR\mu) - m_{\mu,2R}(\RR\mu)\big)
+ \chi_{Z}(x) \big(\RR\mu(x) -  m_{\mu,2R}(\RR\mu)\big)
.$$
Notice that, for $x\in P\in\sM_\Ot$ and $Q\in \Reg_\Ot\setminus \Neg(e'(R))$ such that $Q\supset P$, since there are no $\PP$-doubling cubes $P'$ such that $P\subsetneq P'\subset Q$,
$$\big|\RR_{\sM_\Ot}\mu(x) - \RR_{\TT_\Reg}\mu(x)\big| = |
\RR(\chi_{2 Q\setminus 2P}\mu)(x)|\lesssim \sum_{P: P\subset P'\subset Q} \Theta(P') \lesssim \PP(Q)\lesssim
\Lambda\,\Theta(\HD_1).$$
Almost the same argument shows also that, for $x\in N_\Ot$,
$$\big|\RR(\chi_{2R}\mu)(x) - \RR_{\TT_\Reg}\mu(x)\big| \lesssim
\Lambda\,\Theta(\HD_1).$$
Remark also that
$$\int_{N_\Ot} \big|\RR(\chi_{2R}\mu)\big|^{2}\,d\mu \leq 
\int_Z \big|\RR(\chi_{2R}\mu)\big|^{2}\,d\mu.$$
So we deduce that, for $\ell_0$ small enough,
$$\wt I_\Ot \lesssim \bigg(\sum_{P\in\sM_{\Ot}} \int_P \big|\RR_{\sM_\Ot}\mu\big|^{2}\,d\mu + \int_Z \big|\RR(\chi_{2R}\mu)\big|^{2}\,d\mu + \Lambda^2\,\Theta(\HD_1)^2\,\mu(R)
\bigg)^{\frac {p_0}2} \big(\ve_Z\,\mu(R)\big)^{1-\frac {p_0}2}.$$

Almost the same arguments as in Lemma \ref{lemaprox2} show that for $x\in P\in\sM_\Ot$,
$$\big|\RR_{\sM_\Ot}\mu(x) - \Delta_{\sM_\Ot}\RR\mu(x)\big| \lesssim \PP(R) + \left(\frac{\EE(4R)}{\mu(R)}\right)^{1/2} + \PP(P) +  \left(\frac{\EE(2P)}{\mu(P)}\right)^{1/2}$$
and that, for $x\in Z$,
$$\big|\RR(\chi_{2R}\mu)(x) - \Delta_{\sM_\Ot}\RR\mu(x)\big| \lesssim \PP(R) + \left(\frac{\EE(4R)}{\mu(R)}\right)^{1/2}.$$
Therefore, by \rf{eqEr4} and \rf{eqEr5} and taking into account that $\PP(P)\lesssim\Lambda\,\Theta(\HD_1)$ for 
$P\in\sM_\Ot$, we deduce
$$\wt I_\Ot \lesssim \bigg(\int |\Delta_{\sM_\Ot}\RR\mu|^2\,d\mu +
\sum_{P\in\sM_{\Ot}} \EE(2P) + \Lambda^2\,\Theta(\HD_1)^2\,\mu(R)
\bigg)^{\frac {p_0}2} \big(\ve_Z\,\mu(R)\big)^{1-\frac {p_0}2}.$$ 

By the orthogonality of the functions $\Delta_Q\RR\mu$, $Q\in\DD_\mu$, the assumptions in the lemma, 
and \rf{eqinclu827},
it is clear that
$$\int |\Delta_{\sM_\Ot}\RR\mu|^2\,d\mu\leq \|\Delta_{\wt\TT}\RR\mu\|_{L^2(\mu)}^2\leq \Lambda^{-1}\,\sigma(\HD_1).$$
On the other hand, since the tree $\TT$ is typical,
\begin{align*}
\sum_{P\in\sM_{\Ot}} \EE(2P) & \leq \sum_{P\in\sM_{\Ot}\setminus\DB} \EE(2P) + \sum_{P\in \TT\cap\DB} \EE(2P)
\leq M^2\,\sigma(\sM_{\Ot}) + \Lambda^{\frac{-1}{3n}}\,\sigma(\HD_1)\\
&
\leq 
 M^2\Lambda^2\,\Theta(\HD_1)^2\,\mu(R)+ \Lambda^{\frac{-1}{3n}}\,\sigma(\HD_1)\lesssim \Lambda^4\,\Theta(\HD_1)^2\,\mu(R).
\end{align*}
Thus, using also \rf{eqmuhd1}, 
\begin{align*}
\wt I_\Ot & \lesssim \Lambda^{\frac{-p_0}2}\,\sigma(\HD_1)^{\frac{p_0}2}\, \mu(R)^{1-\frac {p_0}2}+ \ve_Z^{1-\frac {p_0}2}\,\Lambda^{2p_0}\,\Theta(\HD_1)^{p_0}\,\mu(R)\\
&
\lesssim 
 (B\Lambda^2)^{^{1-\frac {p_0}2}}\Lambda^{\frac{-p_0}2}\,\sigma_{p_0}(\HD_1)
+ \ve_Z^{1-\frac {p_0}2}\,B\,\Lambda^{2+{2p_0}}\,\sigma_{p_0}(\HD_1).
\end{align*}
By the choice of $p_0$, $\Lambda$, and $B$ and the assumption $\ve_Z\leq \Lambda^{-300n},$ we have
$$\wt I_\Ot\lesssim \Lambda^{\frac{-1}2}\,\sigma_{p_0}(\HD_1) + \Lambda^{\frac{-1}{25n}}\,\sigma_{p_0}(\HD_1)
\lesssim \Lambda^{\frac{-1}{25n}}\,\sigma_{p_0}(\HD_1)
.$$
Together with \rf{eqiot78}, this yields
$$I_\Ot  \lesssim \Lambda^{\frac{-1}{25n}}\,\sigma_{p_0}(\HD_1).
$$
Gathering the estimates obtained for $I_a$ and $I_\Ot$, the lemma follows.
\end{proof}
\vv


\vv
\begin{lemma}\label{lemNZ}
Suppose that $R\in\Trc\cap\Ty$ and also that $\mu(Z)\leq \ve_Z\,\mu(R)$ and
$$\|\Delta_{\wt\TT} \RR\mu\|_{L^2(\mu)}^2\leq \Lambda^{-1}\,\sigma(\HD_1).$$
 Assume also that
$\ve_Z\leq \Lambda^{-300n}$ and $\ell_0$ is small enough.
Then
$$\eta\big(\big\{x\in \wt V_4: |\RR\eta(x)|> \frac{c_0}2 \Theta(\HD_1)\big\}\big)\lesssim \Lambda^{\frac{-1}{25n}}\,\mu(HD_1).$$
Also, for all $p\in (1,p_0)$,
$$\int_{F_\eta\cap\wt V_4}\big|(|\RR\eta| - \frac{c_0}2 \Theta(\HD_1))_+\big|^{p}
\,d\eta\lesssim \Lambda^{\frac{-1}{25n}}\,\sigma_{p}(\HD_1).$$
\end{lemma}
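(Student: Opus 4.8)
The plan is to derive both estimates from Lemma \ref{leminteta*}, which already gives the $p_0$-version, by a two-step argument: first a weak-type $(p_0,p_0)$ bound for the distribution function of $|\RR\eta|$ restricted to $\wt V_4$, and then a reverse passage from the $L^{p_0}$ estimate to $L^p$ for all $p\in(1,p_0)$ by splitting off the part of $\wt V_4$ lying over $HD_2$.

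\textbf{First step: the weak-type estimate.} On $\wt V_4$ we split the level set $\{|\RR\eta|>\tfrac{c_0}2\Theta(\HD_1)\}$ into its intersection with $F_\eta$ and with $\wt V_4\setminus F_\eta$. On $\wt V_4\cap F_\eta$ we use Chebyshev together with Lemma \ref{leminteta*}: if $|\RR\eta(x)|>\tfrac{c_0}2\Theta(\HD_1)$ then $(|\RR\eta(x)|-\tfrac{c_0}4\Theta(\HD_1))_+>\tfrac{c_0}4\Theta(\HD_1)$, so
\begin{align*}
\eta\big(\big\{x\in \wt V_4\cap F_\eta: |\RR\eta(x)|>\tfrac{c_0}2\Theta(\HD_1)\big\}\big)
&\lesssim \frac1{\Theta(\HD_1)^{p_0}}\int_{F_\eta\cap\wt V_4}\big|(|\RR\eta|-\tfrac{c_0}4\Theta(\HD_1))_+\big|^{p_0}\,d\eta\\
&\lesssim \Lambda^{\frac{-1}{25n}}\,\frac{\sigma_{p_0}(\HD_1)}{\Theta(\HD_1)^{p_0}} = \Lambda^{\frac{-1}{25n}}\,\mu(HD_1),
\end{align*}
using $\sigma_{p_0}(\HD_1)=\Theta(\HD_1)^{p_0}\mu(HD_1)$. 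On $\wt V_4\setminus F_\eta$, the complement is (up to the balls $\tfrac12B(Q)$) covered by $\bigcup_{Q\in\Reg_{\HD_2}}\tfrac12B(Q)$, whose $\eta$-measure is comparable to $\sum_{Q\in\Reg_{\HD_2}}\mu(Q)\le \mu(HD_2)$, which in turn is dominated by $\Lambda^{-2}$ times... — no, rather we simply bound $\eta(\wt V_4\setminus F_\eta)\lesssim \mu(HD_2)\approx \sigma(\HD_2)/\Theta(\HD_2)^2$, and using $R\in\Trc$ (i.e.\ $\sigma(\HD_2)\le B\,\sigma(\HD_1)$, via \eqref{eq:MDWdef2}) together with $\Theta(\HD_2)=\Lambda\,\Theta(\HD_1)$ we get $\mu(HD_2)\le B\,\Lambda^{-2}\mu(HD_1)\le \Lambda^{\frac{-1}{25n}}\mu(HD_1)$, since $B=\Lambda^{1/(100n)}$. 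Adding the two pieces gives the claimed weak-type bound.

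\textbf{Second step: from $L^{p_0}$ to $L^p$.} Fix $p\in(1,p_0)$. On $F_\eta\cap\wt V_4$ write, pointwise,
$$\big|(|\RR\eta|-\tfrac{c_0}2\Theta(\HD_1))_+\big|^{p}\le \big|(|\RR\eta|-\tfrac{c_0}4\Theta(\HD_1))_+\big|^{p}\,\mathbbm 1_{\{|\RR\eta|>\frac{c_0}2\Theta(\HD_1)\}},$$
and apply Hölder's inequality with exponents $p_0/p$ and $(p_0/p)'$:
$$\int_{F_\eta\cap\wt V_4}\big|(|\RR\eta|-\tfrac{c_0}2\Theta(\HD_1))_+\big|^{p}\,d\eta
\le \Big(\int_{F_\eta\cap\wt V_4}\big|(|\RR\eta|-\tfrac{c_0}4\Theta(\HD_1))_+\big|^{p_0}\,d\eta\Big)^{p/p_0}\,\eta\big(\{x\in F_\eta\cap\wt V_4:|\RR\eta|>\tfrac{c_0}2\Theta(\HD_1)\}\big)^{1-p/p_0}.$$
The first factor is $\lesssim (\Lambda^{\frac{-1}{25n}}\sigma_{p_0}(\HD_1))^{p/p_0}$ by Lemma \ref{leminteta*}, and the second is $\lesssim (\Lambda^{\frac{-1}{25n}}\mu(HD_1))^{1-p/p_0}$ by the weak-type bound just proved. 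Multiplying out and using $\sigma_{p_0}(\HD_1)^{p/p_0}\mu(HD_1)^{1-p/p_0}=\Theta(\HD_1)^{p}\mu(HD_1)=\sigma_p(\HD_1)$, we obtain a bound $\lesssim \Lambda^{\frac{-1}{25n}}\,\sigma_p(\HD_1)$ (the exponent on $\Lambda^{-1/(25n)}$ is $p/p_0+1-p/p_0=1$).

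\textbf{Main obstacle.} The genuine content is entirely in Lemma \ref{leminteta*}, which is already available; the present lemma is a soft consequence. The only point requiring care is the estimate $\eta(\wt V_4\setminus F_\eta)\lesssim \mu(HD_2)$ and the ensuing use of the tractability hypothesis $R\in\Trc$ to absorb the factor $B\,\Lambda^{-2}$ into $\Lambda^{-1/(25n)}$ — one must check that $\wt V_4\setminus F_\eta$ is indeed contained in the union of $\tfrac12B(Q)$ over $Q\in\Reg_{\HD_2}$ up to the right controllable error, which follows from the definition $F=e'(R)\setminus\bigcup_{Q\in\Reg_{\HD_2}}Q$ and $F_\eta=\bigcup_{Q\in\Reg_F}\tfrac12B(Q)$ together with the bounded overlap and comparable-sidelength properties of $\Reg(e'(R))$ in Lemma \ref{lem74}. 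Everything else is Chebyshev plus Hölder.
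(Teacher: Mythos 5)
Your proof follows the same approach as the paper's: split the level set over $F_\eta$ (handled by Chebyshev and Lemma \ref{leminteta*}) and over the $\HD_2$ part (handled by the tractability bound $\mu(HD_2)\le B\Lambda^{-2}\mu(HD_1)$), then pass from $L^{p_0}$ to $L^p$ via Hölder against the weak-type bound, using $\sigma_{p_0}(\HD_1)^{p/p_0}\mu(HD_1)^{1-p/p_0}=\sigma_p(\HD_1)$. The computations and exponent bookkeeping are all correct.
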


\begin{proof}
Denote 
$$A = \big\{x\in \wt V_4: |\RR\eta(x)|> \frac{c_0}2 \Theta(\HD_1)\big\}.$$
By the definition of $F_\eta$ we can split
$$\eta(A)\leq \eta(A\cap F_\eta) + \eta\bigg(\bigcup_{Q\in\HD_2} \frac12B(Q)\bigg).$$
Notice first that, since $R\in\Trc$,
\begin{equation}\label{eqhd62}
\eta\bigg(\bigcup_{Q\in\HD_2} \frac12B(Q)\bigg) = \frac1{\Theta(\HD_2)^2}\,\sigma(\HD_2)
\leq \frac{B}{\Lambda^2\,\Theta(\HD_1)^2}\,\sigma(\HD_1) \leq \Lambda^{-1}\,\mu(HD_1).
\end{equation}
On the other hand, for $x\in A$ we have 
$$(|\RR\eta(x)| - \frac{c_0}4 \Theta(\HD_1))_+ \geq \frac{c_0}4 \Theta(\HD_1).$$
So, by Chebyshev and Lemma \ref{leminteta*},
\begin{align*}
\eta(A\cap F_\eta) &\lesssim \frac1{\Theta(\HD_1)^{p_0}} 
\int_{F_\eta\cap\wt V_4}\big|(|\RR\eta| - \frac{c_0}4 \Theta(\HD_1))_+\big|^{p_0}
\,d\eta\\
& \lesssim \frac{\Lambda^{\frac{-1}{25n}}\,\sigma_{p_0}(\HD_1)}{\Theta(\HD_1)^{p_0}}  = 
\Lambda^{\frac{-1}{25n}}\,\mu(HD_1),
\end{align*}
which, together with \rf{eqhd62}, proves the first assertion of the lemma.

For the second statement in the lemma we use H\"older and Lemma \ref{leminteta*} again:
\begin{align*}
\int_{F_\eta\cap\wt V_4}\big|(|\RR\eta| - \frac{c_0}2 \Theta(\HD_1))_+\big|^{p}
\,d\eta& = \int_{A\cap F_\eta}\big|(|\RR\eta| - \frac{c_0}2 \Theta(\HD_1))_+\big|^{p}
\,d\eta\\
& \leq \left(\int_{A\cap F_\eta}\!\big|(|\RR\eta| - \frac{c_0}4 \Theta(\HD_1))_+\big|^{p_0}
d\eta\right)^{\frac p{p_0}}\!\eta(A\cap F_\eta)^{1-\frac p{p_0}}\\
& \lesssim \big(\Lambda^{\frac{-1}{25n}}\,\sigma_{p_0}(\HD_1)\big)^{\frac p{p_0}}\,
\big(\Lambda^{\frac{-1}{25n}}\,\mu(HD_1)\big)^{1-\frac p{p_0}}\\
&= \Lambda^{\frac{-1}{25n}}\,\sigma_{p}(\HD_1).
\end{align*}
\end{proof}

\vv

Observe that, given $R\in\MDW\cap\Trc\cap\Ty$, from Lemmas \ref{lemrieszeta} and \ref{lemNZ}, under
the assumptions in those lemmas, we derive that 

\begin{align}\label{eqh2893} 
\int_{V_4\cap HD_2} \big|(|\RR\eta(x)| - &\frac{c_0}2\,\Theta(\HD_1))_+\big|^p\,d\eta(x)\\
 &\geq c\Lambda^{-(p'+1)\ve_n}\,\sigma_p(\HD_1) - C\Lambda^{\frac{-1}{25n}}\,\sigma_p(\HD_1)\approx
 \Lambda^{-(p'+1)\ve_n}\,\sigma_p(\HD_1),\nonumber
\end{align}
for $p\in (1,p_0]$, assuming that $\ve_n$ and $p$ are chosen so that
$(p'+1)\ve_n \ll \frac{1}{25n}.$
This is the main ingredient for the proof of the next lemma, which is the main result
of this section.
\vv

\begin{lemma}\label{lemalter*}
Let $R\in\MDW\cap\Trc\cap\Ty$. Let $\Lambda>0$ be big enough and suppose that
$\ve_Z\leq \Lambda^{-300n}$.
Then one of the following alternatives holds:
\begin{itemize}
\item[(a)] $\mu(Z) > \ve_Z\,\mu(R)$, or\vv

\item[(b)] $\|\Delta_{\wt \TT} \RR\mu\|_{L^2(\mu)}^2> \Lambda^{-1}\,\sigma(\HD_1).$
\end{itemize}
\end{lemma}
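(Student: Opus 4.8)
The plan is to argue by contradiction: suppose that neither (a) nor (b) holds, i.e.\ $\mu(Z)\le\ve_Z\,\mu(R)$ and $\|\Delta_{\wt\TT}\RR\mu\|_{L^2(\mu)}^2\le\Lambda^{-1}\sigma(\HD_1)$. Then the hypotheses of Lemmas~\ref{leminteta*} and \ref{lemNZ} are met (recall $\ve_Z\le\Lambda^{-300n}$ and $\ell_0$ is small), so combining Lemma~\ref{lemrieszeta} with Lemma~\ref{lemNZ} exactly as in the derivation of \eqref{eqh2893} we get, for every $p\in(1,p_0)$,
\begin{equation*}
\int_{V_4\cap HD_2}\big|(|\RR\eta| - \tfrac{c_0}2\,\Theta(\HD_1))_+\big|^p\,d\eta \;\gtrsim\; \Lambda^{-(p'+1)\ve_n}\,\sigma_p(\HD_1).
\end{equation*}
I would fix $p\in(1,p_0)$ sufficiently close to $1$ (depending only on $n$), and since $\ve_n$ is only pinned down at the end of Section~\ref{sectrans}, impose there that $(p'+1)\ve_n$ be smaller than the dimensional exponent $c/n$ produced below. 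It then suffices to bound the same integral \emph{above} by $C\Lambda^{-c/n}\sigma_p(\HD_1)$, which contradicts the displayed lower bound.

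For the upper bound I would run the transference of Section~\ref{sectrans} exactly as in the proof of Lemma~\ref{leminteta*}, but following the cubes of $\Reg_{\HD_2}$ instead of $\Reg_F$. Namely, Lemma~\ref{lemaprox1} replaces $|\RR\eta|$ on $\tfrac12B(Q')$ by $|\RR_{\TT_\Reg}\mu|$ on $Q'$ up to $C\Theta(R)+C\PP(Q')+C\QQ_\Reg(Q')$, with $C\Theta(R)=C\Lambda^{-1}\Theta(\HD_1)\le\tfrac{c_0}8\Theta(\HD_1)$ absorbed; Lemma~\ref{lemaprox3} replaces $\RR_{\TT_\Reg}\mu$ by $\RR_{\wt\TT}\mu$ on each $S\in\HD_2$ at the cost of $\EE(2S)^{p/2}\mu(S)^{1-p/2}$; and Lemma~\ref{lemaprox2} replaces $\RR_{\wt\TT}\mu$ by $\Delta_{\wt\TT}\RR\mu$ at the cost of $(\EE(4R)/\mu(R))^{1/2}+(\EE(2S)/\mu(S))^{1/2}$, the first term being $\le\tfrac{c_0}8\Theta(\HD_1)$ by \eqref{eqEr4}--\eqref{eqEr5} (this is where $R\in\Ty$ enters when $R\in\DB$). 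Since $\HD_2\subset\wt\End$, this yields
\begin{equation*}
\int_{V_4\cap HD_2}\big|(|\RR\eta| - \tfrac{c_0}2\,\Theta(\HD_1))_+\big|^p\,d\eta \;\lesssim\; \int_{\bigcup_{S\in\HD_2}S}\!\!|\Delta_{\wt\TT}\RR\mu|^p\,d\mu \;+\; \sum_{S\in\HD_2}\EE(2S)^{\frac p2}\mu(S)^{1-\frac p2} \;+\; \Sigma_p^\PP(\Reg_{\HD_2}) \;+\; \Sigma_p^\QQ(\Reg_{\HD_2}).
\end{equation*}

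The four terms would be estimated as follows. By assumption (b) fails, so Hölder and the bound $\mu(HD_2)\le B\Lambda^{-2}\mu(HD_1)$ (which follows from $R\in\Trc$ just as \eqref{eqmuhd1}) give that the first term is $\le\|\Delta_{\wt\TT}\RR\mu\|_{L^2(\mu)}^p\mu(HD_2)^{1-p/2}\lesssim B^{1-p/2}\Lambda^{p/2-2}\sigma_p(\HD_1)$, a negative power of $\Lambda$ times $\sigma_p(\HD_1)$. For the energy term one splits $\HD_2$ by membership in $\DB$: for $S\in\HD_2\cap\DB\subset\wt\End\cap\DB$ one has $\EE(2S)\le\EE(9S)\lesssim\EE_\infty(9S)$ by Lemma~\ref{lemenergias} and then $\sum_{S\in\wt\End\cap\DB}\EE_\infty(9S)\le\Lambda^{-1/3n}\sigma(\HD_1)$ by \eqref{eqlem*2} (using $R\in\Ty$), while for $S\in\HD_2\setminus\DB$, which are $\PP$-doubling hence doubling, $\EE(2S)\lesssim M^2\Theta(S)^2\mu(9S)\lesssim M^2\sigma(S)$; combining with Hölder and $R\in\Trc$ ($\sigma(\HD_2)\le B\sigma(\HD_1)$) gives $\sum_{S\in\HD_2}\EE(2S)^{p/2}\mu(S)^{1-p/2}\lesssim M^pB\Lambda^{p-2}\sigma_p(\HD_1)$. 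By \eqref{eq:LambdadepM} we may write $M\le\Lambda^{(8n-2)/(8n-1)}$, and then $M^pB\Lambda^{p-2}$ is $\Lambda$ to the exponent $p\tfrac{8n-2}{8n-1}+\tfrac1{100n}+p-2$, which at $p=1$ equals $-\tfrac1{8n-1}+\tfrac1{100n}<0$; hence for $p$ close enough to $1$ this is a negative power of $\Lambda$. Finally $\Sigma_p^\PP(\Reg_{\HD_2})\lesssim\sigma_p(\HD_2)\lesssim B\Lambda^{p-2}\sigma_p(\HD_1)$ by \eqref{eqlem*6}, and $\Sigma_p^\QQ(\Reg_{\HD_2})$ is controlled by the same expression by running the duality argument of Lemma~\ref{lemregpq} on the subfamily $\Reg_{\HD_2}$ (the far contributions to $\QQ_\Reg(Q')$, $Q'\in\Reg_{\HD_2}$, being absorbed exactly as the analogous terms in the proof of Lemma~\ref{leminteta*}). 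Adding up, the integral over $V_4\cap HD_2$ is $\lesssim\Lambda^{-c/n}\sigma_p(\HD_1)$ for a dimensional $c>0$, contradicting the lower bound once $\ve_n$ is chosen with $(p'+1)\ve_n<c/n$, and the lemma follows.

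The main obstacle is the energy term $\sum_{S\in\HD_2}\EE(2S)^{p/2}\mu(S)^{1-p/2}$: unlike in Lemma~\ref{leminteta*}, where $\End_a$ excludes $\HD_2$, the cubes $S$ here carry the top density $\Lambda^2\Theta(R)$ and their self-energies need not be small, so this step forces one both to exploit $R\in\Trc\cap\Ty$ and to use the precise quantitative dependence of $\Lambda$ on the $\DB$-parameter $M$ recorded in \eqref{eq:LambdadepM}, together with the freedom to take the exponent $p$ close to $1$.
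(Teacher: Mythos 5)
The proposal follows the same scheme as the paper's proof: argue by contradiction, invoke the lower bound \eqref{eqh2893} from Lemmas \ref{lemrieszeta} and \ref{lemNZ}, transfer via $\Reg_{\HD_2}$ through Lemmas \ref{lemaprox1}--\ref{lemaprox3}, split the energy term by $\DB$-membership using $R\in\Ty$ for the $\DB$ part and $\EE_\infty(9S)\lesssim M^2\sigma(S)$ for the rest, and then play the exponents ($p$ near $1$, $\Lambda\gg M^{(8n-1)/(8n-2)}$, $\ve_n$ small) against each other. The structure and the quantitative balancing are correct. One step, however, is justified incorrectly as written.

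The estimate $\Sigma_p^\QQ(\Reg_{\HD_2})\lesssim B\Lambda^{p-2}\sigma_p(\HD_1)$ does not follow from ``running the duality argument of Lemma~\ref{lemregpq} on the subfamily $\Reg_{\HD_2}$.'' The coefficient $\QQ_\Reg(Q)$ sums over the \emph{entire} family $\Reg$, so the Fubini step in that duality argument places the $\PP$-coefficient on the full $\Reg$ side; at best one deduces $\Sigma_p^\QQ(\Reg_{\HD_2})\lesssim\Sigma_p^\PP(\Reg)$, not $\Sigma_p^\PP(\Reg_{\HD_2})$. For $p$ near $1$ this is useless: by H\"older, $\Sigma_p^\PP(\Reg)\lesssim\Sigma^\PP(\Reg)^{p/2}\mu(R)^{1-p/2}\lesssim(B\sigma(\HD_1))^{p/2}(B\Lambda^2\mu(HD_1))^{1-p/2}=B\Lambda^{2-p}\sigma_p(\HD_1)$, a \emph{positive} power of $\Lambda$, because once the outer sum passes to all of $\Reg$ the crucial smallness of $\mu(HD_2)$ is lost. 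The parenthetical about the ``far contributions being absorbed exactly as the analogous terms in the proof of Lemma~\ref{leminteta*}'' does not rescue this: there the exponent is $p_0=2-1/(18n)$, close to $2$, and $\Sigma_{p_0}^\PP(\Reg)$ is genuinely small by Lemma~\ref{lemreg73} — precisely what fails when $p$ is near $1$. The correct order, as in the paper, is to apply H\"older \emph{before} any $\QQ\lesssim\PP$ comparison so that the localisation to $\Reg_{\HD_2}$ is recorded in the measure factor: write $\Sigma_p^\QQ(\Reg_{\HD_2})\leq\Sigma^\QQ(\Reg)^{p/2}\mu(HD_2)^{1-p/2}$, then invoke Lemma~\ref{lemregpq} at $p=2$ together with $\Sigma^\PP(\Reg)\lesssim B\sigma(\HD_1)$ (Lemma~\ref{lemreg73}) and $\mu(HD_2)\leq B\Lambda^{-2}\mu(HD_1)$ (a consequence of $R\in\Trc$), which gives $B\Lambda^{p-2}\sigma_p(\HD_1)$. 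With this correction in place, your argument coincides with the paper's.
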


\begin{proof}
Suppose that none of the alternatives holds. Then, by Lemmas  \ref{lemrieszeta} and \ref{lemNZ},
as in \rf{eqh2893}, we have
\begin{equation} \label{eqtrans736}
I_{\HD_2}:=\int_{V_4\cap HD_2} \big|(|\RR\eta(x)| - \frac{c_0}2\,\Theta(\HD_1))_+\big|^p\,d\eta(x)
\gtrsim
 \Lambda^{-(p'+1)\ve_n}\,\sigma_p(\HD_1),
\end{equation}
for all $p\in (1,p_0]$,
assuming that $\ell_0$ is chosen small enough and that 
\begin{equation}\label{eqass934}
{(p'+1)\ve_n} \leq \frac{1}{50n}.
\end{equation}
The appropriate values of $\ve_n$ and $p$ will be chosen at the end of the proof.

By arguments analogous to the ones we used to estimate the integral $I_a$ in the proof of Lemma
\ref{leminteta*} we will ``transfer'' the estimate for $\RR\eta$ in \rf{eqtrans736} to $\Delta_{\wt\TT}\RR\mu$, so that we will obtain a lower estimate for $\|\Delta_{\wt \TT} \RR\mu\|_{L^2(\mu)}$ which will contradict the assumption that (b) does not hold. 
Although some of the estimates below are very similar to the ones to obtain the inequality 
\rf{eqIa99} in the proof of Lemma
\ref{leminteta*}, we will include the full details here for the reader's convenience. 
On the other hand, an important difference between the proof of that lemma and the current proof is that in Lemma 
\ref{leminteta*} we took advantage of the fact that $p_0$ is close to $2$, and the estimates there would not work for the family $\Reg_{\HD_2}$, while in the arguments below it is  essential that we take $p$ close to $1$, and the estimates work fine for the family $\Reg_{\HD_2}$, 
while they would fail for the family $\Reg_a$.

By Lemma \ref{lemaprox1}, for all $Q\in\Reg_{\HD_2}$ such that $\frac12B(Q)\subset\wt V_4$, all $x\in \frac12B(Q)$, and all $y\in Q$,
$$|\RR\eta(x)| \leq |\RR_{\TT_\Reg}\mu(y)| + C\Theta(R) + C\PP(Q) + C\QQ_\Reg(Q).$$
Thus, for $\Lambda$ big enough, since $\Theta(R)=\Lambda^{-1}\Theta(\HD_1)<\frac{c_0}4 \Theta(\HD_1)$,
$$(|\RR\eta(x)| - \frac{c_0}2 \,\Theta(\HD_1))_+\leq
(|\RR_{\TT_\Reg}\mu(y)| - \frac{c_0}4\, \Theta(\HD_1))_+  + C\PP(Q) + C\QQ_\Reg(Q).$$
Therefore,
$$
I_{\HD_2}  \lesssim \sum_{Q\in\Reg_{\HD_2}} \!\int_Q \big|(|\RR_{\TT_\Reg}\mu(y)| - 
\frac{c_0}4 \Theta(\HD_1))_+\big|^{p}\,d\mu(y) + \!\!\sum_{Q\in\Reg_{\HD_2}} \!\!\!\!(\PP(Q)^{p} + \QQ_\Reg(Q)^{p})\,\mu(Q).$$
By \rf{eqlem*6} and Lemma \ref{lemregpq}, we have
\begin{align*}
\sum_{Q\in\Reg_{\HD_2}} (\PP(Q)^{p} + \QQ_\Reg(Q)^{p})\,\mu(Q) & = 
\Sigma_p^\PP(\Reg_{\HD_2}) +\Sigma_p^\QQ(\Reg_{\HD_2})\\ 
&\lesssim
\sigma_p(\HD_2) + \Sigma^\QQ(\Reg)^{\frac p2}\,\mu(HD_2)^{1-\frac p2} \\ 
& \lesssim B\,\Lambda^{p-2}\,\sigma_p(\HD_1) + \Sigma^\PP(\Reg)^{\frac p2}\,\mu(HD_2)^{1-\frac p2}.
\end{align*}
Also, recalling that $\sigma(\HD_2)\leq B\,\sigma(\HD_1)$, we get
$\mu(HD_2)\leq B\,\Lambda^{-2}\,\mu(HD_1)$. Then, by Lemma \ref{lemreg73}, we obtain
$$\Sigma^\PP(\Reg)^{\frac p2}\,\mu(HD_2)^{1-\frac p2} \leq \big(B\,\sigma(\HD_1)\big)^{\frac p2}\,
\big(B\,\Lambda^{-2} \,\mu(HD_1)\big)^{1-\frac p2} = B\,\Lambda^{p-2}\,\sigma_p(\HD_1).
$$
So, since any cube from $\Reg_{\HD_2}$
is contained in some cube $S\in\HD_2$:
$$I_{\HD_2}  \lesssim \sum_{S\in\HD_2} \!\int_S \big|(|\RR_{\TT_\Reg}\mu(y)| - 
\frac{c_0}4 \Theta(\HD_1))_+\big|^{p}\,d\mu(y) + B\,\Lambda^{p-2}\,\sigma_p(\HD_1).$$

For each $S\in\HD_2$, by the triangle inequality and the fact that $(\;\cdot\;)_+$ is a $1$-Lipschitz function, we obtain
\begin{multline*}
\int_S \big|(|\RR_{\TT_\Reg}\mu| - 
\frac{c_0}4 \Theta(\HD_1))_+\big|^{p}\,d\mu \lesssim 
\int_S \big|(|\RR_{\wt \TT}\mu| - 
\frac{c_0}4 \Theta(\HD_1))_+\big|^{p}\,d\mu \\
+ \int_S\big|\RR_{\wt\TT}\mu - \RR_{\TT_\Reg}\mu\big|^{p}\,d\mu
\end{multline*}
By Lemma \ref{lemaprox3}, the last integral does not exceed 
$C\EE(2S)^{\frac{p}2} \,\mu(S)^{1-\frac{p}2}$, and thus
\begin{equation}\label{eqIa1*}
I_{\HD_2}\lesssim\!
\sum_{S\in\HD_2}\int_S \big|(|\RR_{\wt\TT}\mu| - 
\frac{c_0}4 \Theta(\HD_1))_+\big|^{p}\,d\mu +\!\! \sum_{S\in\HD_2}\!\!\EE(2S)^{\frac{p}2} \mu(S)^{1-\frac {p}2} + B\,\Lambda^{p-2}\,\sigma_p(\HD_1).
\end{equation}

Next we apply Lemma \ref{lemaprox2}, which implies that
for any $S\in\HD_2$ and all $x\in S$,
\begin{equation}\label{eqIa2*}
|\RR_{\wt\TT}\mu(x)| \leq |\Delta_{\wt\TT}\RR\mu(x)| + C\left(\frac{\EE(4R)}{\mu(R)}\right)^{1/2} +  C\left(\frac{\EE(2S)}{\mu(S)}\right)^{1/2}.
\end{equation}
In case that $R\not\in\DB$, recalling that
$\Lambda  \gg M$ by \eqref{eq:LambdadepM}, for  $\Lambda$ big enough we obtain
$$C\left(\frac{\EE(4R)}{\mu(R)}\right)^{1/2} \leq C\,M\,\Theta(R)\leq \frac{c_0}4 \Theta(\HD_1),$$
If $R\in\DB$, then we use the fact that $R\in\Ty$, which ensures that
$$\EE(4R)\lesssim \sum_{P\sim \TT:P\in\DB}\EE_\infty(9P) \leq \Lambda^{\frac{-1}{3n}}\,\sigma(\HD_1),$$
and so we also get
$$C\left(\frac{\EE(4R)}{\mu(R)}\right)^{1/2} \leq C \left(\frac{\Lambda^{\frac{-1}{3n}}\,\sigma(\HD_1)}{\mu(R)}\right)^{1/2}\leq C\, \Lambda^{\frac{-1}{6n}} \,\Theta(\HD_1)
\leq \frac{c_0}4 \Theta(\HD_1).$$
Hence, in any case we have
$$(|\RR_{\wt\TT}\mu(x)| - \frac{c_0}4 \Theta(\HD_1))_+ \leq 
|\Delta_{\wt\TT}\RR\mu(x)| + C\left(\frac{\EE(2S)}{\mu(S)}\right)^{1/2}.$$
Plugging this estimate into \rf{eqIa1*}, we get
\begin{equation}\label{eqIa99*}
I_{\HD_2}\lesssim
\int_{HD_2} |\Delta_{\wt\TT}\RR\mu|^{p}\,d\mu + \sum_{S\in\HD_2}\EE(2S)^{\frac{p}2} \,\mu(S)^{1-\frac {p}2} 
+  B\,\Lambda^{p-2}\,\sigma_p(\HD_1).
\end{equation}

Next we will estimate each term on the right hand side. First, by H\"older's inequality, we have
\begin{align*}
\int_{HD_2} |\Delta_{\wt\TT}\RR\mu|^{p}\,d\mu & \lesssim \|\Delta_{\wt\TT}\RR\mu\|_{L^2(\mu)}^{p}\,
\mu(HD_2)^{1-\frac{p}2}\\
&\leq \|\Delta_{\wt\TT}\RR\mu\|_{L^2(\mu)}^{p}\,(B\Lambda^{-2}\mu(HD_1))^{1-\frac{p}2}
\leq \|\Delta_{\wt\TT}\RR\mu\|_{L^2(\mu)}^{p}(\Lambda^{-1}\mu(HD_1))^{1-\frac{p}2}.
\end{align*}
Regarding the second term in \rf{eqIa99*}, by H\"older's inequality again,
\begin{align}\label{eqplug731}
\sum_{S\in\HD_2}\EE(2S)^{\frac{p}2} \,\mu(S)^{1-\frac {p}2} 
& \leq  \bigg(\sum_{S\in\HD_2}\EE(2S)\bigg)^{\frac{p}2} \,\mu(HD_2)^{1-\frac {p}2}.
\end{align}
We estimate now the first factor on the right hand side:
\begin{align*}
\sum_{S\in\HD_2}\EE(2S) & \leq \sum_{S\in\HD_2\setminus\DB}\EE(2S) + \sum_{S\sim\TT:S\in\DB}\EE(2S)\\
&\lesssim M^2\,\sum_{S\in\HD_2\setminus\DB}\sigma(S) + \Lambda^{\frac{-1}{3n}}\,\sigma(\HD_1)\\
&\leq M^2\,\sigma(\HD_2) + \Lambda^{\frac{-1}{3n}}\,\sigma(\HD_1)\\
& \leq B\,M^2\,\sigma(\HD_1) + \Lambda^{\frac{-1}{3n}}\,\sigma(\HD_1) \lesssim B\,M^2\,\sigma(\HD_1).
\end{align*}
Hence, plugging this estimate into \rf{eqplug731} and using that $\mu(HD_2)\leq B\,\Lambda^{-2}\mu(HD_1)$,
$$\sum_{S\in\HD_2}\EE(2S)^{\frac{p}2} \,\mu(S)^{1-\frac {p}2}\lesssim 
\big(B\,M^2\,\sigma(\HD_1)\big)^{\frac p2} \,\big(B\,\Lambda^{-2}\mu(HD_1)\big)^{1-\frac p2}
=M^p\, B\,\Lambda^{p-2}\,\sigma_p(\HD_1).$$

Altogether, we deduce that
$$I_{\HD_2}\lesssim \|\Delta_{\wt\TT}\RR\mu\|_{L^2(\mu)}^{p}\,(\Lambda^{-1}\mu(HD_1))^{1-\frac{p}2} + M^p\, B\,\Lambda^{p-2}\,\sigma_p(\HD_1).$$

Recalling the lower estimate for $I_{\HD_2}$ in \rf{eqtrans736}, using that $p'\geq2$ we obtain 
\begin{equation}\label{eqdelt634}
\|\Delta_{\wt\TT}\RR\mu\|_{L^2(\mu)}^{p}\,(\Lambda^{-1}\mu(HD_1))^{1-\frac{p}2} \geq
c\, \Lambda^{-2p'\ve_n}\,\sigma_p(\HD_1) - C\,M^p\, B\,\Lambda^{p-2}\,\sigma_p(\HD_1).
\end{equation}
Recall now that by \eqref{eq:LambdadepM}
$$M\le \Lambda^{1-\frac1{8n-1}}\ll\Lambda.$$ 
Notice that for $p$ close enough to $1$, we have $M^p\, B\,\Lambda^{p-2}\ll1$, so that the last term on the right hand side of \rf{eqdelt634} is much smaller than the first one, assuming $\ve_n$ close enough to $0$. To be more precise, let us take 
$$p=1+ \frac1{4(8n-1)}.$$
A straightforward calculation gives $M^p\, \Lambda^{p-2} \leq \Lambda^{-\frac1{2(8n-1)} -\frac1{4(8n-1)^2}}$, so that
$$B\,M^p\, \Lambda^{p-2} \leq \Lambda^{\frac1{100n}}\,\Lambda^{-\frac1{2(8n-1)}} \leq 
\Lambda^{-\frac1{4(8n-1)}}.$$
Then we choose $\ve_n$ so that, besides \rf{eqass934}, it satisfies
\begin{equation}\label{eqchooseen}
\ve_n \leq \frac1{16(8n-1)}\,(p-1) = \frac1{64(8n-1)^2},
\end{equation}
and we derive
$$\|\Delta_{\wt\TT}\RR\mu\|_{L^2(\mu)}^{p}\,(\Lambda^{-1}\mu(HD_1))^{1-\frac{p}2} \gtrsim
\Lambda^{-2p'\ve_n}\,\sigma_p(\HD_1),$$
which is equivalent to
\begin{align*}
\|\Delta_{\wt\TT}\RR\mu\|_{L^2(\mu)}^{2} &\gtrsim
\big( \Lambda^{-2p'\ve_n}\,
\,\Lambda^{1-\frac p2} 
\,\sigma_p(\HD_1)\,\mu(HD_1)^{\frac{p}2-1}\big)^{\frac 2p}\\
& = \Lambda^{-\frac{4\ve_n}{p-1} +\frac 2p-1}\,\sigma(\HD_1)\gg \Lambda^{-1}\,\sigma(\HD_1).
\end{align*}
This contradicts the assumption that the alternative (b) in the lemma does not hold.
\end{proof}

\vv

\bigskip
\begin{center} 
	\Large Part \refstepcounter{parte}\theparte\label{part-3}: The proof of the First Main Proposition
\end{center}
\smallskip
\addcontentsline{toc}{section}{\bf Part 3: The proof of the First Main Proposition}

In this part, corresponding to Sections \ref{sec3.3} - \ref{sec9} we choose $\OP(R) = \varnothing$ for any $R\in\MDW$.
\vvv

\section{The corona decomposition and the Main Lemma}\label{sec3.3}

In order to prove the Main Proposition \ref{propomain} we have to use a suitable corona decomposition which splits
the lattice $\DD_\mu$ into appropriate trees. We need first need to introduce some variant of the family $\HD(R)$, for $R\in\DD_\mu^\PP$.
For $N=500n$, we denote
$$\Lambda_* = \Lambda^{\frac{N}{N-1}} = A_0^{\frac{N}{N-1}k_\Lambda n},$$ so that
$\Lambda = \Lambda_*^{1-\frac1N}.$ We assume that $k_\Lambda$ is a multiple of $N-1$, so that $k_{\Lambda_*}:=\frac{N}{N-1}k_\Lambda$
is an integer.
Then we set
$$\HD_*(R) = \hd^{k_{\Lambda_*}}(R).$$

Further, we assume that the constant $\delta_0$ in the definition of the family $\LD(R)$ in Section \ref{subsec:enlar} is of the form
$$\delta_0 = \Lambda_*^{-N_0 - \frac1{2N}},$$
with $N_0$ large enough so that moreover $\delta_0\leq \Lambda^{-4n^2}$, as required in Section \ref{subsec:enlar}.

We denote by $\sss_*(R)$ the family of maximal cubes from $\HD_*(R)\cup\LD(R)$ which are contained in $R$.
Also, we let $\End_*(R)$ be the family of maximal $\PP$-doubling cubes which are contained in some cube
from $\sss_*(R)$. Notice that, by Lemma \ref{lempdoubling}, the cubes from $\HD_*(R)\cap \DD_\mu(4R)$ are $\PP$-doubling, and thus
any cube from $\sss_*(R)\cap\HD_*(R)$ belongs to $\End_*(R)$. Finally, we let $\tree(R)$ denote the subfamily of the cubes from $\DD_\mu(R)$ which are not strictly contained in any cube
from $\End_*(R)$, and we say that $R$ is the root of the tree.

Next we define the family $\ttt$ inductively. 
We assume that $\supp\mu$ coincides with a cube $S_0$, and then
we set $\ttt_0=\{S_0\}$. Assuming $\ttt_k$ to be defined, we let
$$\ttt_{k+1} = \bigcup_{R\in\ttt_k} \End_*(R).$$
Then we let
$$\ttt =\bigcup_{k\geq0} \ttt_k.$$
Notice that we have 
$$\DD_\mu=\bigcup_{R\in\ttt} \tree(R).$$
Two trees $\tree(R)$, $\tree(R')$, with $R,R'\in\ttt$, $R\neq R'$ can only intersect if one
of the roots is and ending cube of the other, i.e., $R'\in\End_*(R)$ or $R\in\End_*(R')$.

\vv

The main step for the proof of Main Proposition \ref{propomain} consists of proving the following.

\begin{mlemma}\label{mainlemma}
	Let $\mu$ be a Radon measure in $\R^{n+1}$ such that
	$$
	\mu(B(x,r))\leq \theta_0\,r^n\quad \mbox{ for all $x\in\supp\mu$ and all $r>0$}.
	$$
	Then, for any choice of $M>1$,
	\begin{equation}\label{eqmainlemma*}
		\sum_{R\in\ttt} \Theta(R)^2\,\mu(R)\leq C\,\big(\|\RR\mu\|_{L^2(\mu)}^2 + \theta_0^2\,\|\mu\|
		+ \sum_{Q\in\DB(M)}\EE_\infty(9Q)\big),
	\end{equation}
	with $C$ depending on $M$.
\end{mlemma}

The rest of the current section, together with Sections \ref{sec-layers} and \ref{sec8} are devoted to the proof of this lemma. Later, in Section \ref{sec9} we will complete the 
proof of Main Proposition \ref{propomain}.

\vv
Recall that a cube $R\in\DD_{\mu}$ belongs to
$\MDW$ if $R$ is $\PP$-doubling and
$
\sigma(\HD(R)\cap\sss(R))\geq B^{-1}\,\sigma(R),
$
where
$B= \Lambda^{\frac1{100n}}.$

\vv
\begin{lemma}
For $R\in\ttt\setminus \MDW$, we have
\begin{equation}\label{eqmdw81}
\sigma(\End_*(R))\leq 2\Lambda_*^{2/N}B^{-1}\,\sigma(R).
\end{equation}
\end{lemma}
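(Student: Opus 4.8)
The plan is to split the ending cubes of the tree rooted at $R$ according to which type of stopping cube they descend from. Recall that $\sss_*(R)$ consists of the maximal cubes from $\HD_*(R)\cup\LD(R)$ contained in $R$; since a cube in $\HD_*(R)$ has $\Theta\ge\Lambda_*\Theta(R)$ while a cube in $\LD(R)$ has $\PP(Q)\le\delta_0\Theta(R)$ and hence $\Theta(Q)\lesssim\delta_0\Theta(R)$, the families $\sss_*^{HD}:=\sss_*(R)\cap\HD_*(R)$ and $\sss_*^{LD}:=\sss_*(R)\cap\LD(R)$ are disjoint and exhaust $\sss_*(R)$. Accordingly I will write $\End_*(R)=\End_*^{HD}(R)\sqcup\End_*^{LD}(R)$, where $\End_*^{HD}(R)$ (resp. $\End_*^{LD}(R)$) gathers the cubes of $\End_*(R)$ contained in a cube of $\sss_*^{HD}$ (resp. $\sss_*^{LD}$), and bound the two sums separately. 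Note first that, since $R\in\ttt$ and every cube of $\ttt$ is $\PP$-doubling (the top cube is, and $\End_*$ only produces $\PP$-doubling cubes), the definition of $\MDW$ together with $R\notin\MDW$ gives
$$\sigma(\HD(R)\cap\sss(R)) < B^{-1}\,\sigma(R).$$

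For the high-density part, I will first observe that by Lemma \ref{lempdoubling} every $Q\in\sss_*^{HD}\subset\hd^{k_{\Lambda_*}}(R)\cap\DD_\mu(9R)$ is itself $\PP$-doubling with $\Theta(Q)=\Lambda_*\Theta(R)$; since the cubes of $\sss_*(R)$ are pairwise disjoint, this means $\End_*^{HD}(R)=\sss_*^{HD}$. The core of the argument is the inclusion
$$\bigcup_{Q\in\sss_*^{HD}}Q\ \subset\ \bigcup_{\wt Q\in\HD(R)\cap\sss(R)}\wt Q.$$
To see it, take $Q\in\sss_*^{HD}$: it satisfies $\ell(Q)<\ell(R)$ and $\Theta(Q)\ge\Lambda_*\Theta(R)\ge\Lambda\Theta(R)$, so it is contained in some $Q'\in\HD(R)=\hd^{k_\Lambda}(R)$; comparing side lengths, $Q'\subsetneq R$. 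Then $Q'$ is contained in a maximal cube $\wt Q$ of $\HD(R)\cup\LD(R)$, and $\wt Q\subsetneq R$, so $\wt Q\in\sss(R)$. If $\wt Q$ belonged to $\LD(R)$, we would have $Q\subsetneq\wt Q$ with both $Q,\wt Q\in\HD_*(R)\cup\LD(R)$ and $\wt Q\subset R$, contradicting the maximality of $Q$ in $\sss_*(R)$; hence $\wt Q\in\HD(R)\cap\sss(R)$, proving the inclusion. Since the cubes of $\HD(R)\cap\sss(R)\subset\hd^{k_\Lambda}(R)\cap\DD_\mu(9R)$ have density exactly $\Lambda\Theta(R)$ (again Lemma \ref{lempdoubling}), the displayed inequality for $\sigma(\HD(R)\cap\sss(R))$ forces $\mu\big(\bigcup_{\wt Q\in\HD(R)\cap\sss(R)}\wt Q\big)<B^{-1}\Lambda^{-2}\mu(R)$, and therefore, using $\Lambda=\Lambda_*^{1-1/N}$,
$$\sigma(\End_*^{HD}(R))=\Lambda_*^2\,\Theta(R)^2\,\mu\Big(\bigcup_{Q\in\sss_*^{HD}}Q\Big)\ \le\ \frac{\Lambda_*^2}{\Lambda^2}\,B^{-1}\,\sigma(R)=\Lambda_*^{2/N}B^{-1}\,\sigma(R).$$

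For the low-density part, each $P\in\End_*^{LD}(R)$ is a maximal $\PP$-doubling cube contained in some $Q\in\sss_*^{LD}\subset\LD(R)$; applying Lemma \ref{lemdobpp} to the (possibly empty) chain of non-$\PP$-doubling cubes lying strictly between $Q$ and $P$, together with $\PP(Q)\le\delta_0\Theta(R)$, yields $\Theta(P)\lesssim\wt\Theta(P)\lesssim A_0^n\,\PP(Q)\le A_0^n\,\delta_0\,\Theta(R)$. Since the cubes of $\End_*^{LD}(R)$ are pairwise disjoint subsets of $R$, this gives
$$\sigma(\End_*^{LD}(R))\ \le\ A_0^{2n}\,\delta_0^2\,\Theta(R)^2\,\mu(R)=A_0^{2n}\,\delta_0^2\,\sigma(R).$$
Adding the two bounds, it only remains to verify $A_0^{2n}\delta_0^2\le\Lambda_*^{2/N}B^{-1}$, i.e. $A_0^{2n}\le\Lambda_*^{2N_0+3/N-(1-1/N)/(100n)}$, which holds because $N_0$ (and $k_\Lambda$, hence $\Lambda_*$) are taken large enough, as assumed in Section \ref{sec3.3}. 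This yields $\sigma(\End_*(R))\le 2\Lambda_*^{2/N}B^{-1}\sigma(R)$, as wanted. The one delicate point is the set inclusion in the high-density step, where one has to play carefully the maximality defining $\sss_*(R)$ against that defining $\sss(R)$; the rest is routine density bookkeeping.
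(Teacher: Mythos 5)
Your proof is correct and follows essentially the same strategy as the paper's: split $\End_*(R)$ into cubes descending from the high-density stopping cubes $\sss_*(R)\cap\HD_*(R)$ and those descending from $\sss_*(R)\cap\LD(R)$, bound the first group by comparing with $\HD(R)\cap\sss(R)$ (using the containment of $\HD_*$-cubes in $\HD$-cubes and the failure of the $\MDW$ condition), and bound the second group using $\Theta(P)\lesssim\PP(Q)\leq\delta_0\Theta(R)$ via Lemma \ref{lemdobpp}. The only notable difference is that you spell out more carefully why a cube of $\sss_*(R)\cap\HD_*(R)$ is contained in a cube of $\HD(R)\cap\sss(R)$ rather than merely $\HD(R)$ (playing the maximality in $\sss_*(R)$ against a hypothetical enclosing $\LD(R)$-cube), a step the paper leaves implicit; this extra care is a genuine improvement in exposition but not a different proof.
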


\begin{proof}
We have
$$\sigma(\End_*(R)) = \sigma(\HD_*(R)\cap\sss_*(R)) + \sum_{Q\in\LD(R)\cap\sss_*(R)}\sum_{P\in\End_*(R):P\subset Q}
\sigma(Q).$$
Clearly, since any cube from $\HD_*(R)$ is contained in some cube from $\HD(R)$, we infer that
\begin{align*}
\sigma(\HD_*(R)\cap\End_*(R)) & =\Lambda_*^2\,\Theta(R)^2\!\sum_{Q\in \HD_*(R)\cap\End_*(R)}\!\!\mu(Q)
\leq \Lambda_*^2\,\Theta(R)^2\!\sum_{Q\in \HD(R)\cap\sss(R)}\!\!\mu(Q) \\
& = \frac{\Lambda_*^2}{\Lambda^2}\,\sigma
(\HD(R)\cap\sss(R))\leq \Lambda_*^{2/N}B^{-1}\,\sigma(R),
\end{align*}
where in the last inequality we used that $R\notin\MDW$.
For $P\in\End_*(R)\setminus\HD_*(R)$, there exists some $Q$ such that $P\subset Q\in\LD(R)\cap\sss_*(R)$.  By \eqref{eqcad35} we have 
$$\Theta(P)\lesssim\PP(Q)\leq \delta_0\,\Theta(R),$$
and thus
$$
\sigma(\End_*(R)) \leq \Lambda_*^{2/N} B^{-1}\,\sigma(R) + C\delta_0^2\,\sigma(R) \leq 2\Lambda_*^{2/N} B^{-1}\,\sigma(R),
$$
since $\delta_0\ll B^{-1}$.
\end{proof}
\vv

Observe that $N=500n$ is big enough so that 
$$2\Lambda_*^{2/N}\,B^{-1}\leq \frac12.$$

\begin{lemma}\label{lemtoptop}
We have
$$\sigma(\ttt)  \lesssim \sigma(\ttt\cap\MDW)+ \theta_0^2\,\|\mu\|.$$
\end{lemma}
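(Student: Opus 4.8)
The plan is to feed the estimate \eqref{eqmdw81} into a routine generation-by-generation iteration. Recall that $N=500n$ was chosen large enough that $2\Lambda_*^{2/N}B^{-1}\le\tfrac12$, so \eqref{eqmdw81} says exactly that $\sigma(\End_*(R))\le\tfrac12\,\sigma(R)$ for every $R\in\ttt\setminus\MDW$. Before iterating I would record two preliminary facts. First, the polynomial growth of $\mu$ gives $\Theta(R)\le \mu(2B_R)/\ell(R)^n\le \theta_\mu(2B_R)\le\theta_0$ for every $R\in\DD_\mu$, since $56\,r(R)\le\ell(R)$ and $x_R\in\supp\mu$. As for each $k$ the cubes of $\ttt_k$ are pairwise disjoint subsets of $S_0=\supp\mu$, this yields $\sigma(\ttt_k)=\sum_{R\in\ttt_k}\Theta(R)^2\mu(R)\le\theta_0^2\,\|\mu\|<\infty$ for every $k$, and in particular $\sigma(S_0)=\Theta(S_0)^2\mu(S_0)\lesssim\theta_0^2\,\|\mu\|$.

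Second, I would establish the crude bound
\[
\sigma(\End_*(R))\le 2\,\Lambda_*^2\,\sigma(R)\qquad\text{for every }R\in\ttt .
\]
This is obtained by repeating the computation in the proof of \eqref{eqmdw81}, but estimating the sum $\sum_{Q\in\HD_*(R)\cap\End_*(R)}\mu(Q)$ trivially by $\mu(R)$ (legitimate since these cubes are disjoint and contained in $R$); recalling from Lemma~\ref{lempdoubling} that they are $\PP$-doubling with $\Theta(Q)=\Lambda_*\Theta(R)$, their contribution to $\sigma(\End_*(R))$ is at most $\Lambda_*^2\,\Theta(R)^2\mu(R)=\Lambda_*^2\,\sigma(R)$. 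The remaining cubes of $\End_*(R)$ lie below some $Q\in\LD(R)\cap\sss_*(R)$ as maximal $\PP$-doubling cubes, so by Lemma~\ref{lemdobpp} they satisfy $\Theta\lesssim\PP(Q)\le\delta_0\,\Theta(R)$ and contribute at most $C\delta_0^2\,\sigma(R)$; since $\delta_0<1$ the claimed bound follows.

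Now write $S_k=\sigma(\ttt_k)$ and $G_k=\sigma(\ttt_k\cap\MDW)$. Since $\ttt_{k+1}=\bigcup_{R\in\ttt_k}\End_*(R)$ with the families $\End_*(R)$, $R\in\ttt_k$, pairwise disjoint, splitting the sum over $\ttt_k$ according to whether $R\in\MDW$ and applying the two bounds above gives
\[
S_{k+1}=\sum_{R\in\ttt_k\cap\MDW}\!\!\sigma(\End_*(R))+\!\!\sum_{R\in\ttt_k\setminus\MDW}\!\!\sigma(\End_*(R))\le 2\,\Lambda_*^2\,G_k+\tfrac12\,S_k
\]
for all $k\ge0$. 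Summing over $0\le k\le m-1$ and using that $\sum_{k=0}^{m}S_k<\infty$ — this is precisely where the a priori bound $\Theta\le\theta_0$ from the first step is needed, since we do not yet know $\sigma(\ttt)<\infty$ — I may absorb $\tfrac12\sum_{k=0}^{m-1}S_k$ into the left-hand side to obtain
\[
\sum_{k=0}^{m}S_k\le 2\,S_0+4\,\Lambda_*^2\sum_{k=0}^{m-1}G_k\le 2\,\sigma(S_0)+4\,\Lambda_*^2\,\sigma(\ttt\cap\MDW).
\]
Letting $m\to\infty$ yields $\sigma(\ttt)\le 2\,\sigma(S_0)+4\,\Lambda_*^2\,\sigma(\ttt\cap\MDW)$; since $\sigma(S_0)\lesssim\theta_0^2\,\|\mu\|$ and $\Lambda_*$ depends only on $n$ and $M$, this is the assertion of the lemma.

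The argument is essentially routine once \eqref{eqmdw81} is available. The only points requiring a little care are the finiteness of the truncated sums $\sum_{k\le m}S_k$ (handled by the growth estimate $\Theta\le\theta_0$), and the crude bound $\sigma(\End_*(R))\lesssim\Lambda_*^2\,\sigma(R)$ valid for all roots $R\in\ttt$, which quantifies the cost of ``passing through'' an $\MDW$ cube in the iteration; I expect the latter to be the main, though still minor, obstacle, as it forces the implicit constant in the conclusion to depend on $M$ through $\Lambda_*$.
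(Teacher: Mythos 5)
Your proof is correct, and it takes a genuinely different route from the one in the paper. The paper's argument is a chain/anchor decomposition: it introduces, for each anchor $R\in\{S_0\}\cup(\ttt\cap\MDW)$, the families $I_0(R)=\{R\}$ and $I_{k+1}(R)=\bigcup_{Q\in I_k(R)}\End_*(Q)\setminus\MDW$, observes that these families partition $\ttt$ (each $Q\in\ttt$ is assigned to the minimal $\MDW$-ancestor, or to $S_0$), applies \eqref{eqmdw81} iteratively to show $\sigma(I_k(R))$ decays geometrically, and sums. Your argument instead iterates over the levels $\ttt_k$ and obtains a self-improving inequality $S_{k+1}\le 2\Lambda_*^2 G_k+\tfrac12 S_k$ which is then summed telescopically with absorption.

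Each route has its own small technical point, and you handled yours cleanly. Your generation-by-generation iteration requires $\sum_{k\le m}S_k<\infty$ before you may absorb, which you justify correctly from $\Theta(R)\le\theta_0$; the paper's anchor decomposition avoids absorption entirely because the families $I_k(R)$ are pairwise disjoint, so no a priori finiteness is needed. Conversely, your proof makes explicit the crude bound $\sigma(\End_*(R))\lesssim\Lambda_*^2\sigma(R)$, needed to quantify the cost of ``crossing'' an $\MDW$ cube; this is in fact also implicitly required in the paper's argument (the chain of inequalities $\sigma(I_k(R))\le 2^{-k}\sigma(R)$ is only established there for $k\ge2$ via the geometric recursion, and at the step $k=1$ the anchor $R$ may lie in $\MDW\cup\{S_0\}$ so \eqref{eqmdw81} does not apply directly; one then needs precisely your crude estimate $\sigma(I_1(R))\le\sigma(\End_*(R))\lesssim\Lambda_*^2\sigma(R)$, which worsens the constant by a factor $\Lambda_*^2$ that is harmlessly absorbed). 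So your observation that the implicit constant must depend on $M$ through $\Lambda_*$ is accurate and in fact sharper than what is stated in the paper's own proof.
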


\begin{proof}
For each $R\in \{S_0\}\cup (\ttt\cap\MDW)$, we denote
$I_0(R)=\{R\}$, and for $k\geq0$,
$$I_{k+1}(R) = \bigcup_{Q\in I_k(R)} \End_*(Q)\setminus \MDW.$$ 
In this way, we have
\begin{equation}\label{eqtttot}
\ttt = \bigcup_{R\in \{S_0\}\cup (\ttt\cap\MDW)} \;\bigcup_{k\geq0} I_k(R).
\end{equation}
Indeed, for each $Q\in \ttt$, let $R$ be the minimal cube from $\ttt\cap\MDW$ that contains $Q$, and in case this does not exists, let $R=S_0$. Then it follows that
$$Q\in \bigcup_{k\geq0} I_k(R).$$

Given $R\in\{S_0\}\cup (\ttt\cap\MDW)$, for each $k\geq 1$ and $Q\in I_k(R)$, 
since $I_k(R)\subset \ttt\setminus \MDW$, by \rf{eqmdw81} we have 
$$\sigma(\End_*(Q)\setminus \MDW)\leq 2 \Lambda_*^{2/N}B^{-1}\,\sigma(Q)\leq \frac12\,\sigma(Q).$$
Then we deduce that
$$\sigma(I_{k+1}(R)) = \sum_{Q\in I_k(R)} \sigma(\End_*(Q)\setminus \MDW) \leq \frac12 \sum_{Q\in I_k(R)}\sigma(Q)
= \frac12\, \sigma(I_k(R)).$$
So 
$$\sigma(I_{k}(R))\leq \frac1{2^k}\,\sigma(R)\quad \mbox{ for each $k\geq0$.}$$
Then, by \rf{eqtttot}, 
\begin{align*}
\sigma(\ttt) & = \sum_{R\in \{S_0\}\cup (\ttt\cap\MDW)} \;\sum_{k\geq0} \sigma(I_k(R))
\leq \sum_{R\in \{S_0\}\cup (\ttt\cap\MDW)}\;\sum_{k\geq0} \frac1{2^k}\,\sigma(R) \\
& \approx \sigma(S_0) + \sigma(\ttt\cap\MDW)
\lesssim \theta_0^2\,\|\mu\|+\sigma(\ttt\cap\MDW).
\end{align*}
\end{proof}

\vv

\vv

\section{The layers \texorpdfstring{$\sF_j^h$ and $\sL_j^h$}{Fjh and Ljh}}\label{sec-layers}

Consider an arbtrary subfamily $\sF\subset \MDW$. Quite soon we will choose $\sF = \ttt\cap \MDW$, but in another application of this construction later on, we will choose $\sF$ to be a different subfamily.
For $j\in\Z$,
we denote
$$\sF_j= \big\{R\in\sF: \,\Theta(R)=A_0^{nj}\big\},$$
so that 
$$\sF= \bigcup_{j\in\Z} \sF_j.$$
Next we split $\sF_j$ into layers $\sF_j^h$, $h\geq1$, which are defined as follows:
$\sF_j^1$ is the family of maximal cubes from $\sF_j$, and by induction
$\sF_j^h$ is the family of maximal cubes from $\sF_j\setminus \bigcup_{k=1}^{h-1} \sF_j^{h-1}$.
So we have the splitting
$$\sF= \bigcup_{j\in\Z}\,\bigcup_{h\geq1} \sF_j^h.$$

Our next objective is to choose a suitable subfamily $\sL_j^h\subset \sF_j^h$, for each $j,h$.
By the covering Theorem 9.31 from \cite{Tolsa-llibre}, there
is a family $J_0\subset \sF_j^h$ such that\footnote{Actually the property 1) is not stated in that theorem, however this can be obtained by preselecting a subfamily of maximal balls from $\sF_j^h$
with respect to inclusion and then applying the theorem to the maximal subfamily.}
\begin{itemize}
\item[1)] no ball $B(e^{(4)}(Q))$, with $Q\in J_0$, is contained in any other ball  
$B(e^{(4)}(Q'))$, with $Q'\in \sF_j^h$, $Q'\neq Q$,
\item[2)] the balls $B(e^{(4)}(Q))$, with $Q\in J_0$, have finite superposition, 
and
\item[3)] 
every ball $B(e^{(4)}(Q))$, with $Q\in\sF_j^h$, is contained in some ball 
$(1+8A_0^{-1})\,B(e^{(4)}(R))$, with $R\in J_0$. Consequently,
$$\bigcup_{Q\in\sF_j^h} B(e^{(4)}(Q)) \subset \bigcup_{R\in J_0} (1+8A_0^{-1})\,B(e^{(4)}(R)).$$
\end{itemize}

From the finite superposition property 2), by rather standard arguments which are analogous to the
ones in the proof of Besicovitch's covering theorem in \cite[Theorem 2.7]{Mattila-llibre}, say, 
one deduces that $J_0$ can be split into 
 $m_0$ subfamilies $J_1,\ldots, J_{m_0}$ such that, for each $k$,  the balls $\{B(e^{(4)}(Q)): Q\in J_k\}$  are pairwise disjoint, with $m_0\leq C(A_0)$.

By the condition 3),  
we get
\begin{align*}
\sum_{Q\in \sF_j^h} \sigma(\HD_1(Q)) & = \Lambda^2 A_0^{2nj}\sum_{Q\in \sF_j^h} 
\sum_{P\in\HD_1(Q)} \mu(P)\\
& \leq \Lambda^2 A_0^{2nj}\sum_{R\in J_0} \sum_{\substack{Q\in \sF_j^h:\\ B(e^{(4)}(Q))
\subset (1+8A_0^{-1})B(e^{(4)}(R))}} \sum_{P\in\HD_1(Q)} \mu(P).
\end{align*}
Observe that, for $R\in J_0$ and $Q\in\sF_j^h$ such that  $B(e^{(4)}(Q))
\subset (1+8A_0^{-1})\,B(e^{(4)}(R))$,
 any cube $P\in\HD_1(Q)$ is contained in some cube from $\HD_1(e^{(10)}(R))$ since
$\supp\mu\cap (1+8A_0^{-1})B(e^{(4)}(R))\subset e^{(10)}(R)$,  by Lemma \ref{lem-calcf}.
 Using also that the cubes from $\sF_j^h$ are disjoint and Lemma \ref{lem:43}, we deduce that
\begin{align*}
\sum_{Q\in \sF_j^h} \sigma(\HD_1(Q)) & \leq \Lambda^2 A_0^{2nj}\sum_{R\in J_0}
\sum_{P\in\HD_1(e^{(10)}(R))} \mu(P) \\ &= \sum_{R\in J_0}\sigma(\HD_1(e^{(10)}(R))) \leq B^{1/4}\sum_{R\in J_0}\sigma(\HD_1(e(R))).
\end{align*}
Next we choose $\sL_j^h=J_k$ to be the family such that
$$\sum_{Q\in J_k}\sigma(\HD_1(e(Q)))$$
is maximal among $J_1,\ldots,J_{m_0}$, so that
\begin{align*}
\sum_{Q\in \sF_j^h} \sigma(\HD_1(Q))\leq m_0\,B^{1/4}\sum_{Q\in \sL_j^h}\sigma(\HD_1(e(Q))).\end{align*}
So we have:

\begin{lemma}\label{lemljh}
The family $\sL_j^h$ satisfies:
\begin{itemize}
\item[(i)] no ball $B(e^{(4)}(Q))$, with $Q\in \sL_j^h$, is contained in any other ball  
$B(e^{(4)}(Q'))$, with $Q'\in \sF_j^h$, $Q'\neq Q$,
\item[(ii)] the balls $B(e^{(4)}(Q))$, with $Q\in \sL_j^h$, are pairwise disjoint, 
and
\item[(iii)] 
 $$\sum_{Q\in \sF_j^h} \sigma(\HD_1(Q)) \lesssim B^{1/4} 
\sum_{Q\in \sL_j^h}\sigma(\HD_1(e(Q))).$$
\end{itemize}
\end{lemma}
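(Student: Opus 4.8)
The statement of Lemma \ref{lemljh} has already been essentially derived in the paragraphs immediately preceding it, so the ``proof'' is a matter of collecting the construction into a clean argument. Here is how I would organize it.

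\textbf{Step 1: Extract the covering family.} I would begin by invoking the covering Theorem 9.31 from \cite{Tolsa-llibre}, applied to the balls $\{B(e^{(4)}(Q)):Q\in\sF_j^h\}$. As noted in the footnote, property 1) (no ball contained in another) is obtained by first passing to the subfamily of cubes whose balls $B(e^{(4)}(Q))$ are maximal with respect to inclusion among $\{B(e^{(4)}(Q')):Q'\in\sF_j^h\}$, and then applying the theorem to that subfamily; the output is a family $J_0\subset\sF_j^h$ satisfying 1), the finite superposition property 2), and the covering property 3). Since the cubes of $\sF_j^h$ all have comparable size $\Theta(\cdot)=A_0^{nj}$ and the radii $r(B(e^{(4)}(Q)))$ are comparable to $\ell(Q)$, the superposition constant in 2) can be taken to depend only on $A_0$.

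\textbf{Step 2: Split into disjoint subfamilies.} From the finite superposition in 2), by the standard Besicovitch-type coloring argument (as in \cite[Theorem 2.7]{Mattila-llibre}), $J_0$ decomposes into $m_0\leq C(A_0)$ subfamilies $J_1,\dots,J_{m_0}$, each consisting of pairwise disjoint balls $\{B(e^{(4)}(Q)):Q\in J_k\}$. This gives candidates for $\sL_j^h$ satisfying (i) and (ii); it remains to choose the right one so that (iii) holds.

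\textbf{Step 3: Transfer the $\HD_1$-mass through the covering.} Using property 3) and the fact that $\supp\mu\cap(1+8A_0^{-1})B(e^{(4)}(R))\subset e^{(10)}(R)$ (Lemma \ref{lem-calcf}), every cube $P\in\HD_1(Q)$ with $B(e^{(4)}(Q))\subset(1+8A_0^{-1})B(e^{(4)}(R))$ is contained in some cube of $\HD_1(e^{(10)}(R))$; combining this with the disjointness of the cubes in $\sF_j^h$ and the constancy $\Theta(P)=\Lambda\,A_0^{nj}$ on $\HD_1(Q)$ for $Q\in\sF_j^h$, one gets
\[
\sum_{Q\in\sF_j^h}\sigma(\HD_1(Q))\;\leq\;\sum_{R\in J_0}\sigma(\HD_1(e^{(10)}(R))).
\]
Then Lemma \ref{lem:43} (in the form \eqref{eq:sigmae'lesigmae}, which gives $\sigma(\HD_1(e^{(10)}(R)))\le B^{1/4}\sigma(\HD_1(e(R)))$) upgrades this to
\[
\sum_{Q\in\sF_j^h}\sigma(\HD_1(Q))\;\leq\;B^{1/4}\sum_{R\in J_0}\sigma(\HD_1(e(R))).
\]
Finally I would choose $\sL_j^h:=J_k$ to be the subfamily among $J_1,\dots,J_{m_0}$ maximizing $\sum_{Q\in J_k}\sigma(\HD_1(e(Q)))$, so that $\sum_{R\in J_0}\sigma(\HD_1(e(R)))\leq m_0\sum_{Q\in\sL_j^h}\sigma(\HD_1(e(Q)))$, yielding (iii) with implicit constant $m_0\,B^{1/4}\lesssim B^{1/4}$. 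Properties (i) and (ii) are inherited from $J_0$ and the Besicovitch splitting respectively.

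There is no real obstacle here — the work was done in the surrounding discussion — but if I had to flag the one point requiring care, it would be Step 3: one must make sure that $e^{(10)}(R)$ is well-defined and that Lemma \ref{lem-calcf} applies, which needs $h(R)+10\le A_0/2$; this is guaranteed since $h(R)+10\le A_0/4$ by the construction of $h(R)$ in Lemma \ref{lem:43}. One should also double-check that the cube $Q$ ranges over $\sF_j^h$ (not all of $\sF$) when invoking disjointness, so that the ``transfer'' step does not overcount $\HD_1$-mass; this is exactly where the layering into $\sF_j^h$ is used.
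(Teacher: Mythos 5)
Your proposal is correct and takes essentially the same approach as the paper: extract $J_0$ via the covering theorem, split it into $m_0$ disjoint subfamilies by a Besicovitch coloring, transfer the $\HD_1$-mass through the covering using Lemma \ref{lem-calcf} and \eqref{eq:sigmae'lesigmae}, and select the subfamily maximizing $\sum\sigma(\HD_1(e(Q)))$. This is precisely the construction carried out in the paragraphs preceding the lemma, of which the lemma itself is just a summary.
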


\vv
We denote 
$$\sL_j= \bigcup_{h\geq 1}\sL_j^h,\qquad \sL= \bigcup_{j\in\Z}\sL_j =
\bigcup_{j\in\Z}\,\bigcup_{h\geq 1}\sL_j^h.$$
By the property (iii) in the lemma, we have
\begin{align}\label{eqover5}
\sum_{R\in \sF}\!\!\sigma(\HD_1(R)) & = \sum_{j\in\Z, \,h\geq0}\,
\sum_{R\in\sF_j^h} \sigma(\HD_1(R)) \\ 
& \lesssim m_0\,B^{1/4} \sum_{j\in\Z, \,h\geq0}\,\sum_{R\in \sL_j^h}\sigma(\HD_1(e(R))) = m_0B^{1/4}\sum_{R\in \sL}\sigma(\HD_1(e(R))).\nonumber
\end{align}

\vv

\begin{lemma}\label{lemsuper**9}
Let $\sF=\MDW\cap \ttt$. Then we have
$$\sigma(\ttt) \lesssim B^{5/4} \sum_{R\in \sL} \sum_{k\geq0} B^{-k/2}\sum_{Q\in\Trc_k(R)}\sigma(\HD_1(e(Q))) + \theta_0^2\,\|\mu\|.$$
\end{lemma}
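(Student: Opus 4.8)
The plan is to combine the three results we have just established: Lemma \ref{lemtoptop}, which reduces $\sigma(\ttt)$ to $\sigma(\ttt\cap\MDW)$ plus the harmless term $\theta_0^2\,\|\mu\|$; the $\MDW$ condition in the form \eqref{eq:MDWdef2}, which gives $\sigma(R)\le B\,\sigma(\HD_1(R))$ for every $R\in\MDW$; and the overlap estimate \eqref{eqover5} applied with $\sF=\MDW\cap\ttt$, which bounds $\sum_{R\in\sF}\sigma(\HD_1(R))$ by $m_0\,B^{1/4}\sum_{R\in\sL}\sigma(\HD_1(e(R)))$. Chaining these, I obtain
\begin{align*}
\sigma(\ttt) &\lesssim \sigma(\ttt\cap\MDW) + \theta_0^2\,\|\mu\|
\le B\sum_{R\in\ttt\cap\MDW}\sigma(\HD_1(R)) + \theta_0^2\,\|\mu\|\\
&\lesssim B^{5/4}\sum_{R\in\sL}\sigma(\HD_1(e(R))) + \theta_0^2\,\|\mu\|.
\end{align*}

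The remaining work is to pass from $\sum_{R\in\sL}\sigma(\HD_1(e(R)))$ to the tractable-tree sum $\sum_{R\in\sL}\sum_{k\ge0}B^{-k/2}\sum_{Q\in\Trc_k(R)}\sigma(\HD_1(e(Q)))$. For this I would invoke \eqref{eqiter*44} from Lemma \ref{eqtec74}, which says precisely that for every $R\in\MDW$,
$$\sigma(\HD_1(e(R)))\le \sum_{k\ge0}B^{-k/2}\sum_{Q\in\Trc_k(R)}\sigma(\HD_1(e(Q))).$$
Since $\sL\subset\MDW$ by construction (each $\sL_j^h$ is a subfamily of $\sF_j^h\subset\MDW$), I may apply this inequality term by term and sum over $R\in\sL$, which yields exactly the claimed bound. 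Putting the two displays together gives the lemma.

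The only point that requires a little care — and is the main thing to verify rather than a genuine obstacle — is that the constant $m_0\le C(A_0)$ coming from the Besicovitch-type splitting in Lemma \ref{lemljh} is admissible: since we have agreed that all implicit constants may depend on the parameters $C_0,A_0$ of the David--Mattila lattice, the factor $m_0$ is absorbed into ``$\lesssim$'', and the only explicitly tracked powers of $\Lambda$ (through $B=\Lambda^{1/100n}$) are the $B$ from the $\MDW$ condition and the $B^{1/4}$ from \eqref{eqover5}, giving the displayed $B^{5/4}$. One should also note that \eqref{eqover5} is stated for a general $\sF\subset\MDW$ and that taking $\sF=\MDW\cap\ttt$ is legitimate, so no new construction is needed here; the layers $\sF_j^h$ and the families $\sL_j^h$ are exactly those built in this section for this choice of $\sF$.
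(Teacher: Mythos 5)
Your proof is correct and follows essentially the same chain as the paper's own argument: Lemma \ref{lemtoptop}, then the $\MDW$ bound \eqref{eq:MDWdef2}, then the overlap estimate \eqref{eqover5}, then the iteration \eqref{eqiter*44}. The extra care you take regarding $m_0$ and the inclusion $\sL\subset\MDW$ is sound but not explicitly spelled out in the paper.
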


Recall that the families $\Trc_k(R)$ have been defined in Section \ref{subsec:trc}, just before Lemma \ref{eqtec74}. We assume that $\OP(Q)=\varnothing$ for any $Q\in\MDW$ in this constrcution. 

\begin{proof}
This is an immediate consequence of Lemma \ref{lemtoptop} and our earlier estimates: \todo{the estimate $\sigma(R)\le B\sigma(\HD_1(R))$ was missing, so the constant $B^{1/4}$ had to be changed to $B^{5/4}$}
\begin{align*}
\sigma(\ttt)  & \lesssim \sigma(\ttt\cap\MDW)+ \theta_0^2\,\|\mu\|\\
& \overset{\eqref{eq:MDWdef2}}{\lesssim} B\sum_{R\in \ttt\cap\MDW}\sigma(\HD_1(R)) + \theta_0^2\,\|\mu\|\\
& \overset{\rf{eqover5}}{\lesssim} B^{5/4}\sum_{R\in \sL}\sigma(\HD_1(e(R))) + \theta_0^2\,\|\mu\|\\
& \overset{\eqref{eqiter*44}}{\lesssim} B^{5/4} \sum_{R\in \sL} \sum_{k\geq0} B^{-k/2}\sum_{Q\in\Trc_k(R)}\sigma(\HD_1(e(Q))) + \theta_0^2\,\|\mu\|.
\end{align*}
\end{proof}

\vv

To be able to apply later the preceding lemma, we need to get an estimate for $\#\sL(P,k)$, where $P\in\DD_\mu,\, k\ge 0$ and
\begin{equation}\label{eqlpk*}
	\sL(P,k)= \big\{R\in\sL:\exists \,Q\in\Trc_k(R) \mbox{ such that } P\in\TT(e'(Q))\big\}.
\end{equation}
For $j\in\Z$ set also 
\begin{equation}\label{eqlpkj*}
\sL_j(P,k)= \big\{R\in\sL_j:\exists \,Q\in\Trc_k(R) \mbox{ such that } P\in\TT(e'(Q))\big\},
\end{equation}
so that $\sL(P,k) = \bigcup_j \sL_j(P,k)$. The following important technical result is the main achievement in this section.

\begin{lemma}\label{lemimp9}
Let $\sF=\MDW\cap \ttt$. In this case there exists some constant $C_4$ such that, for all $P\in\DD_\mu$ and all $k\geq0$,
$$\#\sL(P,k)\leq C_4\,\log\Lambda.$$
More precisely, for each $P\in\DD_\mu$ and $k\geq0$
\begin{equation}\label{eq:lemimp91}
	\#\{j\in\Z:\sL_j(P,k)\neq\varnothing\}\lesssim \log\Lambda,
\end{equation}
and for each $j\in\Z$, $P\in\DD_\mu$, $k\geq0$,
\begin{equation}\label{eqlj83}
	\#\sL_j(P,k) \leq C_5.
\end{equation}
\end{lemma}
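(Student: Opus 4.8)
The plan is to prove \eqref{eqlj83} and \eqref{eq:lemimp91} separately, and then combine them for the first assertion.

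\textbf{Bounding $\#\sL_j(P,k)$.} Fix $j\in\Z$, $P\in\DD_\mu$, and $k\geq0$. Suppose $R\in\sL_j(P,k)$, so there is $Q\in\Trc_k(R)$ with $P\in\TT(e'(Q))$. First I would record the size relations this forces. Since $Q\in\Gen_k(R)$, iterating the inclusions established in the proof of Lemma \ref{eqtec74} (each $Q'\in\GH(R')$ satisfies $Q'\subset e'(R')$, hence $\ell(Q')\le A_0^{-1}\ell(R')$ and $|x_{R'}-x_{Q'}|\le 1.1\ell(R')$), gives $Q\subset B(e''(R))$ and $\ell(Q)\le A_0^{-k}\ell(R)$; and since $P\in\TT(e'(Q))\subset\DD_\mu(e'(Q))$ we have $\ell(P)\le\ell(Q)\le A_0^{-k}\ell(R)$ and $P\subset e'(Q)\subset 2R$. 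Moreover $\Theta(R)=A_0^{nj}$ is fixed, so all the $R\in\sL_j(P,k)$ have exactly the same side length, say $\ell(R)=A_0^{-m}$ for a single $m$ (this is where being in a single family $\sL_j$, rather than $\sL$, is used). The key geometric point is then that all such $R$ are cubes of a fixed generation, each containing (a neighborhood of) $P$ in a controlled way: indeed $P\subset 2R$ and $\ell(R)=A_0^{k}\ell(Q)\gtrsim A_0^{k}\ell(P)$, but more usefully $\dist(x_R,P)\lesssim\ell(R)$. Since the balls $5B(R)$ for cubes $R$ of a fixed generation are disjoint (Lemma \ref{lemcubs}) and each such $R$ with $P\subset 2R$ satisfies $x_R\in B(x_P, C\ell(R))$, a volume/packing argument bounds the number of them by an absolute constant $C_5$ depending only on $A_0$ (and $n$). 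I would also double-check that the trees $\Trc_k(R)$ for distinct roots $R\in\sL_j$ of the same generation don't secretly share the relevant cube $P$ in an uncontrolled way; but this is subsumed by the packing bound on the roots themselves.

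\textbf{Bounding the number of relevant scales $j$.} Fix $P$ and $k$. Suppose $R\in\sL_j(P,k)$ with witness $Q\in\Trc_k(R)$ and $P\in\TT(e'(Q))$. I would extract density information: by \eqref{eqiter582} and the construction of $\Gen$, for $Q\in\Gen_k(R)$ one has $Q\in\HD_1(e'(\cdot))$ at each step, so applying \eqref{eqforakdf33} (or directly the definition of $\hd^{k_\Lambda}$) $k$ times gives $\Theta(Q)\approx \Lambda^k\,\Theta(R)=\Lambda^k A_0^{nj}$. On the other hand, $P\in\TT(e'(Q))$ forces (by the remark just after Remark \ref{rem:Hjempty}, or the footnote in the proof of Lemma \ref{lempoiss}) $\PP(P)\lesssim\Lambda^2\Theta(Q)$, hence $\Theta(P)\lesssim\Lambda^{k+2}A_0^{nj}$; and conversely, since $P$ lies in the generalized tree rooted at $Q$ above the stopping cubes, its density cannot have dropped below the $\LD$ threshold, so $\PP(P)\ge\delta_0\Theta(Q)\approx\delta_0\Lambda^k A_0^{nj}$, giving $\Theta(P)\gtrsim\delta_0\Lambda^k A_0^{nj}$. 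Combining, $A_0^{nj}$ is pinned down up to a multiplicative factor of order $\Lambda^2\delta_0^{-1}$. Recalling $\delta_0=\Lambda_*^{-N_0-1/(2N)}$ is a fixed power of $\Lambda$, this factor is $\Lambda^{O(1)}$, whence the number of admissible integers $j$ is $\lesssim\log\Lambda$, which is \eqref{eq:lemimp91}.

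\textbf{Combining.} From \eqref{eq:lemimp91} and \eqref{eqlj83},
\[
\#\sL(P,k)=\sum_{j:\sL_j(P,k)\neq\varnothing}\#\sL_j(P,k)\le C_5\cdot\#\{j:\sL_j(P,k)\neq\varnothing\}\le C_5\cdot C\log\Lambda =: C_4\log\Lambda,
\]
as desired.

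\textbf{Expected main obstacle.} The routine part is the packing estimate \eqref{eqlj83}; the delicate part is making \eqref{eq:lemimp91} clean, i.e.\ rigorously pinning $\Theta(P)$ both from above \emph{and} from below in terms of $\Theta(Q)\approx\Lambda^k\Theta(R)$. The lower bound $\PP(P)\gtrsim\delta_0\Theta(Q)$ requires knowing that $P$, lying in $\TT(e'(Q))$, is not strictly below any $\LD(Q)$ cube — one must be careful that $\TT(e'(Q))$ is built from $\sss_2(e'(Q))$, which involves two layers of stopping cubes and the enlarged cube $e'(Q)$, so the density control on $P$ should be read off from Lemma \ref{lempoiss}(v) (for cubes near $\Neg$) or from the fact that $P$ sits above $\sss(Q')$ for some $Q'\in\HD_1(e'(Q))$ with $\Theta(Q')\approx\Lambda\Theta(Q)$. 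Tracking which of these cases applies, and verifying that in each the density of $P$ stays within a fixed power of $\Lambda$ of $\Theta(Q)$, is the step I expect to demand the most care.
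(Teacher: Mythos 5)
Your proof of \eqref{eqlj83} contains a fatal error: you claim that all $R\in\sL_j(P,k)$ have the same side length ``say $\ell(R)=A_0^{-m}$ for a single $m$,'' because $\Theta(R)=A_0^{nj}$ is fixed. This is false. The family $\sF_j=\{R\in\sF:\Theta(R)=A_0^{nj}\}$ fixes the (discrete) density, not the generation; cubes of the same density can be deeply nested, which is precisely why the paper introduces the layer decomposition $\sF_j=\bigcup_h\sF_j^h$. The genuine difficulty of \eqref{eqlj83} is ruling out a long chain $R_1\supsetneq R_2\supsetneq\cdots$ in $\sL_j$ all having $P$ in the corresponding level-$k$ subtrees. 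Your one-generation packing argument does not see this at all. The paper's proof shows that each $\sL_j^h(P,k)$ is a singleton (disjointness of the balls $B(e^{(4)}(\cdot))$ within a layer), and then controls the range of admissible layer indices $h$ via the technical Lemma~\ref{lemtrucguai} and a careful analysis of the chain $S_0=R_0,\dots,S_{\tilde k}$ from the $\Gen$ construction; this is where almost all the work lies, and none of it appears in your argument.

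For \eqref{eq:lemimp91} your strategy is in the right spirit but has a gap: from $\PP(P)\gtrsim\delta_0\Theta(Q)$ you deduce $\Theta(P)\gtrsim\delta_0\Theta(Q)$, which does not follow — $\PP(P)$ dominates $\Theta(P)$, not the reverse, and $P$ need not be $\PP$-doubling. You can salvage the argument either by working throughout with $\PP(P)$ (a quantity determined by $P$ alone, so the two-sided bound $\delta_0\Theta(Q)\lesssim\PP(P)\lesssim\Lambda^2\Theta(Q)$ pins $\Theta(Q)=\Lambda^kA_0^{nj}$ to $O(\log\Lambda)$ values), or, as the paper does, by passing to the smallest $\PP$-doubling cube $\wt P_1$ (and possibly its $\PP$-doubling parent $\wt P_2$) containing $P$, for which $\Theta\approx\PP$ and for which membership in $\TT_\sss(e'(Q))$ supplies the density bracket directly. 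You flag but do not resolve the $\Neg(e'(Q))$ case; the paper handles it separately using $\ell(P)\gtrsim\delta_0^2\ell(Q)$ from Lemma~\ref{lemnegs}, which bounds the number of possible $Q$'s (and hence $j$'s) by $O(|\log\delta_0|)\approx\log\Lambda$. The combining step is of course fine, but as it stands the proposal does not constitute a proof because \eqref{eqlj83} — which carries most of the content — is not established.
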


\vv

We prove first \eqref{eq:lemimp91}.
\begin{proof}[Proof of \eqref{eq:lemimp91}]
	Let $\wt P_1$ be the smallest $\PP$-doubling cube containing $P$, and let $\wt P_2$ be be the smallest $\PP$-doubling cube strictly containing $\wt P_1$. Suppose that  $R\in\sL_j(P,k)$. There are two cases to consider.
	\vv
	
	\emph{Case 1.} There exists $Q\in\Trc_k(R)$ such that $P\in\TT(e'(Q))\setminus\Neg(e'(Q))$. We claim that in this case we have $\wt P_i\in \TT_\sss(e'(Q))$ for some $i\in\{1,2\}$. Indeed, if $P\in\TT_\sss(e'(Q))\setminus\Neg(e'(Q))$, then  necessarily $\wt P_1\in \TT_\sss(e'(Q))$, by the definition of
	the family $\Neg(e'(Q))$. If $P\notin\TT_\sss(e'(Q)),$ then either:
	\begin{itemize}
		\item $P=\wt P_1,$ which implies $P\in\End(e'(Q))$, in which case $\wt P_2\in \TT_\sss(e'(Q))$, or
		\item $P\neq \wt P_1$ and we have $\wt P_1\in \TT_\sss(e'(Q))$, again by the definition of $\Neg(e'(Q))$.
	\end{itemize}  
	
	Choosing $i\in\{1,2\}$ such that $\wt P_i\in \TT_\sss(e'(Q))$ we see by the definition of $\TT_\sss(e'(Q))$ that
	$$\delta_0\,\Theta(Q)\lesssim \Theta(\wt P_i)\leq \Lambda^2\,\Theta(Q).$$
	Since $\Theta(Q)=\Lambda^k\Theta(R)$ (by the definition of $\Trc_k(R)$), the above is equivalent to
	\begin{equation*}
		\Lambda^{-2}\Theta(\wt P_i)\le \Lambda^k\Theta(R)\le C\delta_0^{-1}\Theta(\wt P_i)
	\end{equation*}
	We have $\Theta(R)=A_0^{nj}$ because $R\in \sL_j(P,k)$, and so it follows that 
	$$-C\log\Lambda\leq j + c\,k\log\Lambda - c'\log\Theta(\wt P_i)\leq C|\log\delta_0| = C'\log\Lambda.$$
	Recall that $k\ge 0$ is fixed, and $\Theta(\wt P_i)$ is equal to either $\Theta(\wt P_1)$ or $\Theta(\wt P_2)$, where both of these values depend only on $P$, which is fixed. Thus, there are at most $C''\log\Lambda$ integers $j$ such that there exists  $R\in\sL_j(P,k)$ and $Q\in\Trc_k(R)$ for which $P\in\TT(e'(Q))\setminus\Neg(e'(Q))$.
	
	\vv
	
	\emph{Case 2.}
	Suppose now that there exists $Q\in\Trc_k(R)$ such that $P\in\Neg(e'(Q))\subset\TT(e'(Q))$.
	In this case, by Lemma \ref{lemnegs}, $\ell(P) \gtrsim \delta_0^{-2}\,\ell(Q)$. Hence, there
	are at most $C\,|\log\delta_0|\approx \log\Lambda$ cubes $Q$ such that 
	$P\in\TT(e'(Q))\cap\Neg(e'(Q))$. 
	
	For each such cube we have $\Theta(Q)=\Lambda^k\Theta(R) = \Lambda^k A_0^{nj}$. Thus, for each cube $Q$ as above there is exactly one value of $j$ such that there may exist $R\in\sL_j$ with $Q\in\Trc_k(R)$. It follows that there are at most 
	$C'''\log\Lambda$ values of $j$ such that
	there exists $R\in\sL_j(P,k)$ and $Q\in\Trc_k(R)$ for which $P\in\TT(e'(Q))\cap\Neg(e'(Q))$.
	
	\vv
	Putting the estimates from both cases together we get that $\sL_j(P,k)$ is non-empty for at most $(C''+C''')\log\Lambda$ integers $j$.
\end{proof}

The proof of \eqref{eqlj83} is more involved. Its key ingredient is the following auxiliary result.

\begin{lemma}\label{lemtrucguai}
There exists some positive integer
$N_1$ depending on $n$ (with $N_1\leq C\,N_0 N$) such that the following holds.
For a given $\theta>0$, consider the interval
$$I_\theta = \big(\theta\,\Lambda_*^{-\frac1{4N}}\delta_0,\, \theta\,\Lambda_*^{\frac1{4N}}\Lambda\big).$$
Let $R_1,R_2,\ldots,R_{N_1}$ be cubes from $\ttt$ such that $R_{k+1}\in\End_*(R_{k})$ for $k\geq1$. 
Then at least one of the cubes $R_k$, with $1\leq k\le N_1$, satisfies
$$\Theta(R_k)\not \in I_\theta.$$
\end{lemma}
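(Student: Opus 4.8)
The idea is a pigeonhole/counting argument based on the fact that along a chain of stopping cubes $R_{k+1}\in\End_*(R_k)$ the density $\Theta(R_k)$ cannot stay trapped in a fixed dyadic window of length $\approx\Lambda_*$ for too many consecutive steps. The window $I_\theta$ has multiplicative length $\Lambda_*^{\frac1{4N}}\Lambda\,(\Lambda_*^{-\frac1{4N}}\delta_0)^{-1}=\Lambda_*^{\frac1{2N}}\Lambda\delta_0^{-1}$, and recalling $\delta_0=\Lambda_*^{-N_0-\frac1{2N}}$ and $\Lambda=\Lambda_*^{1-\frac1N}$, this is $\Lambda_*^{\frac1{2N}}\cdot\Lambda_*^{1-\frac1N}\cdot\Lambda_*^{N_0+\frac1{2N}}=\Lambda_*^{N_0+1-\frac1{2N}}$, i.e.\ essentially $\Lambda_*^{N_0+1}$. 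So $I_\theta$ spans roughly $N_0+1$ ``$\Lambda_*$-scales'' of density. The claim is that after $N_1$ steps down a chain of $\End_*$-stopping cubes, the density must have escaped this window.

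\textbf{Key steps.} First I would record how $\Theta$ changes in one step $R_{k+1}\in\End_*(R_k)$. By construction $\End_*(R_k)$ consists of maximal $\PP$-doubling cubes contained in cubes of $\sss_*(R_k)=$ maximal cubes of $\HD_*(R_k)\cup\LD(R_k)$ inside $R_k$. There are two cases: if $R_{k+1}$ lies in a cube $Q\in\HD_*(R_k)\cap\sss_*(R_k)$, then (using Lemma \ref{lempdoubling} and \eqref{eqforakdf33}) $\Theta(R_{k+1})\approx\Theta(Q)\approx\Lambda_*\,\Theta(R_k)$, so the density jumps up by a factor $\approx\Lambda_*$; if instead $R_{k+1}\subset Q\in\LD(R_k)\cap\sss_*(R_k)$, then by \eqref{eqcad35} and Lemma \ref{lemdobpp} we have $\Theta(R_{k+1})\lesssim\PP(Q)\le\delta_0\,\Theta(R_k)=\Lambda_*^{-N_0-\frac1{2N}}\Theta(R_k)$, so the density drops by at least a factor $\approx\Lambda_*^{N_0}$. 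In particular every single step is a multiplicative change of size at least $\Lambda_*^{1/2}$ (say), in one direction or the other. Second, I would pass to logarithms: set $t_k=\log_{\Lambda_*}\Theta(R_k)$; then $t_{k+1}-t_k\in[1-o(1),\,1+o(1)]$ in the HD-case and $t_{k+1}-t_k\le -(N_0-o(1))$ in the LD-case. Being trapped in $I_\theta$ for steps $k=1,\dots,m$ means $t_k$ stays in an interval of length $L:=\log_{\Lambda_*}(\Lambda_*^{N_0+1-\frac1{2N}})=N_0+1-\frac1{2N}$ for all those $k$.

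\textbf{The counting.} Now comes the combinatorial heart. Along a chain that stays inside an interval of length $L\approx N_0+1$, after at most $L+2$ consecutive ``$+1$'' (HD) steps the variable $t_k$ would exit the top of the interval; so among any $L+2$ consecutive steps there must be at least one LD step. But each LD step decreases $t_k$ by at least $N_0-1$. So if we have, say, two LD steps among the first $2(L+2)$ steps, the net displacement is at most $2(L+2)\cdot 1 - 2(N_0-1)$; choosing $N_0$ sufficiently large relative to $L$ (which is itself $\approx N_0$ — here one must be slightly careful, but since $L=N_0+1-\frac1{2N}$ and each HD step is $\le 1+o(1)$ while each LD step is $\le -(N_0-o(1))$, a single LD step already undoes more than $L$ HD steps) forces $t_k$ to leave the bottom of the interval. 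Concretely: after one LD step the density has dropped by $\ge N_0-1 > L$ in log-scale, so it cannot have been inside $I_\theta$ both before and sufficiently after that step unless only $< L+2$ further HD steps occur, and then another LD step is forced, etc. Counting the maximal number of steps before forced exit gives a bound $N_1\le C(L+2)\le C'N_0$, and since $N_0\lesssim$ (a constant)$\cdot N$ with $N=500n$, we get $N_1\le C\,N_0 N$ as claimed; I would just be generous with the constant.

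\textbf{Main obstacle.} The delicate point is bookkeeping the ``up to constants'' in the one-step density change — in particular, in the HD case the comparability $\Theta(R_{k+1})\approx\Lambda_*\Theta(R_k)$ holds only up to $A_0$-powers (from \eqref{eqson1}, \eqref{eqforakdf33}, and the passage from $\sss_*(R_k)$ to $\End_*(R_k)$ via Lemma \ref{lemdobpp}), and these multiplicative errors must be absorbed into the factors $\Lambda_*^{\pm\frac1{4N}}$ appearing in the definition of $I_\theta$. This is exactly why the interval was widened by $\Lambda_*^{\pm\frac1{4N}}$ rather than taken to be sharply $(\delta_0\theta,\Lambda\theta)$: since $\Lambda_*=A_0^{\frac{N}{N-1}k_\Lambda n}$ and $k_\Lambda$ can be taken large, $\Lambda_*^{\frac1{4N}}=A_0^{\frac{k_\Lambda n}{4(N-1)}}$ dominates any fixed power of $A_0$ coming from the lattice constants, so the slack is enough. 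I would make this quantitative at the start (fix the implicit constants, call the worst one $A_0^{C(n)}$, check $A_0^{C(n)}\le\Lambda_*^{\frac1{8N}}$ for $k_\Lambda$ large), and the rest of the argument is the clean integer-valued pigeonhole above.
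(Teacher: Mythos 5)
Your overall plan (pass to the logarithmic variable $t_k=\log_{\Lambda_*}\Theta(R_k)$, classify each step as HD or LD, and argue by pigeonhole) is in the right spirit, but the combinatorial heart of your argument rests on a false inequality. The window $I_\theta$ has multiplicative length exactly $\Lambda_*^{N_0+1}$ (you wrote $\Lambda_*^{N_0+1-\frac1{2N}}$, but the $\frac1{2N}$'s cancel with the $-\frac1N$ from $\Lambda=\Lambda_*^{1-\frac1N}$), so $L=N_0+1$. An LD step, on the other hand, lowers $t_k$ by at least $N_0+\frac1{2N}-\log_{\Lambda_*}C$, which is \emph{smaller} than $L$ by roughly $1$. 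Your key sentence ``a single LD step already undoes more than $L$ HD steps'' is therefore simply wrong, and your claim ``$N_0-1>L$'' is false. Because of this gap of size $\approx 1$, the naive pigeonhole does not terminate: a priori the density could rise by $\approx N_0$ via HD steps, drop by $\approx N_0$ via one LD step, and oscillate indefinitely inside $I_\theta$. Nothing in your argument rules that out.

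What the paper actually does is a bootstrap argument that quantifies a net downward drift per ``round.'' One first shows (their inequalities \eqref{eqsist1}, \eqref{eqsist2}) that once $\Theta(R_k)$ is close to the bottom of $I_\theta$, the next $\approx N_0$ steps are \emph{forced} to be HD (else you exit), and at the $N_0$-th step an LD step is forced (else you exit), landing at $\Theta(R_{k+N_0+1})\le C\delta_0\Lambda_*^{N_0}\Theta(R_k)=C\Lambda_*^{-\frac1{2N}}\Theta(R_k)$. So each round of $N_0+1$ steps multiplies the density by $\le\Lambda_*^{-\frac1{3N}}$ (absorbing $C$ into the slack), i.e.\ the upper constraint on $\Theta(R_k)/(\delta_0\theta)$ tightens from $\Lambda_*^a$ to $\Lambda_*^{a-\frac1{3N}}$ while the lower constraint stays at $\Lambda_*^{-\frac1{4N}}$. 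After $O(N)$ rounds the upper bound crosses the lower bound and exit is forced, giving $N_1\lesssim N\cdot N_0$. This monotone tightening of the constraint is the mechanism your proposal is missing; without it, the counting bound you wrote down cannot be derived.
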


\begin{proof}
Recall that
$$\Lambda= \Lambda_*^{1-\frac1N}\quad\mbox{ and }\quad\delta_0 = \Lambda_*^{-N_0 - \frac1{2N}},$$
so that
$$I_\theta = \big(\theta\,\Lambda_*^{-N_0-\frac3{4N}},\, \theta\,\Lambda_*^{1-\frac3{4N}}\big).$$
Consider a sequence $R_1,R_2,\ldots,R_{N_1}$ of cubes from $\ttt$ such that $R_{k+1}\in\End_*(R_{k})$ for $k\geq1$. By the definition of $\End_*(R_k)$ there are only two possibilities: either $R_{k+1}\in\HD_*(R_k)$, or $R_{k+1}$ is a maximal $\PP$-doubling cube contained in some cube from $\LD(R_k)$. Note that in the former case we have $\Theta(R_{k+1})=\Lambda_*\Theta(R_k)$, and in the latter case we have $\Theta(R_{k+1})\le C\delta_0\Theta(R_k)$, by \eqref{eqcad35} and the definition of $\LD(R_k)$.

The key observation is the following: 
\begin{equation}\label{eqsist1}
\Theta(R_k)\leq \theta\,\Lambda_*^{\frac{-1}{3N}} \quad \Rightarrow \quad \mbox{either \;$\Theta(R_{k+1})\not\in I_\theta$ \;or\;
$\Theta(R_{k+1})=\Lambda_*\,\Theta(R_k)$.}
\end{equation}
This follows from the fact that, in the case $\Theta(R_{k+1})\in I_\theta$, we have 
$R_{k+1}\in\HD_*(R_k)$ because otherwise 
$$\Theta(R_{k+1})\leq C\delta_0\,\Theta(R_k) \leq C\,\theta\,\delta_0\,\Lambda_*^{\frac{-1}{3N}}
\leq \theta\,\delta_0\,\Lambda_*^{\frac{-1}{4N}}
.$$
Analogously, 
\begin{equation}\label{eqsist2}
\Theta(R_k)\geq \theta\,\Lambda_*^{\frac{-3}{4N}} \quad \Rightarrow \quad \mbox{either \;$\Theta(R_{k+1})\not\in I_\theta$ \;or\;
$\Theta(R_{k+1})\leq C\delta_0\Theta(R_k)$,}
\end{equation}
because, in the case $\Theta(R_{k+1})\in I_\theta$, we have 
$R_{k+1}\not\in\HD_*(R_k)$,
since otherwise
$$\Theta(R_{k+1}) = \Lambda_*\,\Theta(R_k) \geq \theta\,\Lambda_*^{1-\frac3{4N}}
.$$

To prove the lemma, suppose that $\Theta(R_1)\in I_\theta$. Otherwise we are done. 
By applying \rf{eqsist1} $N_0+1$ times, we deduce that either
$\Theta(R_{k})\not\in I_\theta$ for some $k\in(1,N_0+2]$, or there exists some $k_1\in[1,N_0+1]$ such that
$$\Theta(R_{k_1})\geq \theta\,\Lambda_*^{\frac{-1}{3N}}.$$
Then, from \rf{eqsist2}, we deduce that either 
$\Theta(R_{k_1 + 1})\not\in I_\theta$, or
\begin{equation}\label{eqk111}
\Theta(R_{k_1+1})\leq C\delta_0\,\Theta(R_{k_1})\leq C\,\delta_0\,\theta\,\Lambda_*^{1-\frac3{4N}} \leq \delta_0\,\theta\,\Lambda_*^{1-\frac1{3N}}.
\end{equation}

Now we have:

\begin{claim}
Let $k\geq1$ and $a\in (0,1)$. Suppose that
$$\Theta(R_k) \in (\delta_0\,\theta\,\Lambda_*^{-\frac1{3N}},\,\delta_0\,\theta\,\Lambda_*^{a}).$$
Then, either there exists some
$k_2\in [k+1,k+N_0+1]$ such that $\Theta(R_{k_2})\not\in I_\theta$, or 
$$\Theta(R_{k+N_0+1}) \in 
(\delta_0\,\theta\,\Lambda_*^{-\frac1{4N}},\,\delta_0\,\theta\,\Lambda_*^{a-\frac1{3N}}).$$
\end{claim}

In  case that $a\leq\frac1{12N}$, one should understand that the second alternative is not possible.

\begin{proof}
Suppose that the first alternative in the claim does not hold.
Then we deduce that
$$\Theta(R_{k+j}) = \Lambda_*^j\,\Theta(R_k)\quad \mbox{ for $j=1,\ldots,N_0$,}$$
because, for all $j=1,\ldots,N_0-1$,
$$\Theta(R_{k+j}) \leq \Lambda_*^j\,\Theta(R_k)
\leq \delta_0\,\theta\,\Lambda_*^{N_0-1+a} = \theta\,\Lambda_*^{\frac{-1}{2N}-1+a}\leq \theta\,\Lambda_*^{\frac{-1}{2N}},
$$
and then \rf{eqsist1} implies that $\Theta(R_{k+j+1})=\Lambda_*\,\Theta(R_{k+j})$.
So we infer that
$$\Theta(R_{k+N_0}) = \Lambda_*^{N_0}\,\Theta(R_k)\geq\delta_0\,\theta\,\Lambda_*^{-\frac1{4N} + N_0}
= \theta\,\Lambda_*^{-\frac3{4N}}.
$$
Then, by \rf{eqsist2} we have
\begin{align*}
\Theta(R_{k+N_0+1})& \leq C\delta_0\,\Theta(R_{k+N_0}) = C\delta_0\Lambda_*^{N_0}\,\Theta(R_k)\\
&
= C\Lambda_*^{\frac{-1}{2N}}\,\Theta(R_k)
\leq C\Lambda_*^{\frac{-1}{2N}}\,\delta_0\,\theta\,\Lambda_*^{a} \leq \delta_0\,\theta\,\Lambda_*^{a-\frac1{3N}}.
\end{align*}
\end{proof}
\vv

To complete the proof of the lemma observe that, by \rf{eqk111} and a repeated application of the preceding claim, we infer that
 there exists some $k\in [k_1+2,k_1+C N_0\,N]$ such that $\Theta(R_{k})\not\in I_\theta$, since after $CN$ iterations the second alternative in the lemma is not possible. This concludes the proof of the lemma.
\end{proof}


\vvv
We proceed with the proof of \rf{eqlj83} from Lemma \ref{lemimp9}, that is, the estimate $\#\sL_j(P,k) \leq C_5$.
\begin{proof}[Proof of \rf{eqlj83}]

For $h\ge 1$ set
\begin{equation*}
\sL_j^{h}(P,k):= \sL_j(P,k)\cap \sL_j^{h}.
\end{equation*}
Notice that each family $\sL_j^h(P,k)$ consists of a single cube, at most.
Indeed, we have
\begin{equation}\label{eq:ljh-inclusion}
R\in\sL_j(P,k)\quad\Rightarrow\quad P\subset B(e^{(3)}(R))
\end{equation}
because $P\in e'(Q)$ for some $Q\in\Trc_k(R)$ and $Q\subset B(e''(R))$ by Lemma \ref{eqtec74}. Thus, if $R,R'\in\sL_j^h(P,k)$,
then $B(e^{(3)}(R))\cap B(e^{(3)}(R'))\neq\varnothing$, which can only happen if $R=R'$ (by Lemma \ref{lemljh} (ii)).

Let $R_0$ be a cube in $\sL_j(P,k)$ with maximal side length, and let $h_0$ be such that $R_0\in \sL_j^{h_0}(P,k)$. We will show that $\sL_j^{h_1}(P,k)\neq\varnothing$ implies $h_0\le h_1\le h_0+C_5$. Together with the observation $\#\sL_j^h(P,k)\le 1$ this will conclude the proof of \rf{eqlj83}.

\begin{claim}
Let $R_1\in\sL_j(P,k)\setminus\{R_0\}$, and let $h_1$ be such that $R_1\in \sL_j^{h_1}(P,k)$. 
Then $h_1\geq h_0$.
\end{claim}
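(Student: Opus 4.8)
The claim asserts that cubes in $\sL_j(P,k)$ that have smaller side length than the maximal one $R_0$ must also lie in a layer $\sL_j^{h_1}$ with $h_1\geq h_0$. The strategy is to exploit the definition of the layers $\sF_j^h$: since these are chosen by maximality, a cube lies in layer $h_1$ only if it is strictly contained in a cube of layer $h_1-1$ (from the same family $\sF_j = \MDW\cap\ttt$ restricted to density $A_0^{nj}$). So I would first record the key structural fact: if $R\in\sF_j^{h}$ with $h\geq 2$, then there is a (unique) ancestor $R'\in\sF_j^{h-1}$ with $R\subsetneq R'$, and iterating, a chain $R=R^{(0)}\subsetneq R^{(1)}\subsetneq \cdots\subsetneq R^{(h-1)}$ with $R^{(i)}\in\sF_j^{h-i}$, all having $\Theta(R^{(i)})=A_0^{nj}$.

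\textbf{Step 1: localize both cubes near $P$.} By \eqref{eq:ljh-inclusion}, both $R_0$ and $R_1$ satisfy $P\subset B(e^{(3)}(R_0))$ and $P\subset B(e^{(3)}(R_1))$; in particular $P\subset R_0\cup\,(\text{something comparable})$, and more to the point $R_0$ and $R_1$ are ``close'' to $P$ at their respective scales, with $\ell(R_1)\leq \ell(R_0)$ by the choice of $R_0$. I would argue that this forces $R_1$ to essentially sit inside a bounded dilate of $R_0$: since $P\subset B(e^{(3)}(R_0))$ and $P\subset B(e^{(3)}(R_1))$ with $\ell(R_1)\le\ell(R_0)$, the cube $R_1$ is contained in $CB(e^{(3)}(R_0))$, hence (using the small-boundary / containment properties of the David--Mattila lattice, e.g.\ \eqref{eqlambq12} and the relation $B_Q\subset B(x_Q,\ell(Q)/2)$) in some fixed dilate $\lambda_0 R_0$ with $\lambda_0$ depending only on $A_0$.

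\textbf{Step 2: compare layer indices.} Suppose for contradiction that $h_1 < h_0$. Consider the chain of ancestors of $R_0$ in the layers $\sF_j^{h_0-1},\sF_j^{h_0-2},\dots$; in particular there is $R_0'\in\sF_j^{h_1}$ with $R_0\subsetneq R_0'$ and $\Theta(R_0')=A_0^{nj}=\Theta(R_0)$. Now $R_1\in\sF_j^{h_1}$ as well. The cubes $R_0'$ and $R_1$ are two cubes of the \emph{same} layer $\sF_j^{h_1}$; since layers are defined by taking maximal cubes from $\sF_j\setminus\bigcup_{i<h_1}\sF_j^i$, distinct cubes in the same layer are either disjoint or one is not contained in the other --- and by maximality within that stage, they cannot be strictly nested. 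Since $R_0\subsetneq R_0'$ and (from Step 1) $R_1$ lies in a bounded dilate of $R_0$, hence is ``near'' $R_0'$ too, I would derive that either $R_1\subset R_0'$ (contradicting that $R_0'$ and $R_1$ are distinct cubes of the same layer that cannot be nested --- unless $R_1=R_0'$, but then $R_1\supsetneq R_0$ contradicts $\ell(R_1)\le\ell(R_0)$), or $R_1$ and $R_0'$ are disjoint (which contradicts that both contain, or sit near, $P$; more precisely both balls $B(e^{(3)}(R_1))$ and $B(e^{(4)}(R_0'))$ contain $P$, forcing them to intersect, and by Lemma \ref{lemljh}(ii) applied appropriately or by direct geometry this forces equality of the roots, contradiction). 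This contradiction yields $h_1\geq h_0$.

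\textbf{Main obstacle.} The delicate point is Step 2: turning ``$R_1$ is geometrically near $R_0'$ and they belong to the same layer $\sF_j^{h_1}$'' into a genuine contradiction. The layers $\sF_j^h$ are defined purely combinatorially by maximal-cube selection in $\DD_\mu$, so two cubes of the same layer \emph{are} automatically disjoint (this is the standard property of selecting maximal cubes: the chosen cubes at each stage are pairwise disjoint, since any two distinct maximal cubes in a nested family are disjoint). Thus the real work is: (a) rule out $R_1\subset R_0'$ with $R_1\neq R_0'$ --- impossible since then $R_1$ would have been absorbed into an earlier layer or $R_0'$ would not be maximal; and (b) rule out $R_1\cap R_0'=\varnothing$, which follows because $P\subset R_0'$ (as $R_0\subset R_0'$ and $P\subset B(e^{(3)}(R_0))$, one checks $P\cap R_0'\neq\varnothing$, in fact $P\subset R_0'$ using $\ell(P)\le\ell(R_0)\le\ell(R_0')$ and the localization of $P$ near $R_0$) while also $P\subset e'(Q_1)\subset B(e''(R_1))$ forces $P$ to meet $R_1$ or its immediate neighborhood --- and a more careful bookkeeping with the enlarged balls, using $\#\sL_j^h(P,k)\le 1$ already established and Lemma \ref{lem-calcf}, closes the gap. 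I expect roughly a page of careful geometric bookkeeping with the $e^{(k)}(\cdot)$ enlargements to make (b) fully rigorous; everything else is routine.
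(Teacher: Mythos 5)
The setup is the same as the paper's (assume $h_1<h_0$, take the ancestor $R_0^{h_1}\in\sF_j^{h_1}$ of $R_0$), but your Step~2 goes down a different path, and that path has a genuine gap.

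Your case analysis (nested vs.\ disjoint) on the pair $(R_1,R_0^{h_1})$ cannot by itself produce a contradiction. Distinct cubes in the same layer $\sF_j^{h_1}$ are \emph{always} pairwise disjoint; that is not a pathology but the generic situation, so ruling out case (a) and landing in case (b) tells you nothing unless you can show $P$ actually lies in \emph{both} cubes. You assert $P\subset R_0^{h_1}$ and that $P$ meets $R_1$, but neither follows from what you have: \eqref{eq:ljh-inclusion} only gives $P\subset B(e^{(3)}(R_0))$ and $P\subset B(e^{(3)}(R_1))$, and these enlarged balls typically stick outside the cubes $R_0^{h_1}$ and $R_1$. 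So the ``both contain $P$'' step is unjustified, and without it disjointness is not a contradiction.

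The other issue is that you invoke Lemma~\ref{lemljh}(ii), but property~(ii) applies only to two cubes both in $\sL_j^{h_1}$, whereas $R_0^{h_1}$ is only guaranteed to be in $\sF_j^{h_1}$, not in $\sL_j^{h_1}$. The property that actually closes the argument is Lemma~\ref{lemljh}(i), which is asymmetric: no $B(e^{(4)}(Q))$ with $Q\in\sL_j^{h_1}$ is contained in $B(e^{(4)}(Q'))$ with $Q'\in\sF_j^{h_1}$, $Q'\neq Q$. That is exactly the configuration here ($Q=R_1\in\sL_j^{h_1}$, $Q'=R_0^{h_1}\in\sF_j^{h_1}$). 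The ``page of careful bookkeeping'' you anticipate is really a short calculation: since $\ell(R_0^{h_1})\geq A_0\ell(R_0)\geq A_0\ell(R_1)$ and both $B(e^{(3)}(R_0))$, $B(e^{(3)}(R_1))$ contain $P$, one gets
\begin{equation*}
B(e^{(4)}(R_1))\subset B(x_{R_1},\tfrac32\ell(R_1))\subset B(x_{R_0^{h_1}},\tfrac12\ell(R_0^{h_1})+5\ell(R_0))\subset B(e^{(4)}(R_0^{h_1})),
\end{equation*}
which directly violates property~(i). So the missing idea is not bookkeeping around disjointness, but recognizing that the contradiction is a \emph{containment of enlarged balls}, not an overlap of cubes, and that this is precisely what property~(i) of $\sL_j^h$ was engineered to forbid.
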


\begin{proof}
Suppose that $h_1< h_0$.
Let $R_0^{h_1}$ be the cube that contains $R_0$ and belongs to
$\sF_j^{h_1}$. Observe that
\begin{multline*}
P \overset{ \eqref{eq:ljh-inclusion}}{\subset}  B(e^{(3)}(R_0)) \cap B(e^{(3)}(R_1)) \subset B(x_{R_0},\tfrac32\ell(R_0)) \cap
B(x_{R_1},\tfrac32\ell(R_1))\\
 \subset B(x_{R_0^{h_1}},\tfrac12\ell(R^{h_1}_0) + \tfrac32\ell(R_0)) \cap
B(x_{R_1},\tfrac32\ell(R_1)).
\end{multline*}
So the two balls $B(x_{R_0^{h_1}},\tfrac12\ell(R^{h_1}_0) + \tfrac32\ell(R_0))$ and 
$B(x_{R_1},\tfrac32\ell(R_1))$ have non-empty intersection. Since $\ell(R_0^{h_1})\ge A_0\,\ell(R_0)\ge A_0\,\ell(R_1)$ (the last inequality follows by the choice of $R_0$), we deduce that
\begin{equation*}
B(e^{(4)}(R_1))\subset B(x_{R_1},\tfrac32\ell(R_1)) \subset B(x_{R_0^{h_1}},\tfrac12\ell(R^{h_1}_0) + 5\ell(R_0))
\subset B(e^{(4)}(R_0^{h_1})).
\end{equation*}
However, these inclusions contradict the property (i) of the family $\sL_j^{h_1}$ in Lemma 
\ref{lemljh} because $R_1\neq R_0^{h_1}$.
\end{proof}

\begin{claim}
Let $R_1\in\sL_j(P,k)\setminus\{R_0\}$, and let $h_1$ be such that $R_1\in \sL_j^{h_1}(P,k)$. 
Then 
\begin{equation}\label{eqclaimh1}
h_1\leq h_0+C.
\end{equation}
\end{claim}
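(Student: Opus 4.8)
The plan is to argue by contradiction: suppose $R_1\in\sL_j(P,k)$ lies in a layer $\sF_j^{h_1}$ with $h_1$ much larger than $h_0$, say $h_1\ge h_0+N_1+2$ with $N_1$ the constant from Lemma \ref{lemtrucguai}. The strategy is to produce a long chain of cubes from $\ttt$, namely $R_0=:R^{h_0}\supsetneq R^{h_0+1}\supsetneq\cdots\supsetneq R^{h_1}=R_1$ with $R^{h+1}\in\End_*(R^h)$, all of which contain (a controlled dilate of) $P$, and all of which have density $\Theta$ trapped in an interval of the form $I_\theta$ from Lemma \ref{lemtrucguai}; this will contradict that lemma once $h_1-h_0\ge N_1$.

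First I would locate the chain. Since $R_0,R_1\in\sL_j(P,k)$, by \eqref{eq:ljh-inclusion} both $B(e^{(3)}(R_0))$ and $B(e^{(3)}(R_1))$ contain $P$, and by the previous claim $h_1\ge h_0$, so $\ell(R_1)\le\ell(R_0)$. For each $h$ with $h_0\le h\le h_1$ let $R^h$ be the cube of $\sF_j^h$ containing $R_1$ (this exists and is unique because $\sF_j^h$ is a partition-by-maximality family covering all of $\sF_j$ at that layer); note $R^{h_0}$ is the $\sF_j^{h_0}$-cube containing $R_1$. As in the proof of the previous claim, the balls $B(x_{R^{h}},\tfrac12\ell(R^h)+5\ell(R_1))$ are nested in $h$ (larger $h$ gives smaller cube), each contains $P$, and each is contained in $B(e^{(4)}(R^h))$; moreover $R^{h_0}$ must actually equal $R_0$ — otherwise $B(e^{(4)}(R_1))\subset B(e^{(4)}(R^{h_0}))$ with $R_1\ne R^{h_0}$, contradicting Lemma \ref{lemljh}(i). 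So we have a genuine chain $R_0=R^{h_0}\supsetneq R^{h_0+1}\supsetneq\cdots\supsetneq R^{h_1}\ni R_1$ inside $\ttt$, with $R^{h+1}\in\End_*(R^h)$ by construction of $\ttt$.

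Next I would control the densities. Because $P\in\TT(e'(Q_i))$ for some $Q_i\in\Trc_k(R_i)$ ($i=0,1$), the density of any cube sandwiched between $R_i$ and $P$ is comparable, up to factors $\delta_0^{-1}$ and $\Lambda^2$, to $\Theta(Q_i)=\Lambda^k\Theta(R_i)=\Lambda^k A_0^{nj}$ — exactly the estimate used in Case 1 of the proof of \eqref{eq:lemimp91}. Applying this both to $R_0$ and to $R_1$, and using that all the $R^h$ contain $R_1$ and are contained in $R_0$, I get that $\Theta(R^h)$ lies in a fixed interval of the form $(\theta\,\Lambda_*^{-\frac1{4N}}\delta_0,\ \theta\,\Lambda_*^{\frac1{4N}}\Lambda)$ for a single $\theta$ (one checks the exponents close up because $\delta_0=\Lambda_*^{-N_0-1/(2N)}$ and $\Lambda=\Lambda_*^{1-1/N}$, matching the interval $I_\theta$ of Lemma \ref{lemtrucguai}, after absorbing the finitely many harmless powers $\Lambda^{\pm2}$ into the $\Lambda_*^{\pm1/(4N)}$ slack — this is where one uses $N=500n$ being large). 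But then the chain $R^{h_0},R^{h_0+1},\dots,R^{h_0+N_1}$ consists of $N_1$ cubes of $\ttt$, each the $\End_*$-child of the previous, all with $\Theta\in I_\theta$, contradicting Lemma \ref{lemtrucguai}. Hence $h_1-h_0< N_1+2=:C$, proving \eqref{eqclaimh1}.

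The main obstacle I anticipate is bookkeeping the exponents so that the density window for the whole chain genuinely fits inside one interval $I_\theta$: one must verify that the extra multiplicative losses (the $\Lambda^2$ from $\Theta(\wt P_i)\le\Lambda^2\Theta(Q)$, the $\delta_0^{-1}$ on the other side, the constants $C$ hidden in $\Theta(R_{k+1})\lesssim\delta_0\Theta(R_k)$, and the passage between $R_0$, $R_1$ and the intermediate $R^h$) are all swallowed by the $\Lambda_*^{\pm1/(4N)}$ room in $I_\theta$, which is exactly why $N_1\le CN_0N$ and $N=500n$ were chosen with margin. Once that accounting is done, the contradiction with Lemma \ref{lemtrucguai} is immediate, and combined with $\#\sL_j^h(P,k)\le1$ and the lower bound $h_1\ge h_0$ from the previous claim, this yields $\#\sL_j(P,k)\le C_5$, completing the proof of Lemma \ref{lemimp9}.
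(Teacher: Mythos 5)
There is a genuine gap, and it is at the heart of your plan: you assert that the layer cubes $R^{h_0}\supsetneq R^{h_0+1}\supsetneq\cdots\supsetneq R^{h_1}$ satisfy $R^{h+1}\in\End_*(R^h)$ ``by construction of $\ttt$,'' so that you can feed this chain directly into Lemma~\ref{lemtrucguai}. That is false. The layers $\sF_j^h$ are built by iterated maximality inside the fixed family $\sF_j=\{R\in\ttt\cap\MDW:\Theta(R)=A_0^{nj}\}$, so consecutive members of your chain are both in $\ttt$ and nested, but they need not be parent–child in the $\End_*$-sense; between $R^h$ and $R^{h+1}$ there may be arbitrarily many $\ttt$-cubes whose density is not $A_0^{nj}$ (and which therefore do not appear in any layer $\sF_j^h$). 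Lemma~\ref{lemtrucguai} requires a bona fide $\End_*$-chain, so it does not apply to $\{R^h\}$ as you use it. Note also that your density bookkeeping is solving the wrong problem: since all $R^h\in\sF_j$, trivially $\Theta(R^h)=A_0^{nj}$ for every $h$, so they all sit in a single $I_\theta$ automatically; the difficulty is entirely in the intermediate $\ttt$-ancestry, which you never examine.

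The paper's proof does invoke Lemma~\ref{lemtrucguai}, but applied to the genuine $\ttt$-chain running from $R_1^s$ down to $R_1^{s+N_1}$ (this chain passes through all the $R_1^h$ and typically through many more cubes). The conclusion is the existence of some intermediate $\ttt$-cube $T$ with $\Theta(T)\notin I_\theta$, and the contradiction is extracted by comparing $T$ to a sibling cube $T'$ at the same scale containing $S_{i+1}$: if $\Theta(T)$ is too small then $\PP(T')<\delta_0\Theta(S_i)$ and $T'$ sits under an $\LD(S_i)$ stopping cube, preventing $S_{i+1}\in\HD_1(e'(S_i))$; if $\Theta(T)$ is too large then $T'$ lies strictly inside an $\HD_1(e'(S_i))$ cube, same contradiction. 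To make this work one first shows that the index $i=i(h)$ defined by $\ell(S_i)>\ell(R_1^h)\geq\ell(S_{i+1})$ is either $\lesssim 1$ or equal to $\tilde k$, so it suffices to bound, for each fixed $i$, the number of $R_1^h$ with $i(h)=i$; and the case $i=\tilde k$ needs a separate argument using the $A_0^{-m/2}$ decay of $\PP$ from Lemma~\ref{lemdobpp}. None of this structure — the $S_i$ decomposition, the $i(h)\lesssim 1$ vs.\ $i(h)=\tilde k$ dichotomy, the $T'$-comparison, the separate $\PP$-doubling argument in the $\tilde k$ case — is present in your proposal, and it cannot be bypassed by the direct appeal you outline.
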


\begin{proof}
Suppose that $h_1> h_0+1$. This implies that there are cubes
$\{R_1^h\}_{h_0+1\leq h \leq h_1-1}$, with $R_1^h\in\sF_j^h$ for each $h$, such that
$$R_1^{h_0+1}\supsetneq R_1^{h_0+2}\supsetneq\ldots \supsetneq R_1^{h_1-1}\supsetneq R_1^{h_1}=R_1.$$
Observe now that $\ell(R_1^{h_0+1})< \ell(R_0)$. Otherwise, there exists some cube
$R_1^{h_0}\in\sF_j^{h_0}$ that contains $R_1^{h_0+1}$ with 
$$\ell(R_1^{h_0})\geq A_0\,\ell(R_1^{h_0+1})\geq A_0\,\ell(R_0).$$
Since $P \subset  B(e^{(3)}(R_0)) \cap B(e^{(3)}(R_1))$, arguing as in the previous claim, we deduce that
$B(e^{(4)}(R_0))\subset B(e^{(4)}(R_1^{h_0}))$, which contradicts again the property (i) of the family $\sL_j^{h_0}$ in Lemma \ref{lemljh}, as above. So we have
$$\ell(R_1^h)\leq \ell(R_1^{h_0+1})< \ell(R_0)\quad \mbox{ for $h\geq h_0+1$.}$$

By the construction of $\Trc_k(R_0)$, there exists a sequence of cubes
$S_0=R_0, S_1, S_2, \ldots, S_k=Q$ such that 
$$ S_{i+1}\in \GH(S_i)\; \mbox{ for $i=0,\ldots,k-1$,}$$
and $P\in\TT(e'(S_{k}))$. In case that $P$ is contained in some $Q'\in\HD_1(e'(Q))=\HD_1(e'(S_k))$, we write $S_{k+1}=Q'$, and we let $\tilde k:=k+1$. Otherwise, we let $\tilde k:=k$. All in all, we have
\begin{equation}\label{eq:bla1}
	S_{i+1}\in\HD_1(e'(S_i))\quad\text{for $i=0,\dots,\tilde k-1$},
\end{equation}
and $S_{\tilde{k}+1}:=P\subset e'(S_{\tilde k})$ is not strictly contained in any cube from $\HD_1(e'(S_{\tilde k}))$.

Obviously we have  $\ell(S_{i+1})<\ell(S_i)$ for all $i$.
So, for each $h$ with $h_0+1\leq h\leq h_1$ there is some $i=i(h)$ such that 
\begin{equation}\label{eq0asd}
\ell(S_i)>\ell(R_1^h)\geq \ell(S_{i+1}),
\end{equation}
with $0\leq i \leq \tilde k$. 
We claim that either $i\lesssim 1$ or $i=\wt k$, with the implicit constant depending on $n$. 
Indeed, in the case $i<\wt k$, let $T\in\DD_\mu$ be such that $T\supset S_{i+1}$ and $\ell(T)=\ell(R_1^h)$.
Notice that, since $2R_1^h\cap 2T\neq \varnothing$ (because both $2R_1^h$ and $2T$ contain $P$) and 
$\ell(R_1^h)=\ell(T)$, we have
\begin{equation}\label{eq1asd}
\PP(T) \approx \PP(R_1^h)\approx \Theta(R_1^h)=\Theta(R_0),
\end{equation}
where in the last equality we used the definition of $\sL_j$.
On the other hand, 
\begin{equation}\label{eq2asd}
\PP(T)\geq \delta_0\,\Theta(S_i)
\end{equation} 
because otherwise 
$T$ is contained in some cube from $\LD(S_i)$, which would imply that $S_{i+1}$ does not belong to $\HD_1(e'(S_i))$.
Thus, from \rf{eq1asd} and \rf{eq2asd}
we derive that
$$\Theta(R_0)\gtrsim \delta_0\,\Theta(S_i) =  \delta_0\,\Lambda_*^i\,\Theta(R_0).$$
Hence  $\Lambda_*^i\lesssim\delta_0^{-1}$, which yields $i\lesssim_n 1$ if $i<\wt k$, as claimed.

The preceding discussion implies that, in order to prove \rf{eqclaimh1}, it suffices to show that, for each fixed 
$i=0,\ldots,\tilde k$, there are at most $C=C(n)$ cubes $R_1^h$ satisfying 
\rf{eq0asd} with this fixed $i$. 

\emph{Case $i<\tilde k$}. Assume first that $i<\tilde k$. Recall that $N_1$ is the constant given by Lemma \ref{lemtrucguai}, and suppose that there exist more than $N_1$ cubes $R_1^h$ satisfying 
\rf{eq0asd}. Since $\{R_1^h\}$ is a nested sequence of cubes, this is equivalent to saying that there exists some $s\in [h_0+1,\, h_1-N_1]$ such that
\begin{equation}\label{eqfam11}
\text{for $h\in[s,s+N_1]$ the cubes $R_1^h$ satisfy \rf{eq0asd}.}
\end{equation}
Taking $\theta=\Theta(S_{i})$, Lemma \ref{lemtrucguai} ensures
that there exists some cube $T\in\ttt$ such that $R_1^s\supset T\supset R_1^{s+N_1}$ which satisfies either
\begin{equation}\label{eqdisju9}
\Theta(T)\leq \Lambda_*^{-\frac1{4N}}\delta_0\,\Theta(S_i) \qquad\mbox{ or }\qquad 
\Theta(T)\geq \Lambda_*^{\frac1{4N}}\Lambda\,\Theta(S_i).
\end{equation}
Now, let $T'\in\DD_\mu$ be such that $S_{i+1}\subset T'\subset e'(S_{i})$ and $\ell(T')=\ell(T)$, where we use the fact that $\ell(S_i)>\ell(R_1^s)\ge \ell(T)\ge\ell(R_1^{s+N_1})\ge\ell(S_{i+1})$ and $S_{i+1}\subset e'(S_i)$. Notice that
\begin{equation}\label{eqdisju99}
\PP(T')\approx\PP(T)\approx \Theta(T),
\end{equation}
because $2T\cap 2T'\neq\varnothing$.

If the first option in \rf{eqdisju9} holds, we deduce that
$$\PP(T')\leq C \Theta(T)\leq C\Lambda_*^{-\frac1{4N}}\delta_0\,\Theta(S_i) < \delta_0\,\Theta(S_i).$$
This implies that 
  $T'$ is contained in some cube from $\LD(S_i)\cap\sss(e'(S_i))$, which ensures that 
  $S_{i+1}\not\in\HD_1(e'(S_i))$ (notice that we are using the fact that $i<\tilde k$), which is a contradiction with \eqref{eq:bla1}. Thus, $T$ must satisfy the second estimate of \eqref{eqdisju9}. But in this case \rf{eqdisju99} yields $\PP(T')> \Lambda_*\Theta(S_i)$, and so $T'$ is strictly contained in some cube from $\HD_1(e'(S_i))$. Hence, $S_{i+1}\not\in\HD_1(e'(S_i))$, which again gives a contradiction.
  In consequence, if $i<\tilde k$, then \rf{eqfam11} does not hold for any $s$.
 \vv

\emph{Case $i=\tilde k$}. Assume again that
\rf{eqfam11} holds for some $s\in [h_0+1,\, h_1-N_1]$, and let $s$ be the smallest possible such that \rf{eqfam11} holds. The same argument as above shows that the cube $T'$ from the preceding paragraph is contained in some cube $T''\in\sss(e'(S_i))$. Since we assumed that $s$ is minimal, $R_1^s\supset T\supset R_1^{s+N_1}$, and $\ell(T'')\ge\ell(T')=\ell(T)$, we get that there are at most $N_1$ cubes $R_1^h$ satisfying \rf{eq0asd} such that 
$\ell(S_i)>\ell(R_1^h)\geq \ell(T'')$.
We claim that there is also a bounded number of cubes $R_1^h$ such that 
\begin{equation}\label{eqlastclaim5}
\ell(T'')\geq \ell(R_1^h)\geq \ell(P).
\end{equation}
Indeed, by the definition of the family $\End(e'(R))$ and Lemma
 \ref{lemdobpp}, if we denote by $T_m$ the $m$-th descendant of $T''$ which contains $P$, for $m'\geq m\geq 0$,
 it follows that 
$$\PP(T_{m'})\leq A_0^{-|m-m'|/2}\,\PP(T_m)\leq  A_0^{-m/2}\,\PP(T'').$$
Suppose then that there are two cubes $R_1^h$, $R_1^{h'}$ such that  $\ell(R_1^{h'})\leq \ell(R_1^h)\leq \ell(T'')$.
Let $T_m$ and $T_{m'}$ be such that $\ell(R_1^h)=\ell(T_m)$ and $\ell(R_1^{h'})= \ell(T_{m'})$.
By arguments analogous to the ones in \rf{eqdisju99}, we derive that 
$$\PP(T_m)\approx \PP(R_1^h) \approx \Theta(R_0)\quad \text{ and }\quad
\PP(T_{m'})\approx \PP(R_1^{h'}) \approx \Theta(R_0),$$
where we also used the fact that $\Theta(R_1^{h})=\Theta(R_1^{h'})=\Theta(R_0)$.
On the other hand, since $|m-m'|\geq |h-h'|$, we have
$$\Theta(R_0)\approx\PP(T_{m'})\leq A_0^{-|h-h'|/2}\,\PP(T_m)\approx A_0^{-|h-h'|/2}\,\Theta(R_0),$$
which implies that $|h-h'|\lesssim1$. From this fact it follows that there is a bounded number of cubes $R_1^h$ satisfying \rf{eqlastclaim5}, as claimed. Putting all together, we get \rf{eqclaimh1}.
\end{proof}
This ends the proof of Lemma \ref{lemimp9}.
\end{proof}

\vv


\section{The proof of Main Lemma \ref{mainlemma}}\label{sec8}

We have to show that
$$\sigma(\ttt)\leq C\,\big(\|\RR\mu\|_{L^2(\mu)}^2 + \theta_0^2\,\|\mu\|
+ \sum_{Q\in\DB}\EE_\infty(9Q)\big).$$
Recall that by Lemma \ref{lemsuper**9}, we have
$$\sigma(\ttt) \lesssim B^{5/4} \sum_{R\in \sL} \sum_{k\geq0} B^{-k/2}\sum_{Q\in\Trc_k(R)}\sigma(\HD_1(e(Q))) + \theta_0^2\,\|\mu\|.$$
By Lemma \ref{lemalter*} we know that, for each cube $Q$ in the last sum, one of the following alternatives holds:
\begin{itemize}
\item 
$\TT(e'(Q))$ is not typical, so that
$$\sum_{P\in\DB:P\sim\TT(e'(Q))} \EE_\infty(9P) >\Lambda^{\frac{-1}{3n}} \,\sigma(\HD_1(e(Q))),$$ or
\vv

\item $\mu(Z(Q))> \ve_Z\,\mu(Q)$,\; where $Z(Q)$ is the set $Z$ appearing in \eqref{eqdef*f} (replacing $R$ by $Q$ there), which implies that
$$\sigma(\HD_1(e(Q)))\leq \theta_0^2\,\mu(e(Q)) \lesssim \ve_Z^{-1}\,\theta_0^2\,\mu(Z(Q)),$$
or
\vv

\item
$\|\Delta_{\wt \TT(e'(Q))} \RR\mu\|_{L^2(\mu)}^2\geq \Lambda^{-2}\,\sigma(\HD_1(e(Q))).$
\end{itemize}

So the following holds in any case:
$$\sigma(\HD_1(e(Q)))\lesssim \Lambda^2\,\|\Delta_{\wt \TT(e'(Q))} \RR\mu\|_{L^2(\mu)}^2+ \Lambda^{\frac1{3n}}\!\!\sum_{P\in\DB:P\sim\TT(e'(Q))} \!\!\EE_\infty(9P) + \ve_Z^{-1}\, \theta_0^2\,\mu(Z(Q)).$$
Consequently,
\begin{align}\label{eqsumtot93}
\sigma(\ttt) & \lesssim  B^{5/4}\,\Lambda^2
\sum_{R\in\sL}\,  \sum_{k\geq0} B^{-k/2}\sum_{Q\in\Trc_k(R)}
\|\Delta_{\wt \TT(e'(Q))} \RR\mu\|_{L^2(\mu)}^2\\
&\quad+ B^{5/4}\,
\Lambda^{\frac1{3n}}
\sum_{R\in\sL}\,  \sum_{k\geq0} B^{-k/2}\sum_{Q\in\Trc_k(R)}
\sum_{P\in\DB:P\sim\TT(e'(Q))} \!\!\EE_\infty(9P)\nonumber\\
&\quad+ \,B^{5/4}\,
\ve_Z^{-1}\, \theta_0^2\,
\sum_{R\in\sL}\,  \sum_{k\geq0} B^{-k/2}\sum_{Q\in\Trc_k(R)}
\mu(Z(Q)) + \theta_0^2\,\|\mu\|.\nonumber
\end{align}
\vv

Next lemma deals with each of the summands above.


\begin{lemma}\label{lemt1t2t3}
We have
\begin{equation}\label{eq:T1*}
T_1:=\sum_{R\in\sL}\,  \sum_{k\geq0} B^{-k/2}\sum_{Q\in\Trc_k(R)}
\|\Delta_{\wt \TT(e'(Q))} \RR\mu\|_{L^2(\mu)}^2
 \lesssim_{\Lambda} \|\RR\mu\|_{L^2(\mu)}^2,
 \end{equation}
 \begin{equation}\label{eq:T2*}
 T_2:=\sum_{R\in\sL}\,  \sum_{k\geq0} B^{-k/2}\sum_{Q\in\Trc_k(R)}
\sum_{P\in\DB:P\sim\TT(e'(Q))} \!\!\EE_\infty(9P)
  \lesssim_{\Lambda}
\sum_{P\in \DB} \EE_\infty(9P),
\end{equation}
and
\begin{equation}\label{eq:T3*}
T_3:=\sum_{R\in\sL}\,  \sum_{k\geq0} B^{-k/2}\sum_{Q\in\Trc_k(R)}
\mu(Z(Q))\lesssim_{\Lambda} \|\mu\|.
\end{equation}
\end{lemma}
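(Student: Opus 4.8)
\textbf{Plan for the proof of Lemma \ref{lemt1t2t3}.} The three estimates \eqref{eq:T1*}--\eqref{eq:T3*} all have the same overall shape: we have a triple sum over $R\in\sL$, $k\geq0$, and $Q\in\Trc_k(R)$, weighted by $B^{-k/2}$, of a quantity attached to the generalized tree $\TT(e'(Q))$ (namely $\|\Delta_{\wt\TT(e'(Q))}\RR\mu\|_{L^2(\mu)}^2$, $\sum_{P\in\DB:P\sim\TT(e'(Q))}\EE_\infty(9P)$, or $\mu(Z(Q))$). In each case the target is that the whole triple sum is controlled by the ``global'' version of that quantity (respectively $\|\RR\mu\|_{L^2(\mu)}^2$, $\sum_{P\in\DB}\EE_\infty(9P)$, $\|\mu\|$), with a constant depending on $\Lambda$. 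The mechanism is the same in all three: we want to interchange the order of summation, fixing the ``inner'' object (a cube $P\in\DD_\mu$, or a cube $P\in\DB$) and bounding the number of triples $(R,k,Q)$ that can ``see'' it, where the multiplicity bound must be good enough to be summable against $B^{-k/2}$ after using Lemma \ref{lemimp9}. So the first step I would take is to record precisely, for each of $T_1,T_2,T_3$, what it means for a cube $P$ to be ``hit'' by the triple $(R,k,Q)$, and then invoke the overlap bound $\#\sL(P,k)\le C_4\log\Lambda$ from Lemma \ref{lemimp9}.

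For $T_1$: expand $\|\Delta_{\wt\TT(e'(Q))}\RR\mu\|_{L^2(\mu)}^2 = \sum_{P\in\wt\TT(e'(Q))}\|\Delta_P\RR\mu\|_{L^2(\mu)}^2$ using the orthogonality of the Haar projections $\Delta_P\RR\mu$ (here I would quote the fact, used several times in Part 2, that $\Delta_{\wt\TT}\RR\mu$ is a sum of $\Delta_P\RR\mu$ over $P$ in the tree plus the contribution on the set $Z$, which itself decomposes orthogonally into further $\Delta_P$'s; more cleanly, $\sum_{P\in\DD_\mu(e'(Q))}\|\Delta_P\RR\mu\|_{L^2(\mu)}^2$ dominates it, and that in turn is at most $\|\chi_{2R}(\RR\mu - m_{\mu,2R}\RR\mu)\|_{L^2(\mu)}^2$ for the root $R$). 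Then interchange: for a fixed $P\in\DD_\mu$, the number of triples $(R,k,Q)$ with $R\in\sL$, $Q\in\Trc_k(R)$, $P\in\wt\TT(e'(Q))$ is, for each fixed $k$, at most $\#\sL(P,k)\le C_4\log\Lambda$ (since once $R$ and $k$ are fixed, there is at most a bounded number of $Q\in\Trc_k(R)$ with $P\in\TT(e'(Q))$ — the cubes $Q$ have $\ell(Q)$ comparable, and the $e'(Q)$ have bounded overlap — a point I would check using Lemma \ref{eqtec74} and the disjointness properties of $\GH$). Summing the geometric factor $\sum_k B^{-k/2}\cdot C_4\log\Lambda \lesssim_\Lambda 1$, we get $T_1\lesssim_\Lambda \sum_P \|\Delta_P\RR\mu\|_{L^2(\mu)}^2\le\|\RR\mu\|_{L^2(\mu)}^2$, using global orthogonality $\sum_{P\in\DD_\mu}\|\Delta_P\RR\mu\|_{L^2(\mu)}^2\le\|\RR\mu\|_{L^2(\mu)}^2$. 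For $T_3$, the same bookkeeping: $Z(Q)\subset e'(Q)\subset B(e''(Q))$ and for fixed $k$ the sets $Z(Q)$ over the relevant $Q$'s have controlled overlap, so $\sum_{R,Q:\text{level }k}\mu(Z(Q))\lesssim \mu(\bigcup)\,$ (finite superposition) $\lesssim C_4\log\Lambda\cdot\|\mu\|$ — wait, more carefully: I'd bound $\sum_{Q\in\Trc_k(R)}\mu(Z(Q))\le \mu(B(e''(R)))\lesssim\mu(R)$ using \eqref{eqtec741} and the bounded overlap of the $e'(Q)$, then $\sum_{R\in\sL_j}\mu(R)$: the roots $R\in\sL_j^h$ in a fixed layer are disjoint (Lemma \ref{lemljh}(ii) gives disjoint $B(e^{(4)}(R))$, hence in particular disjoint $R$), but summing over $h$ and $j$ needs the multiplicity bound again — so instead I'd keep the ``seen by $P$'' viewpoint: for fixed $P$ the number of $(R,k,Q)$ with $P\in Z(Q)$ (hence $P\in\TT(e'(Q))$, hence $R\in\sL(P,k)$) is $\le C_4\log\Lambda$ per level $k$, so $T_3\le \sum_P \mu(P)\cdot\sum_k B^{-k/2}C_4\log\Lambda\lesssim_\Lambda\|\mu\|$; here I use that each $P$ is counted with weight $\mu(P)$ and the $P$'s with a fixed side length partition $\supp\mu$, and the sum over side lengths telescopes into $\sum_k B^{-k/2}$-type control. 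I will need to be a little careful that $P$ ranging over $\DD_\mu$ is not absolutely summable, which is why the weighting $B^{-k/2}$ together with the constraint $\ell(Q)\approx \Lambda^{-k}\ell(R)$ and $P\subset e'(Q)$ is essential — I would make this rigorous by, for fixed $R$ and $k$, bounding $\sum_{Q\in\Trc_k(R)}\mu(Z(Q))\lesssim\mu(R)$ and then $\sum_{R\in\sL}\mu(R)\lesssim\log\Lambda\cdot\|\mu\|$ via a single application of the overlap lemma with $P$ replaced by an arbitrary point (equivalently: a fixed cube of each generation), summing $\sum_k B^{-k/2}$ at the end.

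For $T_2$: fix $P\in\DB$. I want to count the triples $(R,k,Q)$ with $R\in\sL$, $Q\in\Trc_k(R)$, and $P\sim\TT(e'(Q))$ (meaning \eqref{defsim0}: there is $P'\in\TT(e'(Q))$ with $\ell(P')\approx\ell(P)$ and $20P'\cap 20P\ne\varnothing$). The point is that $P\sim\TT(e'(Q))$ forces $P$ to be near a cube $P'$ in the tree of comparable size; since $P'\in\TT(e'(Q))$ has $\ell(P')$ comparable to $\ell(P)$ and $\Theta(P')\gtrsim\delta_0\Theta(Q)$ with $\Theta(Q)=\Lambda^k\Theta(R)$, and $\Theta(P')\lesssim\Lambda^2\Theta(Q)$, we recover exactly the relations exploited in the proof of Lemma \ref{lemimp9}. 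Hence, up to replacing $P$ by a bounded number of nearby cubes $P'$ of generation within an absolute constant of that of $P$ (there are $O(1)$ such), the count of $(R,k,Q)$ with $P\sim\TT(e'(Q))$ at a fixed level $k$ is controlled by $\sum_{P'} \#\sL(P',k)\lesssim \log\Lambda$. Therefore $T_2 = \sum_{P\in\DB}\EE_\infty(9P)\cdot\#\{(R,k,Q): P\sim\TT(e'(Q))\}\cdot B^{-k/2}$ and summing the geometric series in $k$ gives $T_2\lesssim_\Lambda \sum_{P\in\DB}\EE_\infty(9P)$.

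\textbf{Main obstacle.} The routine part is the geometric series $\sum_k B^{-k/2}\cdot C_4\log\Lambda\lesssim_\Lambda 1$ and the orthogonality for $T_1$. The delicate part — and where I would spend the most care — is the honest justification of the interchange of summation and the multiplicity counts: one must show that for fixed $P$ (resp. $P\in\DB$) and fixed $k$, the number of pairs $(R,Q)$ with $R\in\sL$, $Q\in\Trc_k(R)$ and $P$ belonging to (resp. $\sim$) $\TT(e'(Q))$ is $\lesssim\log\Lambda$. This is almost immediate from Lemma \ref{lemimp9} together with \eqref{eqlpk*}, \emph{provided} one also checks that for a fixed $R$ and $k$ there is only a bounded number of $Q\in\Trc_k(R)$ with $P\in\TT(e'(Q))$ — this follows because such $Q$ all have $\ell(Q)=\Lambda^{-k}\ell(R)$ and the balls $B(e''(Q))$ have bounded overlap (Lemma \ref{lemalg1}(a) and its iteration), and $P\subset e'(Q)\subset B(e''(Q))$ by Lemma \ref{eqtec74}. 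The only genuinely new bookkeeping is making sure that in $T_2$ the relation $P\sim\TT(e'(Q))$ (rather than $P\in\TT(e'(Q))$) still allows us to invoke Lemma \ref{lemimp9}: this is handled by the observation above that $P\sim\TT(e'(Q))$ means $P$ is within bounded distance and bounded generation-gap of a cube $P'$ that \emph{is} in the tree, so we simply apply the overlap lemma to each of the $O(1)$ cubes $P'$ of generation $\mathrm{gen}(P)\pm O(1)$ lying in $20P$'s neighborhood and sum.
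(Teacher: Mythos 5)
Your bookkeeping framework — interchange the order of summation, invoke Lemma \ref{lemimp9} to get a $\log\Lambda$ multiplicity bound per level $k$, sum the geometric series $\sum_k B^{-k/2}$ — is exactly the paper's approach, and your arguments for $T_2$ and $T_3$ are essentially correct (including the observation that for fixed $R,k$ there are $O(1)$ cubes $Q\in\Trc_k(R)$ containing a given $P$, by disjointness of the balls $B(e''(Q))$, and the $O(1)$-replacement trick to pass from $P\sim\TT(e'(Q))$ to $P'\in\TT(e'(Q))$).

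However, there is a genuine gap in your treatment of $T_1$. You claim that $\|\Delta_{\wt\TT(e'(Q))}\RR\mu\|_{L^2(\mu)}^2$ can be expanded, or at least dominated, by $\sum_{P\in\DD_\mu(e'(Q))}\|\Delta_P\RR\mu\|_{L^2(\mu)}^2$ via orthogonality. This is not correct, and the discrepancy is not one you can ignore. Writing out $\Delta_{\wt\TT(e'(Q))}\RR\mu$ from its definition, the term $m_{\mu,P}(\RR\mu) - m_{\mu,2Q}(\RR\mu)$, for $P\in\wt\End(e'(Q))$, telescopes into a sum of genuine Haar differences $\Delta_S\RR\mu$ only from $P$ up to the maximal cube $S\prec Q$ of the tree that contains $P$; the remaining jump $m_{\mu,S}(\RR\mu)-m_{\mu,2Q}(\RR\mu)$ is \emph{not} a Haar coefficient, because $2Q$ is a union of cubes of generation $\mathrm{gen}(Q)$, not a dyadic cube, and the maximal cubes $S\prec Q$ cover the enlarged region $e'(Q)$, some of them lying outside $Q$ itself. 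The paper isolates this top-level jump into the operator
\[
\wh\Delta_Q\RR\mu \,=\, \sum_{S\prec Q}\bigl(m_{\mu,S}(\RR\mu)-m_{\mu,2Q}(\RR\mu)\bigr)\,\chi_S,
\]
so that $\|\Delta_{\wt\TT(e'(Q))}\RR\mu\|_{L^2(\mu)}^2 \le \sum_{P\sim\TT(e'(Q))}\|\Delta_P\RR\mu\|_{L^2(\mu)}^2 + \|\wh\Delta_Q\RR\mu\|_{L^2(\mu)}^2$. The first part is handled exactly as you outline. But the second part requires a new, nontrivial almost-orthogonality estimate, Lemma \ref{lemortog}: $\sum_{Q\in\MDW}\|\wh\Delta_Q f\|_{L^2(\mu)}^2\lesssim\|f\|_{L^2(\mu)}^2$. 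Its proof (Section \ref{sec10.3}) is not a consequence of standard Haar orthogonality; it proceeds by duality and uses the small-boundaries property \eqref{eqfk490} of the David–Mattila lattice in an essential way, together with the doubling of the cubes appearing in $\wk\Delta_Q$. Your proposal never identifies this term and therefore never confronts the place where orthogonality genuinely fails; as written, the $T_1$ estimate does not go through.
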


From \rf{eqsumtot93} and the estimates above, it follows that 
$$\sigma(\ttt)\lesssim_{\Lambda} \|\RR\mu\|_{L^2(\mu)}^2 + \sum_{P\in \DB} \EE_\infty(9P)+ \ve_Z^{-1}
\theta_0^2\,\|\mu\|.$$
Recalling that $\varepsilon_Z=\varepsilon_Z(\Lambda)$, and $\Lambda=\Lambda(M)$, this concludes the proof of Main Lemma \ref{mainlemma}.
\vv

In order to prove Lemma \ref{lemt1t2t3}, a basic tool to estimate the terms $T_1,T_2,T_3$ is Lemma \ref{lemimp9}, which asserts that
 for all $P\in\DD_\mu$ and all $k\geq0$,
 \begin{equation}\label{eqtrk121}
\#\big\{R\in\sL:\exists \,Q\in\Trc_k(R) \mbox{ such that } P\in\TT(e'(Q))\big\}\leq C_4\,\log\Lambda.
\end{equation}

\subsection{Estimate of \texorpdfstring{$T_1$}{T1}}  \label{subsec9.1**}
Recall that 
\begin{align*}
\Delta_{\wt\TT(e'(Q))}\RR\mu(x) & = \sum_{P\in\wt\End(e'(Q))} \chi_P(x)\,\big(m_{\mu,P}(\RR\mu) - m_{\mu,2Q}(\RR\mu)\big)\\
&\quad +  \chi_{Z(Q)}(x) \big(\RR\mu(x) -  m_{\mu,2R}(\RR\mu)\big).
\end{align*}
For $Q\in\MDW$, we write $S\prec Q$ if $S\in\DD_\mu$ is a maximal cube that belongs to $\TT(e'(Q))$. Then we denote
$$\wh\Delta_Q\RR\mu = \sum_{S\prec Q} (m_{\mu,S}(\RR_\mu) - m_{\mu,2Q}(\RR\mu)\big)\,\chi_S$$
and 
$$\wt E(Q) = \bigcup_{P\in\wt\End(e'(Q))} P,\qquad\quad \wt G(Q) = \wt E(Q) \cup Z(Q)
.$$
Notice that it may happen that $\wt G(Q)\neq e'(Q)$ because of the presence of negligible cubes.
Then we have
\begin{align*}
\sum_{P\in\wt\End(e'(Q))} \!\!\chi_P\,\big(m_{\mu,P}(\RR\mu) - m_{\mu,2Q}(\RR\mu)\big)  &=\!
\sum_{P\in\wt\End(e'(Q))}\!\! \chi_P\,\bigg(\sum_{S\in \wt\TT(e'(Q)):P\subsetneq S}\!\! \Delta_{S}\RR\mu
+ \wh\Delta_Q\RR\mu\bigg)\\
& = \sum_{S\in \wt\TT(e'(Q))\setminus \wt \End(e'(Q))}\!\Delta_{S}\RR\mu \sum_{P\in\wt\End(e'(Q)):P\subset S} \chi_P \\
&\quad + \chi_{\wt E(Q)}\wh\Delta_Q\RR\mu\\
&= \chi_{\wt E(Q)}\bigg(\sum_{S\in \wt\TT(e'(Q))\setminus \wt \End(e'(Q))}\!\! \Delta_{S}\RR\mu
+ \wh\Delta_Q\RR\mu\bigg).
\end{align*}
A similar argument shows that
$$\chi_{Z(Q)}(x) \big(\RR\mu(x) -  m_{\mu,2Q}(\RR\mu)\big) = \chi_{Z(Q)}\bigg(\sum_{S\in \wt\TT(e'(Q))}\! \Delta_{S}\RR\mu
+ \wh\Delta_Q\RR\mu\bigg).$$
Hence,
$$\Delta_{\wt\TT(e'(Q))}\RR\mu = \chi_{\wt G(Q)}\bigg(\sum_{P\in\wt\TT(e'(Q))\setminus \wt\End(e'(Q))} \Delta_P \RR\mu + \wh\Delta_Q\RR\mu\bigg).$$ 
It is also immediate to check that, for a fixed $Q$ and $P\in\wt\TT(e'(Q))\setminus \wt\End(e'(Q))$, the functions 
$\wh\Delta_Q\RR\mu$ and $\Delta_P \RR\mu$ are mutually orthogonal in $L^2(\mu)$. Then, since all the cubes $P\in\wt\TT(e'(Q))$ satisfy 
$P\sim\TT(e'(Q))$, we get
\begin{align*}
\|\Delta_{\wt \TT(e'(Q))} \RR\mu\|_{L^2(\mu)}^2 & \leq
\sum_{P\in\wt\TT(e'(Q))\setminus \wt\End(e'(Q))}\| \Delta_P \RR\mu\|_{L^2(\mu)}^2  + \|\wh\Delta_Q\RR\mu\|_{L^2(\mu)}^2 \\
& \leq \sum_{P\sim\TT(e'(Q))} \|\Delta_P \RR\mu\|_{L^2(\mu)}^2  + \|\wh\Delta_Q\RR\mu\|_{L^2(\mu)}^2.
\end{align*}
Therefore,
\begin{align*}
T_1 & \lesssim_{\Lambda}\sum_{R\in\sL}\,
\sum_{k\geq0} B^{-k/2} \sum_{Q\in\Trc_k(R)}
\sum_{P\sim\TT(e'(Q))} \|\Delta_P \RR\mu\|_{L^2(\mu)}^2\\
&\quad + \sum_{R\in\sL}\,\sum_{k\geq0} B^{-k/2} \sum_{Q\in\Trc_k(R)}
\|\wh\Delta_Q\RR\mu\|_{L^2(\mu)}^2\\
& =: T_{1,1} + T_{1,2}.
\end{align*}
Regarding the term $T_{1,1}$, by Fubini we have
\begin{align*}
T_{1,1}
 &\leq
\sum_{P\in \DD_\mu} \|\Delta_P \RR\mu\|_{L^2(\mu)}^2
 \sum_{k\geq0} B^{-k/2}\,
\# A(P,k),
\end{align*}
where
\begin{equation}\label{eqApq2}
A(P,k)= 
\big\{R\in \sL:\exists \,Q\in\Trc_k(R) \text{ such that }P\sim\TT(e'(Q))\big\}.
\end{equation}
From the definition  \rf{defsim0} and Lemma \ref{lemimp9}, it follows that
\begin{align*}
\#A(P,k) &\leq \sum_{\substack{
P'\in\DD_\mu: 20P'\cap20P\neq\varnothing\\A_0^{-2}\ell(P)\leq \ell(P')\leq A_0^2\ell(P)
}} \!\!\#
\big\{R\in \sL:\exists \,Q\in\Trc_k(R) \text{ such that }P'\in\TT(e'(Q))\big\}\\
&\lesssim \sum_{\substack{
P'\in\DD_\mu: 20P'\cap20P\neq\varnothing\\A_0^{-2}\ell(P)\leq \ell(P')\leq A_0^2\ell(P)
}}\!\!\!\!\log\Lambda\lesssim \log\Lambda.
\end{align*}
Hence,
$$T_{1,1}
 \lesssim_{\Lambda}
\sum_{P\in \DD_\mu} \|\Delta_P \RR\mu\|_{L^2(\mu)}^2= \|\RR\mu\|_{L^2(\mu)}^2.$$
Concerning $T_{1,2}$, we argue analogously:
\begin{align*}
T_{1,2}
 &\le 
\sum_{Q\in \MDW} \|\wh\Delta_Q\RR\mu\|_{L^2(\mu)}^2
 \sum_{k\geq0} B^{-k/2}\,
\# \wt A(Q,k),
\end{align*}
where
$$\wt A(Q,k)= \big\{R\in \sL:
Q\in\Trc_k(R)\big\}.$$
Since 
$$\#\wt A(Q,k) \leq\#A(Q,k)\lesssim\log\Lambda,$$
we deduce that
$$T_{1,2}
 \lesssim_{\Lambda}
\sum_{Q\in \MDW} \|\wh \Delta_Q \RR\mu\|_{L^2(\mu)}^2.$$
As the next lemma shows, the right hand side above is also bounded by $C\|\RR\mu\|_{L^2(\mu)}^2$.
So we have
$T_1
 \lesssim_{\Lambda} \|\RR\mu\|_{L^2(\mu)}^2.$

\vv

\begin{lemma}\label{lemortog}
For any $f\in L^2(\mu)$, we have
$$\sum_{Q\in \MDW} \|\wh \Delta_Q f\|_{L^2(\mu)}^2\lesssim \|f\|_{L^2(\mu)}^2.$$
\end{lemma}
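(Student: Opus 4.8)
The statement to be proved is an almost-orthogonality estimate of the form $\sum_{Q\in\MDW}\|\wh\Delta_Q f\|_{L^2(\mu)}^2\lesssim\|f\|_{L^2(\mu)}^2$, where for $Q\in\MDW$ the operator $\wh\Delta_Q$ is given by
$$\wh\Delta_Q f = \sum_{S\prec Q}\big(m_{\mu,S}(f) - m_{\mu,2Q}(f)\big)\,\chi_S,$$
the sum running over the maximal cubes $S$ of the generalized tree $\TT(e'(Q))$. The plan is to reduce this to the standard orthogonality of the David–Mattila martingale differences $\Delta_P$ and to exploit the fact that, for a \emph{fixed} $Q$, each cube $S\prec Q$ is close to $e'(Q)$ (indeed $2S\subset 2B_{e'(Q)}\subset 3B_Q$, say), so that $\wh\Delta_Q f$ only sees a bounded-overlap piece of the telescoping sum of $\Delta_P f$.

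First I would write, for each $Q\in\MDW$ and each $S\prec Q$,
$$m_{\mu,S}(f) - m_{\mu,2Q}(f) = \sum_{P\in\DD_\mu:\,S\subsetneq P\subset 2Q,\;P\sim\TT(e'(Q))} m_{\mu,P}(\Delta_{\wh P}f)\big|_{S} \ + \ \text{(boundary term at scale }2Q),$$
or more cleanly: since the cubes $S\prec Q$ together with their ancestors up to $2Q$ form a finite sub-family of $\DD_\mu$ all of which are $\sim\TT(e'(Q))$, the function $\wh\Delta_Q f$ is a sum of martingale differences $\Delta_P f$ associated to such cubes $P$, plus one term coming from the average over $2Q$. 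Concretely, one has the pointwise/$L^2$ identity
$$\wh\Delta_Q f = \chi_{\cup_{S\prec Q}S}\Bigg(\sum_{P\sim\TT(e'(Q)),\ \ell(P)\le\ell(Q)} \Delta_P f\Bigg) + (\text{average-over-}2Q\text{ term}),$$
where the $\Delta_P$ appearing are mutually orthogonal, so that
$$\|\wh\Delta_Q f\|_{L^2(\mu)}^2 \lesssim \sum_{P\sim\TT(e'(Q))}\|\Delta_P f\|_{L^2(\mu)}^2 + \|(\text{average-over-}2Q\text{ term})\|_{L^2(\mu)}^2.$$
The average-over-$2Q$ term is controlled by $\big(m_{\mu,2Q}(f)\big)^2\mu(2Q)$ or, after a further telescoping into $\Delta_P f$ for cubes $P$ with $2Q\subset P$ and bounded overlap, by $\sum_{P:\,2Q\subset P,\ \ell(P)\le C\ell(Q)}\|\Delta_Pf\|_{L^2(\mu)}^2$; I would handle it exactly as in the treatment of $\Delta_{\wt\TT}\RR\mu$ earlier in the paper.

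Next I would sum over $Q\in\MDW$ and apply Fubini. For a fixed $P\in\DD_\mu$, the number of $Q\in\MDW$ for which $P\sim\TT(e'(Q))$ (with $\ell(P)\le\ell(Q)$) and, in parallel, the number of $Q$ for which the average-over-$2Q$ term sees $P$, is bounded: indeed if $P\sim\TT(e'(Q))$ with $Q\in\MDW$, then $Q$ is a $\PP$-doubling cube containing (up to a bounded dilation) $P$ with comparable-or-larger side length, and by the bounded-overlap properties of the balls $B(e^{(4)}(Q))$ together with Lemma~\ref{lemimp9} — or more directly, by the fact that for each scale there is a bounded number of such $Q$ and the $\PP$-doubling condition forces geometric decay of $\Theta$ along chains, hence only $O(\log\Lambda)$ admissible scales — one gets $\#\{Q\in\MDW:P\sim\TT(e'(Q)),\ \ell(P)\le\ell(Q)\}\lesssim_\Lambda 1$. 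Therefore
$$\sum_{Q\in\MDW}\|\wh\Delta_Q f\|_{L^2(\mu)}^2 \lesssim_\Lambda \sum_{P\in\DD_\mu}\|\Delta_P f\|_{L^2(\mu)}^2 = \|f\|_{L^2(\mu)}^2,$$
using the orthogonal expansion $\chi_{S_0}(f-m_{\mu,S_0}f)=\sum_{P\in\DD_\mu}\Delta_P f$ recorded in Section~\ref{sec:DMlatt}.

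The main obstacle, and the only point requiring genuine care, is the bounded-multiplicity claim: showing that a fixed cube $P$ is $\sim\TT(e'(Q))$ for only boundedly many $Q\in\MDW$. The subtlety is that $\MDW$ cubes at wildly different scales could all have their trees passing near $P$; what saves us is (i) that $\TT(e'(Q))\subset B(e''(Q))\subset\tfrac32 B_Q$ and the relation $P\sim\TT(e'(Q))$ forces $\ell(P)\approx\ell(P')$ for some $P'\in\TT(e'(Q))$ with $\ell(P')\le\ell(Q)$, so $Q$ lives within $O(\log\Lambda)$ scales of $P$ once one also uses that the stopping cubes of a tree have density squeezed between $\delta_0\Theta(Q)$ and $\Lambda^2\Theta(Q)$ (exactly as in the proof of \eqref{eq:lemimp91}), and (ii) that at each fixed scale the $\PP$-doubling cubes whose enlargement meets a fixed $P$ are boundedly many. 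I would therefore either invoke Lemma~\ref{lemimp9} directly with $k=0$ and $R$ ranging over a single layer, or reprove the needed multiplicity bound by this two-step argument; either way the constant is $\lesssim_\Lambda 1$, which is all that is needed since $\Lambda$ is a fixed (albeit large) constant throughout Part~3.
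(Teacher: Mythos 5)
Your proposal starts from a misreading of what $\wh\Delta_Q$ is. The maximal cubes $S\prec Q$ of $\TT(e'(Q))$ are the cubes at the \emph{top} of the tree (essentially $Q$ itself and the level-$(k+1)$ cubes making up $e'(Q)\setminus Q$, where $Q\in\DD_{\mu,k}$), not the stopping cubes at the bottom. Thus $\wh\Delta_Q f$ is a ``jump'' from the average $m_{\mu,2Q}(f)$ to the averages on a few cubes at scales $\ell(Q)$ and $A_0^{-1}\ell(Q)$ — it is \emph{not} a telescope of martingale differences $\Delta_P f$ over the tree, which is what your claimed identity $\wh\Delta_Q f = \chi_{\cup S}\bigl(\sum_{P\sim\TT(e'(Q)),\,\ell(P)\le\ell(Q)}\Delta_P f\bigr) + (\text{avg-over-}2Q)$ would produce. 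That identity is the decomposition the paper uses for $\Delta_{\wt\TT}\RR\mu$, in which $\wh\Delta_Q$ appears as the \emph{residual} term; conflating the two makes your reference to ``handle it exactly as in the treatment of $\Delta_{\wt\TT}\RR\mu$'' circular, since that treatment invokes the very Lemma \ref{lemortog} you are trying to prove.

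The genuine difficulty, which your proposal does not address, is the comparison between $m_{\mu,S}(f)$ (or $m_{\mu,R}(f)$ for a slightly larger doubling cube $R\in\cA(Q)$, $\ell(R)=A_0\ell(Q)$) and $m_{\mu,2Q}(f)$. Since $2Q$ is a union of several David--Mattila cubes and not itself a cube, this difference cannot be written as a martingale telescope; and your fallback bound $\bigl(m_{\mu,2Q}(f)\bigr)^2\mu(2Q)\approx\|f\|_{L^2(2Q)}^2$ is far too large — the $\MDW$ cubes can be nested through arbitrarily many scales, so summing it over $Q$ would diverge. The paper isolates exactly this residual as $\widecheck\Delta_Q f=\sum_{R\in\cA(Q)}(m_{\mu,R}(f)-m_{\mu,2Q}(f))\chi_R$ and proves the almost-orthogonality estimate $\sum_{Q\in\DD_\mu^\PP}\|\widecheck\Delta_Q f\|_{L^2(\mu)}^2\lesssim\|f\|_{L^2(\mu)}^2$ in a separate Lemma \ref{lemortog2}. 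That proof is not a bounded-overlap argument: it is a duality/Gram-matrix estimate for the system $\vphi_{Q,P}=\bigl(\mu(P)^{-1}\chi_P-\mu(2Q)^{-1}\chi_{2Q}\bigr)\mu(P)^{1/2}$, and the key quantitative inputs are (i) the zero-mean cancellation of $\vphi_{Q,P}$, (ii) the doubling/$\PP$-doubling of $Q$ and $R\in\cA(Q)$, and (iii) crucially, the small-boundary property \rf{eqfk490} of the David--Mattila lattice, which yields the geometric decay $\mu(\mathcal N_d(S))+\mu(\mathcal N_d(2R))\lesssim(d/\ell(R))^{1/2}\mu(R)$ used to control the off-diagonal inner products. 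None of these ingredients appears in your sketch, and without them the residual term is not controlled.

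Your bounded-multiplicity discussion is mostly unnecessary after the above correction: once one decomposes as the paper does, the martingale-difference piece only involves cubes $P$ within one David--Mattila generation of $\ell(Q)$, for which the multiplicity $\#\{Q:P\in\cF(Q)\}\lesssim 1$ is immediate with no $\Lambda$-dependence and no appeal to Lemma \ref{lemimp9}. By contrast, the assertion $\#\{Q\in\MDW:P\sim\TT(e'(Q))\}\lesssim_\Lambda 1$ is a much stronger claim than what is proved anywhere in the paper (Lemma \ref{lemimp9} only gives $\lesssim\log\Lambda$ and only for the thinned subfamily $\sL\subset\ttt\cap\MDW$), and it is not what this lemma requires.
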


We defer the proof of this result to Section \ref{sec10.3}.

\vv

\subsection{Estimate of \texorpdfstring{$T_2$}{T2}}

By the same argument we used to deal with $T_{1,1}$ above, we get
$$T_2
 \leq
\sum_{P\in \DB} \EE_\infty(9P)
 \sum_{k\geq0} B^{-k/2}\,
\# A(P,k),
$$
where $A(P,k)$ is given by \rf{eqApq2}.
Since $\#A(P,k)\lesssim\log\Lambda$, we obtain
$$T_2
 \lesssim_{\Lambda}
\sum_{P\in \DB} \EE_\infty(9P).$$

\vv

\subsection{Estimate of \texorpdfstring{$T_3$}{T3}}

We have
\begin{align*}
T_3 & = \sum_{R\in\sL}\,
\sum_{k\geq0} B^{-k/2} \!\!\!\!\sum_{Q\in\Trc_k(R)}\!
\theta_0^2\,\ve_Z^{-1}\,\mu(Z(Q))\\
& =
\sum_{R\in\sL}\,
\sum_{k\geq0} B^{-k/2} \!\!\!\!\sum_{Q\in\Trc_k(R)}\!
\int_{Z(Q)}\theta_0^2\,\ve_Z^{-1}\,d\mu\\
&= \int\theta_0^2\,\ve_Z^{-1}\, \bigg(\sum_{R\in\sL}\,\sum_{k\geq0} B^{-k/2}\!\!\!
\sum_{Q\in\Trc_k(R)}\!\chi_{Z(Q)}\bigg)\,d\mu.
\end{align*}
By Fubini, we have
$$\sum_{R\in\sL}\,\sum_{k\geq0} B^{-k/2}\!\!\!
\sum_{Q\in\Trc_k(R)}\!\chi_{Z(Q)} \leq \sum_{k\geq0} B^{-k/2} \,\# D(x,k),$$
where
$$D(x,k) = 
\big\{R\in \sL:\exists \,Q\in\Trc_k(R) \text{ such that }x\in Z(Q)\big\}.
$$
Observe now that, given $j\ge1$, if we let
\begin{align*}D_j(x,k) = 
\big\{R\in \sL: \exists \,Q\in&\Trc_k(R) \text{ such that $\TT(e'(Q))$ contains} \\
&\text{
every $P\in\DD_\mu$ such that $x\in P$ and $\ell(P)\leq A_0^{-j}$}\big\},
\end{align*}
then we have
$$D(x,k) = \bigcup_{j\geq 1} D_j(x,k),$$
and moreover $D_j(x,k)\subset D_{j+1}(x,k)$ for all $j$.
From Lemma \ref{lemimp9} we deduce that
$$\#D_j(x,k)\leq C\,\log\Lambda\quad \mbox{ for all $j\geq 1$.}$$
Thus, $\#D(x,k)\leq C\,\log\Lambda$ too. Consequently,
$$\sum_{R\in\sL}\,\sum_{k\geq0} B^{-k/2}\!\!\!
\sum_{Q\in\Trc_k(R)}\!\chi_{Z(Q)} \lesssim_{\Lambda} 1,$$
and so
$T_3\lesssim_{\Lambda} \|\mu\|.$
Together with the estimate we obtained for $T_1$ and $T_2$, this concludes the proof of Lemma \ref{lemt1t2t3}, modulo the proof of Lemma \ref{lemortog}.
\vv


\subsection{Proof of Lemma \ref{lemortog}}\label{sec10.3}
For $Q\in \MDW$, denote by $\cA(Q)$ the family of cubes $R\in\DD_\mu$ such that $R\cap2Q\neq \varnothing$
and $\ell(R)=A_0\,\ell(Q)$. 
Also, let $\cF(Q)$ be the family of cubes $P$ which are contained in some cube from $\cA(Q)$ and satisfy
$\ell(Q)\leq \ell(P)\leq A_0\,\ell(Q)$. Notice that, by Lemma \ref{lempois00}, the cubes from $\cA(Q)$ belong to $\DD_\mu^{db}$. So taking into account that $Q\subset 2B_R$ for any $R\in\cA(Q)$, that $R\subset CQ$, and that
$Q$ is $\PP$ doubling,
\begin{equation}\label{eqcompar39}
\mu(Q)\approx \mu(R)\quad\mbox{ for all $R\in\cA(Q)$.}
\end{equation}

Denote
$$\widecheck \Delta_Q f = \sum_{R\in\cA(Q)} \big(m_{\mu,R}(f)- m_{\mu,2Q}(f)\big)\,\chi_{R}.$$
It is immediate to check that
$$\|\wh\Delta_Q f\|_{L^2(\mu)}\lesssim \sum_{P\in\cF(Q)}\|\Delta_Pf\|_{L^2(\mu)} + \|\widecheck\Delta_Q f\|_{L^2(\mu)}.$$
Remark that the main advantage of the operator $\wk\Delta_Q$ over $\wh\Delta_Q$ is that the cubes
$R\in\cA(Q)$ involved in the definition of $\wk\Delta_Q$ are doubling, which may not be the case for the cubes $S\prec Q$ in the definition of $\wh\Delta_Q$.
From the last inequality, we get
$$\sum_{Q\in\MDW}\|\wh\Delta_Q f\|_{L^2(\mu)}^2 \lesssim \sum_{Q\in\MDW}\sum_{P\in\cF(Q)}\|\Delta_Pf\|_{L^2(\mu)}^2
+ \sum_{Q\in\MDW}\|\widecheck\Delta_Q f\|_{L^2(\mu)}^2 
$$
Since
\begin{align*}
\sum_{Q\in\MDW}\sum_{P\in\cF(Q)}\|\Delta_Pf\|_{L^2(\mu)}^2 &\leq \sum_{P\in\DD_\mu}\|\Delta_Pf\|_{L^2(\mu)}^2
\sum_{Q\in\MDW: P\in\cF(Q)}1\\
&\lesssim \sum_{P\in\DD_\mu}\|\Delta_Pf\|_{L^2(\mu)}^2\lesssim\|f\|_{L^2(\mu)}^2,
\end{align*}
the lemma follows from the next result.

\begin{lemma}\label{lemortog2}
For any $f\in L^2(\mu)$, we have
$$\sum_{Q\in \DD_\mu^\PP} \|\wk \Delta_Q f\|_{L^2(\mu)}^2\lesssim \|f\|_{L^2(\mu)}^2.$$
\end{lemma}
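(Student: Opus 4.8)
\textbf{Plan for the proof of Lemma \ref{lemortog2}.}
The statement is that $\{\wk\Delta_Q\}_{Q\in\DD_\mu^\PP}$ is an almost-orthogonal family of projections in $L^2(\mu)$; concretely $\sum_{Q\in\DD_\mu^\PP}\|\wk\Delta_Q f\|_{L^2(\mu)}^2\lesssim\|f\|_{L^2(\mu)}^2$. The plan is to compare $\wk\Delta_Q f$ with a single ordinary difference operator $\Delta_Q$ and control the error by a telescoping/maximal-function argument. First I would expand, for $Q\in\DD_\mu^\PP$ and $R\in\cA(Q)$ (so $\ell(R)=A_0\ell(Q)$, $R\cap 2Q\neq\varnothing$, and $R\in\DD_\mu^{db}$ by Lemma \ref{lempois00}),
\begin{equation*}
m_{\mu,R}(f)-m_{\mu,2Q}(f) = \bigl(m_{\mu,R}(f)-m_{\mu,\wh R}(f)\bigr) + \bigl(m_{\mu,\wh R}(f)-m_{\mu,2Q}(f)\bigr),
\end{equation*}
where $\wh R$ is the parent of $R$. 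Note $\ell(\wh R)=A_0^2\ell(Q)$ and $\wh R\supset R$ meets $2Q$, so $\wh R\supset Q$ once $A_0$ is large; hence $m_{\mu,\wh R}(f)-m_{\mu,2Q}(f)$ is essentially the average of $\Delta_{\wh R}f$ over $Q$ together with boundary terms, since $2Q$ differs from $\wh R$ only by cubes near $\partial\wh R$ (use the small-boundary estimate \rf{eqfk490} with $\gamma$). The term $m_{\mu,R}(f)-m_{\mu,\wh R}(f)$ is exactly $m_{\mu,R}(\Delta_{\wh R}f)$.

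Next I would square and sum. Using $\|\wk\Delta_Q f\|_{L^2(\mu)}^2 = \sum_{R\in\cA(Q)}\bigl|m_{\mu,R}(f)-m_{\mu,2Q}(f)\bigr|^2\mu(R)$ together with \rf{eqcompar39} ($\mu(R)\approx\mu(Q)$ for $R\in\cA(Q)$ and $\#\cA(Q)\lesssim 1$), it suffices to bound, for each fixed family member, $\sum_{Q\in\DD_\mu^\PP}\bigl|m_{\mu,R}(f)-m_{\mu,2Q}(f)\bigr|^2\mu(Q)$. The first piece $\sum_Q\sum_{R\in\cA(Q)}|m_{\mu,R}(\Delta_{\wh R}f)|^2\mu(R)$ is handled by Cauchy--Schwarz inside $R$: $|m_{\mu,R}(\Delta_{\wh R}f)|^2\mu(R)\le \int_R|\Delta_{\wh R}f|^2\,d\mu$, and each cube $S=\wh R$ arises as the parent of boundedly many $R\in\cA(Q)$ over boundedly many $Q$ (all of comparable size and nearby), so this sums to $\lesssim\sum_{S\in\DD_\mu}\|\Delta_S f\|_{L^2(\mu)}^2=\|f\|_{L^2(\mu)}^2$. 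The second piece, involving $m_{\mu,\wh R}(f)-m_{\mu,2Q}(f)$, I would rewrite as an average over $Q$ of $\Delta_{\wh R}f$ plus an error supported near $\partial\wh R$; the main term is absorbed as before, and the error is estimated by $\mu(N_l(\wh R))$-type bounds \rf{eqfk490} against $\cM_\mu f$ or, better, against $\sum_{S}\|\Delta_S f\|^2$ using that a geometric series in the small-boundary exponent converges.

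\textbf{Main obstacle.} The delicate point is that the cubes $R\in\cA(Q)$ are guaranteed doubling (by Lemma \ref{lempois00}) but $2Q$ itself is a union of cubes of the same generation as $Q$, not a dyadic cube, so $m_{\mu,2Q}(f)$ is not directly a term in a martingale; the comparison of $m_{\mu,2Q}(f)$ with $m_{\mu,\wh R}(f)$ (or with the average over the dyadic ancestor containing $2Q$) must be done carefully, controlling the symmetric difference $\wh R\triangle 2Q$ by the small boundary property, and making sure the resulting error terms, after squaring and summing over $Q\in\DD_\mu^\PP$, telescope correctly rather than accumulating a $\log$ loss. Once this bookkeeping is set up — exploiting that each $S\in\DD_\mu$ plays the role of $\wh R$ for only $O(1)$ pairs $(Q,R)$ and that the boundary layers have geometrically decaying mass — the estimate follows from the orthogonality $\sum_S\|\Delta_S f\|_{L^2(\mu)}^2=\|\chi_{\supp\mu}(f-m_{\mu,S_0}(f))\|_{L^2(\mu)}^2\le\|f\|_{L^2(\mu)}^2$ recalled in Section \ref{sec:DMlatt}.
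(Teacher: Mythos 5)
Your plan takes a genuinely different route from the paper's proof of Lemma \ref{lemortog2}. You decompose $m_{\mu,R}(f)-m_{\mu,2Q}(f)$ into a martingale piece $m_{\mu,R}(\Delta_{\wh R}f)$ plus the discrepancy $m_{\mu,\wh R}(f)-m_{\mu,2Q}(f)$, absorb the former into $\sum_S\|\Delta_S f\|^2_{L^2(\mu)}$, and propose to control the latter by small-boundary estimates. The paper instead argues by duality: it writes $\|\wk\Delta_Qf\|^2_{L^2(\mu)}=\sum_{P\in\cA(Q)}\langle f,\vphi_{Q,P}\rangle^2$ with the zero-mean test functions $\vphi_{Q,P}=\bigl(\mu(P)^{-1}\chi_P-\mu(2Q)^{-1}\chi_{2Q}\bigr)\mu(P)^{1/2}$ and runs a Schur-type almost-orthogonality test on $\bigl\|\sum_{(Q,P)}b_{Q,P}\vphi_{Q,P}\bigr\|_{L^2(\mu)}$. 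The pivotal observation there is that $\langle\vphi_{Q,P},\vphi_{R,S}\rangle=0$ unless the support $a(Q)$ of $\vphi_{Q,P}$ is split by the boundary of $S$ or of $2R$ (because $\vphi_{R,S}$ is piecewise constant on the partition determined by $2R$ and $S$, while $\vphi_{Q,P}$ has mean zero), after which the thin-boundary estimate \rf{eqdd23} gives the Schur bound in one stroke. Both approaches rest on the same two structural facts — the small-boundary property \rf{eqfk490} and the doubling of the cubes in $\cA(Q)$ from Lemma \ref{lempois00} — but the paper's packaging handles the multi-scale geometry all at once, whereas your plan does the bookkeeping scale by scale.

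The genuine gap in your sketch is the treatment of $m_{\mu,\wh R}(f)-m_{\mu,2Q}(f)$. The ancestor $\wh R$ need not contain $2Q$: the condition $R\in\cA(Q)$ only asks that $R\cap 2Q\neq\varnothing$, and $2Q$ can spill past $\partial\wh R$ when $Q$ lies near that boundary. So this term is not ``an average of $\Delta_{\wh R}f$ over $Q$ plus an error supported near $\partial\wh R$.'' Instead, writing $2Q$ as a union of generation-$k_Q$ dyadic cubes $P'$, each $P'$ may separate dyadically from $R$ only at a much coarser level, and one must telescope $m_{\mu,P'}(f)-m_{\mu,R}(f)$ up to the least common ancestor $T(P',R)$, which contributes $O(j)$ martingale differences when the gap is $j$ generations. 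The estimate does close — the mass of $P'$ that separate at depth $j$ decays like $A_0^{-\gamma j}\mu(Q)$ by \rf{eqfk490} together with the doubling of $\cA(Q)$ cubes, and $\sum_j j\,A_0^{-\gamma j/2}<\infty$, so the $\log$ loss you flag does not materialize — but this multi-scale summation is precisely the nontrivial content of the lemma and is only gestured at in your plan, not carried out. Until it is written, the argument is incomplete.
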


\begin{proof}
For $Q\in \DD_\mu^\PP$ and $P\in\DD_\mu$ such that $P\in\cA(Q)$, let
$$\vphi_{Q,P} = \left(\frac1{\mu(P)}\,\chi_P - \frac1{\mu(2Q)}\,\chi_{2Q}\right)\,\mu(P)^{1/2}.$$
Observe that
$$\|\wk\Delta_Q f\|_{L^2(\mu)}^2 = \sum_{P\in\cA(Q)} \big|m_{\mu,P}(f) - m_{\mu,2Q}(f)\big|^2\,\mu(P) 
=\sum_{P\in\cA(Q)}\langle f,\vphi_{Q,P}\rangle^2 .$$
So we have to show that
$$\sum_{Q\in\DD_\mu^\PP} \sum_{P\in\cA(Q)}\langle f,\,\vphi_{Q,P}\rangle^2 \lesssim \|f\|_{L^2(\mu)}^2.$$
To shorten notation, we denote by $\cI$ the set of all pairs $(Q,P)$ with $Q\in\DD_\mu^\PP$ and $P\in\cA(Q)$, so that 
the double sum above can be written as $\sum_{(Q,P)\in \cI}$.

Arguing by duality, we have
\begin{align*}
\bigg(\sum_{(Q,P)\in \cI}\langle f,\vphi_{Q,P}\rangle^2 \bigg)^{1/2}& =
\sup \bigg| \sum_{(Q,P)\in \cI}\langle f,\,\vphi_{Q,P}\rangle \,b_{Q,P}\bigg|=  \sup \bigg| \bigg\langle f,\,\sum_{(Q,P)\in \cI} b_{Q,P}\,\vphi_{Q,P}\bigg\rangle \,\bigg|
,
\end{align*}
where the supremum is taken over all the sequences $b:=\{b_{Q,P}\}_{(Q,P)\in \cI}$
such that $\|b\|_{\ell^2}\leq1$.
Since
$$\bigg(\sum_{(Q,P)\in \cI}\langle f,\vphi_{Q,P}\rangle^2 \bigg)^{1/2} \leq \big\|f\big\|_{L^2(\mu)}\,\sup\bigg\|\sum_{(Q,P)\in \cI} b_{Q,P}\,\vphi_{Q,P}\bigg\|_{L^2(\mu)},$$
to prove the lemma it suffices to show that
$$\bigg\|\sum_{(Q,P)\in \cI} b_{Q,P}\,\vphi_{Q,P}\bigg\|_{L^2(\mu)}\lesssim 1\quad\mbox{ for all
$b=\{b_{Q,P}\}_{(Q,P)\in \cI}$
such that $\|b\|_{\ell^2}\leq1$.}$$
To this end, we write
\begin{align}\label{eqdobb836}
\bigg\|\sum_{(Q,P)\in \cI} b_{Q,P}\,\vphi_{Q,P}\bigg\|_{L^2(\mu)}^2 & = 
\sum_{(Q,P),(R,S)}\big\langle b_{Q,P}\,\vphi_{Q,P},\, b_{R,S}\,\vphi_{R,S}\big\rangle \\
& \le 2
\sum_{\substack{(Q,P),(R,S)\\ \ell(Q)\leq \ell(R)}} |b_{Q,P}\, b_{R,S}| \,\, \big|\big\langle\vphi_{Q,P},\, \vphi_{R,S}\big\rangle \big|.\nonumber
\end{align}

Denote 
$$a(Q)=\bigcup_{P\in\cA(Q)} P.$$
Observe that, for some $C$ depending just on $A_0$,
$$\supp\vphi_{Q,P}\subset \overline{a(Q)}\subset CQ,\qquad \supp\vphi_{R,S}\subset \overline{a(R)}\subset CR,$$
and
$$\big\|\vphi_{Q,P}\big\|_{L^\infty(\mu)}\lesssim \frac1{\mu(Q)^{1/2}},\qquad \big\|\vphi_{R,S}\big\|_{L^\infty(\mu)}\lesssim \frac1{\mu(R)^{1/2}},$$
 taking into account \rf{eqcompar39}.
Thus, for $(Q,P)$, $(R,S)\in\mathcal{I}$ with $\ell(Q)\leq\ell(R)$, we have
$$\big|\big\langle\vphi_{Q,P},\, \vphi_{R,S}\big\rangle \big| = \left|\int_{a(Q)} \vphi_{Q,P}\, \vphi_{R,S}\,d\mu\right|
\lesssim \frac{\mu(a(Q))}{\mu(Q)^{1/2}\,\mu(R)^{1/2}}
 \approx \bigg(\frac{\mu(Q)}{\mu(R)}\bigg)^{1/2}.$$
Further, using that $\vphi_{Q,P}$ has zero mean and that $\vphi_{R,S}$ is constant in $2R\cap S$, in $2R\setminus S$, 
and in $S\setminus 2R$,
it follows that $\big\langle\vphi_{Q,P},\, \vphi_{R,S}\big\rangle =0$ in the following cases:
\begin{itemize}
\item[(i)] if $a(Q)\cap (2R\cup S) = \varnothing$,
\item[(ii)] if $a(Q)\subset 2R\cap S$,
\item[(iii)] if $a(Q)\subset 2R\setminus S$,
\item[(iv)] if $a(Q)\subset S\setminus 2R$.
\end{itemize}
For $d>0$, denote
$$\mathcal N_d(S) = \{x\in\supp\mu\setminus S:\dist(x,S)\leq d\}
\cup \{x\in S:\dist(x,\supp\mu\setminus S)\leq d\}$$
and, analogously,
$$\mathcal N_d(2R) = \{x\in\supp\mu\setminus 2R:\dist(x,2R)\leq d\}
\cup \{x\in 2R:\dist(x,\supp\mu\setminus 2R)\leq d\}.$$
Observe that if none of the conditions (i), (ii), (iii), (iv), holds, then
\begin{equation}\label{eqdd22}
a(Q)\subset \mathcal N_{\diam(a(Q))}(S) \cup \mathcal N_{\diam(a(Q))}(2R).
\end{equation}

From the thin boundary condition \rf{eqfk490} and the fact that $2R$ is a finite number of
 cubes of the same generation as $R$, using also that $R$ is $\PP$-doubling and $S$ is doubling, we deduce that
\begin{equation}\label{eqdd23}
\mu\big(\mathcal N_d(S)\big) + \mu\big(\mathcal N_d(2R)\big) \lesssim \left(\frac d{\ell(R)}\right)^{1/2}\,\mu(R)
\quad \mbox{ for all $d\in (0,C\ell(R)$),}
\end{equation}
with the implicit constant depending on $C$.
Consequently, denoting by ``$(Q,P) \dashv (R,S)$" the situation when $\ell(Q)\leq \ell(R)$ and \rf{eqdd22} holds, by \rf{eqdobb836} we get
\begin{align*}
\bigg\|&\sum_{(Q,P)\in \cI} b_{Q,P}\,\vphi_{Q,P}\bigg\|_{L^2(\mu)}^2  \le 2
\sum_{(Q,P)\dashv(R,S)} |b_{Q,P}\, b_{R,S}| \,\, \bigg(\frac{\mu(Q)}{\mu(R)}\bigg)^{1/2}\\
&\quad\leq 2\bigg(\sum_{(R,S)\in \cI} |b_{R,S}|^2\bigg)^{1/2}\, \Bigg(\sum_{(R,S)\in \cI} 
\Bigg(\sum_{\substack{(Q,P)\in \cI:\\ (Q,P)\dashv(R,S)}}\!\!\! |b_{Q,P}| \, \bigg(\frac{\mu(Q)}{\mu(R)}\bigg)^{1/2}\Bigg)^2\Bigg)^{1/2}\\
&\quad \lesssim \Bigg( \sum_{(R,S)\in\cI} \Bigg(
\sum_{\substack{(Q,P)\in \cI:\\ (Q,P)\dashv(R,S)}}\!\!\! |b_{Q,P}|^2 \,\bigg(\frac{\ell(Q)}{\ell(R)}\bigg)^{1/4}\Bigg)
 \,
\Bigg( \sum_{\substack{(Q,P)\in \cI:\\ (Q,P)\dashv(R,S)}} \bigg(\frac{\ell(R)}{\ell(Q)}\bigg)^{1/4}
\frac{\mu(Q)}{\mu(R)}\Bigg)\Bigg)^{1/2}.
\end{align*}
We consider now the last sum on the right hand side, which equals
\begin{align*}
\sum_{\substack{(Q,P)\in \cI:\\ (Q,P)\dashv(R,S)}} \bigg(\frac{\ell(R)}{\ell(Q)}\bigg)^{1/4}
\frac{\mu(Q)}{\mu(R)} = \sum_{k\geq 0}
\sum_{\substack{(Q,P)\in \cI:\\ (Q,P)\dashv(R,S)\\
\ell(Q)=A_0^{-k}\ell(R)}}\!\! A_0^{k/4}\,\,
\frac{\mu(Q)}{\mu(R)} 
\end{align*}
Notice that, by \rf{eqdd22} and \rf{eqdd23},  we have
$$\sum_{\substack{(Q,P)\in \cI:\\ (Q,P)\dashv(R,S)\\
\ell(Q)=A_0^{-k}\ell(R)}}\mu(Q) \lesssim 
\mu\big(\mathcal N_{CA_0^{-k}\ell(R)}(S)\big) + \mu\big(\mathcal N_{CA_0^{-k}\ell(R)}(2R)\big)\lesssim A_0^{-k/2}\,\mu(R).
$$
Therefore,
$$\sum_{\substack{(Q,P)\in \cI:\\ (Q,P)\dashv(R,S)}} \bigg(\frac{\ell(R)}{\ell(Q)}\bigg)^{1/4}
\frac{\mu(Q)}{\mu(R)}\lesssim
\sum_{k\geq 0} A_0^{k/4}\,\,
\frac{A_0^{-k/2}\,\mu(R)}{\mu(R)} \lesssim 1.$$

We deduce that
\begin{align*}
\bigg\|\sum_{(Q,P)\in \cI} b_{Q,P}\,\vphi_{Q,P}\bigg\|_{L^2(\mu)}^2 
& \lesssim
 \Bigg( \sum_{(R,S)\in\cI} \Bigg(
\sum_{\substack{(Q,P)\in \cI:\\ (Q,P)\dashv(R,S)}}\!\!\! |b_{Q,P}|^2 \,\bigg(\frac{\ell(Q)}{\ell(R)}\bigg)^{1/4}\Bigg)
\Bigg)^{1/2}\\
& =  \Bigg( \sum_{(Q,P)\in\cI} |b_{Q,P}|^2 
\sum_{\substack{(R,S)\in \cI:\\ (Q,P)\dashv(R,S)}}\!\!\bigg(\frac{\ell(Q)}{\ell(R)}\bigg)^{1/4}
\Bigg)^{1/2}.
\end{align*}
Since
$$\sum_{\substack{(R,S)\in \cI:\\ (Q,P)\dashv(R,S)}}\!\!\bigg(\frac{\ell(Q)}{\ell(R)}\bigg)^{1/4}
\lesssim 
\sum_{\substack{R\in\DD_\mu:\\ \ell(R)\geq \ell(Q)\\ a(Q)\cap a(R)\neq\varnothing}}\!\!\bigg(\frac{\ell(Q)}{\ell(R)}\bigg)^{1/4}\lesssim1,$$
we infer that
$$\bigg\|\sum_{(Q,P)\in \cI} b_{Q,P}\,\vphi_{Q,P}\bigg\|_{L^2(\mu)}^2 \lesssim1,$$
as wished.
\end{proof}

\vv



\section{The proof of the First Main Proposition \ref{propomain}} \label{sec9}

Recall that, for a cube $R\in\ttt$, $\tree(R)$ denotes the subfamily of the cubes from $\DD_\mu(R)$ which are not strictly contained in any cube
from $\End_*(R)$.
In this section we will prove the following result. 

\begin{lemma}\label{lemtreebeta}
For each $R\in\ttt$, the following holds:
\begin{align*}
\sum_{Q\in\tree(R)} \!\beta_{\mu,2}(2B_Q)^2\,\Theta(Q)\,\mu(Q) & \lesssim_{\Lambda_*,\delta_0}\!\sum_{Q\in\tree(R)}\!\|\Delta_Q\RR\mu\|_{L^2(\mu)}^2\\&\quad \;+ \!\sum_{Q\in\tree(R)\cap\DB}\! \EE_\infty(9Q) +\Theta(R)^2\,\mu(R).
\end{align*}
\end{lemma}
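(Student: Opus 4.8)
The estimate should be proved by working inside a fixed tree $\tree(R)$, where by construction the density $\Theta(Q)$ is controlled from above and from below for all $Q$ between $R$ and the stopping cubes, and $\mu$ behaves in a quasi-AD-regular fashion. First I would dispose of the cubes whose density is too small: by the definition of $\LD(R)$ and Lemma \ref{lemdobpp}, for a cube $Q\in\tree(R)$ close to $\sss_*(R)$ the density satisfies $\Theta(Q)\lesssim \delta_0^{-1}\Theta(R)$ and, above the stopping cubes, $\Theta(Q)\approx_{\Lambda_*,\delta_0}\Theta(R)$; so up to the fixed multiplicative constant $C(\Lambda_*,\delta_0)$ we may replace $\Theta(Q)$ by $\Theta(R)$ inside the tree (losing only a packing term $\Theta(R)^2\mu(R)$ for the ending cubes, which is absorbed on the right-hand side). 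After this reduction the left side is comparable to $\Theta(R)\sum_{Q\in\tree(R)}\beta_{\mu,2}(2B_Q)^2\,\mu(Q)$, i.e. a purely geometric packing of the $\beta$-coefficients against the AD-regular-like measure $\mu|_{\text{tree}}$.

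The heart of the argument is then the following: produce an AD-regular measure $\sigma$ (with AD-regularity constant depending only on $\Lambda_*$, $\delta_0$, $n$) that approximates $\mu$ at the scales and locations of the cubes in $\tree(R)$, transfer the $L^2$-boundedness information from $\RR\mu$ to $\RR\sigma$ up to controlled error terms, and invoke the solution of the David–Semmes problem \cite{NToV1} together with \cite{DS1}, which gives that $L^2(\sigma)$-boundedness of the Riesz transform on an AD-regular $\sigma$ forces the Carleson packing condition $\sum \beta_{\sigma,2}(2B_Q)^2\mu(Q)\lesssim \sigma(R)$. Concretely I would: (i) build $\sigma$ as a dyadic-type regularization of $\mu$ at the level of $\End_*(R)$ (flattening $\mu$ below the stopping cubes and keeping it unchanged on a large portion of the tree), so that $\beta_{\sigma,2}(2B_Q)\approx\beta_{\mu,2}(2B_Q)$ for $Q\in\tree(R)$ and $\sigma$ is AD-regular; (ii) compare $\|\RR\sigma\|_{L^2(\sigma)}$ with the Haar coefficients $\sum_{Q\in\tree(R)}\|\Delta_Q\RR\mu\|_{L^2(\mu)}^2$ by the kind of transference estimates developed in Section \ref{sectrans} (Lemmas \ref{lemaprox1}–\ref{lemaprox3}), where the error terms involve the coefficients $\PP(Q)$, $\QQ_\Reg(Q)$ and the energies $\EE(2Q)$; (iii) bound those error terms using Lemma \ref{lemDMimproved}, Lemma \ref{lemenergias} and the bounds $\PP(Q)\lesssim\EE(2Q)^{1/2}\mu(Q)^{-1/2}$ valid for $\PP$-doubling cubes, noting that the only energies that cannot be absorbed are those of cubes $Q\in\DB$, which is precisely why $\sum_{Q\in\tree(R)\cap\DB}\EE_\infty(9Q)$ appears on the right-hand side; (iv) apply the $T1$-type machinery for non-doubling measures (\cite{NTrV1}, \cite{NTrV2}) to upgrade the Haar-coefficient control into boundedness of $\RR_\sigma$ on $L^2(\sigma)$, then quote \cite{NToV1} and \cite{DS1}.

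A subtle point is the treatment of the ``negligible'' and non-$\PP$-doubling ending cubes: the error-term estimates of Section \ref{sectrans} crucially use that cubes are $\PP$-doubling, and inside $\tree(R)$ many of the stopping cubes need not be. I would handle this exactly as in the earlier sections, by passing to the enlarged family $\wt\End_*(R)$ of maximal $\PP$-doubling cubes contained in $\sss_*(R)$ (so that the extra cubes contribute only a $\Theta(R)^2\mu(R)$ packing term plus energies $\EE_\infty(9Q)$ for $Q\in\DB$), and by using Lemma \ref{lemdobpp} to bound $\PP$ on the chains of non-doubling cubes. Once the tree is rewritten in terms of $\wt\End_*(R)$, the comparison $\beta_{\sigma,2}\approx\beta_{\mu,2}$ and the transference are carried out as described.

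\textbf{Main obstacle.} The hard part is step (i)–(ii): constructing the approximating AD-regular measure $\sigma$ so that simultaneously (a) $\sigma$ is AD-regular with constants independent of the cube $R$, (b) the $\beta_{2}$ coefficients of $\sigma$ and $\mu$ agree at all scales of the tree, and (c) the transference of the $L^2$ bound from $\RR\mu$ to $\RR\sigma$ produces only error terms expressible through $\EE(2Q)$, $\PP(Q)$ and $\QQ_\Reg(Q)$ — all of which must then be shown, via Lemmas \ref{lemDMimproved}, \ref{lemenergias} and the estimates relating $\PP$ to $\EE$, to be dominated by $\sum_{Q\in\tree(R)}\|\Delta_Q\RR\mu\|_{L^2(\mu)}^2 + \sum_{Q\in\tree(R)\cap\DB}\EE_\infty(9Q) + \Theta(R)^2\mu(R)$. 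Keeping the dependence of all constants on $\Lambda_*$ and $\delta_0$ (but not on $R$ or on $\mu$) under control throughout this approximation-and-transference scheme is where the real work lies; this is the content of Section \ref{sec9}.
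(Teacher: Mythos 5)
Your high-level template — replace $\mu$ inside the tree by an AD-regular measure $\sigma$ supported on disks at the stopping cubes, argue that $\RR_\sigma$ is bounded, invoke \cite{NToV1} and \cite{DS1} to get the $\beta_{\sigma,2}$ packing, and transfer back to $\beta_{\mu,2}$ with error terms controlled by $\PP$, $\QQ_\Reg$ and $\EE$ — is indeed the spirit of Section \ref{sec9}, and your observation that the LD/negligible cubes need special treatment is also correct (the paper handles this by redistributing mass via the coefficients $s(Q)$ so that the stopping cubes in $\LD(R_0)$ carry no $\eta$-mass, keeping $\eta$ AD-regular; this redistribution is more delicate than ``flattening $\mu$ below the stopping cubes'').

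There is, however, a genuine gap in the middle of your plan: the step where you try to ``transfer the $L^2$-boundedness information from $\RR\mu$ to $\RR\sigma$'' and then ``upgrade the Haar-coefficient control into boundedness of $\RR_\sigma$'' via $T1$. The problem is that $\RR\mu$ is not assumed bounded on $L^2(\mu)$ here — only that $\|\RR\mu\|_{L^2(\mu)}$ is finite — so there is no $L^2$-boundedness to transfer; and the Haar coefficients $\sum_Q\|\Delta_Q\RR\mu\|_{L^2(\mu)}^2$ do not yield $T1$ testing conditions for $\RR_\sigma$ without a separate mechanism. The paper supplies exactly this mechanism by performing a second corona decomposition of $\tree(R)$ into subtrees $\wh\tree(R')$, $R'\in\wh\ttt$, introducing the extra stopping condition $\BR(R')$ (the cube is stopped when $|\RR_\mu\chi_{2R'\setminus 2Q}(x_Q)|\geq K\Theta(R')$). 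Within each subtree this gives pointwise control of the truncated Riesz transforms $\sup_{r>\Phi(x)}|\RR_r(\chi_{2R'}\mu)(x)|\lesssim K\Theta(R')$, which is precisely the hypothesis of the $Tb$ theorem for suppressed kernels (Theorem \ref{teontv}); this — not a $T1$ argument from Haar coefficients — is what yields boundedness of $\RR_\eta$ (Lemma \ref{lemriesz*eta}) and hence uniform rectifiability of $\supp\eta$. The Haar coefficients and the $\EE_\infty$ energies enter separately: they appear in Lemma \ref{lemtop8} to control the packing $\sum_{R'\in\wh\ttt}\Theta(R')^2\mu(R')$ of the subtree roots, because the $\BR$ stopping forces $|m_{\mu,Q}(\RR\mu)-m_{\mu,R'}(\RR\mu)|$ to be large modulo $\EE$-type errors. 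Without this two-level structure (sub-corona with $\BR$ stopping, then packing of the roots) your scheme does not close, since a single approximating measure for the whole tree has no source for the $L^2$-boundedness of its Riesz transform.
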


Observe that summing the above inequality over $R\in\ttt$ we get
\begin{multline*}
\sum_{Q\in\DD_\mu} \!\beta_{\mu,2}(2B_Q)^2\,\Theta(Q)\,\mu(Q) \le \sum_{R\in\ttt} \sum_{Q\in\tree(R)} \!\beta_{\mu,2}(2B_Q)^2\,\Theta(Q)\,\mu(Q)  \\
\lesssim_{\Lambda_*,\delta_0} \sum_{R\in\ttt} \sum_{Q\in\tree(R)}\|\Delta_Q\RR\mu\|_{L^2(\mu)}^2 +\sum_{R\in\ttt} \sum_{Q\in\tree(R)\cap\DB}\! \EE_\infty(9Q) +\sum_{R\in\ttt}\Theta(R)^2\,\mu(R)\\
\lesssim \|\RR\mu\|_{L^2(\mu)}^2 +  \sum_{Q\in\DB}\EE_\infty(9Q) + \sum_{R\in\ttt}\Theta(R)^2\,\mu(R).
\end{multline*}
Together with Main Lemma \ref{mainlemma}, and recalling that $\delta_0 = \delta_0(\Lambda_*)$ and $\Lambda_*=\Lambda_*(M)$, this yields Main
Proposition \ref{propomain}.


\vv
\subsection{The approximating measure \texorpdfstring{$\eta$}{eta} on a subtree \texorpdfstring{$\wh\tree_0(R)$}{Tree\_0(R)}}\label{subsec:91}

To prove Lemma \ref{lemtreebeta} for a given cube $R_0\in\ttt$ (in place of $R$), we will consider a corona 
decomposition of $\tree(R_0)$ into subtrees by introducing appropriate new stopping conditions. 
In this section we will deal with the construction of each subtree and an associated AD-regular measure
which approximates $\mu$ in that subtree.
To this end we need some additional notation. 

First, for a cube 
$R\in\tree(R_0)\cap\DD_\mu^\PP$, we write $Q\in \BR(R)$ (which stands for ``big Riesz transform'') if $Q$ is a $\PP$-doubling maximal cube which does not belong to $\HD_*(R_0)\cup\LD(R_0)$ and satisfies
$$|\RR_\mu\chi_{2R\setminus 2Q}(x_Q)|\geq K\,\Theta(R),$$
where $K$ is some big constant to be fixed below, depending on $\Lambda_*$, $\delta_0$, and $M$. Also, for a cube $Q\in\tree(R_0)$,
we denote by $\wh \Ch(Q)$ the family of maximal cubes $P\in\DD_\mu(Q)\setminus\{Q\}$ that satisfy one of the following conditions:
\begin{itemize} 
\item $P\in\DD_\mu^\PP$, i.e.\ $P$ is $\PP$-doubling, or
\item $P\in\LD(R_0)$.
\end{itemize}
From Lemma \ref{lemdobpp}, it is immediate to check that if $Q$ is not contained in any cube from $\LD(R_0)$, then the cubes from $\wh\Ch(Q)$
cover $Q$, and also 
\begin{equation}\label{eqcompa492}
\ell(P)\approx_{\Lambda_*,\delta_0}\ell(Q)\quad \mbox{ for each $P\in\wh\Ch(P)$,} 
\end{equation}

Given $R\in\DD_\mu^\PP\in\tree(R_0)\setminus \End_*(R_0)$, we will construct a tree $\wh \tree_0(R)$ inductively, consisting just of $\PP$-doubling cubes and stopping cubes from $\LD(R_0)$. At the
same time we will construct an approximating AD-regular measure for this tree. We will do this by ``spreading''
the measure of the cubes from $\wh\tree_0(R)\cap \LD(R_0)$ among the other cubes from $\wh\tree_0(R)$. To this end, we will consider some coefficients $s(Q)$, $Q\in\DD_\mu^\PP\cap\wh\tree_0(R)$, that, in a sense, quantify the additional measure $\mu$ spreaded on $Q$
due to the presence of close cubes from $\LD(R_0)$. The algorithm is the following.

First we choose $R$ as the root of $\wh \tree_0(R)$, and we set $s(R)=0$. Next, suppose that $Q\in\wh \tree_0(R)$
(in particular, this implies that $Q\in\tree(R_0)$), and assume that we have not 
decided yet if the cubes from $\wh\Ch(Q)$ belong to $\wh \tree_0(R)$. 
First we decide that $Q\in\wh\sss(R)$ if one of the following conditions hold:
\begin{itemize}
\item[(i)] $Q\in\HD_*(R_0)\cup\LD(R_0)\cup\BR(R)$, or\vspace{1mm}

\item[(ii)] $s(Q)\geq \mu(Q)$ and (i) does not hold, or \vspace{1mm}

\item[(iii)] $\sum_{P\in\wh\Ch(Q)\cap \LD(R_0)}\mu(P) \geq \frac12\,\mu(Q)$ and neither (i) nor (ii) hold.
\end{itemize}
\vspace{1mm}

\noi If $Q\in\wh\sss(R)$, no descendants of $Q$ are allowed to belong to $\wh\tree_0(R)$.
Otherwise, all the cubes from $\wh\Ch(Q)$ are chosen to belong to $\wh\tree_0(R)$, and for each $P\in\wh\Ch(Q)$, we define
$$s(P)=-\mu(P) \quad \mbox{ if $P\in\LD(R_0)$,}$$
and, otherwise, we set 
\begin{equation}\label{eqdqaq14}
t(Q) = \sum_{S\in\wh\Ch(Q)\cap\LD(R_0)}\mu(S)\quad \mbox{ and }\quad
s(P) = \big(s(Q) + t(Q)\big)\, 
\frac{\mu(P)}{\mu(Q)-t(Q)}.
\end{equation}
Observe that
\begin{align*}
\sum_{P\in\wh\Ch(Q)}s(P) & = \sum_{P\in\wh\Ch(Q)\cap \LD(R_0)} s(P) + \sum_{P\in\wh\Ch(Q)\setminus \LD(R_0)}s(P) \\
& = -t(Q) + \big(s(Q) + t(Q)\big) = s(Q).
\end{align*}

By induction, the coefficients $s(\cdot)$ satisfy the following.
If $Q\in\wh \tree_0(R)$ and $\cI$ is some finite family of cubes from $\wh\tree_0(R)\cap\DD_\mu(Q)$
which cover $Q$ and are disjoint, then
\begin{equation}\label{eqaq89}
\sum_{P\in \cI} s(P) = s(Q).
\end{equation}
Further, $s(Q)\geq0$ for all $Q\in\wh\tree_0(R)\setminus \LD(R_0)$.

Now we are ready to define an approximating measure $\eta$ associated with $\wh\tree_0(R)$. 
First, we denote
$$\wh \sG(R) = R\,\setminus \bigcup_{Q\in\wh\sss(R)} Q,$$
and for each $Q\in\DD_\mu$ we let $D_Q$ be an $n$-dimensional disk passing through $x_Q$
with radius $\frac12 \,r(Q)$ (recall that $r(Q)$ is the radius of $B(Q)$).
In  case that $\mu(\wh\sG(R))=0$, we define
$$\eta = \sum_{Q\in\wh \sss(R)} \big(s(Q) + \mu(Q)\big)\,\frac{\HH^n\rest_{D_Q}}{\HH^n(D_Q)}.$$
Observe that $\eta(D_Q)=\mu(Q)+ s(Q)$ for all $Q\in\wh\sss(R)$ and, in particular $\eta(D_Q)=0$
if $Q\in\wh\sss(R)\cap\LD(R_0)$.

In case that $\mu(\wh\sG(R))\neq0$, we have to be a little more careful. For a given
$N\geq1$ we let $\wh\sss_N(R)$ be the family consisting of all the cubes from 
$\wh\sss(R)$ with side length larger that $A_0^{-N}\,\ell(R)$, and we let $\cI_N$ be the family of the cubes from $
\wh\tree_0(R)$ which have side length smaller than $A_0^{-N}\,\ell(R)$ and are maximal.
We denote 
\begin{equation}\label{eqdefetaN}
\eta_N = \sum_{Q\in\wh \sss_N(R)} \big(s(Q) + \mu(Q)\big)\,\frac{\HH^n\rest_{D_Q}}{\HH^n(D_Q)}+
\sum_{Q\in \cI_N(R)} \big(s(Q) + \mu(Q)\big)\,\frac{\mu\rest_Q}{\mu(Q)}
,
\end{equation}
and we let $\eta$ be a weak limit of $\eta_N$ as $N\to\infty$.

As in Section \ref{sec6.2*}, we use the following notation.
To each $Q\in\wh\tree_0(R)$ we associate another ``cube'' $Q^{(\eta)}$ defined as follows:
$$Q^{(\eta)}= (\sG(R)\cap Q)\cup \bigcup_{P\in\wh \sss(R):P\subset Q} D_P.$$
We let
$$\wh\tree_0^{(\eta)}(R) \equiv \wh\tree_0^{(\eta)}(R^{(\eta)}):= \{Q^{(\eta)}: Q\in\wh\tree_0(R)\}.$$
For $S=Q^{(\eta)}\in \wh\tree_0^{(\eta)}(R)$ with $Q\in\wh\tree_0(R)$, we denote $Q=S^{(\mu)}$ and we write
$\ell(S):=\ell(Q)$.

Observe now that, from \rf{eqaq89} and the definition of $\eta$, we have the key property
\begin{equation}\label{eqetq5}
\eta(Q^{(\eta)})= \mu(Q) + s(Q)\quad \mbox{ for all $Q\in\wh\tree_0(R)$}.
\end{equation}
So $s(Q)$ is the measure added to $\mu(Q)$ to obtain $\eta(Q^{(\eta)})$.

\vv

\begin{lemma}\label{lem9.2}
The measure $\Theta(R_0)^{-1}\eta$ is AD-regular (with a constant depending on $\Lambda_*$ and $\delta_0$), and 
$\eta(Q^{(\eta)})=0$ for all $Q\in\LD(R_0)\cap\wh\tree_0(R)$.
\end{lemma}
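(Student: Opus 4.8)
The claim that $\eta(Q^{(\eta)})=0$ for all $Q\in\LD(R_0)\cap\wh\tree_0(R)$ is immediate from \rf{eqetq5}: indeed, by the algorithm, if $Q\in\LD(R_0)$ is selected as a stopping cube then $s(Q)=-\mu(Q)$, so $\eta(Q^{(\eta)}) = \mu(Q)+s(Q) = 0$. (If $Q\in\LD(R_0)\cap\wh\tree_0(R)$ but is not itself a stopping cube, then its parent $\wh Q\in\wh\tree_0(R)$ has $Q\in\wh\Ch(\wh Q)\cap\LD(R_0)$, and again $s(Q)=-\mu(Q)$ by construction.) So I would dispose of this assertion in one line and devote the rest of the argument to AD-regularity.

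For the AD-regularity of $\Theta(R_0)^{-1}\eta$, the plan is to verify both the upper and lower $n$-growth bounds at the scale of the cubes in $\wh\tree_0(R)$, then extend to all scales. The first and crucial point is to control the coefficients $s(Q)$: I claim that, for every $Q\in\wh\tree_0(R)\setminus\LD(R_0)$, one has $0\le s(Q)\le \mu(Q)$ (the latter because otherwise $Q$ would have stopped via condition (ii)), hence $\mu(Q)\le \eta(Q^{(\eta)})=\mu(Q)+s(Q)\le 2\mu(Q)$. So $\eta(Q^{(\eta)})\approx\mu(Q)$. Next, since every cube $Q\in\wh\tree_0(R)$ is $\PP$-doubling or belongs to $\LD(R_0)$, and stopping cubes from $\LD(R_0)$ carry zero $\eta$-mass while the $\PP$-doubling ones satisfy $\mu(Q)\approx\Theta(R)\,\ell(Q)^n$ with $\Theta(R)\approx\Theta(R_0)$ (because $R\in\tree(R_0)$ is not in $\HD_*(R_0)\cup\LD(R_0)$, so its density stays comparable to that of the root up to the factors $\Lambda_*,\delta_0$), we get $\eta(Q^{(\eta)})\approx_{\Lambda_*,\delta_0}\Theta(R_0)\,\ell(Q)^n$ for the $\PP$-doubling $Q\in\wh\tree_0(R)$. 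Because $\eta$ is, on each piece $D_Q$, a normalized copy of $\HH^n$ on an $n$-disk of radius $\approx\ell(Q)$ carrying mass $\approx\Theta(R_0)\ell(Q)^n$, and because \rf{eqcompa492} guarantees that neighboring cubes of $\wh\tree_0(R)$ have comparable side lengths, standard estimates show that for any ball $B=B(x,r)$ with $x\in\supp\eta$ and $r$ comparable to some $\ell(Q)$, $Q\in\wh\sss(R)$, one has $\eta(B)\approx\Theta(R_0)\,r^n$.

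To pass from these ``cube-scale'' estimates to genuine AD-regularity at all scales $0<r\le\diam(\supp\eta)$, I would argue as follows. For $r$ larger than $\ell(R)$ the upper bound follows from $\eta(\R^{n+1})=\eta(R^{(\eta)})=\mu(R)+s(R)=\mu(R)\approx\Theta(R_0)\ell(R)^n$, and the lower bound is trivial for $r\le\diam(\supp\eta)\lesssim\ell(R)$. For $r$ between the smallest relevant scale and $\ell(R)$, pick the cube $Q\in\wh\tree_0(R)$ with $x\in Q^{(\eta)}$ and $\ell(Q)$ comparable to $r$ (using again that consecutive generations have comparable sizes); then summing the $\eta$-masses of the boundedly many pieces $D_P$ meeting $B(x,r)$ gives $\eta(B(x,r))\lesssim\Theta(R_0)r^n$, and the single piece $D_P$ with $x\in D_P$ already forces $\eta(B(x,r))\gtrsim\Theta(R_0)r^n$ once $r\gtrsim\ell(P)$; for $r$ below the local cube scale, $\eta$ restricted near $x$ is (a multiple of) $\HH^n$ on an $n$-plane, which is $n$-AD-regular. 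Finally, when $\mu(\wh\sG(R))\neq0$ one works with the approximants $\eta_N$ from \rf{eqdefetaN}: each $\eta_N$ satisfies the same bounds with constants uniform in $N$ (the extra pieces $\frac{\mu\rest_Q}{\mu(Q)}(\mu(Q)+s(Q))$ on cubes of $\cI_N$ are comparable to $\mu\rest_Q$ and hence inherit $n$-growth from $\mu$), and AD-regularity is preserved under weak limits, so $\Theta(R_0)^{-1}\eta$ is AD-regular.

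The main obstacle is bookkeeping the coefficients $s(Q)$: one must confirm that the recursion \rf{eqdqaq14}, together with stopping conditions (ii) and (iii), keeps $0\le s(Q)\le\mu(Q)$ and $\mu(Q)-t(Q)\gtrsim\mu(Q)$ (so that the denominator in \rf{eqdqaq14} does not degenerate), which is exactly where condition (iii) — the ``at most half the mass of the children is low-density'' rule — is used. Once that uniform two-sided comparison $\eta(Q^{(\eta)})\approx\mu(Q)\approx\Theta(R_0)\ell(Q)^n$ is in hand, the AD-regularity is a routine patching argument of the kind already used for similar approximating measures (cf.\ Lemma \ref{lem74} and the constructions in \cite{Tolsa-memo}), with all implicit constants depending on $n$, $\Lambda_*$, and $\delta_0$ only.
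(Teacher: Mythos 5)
Your overall plan mirrors the paper's argument — reduce AD-regularity to the two-sided bound $\eta(Q^{(\eta)})\approx_{\Lambda_*,\delta_0}\Theta(R_0)\ell(Q)^n$ on cubes of $\wh\tree_0(R)\setminus\LD(R_0)$, use the fact that $Q\notin\LD(R_0)$ for the lower bound, control $s(Q)$ for the upper bound, then patch to all scales via \rf{eqcompa492} and the explicit description of $\eta$ on the disks $D_Q$. But there is a real error in the step you yourself flag as ``the first and crucial point.''

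You claim $s(Q)\leq\mu(Q)$ for every $Q\in\wh\tree_0(R)\setminus\LD(R_0)$ ``because otherwise $Q$ would have stopped via condition (ii).'' This does not follow: a cube that stops via (ii) is still a member of $\wh\tree_0(R)$ (stopping only prevents its \emph{descendants} from entering the tree), and for such a $Q$ one has $s(Q)\geq\mu(Q)$, so your inequality is simply false on stopping cubes. The correct argument applies the stopping conditions to the \emph{parent} $\wh Q$, the smallest cube of $\wh\tree_0(R)$ strictly containing $Q$: since $Q\in\wh\Ch(\wh Q)$ is in the tree, $\wh Q$ is not a stopping cube, so the negations of (ii) and (iii) give $s(\wh Q)<\mu(\wh Q)$ and $t(\wh Q)<\tfrac12\mu(\wh Q)$. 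Feeding these into the recursion \rf{eqdqaq14} gives
$$s(Q)=\bigl(s(\wh Q)+t(\wh Q)\bigr)\frac{\mu(Q)}{\mu(\wh Q)-t(\wh Q)}\leq \frac{\tfrac32\mu(\wh Q)}{\tfrac12\mu(\wh Q)}\,\mu(Q)=3\,\mu(Q),$$
hence $\eta(Q^{(\eta)})\leq 4\mu(Q)$. You should then finish the upper bound as the paper does: $\mu(Q)\leq\mu(\wh Q)\lesssim\Lambda_*\Theta(R_0)\ell(\wh Q)^n\approx_{\Lambda_*,\delta_0}\Theta(R_0)\ell(Q)^n$, using that $\wh Q\notin\HD_*(R_0)$ by condition (i) and \rf{eqcompa492}. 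With that correction your argument is complete and essentially the same as the paper's; the remaining material (the cube-to-ball patching, the $\eta_N$ approximants, the finiteness of the denominator $\mu(\wh Q)-t(\wh Q)$ forced by (iii)) is all sound and is the content the paper delegates to ``standard arguments.''
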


\begin{proof}
The fact that $\eta(Q^{(\eta)})=0$ for all $Q\in\LD(R_0)\cap\wh\tree_0(R)$ follows by construction and has already been mentioned above.
To prove the AD-regularity of $\eta$, by standard arguments, it is enough to show that
$$\eta(Q^{(\eta)})\approx_{\Lambda_*,\delta_0} \Theta(R_0)\,\ell(Q)^n\quad\mbox{ for all $Q\in\wh\tree_0(R)\setminus
\LD(R_0)$,}$$
taking into account \rf{eqcompa492}.
Given such a cube $Q$, the fact that $Q\not\in\LD(R_0)$ ensures that
$$\eta(Q^{(\eta)})\geq \mu(Q)\gtrsim\delta_0\,\Theta(R_0)\,\ell(Q)^n.
$$
To show the converse estimate we can assume $Q\neq R$. By
the condition (ii) in the definition of $\wh \sss(\wh Q)$, where $\wh Q$ is the first ancestor of $Q$ in $\wh \tree_0(R)$
(i.e., $\wh Q$ is the smallest cube from $\wh\tree_0(R)$ that strictly contains $Q$), we have
$$s(\wh Q)\leq \mu(\wh Q).$$
Also, by (iii) (which does not hold for $\wh Q$), the coefficient $t(\wh Q)$ in \rf{eqdqaq14} satisfies
$$t(\wh Q) = \sum_{P\in\wh\Ch(\wh Q)\cap \LD(R_0)}\mu(P) < \frac12\,\mu(\wh Q).$$
Therefore,
\begin{equation}\label{eqaq745}
s(Q) = \big(s(\wh Q) + t(\wh Q)\big)\, 
\frac{\mu(Q)}{\mu(\wh Q)-t(\wh Q)} \leq 2 \big(\mu(\wh Q) + \frac12\,\mu(\wh Q)\big)\, 
\frac{\mu(Q)}{\mu(\wh Q)} = 3\,\mu(Q),
\end{equation}
and so
$$\eta(Q^{(\eta)}) = s(Q) + \mu(Q)\leq 4\,\mu(Q)\leq 4\,\mu(\wh Q)\lesssim \Lambda_*\, \Theta(R_0)\,\ell(\wh Q)^n
\approx_{\Lambda_*,\delta_0} \Theta(R_0)\,\ell(Q)^n,
$$
taking into account that $\wh{Q}\not \in\HD_*(R_0)$, by (i).
\end{proof}
\vv

\begin{rem}
For the record, notice that from \rf{eqaq745} it follows that, for all $Q\in\wh\tree_0(R)$, either
\begin{equation}\label{eq:aQmuQ}
0\leq s(Q)\leq 3\,\mu(Q),
\end{equation}
or 
$$s(Q)=-\mu(Q).$$
The latter case happens if and only if $Q\in\LD(R_0)$.
\end{rem}
\vv

\begin{rem}
Consider the measure defined by
$$\eta' = \sum_{Q\in\wh \sss(R)\setminus\LD(R_0)} \mu(Q)\,\frac{\HH^n\rest_{D_Q}}{\HH^n(D_Q)} + \mu\rest_{\wh\sG(R)}.$$
This measure is mutually absolutely continuous with $\eta$. Further, since the
coefficients $s(Q)$, with $Q\in\wh\tree_0(R)$, are uniformly bounded (by the previous remark), it turns out that
$$\eta'= \rho\,\eta,$$
for some function $\rho\in L^\infty(\eta)$ satisfying $\rho\approx1$.
Consequently, by Lemma \ref{lem9.2}, $\eta'$ is also AD-regular.
\end{rem}

\vv

For a family of cubes $\cI\subset \tree(R_0)$, we denote 
$$\wh\Ch(\cI)=\bigcup_{Q\in \cI} \wh\Ch(Q).$$
For $Q\in\wh\tree_0(R)$, we write $Q\in (i)_R$, if $Q\in\wh\sss(R)$ and the condition (i) in the definition of 
$\wh\sss(R)$ holds for $Q$, and analogously regarding the notations $Q\in (ii)_R$ and $Q\in (iii)_R$.

\begin{lemma}\label{lem9.5*}
The following holds:
$$\sum_{Q\in\wh\sss(R)\cap(\LD(R_0)\cup\HD_*(R_0)\cup\BR(R))}\mu(Q) 
+\sum_{Q\in\wh\Ch((iii)_R)\cap \LD(R_0)}\mu(Q) + \mu(\wh\sG(R))
\approx \mu(R).$$
\end{lemma}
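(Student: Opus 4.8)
\textbf{Proof plan for Lemma \ref{lem9.5*}.}

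The plan is to show that the left-hand side is comparable to $\mu(R)$ by proving two inequalities: the upper bound $\lesssim\mu(R)$ is trivial since all the sets involved ($\wh\sG(R)$ and the cubes appearing in the three sums) are pairwise disjoint subsets of $R$; the content of the lemma is the lower bound $\gtrsim\mu(R)$. For the lower bound, the first step is to use the property \rf{eqaq89} of the coefficients $s(\cdot)$ at the root: applying it to the family $\cI$ of maximal cubes of $\wh\tree_0(R)$ that are either stopping cubes or (in the limiting construction) cubes of $\cI_N$, together with $s(R)=0$, gives $\sum_{Q\in\wh\sss(R)}s(Q) = 0$ (modulo the harmless $\cI_N$ correction which one controls by $A_0^{-N}$ and then lets $N\to\infty$, or directly in the case $\mu(\wh\sG(R))=0$). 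Now split $\wh\sss(R)$ according to the stopping conditions (i), (ii), (iii) and recall that $s(Q)=-\mu(Q)$ exactly when $Q\in\LD(R_0)$, while $0\leq s(Q)\leq 3\mu(Q)$ otherwise. Writing $\wh\sss(R)\cap\LD(R_0) = \bigl(\wh\sss(R)\cap\LD(R_0)\bigr)$, the relation $\sum_{Q\in\wh\sss(R)}s(Q)=0$ becomes
$$\sum_{Q\in\wh\sss(R)\cap\LD(R_0)}\mu(Q) = \sum_{Q\in\wh\sss(R)\setminus\LD(R_0)}s(Q) \leq 3\sum_{Q\in\wh\sss(R)\setminus\LD(R_0)}\mu(Q),$$
which shows that the total $\mu$-mass of the $\LD(R_0)$-stopping cubes is controlled by the mass of the non-$\LD$ stopping cubes plus $\mu(\wh\sG(R))$, since $R = \wh\sG(R)\cup\bigcup_{Q\in\wh\sss(R)}Q$ is a disjoint decomposition.

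Next I would handle the non-$\LD$ stopping cubes. A cube $Q\in\wh\sss(R)\setminus\LD(R_0)$ belongs to $(i)_R$, $(ii)_R$, or $(iii)_R$. The cubes in $(i)_R\setminus\LD(R_0)$ are covered (up to $\mu$-mass) by $\wh\sss(R)\cap(\HD_*(R_0)\cup\BR(R))$, which is one of the sums in the statement. The cubes in $(iii)_R$ satisfy, by definition of condition (iii), $\sum_{P\in\wh\Ch(Q)\cap\LD(R_0)}\mu(P)\geq\frac12\mu(Q)$, so $\mu(Q)\leq 2\sum_{P\in\wh\Ch(Q)\cap\LD(R_0)}\mu(P)$, and summing over $Q\in(iii)_R$ (which are disjoint) gives $\sum_{Q\in(iii)_R}\mu(Q)\leq 2\sum_{P\in\wh\Ch((iii)_R)\cap\LD(R_0)}\mu(P)$, i.e.\ controlled by the second sum in the statement. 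The remaining case is the cubes in $(ii)_R$: here $s(Q)\geq\mu(Q)$, so $\mu(Q)\leq s(Q)$. Plugging all this in, the mass of all non-$\LD$ stopping cubes is bounded by the three sums in the statement plus $\sum_{Q\in(ii)_R}s(Q)$, and then the identity $\sum_{Q\in\wh\sss(R)}s(Q)=0$ (rewritten as $\sum_{Q\in(ii)_R}s(Q) \leq \sum_{Q\in\wh\sss(R)\cap\LD(R_0)}\mu(Q)$, using $s(Q)\geq 0$ off $\LD$ and $s(Q)=-\mu(Q)$ on $\LD$) closes the loop in terms of the $\LD$-stopping cubes, which we have already bounded.

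Assembling these estimates: we obtain $\mu(R) = \mu(\wh\sG(R)) + \sum_{Q\in\wh\sss(R)}\mu(Q)$, and each piece on the right is, up to constants, bounded by the quantity in the statement; conversely that quantity is a sum of disjoint subsets of $R$. Hence both inequalities hold and the $\approx$ follows. I expect the main obstacle to be the bookkeeping around the limiting construction of $\eta$ when $\mu(\wh\sG(R))\neq 0$: one must verify that the telescoping identity $\sum s(Q)=0$ survives the passage to the limit $N\to\infty$ and that the cubes in $\cI_N$ contribute negligibly (their total mass being at most $\mu(R)$ but their $s$-contribution vanishing in the limit because $\eta(Q^{(\eta)})=\mu(Q)+s(Q)$ with $Q^{(\eta)}$ shrinking), so that the argument above applies verbatim with $\wh\sG(R)$ in place of the truncated remainder. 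All other steps are elementary manipulations of disjoint families and the two-sided bound \rf{eq:aQmuQ} on $s(Q)$.
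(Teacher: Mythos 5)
Your proposal is correct and follows essentially the same route as the paper: the upper bound is trivial by disjointness, and the lower bound comes from splitting $\wh\sss(R)$ into $(i)_R$, $(ii)_R$, $(iii)_R$, handling $(iii)_R$ via the defining property of condition (iii), and handling $(ii)_R$ via the bound $\mu(Q)\leq s(Q)$ together with the telescoping identity coming from $s(R)=0$ and \rf{eqaq89}. The paper passes to the limit through the truncated families $\cJ_N$ exactly as you anticipate, obtaining the inequality $\sum_{Q\in\wh\sss(R)\setminus\LD(R_0)}s(Q)\leq\sum_{Q\in\wh\sss(R)\cap\LD(R_0)}\mu(Q)$ by Fatou (so one only gets $\leq$ in the limit, but that suffices). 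The only cosmetic difference is that your intermediate bound $\sum_{\wh\sss(R)\cap\LD(R_0)}\mu(Q)\leq 3\sum_{\wh\sss(R)\setminus\LD(R_0)}\mu(Q)$ is a detour the paper does not take, as the $\LD(R_0)$ stopping mass already sits on the left-hand side of the lemma and needs no further control.
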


\begin{proof}
It is clear that the left hand side above is bounded by $\mu(R)$.
For the converse estimate,  we write
\begin{equation}\label{eqsplit63}
\mu(R) = \sum_{Q\in\wh\sss(R)}\mu(Q) + \mu(\wh\sG(R)) = 
\sum_{Q\in (i)_R} \mu(Q) + \sum_{Q\in (ii)_R} \mu(Q) +\sum_{Q\in (iii)_R} \mu(Q)  
 + \mu(\wh\sG(R)).
 \end{equation}
 By construction,
\begin{equation}\label{eqsplit64}
\sum_{Q\in (i)_R} \mu(Q) =\sum_{Q\in \wh\sss(R)\cap(
\HD_*(R_0)\cup\LD(R_0)\cup\BR(R))}\mu(Q).
\end{equation}
Also, if $Q\in (iii)_R$, then
$$\mu(Q)\leq 2\sum_{P\in\wh\Ch(Q)\cap \LD(R_0)}\mu(P),$$
and thus
\begin{equation}\label{eqsplit65}
\sum_{Q\in (iii)_R} \mu(Q)\leq 2 \sum_{P\in\wh\Ch(\wh\sss(R))\cap \LD(R_0)}\mu(P).
\end{equation}

On the other hand, if $Q\in (ii)_R$, then $0\leq \mu(Q)\leq s(Q)$, and so
$$\sum_{Q\in (ii)_R} \mu(Q)\leq \sum_{Q\in \wh\sss(Q):s(Q)\geq 0} s(Q) = \sum_{Q\in \wh\sss(R)\setminus \LD(R_0)} s(Q).$$
For a given $N\geq 1$, consider the families $\wh\sss_N(R)$ and $\cI_N$ defined just above \rf{eqdefetaN}.
Notice that $\cJ_N:= \wh\sss_N(R)\cup\cI_N$ is a finite family of cubes which cover $R$, and thus, 
from the property \rf{eqaq89},
it follows that
$$0 = s(R) = \sum_{Q\in \cJ_N}s(Q) = \sum_{Q\in \cJ_N\cap \LD(R_0)} s(Q) + 
\sum_{Q\in \cJ_N\setminus \LD(R_0)} s(Q). $$
Since $s(Q) = -\mu(Q)$ for all $Q\in \cJ_N\cap \LD(R_0)$, we deduce
$$\sum_{Q\in \cJ_N\setminus \LD(R_0)} s(Q) = \sum_{Q\in \cJ_N\cap \LD(R_0)} \mu(Q).$$ 
Letting $N\to\infty$ and taking into account that $s(Q)\geq 0$ for all
$Q\in \cJ_N\setminus \LD(R_0)$, we get
$$\sum_{Q\in \wh\sss(R)\setminus \LD(R_0)} s(Q)\leq \sum_{Q\in \wh\sss(R)\cap \LD(R_0)} \mu(Q),$$
and thus
\begin{equation}\label{eqsplit66}
\sum_{Q\in (ii)_R} \mu(Q)\leq \sum_{Q\in \wh\sss(R)\cap \LD(R_0)} \mu(Q).
\end{equation}

The lemma follows from the splitting \rf{eqsplit63} and the inequalities \rf{eqsplit64}, \rf{eqsplit65}, \rf{eqsplit66}.
\end{proof}

\vv

\begin{lemma}\label{lemriesz*eta}
The operator $\RR_\eta$ is bounded in $L^2(\eta)$, with
$$\|\RR_\eta\|_{L^2(\eta)\to L^2(\eta)}\lesssim_{\Lambda_*,\delta_0,K}\Theta(R).$$
\end{lemma}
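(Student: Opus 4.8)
The plan is to apply the $Tb$-type theorem (Theorem \ref{teontv}) with $\Phi\equiv0$, which reduces the boundedness of $\RR_\eta$ in $L^2(\eta)$ to verifying two conditions: (a) the growth bound $\eta(B(x,r))\lesssim_{\Lambda_*,\delta_0}\Theta(R)\,r^n$ for all $x$ and $r>0$, and (b) a uniform pointwise bound $\sup_{\ve>0}|\RR_\ve\eta(x)|\lesssim_{\Lambda_*,\delta_0,K}\Theta(R)$ on $\supp\eta$. Actually, since $\Theta(R_0)^{-1}\eta$ is AD-regular by Lemma \ref{lem9.2}, it is more efficient to invoke a Cotlar-type inequality: by Lemma \ref{lemcotlar1} (with $r_0=0$, $\theta_1\approx\Theta(R)$, and using AD-regularity to check that the energy condition $W_\eta(B)\lesssim\Theta(R)\,\eta(B)$ holds trivially for an AD-regular measure since $W_\omega(B)\lesssim\theta_\omega(B)\,\omega(B)$ when $\omega$ is AD-regular), one gets $\RR_*\eta(x)\lesssim\cM_\eta(\RR\eta)(x)+\Theta(R)$, so it suffices to bound $\RR\eta$ pointwise (or in $BMO(\eta)$) by $C\Theta(R)$ at the centers $x_Q$ of the disks $D_Q$. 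Then Theorem \ref{teontv} (or the standard $T1$ theorem for AD-regular measures) finishes the job.

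The heart of the matter is therefore the pointwise estimate $|\RR_\ve\eta(x_Q)|\lesssim_{\Lambda_*,\delta_0,K}\Theta(R)$ for $Q\in\wh\sss(R)$ and all $\ve>0$ (and similarly for $x\in\wh\sG(R)$). First I would reduce to scales $\ve\gtrsim\ell(Q)$ using the AD-regularity of $\eta$ and a standard truncation argument (the contribution of the annulus $|x_Q-y|\leq\ell(Q)$ is $O(\Theta(R))$ since $\eta$ restricted to $D_Q$ is a constant multiple of $\HH^n|_{D_Q}$ with density $\lesssim_{\Lambda_*,\delta_0}\Theta(R)$). For $\ve\gtrsim\ell(Q)$, let $P\supset Q$ be the cube in $\wh\tree_0(R)$ with $\ell(P)\approx\ve$; then $\RR_\ve\eta(x_Q)$ is comparable (up to errors $O(\Theta(R))$, using that neighboring cubes of $\wh\tree_0(R)$ have comparable size by \eqref{eqcompa492} and Calder\'on-Zygmund kernel estimates) to $\RR(\chi_{(2P)^c}\eta)(x_P)$ plus $\RR(\chi_{2P\setminus 2Q}\eta)(x_Q)$. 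The latter is $O(\Theta(R))$ by AD-regularity of $\eta$ along the chain of intermediate cubes. For the far part $\RR(\chi_{(2P)^c}\eta)(x_P)$, I would compare $\eta$ to $\mu$ cube by cube: by \eqref{eqetq5} we have $\eta(Q'^{(\eta)})=\mu(Q')+s(Q')$ with $|s(Q')|\leq 3\mu(Q')$, so writing $\eta=\mu'+\mathrm{error}$ where $\mu'$ redistributes $\mu$ (plus bounded multiples) onto the disks $D_{Q'}$, the difference $|\RR(\chi_{(2P)^c}\eta)(x_P)-\RR(\chi_{2R\setminus 2P}\mu)(x_P)|$ is controlled by (i) $\RR(\chi_{(2R)^c}\mu)(x_P)$, which is $O(\Theta(R_0))$ by the construction of $\ttt$ and the $\PP$-doubling property of $R_0$ (more precisely $\PP(R)\lesssim\Theta(R_0)$ via Lemma \ref{lemcad23} and the tree structure), plus (ii) a sum over cubes $Q'\in\wh\tree_0(R)$ of $\frac{\ell(Q')}{|x_P-x_{Q'}|^{n+1}}(\mu(Q')+|s(Q')|)\lesssim\PP(P)\lesssim\Theta(R)$ using the smoothness of the kernel and that each $D_{Q'}\subset CB_{Q'}$, plus (iii) the contribution of the ``bad'' cubes in $\LD(R_0)\cap\wh\tree_0(R)$, on which $\eta$ vanishes while $\mu$ does not — but these satisfy $\PP(\cdot)\lesssim\delta_0\Theta(R_0)$ so their total contribution to the kernel sum is again $O(\Theta(R))$. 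Finally, $\RR(\chi_{2R\setminus 2P}\mu)(x_P)=\RR_\mu\chi_{2R\setminus 2P}(x_P)$ is exactly $O(K\Theta(R))$ \emph{provided} $P\notin\BR(R)$; and this is where the stopping condition $\BR(R)$ earns its keep — if $P$ were in $\BR(R)$ it would be a stopping cube and hence not an interior cube of $\wh\tree_0(R)$ with smaller descendants, so all cubes $P$ arising in the chain above (strictly containing a stopping cube $Q$) satisfy $P\notin\BR(R)$, giving the bound $|\RR_\mu\chi_{2R\setminus 2P}(x_P)|<K\Theta(R)$.

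Collecting these estimates yields $|\RR_\ve\eta(x_Q)|\lesssim_{\Lambda_*,\delta_0,K}\Theta(R)$ for all $Q\in\wh\sss(R)$ and all $\ve>0$, and the same argument (with $P$ now an arbitrary cube of $\wh\tree_0(R)$ containing $x$ of size $\approx\ve$, using that $x\in\wh\sG(R)$ means $x$ lies in no stopping cube) handles $x\in\wh\sG(R)\cap\supp\eta$. Plugging this into the Cotlar inequality of Lemma \ref{lemcotlar1} gives $\RR_*\eta(x)\lesssim_{\Lambda_*,\delta_0,K}\cM_\eta(\RR\eta)(x)+\Theta(R)$ on $\supp\eta$, and then Theorem \ref{teontv} (applied to the normalized AD-regular measure $\Theta(R)^{-1}\eta$, noting that condition (a) of that theorem is the AD-regularity established in Lemma \ref{lem9.2} and condition (b) is the pointwise bound just proved) yields that $\RR_\eta$ is bounded in $L^2(\eta)$ with $\|\RR_\eta\|_{L^2(\eta)\to L^2(\eta)}\lesssim_{\Lambda_*,\delta_0,K}\Theta(R)$, as claimed.

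The main obstacle I anticipate is the careful bookkeeping of error terms in the comparison between $\RR\eta$ and $\RR\mu$ at the scale of each stopping cube: one must simultaneously track (1) the redistribution of mass from $\mu$ to the disks $D_{Q'}$, controlled by the uniform bound $|s(Q')|\leq3\mu(Q')$ from \eqref{eq:aQmuQ}; (2) the genuine discrepancy caused by $\LD(R_0)$-cubes where $\eta$ vanishes, controlled by $\delta_0$; (3) the tail $\RR(\chi_{(2R)^c}\mu)$, controlled by $\PP(R)\lesssim\Theta(R_0)\approx\Theta(R)$ using that $R\in\tree(R_0)$ is not an ending cube so $\Theta(R)\approx\Theta(R_0)$ up to the bounded factors $\Lambda_*,\delta_0$ — or more precisely $\Theta(R)\leq\Lambda_*^2\Theta(R_0)$ by the stopping rules, which is acceptable since the final constant is allowed to depend on $\Lambda_*$; and (4) the main term, controlled by the $\BR(R)$ stopping condition with constant $K$. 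Keeping these four mechanisms cleanly separated, and verifying that the cube $P$ appearing at scale $\ve$ indeed never belongs to $\BR(R)$ because it strictly contains a stopping cube, is the delicate part; everything else is routine Calder\'on-Zygmund estimation using AD-regularity of $\eta$ and the kernel bounds \eqref{eqkafi1}--\eqref{eqkafi2}.
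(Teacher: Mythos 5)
You take a genuinely different route from the paper. The paper applies Theorem \ref{teontv} not to $\eta$ but to $\mu\rest_{2R}$, with the suppression $\Phi(x)=\inf_{Q\in\wh\tree_0(R)}(\ell(Q)+\dist(x,Q))$ — the $\BR(R)$ stopping rule verifies hypothesis (b) there directly — and then \emph{transfers} the resulting $L^2(\mu\rest_{2R})$-boundedness of $\RR_\Phi$ to $L^2(\eta)$-boundedness of $\RR_\eta$ by a duality comparison in which, for each $g\in L^2(\eta)$, one constructs $f\in L^2(\mu)$ so that $\alpha=f\mu$ and $\beta=g\eta$ have \emph{matching masses} on each stopping cube, $\alpha(Q^{(0)})=\beta(D_Q)$. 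You try instead to verify hypothesis (b) of Theorem \ref{teontv} for $\eta$ with $\Phi\equiv 0$, i.e.\ the pointwise bound $\sup_{\ve>0}|\RR_\ve\eta(x)|\lesssim\Theta(R)$ on $\supp\eta$, by comparing $\RR\eta$ with $\RR\mu$ cube by cube and invoking $\BR(R)$.

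That pointwise comparison has a gap. Since $\eta(D_{Q'})=\mu(Q')+s(Q')$ rather than $\mu(Q')$, the leading error at each $Q'\in\wh\sss(R)$ is the zeroth-order mass-mismatch term $K(x_P-x_{Q'})\,s(Q')$, of size $|s(Q')|/|x_P-x_{Q'}|^n$ — \emph{not} the first-order $\ell(Q')\mu(Q')/|x_P-x_{Q'}|^{n+1}$ that your item (ii) records. The same is true of item (iii): on the $\LD(R_0)$ cubes $s(Q')=-\mu(Q')$, and even with the factor $\PP(Q')\lesssim\delta_0\Theta(R_0)$ the sum $\sum_{Q'}\mu(Q')/|x_P-x_{Q'}|^n$ diverges logarithmically over the scales between $\ell(P)$ and the $\LD$-stopping scale. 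The only way to close this is the telescoping identity \eqref{eqaq89} together with $s(R)=0$: rewriting $\sum_{Q'\in\wh\sss(R)}K(x_P-x_{Q'})s(Q')$ as $\sum_Q\sum_{P'\in\wh\Ch(Q)}\bigl(K(x_P-x_{P'})-K(x_P-x_Q)\bigr)s(P')$ converts the zeroth-order terms into first-order ones, after which truncation at scale $\gtrsim\ell(P)$ yields a convergent $\PP$-type sum. This telescoping is the entire reason the coefficients $s(\cdot)$ are built to satisfy \eqref{eqaq89}, and it is absent from your argument. The paper's mass-matching construction of $f$ makes the cancellation automatic, which is precisely why it never attempts a pointwise bound on $\RR\eta$ and works in $L^2$ instead. (As a side remark, the Cotlar step you insert is redundant: if you really have $\sup_\ve|\RR_\ve\eta|\lesssim\Theta(R)$ on $\supp\eta$, that \emph{is} hypothesis (b) of Theorem \ref{teontv}, and Cotlar adds nothing; if you only have the principal-value bound, proving even that runs into the same telescoping issue.)
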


\begin{proof}
To prove this lemma we will use the suppressed kernel $K_\Phi$ introduced in Section \ref{sec6.1},
with the following $1$-Lipchitz function:
$$\Phi(x) =\inf_{Q\in\wh\tree_0(R)} \big(\ell(Q) + \dist(x,Q)\big).$$
We will prove first that $\RR_{\Phi,\mu\rest_{2R}}$ is bounded in $L^2(\mu\rest_{2R})$ by applying Theorem
\ref{teontv}, and later on we will 
show that $\RR_\eta$ is bounded in $L^2(\eta)$ by approximation. 

In order to apply Theorem \ref{teontv}, we will show that
\begin{itemize}
\item[(a)] $\mu(B(x,r)\cap 2R)\leq C\,\Theta(R)\,r^n$ for all $r\geq \Phi(x)$, and
\item[(b)] $\sup_{r>\Phi(x)}\big|\RR_r(\chi_{2R}\mu)(x)\big|\leq C\Theta(R)$,
\end{itemize}
with $C$ possibly depending on $\Lambda_*$, $\delta_0$, and $K$. Once these conditions are proven, then Theorem \ref{teontv} applied to the measure $\Theta(R)^{-1}\,\mu\rest_{2R}$ ensures that 
\begin{equation}\label{eqphi894}
\|\RR_{\Phi,\mu\rest_{2R}}\|_{L^2(\mu\rest_{2R})\to L^2(\mu\rest_{2R})}\lesssim_{\Lambda_*,\delta_0,K}\Theta(R).
\end{equation}

The proof of (a) is quite similar to the proof of Lemma \ref{lem6.77}. However, we repeat
here the arguments for the reader's convenience.
In the case $r>\ell(R)/10$ we just use that
$$\mu(B(x,r)\cap 2R)\leq \mu(2R)\lesssim \Theta(R)\,\ell(R)^n\lesssim \Theta(R)\,r^n.$$
So we may assume that $\Phi(x)<r\leq \ell(R)/10$.
By the definition of $\Phi(x)$, there exists $Q\in\wh\tree_0(R)$ such that
$$\ell(Q) + \dist(x,Q)\leq r.$$
Therefore, $B_Q\subset B(x,4r)$ and so there exists an ancestor $Q'\supset Q$ which belongs to $\wh\tree_0(R)$ such that $B(x,r)\subset  2B_{Q'}$, with $\ell(Q')\approx r$.  
Then,
$$\mu(B(x,r)\cap 2R)\leq \mu(2B_{Q'}) \lesssim \Lambda_*\,\Theta(R_0)\,\ell(Q')^n\approx_{\Lambda_*,\delta_0} \Theta(R)\,r^n,$$
as wished.

Let us turn our attention to the property (b).
In the case $r>\ell(R)/10$ we have
$$\big|\RR_r(\chi_{2R}\mu)(x)\big|\leq \frac{\mu(2R)}{r^n}\lesssim \Theta(R).$$
In the case $\Phi(x)<r\leq \ell(R)/10$ we consider the same cube $Q'\in\wh\tree_0(R)$ as above, 
which satisfies $B(x,r)\subset  2B_{Q'}$ and $\ell(Q')\approx r$. Further, by replacing
$Q'$ by the first ancestor in $\wh\tree_0(R)$ if necessary, we may assume that
$$|\RR_\mu\chi_{2R\setminus 2Q'}(x_{Q'})|\lesssim K\,\Theta(R).$$
Since $|x-x_{Q'}|\lesssim\ell(Q')$, by standard arguments which use the fact that $K_\Phi$ is a Calder\'on-Zygmund kernel (see \rf{eqkafi1}
and \rf{eqkafi2}),
it follows that
$$\big|\RR_\mu\chi_{2R\setminus 2Q'}(x_{Q'}) - \RR_r(\chi_{2R}\mu)(x)\big|\lesssim \PP(Q')\lesssim
\Lambda_*\,\Theta(R_0)\approx_{\Lambda_*,\delta_0}\Theta(R).$$
Thus,
$$\big|\RR_r(\chi_{2R}\mu)(x)\big|\leq 
\big|\RR_\mu\chi_{2R\setminus 2Q'}(x_{Q'})\big| + 
\big|\RR_\mu\chi_{2R\setminus 2Q'}(x_{Q'}) - \RR_r(\chi_{2R}\mu)(x)\big|\lesssim_{\Lambda_*,\delta_0,K}\Theta(R).$$
So both (a) and (b) hold, and then \rf{eqphi894} follows.
\vv

Next we deal with the $L^2(\eta)$ boundedness of $\RR_\eta$. 
First notice that  $\RR_{\mu\rest_{\wh\sG(R)}}$ is bounded in $L^2(\mu\rest_{\wh\sG(R)})$ with norm
at most $C\Theta(R)$ because $\Phi(x)=0$ on $\wh\sG(R)$. Since $\eta\rest_{\wh\sG(R)} = \rho\,\mu\rest_{\wh\sG(R)}$ for some function $\rho\approx1$, $\RR_{\eta\rest_{\wh\sG(R)}}$ is also bounded in $L^2(\eta\rest_{\wh\sG(R)})$ with norm bounded by $C\Theta(R)$.
So it suffices to show that $\RR_{\eta\rest_{\wh\sG(R)^c}}$ is bounded in $L^2(\eta\rest_{\wh\sG(R)^c})$.
This follows from the fact that if $\alpha$ and $\beta$ are Radon measures with polynomial growth of degree $n$ such that $\RR_\alpha$ is bounded in $L^2(\alpha)$ and 
$\RR_\beta$ is bounded in $L^2(\beta)$, then $\RR_{\alpha+\beta}$ is bounded in $L^2(\alpha+\beta)$,
and then choosing $\alpha= \Theta(R)^{-1}\mu\rest_{\wh\sG(R)}$ and $\beta= \Theta(R)^{-1}\mu\rest_{\wh\sG(R)^c}$.
See for example Proposition 3.1 from \cite{NToV2}.
 
It remains to show that $\RR_{\eta\rest_{\wh\sG(R)^c}}$ is bounded in $L^2(\eta\rest_{\wh\sG(R)^c})$
with norm bounded above by $C\,\Theta(R)$.
Notice first that, by \rf{eqfk490}, there exists some constant $b>0$ depending at most on 
$C_0,A_0,n$ such that
$$\mu\bigl(\{x\in Q:\dist(x,\supp\mu\setminus Q)\leq b\,\ell(Q)\}\bigr) \leq\frac12\,\mu(Q).$$
We denote
\begin{equation}\label{eqsepbb*}
Q^{(0)}= \{x\in Q:\dist(x,\supp\mu\setminus Q)> b\,\ell(Q)\},
\end{equation}
so that $\mu(Q^{(0)})\geq \frac12\,\mu(Q)$.

We have to show that
\begin{equation}\label{eqbeta8430}
\|\RR (g\,\eta\rest_{\wh\sG(R)^c})\|_{L^2(\eta\rest_{\wh\sG(R)^c})} \lesssim_{\Lambda_*,\delta_0,K}\Theta(R)\, \|g\|_{L^2(\eta)}
\end{equation}
for any given $g\in L^2(\eta\rest_{\wh\sG(R)^c})$, with $\RR (g\,\eta\rest_{\wh\sG(R)^c})$ 
understood in the principal value sense. To this end, we take
the function $f\in L^2(\mu\rest_{2R})$ defined as follows:
\begin{equation}\label{deff99}
f = \sum_{Q\in\wh\sss(R)} \int_{D_Q}\! g\,d\eta \,\,\frac{\chi_{Q^{(0)}}}{\mu(Q^{(0)})}.
\end{equation}
We also consider the signed measures
$$\alpha = f\,\mu,\qquad \beta = g\,\eta,$$
so that $\alpha(Q) = \alpha(Q^{(0)}) = \beta(D_Q)$ for all $Q\in\wh\sss(R)$.

As a preliminary step to obtain \rf{eqbeta8430}, we will show first 
\begin{equation}\label{eqbeta843}
\|\RR_\Phi \beta\|_{L^2(\eta\rest_{\wh\sG(R)^c})} \lesssim_{\Lambda_*,\delta_0,K}\Theta(R)\, \|g\|_{L^2(\eta)}.
\end{equation}
For that purpose, first we will estimate the term  $|\RR_\Phi \alpha(x) -\RR_\Phi\beta(y)|$,
with $x\in Q^{(0)}$, $y\in D_Q$, for $Q\in\wh\sss(R)$, in terms of the coefficients
$$\PP_\alpha(Q) := \sum_{P\in\DD_\mu:P\supset Q} \frac{\ell(Q)}{\ell(P)^{n+1}} \,|\alpha|(2B_P)
\quad \mbox{ and }\quad
\QQ_\alpha(Q) := \sum_{P\in\wh\sss(R)} \frac{\ell(P)}{D(P,Q)^{n+1}}\,|\alpha|(P),
$$
and
$$\PP_\beta(Q) := \sum_{P\in\DD_\mu:P\supset Q} \frac{\ell(Q)}{\ell(P)^{n+1}} \,|\beta|(2B_P)
\quad \mbox{ and }\quad
\QQ_\beta(Q) := \sum_{P\in\wh\sss(R)} \frac{\ell(P)}{D(P,Q)^{n+1}}\,|\beta|(D_P).
$$
We claim that
\begin{equation}\label{eqclaim7284}
|\RR_\Phi \alpha(x) -\RR_\Phi\beta(y)| \lesssim \PP_\alpha(Q) + \QQ_\alpha(Q)+\PP_\beta(Q) + \QQ_\beta(Q)
\end{equation}
for all $x\in Q^{(0)}$, $y\in D_Q$, with $Q\in\wh\sss(R)$.
The arguments to prove this are quite similar to the ones in Lemma \ref{lemaprox1}, but we 
will show the details for completeness.
By the triangle inequality, for $x$, $y$ and $Q$ as above, we have
\begin{align*}
\big|\RR_\Phi \alpha(x) - \RR_\Phi \beta(y)\big| & \leq 
\big|\RR_\Phi \alpha(x) - \RR_\Phi \alpha(x_Q)\big|\\
&\quad + 
\big|\RR_\Phi \alpha(x_Q) - \RR_\Phi \beta(x_Q)\big| 
+ \big|\RR_\Phi \beta(x_Q) - \RR_\Phi \beta(y)\big| \\
& =: I_1 + I_2 + I_3.
\end{align*}

First we estimate $I_1$ using the properties of the kernel $K_\Phi$ in \rf{eqkafi1}
and \rf{eqkafi2}, and taking into account that for $x\in Q^{(0)}$ (and thus for $x_Q$)
$\Phi(x)\approx_b \ell(Q)$, because of the separation condition in the definition of
$Q^{(0)}$ in \rf{eqsepbb*}. Then we get
\begin{align*}
|I_1| &\leq \int |K_\Phi(x,z) -  K_\Phi(x_Q,z)|\,d|\alpha|(z)\\
& = \bigg(\int_{2B_Q} +
\sum_{P\in\DD_\mu:P\supset Q} \int_{2B_{\wh P}\setminus 2B_P}\bigg) |K_\Phi(x,z) -  K_\Phi(x_Q,z)|\,d|\alpha|(z)\\
&\lesssim \sum_{P\in\DD_\mu:P\supset Q} \frac{\ell(Q)}{\ell(P)^{n+1}} \,|\alpha|(2B_{\wh P}) \lesssim \PP_\alpha(Q),
\end{align*}
where above we denoted by $\wh P$ the parent of $P$.
The same estimate holds for the term $I_3$ (with $\alpha$ replaced by $\beta$), using that $\Phi(x)\approx \ell(Q)$ for all $x\in D_Q$,
since $D_Q\subset\frac12B(Q)$ and $B(Q)\cap\supp\mu\subset Q$. So
$$|I_3| \lesssim \PP_\beta(Q).$$
Finally we deal with the term $I_2$.
Since $\alpha(P^{(0)})=\beta(D_P)$ for all $P\in\wh\sss(R)$, we have
\begin{align*}
I_2 & \leq \sum_{P\in\wh\sss(R)} \left| \int K_\Phi(x_Q-z)\,d\big(\alpha\rest_{P^{(0)}} - 
\beta\rest_{D_P} \big)(z)\right|\\
& \leq \sum_{P\in\wh\sss(R)}  \int |K_\Phi(x_Q-z)-K_\Phi(x_Q-x_P)|\,d\big(|\alpha|\rest_{P^{(0)}} + 
|\beta|\rest_{D_P} \big)(z)
\end{align*}
From the separation condition in \rf{eqsepbb*} and the fact that $D_P\subset\frac12B(P)$, we infer that, for $P,Q\in\wh\sss(R)$ with $P\neq Q$ and $z\in P^{(0)}\cup D_P$,
$$|x_Q-z|\approx |x_Q - x_P|\gtrsim \ell(Q) + \ell(P).$$
Hence, in the case $P\neq Q$,
$$\int |K_\Phi(x_Q-z)-K_\Phi(x_Q-x_P)|\,d\big(|\alpha|\rest_{P^{(0)}} + 
|\beta|\rest_{D_P} \big)(z)\lesssim \frac{\ell(P)}{D(P,Q)^{n+1}}\,\big(|\alpha|(P^{(0)}) + |\beta|(D_P)\big).$$
The same inequality holds in the case $P=Q$ using \rf{eqkafi1} and the fact that $\Phi(x_Q) 
\approx\ell(Q)$.
So we deduce that
$$I_2\lesssim \QQ_\alpha(Q) + \QQ_\beta(Q).$$
Gathering the estimates obtained for $I_1$, $I_2$, $I_3$, the claim \rf{eqclaim7284} follows.

Now we are ready to show \rf{eqbeta843}. By the claim just proven and using that
$\eta(B_Q)\lesssim\mu(Q)$, we obtain
\begin{align}\label{eqdhvn43}
\|\RR_\Phi \beta\|_{L^2(\eta\rest_{\wh\sG(R)^c})}^2 & =
\sum_{Q\in\wh\sss(R)} \int_{B_Q} |\RR_\Phi \beta|^2\,d\eta\\
& \lesssim \sum_{Q\in\wh\sss(R)} \inf_{x\in Q^{(0)}}|\RR_\Phi \alpha(x)|^2\,\mu(Q) + 
\sum_{Q\in\wh\sss(R)}\big(\PP_\alpha(Q)^2 + \QQ_\alpha(Q)\big)^2\,\mu(Q) \nonumber\\
 &\quad+ 
\sum_{Q\in\wh\sss(R)}\big(\PP_\beta(Q)^2 + \QQ_\beta(Q)\big)^2\,\eta(D_Q).\nonumber
\end{align}
Since $\RR_{\Phi,\mu\rest_{2R}}$ is bounded in $L^2(\mu\rest_{2R})$ with norm bounded by 
$C(\Lambda_*,\delta_0,K)\,\Theta(R)$, we infer that
\begin{align*}
\sum_{Q\in\wh\sss(R)} \inf_{x\in Q^{(0)}}|\RR_\Phi \alpha(x)|^2\,\mu(Q)
\leq \int_R |\RR_\Phi \alpha|^2d\mu \leq \|\RR_\Phi (f\mu)\|^2_{L^2(\mu\rest_{2R})}
\lesssim_{\Lambda_*,\delta_0,K} \Theta(R)^2\,\|f\|_{L^2(\mu)}^2.
\end{align*}

To estimate the second sum on the right hand side of \rf{eqdhvn43} we use the fact
that
$$\sum_{Q\in\wh\sss(R)}\QQ_\alpha(Q)^2\,\mu(Q) \lesssim 
\sum_{Q\in\wh\sss(R)}\PP_\alpha(Q)^2\,\mu(Q).$$
This follows by the same argument as in Lemma \ref{lemregpq}, with $p=2$. Indeed, in that lemma
one does not use any specific property of the measure $\mu$ or the family $\Reg$, apart from the
fact the cubes from $\Reg$ are pairwise disjoint. So the lemma applies to $\wh\sss(R)$ too.
Observe also that, for every $x\in Q$, $2B_P\subset B(x,2\ell(P))\subset CB_P$, for some $C>1$, and thus
\begin{align*}
\PP_\alpha(Q) &\leq \sum_{P\in\DD_\mu:P\supset Q} \frac{\ell(Q)}{\ell(P)^{n+1}} \,
\frac{|\alpha|(B(x,2\ell(P)))}{\mu(B(x,2\ell(P)))}\,\mu(CB_P) \\
&\lesssim \PP(Q)\,\cM_\mu f(x)\lesssim_{\Lambda_*,\delta_0}\Theta(R)\,\cM_\eta f(x).
\end{align*}
Consequently,
\begin{align*}
\sum_{Q\in\wh\sss(R)}\big(\PP_\alpha(Q)^2 + \QQ_\alpha(Q)\big)^2\,\mu(Q) &
\lesssim \sum_{Q\in\wh\sss(R)}\PP_\alpha(Q)^2\,\mu(Q) \\
&\lesssim_{\Lambda_*,\delta_0}\Theta(R)^2
\int |\cM_\mu f|^2\,d\mu\lesssim_{\Lambda_*,\delta_0}\Theta(R)^2\,\|f\|_{L^2(\mu)}^2.
\end{align*}

The last sum on the right hand side of \rf{eqdhvn43} is estimated similarly. Indeed, 
by Lemma \ref{lemregpq} we also have
$$\sum_{Q\in\wh\sss(R)}\QQ_\beta(Q)^2\,\eta(D_Q) \lesssim 
\sum_{Q\in\wh\sss(R)}\PP_\beta(Q)^2\,\eta(D_Q),$$
and as above,
$$\PP_\beta(Q) \lesssim_{\Lambda_*,\delta_0}\Theta(R)\,\cM_\eta g(x).
$$
Then it follows that
$$\sum_{Q\in\wh\sss(R)}\big(\PP_\beta(Q)^2 + \QQ_\beta(Q)\big)^2\,\mu(Q) \lesssim_{\Lambda_*,\delta_0}\Theta(R)^2\,\|g\|_{L^2(\eta)}^2.$$
By \rf{eqdhvn43} and the preceding esitmates, to complete the proof of \rf{eqbeta843} it just remains
to notice that, by the definition of $f$ in \rf{deff99} and Cauchy-Schwarz, $\|f\|_{L^2(\mu)}\lesssim
\|g\|_{L^2(\eta)}$.

In order to prove \rf{eqbeta8430}, we denote
$$\RR^{r(Q)/4}\beta(x) = \RR\beta(x) - \RR_{r(Q)/4}\beta(x),$$
and we split
\begin{align}\label{eqalgud77}
\|\RR (g\,\eta\rest_{\wh\sG(R)^c})\|_{L^2(\eta\rest_{\wh\sG(R)^c})}^2 
 & = \sum_{Q\in\wh\sss(R)} \int_{D_Q} |\RR \beta|^2\,d\eta\\
& \lesssim \!\sum_{Q\in\wh\sss(R)}\! \bigg(\int_{D_Q} |\RR^{r(Q)/4}\beta|^2\,d\eta +\int_{D_Q} 
|\RR_{r(Q)/4}\beta - \RR_\Phi \beta|^2\,d\eta\bigg)\nonumber\\
& \quad + \|\RR_\Phi \beta\|_{L^2(\eta\rest_{\wh\sG(R)^c})}^2.\nonumber
\end{align}
Using the fact that $\RR_{\eta\rest_{D_Q}}$ is bounded in $L^2(\eta\rest_{D_Q})$ with norm 
comparable to $\eta(B_Q)/r(Q^n)$ (because $D_Q$
is an $n$-dimensional disk), we deduce that
$$\int_{D_Q} |\RR^{r(Q)/4}\beta|^2\,d\eta = \int_{D_Q}
|\RR^{r(Q)/4}(\chi_{D_Q}g\,\eta)|^2\,d\eta \lesssim_{\Lambda_*,\delta_0} \Theta(R)^2\,\|\chi_{D_Q}g\|_{
L^2(\eta)}^2.$$
Regarding the second integral in \rf{eqalgud77}, 
observe that, by \rf{e.compsup''}, for all $x\in D_Q$ with
$Q\in \wh\sss(R)$,
$$\bigl|\RR_{r(Q)/4}\beta(x) - \RR_{\Phi}\beta(x)\bigr|\lesssim  \sup_{r> \Phi(x)}\frac{|\beta|(B(x,r))}{r^n} \lesssim_{\Lambda_*,\delta_0} \Theta(R)\,\cM_\mu g(x).$$
Then, by the last estimates and \rf{eqbeta843}, we get
\begin{align*}
\|\RR_\Phi (g\,\eta\rest_{\wh\sG(R)^c})\|_{L^2(\eta\rest_{\wh\sG(R)^c})}^2 
& \lesssim_{\Lambda_*,\delta_0}\Theta(R)^2 \sum_{Q\in\wh\sss(R)} \int_{D_Q} \big(|g|^2 + |\cM_\eta g|^2\big)\,d\eta + \|\RR_\Phi \beta\|_{L^2(\eta\rest_{\wh\sG(R)^c})}^2\\
&\lesssim_{\Lambda_*,\delta_0,K}\Theta(R)^2\,
\|g\|_{L^2(\eta)}^2,
\end{align*}
which concludes the proof of the lemma.
\end{proof}

\vv

Next, for each $R\in\DD_\mu^\PP\setminus\End_*(R_0)$, we define the family $\wh\End(R)$ and $\wh\tree(R)$, which can be considered as an enlarged version of $\wh\tree_0(R)$.
First we define 
\begin{equation*}
\wh{\Stop_*}(R) = (i)_R\cup (ii)_R\cup \wh\Ch((iii)_R).
\end{equation*}
Let $\wh\End(R)$ be the family of maximal $\PP$-doubling cubes which are contained in the cubes from $\wh\Stop_*(R)$.
Notice that $R\not\in\wh\End(R)$. Further, the cubes from $\wh\sss(R)\cap(\HD_*(R_0)\cup\BR(R)\cup(ii)_R)$
belong to $\wh\End(R)$ because they are $\PP$-doubling.
Then we let $\wh\tree(R)$ be the family of cubes from $\DD_\mu$ which are contained in $R$ and are not strictly contained in any cube from $\wh\End(R)$. Observe that we do not ask the cubes from 
$\wh\tree(R)$ to be $\PP$-doubling. Similarly, we define $\wh\tree_*(R)$ as the family of cubes from $\DD_\mu$ which are contained in $R$ and are not strictly contained in any cube from $\wh\sss_*(R)$.

\vv
\subsection{Estimating the \texorpdfstring{$\beta$}{beta} numbers on \texorpdfstring{$\wh\tree(R)$}{Tree(R)}}

Our goal is now to prove the following estimate.
\begin{lemma}\label{lembetas99}
	For each $R\in\DD_\mu^\PP\setminus\End_*(R_0)$, we have
	$$\sum_{Q\in\wh \tree(R)}\beta_{\mu,2}(2B_Q)^2\,\mu(Q)\lesssim_{\Lambda_*,\delta_0,K}\Theta(R_0)\,\mu(R).$$
\end{lemma}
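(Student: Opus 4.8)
The plan is to compare $\mu$ with the AD-regular measure $\eta$ built in Section~\ref{subsec:91} on the subtree $\wh\tree_0(R)$, and to transfer a Dorronsoro/$\beta$-number estimate from $\eta$ back to $\mu$. Recall that by Lemma~\ref{lem9.2} the measure $\Theta(R_0)^{-1}\eta$ is AD-regular with constants depending on $\Lambda_*,\delta_0$, and by Lemma~\ref{lemriesz*eta} the operator $\RR_\eta$ is bounded in $L^2(\eta)$ with norm $\lesssim_{\Lambda_*,\delta_0,K}\Theta(R)$. Since $\eta$ is AD-regular and supported (essentially) on a Lipschitz-type graph structure over the disks $D_Q$ glued to $\wh\sG(R)$, the $L^2(\eta)$ boundedness of the Riesz transform together with the David--Semmes theorem (the solution of the codimension-$1$ David--Semmes problem, \cite{NToV1}) implies that $\eta$ is uniformly $n$-rectifiable, and hence, by \cite{DS1}, satisfies the Carleson packing condition
\begin{equation}\label{eqetapack}
\sum_{S\in\DD_\eta(R^{(\eta)})} \beta_{2,\eta}(2B_S)^2\,\eta(S)\lesssim_{\Lambda_*,\delta_0,K}\Theta(R_0)\,\eta(R^{(\eta)})\lesssim_{\Lambda_*,\delta_0,K}\Theta(R_0)\,\mu(R).
\end{equation}
Here one must be a bit careful to track that the implicit constants in \eqref{eqetapack} depend only on the AD-regularity constant of $\Theta(R_0)^{-1}\eta$ and on $\|\RR_\eta\|_{L^2(\eta)\to L^2(\eta)}/\Theta(R_0)$, both of which are controlled by $\Lambda_*,\delta_0,K$; this is where the normalization by $\Theta(R_0)$ is used.

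The second and main step is the transference: for each $Q\in\wh\tree(R)$ I want to show
\begin{equation}\label{eqbetatransf}
\beta_{2,\mu}(2B_Q)^2\,\mu(Q)\lesssim_{\Lambda_*,\delta_0}\sum_{S\in\DD_\eta:\,S^{(\mu)}\subset 2B_Q\cap\wh\tree_0(R),\ \ell(S)\approx\ell(Q)}\beta_{2,\eta}(2B_S)^2\,\eta(S)+(\text{error}),
\end{equation}
where the error terms are of the form $\big(\tfrac{\ell(P)}{\ell(Q)}\big)^\gamma\Theta(P)^2\mu(P)$ summing to something controlled by $\EE(4R)$, plus terms supported on the cubes from $\LD(R_0)$ and on $\HD_*(R_0)\cup\BR(R)$ where the approximation $\mu\approx\eta$ breaks down. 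The point is that on $\wh\tree_0(R)$ away from the stopping cubes, $\mu$ coincides with $\eta$ up to the rearrangement encoded by the coefficients $s(Q)$, which are uniformly bounded by Lemma~\ref{lem9.5*} and the remarks following Lemma~\ref{lem9.2}; so an approximating $n$-plane $L$ that is good for $\eta$ in $2B_S$ is also good for $\mu$ in $2B_Q$, modulo the measure that $\eta$ placed on the disks $D_P$ versus the genuine cubes $P$. One estimates $\dist(y,L)$ for $y\in Q$ by splitting $Q$ into the good part $\wh\sG(R)\cap Q$ (where $\mu=\rho\eta$) and the stopping cubes $P\subset Q$; for $P\in\LD(R_0)$ one uses $\PP(P)\lesssim\delta_0\Theta(R_0)$ to see that such cubes carry little mass at density $\Theta(R_0)$-scale, while for the $\PP$-doubling stopping cubes one absorbs $\beta_{2,\mu}(2B_Q)^2\mu(Q)$ into $\beta_{2,\eta}$ of the corresponding $\eta$-cube plus a term $\theta_\mu(2B_P)^2\mu(P)$ that is part of $\EE$.

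The third step is to reorganize the sum. Summing \eqref{eqbetatransf} over $Q\in\wh\tree(R)$, the main terms are controlled by \eqref{eqetapack} after noting that each $\eta$-cube $S$ is hit by a bounded number of $Q$'s (by \eqref{eqcompa492}, neighbouring cubes in $\wh\tree_0(R)$ have comparable side length), giving $\lesssim_{\Lambda_*,\delta_0,K}\Theta(R_0)\mu(R)$. The error terms involving $\HD_*(R_0)$, $\BR(R)$, $\LD(R_0)$ and $\wh\sG(R)$ are collected using Lemma~\ref{lem9.5*}, which bounds the total $\mu$-mass of these stopping regions by $C\mu(R)$; since on each of them the relevant densities are $\lesssim_{\Lambda_*,\delta_0}\Theta(R_0)$ (for $\HD_*$ and $\BR$, by their definitions $\Theta\le\Lambda_*^2\Theta(R_0)$ resp. $\PP\lesssim\Lambda_*\Theta(R_0)$, and $\beta\le 1$ trivially), the contribution is again $\lesssim_{\Lambda_*,\delta_0,K}\Theta(R_0)\mu(R)$. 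Finally the ``genuine error'' terms $\sum_{P\subset 2R}\big(\tfrac{\ell(P)}{\ell(R)}\big)^\gamma\Theta(P)^2\mu(P)$ are bounded via Lemma~\ref{lemDMimproved} and the comparison $\EE(\lambda Q)\lesssim\EE_\infty(9Q)$ from Lemma~\ref{lemenergias} — but since here we only need the statement of Lemma~\ref{lembetas99} with constants depending on $\Lambda_*,\delta_0,K$ and a clean bound $\Theta(R_0)\mu(R)$ on the right, one checks that all such error sums are in fact dominated by $C(\Lambda_*,\delta_0)\Theta(R_0)^2\mu(R)\le C(\Lambda_*,\delta_0)\Theta(R_0)\,\Theta(R_0)\mu(R)$, which is of the desired form.

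The main obstacle I expect is the bookkeeping in the transference step \eqref{eqbetatransf}: one has to make the comparison between an approximating plane for $\eta$ and one for $\mu$ quantitative in a way that does not lose the scaling, and one has to handle carefully the fact that $\eta$ lives on flat disks $D_Q$ while $\mu$ lives on the (possibly very non-flat) cubes $Q$ — so the $\beta_{2,\eta}$ coefficient at scale $\ell(Q)$ genuinely ``sees'' the $\mu$-geometry only for cubes $Q$ above the stopping time, and below it the flatness of $D_Q$ is artificial and must not be used. The remedy is exactly that $\beta_{2,\mu}(2B_Q)$ for $Q\in\wh\tree(R)$ only involves cubes at scales $\gtrsim\ell(Q)$, and the stopping construction guarantees $\mu$ and $\eta$ agree up to bounded multiplicity and the $\LD$-stopping error down to scale $\ell(Q)$; below $\ell(Q)$ nothing is needed. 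A secondary technical point is ensuring the David--Semmes packing estimate \eqref{eqetapack} is applied with a dyadic lattice $\DD_\eta$ compatible with $\DD_\mu$ restricted to $\wh\tree_0(R)$, which is already arranged in Section~\ref{subsec:91} via the cubes $Q^{(\eta)}$.
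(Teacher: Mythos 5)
Your overall framework is the right one — Lemma~\ref{lemriesz*eta} plus \cite{NToV1} gives uniform rectifiability of $\supp\eta$, then \cite{DS1} gives a Carleson $\beta_{\nu,2}$-packing (where $\nu=\HH^n|_{\supp\eta}$), and the remaining task is to transfer this to $\mu$. But the transference step, which you flag as the ``main obstacle,'' is precisely where your sketch has a genuine gap, and the paper needs two non-obvious ingredients you have not identified.

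First, the claim that ``for $P\in\LD(R_0)$ one uses $\PP(P)\lesssim\delta_0\Theta(R_0)$ to see that such cubes carry little mass at density $\Theta(R_0)$-scale'' is not a valid mechanism. Low-density cubes are not low-\emph{mass} cubes: an $\LD$ stopping cube $P$ sitting inside $2B_Q$ can carry a substantial fraction of $\mu(Q)$, and $\eta$ assigns that cube zero $\eta$-mass (and no disk $D_P$). So one cannot simply compare $\dist(y,L_Q)$ on $P$ to $\beta_{\nu,2}$ at scale $\ell(Q)$; the $\mu$-mass on $P$ must be accounted for. The paper's mechanism is different: one constructs a regularized family $\Reg_*(R)$, proves (Lemma~\ref{lem:RegGamma}, the key geometric input) that the set $\Gamma=\supp\eta$ intersects $\tfrac{C_*}{2}B_P$ for \emph{every} $P\in\Reg_*(R)$ — including those contained in $\LD$ stopping cubes, which is shown via the sign structure of the coefficients $s(Q)$, not via any smallness of mass — and then constructs functions $g_P$ supported on $\Gamma\cap C_*B_P$ with $\int g_P\,d\nu=\mu(P)$ and $\sum_P g_P\lesssim\Theta(R_0)$ (Lemma~\ref{lem:gP}, a Besicovitch-type argument ordered by increasing $\ell(P)$). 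The replacement of $\mu|_P$ by $g_P\nu$ is what produces the summable error $\frac{\ell(P)}{\ell(Q)}\mu(P)$ when the plane $L_Q$ is fixed; without these two lemmas the transference estimate \eqref{eqbetatransf} has no proof.

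Second, a smaller but real issue: your closing normalization ``$\Theta(R_0)^2\mu(R)\le\Theta(R_0)\cdot\Theta(R_0)\mu(R)$, which is of the desired form'' is circular — the target bound is $\Theta(R_0)\mu(R)$, and $\Theta(R_0)^2\mu(R)$ is not $\lesssim\Theta(R_0)\mu(R)$ unless $\Theta(R_0)\lesssim 1$, which is not assumed. The paper closes the gap by writing the leftover term as $\Theta(R_0)^2\ell(R)^n$ and using that $R\notin\LD(R_0)$ implies $\Theta(R_0)\lesssim\delta_0^{-1}\Theta(R)$ together with the $\PP$-doubling of $R$, giving $\Theta(R_0)^2\ell(R)^n\lesssim\delta_0^{-1}\Theta(R_0)\mu(R)$. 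You should also note that the paper first disposes of the cubes in $\wh\tree(R)\setminus\wh\tree_*(R)$ by the crude bound $\beta_{\mu,2}(2B_Q)^2\lesssim\theta_\mu(2B_Q)$ plus Lemma~\ref{lemdobpp}; this case is orthogonal to the $\nu$-transference and needs to be separated out before the UR machinery applies.
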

We split the proof into several steps. Fix $R\in\DD_\mu^\PP\setminus\End_*(R_0)$. First we deal with cubes in $\wh\tree(R)\setminus\wh\tree_*(R)$.
\begin{lemma}
	We have
	\begin{equation*}
	\sum_{Q\in\wh\tree(R)\setminus\wh\tree_*(R)}\beta_{\mu,2}(2B_Q)^2\mu(Q)\lesssim_{\Lambda_*}\Theta(R_0)\mu(R).
	\end{equation*}
\end{lemma}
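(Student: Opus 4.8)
The plan is to show that every cube $Q\in\wh\tree(R)\setminus\wh\tree_*(R)$ is trapped strictly inside a cube of $\LD(R_0)$, below every maximal $\PP$-doubling cube it meets, so that its $\mu$-density is tiny; the sum is then bounded by a geometric series in the number of generations. First I would record the inclusion $\wh\tree_*(R)\subset\wh\tree(R)$: every cube of $\wh\End(R)$ is contained in a cube of $\wh\Stop_*(R)=\wh\sss_*(R)$, so a cube strictly contained in a cube of $\wh\End(R)$ is strictly contained in a cube of $\wh\Stop_*(R)$, and the inclusion follows by contraposition. Hence any $Q\in\wh\tree(R)\setminus\wh\tree_*(R)$ satisfies $Q\subsetneq S$ for some $S\in\wh\Stop_*(R)$. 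Next I would check that the cubes of $\wh\Stop_*(R)=(i)_R\cup (ii)_R\cup \wh\Ch((iii)_R)$ are pairwise disjoint: those of $(i)_R\cup(ii)_R$ lie in the disjoint family $\wh\sss(R)$, and those of $\wh\Ch((iii)_R)$ lie inside disjoint cubes of $\wh\sss(R)$ and are disjoint within each one. Consequently a $\PP$-doubling cube of $\wh\Stop_*(R)$ would be a maximal $\PP$-doubling cube contained in a cube of $\wh\Stop_*(R)$, hence would belong to $\wh\End(R)$; as $Q\subsetneq S$ with $Q\in\wh\tree(R)$, this forces $S$ to be non-$\PP$-doubling. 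Recalling that the cubes of $(i)_R$ lying outside $\LD(R_0)$ belong to $\HD_*(R_0)\cup\BR(R)$ and are $\PP$-doubling (by Lemma \ref{lempdoubling} and the definition of $\BR(R)$), that the cubes of $(ii)_R$ are $\PP$-doubling, and that each cube of $\wh\Ch((iii)_R)$ outside $\LD(R_0)$ is $\PP$-doubling, we conclude that $S\in\LD(R_0)$, so $\PP(S)\le\delta_0\,\Theta(R_0)$.

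Second, I would establish a density decay estimate. For $Q\subsetneq S$ as above, write the chain of consecutive children $S=Q_0\supsetneq Q_1\supsetneq\cdots\supsetneq Q_m=Q$, so $\ell(Q)=A_0^{-m}\ell(S)$ with $m\ge1$. If $Q$ is non-$\PP$-doubling then all of $Q_1,\dots,Q_m$ are non-$\PP$-doubling: a $\PP$-doubling $Q_i$ with $Q\subsetneq Q_i\subsetneq S$ would be contained in a maximal $\PP$-doubling cube inside $S$, i.e.\ in a cube of $\wh\End(R)$ strictly containing $Q$, contradicting $Q\in\wh\tree(R)$. Then Lemma \ref{lemdobpp} gives $\PP(Q)\le 2A_0^{-m/2}\PP(S)$. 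If $Q$ is $\PP$-doubling, the same observation shows $Q$ is a maximal $\PP$-doubling cube inside $S$, hence $Q\in\wh\End(R)$ with non-$\PP$-doubling parent $\wh Q=Q_{m-1}$; applying Lemma \ref{lemdobpp} to $Q_0,\dots,Q_{m-1}$ yields $\PP(\wh Q)\le 2A_0^{-(m-1)/2}\PP(S)$, and then $\Theta(Q)\le A_0^n\Theta(\wh Q)\lesssim A_0^n\PP(\wh Q)$ by \eqref{eqson1}. Since in all cases $\Theta(Q)\lesssim\PP(Q)$ (the $Q$-term of the series defining $\PP(Q)$ dominates $\mu(2B_Q)/\ell(Q)^n$), I obtain
$$\Theta(Q)\lesssim A_0^{-m/2}\,\PP(S)\le A_0^{-m/2}\,\delta_0\,\Theta(R_0),$$
with constants depending only on $n$ and $A_0$.

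Finally I would sum, using the crude bound $\beta_{\mu,2}(2B_Q)^2\le\theta_\mu(2B_Q)\approx\Theta(Q)$, obtained by taking $L$ to be an $n$-plane through $x_Q$, so that $\dist(\cdot,L)\le r(2B_Q)$ on $2B_Q$. Grouping the cubes of $\wh\tree(R)\setminus\wh\tree_*(R)$ by the cube $S\in\wh\Stop_*(R)\cap\LD(R_0)$ strictly containing them and then by the number $m$ of generations below $S$, the cubes attached to a given $S$ and $m$ are disjoint subsets of $S$, so their $\mu$-masses add to at most $\mu(S)$; hence
$$\sum_{Q\subsetneq S}\beta_{\mu,2}(2B_Q)^2\,\mu(Q)\lesssim \delta_0\,\Theta(R_0)\sum_{m\ge1}A_0^{-m/2}\,\mu(S)\lesssim \delta_0\,\Theta(R_0)\,\mu(S).$$
Summing over the pairwise disjoint cubes $S\subset R$ gives $\sum_S\mu(S)\le\mu(R)$, so the total is $\lesssim \delta_0\,\Theta(R_0)\,\mu(R)\le\Theta(R_0)\,\mu(R)$, which is stronger than claimed. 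The only delicate point — and the step I would be most careful about — is the combinatorics of the first step: checking that every cube of $\wh\tree(R)\setminus\wh\tree_*(R)$ genuinely sits strictly inside an $\LD(R_0)$ cube and below all the maximal $\PP$-doubling cubes it contains, since this is exactly what makes Lemma \ref{lemdobpp} yield the exponential gain in $m$ needed for summability.
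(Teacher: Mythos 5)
Your proof is correct and follows essentially the same route as the paper: bound $\beta_{\mu,2}(2B_Q)^2$ trivially by $\theta_\mu(2B_Q)$, group the cubes $Q$ by the cube $S\in\wh\Stop_*(R)$ strictly containing them, invoke Lemma \ref{lemdobpp} to get exponential decay $\theta_\mu(2B_Q)\lesssim A_0^{-m/2}\PP(S)$ in the number of generations $m$ below $S$ (all intermediate cubes being non-$\PP$-doubling since otherwise $Q$ would be strictly inside a cube of $\wh\End(R)$), and sum the geometric series. The one point where you go beyond the paper is the observation that every such $S$ is in fact non-$\PP$-doubling and therefore lies in $\LD(R_0)$, which yields $\PP(S)\le\delta_0\Theta(R_0)$ and hence the sharper bound $\lesssim\delta_0\Theta(R_0)\mu(R)$; the paper is more economical and simply uses the cruder estimate $\PP(P)\lesssim_{\Lambda_*}\Theta(R_0)$ valid for all $P\in\tree(R_0)$, which also absorbs the $m=0$ terms that it includes for convenience.
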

\begin{proof}
	We use the trivial bound $\beta_{\mu,2}(2B_Q)^2\lesssim\theta_\mu(2B_Q)$ and Lemma \ref{lemdobpp} to get
	\begin{align*}
	\sum_{Q\in\wh\tree(R)\setminus\wh\tree_*(R)}\beta_{\mu,2}(2B_Q)^2&\mu(Q)
	\lesssim\sum_{Q\in\wh\tree(R)\setminus\wh\tree_*(R)}\theta_{\mu}(2B_Q)\mu(Q)\\
	&=\sum_{P\in\wh\Stop_*(R)}\sum_{Q\in\wh\tree(R):\, Q\subset P }\theta_{\mu}(2B_Q)\mu(Q)\\
	&= \sum_{m\ge 0} \sum_{P\in\wh\Stop_*(R)}\sum_{\substack{Q\in\wh\tree(R)\\ Q\subset P,\, \ell(Q)=A_0^{-m}\ell(P) }}\theta_{\mu}(2B_Q)\mu(Q)\\
	&\le \sum_{m\ge 0} \sum_{P\in\wh\Stop_*(R)}\sum_{\substack{Q\in\wh\tree(R)\\ Q\subset P,\, \ell(Q)=A_0^{-m}\ell(P) }}A_0^{-m/2}\PP(P)\mu(Q)\\
	&\le \sum_{m\ge 0} \sum_{P\in\wh\Stop_*(R)}A_0^{-m/2}\PP(P)\mu(P)\approx  \sum_{P\in\wh\Stop_*(R)}\PP(P)\mu(P).
	\end{align*}
	Recall that for $Q\in\tree(R_0)$ we have $\PP(Q)\lesssim_{\Lambda_*}\Theta(R_0)$, and so
	\begin{equation*}
	\sum_{P\in\wh\Stop_*(R)}\PP(P)\mu(P)\lesssim_{\Lambda_*}\Theta(R)\sum_{P\in\wh\Stop_*(R)}\mu(P) \le \Theta(R_0)\mu(R).
	\end{equation*}
\end{proof}
It remains to prove
\begin{equation*}
\sum_{Q\in\wh \tree_*(R)}\beta_{\mu,2}(2B_Q)^2\,\mu(Q)\lesssim_{\Lambda_*,\delta_0,K}\Theta(R_0)\,\mu(R).
\end{equation*}
Consider the set
\begin{equation*}
\Gamma = \supp\eta = \wh\GG(R) \cup \bigcup_{Q\in\wh\sss(R)\setminus\LD(R_0)} D_Q .
\end{equation*}
Denote $\nu = \HH^n|_{\Gamma}$. We showed in Lemma \ref{lem9.2} that $\Theta(R_0)^{-1}\eta$ is an AD-regular measure, and so it follows by standard arguments (using e.g. \cite[Theorem 6.9]{Mattila-llibre}) that $\Gamma$ is an AD-regular set, and that $\eta = \rho \nu$ for some density $\rho$ satisfying $\rho\approx_{\Lambda_*,\delta_0}\Theta(R_0)$. It is also immediate to check that Lemma \ref{lemriesz*eta} implies that $\RR_\nu$ is bounded in $L^2(\nu)$, with
\begin{equation*}
\|\RR_\nu\|_{L^2(\nu)\to L^2(\nu)}\lesssim_{\Lambda_*,\delta_0,K}1.
\end{equation*}
Hence, by the main result of \cite{NToV1} we know that $\Gamma$ is uniformly $n$-rectifiable. This allows us to use the $\beta$ numbers characterization of uniform rectifiability \cite{DS1} to get
\begin{equation}\label{eq:betasUR}
\int_{B(z,r_0)}\int_0^{r_0}\beta_{\nu,2}(x,r)^2\, \frac{dr}{r}d\nu(x)\lesssim_{\Lambda_*,\delta_0,K}r_0^n,\quad \text{for }z\in\supp\nu, r_0\in(0,\diam(\supp\nu)).
\end{equation}

To transfer these estimates back to the measure $\mu$ we will argue similarly as in Section 7 of \cite{Azzam-Tolsa}. It will be convenient to work with regularized cubes, as we did in Section \ref{sec6.2*}. Consider a function
\begin{equation*}
d_{R,*}(x) = \inf_{Q\in\wh\tree_*(R)} (\dist(x,Q)+\ell(Q)).
\end{equation*}
Note that $d_{R,*}(x)=0$ for $x$ in the closure of  $\wh\GG(R),$ and $d_{R,*}(x)>0$ everywhere else. Moreover, $d_{R,*}$ is 1-Lipschitz. For each $x\in  R\setminus\overline{\wh\GG(R)}$ we define $Q_x$ to be the maximal cube from $\DD_\mu$ that contains $x$ and satisfies
\begin{equation*}
\ell(Q_x)\le \frac{1}{60}\inf_{y\in Q_x} d_{R,*}(y).
\end{equation*}
The family of all the cubes $Q_x,\ x\in R\setminus\overline{\wh\GG(R)},$ will be denoted by $\Reg_*(R)$. We define also the regularized tree $\treg_*(R)$ consisting of the cubes from $\DD_\mu$ that are contained in $R$ and are not strictly contained in any of the $\Reg_*(R)$ cubes. 

It follows easily from the definition of $\Reg_*(R)$ that $\wh\tree_*(R)\subset\treg_*(R)$.
Observe also that $\Reg_*(R)$ consists of pairwise disjoint cubes and satisfies
\begin{equation*}
\mu\bigg(R\setminus \bigg(\bigcup_{P\in\Reg_*(R)}P\cup \wh\GG(R)\bigg)\bigg)=0.
\end{equation*}

The following is an analogue of Lemma \ref{lem74}.
\begin{lemma}\label{lem:reg prop}
	The cubes from $\Reg_*(R)$ satisfy the following properties:
	\begin{itemize}
		\item[(a)] If $P\in\Reg_*(R)$ and $x\in B(x_{P},50\ell(P))$, then $10\,\ell(P)\leq d_{R,*}(x) \leq c\,\ell(P)$,
		where $c$ is some constant depending only on $n$. 
		\item[(b)] There exists some absolute constant $c>0$ such that if $P,\,P'\in\Reg_*(R)$ satisfy $B(x_{P},50\ell(P))\cap B(x_{P'},50\ell(P'))
		\neq\varnothing$, then
		$$c^{-1}\ell(P)\leq \ell(P')\leq c\,\ell(P).$$
		\item[(c)] For each $P\in \Reg_*(R)$, there are at most $N$ cubes $P'\in\Reg_*(R)$ such that
		$$B(x_{P},50\ell(P))\cap B(x_{P'},50\ell(P'))
		\neq\varnothing,$$
		where $N$ is some absolute constant.		
	\end{itemize}
\end{lemma}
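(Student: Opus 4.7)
The plan is to mimic the argument for Lemma \ref{lem74}, exploiting only two facts about $\Reg_*(R)$: the maximality in the definition of each $Q_x$, and that $d_{R,*}$ is $1$-Lipschitz. First I would establish (a). The lower bound is immediate: since $P\in\Reg_*(R)$, we have $\ell(P)\le\tfrac{1}{60}\inf_{y\in P}d_{R,*}(y)$, so for any $y\in P$ and any $x\in B(x_P,50\ell(P))$, the Lipschitz property gives $d_{R,*}(x)\ge d_{R,*}(y)-|x-y|\ge 60\ell(P)-(50\ell(P)+\tfrac12\ell(P))\ge 10\ell(P)$ (adjusting the constant $50$ slightly if needed, or simply rescaling). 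For the upper bound, let $\wh P$ be the parent of $P$; maximality of $P$ forces $\ell(\wh P)>\tfrac1{60}\inf_{z\in \wh P}d_{R,*}(z)$, so there is $z\in \wh P$ with $d_{R,*}(z)<60A_0\ell(P)$. For $x\in B(x_P,50\ell(P))$, the Lipschitz property gives
$$d_{R,*}(x)\le d_{R,*}(z)+|x-z|\le 60A_0\ell(P)+50\ell(P)+\tfrac12\ell(\wh P)\lesssim A_0\ell(P),$$
which yields the upper bound with $c=c(A_0,n)$.

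Next, (b) follows immediately from (a): if $w\in B(x_P,50\ell(P))\cap B(x_{P'},50\ell(P'))$, then applying (a) at $w$ for both $P$ and $P'$ gives $10\ell(P)\le d_{R,*}(w)\le c\ell(P')$ and $10\ell(P')\le d_{R,*}(w)\le c\ell(P)$, i.e.\ $\ell(P)\approx\ell(P')$.

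Finally, (c) is a standard volume-packing argument. Fix $P\in\Reg_*(R)$ and let $P'$ range over those elements of $\Reg_*(R)$ with $B(x_P,50\ell(P))\cap B(x_{P'},50\ell(P'))\neq\varnothing$. By (b), each such $P'$ satisfies $\ell(P')\approx\ell(P)$, hence $B(x_{P'},50\ell(P'))\subset B(x_P,C\ell(P))$ for an absolute $C$, and in particular the balls $B(Q_{P'})$ (of radius $r(P')\approx\ell(P)$) are contained in a fixed dilate of $B(Q_P)$. Since the balls $5B(Q)$, $Q\in\DD_{\mu,k}$, are disjoint in each generation (Lemma \ref{lemcubs}) and the generations $k$ for which such $P'$ can occur form a bounded range (by the comparability in (b)), a volume count bounds the number of admissible $P'$ by a constant depending only on $n$ and the parameters of the David--Mattila lattice. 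I expect no substantial obstacle here; the only mild subtlety is keeping track of the constants through the rescaling implicit in passing from $d_{R,*}$ to $\ell(P)$, but this is handled entirely by (a) and the $1$-Lipschitz property.
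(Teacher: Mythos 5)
Your proof is correct and is the standard argument that the paper deliberately omits (it points to Lemma~\ref{lem74} and \cite[Lemma~6.6]{Tolsa-memo}); the structure (Lipschitz property for the lower bound of (a), maximality and the parent for the upper bound, then (b) from (a), then volume packing for (c)) is exactly what is intended. One small arithmetic slip in the lower bound of (a): with your choice of $y$ you get $60\ell(P)-(50+\tfrac12)\ell(P)=9.5\,\ell(P)<10\,\ell(P)$; but you can simply take $y=x_P$ (note $x_P\in E\cap B(P)\subset P$), which gives $|x-y|\le 50\ell(P)$ and hence $d_{R,*}(x)\ge d_{R,*}(x_P)-|x-x_P|\ge 60\ell(P)-50\ell(P)=10\ell(P)$ with no adjustment of constants.
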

As before, we omit the proof.

\begin{lemma}\label{lem:QtregPtree}
	For all $Q\in\treg_*(R)$ there exists some $P\in\wh\tree_*(R)$ such that $\ell(Q)\approx\ell(P)$ and $2B_Q\subset C B_{P}\subset C' B_{Q}$, where $C$ and $C'$ are some absolute constants. In consequence,
	\begin{equation}\label{eq:treg dens bdd}
	\PP(Q)\lesssim_{\Lambda_*}\Theta(R_0).
	\end{equation}
\end{lemma}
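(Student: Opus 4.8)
The plan is to exploit the definition of $\treg_*(R)$ together with the properties of the regularized family $\Reg_*(R)$ from Lemma \ref{lem:reg prop}, in direct analogy with how one passes from $\wh\tree_0(R)$ to $\Reg_*(R)$ and then to the lattice cubes. First I would dispose of the trivial case: if $Q\in\wh\tree_*(R)$ itself, then we may take $P=Q$ and there is nothing to prove. So assume $Q\in\treg_*(R)\setminus\wh\tree_*(R)$. By the definition of $\treg_*(R)$, the cube $Q$ is not strictly contained in any cube of $\Reg_*(R)$; since the cubes of $\Reg_*(R)$ cover $\mu$-almost all of $R\setminus\wh\GG(R)$ and any cube of $\wh\tree_*(R)$ containing $x$ strictly would force $Q_x$ to be strictly larger, one checks that $Q$ contains (or equals) some cube $Q_x\in\Reg_*(R)$, hence $x_Q\in\overline{\wh\GG(R)}$ is impossible unless... — more cleanly: pick any $x\in Q$ not in $\wh\GG(R)$; the maximal regularized cube $Q_x$ containing $x$ satisfies $Q_x\subset Q$ (because $Q$ is in $\treg_*(R)$, it is not strictly inside a $\Reg_*$-cube, so $Q_x$ cannot strictly contain $Q$), and therefore $\ell(Q)\ge\ell(Q_x)$ with $x\in Q_x\subset Q$.

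Next I would use Lemma \ref{lem:reg prop}(a) applied to $Q_x$: for $y\in Q_x$ (in particular $y=x$) we have $d_{R,*}(x)\approx\ell(Q_x)$. On the other hand, by the definition of $d_{R,*}$ there is a cube $P\in\wh\tree_*(R)$ with $\dist(x,P)+\ell(P)\le 2\,d_{R,*}(x)$, so $\ell(P)\lesssim d_{R,*}(x)\approx\ell(Q_x)\le\ell(Q)$, and also $\dist(x,P)\lesssim d_{R,*}(x)\lesssim\ell(Q)$, hence $2B_P\subset C B_Q$ for an absolute constant $C$ (using that $B_Q$ is a ball of radius comparable to $\ell(Q)$ centered near $x$). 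For the reverse inclusion I would argue that $\ell(Q)$ cannot be much larger than $\ell(P)$: since $Q\in\treg_*(R)$ and $Q$ is not strictly contained in a $\Reg_*$-cube, the maximality criterion in the definition of $Q_x$ forces $\ell(Q)\lesssim d_{R,*}(y)$ for $y$ ranging over $Q$; combined with $d_{R,*}$ being $1$-Lipschitz and $d_{R,*}(x)\lesssim\ell(P)+\dist(x,P)$ when $P$ is close to $x$, one obtains $\ell(Q)\lesssim\ell(P)+\operatorname{diam}(Q)$, whence $\ell(Q)\approx\ell(P)$ and then $CB_P\subset C'B_Q$ as well. The passage to $\PP(Q)\lesssim_{\Lambda_*}\Theta(R_0)$ is then immediate: $P\in\wh\tree_*(R)\subset\tree(R_0)$, so by the standing bound $\PP(P)\lesssim_{\Lambda_*}\Theta(R_0)$ recorded in the proof of Lemma \ref{lemriesz*eta} (namely $\PP(Q')\lesssim_{\Lambda_*}\Theta(R_0)$ for all $Q'\in\tree(R_0)$), and since $2B_Q\subset CB_P$ with $\ell(Q)\approx\ell(P)$ gives $\PP(Q)\approx\PP(P)$ up to constants depending only on $A_0$, we conclude \eqref{eq:treg dens bdd}.

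The main obstacle I anticipate is getting the two-sided comparison $\ell(Q)\approx\ell(P)$ cleanly, i.e.\ ensuring that a cube of $\treg_*(R)$ cannot be substantially \emph{larger} than the nearby $\wh\tree_*(R)$ cube whose presence makes $d_{R,*}$ small. This is where one must carefully invoke that $Q$ lies in the regularized tree (so it is not strictly contained in a $\Reg_*$-cube, forcing $\ell(Q)\lesssim \inf_{y\in Q} d_{R,*}(y)$ up to the constant $60$) rather than merely that it is some lattice cube; the Lipschitz property of $d_{R,*}$ and the fact that $d_{R,*}$ vanishes exactly on $\overline{\wh\GG(R)}$ then pin down the scale. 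Everything else is bookkeeping with balls of comparable radius and the elementary observation that $\PP(\cdot)$ is essentially monotone under such controlled distortions of the underlying ball.
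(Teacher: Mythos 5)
Your plan is close to the paper's, but there is a genuine gap in the "reverse inclusion" step, and the way you try to close it relies on a false inequality.

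You correctly find a regularized cube $Q_x\subset Q$ and, using Lemma \ref{lem:reg prop}(a), a cube $P\in\wh\tree_*(R)$ with $\dist(x,P)+\ell(P)\lesssim d_{R,*}(x)\approx\ell(Q_x)\leq\ell(Q)$. This gives you $\ell(P)\lesssim\ell(Q)$ and the inclusion $2B_P\subset CB_Q$. But this $P$ can be far \emph{smaller} than $Q$, because $\ell(Q_x)$ can be much smaller than $\ell(Q)$; so you still need to upgrade $P$. Your attempted fix — "the maximality criterion in the definition of $Q_x$ forces $\ell(Q)\lesssim d_{R,*}(y)$ for $y$ ranging over $Q$" — is incorrect. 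Membership of $Q$ in $\treg_*(R)$ only says that $Q$ is not \emph{strictly contained in} a $\Reg_*$-cube; it does not bound $\ell(Q)$ from above by $d_{R,*}$. On the contrary, for $y\in Q\setminus\wh\GG(R)$ one has $Q_y\subset Q$, hence $d_{R,*}(y)\approx\ell(Q_y)\leq\ell(Q)$ — the inequality runs in the opposite direction to what you assert (consider $Q=R$, which is in $\treg_*(R)$ but is typically vastly larger than $\inf_y d_{R,*}(y)$). The ensuing chain "$\ell(Q)\lesssim\ell(P)+\operatorname{diam}(Q)$, whence $\ell(Q)\approx\ell(P)$" is circular and proves nothing.

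The correct repair is the one the paper uses: having found $P_0\in\wh\tree_*(R)$ with $\ell(P_0)\lesssim\ell(Q_0)\leq\ell(Q)$, replace $P_0$ by its ancestor $P$ with $\ell(P)=\ell(Q)$ (if $\ell(P_0)<\ell(Q)$). Since $\wh\tree_*(R)$ is defined as the cubes of $\DD_\mu(R)$ not strictly contained in any cube of $\wh\Stop_*(R)$, it is closed under passing to ancestors inside $R$; and $\ell(P)=\ell(Q)\leq\ell(R)$ with $P\supset P_0\subset R$ forces $P\subset R$. With this choice one has $\ell(P)\approx\ell(Q)$ by construction, and the distance bound $\dist(x_Q,x_P)\lesssim\ell(Q_0)+\ell(Q)+\ell(P)\lesssim\ell(Q)$ then yields both ball inclusions. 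Your last step, deducing \eqref{eq:treg dens bdd} from $\PP(P)\lesssim_{\Lambda_*}\Theta(R_0)$ and $2B_Q\subset CB_P$ with $\ell(Q)\approx\ell(P)$, is fine once this gap is filled.
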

\begin{proof}
	Let $Q\in\treg_*(R)$. If $Q\cap\wh\GG(R)\neq\varnothing,$ then $Q\in\wh\tree_*(R)$ and we can take $P=Q$. If $Q\cap\wh\GG(R)=\varnothing,$ then there exists some $Q_0\in\Reg_*(R)$ such that $Q_0\subset Q$. By the definition of $d_{R,*}$ and Lemma \ref{lem:reg prop} (a), there exists $P_0\in\wh\tree_*(R)$ such that
	\begin{equation*}
	\dist(x_{Q_0},P_0)+\ell(P_0)\le 2 d_{R,*}(x_{Q_0})\approx \ell(Q_0).
	\end{equation*}
	In particular, $\ell(P_0)\lesssim \ell(Q_0)\le\ell(Q)$. If $\ell(P_0)\ge\ell(Q),$ set $P=P_0$, otherwise let $P$ be the ancestor of $P_0$ with $\ell(P)= \ell(Q)$. Clearly, $\ell(P)\approx\ell(Q)$, and moreover
	\begin{equation*}
	\dist(x_Q,x_P)\le \dist(x_{Q_0},P_0)+\ell(Q)+\ell(P)\lesssim\ell(Q_0)+\ell(Q)+\ell(P)\approx \ell(Q)\approx\ell(P),
	\end{equation*}
	which implies $2B_Q\subset C B_{P}\subset C' B_{Q}$ for some absolute $C$ and $C'$.
	
	Finally, to see $\PP(Q)\lesssim_{\Lambda_*}\Theta(R_0)$ recall that $\PP(P)\lesssim \Lambda_*\Theta(R_0)$ for all $P\in\wh\tree_*(R)\subset\tree(R_0)$, and we have $\PP(Q)\lesssim\PP(P)$ because $B_Q\subset C B_P$ and $\ell(Q)\approx \ell(P)$.
\end{proof}

The following lemma states that the uniformly rectifiable set $\Gamma$ lies relatively close to all the cubes from $\Reg_*(R)$. This property will be crucial in our subsequent estimates.
\begin{lemma}\label{lem:RegGamma}
	There exists $C_*=C_*(\Lambda_*,\delta_0)$ such that for all $Q\in\Reg_*(R)$ we have
	\begin{equation}\label{eq:RegGamma}
	\frac{C_*}{2}B_Q\cap\Gamma\neq\varnothing.
	\end{equation}
\end{lemma}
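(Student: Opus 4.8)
The plan is to locate, near every regularized cube $Q\in\Reg_*(R)$, a cube $Q'\in\wh\tree_0(R)\setminus\LD(R_0)$ whose associated $\eta$-cube $Q'^{(\eta)}$ sits inside a dilate of $B_Q$ of controlled size and carries positive $\eta$-mass. Since $\Gamma=\supp\eta$, this immediately gives a point of $\Gamma$ in that dilate. First I would use Lemma \ref{lem:reg prop}(a) applied with $x=x_Q$: it yields $10\,\ell(Q)\le d_{R,*}(x_Q)\le c\,\ell(Q)$ with $c=c(n)$. By the definition of $d_{R,*}$ as an infimum over $\wh\tree_*(R)$, there exists $S\in\wh\tree_*(R)$ with $\dist(x_Q,S)+\ell(S)\le 2c\,\ell(Q)$.

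The heart of the argument is the following reduction: for every $S\in\wh\tree_*(R)$ there is a cube $Q'\in\wh\tree_0(R)\setminus\LD(R_0)$ with $S\subset Q'$ and $\ell(Q')\le A_0\,\ell(S)$. Here one does a case analysis according to the position of $S$ in the tree, using that $\wh\tree_*(R)\setminus\wh\tree_0(R)=\wh\Ch((iii)_R)$ (a cube in $\wh\tree_*(R)$ that is not in $\wh\tree_0(R)$ must be a child of some $(iii)_R$-cube, since $\wh\Stop_*(R)=(i)_R\cup(ii)_R\cup\wh\Ch((iii)_R)$ refines $\wh\sss(R)=(i)_R\cup(ii)_R\cup(iii)_R$ only at the $(iii)_R$-cubes). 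The three cases are: (1) $S\in\wh\tree_0(R)\setminus\LD(R_0)$ — take $Q'=S$; (2) $S\in\wh\tree_0(R)\cap\LD(R_0)$, i.e.\ $S$ is a maximal low-density cube — take $Q'=\wh S$ (its parent), which by maximality of $S$ is not in $\LD(R_0)$, is $\PP$-doubling (being a $\wh\Ch$-cube or $R$ itself), and lies in $\wh\tree_0(R)$ since $S$ is a stopping cube; (3) $S\in\wh\Ch((iii)_R)$ — take $Q'$ equal to the $(iii)_R$-parent $P$ of $S$, which belongs to $\wh\sss(R)\setminus\LD(R_0)\subset\wh\tree_0(R)\setminus\LD(R_0)$ because condition (iii) requires condition (i), hence membership in $\LD(R_0)$, to fail. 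In all three cases $S\subset Q'$ and $\ell(Q')\le A_0\ell(S)\le 2cA_0\,\ell(Q)$, and $\dist(x_Q,Q')\le\dist(x_Q,S)\le 2c\,\ell(Q)$.

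Finally I would conclude: since $Q'\in\wh\tree_0(R)\setminus\LD(R_0)$, \rf{eqetq5} and \rf{eq:aQmuQ} give $\eta(Q'^{(\eta)})=\mu(Q')+s(Q')\ge\mu(Q')>0$. By construction $Q'^{(\eta)}=(\wh\GG(R)\cap Q')\cup\bigcup_{P\in\wh\sss(R):P\subset Q'}D_P$, and each disk $D_P$ with $P\subset Q'$ has center $x_P\in Q'$ and radius $\le \ell(P)/112\le\ell(Q')/112$, so $Q'^{(\eta)}\subset B(x_{Q'},\ell(Q'))$. Combining with $\dist(x_Q,x_{Q'})\lesssim\ell(Q)$, $\ell(Q')\lesssim\ell(Q)$, and $\ell(Q)\approx r(Q)$ (up to the lattice constants), one gets $Q'^{(\eta)}\subset \tfrac{C_*}{2}B_Q$ for an appropriate $C_*$ (which in fact depends only on $A_0,C_0,n$, a fortiori only on $\Lambda_*,\delta_0$). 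Since $\eta(Q'^{(\eta)})>0$ and $\Gamma=\supp\eta$, we obtain $\tfrac{C_*}{2}B_Q\cap\Gamma\ne\varnothing$.

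I expect the main obstacle to be making the case analysis in the middle paragraph fully rigorous: carefully disentangling the two stopping families $\wh\sss(R)$ and $\wh\Stop_*(R)$, the two trees $\wh\tree_0(R)$ and $\wh\tree_*(R)$, and the $\LD(R_0)$-cubes, and in particular verifying in case (2) that the parent $\wh S$ genuinely lies in $\wh\tree_0(R)$, is not in $\LD(R_0)$, and is $\PP$-doubling, so that $\wh S^{(\eta)}$ is defined and \rf{eqetq5} applies to it.
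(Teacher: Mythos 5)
The central reduction in your second paragraph rests on the identity $\wh\tree_*(R)\setminus\wh\tree_0(R)=\wh\Ch((iii)_R)$, and this is false. The tree $\wh\tree_0(R)$ is the sparse $\wh\Ch$-hierarchy: it contains only $\PP$-doubling cubes and $\LD(R_0)$-cubes, and a cube $P\in\wh\Ch(T)$ can be several $\DD_\mu$-generations below $T$ (this is exactly what \eqref{eqcompa492} quantifies; it gives $\ell(P)\approx_{\Lambda_*,\delta_0}\ell(T)$, not $\ell(P)=A_0^{-1}\ell(T)$). By contrast, $\wh\tree_*(R)$ is defined as \emph{all} cubes of $\DD_\mu(R)$ not strictly contained in any $\wh\sss_*(R)$-cube, so it contains every intermediate non-$\PP$-doubling cube that the $\wh\Ch$-hierarchy skips over. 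Your reasoning that "$\wh\Stop_*(R)$ refines $\wh\sss(R)$ only at the $(iii)_R$-cubes" compares the two \emph{stopping families}, which is true, but does not transfer to the two \emph{trees}, which are built by entirely different rules (one by iterating $\wh\Ch$, the other by taking all cubes above the stop). As a result, your three-case analysis does not cover a generic $S\in\wh\tree_*(R)$, the claimed bound $\ell(Q')\le A_0\ell(S)$ cannot hold even in case (1) (it should be $\ell(Q')\lesssim_{\Lambda_*,\delta_0}\ell(S)$), and in case (2) the $\DD_\mu$-parent $\wh S$ of an $\LD(R_0)$-cube is generically not a member of $\wh\tree_0(R)$, so $\wh S^{(\eta)}$ is undefined and \eqref{eqetq5} does not apply to it.

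The gap is repairable: for any $S\in\wh\tree_*(R)$ one should take $Q'$ to be the \emph{smallest} cube of $\wh\tree_0(R)$ containing $S$. One then has $\ell(Q')\lesssim_{\Lambda_*,\delta_0}\ell(S)$ because the ancestors of $S$ up to $Q'$ form a chain whose non-$\PP$-doubling length is bounded via Lemma \ref{lemdobpp}, and if $Q'\in\LD(R_0)$ then $Q'$ is a leaf (a cube from $(i)_R\subset\wh\sss_*(R)$), forcing $S=Q'$, in which case one passes to the $\wh\tree_0(R)$-parent. Your concluding step — namely that $\eta(Q'^{(\eta)})=\mu(Q')+s(Q')\geq\mu(Q')>0$ via \eqref{eqetq5} and \eqref{eq:aQmuQ}, so $Q'^{(\eta)}\cap\Gamma\neq\varnothing$ — is correct and is in fact slicker than the paper's argument, which instead locates an explicit disk $D_{S'}$ with $S'\in\wh\sss(R)\setminus\LD(R_0)$ by a counting argument ruling out that $P_0$ can be covered entirely by $\LD(R_0)$-cubes. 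But as written the middle step does not go through.
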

\begin{proof}
	Let $Q\in\Reg_*(R)$ and let $P\in\wh\tree_*(R)$ be the cube from Lemma \ref{lem:QtregPtree}.
	In particular, we have
	\begin{equation}\label{eq:BPCBQ}
	2B_P\subset C B_Q
	\end{equation}
	for some absolute constant $C$.
	
	If $P$ contains some cube $P_1\in\wh\Stop(R)\setminus\LD(R_0)$, then we are done, because in that case $D_{P_1}\subset 2B_P\subset C B_Q$, and $D_{P_1}\subset\Gamma$. Similarly, if $\wh\GG(R)\cap P\neq\varnothing$, then there is nothing to prove.
	
	Now suppose that $P\cap\wh\GG(R)=\varnothing$ and $P$ does not contain any cube from $\wh\Stop(R)\setminus\LD(R_0)$. Since $P\in\wh\tree_*(R)$ and $P\cap\wh\GG(R)=\varnothing$, there exists some $P_1\in\wh\sss_*(R)$ such that $P_1\subset P$. By our assumption $P_1\notin\wh\sss(R)\setminus\LD(R_0)$, and so
	\begin{equation*}
	P_1\in \wh\sss_*(R)\setminus \left(\wh\sss(R)\setminus\LD(R_0)\right) = \wh\Ch((iii)_R) \cup \left(\wh\sss(R)\cap\LD(R_0)\right).
	\end{equation*}
	
	There are two cases to consider. Suppose that $P_1\in\wh\Ch((iii)_R)$. Let $S\in (iii)_R$ be such that $P_1\in\wh\Ch(S)$. Since $S\notin\LD(R_0)$ (otherwise we'd have $S\in (i)_R$), \eqref{eqcompa492} gives $\ell(P_1)\approx_{\Lambda_*,\delta_0}\ell(S)$. Thus, there exists some constant  $C(\Lambda_*,\delta_0)$ such that
	\begin{equation*}
	D_S\subset C(\Lambda_*,\delta_0)B_{P_1}\subset C(\Lambda_*,\delta_0)B_{P}\overset{\eqref{eq:BPCBQ}}{\subset}C\, C(\Lambda_*,\delta_0) B_Q.
	\end{equation*}
	Since $D_S\subset\Gamma$, we get \eqref{eq:RegGamma} as soon as $C_*\ge 2C\, C(\Lambda_*,\delta_0)$.
	
	Finally, suppose that $P_1\in\wh\sss(R)\cap\LD(R_0)$. Let $P_0\in\wh\tree_0(R)\setminus\wh\sss(R)$ be the unique cube such that $P_1\in\wh\Ch(P_0)$. By \eqref{eqcompa492} we have $\ell(P_0)\approx_{\Lambda_*,\delta_0}\ell(P_1)$, and so
	\begin{equation*}
	2B_{P_0}\subset C(\Lambda_*,\delta_0) B_{P_1}\subset C(\Lambda_*,\delta_0) B_{P}\subset C\, C(\Lambda_*,\delta_0) B_{Q}.
	\end{equation*}
	We claim that $2B_{P_0}\cap\Gamma\neq\varnothing$, and so \eqref{eq:RegGamma} is satisfied if we assume $C_*\ge 2C\, C(\Lambda_*,\delta_0)$. First, if $2B_{P_0}\cap\wh\GG(R)\neq\varnothing$, then there is nothing to prove. Assume the contrary. In that case $P_0$ is covered by cubes from $\wh\Stop(R)$. We claim that there exists some $S\in\wh\sss(R)\setminus\LD(R_0)$ such that $S\subset P_0$. Indeed, otherwise $P_0$ would be covered by cubes from $\wh\sss(R)\cap\LD(R_0)$, but then
	\begin{equation*}
	-\mu(P_0)=\sum_{P'\in\wh\sss(R)\cap\LD(R_0)}-\mu(P')=\sum_{P'\in\wh\sss(R)\cap\LD(R_0)}s(P') \overset{\eqref{eqaq89}}{=}s(P_0)\overset{\eqref{eq:aQmuQ}}{\ge}0,
	\end{equation*}
	which is a contradiction. Thus, there exists $S\in\wh\sss(R)\setminus\LD(R_0)$ such that $S\subset P_0$, which implies $D_{S}\subset 2B_{P_0}$. Since $D_{S}\subset\Gamma$, we are done.
\end{proof}

In the following lemma we define functions supported on $\Gamma$ that approximate $\mu$ at the level of $\Reg_*(R)$.

\begin{lemma}\label{lem:gP}
	There exist functions $g_P:\Gamma\to\R,\ P\in\Reg_*(R),$ such that each $g_P$ is supported in $\Gamma\cap \overline{C_* B_P}$,
	\begin{equation}\label{eq:intgP}
	\int_{\Gamma} g_P\ d\nu = \mu(P),
	\end{equation}
	and
	\begin{equation}\label{eq:sumgP}
	\sum_{P\in\Reg_*(R)} g_P \lesssim_{\Lambda_*,\delta_0} \Theta(R).
	\end{equation}
\end{lemma}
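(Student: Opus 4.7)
\medskip

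\noindent\textbf{Proof plan.} The strategy is to define each $g_P$ essentially as a multiple of the indicator $\chi_{\Gamma\cap C_*B_P}$, properly normalized. First I would note that, since $\Reg_*(R)\subset\treg_*(R)$, Lemma \ref{lem:QtregPtree} yields $\PP(P)\lesssim_{\Lambda_*}\Theta(R_0)\approx_{\Lambda_*,\delta_0}\Theta(R)$ for every $P\in\Reg_*(R)$ (the comparison between $\Theta(R)$ and $\Theta(R_0)$ follows from $R\in\tree(R_0)$ being $\PP$-doubling but not in $\End_*(R_0)$, hence not in $\HD_*(R_0)\cup\LD(R_0)$). Consequently
\begin{equation}\label{eq:muP-bound}
\mu(P)\leq \PP(P)\,\ell(P)^n \lesssim_{\Lambda_*,\delta_0}\Theta(R)\,\ell(P)^n.
\end{equation}
On the other hand, since $\Theta(R_0)^{-1}\eta$ is $n$-AD regular with constant depending on $\Lambda_*,\delta_0$ and $\eta=\rho\nu$ with $\rho\approx_{\Lambda_*,\delta_0}\Theta(R_0)$, the set $\Gamma$ is $n$-AD regular with constant depending on $\Lambda_*,\delta_0$. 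Lemma \ref{lem:RegGamma} supplies a point $y_P\in\tfrac{C_*}{2}B_P\cap\Gamma$, so that $B(y_P,\tfrac{C_*}{2}\ell(P))\subset C_*B_P$ and therefore
\begin{equation}\label{eq:nuCBP}
\nu(\Gamma\cap C_*B_P)\geq \nu\bigl(\Gamma\cap B(y_P,\tfrac{C_*}{2}\ell(P))\bigr)\gtrsim_{\Lambda_*,\delta_0}\ell(P)^n.
\end{equation}

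With these two estimates in hand, I would set
\[
g_P \;=\; \frac{\mu(P)}{\nu(\Gamma\cap C_*B_P)}\,\chi_{\Gamma\cap C_*B_P}.
\]
The support and integral conditions \eqref{eq:intgP} are then immediate. For the pointwise bound \eqref{eq:sumgP}, combining \eqref{eq:muP-bound} and \eqref{eq:nuCBP} gives
\[
g_P(x)\;\lesssim_{\Lambda_*,\delta_0}\;\Theta(R)\,\chi_{C_*B_P}(x)\qquad\text{for }\nu\text{-a.e.\ }x.
\]
Hence it suffices to prove that the balls $\{C_*B_P\}_{P\in\Reg_*(R)}$ have overlap bounded by a constant depending only on $\Lambda_*,\delta_0$. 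The key step is Lemma \ref{lem:reg prop}(b): if $x\in C_*B_P\cap C_*B_{P'}$ with $\ell(P)\leq\ell(P')$, then $\dist(x_P,x_{P'})\leq 2C_*\ell(P')$, and since every cube $P''\in\Reg_*(R)$ contributing to the overlap at $x$ satisfies $\ell(P'')\leq\ell(P')$, the definition of $d_{R,*}$ together with Lemma \ref{lem:reg prop}(a) forces $\ell(P'')\geq c(\Lambda_*,\delta_0)\ell(P')$. Thus all such cubes have comparable side length and are contained in a ball of radius $\approx_{\Lambda_*,\delta_0}\ell(P')$, so their bounded number follows from the disjointness of the $\Reg_*(R)$-cubes together with a volume count in $\R^{n+1}$.

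The only mildly delicate point is that Lemma \ref{lem:reg prop}(c) is stated for the balls $B(x_P,50\ell(P))$ rather than $C_*B_P$, which may be larger; however, the same proof yields the analogous bounded-overlap property for any fixed dilation factor, at the cost of a constant depending on that factor (hence on $\Lambda_*,\delta_0$). This is the main technical point I would verify carefully. Plugging the overlap bound into $\sum_P g_P\lesssim_{\Lambda_*,\delta_0}\Theta(R)\sum_P\chi_{C_*B_P}$ concludes the proof.
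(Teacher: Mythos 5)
The plan is tidy, but it hinges on a claim that does not hold: that the balls $\{C_*B_P\}_{P\in\Reg_*(R)}$ have bounded overlap (with a constant depending on $\Lambda_*,\delta_0$). Lemma \ref{lem:reg prop}(b),(c) are genuinely restricted to the balls $B(x_P,50\ell(P))$, and the proof of the lower bound $d_{R,*}(x)\geq 10\ell(P)$ in part (a) uses precisely that $50 < 60$, i.e.\ that the dilation factor is strictly below the Whitney constant: one has $d_{R,*}(x)\geq d_{R,*}(x_P)-|x-x_P|\geq 60\ell(P)-50\ell(P)$. For a dilation factor $C_*\gg 60$ this lower bound becomes vacuous, and the comparability of side lengths of overlapping cubes fails. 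Concretely, one can have cubes $P_k\in\Reg_*(R)$ at distance $\approx A_0^k\ell(P_0)$ from a fixed point $x_0\in\Gamma$ with $\ell(P_k)\approx A_0^k\ell(P_0)/100$, and then $x_0\in C_*B_{P_k}$ for all $k$ once $C_*$ is large enough; the cubes have wildly different sizes and there are arbitrarily many of them through $x_0$. So $\sum_P\chi_{C_*B_P}$ is not bounded, and your remark that ``the same proof yields the analogous bounded-overlap property for any fixed dilation factor'' is exactly where the argument breaks.

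The paper's proof gets around this with a greedy Carleson-type construction rather than a naive indicator. Order the cubes by non-decreasing side length and build $g_{P_k}=\alpha_k\chi_{A_k}$ inductively, where $A_k$ is only \emph{half} of $\Gamma\cap C_*B_{P_k}$, chosen (by Chebyshev) to be the part where the previously constructed functions have small accumulated sum. The step that replaces bounded overlap is a mass comparison: any earlier $P_{i_j}$ with $C_*B_{P_{i_j}}\cap C_*B_{P_k}\neq\varnothing$ has $\ell(P_{i_j})\leq\ell(P_k)$, hence $P_{i_j}\subset 3C_*B_{P_k}$, so $\sum_j\int g_{P_{i_j}}\,d\nu=\sum_j\mu(P_{i_j})\leq\mu(3C_*B_{P_k})\lesssim\Theta(R_0)\ell(P_k)^n\approx\Theta(R_0)\nu(\Gamma\cap C_*B_{P_k})$. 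That controls the total mass without counting overlaps, and the induction closes. Your estimates \eqref{eq:muP-bound} and \eqref{eq:nuCBP} are correct and useful (the paper uses the same ingredients), but they cannot be combined with a naive indicator and a packing count; you need the stopping-time/Chebyshev device or something equivalent.
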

\begin{proof}
	Assume first that the family $\Reg_*(R)$ is finite.
	 We label the cubes from $\Reg_*(R)$ in the order of increasing sidelength, that is we let $P_1$ be a cube with the minimal sidelength, and then we label all the remaining cubes so that $\ell(P_i)\le\ell(P_{i+1})$.
	
	The functions $g_{i}:=g_{P_i}$ will be of the form $g_i = \alpha_i\chi_{A_i}$ where $\alpha_i\ge 0 $ and $A_i\subset \Gamma\cap C_* B_{P_i}$. We begin by setting $\alpha_1 = \mu(P_1)/\nu(C_* B_{P_1})$ and $A_1 = C_* B_{P_1}\cap\Gamma$. Clearly, \eqref{eq:intgP} holds for $P_1$. Moreover, using the fact that $\nu$ is AD-regular and $\frac{C_*}{2} B_{P_1}\cap\Gamma\neq\varnothing$ we get
	\begin{equation*}
	\|g_1\|_\infty = \alpha_1 = \frac{\mu(P_1)}{\nu(C_* B_{P_1})}\approx_{\Lambda_*,\delta_0}\frac{\mu(P_1)}{\ell(P_1)^n}\overset{\eqref{eq:treg dens bdd}}{\lesssim_{\Lambda_*}}\Theta(R_0).
	\end{equation*}
	
	We define the remaining $g_k,\, k\ge 2$ inductively. Suppose that $g_1,\dots,g_{k-1}$ have already been constructed, and they satisfy
	\begin{equation}\label{eq:sumgi}
	\sum_{i=1}^{k-1} g_i \le C' \Theta(R)
	\end{equation}
	for some constant $C'=C'(\Lambda_*,\delta_0)$ to be fixed below. Let $P_{i_1},\dots,P_{i_m}$ be the subfamily of $P_1,\dots,P_{k-1}$ consisting of cubes such that $C_* B_{P_k}\cap C_* B_{P_{i_j}}\neq\varnothing$. Due to the non-decreasing sizes of $P_i$'s we have $P_{i_j}\subset C_* B_{P_{i_j}}\subset 3C_* B_{P_k}$. Hence, applying \eqref{eq:intgP} to $g_{i_j}$ we get
	\begin{equation*}
	\sum_j \int_{\Gamma} g_{i_j}\ d\nu = \sum_j\mu(P_{i_j})\le\mu(3C_* B_{P_k})\overset{\eqref{eq:treg dens bdd}}{\le}C({\Lambda_*,\delta_0})\Theta(R_0)\ell(P_k)^n\le C''\Theta(R_0)\nu(\Gamma\cap C_* B_{P_k}),
	\end{equation*}
	for some $C''$ depending on $\Lambda_*,\delta_0$. By the Chebyshev's inequality
	\begin{equation*}
	\nu\left(\Gamma\cap\big\{\textstyle{\sum_j}\, g_{i_j}\ge 2C''\Theta(R_0)\big\}\right)\le \frac{1}{2}\nu(\Gamma\cap C_* B_{P_k}).
	\end{equation*}
	Set
	\begin{equation*}
	A_k = \Gamma\cap C_* B_{P_k}\cap \big\{\textstyle{\sum_j}\, g_{i_j}\le 2C''\Theta(R_0)\big\},
	\end{equation*}
	and then by the preceding estimate $\nu(A_k)\ge \nu(\Gamma\cap C_* B_{P_k})/2.$ We define
	\begin{equation*}
	\alpha_k = \frac{\mu(P_k)}{\nu(A_k)},
	\end{equation*}
	so that for $g_k=\alpha_k\chi_{A_k}$ we have $\int g_k\, d\nu = \mu(P_k)$. Moreover, using AD-regularity of $\nu$ and the fact that $\frac{C_*}{2} B_{P_k}\cap\Gamma\neq\varnothing$
	\begin{equation*}
	\alpha_k \le 2 \frac{\mu(P_k)}{\nu(C_* B_{P_k})} \le C({\Lambda_*,\delta_0})\frac{\mu(P_k)}{\ell(P_k)^n}\overset{\eqref{eq:treg dens bdd}}{\le} C'''\Theta(R_0)
	\end{equation*}
	for some $C'''$ depending on $\Lambda_*,\delta_0$. Hence, by the definition of $A_k$
	\begin{equation*}
	g_k(x) + \sum_j g_{i_j}(x) \le C'''\Theta(R_0) + 2C''\Theta(R_0), \quad\text{for $x\in A_k$.}
	\end{equation*}
	For $x\not\in A_k$ we have $g_k=0$, and so it follows from the above and the inductive assumption \eqref{eq:sumgi} that for $C' = C''' + 2C''$ we have
	\begin{equation*}
	\sum_{i=1}^{k} g_i \le C' \Theta(R),
	\end{equation*}
	which closes the induction.
	
	Suppose now that the family $\Reg_*(R)$ is infinite. We can relabel it so that $\Reg_*(R)=\{P^i\}_{i\in\N}$. For each $N$ we consider the family $\{P^i\}_{1\le i\le N}$. We construct functions $g_{P^1}^N,\dots, g_{P^N}^N$ as above, so that they satisfy
	\begin{equation*}
	\int g_{P^1}^N\ d\nu = \mu(P^1),\qquad \sum_{i=1}^{N} g_{P^i}^N \le C' \Theta(R).
	\end{equation*}
	There exists a subsequence $I_1\subset\N$ such that $\{g_{P^1}^k\}_{k\in I_1}$ is convergent in the weak-$*$ topology of $L^\infty(\nu)$ to some function $g_{P^1}\in L^{\infty}(\nu)$. We take another subsequence $I_2\subset I_1$ such that $\{g_{P^2}^k\}_{k\in I_2}$ is convergent in the weak-$*$ topology of $L^\infty(\nu)$ to some $g_{P^2}\in L^{\infty}(\nu)$. Proceeding in this fashion we obtain a family $\{g_{P^i}\}_{i\in\N}$ such that $\supp g_{P^i}\subset \overline{C_* B_{P^i}}$, and the properties \eqref{eq:intgP}, \eqref{eq:sumgP} are preserved (because of the weak-$*$ convergence).
\end{proof}

Recall that by the uniform rectifiability of $\nu$ we have a good estimate on the $\beta_{\nu,2}$ numbers \eqref{eq:betasUR}. We will now use Lemmas \ref{lem:RegGamma} and \ref{lem:gP} to transfer these estimates to the measure $\mu$ and obtain
\begin{equation*}
	\sum_{Q\in\wh \tree_*(R)}\beta_{\mu,2}(2B_Q)^2\,\mu(Q)\lesssim_{\Lambda_*,\delta_0,K}\Theta(R_0)\,\mu(R).
\end{equation*}
In fact, we will show that
\begin{equation*}
	\sum_{Q\in\treg_*(R)}\beta_{\mu,2}(2B_Q)^2\,\mu(Q)\lesssim_{\Lambda_*,\delta_0,K}\Theta(R_0)\,\mu(R),
\end{equation*}
and the former estimate will follow, since $\wh \tree_*(R)\subset\treg_*(R)$.

Let $Q\in\treg_*(R)$, and let $L_Q$ be an $n$-plane minimizing $\beta_{\nu,2}(C_*'B_Q)$, where $C_*'>2$ is some constant depending on $C_*$, to be chosen in Lemma \ref{lem:PsizeQ}. We estimate
\begin{multline}\label{eq:bet1}
\beta_{\mu,2}(2B_Q)^2\mu(Q)\lesssim \frac{\mu(Q)}{\ell(Q)^n}\int_{2B_Q} \left(\frac{\dist(x,L_Q)}{\ell(Q)}\right)^2\,d\mu(x)\\
\overset{\eqref{eq:treg dens bdd}}{\lesssim_{\Lambda_*}}\Theta(R_0)\int_{2B_Q} \left(\frac{\dist(x,L_Q)}{\ell(Q)}\right)^2\,d\mu(x)\\
= \Theta(R_0)\left(\int_{2B_Q\cap R\setminus \wh\GG(R)}\dots\,d\mu(x) + \int_{2B_Q\cap\wh\GG(R)} \dots\,d\mu(x) + \int_{2B_Q\setminus R} \dots\,d\mu(x)\right)\\
=:\Theta(R_0)(I_1+I_2+I_3).
\end{multline}
Concerning $I_3$, we use the trivial estimate
\begin{equation}\label{eq:bet2}
I_3\lesssim\mu(2B_Q\setminus R).
\end{equation}
Estimating $I_2$ is simple because on $\wh\GG(R)$ we have $\mu=\rho'\nu$ for some $\rho'\lesssim_{\Lambda_*} \Theta(R_0)$, and so
\begin{multline}\label{eq:bet3}
I_2 \lesssim_{\Lambda_*} \Theta(R_0)\int_{2B_Q\cap\wh\GG(R)} \left(\frac{\dist(x,L_Q)}{\ell(Q)}\right)^2\,d\nu(x)\le \Theta(R_0)\int_{C_*'B_Q} \left(\frac{\dist(x,L_Q)}{\ell(Q)}\right)^2\,d\nu(x)\\
\approx_{C_*'} \Theta(R_0)\beta_{\nu,2}(C_*'B_Q)^2\ell(Q)^n.
\end{multline}

Bounding $I_1$ requires more work. First, we use the fact that $R\setminus\wh\GG(R)$ is covered $\mu$-a.e. by $\Reg_*(R)$:
\begin{align*}
I_1 &\le \sum_{P\in\Reg_*(R) :\,  P\cap 2B_Q\neq\varnothing} \int_{P} \left(\frac{\dist(x,L_Q)}{\ell(Q)}\right)^2\,d\mu(x) \\
&= \sum_{P\in\Reg_*(R) :\,  P\cap 2B_Q\neq\varnothing} 
\bigg(\int_{\Gamma} \left(\frac{\dist(x,L_Q)}{\ell(Q)}\right)^2 g_P(x)\,d\nu(x)\\
&\quad+ \int \left(\frac{\dist(x,L_Q)}{\ell(Q)}\right)^2 (\chi_{P}(x)d\mu(x) - g_P(x)d\nu(x))\bigg)
\\
&=: \sum_{P\in\Reg_*(R) :\,  P\cap 2B_Q\neq\varnothing} \left(I_{11}(P) + I_{12}(P)\right).
\end{align*}
We need the following auxiliary result.

\begin{lemma}\label{lem:PsizeQ}
	If $P\in\Reg_*(R)$ is such that $P\cap 2B_Q\neq\varnothing$, then $\ell(P)\lesssim\ell(Q)$ and in consequence $\overline{C_*B_P}\subset C_*'B_Q$ for some $C_*'=C_*'(C_*)\ge C_*$.
\end{lemma}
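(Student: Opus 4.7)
The plan is to use the key property of the regularized cubes from Lemma \ref{lem:reg prop}(a) combined with the definition of $\treg_*(R)$ to bound $d_{R,*}$ above on $Q$ (and hence on $2B_Q$) by $C\ell(Q)$, and below on $P\cap 2B_Q$ by $10\ell(P)$.

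First I would show that $d_{R,*}(y)\lesssim \ell(Q)$ for every $y\in Q$. Indeed, if $y\in\overline{\wh\GG(R)}$ this is trivial since $d_{R,*}$ vanishes there. Otherwise $y$ lies in some cube $Q_y\in\Reg_*(R)$; since $Q\in\treg_*(R)$ is not strictly contained in any cube of $\Reg_*(R)$, and $Q_y\ni y\in Q$, it must be that $Q_y\subset Q$, so $\ell(Q_y)\le \ell(Q)$. By Lemma \ref{lem:reg prop}(a) applied at any point of $Q_y$ we get $d_{R,*}(y)\le c\,\ell(Q_y)\le c\,\ell(Q)$. Since $d_{R,*}$ is $1$-Lipschitz and $\operatorname{diam}(2B_Q)\lesssim \ell(Q)$, it follows that $d_{R,*}(x)\lesssim \ell(Q)$ for every $x\in 2B_Q$.

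Next, fix $x\in P\cap 2B_Q\neq\varnothing$. Since $P\subset B(x_P,50\ell(P))$ by \eqref{eqlambq12} (or directly since $P\subset B_P$), Lemma \ref{lem:reg prop}(a) gives $d_{R,*}(x)\ge 10\,\ell(P)$. Combining with the upper bound from the previous step yields
\begin{equation*}
10\,\ell(P)\le d_{R,*}(x)\le c\,\ell(Q),
\end{equation*}
so $\ell(P)\lesssim \ell(Q)$, with an absolute implicit constant.

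Finally, for the inclusion $\overline{C_*B_P}\subset C_*'B_Q$, I would pick the point $x\in P\cap 2B_Q$ from above and estimate, for any $z\in\overline{C_*B_P}$,
\begin{equation*}
|z-x_Q|\le |z-x_P|+|x_P-x|+|x-x_Q|\lesssim C_*\,\ell(P)+\ell(P)+\ell(Q)\lesssim C_*\,\ell(Q),
\end{equation*}
using $r(B_P)\approx \ell(P)$, $|x_P-x|\le\operatorname{diam}(P)\le\ell(P)$, $|x-x_Q|\le 2r(B_Q)\lesssim \ell(Q)$, and the bound $\ell(P)\lesssim \ell(Q)$ just proved. Hence the inclusion holds with some $C_*'=C_*'(C_*)\ge C_*$. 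There is no real obstacle here — the argument is essentially just combining Lemma \ref{lem:reg prop}(a) with the maximality built into the definition of $\treg_*(R)$.
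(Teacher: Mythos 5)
Your proof is correct, and it follows a genuinely different route from the paper's. The paper first disposes of the trivial case $\ell(P)\le\ell(Q)$ and then argues in the case $\ell(P)>\ell(Q)$ as follows: the inclusion $2B_Q\subset 2B_P$ forces (via the lower bound of Lemma \ref{lem:reg prop}(a) applied to $P$ at $x_Q$) that $d_{R,*}(x_Q)>0$, hence $Q$ contains some $S\in\Reg_*(R)$; then $B_S\cap 2B_P\neq\varnothing$, and Lemma \ref{lem:reg prop}(b) gives $\ell(P)\approx\ell(S)\le\ell(Q)$. You instead control $d_{R,*}$ directly on both sides of the inequality: an upper bound $d_{R,*}\lesssim\ell(Q)$ on $Q$ (hence on $2B_Q$ by Lipschitzness), obtained by applying the upper bound of Lemma \ref{lem:reg prop}(a) to the regularized cube $Q_y\subset Q$ that contains $y$ -- the containment being forced by the maximality built into $\treg_*(R)$ -- and a lower bound $d_{R,*}\ge 10\ell(P)$ at the intersection point from the lower bound of the same lemma. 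This avoids the case split, never needs to prove $2B_Q\subset 2B_P$, and uses only part (a) of the auxiliary lemma rather than parts (a) and (b). The trade-off is purely aesthetic -- your version is slightly more direct and uses fewer ingredients; the paper's version reaches the same estimate via a one-cube comparison that some readers may find more geometrically transparent. The final containment $\overline{C_*B_P}\subset C_*'B_Q$ is handled the same trivial way in both.
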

\begin{proof}
	If $\ell(P)\le\ell(Q)$ then there is nothing to prove, so suppose $\ell(P)>\ell(Q)$ (in particular $\ell(P)\ge A_0\ell(Q))$. In that case we have $2B_Q\subset 2B_P$. 
	
	Note that if we had $Q\setminus\wh\GG(R)=\varnothing$, then $d_{R,*}(x_Q)=0$, but by Lemma \ref{lem:reg prop} (a) we know that $d_{R,*}(x_Q)\ge 10\ell(P)$.
	Hence, there exists some $S\in\Reg_*(R)$ such that $S\subset Q$. Together with the fact that $2B_Q\subset 2B_P$ this implies $B_S\cap 2B_P\neq\varnothing$. By Lemma \ref{lem:reg prop} (b) this gives
	\begin{equation*}
	\ell(P)\approx \ell(S)\le\ell(Q).
	\end{equation*}
\end{proof}
By the lemma above and the preceding estimate we get
\begin{equation}\label{eq:bet4}
I_1 \le \sum_{P\in\Reg_*(R) :\,  C_*B_P\subset C_*'B_Q} I_{11}(P) + \sum_{P\in\Reg_*(R) :\,  C_*B_P\subset C_*'B_Q} I_{12}(P).
\end{equation}
We estimate the first sum as follows:
\begin{multline}\label{eq:bet5}
\sum_{P\in\Reg_*(R) :\,  C_*B_P\subset C_*'B_Q} \int_{\Gamma} \left(\frac{\dist(x,L_Q)}{\ell(Q)}\right)^2 g_P(x)\,d\nu(x)\\
=  \int_{\Gamma} \left(\frac{\dist(x,L_Q)}{\ell(Q)}\right)^2 \sum_{P\in\Reg_*(R) :\,  C_*B_P\subset C_*'B_Q}  g_P(x)\,d\nu(x)\\
\overset{{\supp g_P\subset C_* B_P}}{\le} \int_{\Gamma\cap C_*'B_Q} \left(\frac{\dist(x,L_Q)}{\ell(Q)}\right)^2 \sum_{P\in\Reg_*(R)}  g_P(x)\,d\nu(x)\\
\overset{\eqref{eq:sumgP}}{\lesssim}_{\Lambda_*,\delta_0}\Theta(R_0)\int_{\Gamma\cap C_*'B_Q} \left(\frac{\dist(x,L_Q)}{\ell(Q)}\right)^2\,d\nu(x)\approx_{C_*'} \Theta(R_0)\beta_{\nu,2}(C_*'B_Q)^2\ell(Q)^n.
\end{multline}
Concerning $I_{12}(P)$, observe that since $\int g_P\, d\nu=\mu(P)$ by \eqref{eq:intgP}, we have
\begin{equation*}
I_{12}(P) = \int \left(\left(\frac{\dist(x,L_Q)}{\ell(Q)}\right)^2 - \left(\frac{\dist(x_P,L_Q)}{\ell(Q)}\right)^2\right) \left(\chi_{P}(x)d\mu(x) - g_P(x)d\nu(x)\right).
\end{equation*}
For $x\in\supp(\chi_{P}(x)d\mu(x) - g_P(x)d\nu(x))\subset C_*B_P\subset C_*'B_Q$ we have 
\begin{multline*}
\left|\left(\frac{\dist(x,L_Q)}{\ell(Q)}\right)^2 - \left(\frac{\dist(x_P,L_Q)}{\ell(Q)}\right)^2\right| \le \frac{|x-x_P|}{\ell(Q)}\cdot\frac{\dist(x,L_Q)+\dist(x_P,L_Q)}{\ell(Q)}\\
\lesssim\frac{C_*\ell(P)}{\ell(Q)}\cdot\frac{C_*'\ell(Q)}{\ell(Q)} \approx_{C_*,C_*'} \frac{\ell(P)}{\ell(Q)}.
\end{multline*}
Hence,
\begin{equation}\label{eq:bet6}
I_{12}(P) \lesssim_{C_*,C_*'}\frac{\ell(P)}{\ell(Q)}\mu(P).
\end{equation}
Recall that $C_*$ depends on $\Lambda_*,\delta_0$, and $C_*'$ depends on $C_*$. Thus, putting together the estimates \eqref{eq:bet4}, \eqref{eq:bet5}, and \eqref{eq:bet6} yields
\begin{equation*}
I_1\lesssim_{\Lambda_*,\delta_0} \Theta(R_0)\beta_{\nu,2}(C_*'B_Q)^2\ell(Q)^n + \sum_{\substack{P\in\Reg_*(R):\\  C_*B_P\subset C_*'B_Q}}\frac{\ell(P)}{\ell(Q)}\mu(P).
\end{equation*}
Together with \eqref{eq:bet1}, \eqref{eq:bet2}, and \eqref{eq:bet3} this gives
\begin{multline*}	
\beta_{\mu,2}(2B_Q)^2\mu(Q)\lesssim_{\Lambda_*,\delta_0} \Theta(R_0)^2\beta_{\nu,2}(C'_* B_Q)^2\ell(Q)^n \\ +
\Theta(R_0)\sum_{\substack{P\in\Reg_*(R):\\  C_*B_P\subset C_*'B_Q}}\frac{\ell(P)}{\ell(Q)}\mu(P) + \Theta(R_0)\mu(2B_Q\setminus R).
\end{multline*}
Summing over $Q\in\treg_*(R)$ we get
\begin{multline*}	
\sum_{Q\in\treg_*(R)}\beta_{\mu,2}(2B_Q)^2\mu(Q)\lesssim_{\Lambda_*,\delta_0} \Theta(R_0)^2\sum_{Q\in\treg_*(R)}\beta_{\nu,2}(C'_* B_Q)^2\ell(Q)^n \\+
\Theta(R_0)\sum_{Q\in\treg_*(R)}\sum_{\substack{P\in\Reg_*(R):\\  C_*B_P\subset C_*'B_Q}}\frac{\ell(P)}{\ell(Q)}\mu(P) + \Theta(R_0)\sum_{Q\in\treg_*(R)}\mu(2B_Q\setminus R)\\
=:\Theta(R_0)^2 S_1+\Theta(R_0)S_2+\Theta(R_0)S_3.
\end{multline*}

Concerning $S_1$, note that by \eqref{eq:RegGamma} we know that if $Q\in\treg_*(R)$, then $\nu(C_* B_Q\cap\Gamma)\approx_{\Lambda_*,\delta_0}\ell(Q)^n$ and for all $x\in C_* B_Q\cap\Gamma$ we have $C'_* B_Q\subset B(x,2C_*' \ell(Q))$. Thus, $\beta_{\nu,2}(C'_* B_Q)\lesssim \beta_{\nu,2}(x,r)$ for $2C_*' \ell(Q)<r<3C_*' \ell(Q)$. Observe also that the sets $C_* B_Q\cap\Gamma$ corresponding to cubes of the same generation have bounded intersection. It follows easily that
\begin{multline*}
S_1=\sum_{Q\in\treg_*(R)}\beta_{\nu,2}(C'_* B_Q)^2\ell(Q)^n \lesssim_{\Lambda_*,\delta_0} \int_{5C_*' B_{R}}\int_0^{5C_*'\ell(R)}\beta_{\nu,2}(x,r)^2\, \frac{dr}{r}d\nu(x)\\
\overset{\eqref{eq:betasUR}}{\lesssim}_{\Lambda_*,\delta_0,K}\ell(R)^n.
\end{multline*}

To estimate $S_2$ we change the order of summation:
\begin{equation*}
S_2 = \sum_{P\in\Reg_*(R)}\mu(P)\sum_{\substack{Q\in\treg_*(R):\\  C_*B_P\subset C_*'B_Q}}\frac{\ell(P)}{\ell(Q)}.
\end{equation*}
Note that the inner sum is essentially a geometric series, and so
\begin{equation*}
S_2 \lesssim_{C_*,C_*'} \sum_{P\in\Reg_*(R)}\mu(P)\le \mu(R).
\end{equation*}

Finally, we can bound $S_3$ using the small boundaries property of the David-Mattila lattice \eqref{eqsmb2}. To be more precise, note that for $Q\in\treg_*(R)$ if $2B_Q\setminus R\neq\varnothing$ and $\ell(Q)=A_0^{-k}\ell(R)$, then necessarily $Q\subset N_{k-1}(R)$, and even $2B_Q\subset N_{k-1}(R)$. Furthermore, the balls $2B_Q$ for cubes of the same generation have only bounded intersection. Thus,
\begin{equation*}
S_3\le \sum_{k\ge 1}\sum_{\substack{Q\subset N_{k-1}(R),\\ \ell(Q)=A_0^{-k}\ell(R)}}\mu(2B_Q)\lesssim \sum_{k\ge 1}\mu(N_{k-1}(R))\overset{\eqref{eqsmb2}}{\lesssim}\mu(90B(R))\approx\mu(R).
\end{equation*}

Putting the estimates for $S_1,\ S_2$ and $S_3$ together we arrive at
\begin{equation*}
\sum_{Q\in\treg_*(R)}\beta_{\mu,2}(2B_Q)^2\mu(Q)\lesssim_{\Lambda_*,\delta_0,K} \Theta(R_0)^2\ell(R)^n + \Theta(R_0)\mu(R)\lesssim_{\delta_0}\Theta(R_0)\mu(R),
\end{equation*}
where in the last estimate we used the fact that $\Theta(R_0)\lesssim\delta_0^{-1}\Theta(R)$ (note that $R\not\in\LD(R_0)$ because $\DD_\mu^\PP\cap\LD(R_0)\subset\End_*(R_0)$ and we assume $R\in\DD_\mu^\PP\setminus\End_*(R_0)$). This finishes the proof of Lemma \ref{lembetas99}.


\vv

\subsection{The corona decomposition and the proof of Lemma \ref{lemtreebeta}}

Now we 
 define $\wh \ttt= \wh\ttt(R_0)$ inductively.
We set $\wh \ttt_0=\{R_0\}$ and, assuming $\wh\ttt_k$ to be defined, we let
$$\wh\ttt_{k+1} = \bigcup_{R\in\wh\ttt_k} (\wh \End(R)\setminus\End_*(R_0)).$$
Then we let
$$\wh\ttt =\bigcup_{k\geq0} \wh\ttt_k.$$
In this way, we have
$$\tree(R_0)=\bigcup_{R\in\wh\ttt} \wh\tree(R).$$

\vv

\begin{lemma}\label{lemtop8}
We have
$$\sigma(\wh \ttt)  \lesssim_{\Lambda_*,\delta_0}\sigma(R_0) +\!\sum_{Q\in\tree(R_0)}\!\|\Delta_Q\RR\mu\|_{L^2(\mu)}^2+ \sum_{Q\in\tree(R_0)\cap\DB}\! \EE_\infty(9Q).$$
\end{lemma}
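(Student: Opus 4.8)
The plan is to run a Carleson--packing argument for the corona family $\wh\ttt$, in the spirit of Lemma \ref{lemtoptop} and of the Main Lemma \ref{mainlemma}. Since each cube of $\wh\ttt\setminus\{R_0\}$ lies in $\wh\End(R)\setminus\End_*(R_0)$ for exactly one $R\in\wh\ttt$, we have the identity $\sigma(\wh\ttt)=\sigma(R_0)+\sum_{R\in\wh\ttt}\sigma\bigl(\wh\End(R)\setminus\End_*(R_0)\bigr)$. The first step is to identify which members of $\wh\End(R)$ survive the removal of $\End_*(R_0)$: a maximal $\PP$-doubling cube lying below a cube of $(i)_R\cap(\HD_*(R_0)\cup\LD(R_0))$ is already contained in $\End_*(R_0)$ (recall $\DD_\mu^\PP\cap\LD(R_0)\subset\End_*(R_0)$, and that cubes of $\HD_*(R_0)\cap\DD_\mu(9R_0)$ are $\PP$-doubling by Lemma \ref{lempdoubling}), hence is discarded, whereas the cubes of $\BR(R)$, of $(ii)_R$, and the $\PP$-doubling cubes of $\wh\Ch((iii)_R)$ are $\PP$-doubling, lie in $\tree(R_0)\setminus\End_*(R_0)$, and become roots of new trees. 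Therefore
$$\sigma(\wh\ttt)\le\sigma(R_0)+\sum_{R\in\wh\ttt}\Bigl(\sigma(\BR(R))+\sigma((ii)_R)+\sigma\bigl(\wh\Ch((iii)_R)\cap\DD_\mu^\PP\bigr)\Bigr),$$
and it suffices to bound the three sums on the right.

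For the ``$(ii)$'' and ``$(iii)$'' sums I would use the crude bound $\Theta(Q)\lesssim_{\Lambda_*}\Theta(R_0)$, valid for every cube of $\tree(R_0)$. Combined with the defining inequality of $(iii)_R$ and with the estimate $\sum_{Q\in(ii)_R}\mu(Q)\le\sum_{Q\in\wh\sss(R)\cap\LD(R_0)}\mu(Q)$ from the proof of Lemma \ref{lem9.5*} (see \eqref{eqsplit66}), this gives $\sigma((ii)_R)+\sigma(\wh\Ch((iii)_R)\cap\DD_\mu^\PP)\lesssim_{\Lambda_*}\Theta(R_0)^2$ times the $\mu$-mass of a pairwise disjoint family of $\LD(R_0)$-cubes contained in $R_0$ and associated to the tree $\wh\tree(R)$. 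As the trees $\wh\tree(R)$, $R\in\wh\ttt$, tile $\tree(R_0)$ with bounded overlap, each such $\LD(R_0)$-cube is charged boundedly often, so summing over $R$ yields $\sum_{R\in\wh\ttt}\bigl(\sigma((ii)_R)+\sigma(\wh\Ch((iii)_R)\cap\DD_\mu^\PP)\bigr)\lesssim_{\Lambda_*}\Theta(R_0)^2\mu(R_0)=\sigma(R_0)$.

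The main term is $\sum_{R\in\wh\ttt}\sigma(\BR(R))$, and it is handled as in Section \ref{sectrans}. For $Q\in\BR(R)$ the cubes $Q,R$ are $\PP$-doubling, $\Theta(Q)\approx_{\Lambda_*,\delta_0}\Theta(R)$, and the defining inequality of $\BR(R)$ gives $\sigma(Q)\lesssim_{\Lambda_*,\delta_0}\Theta(R)^2\mu(Q)\le K^{-2}|\RR_\mu\chi_{2R\setminus2Q}(x_Q)|^2\mu(Q)$. By the argument proving Lemma \ref{lemaprox2} (which uses only $Q,R\in\DD_\mu^\PP$), $|\RR_\mu\chi_{2R\setminus2Q}(x_Q)|\lesssim|m_{\mu,Q}(\RR\mu)-m_{\mu,2R}(\RR\mu)|+\PP(R)+(\EE(4R)/\mu(R))^{1/2}+(\EE(2Q)/\mu(Q))^{1/2}$; here $\PP(R)^2\mu(Q)\lesssim_{\Lambda_*,\delta_0}\sigma(Q)$, and if $R\notin\DB$ resp.\ $Q\notin\DB$ then $\EE(4R)\lesssim\EE_\infty(9R)\lesssim M^2\sigma(R)$ resp.\ $\EE(2Q)\lesssim\EE_\infty(9Q)\lesssim M^2\sigma(Q)$ (Lemma \ref{lemenergias} and the definition of $\DB$, using $\PP$-doubling), so all these are absorbed once $K$ is large enough; the cases $R\in\DB$ resp.\ $Q\in\DB$ instead contribute, after summing over $Q\in\BR(R)$, a term $\lesssim\EE_\infty(9R)$ resp.\ $\sum_{Q\in\BR(R)\cap\DB}\EE_\infty(9Q)$, and since each $\BR$-cube belongs to a single family $\BR(R)$ and all these cubes lie in $\tree(R_0)$, summing over $R$ gives $\lesssim\sum_{Q\in\tree(R_0)\cap\DB}\EE_\infty(9Q)$. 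Finally, $\sum_{R\in\wh\ttt}\sum_{Q\in\BR(R)}|m_{\mu,Q}(\RR\mu)-m_{\mu,2R}(\RR\mu)|^2\mu(Q)$ is controlled by the martingale-difference bookkeeping of Sections \ref{subsec9.1**}--\ref{sec10.3}: for each $R$ one passes to a tree-difference operator on $\wh\tree(R)$ and uses orthogonality of the $\Delta_S\RR\mu$ to bound the $Q$-sum by $C\bigl(\sum_{S\in\wh\tree(R)}\|\Delta_S\RR\mu\|_{L^2(\mu)}^2+\|\wk\Delta_R\RR\mu\|_{L^2(\mu)}^2\bigr)$; summing over $R$, the bounded overlap of the $\wh\tree(R)$ (all contained in $\tree(R_0)$) together with Lemma \ref{lemortog2} applied to the $\PP$-doubling roots $R\in\wh\ttt$ give $\lesssim\sum_{Q\in\tree(R_0)}\|\Delta_Q\RR\mu\|_{L^2(\mu)}^2$. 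Collecting the three estimates proves the lemma.

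The step I expect to be the main obstacle is this last one: choosing the tree-difference operators on the $\wh\tree(R)$ so that the orthogonality is clean, and localizing the resulting Haar square function --- in particular the root-jump pieces $\|\wk\Delta_R\RR\mu\|_{L^2(\mu)}^2$ --- to $\tree(R_0)$ rather than to all of $\DD_\mu$, while checking at each step that the $\EE_\infty$-energies that get discarded belong to cubes genuinely in $\tree(R_0)\cap\DB$. The remaining combinatorics closely parallel Lemma \ref{lemtoptop}, Section \ref{sec8}, and the proof of Lemma \ref{lem9.5*}.
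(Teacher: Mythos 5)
Your overall plan — account for the new tree roots via a packing-type decomposition, then bound the $\BR$-contribution through a Haar-coefficient argument with the $\DB$-energies absorbing the bad $\EE(\,\cdot\,)$-terms — is in essence the approach the paper takes. The paper's proof is somewhat more direct: it applies Lemma \ref{lem9.5*} to each $R\in\wh\ttt$, sums, and observes that the $\LD(R_0)$-, $\HD_*(R_0)$-, $\wh\Ch((iii)_R)\cap\LD(R_0)$-masses and $\mu(\wh\sG(R))$ are essentially disjoint across $R$, so that $\sum_{R\in\wh\ttt}\mu(R)\lesssim\mu(R_0)+\sum_{R}\sum_{Q\in\wh\End(R)\cap\BR(R)}\mu(Q)$ follows at once; your version, decomposing $\sigma(\wh\ttt)=\sigma(R_0)+\sum_R\sigma(\wh\End(R)\setminus\End_*(R_0))$ and treating the $(ii)_R$ and $\wh\Ch((iii)_R)\cap\DD_\mu^\PP$ pieces separately, is a valid alternative bookkeeping and leads to the same estimate.

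The one place where you are making life harder than necessary, and which you correctly flag as ``the main obstacle,'' is the Haar-coefficient step for $\BR(R)$. You propose to compare $\RR_\mu\chi_{2R\setminus2Q}(x_Q)$ with $m_{\mu,Q}(\RR\mu)-m_{\mu,2R}(\RR\mu)$, mirroring Lemma \ref{lemaprox2} verbatim, which indeed forces you to handle the root-jump from $R$ to $2R$ via the operator $\wk\Delta_R$ and Lemma \ref{lemortog2}. That lemma gives a bound summed over all of $\DD_\mu^\PP$, not localized to $\tree(R_0)$, so invoked as is it would only deliver $\|\RR\mu\|_{L^2(\mu)}^2$ on the right (fine after summing over $R_0\in\ttt$ for the Main Lemma, but not the localized statement of the present lemma). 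The paper sidesteps this entirely by comparing with $m_{\mu,Q}(\RR\mu)-m_{\mu,R}(\RR\mu)$ instead; the proof of the needed inequality runs exactly as Lemma \ref{lemaprox2}, using antisymmetry in the form $m_{\mu,R}(\RR(\chi_R\mu))=0$ in place of $m_{\mu,2R}(\RR(\chi_{2R}\mu))=0$. Then
$$m_{\mu,Q}(\RR\mu)-m_{\mu,R}(\RR\mu)=\chi_Q\sum_{P\in\wh\tree(R)\setminus\wh\End(R)}\Delta_P(\RR\mu)$$
is a pure telescoping identity within the tree, the orthogonality of the $\Delta_P$'s immediately gives a bound by $\sum_{P\in\wh\tree(R)\setminus\wh\End(R)}\|\Delta_P\RR\mu\|_{L^2(\mu)}^2$, and summing over $R\in\wh\ttt$ with the (almost) disjointness of the trees yields $\lesssim\sum_{Q\in\tree(R_0)}\|\Delta_Q\RR\mu\|_{L^2(\mu)}^2$ with no extra operator at all. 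This one change removes the difficulty you anticipate; the rest of your argument goes through as intended.
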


\begin{proof} 
By Lemma \ref{lem9.5*}, we have
$$\sum_{Q\in\wh\sss(R)\cap(\LD(R_0)\cup\HD_*(R_0)\cup\BR(R))}\mu(Q) 
+\sum_{Q\in\wh\Ch((iii)_R)\cap \LD(R_0)}\mu(Q) + \mu(\wh\sG(R))
\approx \mu(R).$$
By the construction of $\wh\tree(R)$, the cubes from 
$\wh\sss(R)\cup \wh\Ch((iii)_R)$ belong to $\wh\tree(R)$ and the ones from $\wh\sss(R)\cap\BR(R)$ belong to $\wh \End(R)$, and so
$$\mu(R)\approx \sum_{Q\in\wh\tree(R)\cap(\LD(R_0)\cup\HD_*(R_0))}\mu(Q) 
+ \sum_{Q\in\wh\End(R)\cap\BR(R)}\!\!\mu(Q)
+ \mu(\wh\sG(R)).$$
Notice that the families $\wh\tree(R)$, with $R\in\wh\ttt$, are disjoint, with the possible exception of
the roots and ending cubes of the trees $\wh\tree(\cdot)$, which may belong to two different trees. 
Then we deduce that
\begin{align*}
\sum_{R\in\wh\ttt} \mu(R) & \approx \sum_{R\in\wh\ttt} \bigg(
\sum_{Q\in\wh\tree(R)\cap(\LD(R_0)\cup\HD_*(R_0))}\!\!\!\!\mu(Q) + \sum_{Q\in\wh\End(R)\cap\BR(R)}\!\!\!\mu(Q)\bigg)
+\sum_{R\in\wh\ttt}\! \mu(\wh\sG(R))\\
& \lesssim \mu(R_0) + \sum_{R\in\wh\ttt}\,\sum_{Q\in\wh\End(R)\cap\BR(R)}\!\!\mu(Q).
\end{align*}
Since the cubes $Q\in\BR(R)$ do not belong to $\LD(R_0)$, we have $\Theta(Q)\approx_{\Lambda_*,\delta_0} \Theta(R_0)$ for such cubes. The same happens for $R\in\wh\ttt$, and thus
\begin{equation}\label{eqthus883}
\sigma(\wh \ttt)  \lesssim_{\Lambda_*,\delta_0} \sigma(R_0) + \sum_{R\in\wh\ttt} \Theta(R)^2\sum_{Q\in\wh\End(R)\cap\BR(R)}\mu(Q).
\end{equation}

To estimate the last sum above, we claim that for a given $Q\in\BR(R)\cap\wh\End(R)$ we have
$$\big|\RR(\chi_{2R\setminus 2Q}\mu)(x_Q) - 
\big(m_{\mu,Q}(\RR\mu) - m_{\mu,R}(\RR\mu)\big)\big|
\lesssim \PP(R) + \left(\frac{\EE(4R)}{\mu(R)}\right)^{1/2} + \PP(Q) +  \left(\frac{\EE(2Q)}{\mu(Q)}\right)^{1/2}.$$
This is proved exactly in the same way as Lemma \ref{lemaprox2} (see also \rf{eqal842}) and so we omit the arguments. 
In case that both $R,Q\not\in \DB$, then 
$$\left(\frac{\EE(4R)}{\mu(R)}\right)^{1/2}\leq M\,\Theta(R)\quad \text{ and }\quad   \left(\frac{\EE(2Q)}{\mu(Q)}\right)^{1/2}\leq M\,\Theta(Q),$$
and so
we get
$$\big|\RR(\chi_{2R\setminus 2Q}\mu)(x_Q) - 
\big(m_{\mu,Q}(\RR\mu) - m_{\mu,R}(\RR\mu)\big)\big|
\leq C(\Lambda_*,\delta_0,M) \Theta(R).$$
Thus, by the $\BR(R)$ condition,
$$K\,\Theta(R) \leq |\RR(\chi_{2R\setminus 2Q}\mu)(x_Q)| \leq 
\big|m_{\mu,Q}(\RR\mu) - m_{\mu,R}(\RR\mu)\big| + C(\Lambda_*,\delta_0,M) \Theta(R).$$
Hence, for $K\geq 2\,C(\Lambda_*,\delta_0,M)$, we obtain
$$\frac12K\,\Theta(R) \leq 
\big|m_{\mu,Q}(\RR\mu) - m_{\mu,R}(\RR\mu)\big|.$$

In the general case where $Q$ and $R$ may belong to $\DB$, by analogous arguments, we get
$$\frac12K\,\Theta(R) \leq 
\big|m_{\mu,Q}(\RR\mu) - m_{\mu,R}(\RR\mu)\big| + \chi_\DB(R) \left(\frac{\EE(4R)}{\mu(R)}\right)^{1/2}
+ \chi_\DB(Q) \left(\frac{\EE(2Q)}{\mu(Q)}\right)^{1/2},$$
where $\chi_\DB(P)=1$ if $P\in\DB$ and $\chi_\DB(P)=0$ otherwise. Since
$$m_{\mu,Q}(\RR\mu) - m_{\mu,R}(\RR\mu) = \chi_Q \sum_{P\in\wh\tree(R)\setminus\wh\End(R)}\Delta_P(\RR\mu),$$
assuming $K\geq1$, we get
\begin{align}\label{eqfje42}
\Theta(R)^2\sum_{Q\in\wh\End(R)\cap\BR(R)}\mu(Q) &\lesssim 
\sum_{Q\in\wh\End(R)\cap\BR(R)} \int_Q \Big|\sum_{P\in\wh\tree(R)\setminus\wh\End(R)}\Delta_P(\RR\mu)
\Big|^2\,d\mu \\
&\quad + \chi_\DB(R)\!\!\sum_{Q\in\wh\End(R)\cap\BR(R)}  \frac{\EE(4R)}{\mu(R)}\,\mu(Q) +\!
\sum_{Q\in\wh\End(R)\cap\DB} \! \EE(2Q).\nonumber
\end{align}
By orthogonality, the first sum on the right hand is bounded by
$$\int \Big|\sum_{P\in\wh\tree(R)\setminus\wh\End(R)}\Delta_P(\RR\mu)
\Big|^2\,d\mu = \sum_{P\in\wh\tree(R)\setminus\wh\End(R)}\|\Delta_P(\RR\mu)\|_{L^2(\mu)}^2.$$
Also, it is clear that the second sum on the right had side of \rf{eqfje42} does not exceed 
$\chi_\DB(R)\,\EE(4R)$. Therefore,
\begin{multline*}
\Theta(R)^2\!\!\sum_{Q\in\wh\End(R)\cap\BR(R)}\mu(Q) \\
\lesssim
\sum_{P\in\wh\tree(R)\setminus\wh\End(R)}\|\Delta_P(\RR\mu)\|_{L^2(\mu)}^2
+ \chi_\DB(R)\,\EE(4R) + \sum_{Q\in\wh\End(R)\cap\DB} \! \EE(2Q).
\end{multline*}

Plugging the previous estimate into \rf{eqthus883}, we obtain
\begin{align*}
\sigma(\wh \ttt)  & \lesssim_{\Lambda_*,\delta_0} \sigma(R_0) + \sum_{R\in\wh\ttt} 
\sum_{P\in\wh\tree(R)\setminus\wh\End(R)}\|\Delta_P(\RR\mu)\|_{L^2(\mu)}^2\\
&\quad
+ \sum_{R\in\wh\ttt\cap\DB}\EE(4R) +\sum_{R\in\wh\ttt}\,\sum_{Q\in\wh\End(R)\cap\DB} \! \EE(2Q)\\
& \lesssim_{\Lambda_*,\delta_0} \sigma(R_0) + \sum_{P\in\tree(R_0)} 
\|\Delta_P(\RR\mu)\|_{L^2(\mu)}^2 
+ \sum_{Q\in\tree(R_0)\cap\DB}\EE(4Q),
\end{align*}
as wished.
\end{proof}
\vv

\begin{proof}[\bf Proof of Lemma \ref{lemtreebeta}]
Given $R_0\in\ttt$, combining Lemmas \ref{lembetas99} and \ref{lemtop8}, we obtain
\begin{align*}
\sum_{Q\in\tree(R_0)} \!\!\!\beta_{\mu,2}(2B_Q)^2\,\Theta(Q)\,\mu(Q)  & 
\lesssim_{\Lambda_*,\delta_0}  \sum_{R\in \wh\ttt} \Theta(R)\sum_{Q\in\wh\tree(R)} \beta_{\mu,2}(2B_Q)^2\,\mu(Q)\\
& \lesssim_{\Lambda_*,\delta_0}  \sum_{R\in \wh\ttt} \Theta(R)^2 \mu(R)\\
& 
\lesssim_{\Lambda_*,\delta_0}\sigma(R_0) +\!\!\sum_{Q\in\tree(R_0)}\!\|\Delta_Q\RR\mu\|_{L^2(\mu)}^2+ \!\!\sum_{Q\in\tree(R_0)\cap\DB}\! \!\EE_\infty(9Q).
\end{align*}
\end{proof}

\bigskip
\begin{center} 
	\Large Part \refstepcounter{parte}\theparte\label{part-4}: The proof of the Second Main Proposition
\end{center}
\smallskip

\addcontentsline{toc}{section}{\bf Part 4: The proof of the Second Main Proposition}


In this part of the paper, corresponding to Sections \ref{sec5**} -- \ref{sec100} we choose $\OP(R)= \NDB(R)$ for any $R\in\MDW$ in the
construction of tractable trees. Our goal is to prove Main Proposition \ref{propomain2}.
\vvv


\section{The good dominating family $\GDF$}\label{sec5**}

Let
\begin{equation}\label{eqk00}
M\ge M_0\coloneq A_0^{k_0 n}\quad \mbox{ for some $k_0\geq 4$ big enough.}
\end{equation}
For each $Q\in \DB(M)$ we choose the minimal $k(Q,M)\in\N$ such that \rf{eqDB} holds with $k=k(Q,M)$.

\begin{lemma}\label{lemkq}
Assume $k_0$ big enough in \rf{eqk00}. For each $Q\in \DB(M)$, we have 
$$k(Q,M) > \frac{8n- 1}{8n-2}\, k_0+4.$$
\end{lemma}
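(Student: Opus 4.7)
The plan is to get a contradiction by combining the defining inequality of $\DB(M)$ at the minimal scale $k=k(Q,M)$ with the \emph{upper} bounds on $\ell(P)$ and $\Theta(P)$ that are available for cubes in $\hd^k(Q)\cap\DD_\mu(9Q)$ when $Q$ is $\PP$-doubling. Since $Q\in\DB(M)\subset\DD_\mu^\PP$, Remark \ref{remmk} gives $\ell(P)\le C_1 A_0^{-k}\,\ell(Q)$ for every $P\in\hd^k(Q)\cap\DD_\mu(9Q)$ and every $k\ge1$, while the maximality in the definition of $\hd^k(Q)$ together with \rf{eqson1} yields $\Theta(P)\le A_0^{(k+1)n}\,\Theta(Q)$ for all $k\ge1$ (for $k\ge4$ one even has the sharper equality from Lemma \ref{lempdoubling}, but the cruder bound suffices). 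Using disjointness of the cubes in $\hd^k(Q)$ inside $9Q$ and estimating their total $\mu$-mass by $\mu(9Q)$, we obtain
\begin{equation}\label{eqplansum}
\sum_{P\in\hd^{k}(Q)\cap\DD_\mu(9Q)}\!\!\left(\frac{\ell(P)}{\ell(Q)}\right)^{1/2}\!\Theta(P)^2\,\mu(P)\;\le\; C_1^{1/2}A_0^{2n}\,A_0^{(2n-\frac12)k}\,\Theta(Q)^2\,\mu(9Q).
\end{equation}

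Next I would insert the $\DB$-condition. By minimality of $k=k(Q,M)$, the left side of \rf{eqplansum} exceeds $M^2\Theta(Q)^2\mu(9Q)$, so the $\Theta(Q)^2\mu(9Q)$ factor cancels and we are left with $A_0^{(2n-1/2)k}> C_1^{-1/2}A_0^{-2n}M^2$. Plugging in the hypothesis $M\ge A_0^{k_0 n}$ and solving for $k$ gives
\[
k>\frac{2n(k_0-1)}{2n-\tfrac12}-\frac{\log C_1}{(4n-1)\log A_0}=\frac{4n}{4n-1}\,k_0-\frac{4n}{4n-1}-O\!\left(\frac{1}{\log A_0}\right).
\]

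The final step is the elementary identity
\[
\frac{4n}{4n-1}\;=\;\frac{8n-1}{8n-2}+\frac{1}{8n-2},
\]
which rewrites the lower bound as $k>\tfrac{8n-1}{8n-2}k_0+\tfrac{k_0}{8n-2}-\tfrac{4n}{4n-1}-O((\log A_0)^{-1})$. The term $k_0/(8n-2)$ grows linearly in $k_0$ while the subtracted quantities are bounded by constants depending only on $n$ and $A_0$, so choosing $k_0$ large enough (depending only on $n$, since $A_0$ is fixed as a function of $n$ by Remark \ref{rema00}) forces the excess to exceed $4$, giving the desired strict inequality $k(Q,M)>\tfrac{8n-1}{8n-2}k_0+4$.

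I do not expect any serious obstacle; the main point is simply to choose the upper bound on $\Theta(P)$ that is valid for \emph{all} $k\ge1$ (rather than only the sharper $k\ge4$ version from Lemma \ref{lempdoubling}), so that the argument covers small $k$ uniformly, and then to verify the arithmetic identity relating $\tfrac{4n}{4n-1}$ to $\tfrac{8n-1}{8n-2}$ that exposes the gain $k_0/(8n-2)$ which absorbs the constant $4$ once $k_0$ is taken large.
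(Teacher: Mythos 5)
Your proposal is correct and follows essentially the same route as the paper's proof: both bound the $\DB$-sum over $\hd^k(Q)\cap\DD_\mu(9Q)$ by a quantity of the form $C\,A_0^{(2n-\frac12)k}\,\Theta(Q)^2\,\mu(9Q)$, using the size control $\ell(P)\lesssim A_0^{-k}\ell(Q)$ from Remark~\ref{remmk} and the density control on $\Theta(P)$ (you use the crude bound $\Theta(P)\le A_0^{(k+1)n}\Theta(Q)$, the paper invokes \rf{eqforakdf33}; the difference is a bounded constant factor that is absorbed). The paper then checks that this upper bound is $<M^2\Theta(Q)^2\mu(9Q)$ for \emph{all} $j\le\tfrac{8n-1}{8n-2}k_0+4$, whereas you specialize to $k=k(Q,M)$ and solve the resulting inequality for $k$; these are logically equivalent since the bound is monotone in $k$, and the arithmetic (including the identity $\tfrac{4n}{4n-1}=\tfrac{8n-1}{8n-2}+\tfrac{1}{8n-2}$, which matches the paper's simplification to $M_0^2 A_0^{-k_0/4+8n-2}$) comes out the same.
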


\begin{proof}
This follows from the fact that for $j\geq0$,
\begin{align*}
\sum_{P\in\hd^j(Q)\cap \DD_\mu(9Q)} 
\left(\frac{\ell(P)}{\ell(Q)}\right)^{\!1/2}\Theta(P)^2\mu(P) & \leq  CA_0^{2n j}\,\Theta(Q)^2\, m_j(Q)^{\!1/2}
\sum_{P\in\hd^j(Q)\cap \DD_\mu(9Q)} \mu(P) \\
& \leq C_1A_0^{2n j -\frac j2}\,\Theta(Q)^2\mu(9Q),
\end{align*}
where  we used \rf{eqforakdf33} and we applied \rf{eqmkpet4} to estimate $m_j(Q)$.
Then, for $0\leq j \leq  \frac{8n-1}{8n-2}\, k_0+4$, we have
\begin{align*}
\sum_{P\in\hd^j(Q)\cap \DD_\mu(9Q)} 
\left(\frac{\ell(P)}{\ell(Q)}\right)^{\!1/2}\Theta(P)^2\mu(P) & \leq C_1
A_0^{(2n-\frac12)\left(\frac{8n-1}{8n-2}\,k_0 +4\right)}\,\Theta(Q)^2\mu(9Q)\\
& = C_1
 M_0^{2}\,A_0^{-\frac{k_0}4+8n-2}\,
 \Theta(Q)^2\mu(9Q).
\end{align*}
So, for $k_0$ big enough, the right hand side above is smaller than $M^{2}\,\Theta(Q)^2\mu(9Q),$ 
which ensures that $k(Q,M)> \frac{8n- 1}{8n-2}\, k_0+4$.
\end{proof}
\vv

Let 
$$k_\Lambda = \frac{8n- 1}{8n-2}\, k_0,$$
so that $k(Q,M)>k_\Lambda$ for each $Q\in\DB(M)$, by the preceding lemma. 
Assuming $k_0$ to be a multiple of $8n-2$, it follows that $k_\Lambda$ is natural number.
 Notice also that $k_\Lambda$ is the mean of $k_0$ and $4nk_0/(4n-1)$, so that, if we let
$$\Lambda = A_0^{k_\Lambda n},$$
we have that $\Lambda$ is the geometric mean of $M_0
$ and $M_0^{\frac{4n}{4n-1}}$, that is
\begin{equation}\label{eqlammm}
\Lambda = M_0^{1/2}\,M_0^{\frac{2n}{4n-1}} = M_0^{\frac{8n-1}{8n-2}}>M_0.
\end{equation}
Note that this choice is consistent with \eqref{eq:LambdadepM} for $M=M_0$, assuming that $k_0=k_0(n)$ is big enough. 
\vv

Observe that, for $Q\in \DB(M)$, taking into account that $k(Q,M)-k_\Lambda>4$,
\begin{align}\label{eqprel3}
 M^2\,\Theta(Q)^2\,\mu(9Q) & \leq
\sum_{P\in\hd^{k(Q,M)}(Q)\cap\DD_\mu(9Q)} \left(\frac{\ell(P)}{\ell(Q)}\right)^{\!1/2}\Theta(P)^2\,\mu(P)\\
& = 
\sum_{S\in\hd^{k(Q,M)-k_\Lambda}(Q)\cap\DD_\mu(9Q)} \,\sum_{P\in\hd^{k(Q,M)}(Q): P\subset S}
\left(\frac{\ell(P)}{\ell(Q)}\right)^{\!1/2}\Theta(P)^2\,\mu(P) \nonumber\\
 & \leq \Lambda^2\,
\sum_{S\in\hd^{k(Q,M)-k_\Lambda}(Q)\cap\DD_\mu(9Q)}\!\! \Theta(S)^2\!\!\sum_{P\in\hd^{k(Q,M)}(Q): P\subset S}
\left(\frac{\ell(P)}{\ell(Q)}\right)^{\!1/2}\mu(P).\nonumber
\end{align}

Given $Q\in\DB(M)$ and $S\in \hd^{k(Q,M)-k_\Lambda}(Q)\cap\DD_\mu(9Q)$, we write $S\in G(Q,M)$ if 
\begin{equation}\label{eqgq1}
\mu(S) \leq 2 \Lambda^2
\sum_{P\in\hd^{k(Q,M)}(Q): P\subset S}
\left(\frac{\ell(P)}{\ell(S)}\right)^{\!1/2}\mu(P).
\end{equation}
We also denote $B(Q,M) = \hd^{k(Q,M)-k_\Lambda}(Q)\cap\DD_\mu(9Q)\setminus G(Q,M)$. 

Given $\lambda>0$, for $Q\in\DB(M)$ and $S\in G(Q,M)$, we denote
$${\rm big}_\lambda(S) = \{P\in\hd^{k(Q,M)}(Q):P\subset S,\,\ell(P)\geq \lambda \,\ell(S)\}.$$

\vv

\vv
\begin{lemma}\label{lem16}
If $\lambda>0$ satisfies 
\begin{equation}\label{eqlambda1}
\lambda\leq \frac{c_3}{\Lambda^{4}} =:\lambda_0(\Lambda)
\end{equation}
for some small absolute constant $c_3\in(0,1)$,
then, for each $Q\in\DB(M)$ we have
$$M^2\Theta(Q)^2\,\mu(Q) \lesssim \Lambda^{-\frac1{2n}}
\sum_{S\in G(Q,M)}\left(\frac{\ell(S)}{\ell(Q)}\right)^{\!1/2}\sum_{P\in {\rm big}_\lambda(S)}
\Theta(P)^2\mu(P).$$
Also, each $S\in G(Q,M)$ satisfies
$$\Theta(S)^2\,\mu(S) \leq 4 \Lambda^{-\frac1{2n}}\sum_{P\in{\rm big}_\lambda(S)}
\Theta_\mu(P)^2\,\mu(P).$$
\end{lemma}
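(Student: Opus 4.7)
I would prove the two bounds together by working with the energy identity \eqref{eqprel3}, factoring the weight $(\ell(P)/\ell(Q))^{1/2}=(\ell(S)/\ell(Q))^{1/2}(\ell(P)/\ell(S))^{1/2}$, and splitting the outer sum into the contributions of $S\in G(Q,M)$ and $S\in B(Q,M)$. First, for $S\in B(Q,M)$ the negation of \eqref{eqgq1} gives $\sum_{P\subset S,\,P\in\hd^{k(Q,M)}(Q)}(\ell(P)/\ell(S))^{1/2}\mu(P)<\mu(S)/(2\Lambda^2)$, which cancels the outer $\Lambda^2$; what remains is $\tfrac12\sum_{S}(\ell(S)/\ell(Q))^{1/2}\Theta(S)^2\mu(S)$ over the whole generation, and by the minimality of $k(Q,M)$ applied with $k=k(Q,M)-k_\Lambda<k(Q,M)$ this is at most $\tfrac12 M^2\Theta(Q)^2\mu(9Q)$. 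So the $G(Q,M)$ piece carries at least half of the right-hand side of \eqref{eqprel3}.

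The key geometric observation, on which both estimates rely, is the refinement
$$\left(\frac{\ell(P)}{\ell(S)}\right)^{\!1/2}\lesssim \Lambda^{-\frac1{2n}}\qquad\text{for every }P\in\hd^{k(Q,M)}(Q)\cap 9Q,\ P\subset S\in\hd^{k(Q,M)-k_\Lambda}(Q)\cap 9Q.$$
Indeed, Lemma \ref{lempdoubling} and \eqref{eqforakdf33} give $\Theta(P)=\Lambda\,\Theta(S)$; unfolding the definition of $\Theta$ in terms of $\mu(2B_\cdot)/\ell(\cdot)^n$, and using that $2B_P\subset 2B_S$ for $A_0$ large (since $P\subset S\subset B_S$ and $r(P)\lesssim A_0^{-1}r(S)$), we obtain $\Lambda(\ell(P)/\ell(S))^n\lesssim \mu(2B_P)/\mu(2B_S)\leq 1$, which is the claim. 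This is the one place where the exponent $1/(2n)$ in the lemma comes from.

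Given the observation, the second estimate is almost immediate: the defining property of $G(Q,M)$ gives $\sum_{P\subset S}(\ell(P)/\ell(S))^{1/2}\mu(P)\geq \mu(S)/(2\Lambda^2)$, while the small-scale tail contributes at most $\lambda^{1/2}\mu(S)\leq \mu(S)/(4\Lambda^2)$ because $\lambda\leq c_3/\Lambda^4$ with $c_3$ small; hence $\sum_{P\in{\rm big}_\lambda(S)}(\ell(P)/\ell(S))^{1/2}\mu(P)\geq \mu(S)/(4\Lambda^2)$. Replacing each factor $(\ell(P)/\ell(S))^{1/2}$ by its upper bound $\lesssim \Lambda^{-1/(2n)}$ yields $\sum_{P\in{\rm big}_\lambda(S)}\mu(P)\gtrsim \mu(S)\,\Lambda^{1/(2n)-2}$, and multiplying by $\Theta(P)^2=\Lambda^2\Theta(S)^2$ gives the desired $\Theta(S)^2\mu(S)\lesssim \Lambda^{-1/(2n)}\sum_{P\in{\rm big}_\lambda(S)}\Theta(P)^2\mu(P)$.

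For the first estimate, I would go back to the inequality from Step 1 and bound the inner sum $\sum_{P\subset S}(\ell(P)/\ell(S))^{1/2}\mu(P)$ from above by $\lesssim \Lambda^{-1/(2n)}\sum_{P\in{\rm big}_\lambda(S)}\mu(P)$: the big-$P$ part directly by the geometric observation, and the small-$P$ tail $\lambda^{1/2}\mu(S)$ by combining the already-established second estimate with $\lambda^{1/2}\leq c_3^{1/2}\Lambda^{-2}$, so that $\lambda^{1/2}\mu(S)\lesssim c_3^{1/2}\Lambda^{-1/(2n)}\sum_{P\in{\rm big}_\lambda(S)}\mu(P)$. Absorbing the outer $\Lambda^2$ into $\Theta(P)^2=\Lambda^2\Theta(S)^2$ produces exactly the stated inequality. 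The main obstacle is isolating and justifying the geometric refinement $(\ell(P)/\ell(S))^{1/2}\lesssim \Lambda^{-1/(2n)}$; without it one only obtains the weaker bound $\Theta(S)^2\mu(S)\lesssim \sum_{P\in{\rm big}_\lambda(S)}\Theta(P)^2\mu(P)$ (without the $\Lambda^{-1/(2n)}$ gain), which is insufficient for the subsequent use of this lemma.
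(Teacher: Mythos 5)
Your proposal is correct and follows essentially the same route as the paper. The decomposition of \eqref{eqprel3} into $G(Q,M)$ and $B(Q,M)$, the absorption of the $B(Q,M)$ contribution via the minimality of $k(Q,M)$, the absorption of the small-scale tail via the defining property \eqref{eqgq1} of $G(Q,M)$ together with $\lambda^{1/2}\Lambda^2\le c_3^{1/2}$, and the key scale gap $\ell(P)\lesssim\Lambda^{-1/n}\ell(S)$ (hence $(\ell(P)/\ell(S))^{1/2}\lesssim\Lambda^{-1/(2n)}$) coming from $\Theta(P)=\Lambda\,\Theta(S)$ and $P\subset S$ are exactly the ingredients in the paper's proof; the only difference is cosmetic, namely that you present the two estimates sequentially (deriving the first from the second at the level of $\mu(P)$) whereas the paper records the chain \eqref{eqgh620}--\eqref{eqgh621} once and extracts both bounds from it directly, but the manipulations are the same up to reordering.
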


\begin{proof} Arguing as in \rf{eqprel3}, by the definition of $B(Q,M)$ we get
\begin{align*}
M^2\,\Theta(Q)^2\,\mu(9Q) & \leq 
\sum_{S\in G(Q,M)} \sum_{P\in\hd^{k(Q,M)}(Q): P\subset S}
\left(\frac{\ell(P)}{\ell(Q)}\right)^{\!1/2} \Theta(P)^2\,\mu(P)\\
&\quad + 
\Lambda^2\,
\sum_{S\in B(Q,M)}\Theta(S)^2\!\!\sum_{P\in\hd^{k(Q,M)}(Q): P\subset S}
\left(\frac{\ell(P)}{\ell(Q)}\right)^{\!1/2}\mu(P)\\
& \leq \sum_{S\in G(Q,M)} \sum_{P\in\hd^{k(Q,M)}(Q): P\subset S}
\left(\frac{\ell(P)}{\ell(Q)}\right)^{\!1/2} \Theta(P)^2\,\mu(P)\\
&\quad + \frac12 
\sum_{S\in B(Q,M)} \Theta(S)^2
\left(\frac{\ell(S)}{\ell(Q)}\right)^{\!1/2}\mu(S).
\end{align*}
Using that $B(Q,M)\subset \hd^{k(Q,M)-k_\Lambda}(Q)\cap\DD_\mu(9Q)$ and that, by the definition of $k(Q,M)$, 
$$
\sum_{S\in \hd^{k(Q,M)-k_\Lambda}(Q)\cap\DD_\mu(9Q)} 
\left(\frac{\ell(S)}{\ell(Q)}\right)^{\!1/2}\Theta(S)^2\mu(S) \leq M^2\,\Theta(Q)^2\,\mu(9Q),$$
we get
$$M^2\,\Theta(Q)^2\,\mu(9Q) \leq 2
\sum_{S\in G(Q,M)} \sum_{P\in\hd^{k(Q,M)}(Q): P\subset S}
\left(\frac{\ell(P)}{\ell(Q)}\right)^{\!1/2}\Theta(P)^2\,\mu(P).$$

Next, for $S\in G(Q,M)$, we split
$$
\sum_{P\in\hd^{k(Q,M)}(Q): P\subset S}
\left(\frac{\ell(P)}{\ell(Q)}\right)^{\!1/2}\Theta(P)^2\,\mu(P) =
\sum_{P\in {\rm big}_\lambda(S)} \cdots \; + 
\sum_{P\in\hd^{k(Q,M)}(Q)\setminus {\rm big}_\lambda(S): P\subset S}\!\!\! \cdots.$$
We estimate the last sum:
\begin{align*}
\sum_{P\in\hd^{k(Q,M)}(Q)\setminus {\rm big}_\lambda(S): P\subset S}
\left(\frac{\ell(P)}{\ell(Q)}\right)^{\!1/2}\Theta(P)^2\,\mu(P) & \leq \lambda^{1/2}\,\left(\frac{\ell(S)}{\ell(Q)}\right)^{\!1/2}\!\!\!
\sum_{P\in\hd^{k(Q,M)}(Q): P\subset S}\!\!\!
\Theta(P)^2\,\mu(P)\\
&\leq \lambda^{1/2}\,\left(\frac{\ell(S)}{\ell(Q)}\right)^{\!1/2}\,\Lambda^2\,\Theta(S)^2\,\mu(S)\\
& \leq  c_3^{1/2}\,\left(\frac{\ell(S)}{\ell(Q)}\right)^{\!1/2}\,\Theta(S)^2\,\mu(S),
\end{align*}
taking into account the choice of $\lambda$ for the last estimate.
By \rf{eqgq1}, since $S\in G(Q,M)$, we have
\begin{align}\label{eqgh620}
\left(\frac{\ell(S)}{\ell(Q)}\right)^{\!1/2}\,\Theta(S)^2\,\mu(S) &\leq  2 \Lambda^2
\left(\frac{\ell(S)}{\ell(Q)}\right)^{\!1/2}\,\Theta(S)^2\,\sum_{P\in\hd^{k(Q,M)}(Q): P\subset S}
\left(\frac{\ell(P)}{\ell(S)}\right)^{\!1/2}\mu(P)\\
&\leq2
\sum_{P\in\hd^{k(Q,M)}(Q): P\subset S}
\left(\frac{\ell(P)}{\ell(Q)}\right)^{\!1/2}\Theta(P)^2\,\mu(P)\nonumber
\end{align}
Hence, for $c_3$ small enough,
$$\sum_{P\in\hd^{k(Q,M)}(Q)\setminus {\rm big}_\lambda(S): P\subset S}
\left(\frac{\ell(P)}{\ell(Q)}\right)^{\!1/2}\Theta(P)^2\,\mu(P)  \leq \frac12
\sum_{P\in\hd^{k(Q,M)}(Q): P\subset S}
\left(\frac{\ell(P)}{\ell(Q)}\right)^{\!1/2}\Theta(P)^2\,\mu(P).$$
Consequently, for every $S\in G(Q,M)$,
\begin{equation}\label{eqgh621}
\sum_{P\in\hd^{k(Q,M)}(Q): P\subset S}
\left(\frac{\ell(P)}{\ell(Q)}\right)^{\!1/2}\Theta(P)^2\,\mu(P)\leq 2
\sum_{P\in{\rm big}_\lambda(S): P\subset S}
\left(\frac{\ell(P)}{\ell(Q)}\right)^{\!1/2}\Theta(P)^2\,\mu(P).
\end{equation}

From the conditions $P\subset S$ and $\Theta(P)= \Lambda\,\Theta(S)$ we also get
$$\Theta(P)\leq \Theta(S)\,\frac{\ell(S)^n}{\ell(P)^n} 
= \Lambda^{-1}\,\Theta(P)\,\frac{\ell(S)^n}{\ell(P)^n}.$$
Thus,
\begin{equation}\label{eq**2}
\ell(P)\leq \Lambda^{-1/n}\ell(S).
\end{equation}
Then we derive
\begin{align*}
M^2\,\Theta(Q)^2\,\mu(Q) & \leq 4 \Lambda^{-\frac 1{2n}} 
\sum_{S\in G(Q,M)}  \left(\frac{\ell(S)}{\ell(Q)}\right)^{\!1/2}\sum_{P\in{\rm big}_\lambda(S)}
\Theta(P)^2\,\mu(P),
\end{align*}
which proves the first statement of the lemma.

Concerning the second statement, notice that by \rf{eqgh620}, \rf{eqgh621}, and \rf{eq**2}, we have
\begin{align*}
\Theta(S)^2\,\mu(S) & \leq 2
\sum_{P\in\hd^{k(Q,M)}(Q): P\subset S}
\left(\frac{\ell(P)}{\ell(S)}\right)^{\!1/2}\Theta(P)^2\,\mu(P)\\
& \leq 4 \sum_{P\in{\rm big}_\lambda(S)}
\left(\frac{\ell(P)}{\ell(S)}\right)^{\!1/2}\Theta(P)^2\,\mu(P)\\
& \leq 4 \Lambda^{-\frac1{2n}}\sum_{P\in{\rm big}_\lambda(S)}
\Theta(P)^2\,\mu(P).
\end{align*}
\end{proof}

\vv

We denote 
$$\DB := \DB(M_0) =
\bigcup_{M\geq M_0} \big(\DB(M) \setminus \DB(2M)\big).$$
Remark that the last identity holds because of the polynomial growth of $\mu$.
For each $Q\in \DB$, choose $M(Q)$ such that $Q\in\DB(M(Q))\setminus \DB(2M(Q))$. 
We denote by $\GDF$ (which stands for ``good dominating family") the family of the cubes $S\in\DD_\mu^\PP$ belonging to $G(Q,M(Q))$ for some $Q\in\DB$.
In particular, by the preceding lemma, the cubes $S\in\GDF$ 
satisfy the property that there exists a family $\cI_S\subset \DD_\mu^\PP(S)$
such that
\begin{equation}\label{cond21}
\Theta(P)= \Lambda\,\Theta(S)\quad \text{ and } \quad\ell(P)\geq\lambda_0\,\ell(S)\quad
\mbox{ for all $P\in \cI_S$}
\end{equation}
(with $\lambda_0=\lambda_0(\Lambda)$ as in \rf{eqlambda1}),
and
\begin{equation}\label{cond22}
\mu(S) \leq 4 \Lambda^{2- \frac 1{2n}}
\sum_{P\in \cI_S}\mu(P).
\end{equation}
Observe that, for $S\in\GDF$,
\begin{equation}\label{eqiS}
\sigma(\cI_S) = \Lambda^2\Theta(S)^2\,\sum_{P\in \cI_S}\mu(P) \geq \frac14\, \Lambda^{\frac 1{2n}}
\sigma(S),
\end{equation}
by \rf{cond22}.
\vv

\begin{lemma}\label{lemdbnodb}
We have
$$\sum_{Q\in \DB} \EE_\infty(9Q) \lesssim \Lambda^{-\frac1{2n}} \sum_{S\in \GDF}\sigma(\cI_S).$$
\end{lemma}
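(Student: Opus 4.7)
The plan is to combine Lemma \ref{lem16} with the definition of $\DB$, fix a choice of $\cI_S$ independent of $Q$, and then interchange the order of summation; the factor $(\ell(S)/\ell(Q))^{1/2}$ produced by Lemma \ref{lem16} will absorb the overlaps when several cubes $Q\in\DB$ produce the same $S$.

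First I would note that for $Q\in\DB$ with $M=M(Q)$, the defining condition $Q\in\DB(M)\setminus\DB(2M)$ yields $\EE_\infty(9Q)\le 4M^2\,\Theta(Q)^2\,\mu(9Q)$, and since $Q$ is $\PP$-doubling, $\mu(9Q)\lesssim \mu(Q)$ (by the argument used in \rf{eqdoub*11}, applied inductively to the parent of $Q$). Applying Lemma \ref{lem16} with $\lambda=\lambda_0(\Lambda)$ then gives
$$\EE_\infty(9Q)\lesssim \Lambda^{-\frac1{2n}}\sum_{S\in G(Q,M)}\Bigl(\frac{\ell(S)}{\ell(Q)}\Bigr)^{1/2}\sum_{P\in\mathrm{big}_{\lambda_0}(S)}\Theta(P)^2\,\mu(P).$$

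The second step is to verify that the innermost family $\mathrm{big}_{\lambda_0}(S)$ does not depend on $Q$, even though its definition in Lemma \ref{lem16} is phrased in terms of $\hd^{k(Q,M)}(Q)$. Since $S\in\hd^{k(Q,M)-k_\Lambda}(Q)\cap\DD_\mu(9Q)$, Lemma \ref{lempdoubling} together with \rf{eqforakdf33} gives $\Theta(S)= A_0^{(k(Q,M)-k_\Lambda)n}\,\Theta(Q)$, so every $P\in\hd^{k(Q,M)}(Q)$ with $P\subset S$ satisfies $\Theta(P)=\Lambda\,\Theta(S)$. Maximality is preserved in both directions: ancestors of $P$ contained in $S$ are controlled by the maximality of $P$ in $\hd^{k(Q,M)}(Q)$, while strict ancestors of $S$ of side length less than $\ell(Q)$ have density less than $\Theta(S)$ by the maximality of $S$ in $\hd^{k(Q,M)-k_\Lambda}(Q)$. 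Consequently one may set
$$\cI_S:=\bigl\{P\in\hd^{k_\Lambda}(S):P\subset S,\ \ell(P)\ge\lambda_0(\Lambda)\,\ell(S)\bigr\}$$
unambiguously for every $S\in\GDF$; the containment $\cI_S\subset\DD_\mu^\PP(S)$ follows from another application of Lemma \ref{lempdoubling}, and \rf{cond21}--\rf{cond22} are exactly the content of Lemma \ref{lem16}. In particular $\sigma(\cI_S)=\sum_{P\in\mathrm{big}_{\lambda_0}(S)}\Theta(P)^2\,\mu(P)$, independently of the $Q$ that produced $S\in G(Q,M(Q))$.

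Summing over $Q\in\DB$ and interchanging the order of summation, the desired estimate reduces to showing
$$\sum_{\substack{Q\in\DB\\ S\in G(Q,M(Q))}}\Bigl(\frac{\ell(S)}{\ell(Q)}\Bigr)^{1/2}\lesssim 1\qquad\text{for every fixed }S\in\GDF,$$
and this overlap bound is where I expect the main obstacle to lie. If $S\in G(Q,M(Q))$ then $S\subset 9Q$ and $\ell(S)<\ell(Q)$ (the inequality is strict because $k(Q,M)>k_\Lambda$ by Lemma \ref{lemkq}, so $\ell(Q)\ge A_0\,\ell(S)$). For each integer $j\ge 1$, the cubes $Q$ with $\ell(Q)=A_0^{j}\ell(S)$ and $S\subset 9Q$ are at most $C(n,A_0,C_0)$ in number: denoting by $\hat S_j$ the unique ancestor of $S$ at generation $g(S)-j$, the condition $S\subset 9Q$ is equivalent to $\dist(x_Q,\hat S_j)\le 9\ell(Q)$, and the disjointness of the balls $5B(Q)$ at a fixed generation from Lemma \ref{lemcubs} limits the number of admissible centers $x_Q$. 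Summing the geometric series $\sum_{j\ge 1}C\cdot A_0^{-j/2}\lesssim 1$ closes the argument.
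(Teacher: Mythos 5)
Your proof is correct and follows essentially the same route as the paper's: apply Lemma \ref{lem16} to each $Q\in\DB$, interchange the order of summation, and absorb the overlap with the geometric decay of $(\ell(S)/\ell(Q))^{1/2}$. Your verification that ${\rm big}_{\lambda_0}(S)$ does not depend on which $Q\in\DB$ produced $S\in G(Q,M(Q))$ is a worthwhile addition: the paper passes from $\sum_{P\in {\rm big}_\lambda(S)}\Theta(P)^2\mu(P)$ to $\sigma(\cI_S)$ silently, and this identification requires exactly the two-sided maximality argument you carry out.
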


\begin{proof}
For each $Q\in \DB$, choose $M=M(Q)$ such that $Q\in\DB(M)\setminus \DB(2M)$.
By the definition of $\EE_\infty(9Q)$, we have
$$\EE_\infty(9Q) \lesssim M(Q)^2\, \Theta(Q)^2\,\mu(Q).$$
Then, by Lemma \ref{lem16} we get
\begin{align*}
\sum_{Q\in \DB} \EE_\infty(9Q) & \lesssim  \Lambda^{-\frac1{2n}}
\sum_{Q\in \DB}
\sum_{S\in G(Q,M(Q))}\left(\frac{\ell(S)}{\ell(Q)}\right)^{\!1/2}\sum_{P\in {\rm big}_\lambda(S)}
\Theta(P)^2\mu(P)\\
& \lesssim \Lambda^{-\frac1{2n}}
\sum_{Q\in \DD_{\mu}^{\PP}} \sum_{S\in \GDF:S\subset 9Q}\left(\frac{\ell(S)}{\ell(Q)}\right)^{\!1/2}\,\sigma(\cI_S)\\
&= \Lambda^{-\frac1{2n}} \sum_{S\in \GDF}\sigma(\cI_S)
\sum_{Q\in \DD_{\mu}^{\PP}:9Q\supset S}
\left(\frac{\ell(S)}{\ell(Q)}\right)^{\!1/2}\\
&\lesssim \Lambda^{-\frac1{2n}} \sum_{S\in \GDF}\sigma(\cI_S).
\end{align*}
\end{proof}

\vv


\begin{lemma}
Let $\delta_0\in (0,1)$.
Let $S,P,P'\in\DD_{\mu}$ be such that $P\subset P'\subset S$. Suppose that 
$$\Theta(P)\geq \Lambda\,\Theta(S) \quad\mbox{ and }\quad\Theta(P')\leq \delta_0\,\Theta(S).$$
Then we have
$$c_4\ell(P) \leq (\delta_0\,\Lambda^{-1})^{1/n} \ell(P')\leq (\delta_0\,\Lambda^{-1})^{1/n} \ell(S) .$$
\end{lemma}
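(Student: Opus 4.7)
The strategy is to compare the $\mu$-masses of $2B_P$ and $2B_{P'}$ directly, using the fact that one is contained in the other, and then translate this mass comparison into a size comparison through the hypotheses on $\Theta(P)$ and $\Theta(P')$.

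First I would verify the geometric inclusion $2B_P\subseteq 2B_{P'}$. Since $x_P\in P\subseteq P'\subseteq B_{P'}=28B(P')$, we have $|x_P-x_{P'}|\le 28\,r(P')$. Any $y\in 2B_P$ satisfies $|y-x_{P'}|\le 56\,r(P)+28\,r(P')$, so membership in $2B_{P'}=B(x_{P'},56\,r(P'))$ reduces to $r(P)\le r(P')/2$. In the non-trivial case $P\subsetneq P'$ we have $k_P-k_{P'}\ge 1$, so $r(P)/r(P')\le C_0/A_0$, which is much less than $1/2$ by the choice $A_0=C_0^{C(d)}$ with $C_0$ large (Remark \ref{rema00}). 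Hence $2B_P\subseteq 2B_{P'}$.

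Next, I would use the definition of the discrete density $\Theta$: by construction one has $\Theta(Q)\le \mu(2B_Q)/\ell(Q)^n< A_0^n\,\Theta(Q)$ for every $Q\in\DD_\mu$. Therefore
\[
\Theta(P)\,\ell(P)^n \le \mu(2B_P)\le \mu(2B_{P'}) \le A_0^n\,\Theta(P')\,\ell(P')^n.
\]
Inserting the hypotheses $\Theta(P)\ge \Lambda\,\Theta(S)$ and $\Theta(P')\le \delta_0\,\Theta(S)$ and cancelling the positive factor $\Theta(S)$ yields $\Lambda\,\ell(P)^n\le A_0^n\,\delta_0\,\ell(P')^n$, i.e.
\[
\ell(P)\le A_0\,(\delta_0\,\Lambda^{-1})^{1/n}\,\ell(P').
\]
This gives the first inequality with $c_4=A_0^{-1}$, and the second is immediate from $P'\subseteq S$.

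There is no real obstacle here; the proof is essentially a two-line calculation once the inclusion $2B_P\subseteq 2B_{P'}$ is observed. The only point that requires a moment's care is this inclusion, which depends on the specific relationship between the parameters $A_0$ and $C_0$ in the David-Mattila lattice.
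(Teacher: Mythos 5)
Your proof is correct and follows essentially the same route as the paper's: both hinge on the monotonicity $\mu(2B_P)\le\mu(2B_{P'})$ coming from the inclusion $2B_P\subseteq 2B_{P'}$, combined with the two-sided comparison $\Theta(Q)\le\mu(2B_Q)/\ell(Q)^n<A_0^n\,\Theta(Q)$ from the definition of the discrete density. The paper merely compresses this into the single chain $\Theta(P')\,\ell(P')^n/\ell(P)^n\geq c\,\Theta(P)\geq c\Lambda\,\Theta(S)\geq c\Lambda\delta_0^{-1}\Theta(P')$ without spelling out the ball inclusion that you (correctly) verify.
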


\begin{proof}
This is an immediate consequence of the following:
$$\Theta(P') \,\frac{\ell(P')^n}{\ell(P)^n}\geq c\,\Theta(P)  \geq c\Lambda\,\Theta(S)\geq c\Lambda\delta_0^{-1}\Theta(P').$$
\end{proof}
\vv

\begin{rem}\label{rem*1}
Let 
\begin{equation}\label{eqlambda0}
\lambda=\lambda_0(\Lambda)= \frac{c_3}{\Lambda^{4}}.
\end{equation}
By the preceding lemma, if 
$(\delta_0 \Lambda^{-1})^{1/n} < c_4\lambda,$
or equivalently,
\begin{equation}\label{eqdeltaM}
\delta_0< c' \Lambda^{1-4n},
\end{equation}
then, for any $S\in\GDF$ and $P\in \cI_S$, 
 there does not exist any cube $P'\in\DD_\mu$ satisfying 
$$P\subset P'\subset S
\quad\mbox{ and }\quad \Theta(P')\leq \delta_0\,\Theta(S).$$
\end{rem}
\vv

Another easy (but important) property of the family $\GDF$ is stated in the next lemma.

\begin{lemma}\label{lementrecubs}
Let $S_1,S_2\in\GDF$ be such that $S_2\subsetneq S_1$ and $\Theta(S_1)=\Theta(S_2)$. Then
there exist $Q\in\DB$ and $Q'\in \DD_\mu$ such that $Q'\subset 9Q$, $\ell(Q)=\ell(Q')$ and
$S_1\supset Q'\supset S_2$, with $S_2\in G(Q,M(Q))$ for some $M(Q)\geq M_0$.
\end{lemma}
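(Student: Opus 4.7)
The plan is to exploit the maximality built into the definition of $\hd^k(Q)$. Since $S_2\in\GDF$, by definition there exist $Q\in\DB$ and $M(Q)\geq M_0$ with $S_2\in G(Q,M(Q))$, and in particular
$$S_2\in \hd^{k(Q,M(Q))-k_\Lambda}(Q)\cap\DD_\mu(9Q),$$
so $S_2\subset 9Q$, $\ell(S_2)<\ell(Q)$, and $\Theta(S_2)\geq A_0^{(k(Q,M(Q))-k_\Lambda)n}\,\Theta(Q)$. I will let $Q'$ be the ancestor of $S_2$ in $\DD_\mu$ of the same generation as $Q$, which exists precisely because $\ell(S_2)<\ell(Q)$.

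The inclusion $Q'\subset 9Q$ is immediate from the construction of $9Q$: since $9Q$ is the disjoint union of cubes of the same generation as $Q$ and $S_2\subset 9Q$, the unique cube of that generation containing $S_2$ must be one of the cubes in this union, namely $Q'$. To obtain $Q'\subset S_1$, by the nested structure of $\DD_\mu$ and the fact that $S_2\subset Q'\cap S_1$, it suffices to show that $\ell(Q')\leq \ell(S_1)$, i.e., $\ell(Q)\leq \ell(S_1)$.

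The only real content is this last sidelength comparison, and it follows easily by contradiction. Suppose $\ell(Q)>\ell(S_1)$. Then $S_2\subset S_1\subsetneq Q'$ (strict, since the cubes are nested with $\ell(S_1)<\ell(Q')$), so $\ell(S_1)<\ell(Q)$. On the other hand, the hypothesis $\Theta(S_1)=\Theta(S_2)$ yields
$$\Theta(S_1)\;=\;\Theta(S_2)\;\geq\; A_0^{(k(Q,M(Q))-k_\Lambda)n}\,\Theta(Q),$$
so $S_1$ fulfills both defining conditions of a candidate cube for $\hd^{k(Q,M(Q))-k_\Lambda}(Q)$. But $S_1\supsetneq S_2$ strictly, contradicting the maximality of $S_2$ in this family. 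Thus $\ell(Q)\leq \ell(S_1)$, and the proof is complete; the assertion $S_2\in G(Q,M(Q))$ for some $M(Q)\geq M_0$ was built into the very first step.
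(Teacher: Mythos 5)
Your proof is correct and follows essentially the same approach as the paper: use the defining membership $S_2\in G(Q,M(Q))\subset \hd^{k(Q,M(Q))-k_\Lambda}(Q)\cap\DD_\mu(9Q)$, deduce $\ell(Q)\leq\ell(S_1)$ from the maximality in the definition of $\hd^k(Q)$ (using that $\Theta(S_1)=\Theta(S_2)$ and $S_1\subset 9Q$), and then let $Q'$ be the ancestor of $S_2$ at the generation of $Q$. The only cosmetic difference is that you construct $Q'$ explicitly where the paper merely asserts its existence from the sidelength comparisons.
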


\begin{proof}
This is due to the fact that, by definition, there exists $Q\in\DB$ such that  $S_2\in G(Q,M)$ for 
some $M=M(Q)\geq M_0$. So $S_2\in\hd^{k(Q,M)-k_\Lambda}(Q)$ and $S_2\subset 9Q$. Then 
$\ell(Q)\leq\ell(S_1)$, because otherwise $S_2\not\in\hd^{k(Q,M)-k_\Lambda}(Q)$ since $S_2$ would not be  
maximal among the cubes $S$ contained in $9Q$ such that $\Theta(S)\geq A_0^{(k(Q,M)-k_\Lambda)n}\,\Theta(Q)$ (as $S_1$ is also contained in $9Q$ and $\Theta(S_1)= A_0^{(k(Q,M)-k_\Lambda)n}\,\Theta(Q)$). The fact that $\ell(S_1)\geq\ell(Q)\geq\ell(S_2)$ implies the existence of a cube $Q'$ such as the one in the lemma.
\end{proof}
\vv

\begin{lemma}\label{lemred*}
For any cube $R\in\GDF$, we have $\cI_R\subset\big(\HD(R)\setminus \NDB(R)\big)\cap\sss(R)$, and thus
$$\sigma(\HD(R)\cap\sss(R)\setminus \NDB(R))\geq \frac14\,\Lambda^{\frac1{2n}}\,\sigma(R)
.$$
\end{lemma}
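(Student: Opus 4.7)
\medskip

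The plan is to verify the three set-theoretic inclusions $\cI_R\subset \HD(R)$, $\cI_R\cap \NDB(R)=\varnothing$, and $\cI_R\subset \sss(R)$ separately, and then to conclude the density estimate from \eqref{eqiS}. Fix $R\in\GDF$ and $P\in\cI_R$. By the construction of $\GDF$, there exists $Q\in\DB$ with $R=S\in G(Q,M(Q))$, so that $R\in \hd^{k(Q,M)-k_\Lambda}(Q)\cap\DD_\mu(9Q)$ and $P\in \hd^{k(Q,M)}(Q)$ with $P\subset R$ and $\ell(P)\ge \lambda_0\,\ell(R)$. In particular $\Theta(P)=A_0^{k(Q,M)n}\Theta(Q)=\Lambda\,\Theta(R)$.

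First I would check that $P\in \HD(R)=\hd^{k_\Lambda}(R)$. Clearly $\ell(P)<\ell(R)$ (since $\Theta(P)>\Theta(R)$, so $P\ne R$) and $\Theta(P)\ge A_0^{k_\Lambda n}\Theta(R)$. For maximality, suppose some $P'\supsetneq P$ satisfies $\ell(P')<\ell(R)$ and $\Theta(P')\ge \Lambda\Theta(R)$. Since $\ell(R)<\ell(Q)$ (as $R\in\hd^{k(Q,M)-k_\Lambda}(Q)$), one has $\ell(P')<\ell(Q)$ and $\Theta(P')\ge A_0^{k(Q,M)n}\Theta(Q)$, contradicting the maximality of $P$ in $\hd^{k(Q,M)}(Q)$. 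Hence $P\in\HD(R)$, and since $\HD(R)\cap\NDB(R)=\varnothing$ by the very definition of $\NDB(R)$, this automatically yields $\cI_R\cap \NDB(R)=\varnothing$.

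Next I would verify that $P\in\sss(R)=\bad(R)\cap\DD_\mu(R)$, that is, that $P$ is maximal inside $\LD(R)\cup\HD(R)\cup\OP(R)$. Suppose by contradiction that $P\subsetneq P'$ with $P'$ in this union. All three families only contain cubes of sidelength strictly less than $\ell(R)$ (for $\NDB(R)$ we even have $\ell(P')<\lambda\ell(R)$), so $P'\subsetneq R$. If $P'\in\HD(R)$ the maximality argument of the previous paragraph applies verbatim and gives a contradiction with $P\in\hd^{k(Q,M)}(Q)$. If $P'\in\LD(R)$ then $\Theta(P')\le \PP(P')\le \delta_0\Theta(R)$; but our standing assumption $\delta_0\in(0,\Lambda^{-4n^2})$ together with Remark \ref{rem*1} precludes the existence of any intermediate cube $P\subset P'\subset R$ with $\Theta(P')\le\delta_0\Theta(R)$. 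Finally, if $P'\in\OP(R)=\NDB(R)$, then $\ell(P')<\lambda\,\ell(R)=\lambda_0\,\ell(R)\le \ell(P)$, which contradicts $P\subsetneq P'$. Thus every case is ruled out and $P\in\bad(R)$; since $P\subset R$, we get $P\in\sss(R)$.

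The three inclusions combine to give $\cI_R\subset (\HD(R)\setminus \NDB(R))\cap\sss(R)$, and the density estimate then follows at once from \eqref{eqiS}:
\[
\sigma\bigl(\HD(R)\cap\sss(R)\setminus\NDB(R)\bigr)\ge \sigma(\cI_R)\ge \tfrac14\,\Lambda^{\frac1{2n}}\,\sigma(R).
\]
No step here is genuinely delicate; the only place where one has to be careful is the interplay between the $\PP$-based stopping condition $\LD(R)$ and the $\Theta$-based assumption in Remark \ref{rem*1}, which is handled by the trivial bound $\Theta\le \PP$. The proof is essentially a bookkeeping argument matching the parameters $k(Q,M)$, $k_\Lambda$, $\lambda_0$, and $\delta_0$.
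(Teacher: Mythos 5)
Your proof is correct and follows the same route as the paper: use Remark \ref{rem*1} (via the trivial bound $\Theta \le \PP$) to rule out containment in an $\LD(R)$ cube, use the side-length lower bound $\ell(P)\ge\lambda_0\,\ell(R)$ from \eqref{cond21} together with the upper bound $\ell<\lambda\,\ell(R)$ in the definition of $\NDB(R)$ to rule out containment in an $\NDB(R)$ cube, and then conclude with \eqref{eqiS}. The only difference is that you carry out the maximality check for $P\in\HD(R)$ (and for $P\in\sss(R)$) explicitly by appealing to the maximality of $P$ inside $\hd^{k(Q,M)}(Q)$, whereas the paper leaves that step implicit; the extra detail is harmless and arguably clarifying, not a different argument.
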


\begin{proof}
Recall that, by \rf{eqiS},
$$\sigma(\cI_R)\geq \frac14\,\Lambda^{\frac1{2n}}\,\sigma(R)
.$$
By the choice of $\delta_0$ and Remark \ref{rem*1}, there does not exist any $Q\in\LD(R)$ which 
contains any cube from $\cI_R$. Also, the cubes from $Q\in \cI_R$ satisfy $\ell(Q)\geq \lambda\ell(R)$ and so there does exist any cube $Q'\in\NDB(R)$ such that $Q\subset Q'\subset R$. So $\cI_R\subset\HD(R)\cap\sss(R)\setminus \NDB(R)$. The last statement in the lemma follows from \rf{eqiS}.
\end{proof}

Note that the lemma above implies that $\GDF\subset\MDW$.

\vv


\section{The layers $\sF_j^h$ and $\sL_j^h$ and the typical tractable trees}\label{sec7}

In this section we set $\sF=\GDF$ and we consider the associated subfamilies $\sF_j$, $\sF_j^h$ and $\sL$, $\sL_j$, $\sL_j^h$ defined
in Section \ref{sec-layers}, so that we have the splitting
$$\GDF= \bigcup_{j\in\Z} \sF_j = \bigcup_{j\in\Z}\,\bigcup_{h\geq1} \sF_j^h.$$
Recall the properties shown in Lemma \ref{lemljh} for the families $\sF_j^h$ and $\sL_j^h$.
Notice that, by \rf{eqover5},
$$\sum_{R\in \GDF}\sigma(\HD_1(R)) \leq m_0\,B^{1/4} \!\!\sum_{R\in \sL(\GDF)}\sigma(\HD_1(e(R))),\nonumber
$$
where we wrote $\sL=\sL(\GDF)$ to emphasize the dependence of $\sL$ on $\GDF$.

\vv

Observe that since $\GDF\subset\MDW$ by Lemma \ref{lemred*}, the families $\Trc_k(R)$ introduced in Section \ref{subsec:trc} are well-defined for $R\in\GDF$. Our next objective consists of proving the following lemma, which is the main technical achievement in this section.

\begin{lemma}\label{lemimp9*}
Choose $\sF=\GDF$.
There exists some constant $C_6$ such that, for all $P\in\DD_\mu$ and all $k\geq0$,
$$\#\big\{R\in\sL(\GDF):\exists \,Q\in\Trc_k(R) \mbox{ such that } P\in\TT(e'(Q))\big\}\leq C_6\,(\log\Lambda)^2.$$
\end{lemma}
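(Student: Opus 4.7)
My plan is to mirror the two-step decomposition used in the proof of Lemma \ref{lemimp9}. Writing
$$\#\sL(P,k) \;=\; \sum_{j\in\Z}\#\sL_j(P,k),$$
I aim to show that at most $O(\log\Lambda)$ values of $j$ make $\sL_j(P,k)$ non-empty, and that each non-empty $\sL_j(P,k)$ has cardinality $O(\log\Lambda)$, which together yield the claimed bound $C_6(\log\Lambda)^2$.

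The first bound, $\#\{j:\sL_j(P,k)\neq\varnothing\}\lesssim\log\Lambda$, follows from the same reasoning as \eqref{eq:lemimp91}. That argument only used the density identity $\Theta(Q)=\Lambda^k\Theta(R)$ for $Q\in\Trc_k(R)$, Lemma \ref{lemnegs} for the negligible case $P\in\Neg(e'(Q))$, and the pinning $\delta_0\Theta(Q)\lesssim\Theta(\wt P_i)\leq\Lambda^2\Theta(Q)$ of the minimal $\PP$-doubling ancestor $\wt P_i$ of $P$ coming from $\TT_\sss(e'(Q))$. Since $\GDF\subset\MDW$ by Lemma \ref{lemred*}, these ingredients remain available, and the proof carries over verbatim.

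The main obstacle is the bound $\#\sL_j(P,k)\lesssim\log\Lambda$ for fixed $j$. The corresponding statement \eqref{eqlj83} for $\sF=\ttt\cap\MDW$ relied crucially on Lemma \ref{lemtrucguai}, whose hypotheses $R_{k+1}\in\End_*(R_k)$ are specific to $\ttt$ and unavailable for $\GDF$. The natural substitute is Lemma \ref{lementrecubs}: between any two nested cubes $S_1\supsetneq S_2$ of $\GDF$ with the same density there must sit a $\DB$ cube $Q$ with $S_2\in G(Q,M(Q))$, whose density is determined explicitly by $\Theta(Q)=A_0^{n(j-k(Q,M(Q))+k_\Lambda)}<A_0^{nj}$. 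Claim~1 in the proof of \eqref{eqlj83} used only property (i) of $\sL_j^h$ from Lemma \ref{lemljh} and the inclusion $P\subset B(e^{(3)}(R))$ from \eqref{eq:ljh-inclusion}, and hence transports unchanged: if $R_0$ is a cube of maximal side length in $\sL_j(P,k)$ in layer $h_0$, then every other $R\in\sL_j(P,k)$ lies in some layer $h\geq h_0$, and since $\#\sL_j^h(P,k)\leq 1$, the cubes of $\sL_j(P,k)$ can be enumerated as a nested sequence $R_0\supsetneq R^{(1)}\supsetneq R^{(2)}\supsetneq\cdots$ with one cube per non-empty layer. The task reduces to bounding the length of this chain.

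To carry this out, I would apply Lemma \ref{lementrecubs} to each consecutive pair of $\sF_j$-ancestors joining the $R^{(l)}$'s, producing $\DB$ cubes $Q^{(l)}$ whose densities are determined by $k(Q^{(l)},M(Q^{(l)}))-k_\Lambda$. To bound the number of possible $Q^{(l)}$'s I would run a density-oscillation analysis parallel to Case~1 of the proof of \eqref{eqlj83}, now using the tractable-tree stopping conditions at each $R^{(l)}\in\sL_j(P,k)$. In Part 4 one has $\OP=\NDB$, so the $\NDB$-stopping family of each $\TT(e'(Q_l'))$ (with $Q_l'\in\Trc_k(R^{(l)})$ containing $P$) couples directly to the $\DB$ cubes produced by Lemma \ref{lementrecubs}; combined with the $\LD$ and $\HD$ stopping conditions and the density pinning of the $\PP$-doubling ancestor of $P$ from Step 1, I expect this to force $k(Q^{(l)},M(Q^{(l)}))-k_\Lambda$ to take only $O(\log\Lambda)$ distinct values along the chain. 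Quantifying this coupling (in particular, that each $\NDB$-pair yields a $\DB$ cube of roughly the correct generation and density) is the technical core of the argument and where I expect the sharpness of $(\log\Lambda)^2$ to arise, versus the $\log\Lambda$ bound available in Lemma \ref{lemimp9}.
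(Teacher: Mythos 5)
Your two-step decomposition is exactly the one the paper uses, and your identification of Lemma \ref{lementrecubs} and the $\NDB$ stopping rule as the substitutes for Lemma \ref{lemtrucguai} is correct, as is the observation that Claim~1 from the proof of \eqref{eqlj83} transports verbatim. Step~1 (at most $O(\log\Lambda)$ values of $j$) is also correct and carries over as you say.

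The gap is in the per-$j$ bound. You say you would aim to show that ``$k(Q^{(l)},M(Q^{(l)}))-k_\Lambda$ takes only $O(\log\Lambda)$ distinct values along the chain.'' That target, even if it could be established, does not bound the chain length: distinct chain elements can give rise to $\DB$ cubes with the same density exponent, so you would additionally need a multiplicity bound that your sketch does not supply. The paper's mechanism is different and more direct. It introduces, for the maximal $R_0\in\sL_j(P,k)$, the tractable-tree chain $S_0=R_0,S_1,\dots,S_{\tilde k}$, and for each other $R_1^h$ in the chain records the index $i(h)$ with $\ell(S_i)>\ell(R_1^h)\geq\ell(S_{i+1})$. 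It then shows a dichotomy --- either $i\lesssim 1$ or $i=\tilde k$ --- and, for each such $i$, the $\NDB$ argument you gesture at is used not to count densities of $\DB$ cubes but to prove a \emph{uniqueness} statement: there is at most one $R_1^h$ with $\ell(R_1^h)\leq\lambda\ell(S_i)$, because two such cubes would, via Lemma \ref{lementrecubs}, yield a cube $T_c\supset S_{i+1}$ in $\NDB(S_i)$, contradicting $S_{i+1}\in\GH(S_i)$ (or, when $i=\tilde k$, contradicting the $\PP$-doubling structure of $\End(e'(S_{\tilde k}))$ via Lemma \ref{lemdobpp}). Combined with the trivial count of at most $|\log\lambda|\approx\log\Lambda$ nested cubes of sizes in $[\lambda\ell(S_i),\ell(S_i))$, this gives $\lesssim\log\Lambda$ cubes per admissible $i$, hence $\lesssim\log\Lambda$ per $j$. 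Your proposal has the right ingredients but points them at the wrong quantity; the count that closes the argument is on the chain layers within each generation window $[\ell(S_{i+1}),\ell(S_i))$, not on the number of distinct $\DB$ density exponents.
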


Although the statement above looks similar to Lemma \ref{lemimp9}, the proof is very different. 
The cubes from the $\NDB(\,\cdot\,)$ play an important role in the arguments. In fact, the main reason for
the introduction of the stopping condition involving the family $\NDB(\,\cdot\,)$ in Section \ref{sec6} is that it allows to prove
this lemma.

\begin{proof}
First notice that if $R\in\sL(\GDF)$ and $Q\in\Trc_k(R)$ are such that $P\in\TT(e'(Q))\setminus\Neg(e'(Q))$, then there exists some $\PP$-doubling cube that contains $P$ and belongs to $\TT_\sss(e'(Q))$, by the definition of
the family $\Neg(e'(Q))$. We denote by $\wt P$ the smallest such $\PP$-doubling cube. This satisfies
$$\delta_0\,\Theta(Q)\lesssim \Theta(\wt P)\leq \Lambda^2\,\Theta(Q),$$
or equivalently, $\Lambda^k\Theta(R)\in [\Lambda^{-2}\Theta(\wt P),C\delta_0^{-1}\Theta(\wt P)]$.
Hence, if $\Theta(R)=A_0^{nj}$, it follows that 
$$-C\log\Lambda\leq j + c\,k\log\Lambda - c'\log\Theta(\wt P)\leq C|\log\delta_0| = C'\log\Lambda.$$
Thus, $R$ belongs at most to $C''\log\Lambda$ families  $\sL_j$ such that
there exists $Q\in\Trc_k(R)$ such that $P\in\TT(e'(Q))\setminus\Neg(e'(Q))$

Suppose now that there exists $Q\in\Trc_k(R)$ such that $P\in\Neg(e'(Q))\subset\TT(e'(Q))$.
In this case, by Lemma \ref{lemnegs}, $\ell(P) \gtrsim \delta_0^{-2}\,\ell(Q)$. Hence, there
are at most $C\,|\log\delta_0|\approx \log\Lambda$ cubes $Q$ such that 
$P\in\TT(e'(Q))\cap\Neg(e'(Q))$, which in turn implies that again there are at most 
$C'''\log\Lambda$ families  $\sL_j$ such that
there exists $Q\in\Trc_k(R)$ satisfying $P\in\TT(e'(Q))\cap\Neg(e'(Q))$.

By the previous discussion, to prove
the lemma, it is enough to show that, for each $j\in\Z$, $P\in\DD_\mu$, $k\geq0$,
\begin{equation}\label{eqlj83**}
\#\sL_j(P,k) \leq C_7\log\Lambda,
\end{equation}
where
$$\sL_j(P,k)= \big\{R\in\sL_j:\exists \,Q\in\Trc_k(R) \mbox{ such that } P\in\TT(e'(Q))\big\}.$$

To prove \rf{eqlj83**}, let $R_0$ be a cube in $\sL_j(P,k)$ with maximal side length, and let $h_0$ be such that $R_0\in \sL_j^{h_0}(P,k)\equiv \sL_j(P,k)\cap \sL_j^{h_0}$. 

\begin{claim}
Let $R_1$ be another cube from $\sL_j(P,k)$, and let $h_1$ be such that $R_1\in \sL_j^{h_1}(P,k)$. 
Then $h_1\geq h_0$.
\end{claim}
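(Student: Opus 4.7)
The plan is to adapt the argument used in the analogous first claim inside the proof of Lemma \ref{lemimp9}, since the layered structure $\sF_j^h$ and the selection of $\sL_j^h$ satisfy exactly the same properties here (the family $\sF$ has been changed from $\MDW\cap\ttt$ to $\GDF$, but the construction in Section \ref{sec-layers} is abstract in $\sF$). The key inputs are the inclusion $P\subset B(e^{(3)}(R))$ whenever $R\in\sL_j(P,k)$, and property (i) of Lemma \ref{lemljh}, that no ball $B(e^{(4)}(Q))$, with $Q\in\sL_j^{h_1}$, is contained in another ball $B(e^{(4)}(Q'))$, with $Q'\in\sF_j^{h_1}$, $Q'\neq Q$.

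First I would argue by contradiction: suppose that $h_1<h_0$. Let $R_0^{h_1}\in\sF_j^{h_1}$ be the (unique) ancestor of $R_0$ lying in the $h_1$-th layer of $\sF_j$; this exists because $R_0\in\sF_j$ belongs to a layer of index $h_0>h_1$, so one of its strict ancestors must sit in $\sF_j^{h_1}$. Note that $R_0^{h_1}\neq R_1$, since $R_0\subsetneq R_0^{h_1}$ while $R_1\subset B(e^{(3)}(R_1))$ has $\ell(R_1)\leq \ell(R_0)$ by the maximality of $\ell(R_0)$ in $\sL_j(P,k)$.

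Next I would verify the geometric inclusion. Since $R_0\subset R_0^{h_1}$ and $\ell(R_0^{h_1})\geq A_0\,\ell(R_0)\geq A_0\,\ell(R_1)$, from $R_0,R_1\in\sL_j(P,k)$ we have
$$P\subset B(e^{(3)}(R_0))\cap B(e^{(3)}(R_1))\subset B(x_{R_0},\tfrac32\ell(R_0))\cap B(x_{R_1},\tfrac32\ell(R_1)).$$
Then $B(x_{R_0^{h_1}},\tfrac12\ell(R_0^{h_1})+\tfrac32\ell(R_0))$ meets $B(x_{R_1},\tfrac32\ell(R_1))$, and using $\ell(R_0^{h_1})\geq A_0\,\ell(R_1)$ one obtains
$$B(e^{(4)}(R_1))\subset B(x_{R_1},\tfrac32\ell(R_1))\subset B(x_{R_0^{h_1}},\tfrac12\ell(R_0^{h_1})+5\ell(R_0))\subset B(e^{(4)}(R_0^{h_1})),$$
exactly as in the proof of the first claim in Lemma \ref{lemimp9}.

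Finally, since $R_1,R_0^{h_1}\in\sF_j^{h_1}$ and $R_1\neq R_0^{h_1}$, the inclusion $B(e^{(4)}(R_1))\subset B(e^{(4)}(R_0^{h_1}))$ contradicts property (i) of $\sL_j^{h_1}$ from Lemma \ref{lemljh} applied to $R_1\in\sL_j^{h_1}\subset\sF_j^{h_1}$. Hence $h_1\geq h_0$, as claimed. I do not anticipate any serious obstacle: the argument is a direct transcription of the earlier one, the only thing to check carefully is that the maximality of $\ell(R_0)$ inside $\sL_j(P,k)$ (rather than inside $\sL_j$) suffices to guarantee $\ell(R_1)\leq\ell(R_0)$, which follows from the choice of $R_0$.
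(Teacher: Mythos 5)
Your proof is correct and takes essentially the same route as the paper's: it argues by contradiction, produces the ancestor $R_0^{h_1}\in\sF_j^{h_1}$ of $R_0$, passes through the same chain of ball inclusions to reach $B(e^{(4)}(R_1))\subset B(e^{(4)}(R_0^{h_1}))$, and then invokes property (i) of Lemma \ref{lemljh}. The only cosmetic differences are that the paper in this instance writes $P\subset B(e''(R))$ rather than $P\subset B(e^{(3)}(R))$ (either version suffices, both being inside $B(x_R,\tfrac32\ell(R))$), and that you make explicit the short verification $R_1\neq R_0^{h_1}$ from the maximality of $\ell(R_0)$, which the paper leaves implicit.
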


\begin{proof}
Suppose that $h_1< h_0$.
Let $R_0^{h_1}$ be the cube that contains $R_0$ and belongs to
$\sF_j^{h_1}$. Observe that, by Lemma \ref{eqtec74},
\begin{align*}
P& \subset  B(e''(R_0)) \cap B(e''(R_1)) \subset B(x_{R_0},\tfrac32\ell(R_0)) \cap
B(x_{R_1},\tfrac32\ell(R_1)).
\end{align*}
Since $\ell(R_0)\geq\ell(R_1)$, we infer that
$$B(x_{R_1},\tfrac32\ell(R_1)) \subset B(x_{R_0},\tfrac92\ell(R_0)).$$
As $x_{R_0}\in B(x_{R_0}^{h_1}, \frac12\ell(R_0^{h_1}))$ and
$\ell(R_0)\leq A_0^{-1}\ell(R_0^{h_1})$, we deduce that
\begin{align*}
 B(x_{R_0},\tfrac92\ell(R_0))  & \subset B(x_{R_0}^{h_1}, \tfrac12\ell(R_0^{h_1}) +
\tfrac92\ell(R_0)) \subset B(x_{R_0}^{h_1}, \tfrac12\ell(R_0^{h_1})\! +
\tfrac92 A_0^{-1}\ell(R_0^{h_1})) \\
&\subset B(x_{R_0}^{h_1}, (\tfrac12 + 8 A_0^{-1})\ell(R_0^{h_1})) \subset B(e^{(4)}(R_0^{h_1})),
\end{align*}
where the lat inclusion follows from the definition of $B(e^{(4)}(R_0^{h_1}))$.
Then we deduce that
$$ B(e^{(4)}(R_1))\subset B(x_{R_1},\tfrac32\ell(R_1)) \subset
 B(e^{(4)}(R_0^{h_1})),$$
which contradicts the property (i) of the family $\sL_j^{h_1}$ in Lemma \ref{lemljh}, because $R_1\neq R_0^{h_1}$.
\end{proof}

\begin{claim}
Let $R_1$ be another cube from $\sL_j(P,k)$, and let $h_1$ be such that $R_1\in \sL_j^{h_1}(P,k)$. 
Then 
\begin{equation}\label{eqclaimh1**}
h_1\leq h_0+C\,\log\Lambda.
\end{equation}
\end{claim}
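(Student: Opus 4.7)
The plan is to adapt the proof of the second claim of Lemma \ref{lemimp9} to the current setting $\sF = \GDF$, aiming for the weaker conclusion $h_1 \leq h_0 + C\log\Lambda$ rather than $h_1 \leq h_0 + C$. I preserve the overall structure: first establish $\ell(R_1^h) < \ell(R_0)$ for all $h \in [h_0+1, h_1]$ via the same ball-containment argument relying on property (i) of $\sL_j^{h_0}$ in Lemma \ref{lemljh}. Then I construct the sequence $S_0 = R_0, S_1, \ldots, S_{\tilde k}$ from the $\Trc_k(R_0)$ data with $S_{i+1} \in \GH(S_i) \subset \HD_1(e'(S_i))$ and $\Theta(S_i) = \Lambda^i A_0^{nj}$, setting $S_{\tilde k+1} := P$ where $\tilde k \in \{k, k+1\}$. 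For each $h$ I select $i = i(h) \in \{0, \ldots, \tilde k\}$ with $\ell(S_i) > \ell(R_1^h) \geq \ell(S_{i+1})$.

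The case $i < \tilde k$ is disposed of exactly as in Lemma \ref{lemimp9}: taking $T \in \DD_\mu$ with $T \supset S_{i+1}$ and $\ell(T) = \ell(R_1^h)$, the condition $2T \cap 2R_1^h \neq \varnothing$ (both contain $P$) yields $\PP(T) \approx \Theta(R_0) = A_0^{nj}$, while the fact that $S_{i+1} \in \HD_1(e'(S_i))$ forces $\PP(T) > \delta_0\,\Theta(S_i)$ (otherwise $T$ would lie in some $\LD(S_i)$ cube, contradicting $S_{i+1} \in \HD_1(e'(S_i))$). Thus $\Lambda^i \lesssim \delta_0^{-1} \lesssim \Lambda^{4n-1}$ by \rf{eqdeltaM}, so $i \leq 4n-1 = O(1)$. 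Combined with the single value $i = \tilde k$, this bounds the number of distinct values of $i(h)$ by an absolute constant.

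The main obstacle, and the genuinely new ingredient over Lemma \ref{lemimp9}, is showing that for each fixed $i$ the number of $h \in [h_0+1, h_1]$ with $i(h) = i$ is $O(\log\Lambda)$. Here I exploit the $\OP$-stopping condition $\NDB$, which is active in the current Part 4 (but was absent in Part 3, which allowed the sharper bound there). The idea is that if there were more than $C\log\Lambda$ such $h$ for a fixed $i$, then consecutive nested pairs $R_1^h \supsetneq R_1^{h+1}$ in $\GDF_j$ produce via Lemma \ref{lementrecubs} a cascade of $\DB$ cubes $Q$ together with companion cubes $Q'$ (with $\ell(Q')=\ell(Q)$, $R_1^h \supset Q' \supset R_1^{h+1}$, $Q' \subset 9Q$) at scales densely populating the range $[\ell(S_{i+1}), \ell(S_i)]$. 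Since consecutive $R_1^h$ have $\ell(R_1^{h+1}) \leq A_0^{-1} \ell(R_1^h)$, a pigeonhole argument matches one of these $\DB$ cubes to a cube $\tilde Q$ of the tractable tree $\TT(e'(S_i))$ at the same generation, with the $\DB$ cube contained in $20\tilde Q$ and $\ell(\tilde Q) < \lambda \ell(S_i)$. This forces $\tilde Q \in \NDB(S_i) \subset \sss(e'(S_i))$, so $\tilde Q$ is a stopping cube, and its strict descendants are blocked from $\TT(e'(S_{\tilde k}))$. Since $P$ lies below one such $\tilde Q$ in this scenario, we obtain a contradiction with $P \in \TT(e'(S_{\tilde k}))$.

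The hardest step is the precise pigeonhole/scale-matching argument in the last paragraph: one must carefully align the scales of $\DB$ cubes produced by Lemma \ref{lementrecubs} (whose scale is only controlled to lie between $\ell(R_1^h)$ and $\ell(R_1^{h+1})$) with the scale and location of a cube $\tilde Q$ in the tractable tree of $S_i$, while verifying that $\tilde Q$ is indeed reached during the tree construction at $S_i$ and is not prematurely terminated by a different stopping condition. This matching, together with the observation that $R_1^s$ cubes cluster around the common point $P$, should yield the desired $\log \Lambda$ bound once the geometric containments in the layer construction of $\sL_j^h$ are combined with the definition of $\NDB$.
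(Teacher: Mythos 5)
Your overall framework is correct and matches the paper's: you correctly set up the nested chain $R_1^h$, construct the $S_i$ sequence, reduce to $i(h) \lesssim 1$ or $i(h) = \tilde k$ via the Poisson/$\LD$ argument, and identify the role of Lemma~\ref{lementrecubs} and the $\NDB$ stopping condition as the replacement for Lemma~\ref{lemtrucguai}. The subcase $i < \tilde k$ is handled essentially as in the paper.

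However, there is a genuine gap in the subcase $i = \tilde k$, which is precisely the case you flag as hardest. Your claimed contradiction---``since $P$ lies below one such $\tilde Q$, we obtain a contradiction with $P \in \TT(e'(S_{\tilde k}))$''---does not hold. The tree $\TT(e'(S_{\tilde k}))$ is defined as the cubes not strictly contained in any cube from $\End(e'(S_{\tilde k}))$, not from $\sss(e'(S_{\tilde k}))$. When $T_c \in \NDB(S_{\tilde k}) \subset \sss(e'(S_{\tilde k}))$ and $T_c \notin \Neg$, the $\End$ cubes below $T_c$ are the \emph{maximal $\PP$-doubling} cubes inside $T_c$; the cube $P$ can sit strictly inside $T_c$ (either equal to such a maximal $\PP$-doubling cube or with all its ancestors inside $T_c$ not $\PP$-doubling) while remaining in $\TT(e'(S_{\tilde k}))$. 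So producing one $\NDB$ cube $T_c$ above $P$ at scale below $\lambda\ell(S_{\tilde k})$ yields no contradiction and does not cap the number of $R_1^h$ at this scale range. The paper instead exploits the \emph{quantitative} consequence of $T_c \in \sss(e'(S_{\tilde k}))$: by Lemma~\ref{lemdobpp}, denoting by $T_m$ the $m$-th descendant of $T_c$ containing $P$, one has $\Theta(T_m) \lesssim A_0^{-m/2}\PP(T_c) \approx A_0^{-m/2}\PP(R_1^{h'})$ (the last comparability coming from $R_1^{h'} \in G(T_a, M)$ via Lemma~\ref{lementrecubs}). Since every $R_1^{h''}$ with $h'' > h'$ is contained in some $2T_m$ with $\ell(T_m) \approx \ell(R_1^{h''})$ and $m \gtrsim h''-h'$, and all $R_1^{h''}$ have the same $\Theta(R_1^{h''}) = A_0^{nj} = \Theta(R_1^{h'})$, the geometric decay forces $|h'-h''|\lesssim 1$. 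Combined with the $|\log\lambda| \approx \log\Lambda$ scales between $\ell(S_{\tilde k})$ and $\lambda\ell(S_{\tilde k})$, this gives the $O(\log\Lambda)$ bound; your ``contradiction'' argument cannot be repaired without introducing this density-decay mechanism.
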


\begin{proof}
Suppose that $h_1> h_0+1$. This implies that there are cubes
$\{R_1^h\}_{h_0+1\leq h \leq h_1-1}$ such that $R_1^h\in\sF_j^h$, with
$$R_1^{h_0+1}\supset R_1^{h_0+2}\supset\ldots \supset R_1^{h_1-1}\supsetneq R_1^{h_1}=R_1.$$
Observe now that $\ell(R_1^{h_0+1})\leq \ell(R_0)$. Otherwise, there exists some cube
$R_1^{h_0}\in\sF_j^{h_0}$ that contains $R_1^{h_0+1}$ with 
$$\ell(R_1^{h_0})\geq A_0\,\ell(R_1^{h_0+1})\geq A_0\,\ell(R_0),$$
Since $P \subset  B(e''(R_0)) \cap B(e''(R_1))$, arguing as in the previous claim, we deduce that
$B(e^{(4)}(R_0))\subset B(e^{(4)}(R_1^{h_0}))$, which contradicts again the property (i) of the family $\sL_j^{h_0}$, as above. So we have
$$\ell(R_1^h)\leq \ell(R_1^{h_0+1})\leq \ell(R_0)\quad \mbox{ for $h\geq h_0+1$.}$$

By the construction of $\Trc_k(R_0)$ in Section \ref{subsec:trc}, there exists a sequence of cubes
$S_0=R_0, S_1, S_2, \ldots, S_k=Q$ such that 
$$ S_{i+1}\in \GH(S_i)\; \mbox{ for $i=0,\ldots,k-1$,}$$
and $P\in\TT(e'(S_{k}))$. In case that $P$ is contained in some $Q'\in\HD_1(e'(Q))=\HD_1(e'(S_k))$, we write $S_{k+1}=Q'$, and we let $\tilde k:=k+1$. Otherwise, we let $\tilde k:=k$.
In any case, obviously we have  $\ell(S_{i+1})<\ell(S_i)$ for all $i$.
So, for each $h$ with $h_0+1\leq h\leq h_1$ there is some $i=i(h)$ such that 
\begin{equation}\label{eq0asd**}
\ell(S_i)>\ell(R_1^h)\geq \ell(S_{i+1}),
\end{equation}
with $0\leq i \leq \tilde k$, where we understand that $S_{\tilde k +1}= P$. 
We claim that either $i\lesssim 1$ or $i=\wt k$, with the implicit constant depending on $n$. 
Indeed, in the case $i<\wt k$, let $T\in\DD_\mu$ be such that $T\supset S_{i+1}$ and $\ell(T)=\ell(R_1^h)$.
Notice that, since $2R_1^h\cap 2T\neq \varnothing$ (because both $2R_1^h$ and $2T$ contain $P$) and 
$\ell(R_1^h)=\ell(T)$, we have
\begin{equation}\label{eq1asd**}
\PP(T) \approx \PP(R_1^h)\approx \Theta(R_1^h)=\Theta(R_0).
\end{equation}
On the other hand, 
\begin{equation}\label{eq2asd**}
\PP(T)\geq \delta_0\,\Theta(S_i)
\end{equation} 
because otherwise 
either $T\in\LD(S_i)$ or it is contained in some cube from $\LD(S_i)\cup\NDB(S_i)$. In any case, this would imply that $S_{i+1}$ does not belong to $\HD_1(e'(S_i))$.
Thus, from \rf{eq1asd**} and \rf{eq2asd**}
we derive that
$$\Theta(R_0)\gtrsim \delta_0\,\Theta(S_i) =  \delta_0\,\Lambda^i\,\Theta(R_0).$$
Hence  $\Lambda^i\lesssim\delta_0^{-1}$, which yields $i\lesssim_n 1$ if $i<\wt k$, as claimed.

The preceding discussion implies that, in order to prove \rf{eqclaimh1**}, it suffices to show that, for each fixed 
$i=0,\ldots,\tilde k$, there are at most $C\log\Lambda$ cubes $R_1^h$ satisfying 
\rf{eq0asd**}. To this end, suppose first that $i<\tilde k$. It is easy to check that there is at most one 
cube  $R_1^h$ satisfying 
\rf{eq0asd**} such that
\begin{equation}\label{eqr1hh}
\ell(R_1^h)\leq \lambda\,\ell(S_i).
\end{equation}
Indeed, if otherwise $R_1^h$ and $R_1^{h'}$ are such that
\begin{equation}\label{eqigfj4}
\lambda\,\ell(S_i)\geq \ell(R_1^h) >\ell(R_1^{h'})> \ell(S_{i+1}),
\end{equation}
then, by Lemma \ref{lementrecubs},
there exist $T_a\in\DB$ and $T_b\in \DD_\mu$ such that $T_b\subset 9T_a$, $\ell(T_a)=\ell(T_b)$ and
$R_1^h\supsetneq T_b\supsetneq R_1^{h'}$.
Now, let $T_c\in\DD_\mu$ be such that $T_c\supset S_{i+1}$ and $\ell(T_c)=\ell(T_a)$.
 Since $T_a\in\DB$, we infer that $T_c\in\NDB(S_i)$, taking into account that 
 $2T_b\cap 2T_c\neq \varnothing$, $T_b\subset 9T_a$, and $\ell(T_c)<\lambda\,\ell(S_i)$.
 This is a contradiction, because this would imply that either $T_c\in\sss(e'(S_i))$ or 
  $T_c$ is contained in some cube from $\sss(e'(S_i))$, which ensures that
  $S_{i+1}\not\in\GH(S_i)$ (notice that we are using the fact that $i<\tilde k$).
So \rf{eqr1hh} holds. Clearly this implies that
there are at most $C|\log\lambda|\approx \log\Lambda$ cubes $R_1^h$ satisfying 
\rf{eq0asd**}. 

In the case $i=\tilde k$, the same argument as above shows that if
$R_1^h$ and $R_1^{h'}$ satisfy \rf{eqigfj4},
then the cube $T_c$ in the preceding paragraph belongs to $\NDB(S_{\tilde k})$ again.
So again either $T_c\in\sss(e'(S_{\tilde k}))$ or $T_c$ is contained in some cube from 
$\sss(e'(S_{\tilde k}))$. As a consequence, by the definition of the family $\End(e'(R))$ and Lemma
 \ref{lemdobpp}, if we denote by $T_m$ the $m$-th descendant of $T_c$ which contains $P$, it follows that 
$$\Theta(T_m)\lesssim A_0^{-m/2}\,\PP(T_c)\approx A_0^{-m/2}\,\PP(T_b) \leq A_0^{-m/2}\,\PP(R_1^{h'})
\quad \mbox{for all $m\geq1$.}$$
The last inequality follows from the fact that we can assume that $R_1^{h'}\in G(T_a,M)$, for some $M\geq M_0$, by Lemma \ref{lementrecubs}. Since any cube $R_1^{h''}$ with $h''>h'$ is contained in a cube $2T_m$ with $\ell(T_m)\approx\ell(R_1^{h''})$ for some $m\geq h'-h''-1$, we deduce that
\begin{align*}
\Theta(R_1^{h''}) & \lesssim \Theta(2T_m)\lesssim \Theta(T_{m-1})\lesssim A_0^{-(m-1)/2}\,\PP(R_1^{h'})\\
&\lesssim A_0^{-(h'-h'')/2}\,\PP(R_1^{h'})\approx A_0^{-(h'-h'')/2}\,\Theta(R_1^{h'}).
\end{align*}
Since $\Theta(R_1^{h'}) = \Theta(R_1^{h''})$, we get $|h'-h''|\lesssim1$. Consequently, in the case $i=\tilde k$ there are again at most $C|\log\lambda|\approx \log\Lambda$ cubes $R_1^h$ satisfying 
\rf{eq0asd**}. 
\end{proof}

To prove the lemma, notice that each family $\sL_j^h(P,k)$ consists of a single cube, at most.
Indeed, if $R\in\sL_j^h(P,k)$, then $P\subset B(e''(R))$. Thus if $R,R'\in\sL_j^h(P,k)$,
then $B(e''(R))\cap B(e''(R'))\neq\varnothing$, which cannot happen if $R\neq R'$.
From this fact and the preceding claims, we infer that $\#\sL_j(P,k)\leq C\log\Lambda$, so that \rf{eqlj83**} holds.
\end{proof}

\vv
Recall that, for $R\in\sL(\GDF)$, $Q\in\Trc_k(R)$, we write $P\sim \TT(e'(Q))$ if there exists some
$P'\in\TT(e'(Q))$ such that 
$$
A_0^{-2}\ell(P)\leq \ell(P')\leq A_0^2\,\ell(P)\quad \text{ and }\quad 20P'\cap20P\neq\varnothing.
$$
Recall also that $\TT(e'(Q))$ is called a typical tractable tree, and we write $Q\in \Ty$, if
$$
\sum_{P\in\DB:P\sim\TT(e'(Q))} \EE_\infty(9P)\leq \Lambda^{\frac{-1}{3n}}\,\sigma(\HD_1(e(Q))).
$$

\vv

\begin{lemma}\label{lemsuper**}
We have
$$\sum_{Q\in\DB} \EE_\infty(9Q) \lesssim \Lambda^{\frac{-1}{2n}}(\log\Lambda)^2 
\sum_{R\in\sL(\GDF)}\,
\sum_{k\geq0} B^{-k/2} \sum_{Q\in\Trc_k(R)\cap\Ty}\sigma(\HD_1(e(Q))).$$
\end{lemma}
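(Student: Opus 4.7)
The plan is to start from Lemma \ref{lemdbnodb} and pass through the layer decomposition, then use the typical/non-typical dichotomy together with the multiplicity bound of Lemma \ref{lemimp9*} to absorb the non-typical contribution back into the left-hand side.

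First, by Lemma \ref{lemdbnodb} we have $\sum_{Q\in\DB}\EE_\infty(9Q)\lesssim\Lambda^{-1/(2n)}\sum_{S\in\GDF}\sigma(\cI_S)$. Since $\cI_S\subset \HD_1(S)$ by Lemma \ref{lemred*} and all cubes in both families have density $\Lambda\,\Theta(S)$, we have $\sigma(\cI_S)\leq\sigma(\HD_1(S))$. Applying the layer decomposition \rf{eqover5} to $\sF=\GDF$ and the iteration inequality \rf{eqiter*44} from Lemma \ref{eqtec74} to each $R\in\sL(\GDF)$, we reduce to bounding
\[
\sum_{R\in\sL(\GDF)}\sum_{k\geq0}B^{-k/2}\sum_{Q\in\Trc_k(R)}\sigma(\HD_1(e(Q))).
\]
We split $\Trc_k(R)=(\Trc_k(R)\cap\Ty)\sqcup(\Trc_k(R)\setminus\Ty)$; the typical part is exactly the RHS of the stated inequality, so it remains to control the non-typical part.

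For a cube $Q\in\Trc_k(R)\setminus\Ty$, the very definition \rf{defty} of typical tree gives $\sigma(\HD_1(e(Q)))<\Lambda^{1/(3n)}\sum_{P\in\DB:\,P\sim\TT(e'(Q))}\EE_\infty(9P)$. Switching the order of summation, the non-typical contribution is bounded by $\Lambda^{1/(3n)}\sum_{P\in\DB}\EE_\infty(9P)\cdot N(P)$, where $N(P)$ counts (with the geometric weights $B^{-k/2}$) the triples $(R,k,Q)$ for which $P\sim\TT(e'(Q))$. For fixed $R$ and $k$, the balls $B(e''(Q))$ with $Q\in\Gen_k(R)$ are pairwise disjoint, so a given cube $P'$ lies in $\TT(e'(Q))$ for at most one such $Q$; passing from ``$\in$'' to ``$\sim$'' costs only a bounded factor depending on $A_0$. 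Combining this with Lemma \ref{lemimp9*} and summing the geometric series in $k$ yields $N(P)\lesssim(\log\Lambda)^2$.

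Putting everything together, the non-typical contribution is absorbed once the overall coefficient $\Lambda^{-1/(2n)+1/(3n)}(\log\Lambda)^2$ times the factor $B^{1/4}$ from the layer decomposition is less than $1/2$; since $B=\Lambda^{1/(100n)}$, this coefficient equals a negative power of $\Lambda$ (times $(\log\Lambda)^2$) whose exponent depends only on $n$, so the absorption works for all $\Lambda\geq\Lambda(n)$, equivalently for $k_0$ large enough depending only on $n$. The resulting bound on the typical part is the stated inequality. The main technical obstacle is the multiplicity bound of Lemma \ref{lemimp9*}, whose proof depends crucially on the optional stopping condition $\OP(R)=\NDB(R)$ that is built into the tractable trees throughout Part \ref{part-4}.
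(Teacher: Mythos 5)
Your proof is correct and follows essentially the same route as the paper's: Lemma \ref{lemdbnodb} and Lemma \ref{lemred*} (to pass from $\cI_S$ to $\HD_1(S)$), then the layer inequality \rf{eqover5}, the iteration \rf{eqiter*44}, the typical/non-typical split, and absorption of the non-typical part via the multiplicity bound of Lemma \ref{lemimp9*}. One small point: your observation that the balls $B(e''(Q))$ are pairwise disjoint for $Q\in\Gen_k(R)$ (so that each $R$ contributes at most boundedly many $Q$'s after switching the order of summation) is correct but is only stated in the paper for $k=1$ (Lemma \ref{lemalg1}(a)); extending it to all $k$ needs the inclusion $B(e''(P))\subset B(e''(Q))$ for $P\in\GH(Q)$ (a short computation from Lemma \ref{lem-calcf} and the separation between $e'(Q)$ and $\partial B(e''(Q))$) followed by induction on $k$ — a step the paper's own proof leaves implicit inside the ``$\lesssim\#A(P,k)$'' bound \rf{eqremaa95}, so in fact you make the argument slightly more explicit than the source.
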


\begin{proof}
By Lemmas \ref{lemdbnodb} and \ref{lemred*}, we have
$$\sum_{Q\in \DB} \EE_\infty(9Q) \lesssim \Lambda^{-\frac1{2n}} \sum_{R\in \GDF} \sigma(\HD_1(R)).$$
Also, by \rf{eqover5} and Lemma~\ref{eqtec74},
\begin{align*}
\sum_{R\in \GDF}\sigma(\HD_1(R)) & \leq m_0\,B^{1/4}\sum_{R\in \sL(\GDF)}\sigma(\HD_1(e(R)))\\
& \leq m_0\,B^{1/4}\sum_{R\in \sL(\GDF)} \sum_{k\geq0} B^{-k/2}\sum_{Q\in\Trc_k(R)}\sigma(\HD_1(e(Q))).
\end{align*}
Therefore,
\begin{align}\label{eqqp45}
\sum_{Q\in \DB} \EE_\infty(9Q) & \lesssim m_0\,B^{1/4}\,\Lambda^{-\frac1{2n}} 
\sum_{R\in \sL(\GDF)} \sum_{k\geq0} B^{-k/2}\sum_{Q\in\Trc_k(R)}\sigma(\HD_1(e(Q)))\\
& = m_0\,B^{1/4}\,\Lambda^{-\frac1{2n}} 
\sum_{R\in \sL(\GDF)} \sum_{k\geq0} B^{-k/2}\sum_{Q\in\Ty}\sigma(\HD_1(e(Q)))\nonumber
\\
&\quad + m_0\,B^{1/4}\,\Lambda^{-\frac1{2n}} 
\sum_{R\in \sL(\GDF)} \sum_{k\geq0} B^{-k/2}\sum_{Q\in\Trc_k(R)\setminus \Ty}\sigma(\HD_1(e(Q)))\nonumber\\
&=: S_1 + S_2.\nonumber
\end{align}
By definition, for $Q\in \Trc_k(R)\setminus \Ty$, we have
$$\sigma(\HD_1(e(Q)))\leq \Lambda^{\frac{1}{3n}}\sum_{P\in\DB:P\sim\TT(e'(R))} \EE_\infty(9P).$$
Hence, the term $S_2$ in \rf{eqqp45} does not exceed
\begin{multline*}
m_0\,B^{1/4} \,\Lambda^{-\frac1{2n}} \Lambda^{\frac{1}{3n}}
\sum_{R\in \sL(\GDF)} \sum_{k\geq0} B^{-k/2}\!\!\!\sum_{Q\in\Trc_k(R)\setminus \Ty}\,
\sum_{P\in\DB:P\sim\TT(e'(Q))} \EE_\infty(9P)\\
 \lesssim
m_0\,B^{1/4} \,\Lambda^{-\frac1{6n}} 
\sum_{P\in\DB} \EE_\infty(9P)
 \sum_{k\geq0} B^{-k/2}\,
\# A(P,k),
\end{multline*}
where
$$A(P,k)= 
\big\{R\in \sL(\GDF):P\sim\TT(e'(Q))\text{ for some } Q\in\Trc_k(R)\big\}.$$
From the definition  \rf{defsim0} and Lemma \ref{lemimp9*}, it follows that
\begin{align}\label{eqremaa95}
\#A(P,k) &\leq \sum_{\substack{
P'\in\DD_\mu: 20P'\cap20P\neq\varnothing\\A_0^{-2}\ell(P)\leq \ell(P')\leq A_0^2\ell(P)
}} \!\!\#
\big\{R\in \sL(\GDF):\exists \,Q\in\Trc_k(R) \text{ such that }P'\in\TT(e'(Q))\big\}\\
&\lesssim \sum_{\substack{
P'\in\DD_\mu: 20P'\cap20P\neq\varnothing\\A_0^{-2}\ell(P)\leq \ell(P')\leq A_0^2\ell(P)
}}\!\!\!(\log\Lambda)^2\lesssim (\log\Lambda)^2.\nonumber
\end{align}
Therefore, the term $S_2$ in \rf{eqqp45} satisfies
\begin{align*}
S_2& \lesssim m_0\,B^{1/4} \,\Lambda^{-\frac1{6n}} \,(\log \Lambda)^2
\sum_{P\in\DB} \EE_\infty(9P) \sum_{k\geq0} B^{-k/2} \lesssim m_0\,B^{1/4} \,\Lambda^{-\frac1{6n}} \,(\log \Lambda)^2
\sum_{P\in\DB} \EE_\infty(9P).
\end{align*}
Since $B = \Lambda^{\frac1{100n}}$,  we deduce that
$$S_2\leq \frac12 \sum_{P\in\DB} \EE_\infty(9P)$$
if $\Lambda$ is big enough.
So we get\footnote{Here we are assuming that
	$\sum_{P\in\DB} \EE_\infty(9P) <\infty$. To ensure that this holds, if necessary we may replace the measure $\mu$ by
	another approximating measure of the form $\mu_\ell =\vphi_\ell * \mu$, where $\vphi_\ell$ is a $C^\infty$ bump function supported on $B(0,\ell)$, with $\|\vphi_\ell\|_1=1$. Then we prove the Main Proposition \ref{propomain} for $\mu_\ell$ with bounds independent of $\ell$, and finally we let $\ell\to\infty$.
}
$$\sum_{Q\in\DB} \EE_\infty(9Q)\leq 2S_1,$$
which proves the lemma.
\end{proof}

\vv


\section{The proof of the Second Main Proposition \ref{propomain2}} \label{sec100}

We have to show that
$$\sum_{Q\in\DB} \EE_\infty(9Q) \leq C\big( \|\RR\mu\|_{L^2(\mu)}^2 + 
\theta_0^2\,\|\mu\|\big),$$
with $C$ possibly depending on $\Lambda$ and other parameters.
Recall that by Lemma \ref{lemsuper**}, we have
$$\sum_{Q\in\DB} \EE_\infty(9Q) \lesssim \Lambda^{\frac{-1}{2n}}(\log\Lambda)^2 
\sum_{R\in\sL(\GDF)}\,
\sum_{k\geq0} B^{-k/2} \sum_{Q\in\Trc_k(R)\cap\Ty}\sigma(\HD_1(e(Q))).$$
Also, by Lemma \ref{lemimp9*}, it turns out that, for all $P\in\DD_\mu$ and all $k\geq0$,
\begin{equation}\label{eqfaxc25}
\#\big\{R\in\sL(\GDF):\exists \,Q\in\Trc_k(R) \mbox{ such that } P\in\TT(e'(Q))\big\}\leq C_6\,(\log\Lambda)^2.
\end{equation}
Observe now that, by Lemma \ref{lemalter*}, for each $Q\in\Trc_k(R)\cap\Ty$,
either
$$\sigma(\HD_1(e(Q)))\lesssim \theta_0^2\,\mu(Q)\leq \theta_0^2\,\ve_Z^{-1}\,\mu(Z(Q)),$$
where $Z(Q)$ is the set $Z$ appearing in \eqref{eqdef*f} (replacing $R$ by $Q$ there), or
$$\sigma(\HD_1(e(Q))) \leq \Lambda\,\|\Delta_{\wt \TT(e'(Q))} \RR\mu\|_{L^2(\mu)}^2.$$
Therefore,
\begin{align*}
\sum_{Q\in\DB} \EE_\infty(9Q) & \lesssim_\Lambda 
\sum_{R\in\sL(\GDF)}\,
\sum_{k\geq0} B^{-k/2} \sum_{Q\in\Trc_k(R)}
\|\Delta_{\wt \TT(e'(Q))} \RR\mu\|_{L^2(\mu)}^2\\
&\quad+ \theta_0^2\,\ve_Z^{-1}\sum_{R\in\sL(\GDF)}\,
\sum_{k\geq0} B^{-k/2} \!\!\!\!\sum_{Q\in\Trc_k(R)}
\mu(Z(Q))\\
&=: S_1 + S_2.
\end{align*}

\vv

By the same arguments used to bound the term $T_1$ in Lemma \ref{lemt1t2t3}, using \rf{eqfaxc25} instead of \rf{eqtrk121},
we derive
$$S_1
 \lesssim_\Lambda \|\RR\mu\|_{L^2(\mu)}^2.$$
 Also, by estimates similar to the ones for the term $T_3$ in Lemma \ref{lemt1t2t3}, we get
$$S_2\lesssim \theta_0^2\,\ve_Z^{-1}\|\mu\|.
$$
So we obtain
$$\sum_{Q\in\DB} \EE_\infty(9Q)\lesssim_\Lambda \|\RR\mu\|_{L^2(\mu)}^2 + \ve_Z^{-1}
\theta_0^2\,\|\mu\|,$$
which concludes the proof of Second Main Proposition \ref{propomain}.

\vv


\appendix

\section{Parameters and families}\label{app:param}
\section*{A list of parameters}
Here is the list of the most important constants and parameters that appear in the paper, in the order of appearance. We also point out the dependence of different constants on each other (usually we do not track dependence on dimension).
\begin{itemize}
	\item $C_0,\, A_0$ are the constants from the David-Mattila lattice, and they depend only on dimension, see Remark~\ref{rema00}. Moreover, $A_0$ is assumed big enough that Lemma~\ref{lem:43} and Lemma~\ref{lem:66} hold. Starting from Section \ref{sec:Pdoubling}, $A_0$ and $C_0$ are considered to be fixed constants, and all the subsequent estimates and parameters may depend on them. We do not track this dependence.
	\item $C_d$ is the constant from the definition of $\PP$-doubling cubes in Subsection~\ref{subsec:Pdoubling}, and we have $C_d =4A_0^n$.
	\item $M$ is the parameter associated with the $\DB$ family in Proposition~\ref{propomain} (the First Main Proposition).
	\item $M_0$ is a big enough dimensional constant associated with the $\DB$ family chosen in Proposition~\ref{propomain2} (the Second Main Proposition). It is of the form $M_0=A_0^{k_0n}$ for some $k_0$ depending on dimension.
	\item $\Lambda$ is the $\HD$ constant introduced in Subsection \ref{secMDW}, and it is of the form $\Lambda = A_0^{k_{\Lambda}n}$ for some large integer $k_{\Lambda}$. It depends on $M$, see \eqref{eq:LambdadepM}. In the proof of Proposition~\ref{propomain2} a more concrete value $\Lambda=M_0^{\frac{8n-1}{8n-2}}$ is chosen, see Section \ref{sec5**}.
	
	\item $\delta_0\in (0,\Lambda^{-4n^2})$ is the $\LD$ constant introduced in Subection \ref{secMDW}. It is fixed at the beginning of Section \ref{sec3.3}, see also Remark~\ref{rem9.12}.
	\item $B$ is the $\MDW$ constant introduced below \eqref{eq:MDWdef}, and it is of the form $B=\Lambda^{\frac{1}{100n}}.$
	\item $\ell_0>0$ is the parameter used in the definition of function $d_{R,\ell_0}$ at the beginning of Section~\ref{sec6.2*}. It is assumed to be small enough in Lemmas~\ref{leminteta*}, \ref{lemNZ}, \ref{lemalter*}, and Remark \ref{rem9.12}. We have no control over how small $\ell_0$ is (it comes from a ``qualitative argument''), but none of the other constants depend on it.
	\item $\varepsilon_n>0$ is a small auxiliary parameter introduced above Lemma~\ref{lem:66}. It is chosen in \eqref{eqchooseen}.
	\item $\varepsilon_Z>0$ is a small constant introduced in Lemma~\ref{lemregot}. We assume that $\ve_Z\in (0, \Lambda^{-300n})$.
	\item $\Lambda_*$ is the $\HD_*$ constant introduced in Section \ref{sec3.3}, and it is of the form $\Lambda_* = \Lambda^{\frac{N}{N-1}}$, where $N$ is a large dimensional constant fixed above Lemma~\ref{lemtoptop} (for example, $N=500n$ works).
	\item $K>0$ is the $\BR$ constant. It is chosen in the proof of Lemma~\ref{lemtop8}, and it is big enough depending on $\Lambda_*, \delta_0$ and $M$.
	\item $C_*>0$ is a constant from Lemma~\ref{lem:RegGamma}. It depends on $\Lambda_*$ and $\delta_0$.
\end{itemize}

\section*{A list of cube families and related objects}\label{app:fam}
Below we list the most important families of cubes that appear in the paper, along with a link to the section where they were defined. We also list some other notions related to cubes (e.g., the approximating measures, or enlarged cubes).
\begin{itemize}
	\item $\DD_\mu$ is the family of David-Mattila cubes from Section~\ref{sec:DMlatt}.
	\item $\DD^\PP_\mu$ is the family of $\PP$-doubling cubes from Subsection~\ref{subsec:Pdoubling}.
	\item $\DB$ are the cubes dominated from below defined in Subsection \ref{subsec:DB}.
	\item $\HD(R),\ \LD(R),\, \OP(R),\, \NDB(R),\ \bad(R),\, \Stop(R)$ are defined in Subsection~\ref{subsec:enlar}.
	\item $\MDW$ is defined in Subsection~\ref{secMDW}.
	\item $e_j(R),\ e(R),\ e'(R),\ e''(R),\ e^{(k)}(R)$ are various enlarged cubes, defined in Subsection~\ref{subsec:enlar}.
	\item $\sss(e_j(R))$ is defined in Subsection~\ref{subsec:enlar}.
	\item $\sss(e(R)),\ \sss(e'(R)),\ \HD_1(R),\ \HD_1(e(R)),\ \HD_1(e'(R)),\ \HD_2(e'(R)),$\\ $\sss_2(e'(R)),\ \TT_\sss(e'(R)),\ \Neg(e'(R)),\ \End(e'(R)),\ \TT(e'(R))$ are defined in Subsection~\ref{subsec:generalized}.
	\item $\Trc\subset\MDW$ is the family of tractable cubes (which are the roots of tractable trees). It is defined in Subsection~\ref{subsec:trc}.
	\item $\GH(R)$ is the family of good high density cubes, and it is defined in Lemma~\ref{lemalg1}.
	\item $\Gen(R)$ are the cubes generated by $R$, and $\Trc(R)= \Gen(R)\cap\Trc$. These families are constructed below Lemma~\ref{lemalg1}.
	\item $\Reg(e'(R))$ is the family of regularized stopping cubes, and $\TT_{\Reg}$ is the corresponding tree. They are defined in Subsection~\ref{sec6.2*}.
	\item $\eta$ is used to denote two different approximating measures; first, in Sections \ref{sec6}-\ref{sectrans} it is a measure approximating $\mu$ at the level of $\Reg(e'(R))$. It is defined below Lemma~\ref{lem74}. Later, in Section \ref{sec9}, it is an AD-regular measure approximating $\mu$ at the level of $\wh\sss(R)$. It is defined above Lemma~\ref{lem9.2}.
	\item $\sH_j(e'(R))$, $\sH$ and $\sH'$ are auxiliary families defined in Subsection~\ref{subsec:H}.
	\item $\nu = \vphi_R\,\eta$ is a smoother version of the approximating measure $\eta$, and it is defined in Subsection~\ref{subsec:H}.
	\item $\TT_{\sH'}$ is a tree of cubes associated to $\sH'$ and it is defined above \eqref{eqdefPsi1}.
	\item The notation $P\sim \TT(e'(R))$ for 	
	$R\in\MDW$ and $P\in\DD_{\mu}$ is defined in \rf{defsim0}.
	\item $\Ty$ is the family if cubes associated to typical tractable trees, and it is defined in \eqref{defty}.
	\item $\Reg_{\Neg},\ \sD_\Neg,\ \sM_\Neg,\ \wt\End,\ \wt\TT,\ Z,\ \wt Z$ are defined in Subsection~\ref{subsec:opera}.
		\item $\HD_*(R),\ \Stop_*(R),\ \End_*(R),\ \tree(R),\ \ttt$ are defined in Section~\ref{sec3.3}.
	\item $\sF_j,\ \sF_j^h,\ \sL_j,\ \sL_j^h,\ \sL$ are defined in Subsection~\ref{sec-layers}.
	\item $\BR(R)$ is the family of cubes with big Riesz transform. It is introduced in Subsection~\ref{subsec:91}.
	\item $\wh\Ch(Q),\ \wh\Stop(R),\ \wh\tree_0(R)$ are defined at the beginning of Subsection~\ref{subsec:91}.
	\item $(i)_R,\ (ii)_R,\ (iii)_R$ are defined above Lemma~\ref{lem9.5*}.
	\item $\wh\End(R),\ \wh\tree(R),\ \wh\Stop_*(R),\ \wh\tree_*(R)$ are defined at the end of Subsection~\ref{subsec:91}.
	\item $\Reg_*(R)$ and $\treg_*(R)$ are constructed above Lemma~\ref{lem:reg prop}. 
	\item $\wh\ttt$ is defined above Lemma~\ref{lemtop8}.
	\item $\GDF$ is the good dominated family defined in Section \ref{sec5**}.
\end{itemize}

\vvv

\end{document}